\newcommand{\listofsymbolsname}{List of Symbols and Notation}
\theoremstyle{plain}
\newtheorem{thm}{Theorem}[chapter]
\newtheorem{thm*}{Theorem}
\newtheorem{lem}[thm]{Lemma}
\newtheorem{prop}[thm]{Proposition}
\newtheorem{prop*}[thm*]{Proposition}
\newtheorem{cor}[thm]{Corollary}
\theoremstyle{definition}
\newtheorem{defn}[thm]{Definition}
\newtheorem{ex}[thm]{Example}
\newtheorem{qu}[thm]{Question}
\newtheorem{notation}[thm]{Notation}
\newtheorem{meth}[thm]{Method}
\newtheorem{conj}[thm]{Conjecture}
\theoremstyle{remark}
\newtheorem{rem}[thm]{Remark}
\DeclareMathOperator{\height}{ht}
\DeclareMathOperator{\depth}{depth}
\DeclareMathOperator{\Supp}{Supp}
\DeclareMathOperator{\reg}{reg}
\DeclareMathOperator{\Hred}{\widetilde{H}}
\DeclareMathOperator{\HK}{HK}
\DeclareMathOperator{\link}{link}
\DeclareMathOperator{\pdim}{pdim}
\DeclareMathOperator{\hcf}{hcf}
\DeclareMathOperator{\id}{id}
\DeclareMathOperator{\Sym}{Sym}
\DeclareMathOperator{\Skel}{Skel}
\DeclareMathOperator{\Cl}{Cl}
\DeclareMathOperator{\Ind}{Ind}
\DeclareMathOperator{\Star}{star}
\DeclareMathOperator{\codim}{codim}
\DeclareMathOperator{\free}{free}
\DeclareMathOperator{\SimpLink}{Simp-Link}
\newcommand{\isomto}{\overset{\sim}{\rightarrow}} 
\newcommand{\arrowX}[1]{\overset{#1}{\rightarrow}}
\newcommand{\ZZ}{\mathbb{Z}}
\newcommand{\QQ}{\mathbb{Q}}
\newcommand{\RR}{\mathbb{R}}
\newcommand{\KK}{\mathbb{K}}
\newcommand{\PP}{\mathbb{P}}
\newcommand{\hh}{\widehat{h}}
\newcommand{\calB}{\mathcal{B}}
\newcommand{\calBtilde}{\widetilde{\mathcal{B}}}
\newcommand{\calC}{\mathcal{C}}
\newcommand{\calD}{\mathcal{D}}
\newcommand{\calH}{\mathcal{H}}
\newcommand{\calV}{\mathcal{V}}
\newcommand{\II}{\mathbb{I}}
\newcommand{\JJ}{\mathbb{J}}
\newcommand{\WW}{\mathcal{W}}
\newcommand{\Vn}{\mathcal{V}_n}
\newcommand{\fC}{\mathfrak{C}}
\newcommand{\calI}{\mathfrak{I}}
\newcommand{\calP}{\mathcal{P}}
\newcommand{\barP}{\widehat{\mathcal{P}}}
\newcommand{\tbeta}{\widetilde{\beta}}
\newcommand{\tDelta}{\widetilde{\Delta}}
\newcommand{\tV}{\widetilde{V}}
\newcommand{\BDelta}{\mathcal{B}(\Delta)}
\newcommand{\LDelta}{\mathcal{L}(\Delta)}
\newcommand{\LiDelta}[1][i]{\mathcal{L}_{#1}(\Delta)}
\newcommand{\barDelta}{\widehat{\Delta}}
\newcommand{\eseq}[2]{\mathbf{e}^{#1}_{#2}} 
\newcommand{\ba}{\mathbf{a}}
\newcommand{\bfb}{\mathbf{b}}
\newcommand{\bc}{\mathbf{c}}
\newcommand{\bd}{\mathbf{d}}
\newcommand{\bm}{\mathbf{m}}
\newcommand{\bp}{\mathbf{p}}
\newcommand{\bs}{\mathbf{s}}
\newcommand{\bk}{\mathbf{k}}
\newcommand{\nth}[2][th]{{#2}^\text{#1}} 
\NewDocumentCommand{\Cn}{O{n}}{\mathcal{C}_{#1}}
\NewDocumentCommand{\Dn}{O{n}}{\mathcal{D}_{#1}}
\NewDocumentCommand{\Dntilde}{O{n}}{\widetilde{\mathcal{D}}_{#1}}
\NewDocumentCommand{\Cnh}{O{n} O{h}}{\mathcal{C}_{#1}^{#2}}
\NewDocumentCommand{\Dnh}{O{n} O{h}}{\mathcal{D}_{#1}^{#2}}
\NewDocumentCommand{\Dnhtilde}{O{n} O{h}}{\widetilde{\mathcal{D}}_{#1}^{#2}}
\NewDocumentCommand{\lkds}{O{\sigma} O{\Delta}}{\link_{#2} #1}
\NewDocumentCommand{\Idstar}{O{\Delta}}{I_{{#1}^*}}
\begin{document}
	\frontmatter
	
	\begin{titlepage}
		\begin{center}
			\vspace*{1cm}
			\Huge
			\textbf{Betti Cones of Stanley-Reisner Ideals}
			
			\vspace{1.5cm}
			\LARGE
			\textbf{David Carey}
			
			\vfill
			
			A thesis presented for the degree of\\
			Doctor of Philosophy
			
			\vspace{1.2cm}
			\Large
			School of Mathematics and Statistics\\
			University of Sheffield\\
			August 2023
		\end{center}
	\end{titlepage}
	
	\clearpage
	
	\thispagestyle{empty}
	
	
	\chapter*{Acknowledgements}
	I would like to thank a number of people for their help during the course of my PhD.
	
	Firstly, I am grateful to the Engineering and Physical Sciences Research Council for supporting my work.
	
	During the course of my PhD I was fortunate enough to attend the virtual Research Encounters in Algebraic and Combinatorial Topics conference in 2021; the Graduate Meeting on Combinatorial Commutative Algebra at the University of Minnesota in 2022; and the 4th Graduate Meeting on Applied Algebra and Combinatorics at KTH in 2023. I would like to thank the coordinators of all of these conferences, both for the hard work they put into organising the events, and for giving me the opportunity to present my research there. I am also very grateful to the many mathematicians I met across these events who took the time to tell me about their own work (or other work that interested them), for providing countless fascinating discussions about combinatorics and algebra. In particular I am indebted to Alex Lazar for an invaluable conversation about Cohen-Macaulay complexes.
	
	I would also like to thank my parents for providing a comfortable space to finish writing this thesis (and for keeping me supplied with cups of tea and great meals throughout); my brother for his assistance with some of my coding problems; Joseph Martin, Lewis Combes and David Williams for attending a mock run-through of my first ever academic talk, and offering some extremely helpful feedback on how to improve it; the Soho Scribblers for providing a regular, relaxed and friendly environment to write whenever I find myself down in London; and all of the friends and family who have helped to keep me grounded and boost my morale when the workload was most stressful.
	
	Lastly, and most importantly, I would like to thank my supervisor Moty Katzman for his endless supply of support, guidance and patience throughout all aspects of this project. I am very lucky to have had him as my supervisor.
	
	\chapter*{Abstract}
	The aim of this thesis is to investigate the Betti diagrams of squarefree monomial ideals in polynomial rings. Betti diagrams encode information about the minimal free graded resolutions of these ideals, and are therefore important algebraic invariants.
	
	Computing resolutions is a difficult task in general, but in our case there are tools we can use to simplify it. Most immediately, the Stanley-Reisner Correspondence assigns a unique simplicial complex to every squarefree monomial ideal, and Hochster’s Formula (\cite{Hoc} Theorem 5.1) allows us to compute the Betti diagrams of these ideals from combinatorial properties of their corresponding complexes. This reduces the algebraic problem of computing resolutions to the (often easier) combinatorial problem of computing homologies. As such, most of our work is combinatorial in nature.
	
	The other key tool we use in studying these diagrams is Boij-S\"{o}derberg Theory. This theory views Betti diagrams as vectors in a rational vector space, and investigates them by considering the convex cone they generate. This technique has proven very instructive: it has allowed us to classify all Betti diagrams up to integer multiplication. This thesis applies the theory more narrowly, to the cones generated by diagrams of squarefree monomial ideals.
	
	In Chapter \ref{Chapter: Introduction} we introduce all of these concepts, along with some preliminary results in both algebra and combinatorics we will need going forward. Chapter \ref{Chapter: Dimensions} presents the dimensions of our cones, along with the vector spaces they span.
	
	Chapters \ref{Chapter: PR Complexes and Degree Types} and \ref{Chapter: Families of PR Complexes} are devoted to the pure Betti diagrams in these cones (motivated in part by \cite{Bruns-Hibi} and \cite{Bruns-Hibi-earlier}), and the combinatorial properties of their associated complexes. Finally Chapter \ref{Chapter: Generating Degree Types} builds on these results to prove a partial analogue of the first Boij-S\"{o}derberg conjecture for squarefree monomial ideals, by detailing an algorithm for generating pure Betti diagrams of squarefree monomial ideals of any degree type.

	\tableofcontents
	\pagebreak
	
	
	
	
	\mainmatter
	
	\chapter{Introduction}
	
	Throughout this thesis we fix a field $\KK$ of arbitrary characteristic and a positive integer $n$, and we set $R$ to be the polynomial ring $\KK[x_1,\dots, x_n]$. Our aim is to investigate both the squarefree monomial ideals of $R$ and simplicial complexes on the vertex set $[n]=\{1,\dots,n\}$, which are related via the Stanley-Reisner Correspondence. More specifically we wish to study an important algebraic invariant of squarefree monomial ideals: their Betti diagrams.
	
	Betti diagrams are an invariant of any graded $R$-module $M$; they are defined as the matrix of exponents $\beta_{i,d}$ in a minimal free graded resolution of $M$,
	$$0\rightarrow \bigoplus_{d\in \ZZ} R(-d)^{\beta_{p,d}} \rightarrow ... \rightarrow \bigoplus_{d\in \ZZ} R(-d)^{\beta_{1,d}} \rightarrow \bigoplus_{d\in \ZZ} R(-d)^{\beta_{0,d}}\rightarrow M \rightarrow 0.$$
	and they encode a lot of important information about their respective modules: in particular, the Krull-dimension, projective dimension, Hilbert polynomial, and Castelnuovo-Mumford regularity of a module can all be computed from its Betii diagram.
	
	Unfortunately there does not, in general, exist an efficient way of computing Betti diagrams for arbitrary modules. However, in the case of monomial ideals, there are often combinatorial properties of the ideal which we can exploit to help us compute its Betti diagram. 
	
	More specifically, in the case of \textit{squarefree} monomial ideals, the Stanley-Reisner correspondence assigns the ideal a unique simplicial complex; and a seminal theorem of Hochster's (\cite{Hoc} Theorem 5.1) reframes the Betti numbers of the ideal in terms of homological data from its corresponding complex. Hochster's Formula has proven to be an incredibly powerful tool in the study of Betti diagrams. It has been successfully used to compute diagrams of many families of squarefree monomial ideals -- in particular, the edge ideals of certain families of graphs, such as in \cite{Ramos} Theorem 2.3.3 (complements of cyclic graphs) or \cite{Jacques} Theorem 5.3.8 (complete multipartite graphs) -- and to find combinatorial bounds for other algebraic invariants (such as Theorem 6.7 in \cite{Reg}, which gives a bound on the regularity of an edge ideal in terms of matchings in its corresponding graph).
	
	Note that every Betti diagram of a monomial ideal can be realised as the Betti diagram of a squarefree monomial ideal, via the process of polarisation (see \cite{polar} Definition 2.1). Thus, restricting our attention to the squarefree case does not in fact require us to neglect any Betti diagrams of arbitrary monomial ideals.
	
	This thesis is particularly focussed on those complexes whose Stanley-Reisner ideals have \textit{pure} Betti diagrams -- these are Betti diagrams arising from \textit{pure resolutions} of the form
	$$0\rightarrow R(-c_p)^{\beta_{p,c_p}} \rightarrow ... \rightarrow R(-c_1)^{\beta_{1,c_1}} \rightarrow R(-c_0)^{\beta_{0,c_0}}\rightarrow I \rightarrow 0.$$
	Put simply, a Betti diagram is pure if it has only a single nonzero entry in each column. Stanley-Reisner ideals with pure resolutions have been investigated extensively in the literature. Most notably, \cite{Bruns-Hibi-earlier}, \cite{Fro} and \cite{Bruns-Hibi} all include a number of results in this area. Between them they classify all the Stanley-Reisner ideals with pure resolutions corresponding to $1$-dimensional complexes (Proposition 2.2 of \cite{Bruns-Hibi-earlier}); Cohen-Macaulay posets (Theorem 3.6 of \cite{Bruns-Hibi-earlier}); and clique complexes of graphs (Theorem 1 in \cite{Fro} and Theorem 2.1 in \cite{Bruns-Hibi}). In the case of clique complexes, \cite{E-S} builds on Theorem 1 of \cite{Fro} to show that any 2-linear diagrams arising from a clique complex has a unique corresponding threshold graph. 
	
	Our specific interest is in the possible \textit{shapes} of pure diagrams of Stanley-Reisner ideals -- that is, the positions of their nonzero entries. This is equivalent to the notion of \textit{shift type} given in \cite{Bruns-Hibi} (page 1203), defined as the sequence $(c_p,\dots,c_0)$ of shifts in the resolution. The authors of this paper give a number of examples of, and restrictions on, the possible shift types of pure resolutions of Stanley-Reisner ideals (see e.g. Theorem 3.1 and Propositions 3.2 and 3.3).
	
	One reason pure diagrams have enjoyed particular attention in the literature comes from Boij-S\"{o}derberg Theory, which is a central motivation for our work. This theory (originally expounded in \cite{BS-Original-2006}) envisages all Betti diagrams of $R$-modules as vectors lying in the infinite-dimensional rational vector space $\Vn=\bigoplus_{d\in \ZZ} \QQ^{n+1}$, and studies the cone generated by the positive rational rays spanned by these diagrams. The central insight of Boij and S\"{o}derberg was that every ray in this cone can be decomposed into the sum of rays corresponding to pure diagrams of Cohen-Macaulay modules (and these pure rays can be easily computed). This result amounts to a complete classification of all Betti diagrams of $R$-modules up to multiplication by a positive rational.
	
	In the case of monomial ideals, there has been considerable interest in obtaining combinatorial descriptions of the Boij-S\"{o}derberg decomposition described above. This avenue has proven very fruitful in certain cases (see e.g. \cite{Nagel} Theorem 3.6, which looks at Ferrers ideals, or \cite{BS-Ver} Theorem 3.2, which looks at Borel ideals). However, there is a significant obstacle to this goal in general: while we know that there exists a pure Betti diagram of any given shift type (this is the first Boij-S\"{o}derberg conjecture, \cite{Floy} Theorem 1.9), this pure diagram need not be the diagram of a monomial ideal. Hence, the decomposition of the diagram of a monomial ideal $\beta(I)$ will often contain pure diagrams which do not correspond to monomial ideals themselves (and thus do not lend themselves readily to combinatorial manipulation).
	
	This raises a natural question: to what extent does the first Boij-S\"{o}derberg conjecture apply to the cone generated by diagrams of squarefree monomial ideals? Or put more simply, \textit{for which shift types does there exist a squarefree monomial ideal with a pure resolution of that shift type}? This question is the central motivation for our study of pure Betti diagrams of Stanley-Reisner ideals in Chapters \ref{Chapter: PR Complexes and Degree Types}, \ref{Chapter: Families of PR Complexes} and \ref{Chapter: Generating Degree Types}. 
	
	Another way to navigate the obstacle above would be to obtain a decomposition of $\beta(I)$ into the rational sum of diagrams of other squarefree monomial ideals. This would require a description of the extremal rays of the cone generated by Betti diagrams of squarefree monomial ideals. We study this cone in Chapter \ref{Chapter: Dimensions}, along with three notable subcones. While a complete description of these cones may currently be out of reach, we present some important properties, which may be of use in future classifications: in particular, we present and prove formulae for their dimensions, and describe the minimal subspaces of $\Vn$ that they span.
	
	For the avoidance of ambiguity we now present a brief overview of our substantial new results, broken down by chapter. It is worth noting that none of these results depend upon the characteristic of $\KK$.
	
	\section*{Chapter \ref{Chapter: Introduction}: Background Material}
	Our first chapter is a survey of relevant algebraic and combinatorial background material from the literature, to provide context and motivation for our main results, as well as a number of key tools that will help us in proving them. This includes a brief overview of topics surrounding Betti diagrams, Boij-S\"{o}derberg Theory, Stanley-Reisner and edge ideals, and Hochster's Formula. As such, none of the results in this chapter are new (with the possible exception of Lemma \ref{Lemma: Deformation Retract} -- a result detailing a condition under which a face may be deleted from a complex while preserving homotopy -- which we have been unable to find in the literature).
	
	\section*{Chapter \ref{Chapter: Dimensions}: Dimensions of Betti Cones}
	This chapter details our work on the dimensions of Betti cones generated exclusively by diagrams of squarefree monomial ideals. The chapter is built around the proofs of four key theorems.
	\begin{itemize}
		\item Theorem \ref{Theorem: dimDn} on the dimension of the Betti cone $\Dn$ generated by the diagrams of all squarefree monomial ideals in $R$.
		\item Theorem \ref{Theorem: dimCn} on the dimension of the Betti cone $\Cn$ generated by the diagrams of edge ideals in $R$.
		\item Theorem \ref{Theorem: dimDnh} on the dimension of the Betti cone $\Dnh$ generated by the diagrams of squarefree monomial ideals in $R$ of a given height $h$.
		\item Theorem \ref{Theorem: dimCnh} on the dimension of the Betti cone $\Cnh$ generated by the diagrams of edge ideals in $R$ of a given height $h$.
	\end{itemize}
	While proving Theorems \ref{Theorem: dimDn} and \ref{Theorem: dimDnh} we also demonstrate how they can be altered slightly to compute the dimensions of the cones $\Dntilde$ and $\Dnhtilde$, generated by diagrams squarefree monomial ideals of degree at least $2$ (equivalently, these are the diagrams corresponding to simplicial complexes with no missing vertices).
	
	Our proofs for each of these theorems involve constructing the minimal vector spaces containing the cone. We construct these spaces in each case by compiling a list of restrictions that the diagrams in the cone must satisfy, most of which can be found in the literature. Hence, the first part of each of our proofs (the `\textit{upper bound}' section) consists predominantly of a compilation of existing results. The \textit{new} part of each theorem, and the bulk of each proof (the `\textit{lower bound}' section), is that the vector spaces carved out by these restrictions are the \textit{minimal} spaces containing their respective cones. This demonstrates that (up to linear combination) these restrictions represent the \textit{only} linear relations satisfied by every diagram in the cone, and in that sense they are best possible.

	\section*{Chapter \ref{Chapter: PR Complexes and Degree Types}: PR Complexes and Degree Types}
	This chapter begins our investigation into pure resolutions of squarefree monomial ideals by introducing the family of PR complexes -- simplicial complexes $\Delta$ whose dual Stanley-Reisner ideals $\Idstar$ have pure Betti diagrams. We present an entirely combinatorial description of these complexes (Corollary \ref{Corollary: PR Complexes}) which follows from the Alexander Dual variant of Hochster's Formula, ADHF (Proposition 1 in \cite{Eagon-Reiner}). While this description is an elementary consequence of ADHF, we have not yet come across a treatment of these complexes in the literature.
	
	A central result of Eagon and Reiner in \cite{Eagon-Reiner} (Theorem 3) is that the dual Stanley-Reisner ideal $I_\Delta^*$ of a complex $\Delta$ has a linear resolution if and only if the Stanley-Reisner ring $\KK[\Delta]$ is Cohen-Macaulay. Thus PR complexes are a generalisation of the family of Cohen-Macaulay complexes. Combinatorially speaking, Cohen-Macaulay complexes are defined as complexes whose links have homology only at the highest possible degree (this is Reisner's criterion, in e.g. \cite{CCA} Theorem 5.53). The PR condition is a relaxation of the Cohen-Macaulay one, requiring only that the homology of each non-acyclic link is a function of the size of its corresponding face. Cohen-Macaulay complexes are the subject of a large amount of literature, ranging from largely algebraic investigations (see e.g. \cite{Iye}) to almost entirely combinatorial ones (see e.g. \cite{Lazar}).
	
	Of particular note in this chapter is the concept of maximal intersections in Section \ref{Subsection: Maximal Intersections}, particularly Proposition \ref{Proposition: beta_1 Purity Condition}, which details the conditions under which the second column of the Betti diagram $\beta(\Idstar)$ is pure; and the link poset (Definition \ref{Definition: Link Poset}), which is a poset structure that can be imposed on the links of a simplicial complex to help us identify whether its dual Stanley-Reisner ideal admits a pure resolution.
	
	\section*{Chapter \ref{Chapter: Families of PR Complexes}: Some Families of PR Complexes}
	This chapter presents a number of highly symmetric families of PR complexes, and proves that they satisfy the PR property. These include the following.
	\begin{itemize}
		\item The family of \textit{cycle complexes} $\fC_{a,b}$ given in Definition \ref{Definition: Cycle Complexes}; Theorem \ref{Theorem: Cycle Complexes are PR} states that the complex $\fC_{a,b}$ is PR with degree type $(a,b)$.
		\item The family of intersection complexes $\calI(\bm)$ given in Definition \ref{Definition: Intersection Complexes}; Theorems \ref{Theorem: Intersection Complexes Degree Type} and \ref{Theorem: Intersection Complexes Betti Numbers} state that these complexes are PR, and detail their degree types and Betti diagrams.
		\item The family of \textit{partition complexes} $\calP(a,p,m)$ given in Definition \ref{Definition: Partition Complexes}; Theorem \ref{Theorem: Partition Complex Degree Type} states that the complex $\calP(a,p,m)$ is PR with degree types of the form $(\overbrace{1,\dots,1,\underbrace{a,1,\dots,1}_{m}}^{p})$.
	\end{itemize}
	
	\section*{Chapter \ref{Chapter: Generating Degree Types}: Pure Resolutions of Any Degree Type}
	This chapter is devoted to the proof of our most significant result, which is the following.
	\begin{itemize}
		\item Theorem \ref{Theorem: PR Complexes of Any Degree Type} states that there exist squarefree monomial ideals with pure resolutions of any given degree type.
	\end{itemize}
	We define the \textit{degree type} of a pure resolution explicitly in Chapter \ref{Chapter: PR Complexes and Degree Types} -- the terminology is borrowed from \cite{Bruns-Hibi} (page 1203), although it should be noted that our definition differs slightly from theirs. In essence, the degree type contains all the information of the shift type $(c_p,\dots,c_0)$ except for the value of the initial shift $c_0$. Thus, Theorem \ref{Theorem: PR Complexes of Any Degree Type} can be seen as a slightly weaker variant of the first Boij-S\"{o}derberg conjecture, for the cone generated by diagrams of squarefree monomial ideals. 
	
	In proving this result, we construct a number of operations on simplicial complexes which preserve the purity of the corresponding Betti diagrams, while altering the diagram's degree type.
	\begin{itemize}
		\item The operations $\{\phi_i : i\in \ZZ^+\}$ in Definition \ref{Definition: Phi_i} map PR complexes of degree type $(d_p,\dots,d_i,1,\dots,1)$ to PR complexes of degree type $(d_p,\dots,d_{i+1},d_i+1,1,\dots,1)$ (this is Theorem \ref{Theorem: Phi_i Operations Degree Type})
		\item The operations $f^\lambda$ and $f^{\free}$ in Definitions \ref{Definition: f-lambda} and \ref{Definition: f-free} have similar PR preserving properties, as shown in Corollaries \ref{Corollary: f-lambda preserves PR} and \ref{Corollary: f-free preserves PR}.
	\end{itemize}.
	Our work in this chapter also involves a brief investigation in to the barycentric subdivision $\BDelta$ of arbitrary simplicial complexes $\Delta$. There are a number of results in the literature on the combinatorial effects of the barycentric subdivision process (for instance, \cite{bary} contains a number of results on its effects on $f$- and $h$-vectors, such as Theorems 2.2 and 3.1). The central results of Section \ref{Subsection: Links in BDelta}, which we believe to be new, demonstrate the effects of the process on links, homologies of links, and both the PR and Cohen-Macaulay properties.
	\begin{itemize}
		\item Proposition \ref{Proposition: Links in BDelta} is a description of the links in $\BDelta$ in terms of links in $\Delta$.
		\item Corollary \ref{Corollary: Homology of Links in BDelta} is a description of the homologies of these links in terms of the homologies of the links in $\Delta$.
		\item Proposition \ref{Proposition: When BDelta is PR} is an equivalence statement about the conditions under which the barycentric subdivision of $\Delta$ is PR (in particular, this occurs if and only if $\Delta$ is Cohen-Macaulay).
	\end{itemize}
	
	\section*{Chapter \ref{Chapter: Future Directions}: Future Directions}
	This final chapter is a brief account of possible future research directions that could be explored based on our main results. As such, it does not contain any substantial new results, but it presents a number of open questions and conjectures.

	\chapter{Background Material}\label{Chapter: Introduction}
	We begin by presenting some preliminary results from the literature which we will need going forward.
	
	Our most central tool, the Stanley-Reisner correspondence, provides a bridge between the fields of algebra and combinatorics; as such, this introductory chapter will be split into two sections: \textit{Results from Algebra} and \textit{Results from Combinatorics}. The former of these will include a brief exposition of Betti diagrams, before moving on to Boij-S\"{o}derberg theory, and detailing how this theory allows us to classify all Betti diagrams up to multiplication by an integer. The latter will describe the Stanley-Reisner Correspondence, as well as presenting some important preliminary results concerning simplicial complexes, and graph theory.
	
	The latter section also reviews some key results from simplicial homology, but does not contain an overview of the topic; an overview can be found in \cite{Hatcher} Chapter 2.1. All of the homology we consider in this thesis will be reduced homology over our arbitrary field $\KK$, and we use the notation $\Hred_i(\Delta)$ to denote the $\nth{i}$ reduced homology group of $\Delta$ with coefficients in $\KK$. None of our results depend upon the characteristic of $\KK$.
	
	
	
	Wherever possible we define all notation we use; any exceptions to this can be found in the \textit{List of Symbols and Notation} section at the end of the thesis.
	
	\section{Results from Algebra}\label{Subsection: Results from Algbebra}
	\subsection{Free Resolutions and Betti Diagrams}\label{Subsection: Resolutions}
	We fix a finitely generated graded $R$-module $M$ (this thesis is only concerned with graded modules, so from now on we will often simply use the term `\textit{module}' to mean `\textit{graded module}'). Consider a minimal free resolution of $M$, given by
	\begin{equation}\label{Equation: Free Resolution}
		0 \rightarrow R^{\beta_p}\rightarrow \dots \rightarrow R^{\beta_0} \rightarrow M \rightarrow 0 \, .
	\end{equation}
	
	
	\begin{rem}\label{Remark: Proj-dim}
		The length $p$ of this resolution is equal to the \textit{projective dimension} of $M$, $\pdim M$. By Hilbert's Syzygy Theorem (\cite{Eisenbud} Corollary 19.7), we have $\pdim M\leq n$.
	\end{rem}
	
	\begin{defn}\label{Definition: Total Betti numbers}
		We call the ranks $\beta_i$ of the free modules in this resolution the \textit{total Betti numbers} of $M$, and often write them as $\beta_i(M)$.
	\end{defn}
	
	Note that while, in general, $M$ will admit many distinct free resolutions, the Betti numbers of $M$ are an invariant. They also encode a lot of information about $M$. In particular, $\beta_0$ is the minimal size of a generating set for $M$, because the image of any basis of $R^{\beta_0}$ under the map $R^{\beta_0}\rightarrow M$ in (\ref{Equation: Free Resolution}) generates $M$. Similarly, $\beta_1$ is equal to the minimal number of generators for the first syzygy module of $M$; and more generally, $\beta_i$ is equal to the minimal number of generators of the $\nth{i}$ syzygy module.
	
	Informally, we can think of this as follows: $\beta_0$ tells us the number of generators of the module, $\beta_1$ tells us the number of relations between those generators, $\beta_2$ tells us the number of relations between those relations, and so on.
	
	One significant limitation of total Betti numbers is that they contain no information about the grading of $M$. Ideally, we would like an invariant that encodes not just the \textit{number} of generators of $M$ and its syzygy modules, but also the \textit{degrees} of those generators.
	
	To construct such an invariant we now consider a minimal \textit{graded} free resolution of $M$, given by
	\begin{equation}\label{Equation: Graded Free Resolution}
		0\rightarrow \bigoplus_{d\in \ZZ} R(-d)^{\beta_{p,d}} \rightarrow ... \rightarrow \bigoplus_{d\in \ZZ} R(-d)^{\beta_{1,d}} \rightarrow \bigoplus_{d\in \ZZ} R(-d)^{\beta_{0,d}}\rightarrow M \rightarrow 0
	\end{equation}
	where $R(-d)$ denotes the graded $R$-module with grading given by $[R(-d)]_a = R_{a-d}$ for $a\in \ZZ$. For each $i$, the integers $d$ for which $\beta_{i,d}\neq 0$ are called \textit{shifts} (at homological degree $i$). The resolution (\ref{Equation: Graded Free Resolution}) is graded in the sense that the maps in the resolution send elements of degree $d$ in one module to elements of degree $d$ in the next. Note that the nonzero entries in the matrices representing these maps all have positive degree.
	
	\begin{defn}\label{Definition: Graded Betti numbers}
		We call the exponents $\beta_{i,d}$ in (\ref{Equation: Graded Free Resolution}) the \textit{graded Betti numbers} of $M$ (or in this thesis, usually simply the \textit{Betti numbers} of $M$). We often write them as $\beta_{i,d}(M)$. 
	\end{defn}

	\begin{ex}\label{Example: Graded Free Resolution}
		Suppose $n=4$. The squarefree monomial ideal $I$ of $R$ generated by $x_1x_4$ and $x_1x_2x_3$ admits the following minimal graded free resolution.
		$$\begin{tikzcd}[ampersand replacement=\&]
			0 \rar \& R(-4) \rar{ \begin{pmatrix}
					x_2x_3 \\ -x_4
			\end{pmatrix} }\&[1em] R(-2)\oplus R(-3) \rar{ \begin{pmatrix}
					x_1x_4 & x_1x_2x_3
			\end{pmatrix} } \&[3em] I\rar \& 0.
		\end{tikzcd}$$
		The nonzero Betti numbers of $I$ are $\beta_{0,2}(I)=\beta_{0,3}(I)=\beta_{1,4}(I)=1$.
	\end{ex}
	
	Just as the total Betti number $\beta_0(M)$ is equal to the minimum number of generators of $M$, the graded Betti number $\beta_{0,d}(M)$ is equal to the minimum number of generators of $M$ of degree $d$. Similarly $\beta_{i,d}(M)$ is equal to the minimum number of generators of the $\nth{i}$ syzygy module of $M$ of degree $d$.
	
	The total Betti numbers and graded Betti numbers of $M$ are related via the equations
	$$\beta_i = \sum_{d\in \ZZ} \beta_{i,d} \text{ for each } 0\leq i \leq n\, .$$
	
	\begin{defn}\label{Definition: Betti Diagrams}
		The \textit{Betti diagram} of $M$, denoted $\beta(M)$, is a matrix containing the graded Betti numbers of $M$. In order to reduce the number of rows of this matrix, the standard notational convention (which we adopt for this thesis) is to write $\beta(M)$ as a matrix $(a_{ij})$ with $a_{ij}=\beta_{j,i+j}$, as shown.
		$$\begin{bmatrix}
			\vdots& \vdots& &\vdots \\
			\beta_{0,0} & \beta_{1,1} & \dots & \beta_{n,n} \\
			\beta_{0,1} & \beta_{1,2} & \dots & \beta_{n,n+1}\\
			\vdots & \vdots & & \vdots\\ 
		\end{bmatrix}$$
	\end{defn}
	Note that while this matrix is infinite, only finitely many of the entries are nonzero.
	
	We also occasionally write Betti diagrams in table notation as follows.
	\begin{notation}\label{Notation: Betti diagrams Table Format}	Suppose $\beta$ is a Betti diagram of an $R$-module. We can write $\beta$ as a table in the following way.
		$$\begin{array}{c | cccc}
			& 0 & 1 & \dots & n\\
			\hline
			0&\beta_{0,0}&\beta_{1,1}&\dots&\beta_{n,n}\\
			1&\beta_{0,1}&\beta_{1,2}&\dots&\beta_{n,n+1}\\
			\vdots&\vdots&\vdots&\vdots&\vdots
		\end{array}$$
	\end{notation}
	
	\begin{ex}\label{Example: Table Betti Diagram}
		The table $\begin{array}{c | ccc}
			& 0 & 1 & 2\\
			\hline
			4 & 8 & 9 & 1\\
			5 & . & . & 1\\ 
		\end{array}$ denotes a Betti diagram $\beta$ for which $\beta_{0,4}=8$, $\beta_{1,5}=9$, $\beta_{2,6}=\beta_{2,7}=1$, and all other Betti numbers are zero.
	\end{ex}
	
	Just as the projective dimension of a module can be seen as the ``\textit{length}'' of its Betti diagram, the following property can be seen as the Betti diagram's ``\textit{width}''.
	\begin{defn}\label{Definition: Regularity}
		Let $M$ be an $R$-module with Betti diagram $\beta(M)$. The \textit{(Castelnuovo-Mumford) regularity} of $M$ is given by $\reg(M)=\max\{d-i: \beta_{i,d}(M)\neq 0\}$.
	\end{defn}
	
	We have a number of important constraints on the Betti numbers of $R$-modules. The following is perhaps one of the most famous, and the most important for our work going forward. It holds for all $R$-modules of a given codimension $h$ (for a proof, see e.g. \cite{Floy} Section 1.3).
	
	\begin{lem}[Herzog-K\"{u}hl Equations]\label{Lemma: HK Equations}
		Let $M$ be a graded $R$-module of codimension $h$, and let $\beta=\beta(M)$. We have
		\begin{equation*}
			\sum_{i,d}(-1)^i d^j \beta_{i,d} = 0 \text{ for each } 0\leq j\leq h-1.
		\end{equation*}
	\end{lem}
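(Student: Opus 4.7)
The plan is to prove this via Hilbert series manipulations. Starting from the minimal graded free resolution (\ref{Equation: Graded Free Resolution}) of $M$, I would first use additivity of Hilbert series along exact sequences to obtain the expression
\begin{equation*}
	H_M(t) \;=\; \frac{Q(t)}{(1-t)^n}, \qquad\text{where}\qquad Q(t) \;=\; \sum_{i,d}(-1)^i \beta_{i,d}\, t^d ,
\end{equation*}
using the fact that $H_{R(-d)}(t) = t^d/(1-t)^n$. The key observation is that $Q(t)$ is a Laurent polynomial (with only finitely many nonzero $\beta_{i,d}$), so this rational expression is unambiguous.

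Next, I would invoke the Hilbert--Serre theorem, which gives a second expression $H_M(t) = P(t)/(1-t)^{\dim M}$, where $P(t)$ is a Laurent polynomial with $P(1) \neq 0$. Since $\codim M = h$ means $\dim M = n-h$, equating the two expressions yields
\begin{equation*}
	Q(t) \;=\; (1-t)^h\, P(t).
\end{equation*}
Because $P(1)\neq 0$, the Laurent polynomial $Q(t)$ has a zero of order exactly $h$ at $t=1$. In particular, $Q(t)$ and its first $h-1$ derivatives all vanish at $t=1$.

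I would then translate this vanishing into the stated equations. Differentiating $j$ times and evaluating at $t=1$ gives
\begin{equation*}
	Q^{(j)}(1) \;=\; \sum_{i,d}(-1)^i d(d-1)\cdots(d-j+1)\, \beta_{i,d} \;=\; 0 \qquad\text{for } 0\leq j\leq h-1.
\end{equation*}
Since the falling factorials $\{d(d-1)\cdots(d-j+1)\}_{j=0}^{h-1}$ and the monomials $\{d^j\}_{j=0}^{h-1}$ span the same $h$-dimensional subspace of polynomials in $d$, an upper-triangular change of basis converts the falling-factorial identities into the power-sum identities $\HK_j(\beta) = \sum_{i,d}(-1)^i d^j \beta_{i,d} = 0$ for each $0 \leq j \leq h-1$.

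The only subtle step is the appeal to Hilbert--Serre for the ``reduced'' form of $H_M(t)$ with denominator $(1-t)^{\dim M}$ and $P(1)\neq 0$; this is the content of the statement that the pole order of $H_M(t)$ at $t=1$ equals $\dim M$, which I would cite from a standard reference (e.g.\ \cite{Eisenbud}). Once this is in hand, the remainder is a purely formal manipulation of the numerator polynomial $Q(t)$.
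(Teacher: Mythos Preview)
Your argument is correct and is in fact the standard proof of the Herzog--K\"{u}hl equations. The paper does not supply its own proof of this lemma; it simply cites \cite{Floy}, Section~1.3, and the argument there is exactly the Hilbert-series manipulation you describe: write $H_M(t)=Q(t)/(1-t)^n$ from the resolution, compare with the Hilbert--Serre form $P(t)/(1-t)^{\dim M}$, deduce that $Q(t)$ vanishes to order $h$ at $t=1$, and pass from falling factorials to monomials via the triangular change of basis. So your proposal fills in precisely what the paper outsources, with no substantive deviation.
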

	
	\begin{notation}\label{Notation: HK Equations}
		For convenience, for each $0\leq j\leq h-1$, we use the notation $\HK_j(\beta)$ to represent the expression $\sum_{i,d}(-1)^i d^j \beta_{i,d}$. In this notation, the Herzog-K\"{u}hl equations for modules of codimension $h$ can be rephrased as the statement $\HK_j(\beta)=0$ for each $0\leq j\leq h-1$.
	\end{notation}
	
	All of the $R$-modules we care about in this thesis are ideals, so we end this section with two key observations about the Betti diagrams of ideals.
	\begin{rem}\label{Remark: HK Equations}
		Let $I$ be an ideal in $R$ of height $h$. This means that the quotient $R/I$ has codimension $h$, and therefore, setting $M=R/I$ in Lemma \ref{Lemma: HK Equations} gives us $\HK_j(\beta(R/I))=0$ for each $0\leq j \leq h-1$. We can use this to find similar linear dependency relations for the diagram $\beta(I)$.
		
		Specifically, if
		$$0\rightarrow \bigoplus_{d\in \ZZ} R(-d)^{\beta_{p,d}} \rightarrow ... \rightarrow \bigoplus_{d\in \ZZ} R(-d)^{\beta_{1,d}} \rightarrow \bigoplus_{d\in \ZZ} R(-d)^{\beta_{0,d}}\rightarrow I \rightarrow 0$$
		is a minimal graded free resolution of $I$, then
		$$0\rightarrow \bigoplus_{d\in \ZZ} R(-d)^{\beta_{p,d}} \rightarrow ... \rightarrow \bigoplus_{d\in \ZZ} R(-d)^{\beta_{1,d}} \rightarrow \bigoplus_{d\in \ZZ} R(-d)^{\beta_{0,d}}\rightarrow R(0) \rightarrow R/I \rightarrow 0$$
		is a minimal graded free resolution of $R/I$.
		
		This means that the Betti diagrams of $R$ and $R/I$ are directly related via
		\begin{align*}
			\beta_{i,d}(R/I)=\begin{cases}
				\beta_{i-1,d}(I) &\text{ if } i > 0\\
				1  &\text{ if } (i,d)=(0,0) \\
				0 & \text{ otherwise.}
			\end{cases}
		\end{align*}
		
		Hence, for any $j\geq 0$, we have
		\begin{align*}
			\HK_j(\beta(R/I)) &= \sum_{i,d}(-1)^i d^j \beta_{i,d}(R/I)\\
			&= 0^j + \sum_{i,d}(-1)^i d^j \beta_{i-1,d}(I)\\
			&= 0^j - \sum_{i,d}(-1)^i d^j \beta_{i,d}(I)\\
			&=\begin{cases}
				1 - \HK_j(\beta(I)) & \text{ if } j=0\\
				- \HK_j(\beta(I))  & \text{ otherwise.}
			\end{cases}
		\end{align*}
		
		Thus, for any ideal $I$ of height $h$, the Herzog-K\"{u}hl equations tell us that $\HK_0(\beta(I))=1$ and $\HK_j(\beta(I))=0$ for each $1\leq j\leq h-1$.
	\end{rem}
	
	\begin{rem}\label{Remark: Betti diagrams of ideals only depend on generators}
		Suppose $I$ is an ideal in $R$, and let
		$$F_\bullet: 0\rightarrow \bigoplus_{d\in \ZZ} R(-d)^{\beta_{p,d}} \rightarrow ... \rightarrow \bigoplus_{d\in \ZZ} R(-d)^{\beta_{1,d}} \rightarrow \bigoplus_{d\in \ZZ} R(-d)^{\beta_{0,d}}\rightarrow I \rightarrow 0$$
		be a minimal free graded resolution for $I$ as an $R$-module.
		
		Let $S$ denote the polynomial ring $R[x_{n+1}]$, and consider the ideal $IS$ in $S$. Because $S$ is a free $R$-module, it is also faithfully flat, and hence $$F_\bullet\otimes_R S: 0\rightarrow \bigoplus_{d\in \ZZ} S(-d)^{\beta_{p,d}} \rightarrow ... \rightarrow \bigoplus_{d\in \ZZ} S(-d)^{\beta_{1,d}} \rightarrow \bigoplus_{d\in \ZZ} S(-d)^{\beta_{0,d}}\rightarrow IS \rightarrow 0$$ is a minimal free graded resolution for $IS$ as an $S$-module.
		
		In particular, any generating set for $I$ as an $R$-ideal is also a generating set for $IS$ as an $S$-ideal. Hence the Betti diagram of $I$ is dependent only on its generators, and not on the number of variables in its ambient polynomial ring.
	\end{rem}
	
	\subsection{Boij-S\"{o}derberg Theory and Betti Cones}\label{Subsection: Boij-Soderberg}
	
	One of the most significant advances in our understanding of Betti Diagrams came from a paper put on the Arxiv by Boij and S\"{o}derberg (and later published in 2008 as \cite{BS-Original-2006}), which laid out two important conjectures about the Betti diagrams of Cohen-Macaulay modules, both of which have since been proven. In this section we give a brief exposition of these two conjectures, and the surrounding theory; a more comprehensive overview can be found in \cite{Floy} Section 1.
	
	The aim of \cite{BS-Original-2006} was a classification result for Betti diagrams of $R$-modules; but instead of trying to classify all such Betti diagrams directly, the authors set themselves the task of classifying them \textit{up to multiplication by a positive rational} (or equivalently, by a positive \textit{integer}, because all of the diagrams have integer values). Their central insight was that any Betti diagram of an $R$-module may be viewed as a vector in the infinite-dimensional rational vector space $\Vn=\bigoplus_{d\in \ZZ} \QQ^{n+1}$, and thus their goal was to classify the positive rational rays spanned by the diagrams in this space.
	
	Note in particular that for any two $R$-modules $M_1$ and $M_2$, and any two positive integers $q_1$ and $q_2$, we have $q_1 \beta(M_1)+q_2 \beta(M_2)=\beta(M_1^{q_1}\oplus M_2^{q_2})$. It follows that the set of positive rational rays spanned by the Betti diagrams in $\Vn$ form a cone. The task, therefore, was to describe the cone.

	Boij and S\"{o}derberg initially restricted their attention to cones generated by diagrams of Cohen-Macaulay $R$-modules of a given codimension $h$, for some fixed $h\leq n$.  Recall that Cohen-Macualay modules are defined in terms of the Krull-dimension and depth of a module as follows (see \cite{C-M} Section 2.1 for more details).
	\begin{defn}\label{Definition: CM Modules}
		An $R$-module $M$ is \textit{Cohen-Macaulay} if we have $\dim M = \depth M$.
	\end{defn}
	
	We can find the projective dimensions of these modules from the Auslander-Buchsbaum formula for graded rings (see e.g. \cite{Eisenbud} Exercise 19.8), which is the following.
	\begin{thm}[Auslander-Buchsbaum Formula for Graded Rings]\label{Theorem: Auslander-Buchsbaum}
		Let $S$ be a graded ring and let $M$ be a finitely generated graded $S$-module of finite projective dimension. We have $$\pdim M = \depth S - \depth M.$$
	\end{thm}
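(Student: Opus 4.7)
The plan is to proceed by induction on $p = \pdim M$, using the graded depth lemma applied to short exact sequences arising from a minimal free resolution of $M$.

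For the base case $p = 0$, the module $M$ is graded free, so $M \cong \bigoplus_i S(-d_i)$ for some shifts $d_i$. This gives $\depth M = \depth S$, and the formula reduces to $0 = \depth S - \depth S$.

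For the inductive step with $p \geq 1$, I would take a minimal graded free resolution
\[ 0 \to F_p \to \cdots \to F_1 \to F_0 \to M \to 0 \]
and let $K = \ker(F_0 \to M)$ be the first syzygy. Then $0 \to F_p \to \cdots \to F_1 \to K \to 0$ is a minimal resolution of $K$ of length $p - 1$, so by the inductive hypothesis $\depth K = \depth S - p + 1$. The short exact sequence $0 \to K \to F_0 \to M \to 0$, combined with $\depth F_0 = \depth S$, should pin down $\depth M = \depth S - p$ via the graded depth lemma. Equivalently, one can work with the long exact sequence of $\Ext^\bullet_S(\KK, -)$ and the characterization $\depth N = \min\{i : \Ext^i_S(\KK, N) \neq 0\}$, where $\KK = S/\mathfrak{m}$ is the residue field modulo the irrelevant maximal graded ideal.

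The main obstacle is upgrading the inequality $\depth M \geq \depth K - 1$ furnished directly by the depth lemma to the equality $\depth M = \depth K - 1$. This is where minimality of the resolution is essential: minimality forces $K \subseteq \mathfrak{m} F_0$, which via $\Tor_1^S(\KK, M) \neq 0$ guarantees that the connecting homomorphism in the Ext long exact sequence is nonzero at the critical homological degree, ruling out the possibility that $\depth M$ jumps above $\depth K - 1$. Some extra care is needed in the boundary case $p = 1$, where $\depth K = \depth F_0$ and the relevant segment of the long exact sequence involves the nonvanishing group $\Ext^{\depth S}_S(\KK, F_0)$; here one argues directly that the induced map $F_0 \to M$ cannot send $\Ext^{\depth S}_S(\KK, F_0)$ onto all of its target, leaving a nontrivial image in $\Ext^{\depth S - 1}_S(\KK, M)$.
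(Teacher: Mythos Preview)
The paper does not prove this theorem; it states it and refers the reader to \cite{Eisenbud}, Exercise 19.8. So there is no proof in the paper to compare your proposal against.

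Your approach by induction on $\pdim M$, using the first syzygy and the long exact sequence of $\Ext^\bullet_S(\KK,-)$, is the standard one and is correct in outline. One point in your final paragraph deserves correction: in the $p=1$ case you say ``the induced map $F_0 \to M$ cannot send $\Ext^{\depth S}_S(\KK, F_0)$ onto all of its target, leaving a nontrivial image in $\Ext^{\depth S - 1}_S(\KK, M)$.'' This is the wrong map and the wrong direction. What you need is that the map $\Ext^{\depth S}_S(\KK, K) \to \Ext^{\depth S}_S(\KK, F_0)$ induced by the inclusion $K \hookrightarrow F_0$ has nonzero kernel, so that the connecting homomorphism $\Ext^{\depth S - 1}_S(\KK, M) \to \Ext^{\depth S}_S(\KK, K)$ is nonzero. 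The clean argument is this: in the $p=1$ case both $K$ and $F_0$ are free, and by minimality the matrix of $K \to F_0$ has all entries in the irrelevant ideal $\mathfrak{m}$. Since $\Ext^{\depth S}_S(\KK, S)$ is annihilated by $\mathfrak{m}$, the induced map on $\Ext^{\depth S}$ is identically zero, so its kernel is all of $\Ext^{\depth S}_S(\KK, K) \neq 0$. This same observation in fact handles the inductive step uniformly once you track it through, and is really where minimality enters.
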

	\begin{cor}[Auslander-Buchsbaum Formula for the polynomial Ring $R$]\label{Corollary: Auslander-Buchsbaum for R}
		Let $M$ be a finitely generated graded $R$-module. We have $$\pdim M = n - \depth M.$$
	\end{cor}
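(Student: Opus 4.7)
The plan is to apply Theorem \ref{Theorem: Auslander-Buchsbaum} with the graded ring $S$ taken to be our polynomial ring $R = \KK[x_1,\dots,x_n]$. Two ingredients need to be verified before the theorem can be invoked: (a) that every finitely generated graded $R$-module $M$ has finite projective dimension (so that the hypothesis of the theorem is satisfied), and (b) that $\depth R = n$ (so that $\depth R - \depth M$ simplifies to $n - \depth M$).

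For (a), I would cite Hilbert's Syzygy Theorem, which has already been noted in Remark \ref{Remark: Proj-dim}: any finitely generated graded $R$-module $M$ satisfies $\pdim M \leq n$, so in particular $\pdim M$ is finite and Theorem \ref{Theorem: Auslander-Buchsbaum} applies.

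For (b), I would observe that the variables $x_1,\dots,x_n$ form a regular sequence on $R$ contained in the homogeneous maximal ideal $\mathfrak{m} = (x_1,\dots,x_n)$, so $\depth R \geq n$; and since $\depth R \leq \dim R = n$ in general, we conclude $\depth R = n$. Substituting into Theorem \ref{Theorem: Auslander-Buchsbaum} yields $\pdim M = n - \depth M$, as required.

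There is really no substantive obstacle here: the corollary is a direct specialisation of Theorem \ref{Theorem: Auslander-Buchsbaum}, and the only non-trivial facts used (Hilbert's Syzygy Theorem, the value of $\depth R$, and the comparison $\depth \leq \dim$) are all classical results that can simply be cited rather than reproved. The argument amounts to little more than identifying $S$ with $R$ and recording the numerical values.
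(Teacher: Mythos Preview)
Your proposal is correct and matches the paper's approach exactly: the paper likewise notes that $R$ has depth $n$ (stating that $R$ is Cohen-Macaulay with depth and Krull-dimension both equal to $n$) and that all finitely generated $R$-modules have finite projective dimension, then applies Theorem~\ref{Theorem: Auslander-Buchsbaum}. Your version simply spells out the justification for $\depth R = n$ in slightly more detail.
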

	\begin{proof}
		The polynomial ring $R$ is Cohen-Macaulay with depth and Krull-dimension both equal to $n$, and all finitely generated $R$-modules have finite projective dimension. 
	\end{proof}
	
	In the case where $M$ is a Cohen-Macaulay $R$-module of codimension $h$ we have $$\pdim M = n - \dim M=\codim M = h.$$ More generally, if $M$ is an arbitrary $R$-module we have $\depth M \leq \dim M$, and hence $\pdim M \geq \codim M$. Thus the diagrams of Cohen-Macaulay modules are the ones with minimal length for their given codimension.
	
	The cone generated by \textit{all} Cohen-Macaulay $R$-modules of codimension $h$ is infinite-dimensional, which makes it difficult to work with. For this reason, Boij and S\"{o}derberg restricted their attention further to finite dimensional subcones generated by Betti diagrams within a given window. We lay out how to build these cones below.
	
	For the rest of this section we fix two strictly decreasing sequences of integrs $\ba =(a_h,\dots,a_0)$ and $\bfb=(b_h,\dots,b_0)$ in $\ZZ^{h+1}$, such that $a_i\leq b_i$ for each $0\leq i \leq h$. Note that this condition imposes a partial order on the sequences in $\ZZ^{h+1}$, which we will henceforth denote by $\ba \leq \bfb$.
	
	\begin{defn}\label{Definition: Boij-Soderberg Cone and Indexing Window}
		We define
		\begin{enumerate}
			\item $\II(\ba,\bfb)=\left\{(i,d)\in \{0,\dots,h\}\times \ZZ : a_i\leq d \leq b_i \right \}$.
			\item $\calV(\ba,\bfb)= \bigoplus_{(i,d)\in \II(\ba,\bfb)} \QQ \subset \Vn$.
			\item $\calC(\ba,\bfb)$ to be the cone of positive rational rays $t \cdot\beta$ for Betti diagrams $\beta$ of Cohen-Macaulay $R$-modules of codimension $h$ which lie inside the subspace $\calV(\ba,\bfb)$.
		\end{enumerate}
	\end{defn}
	
	Boij and S\"{o}derberg's key aim was to find the extremal rays of the cone $\calC(\ba,\bfb)$. These are vectors $v_1,\dots, v_m$ in the cone such that
	\begin{itemize}
		\item Every vector in the cone can be written as a positive rational sum of the form $\lambda_1 v_1 + \dots \lambda_m v_m$.
		\item For each $1\leq i \leq m$ there do not exist linearly independent vectors $u$ and $w$ in the cone such that $v_i = u + w$.
	\end{itemize}
	
	It turns out that these extremal rays come from so-called \textit{pure} Betti diagrams, defined as follows.
	\begin{defn}\label{Definition: Pure Diagrams and Shift Type}
		Let $M$ be an $R$-module. We say the Betti diagram $\beta(M)$ is \textit{pure} if for every $0\leq i \leq n$ there is at most a single value of $c$ such that the diagram $\beta_{i,c}(M)$ is nonzero.
		
		In this case, $M$ admits a graded resolution of the form $$0\rightarrow R(-c_p)^{\beta_{p,c_p}} \rightarrow ... \rightarrow R(-c_1)^{\beta_{1,c_1}} \rightarrow R(-c_0)^{\beta_{0,c_0}}\rightarrow M.$$ We call such a resolution a \textit{pure resolution}, and we say it has \textit{shift type} $(c_p,..., c_0)$ (after the terminology of \cite{Bruns-Hibi} page 1203).
	\end{defn}
	
	Let $\bc=(c_h,\dots,c_0)$ be a strictly decreasing sequence such that $\ba \leq \bc \leq \bfb$. Any pure Betti diagram $\beta$ of shift type $\bc$ corresponding to a Cohen-Macaulay module of codimension $h$ must satisfy the Herzog-K\"{u}hl equations $\HK_0(\beta)=\dots = \HK_{h-1}(\beta)=0$. This gives us a system of $h$ linearly independent equations in the $h+1$ variables $\beta_{0,c_0},\dots,\beta_{h,c_h}$, and hence there is only a single ray of solutions in $\Vn$. We denote the smallest integer solution on this ray by $\pi(\bc)$, and we can compute it as follows (see \cite{BS-Original-2006}, Section 2.1).
	\begin{prop}\label{Proposition: pi(c)}
		Let $\pi=\pi(\bc)\in \Vn$ be the smallest pure integer solution to the Herzog-K\"{u}hl equations $\HK_0(\beta)=\dots=\HK_{h-1}(\beta)=0$. We have $$\pi_{i,c_i}=\Pi_{0\neq k \neq i} \frac{c_k - c_0}{c_k - c_i}.$$
	\end{prop}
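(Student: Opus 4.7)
The plan is to reduce the claim to a Vandermonde-type linear algebra problem and solve it by Lagrange interpolation. A pure diagram of shift type $\bc$ is determined by the $h+1$ numbers $\pi_{0,c_0},\dots,\pi_{h,c_h}$, and after the substitution $y_i=(-1)^i\pi_{i,c_i}$ the Herzog-K\"uhl conditions $\HK_j(\pi)=0$ for $0\leq j\leq h-1$ read
\[\sum_{i=0}^{h}c_i^{j}\,y_i = 0,\qquad 0\leq j\leq h-1.\]
Since the shifts $c_0<c_1<\dots<c_h$ are pairwise distinct, the $h\times(h+1)$ coefficient matrix is a truncated Vandermonde matrix of full row rank $h$, so its kernel is a single line in $\QQ^{h+1}$. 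It therefore suffices to produce one nonzero element of this kernel and then normalise.

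For the kernel element I would invoke the standard fact that every polynomial $p(t)$ of degree at most $h-1$ has vanishing $h$-th divided difference at the distinct nodes $c_0,\dots,c_h$; explicitly,
\[\sum_{i=0}^{h} p(c_i)\prod_{k\neq i}\frac{1}{c_i-c_k}=0.\]
Applying this with $p(t)=t^j$ for $j=0,\dots,h-1$ shows directly that $y_i=\prod_{k\neq i}(c_i-c_k)^{-1}$ lies in the kernel. Tracking signs, exactly $h-i$ of the factors $c_i-c_k$ (those with $k>i$) are negative, so $\prod_{k\neq i}(c_i-c_k)$ has sign $(-1)^{h-i}$; multiplying through by $(-1)^i$ to recover $\pi_{i,c_i}$ puts all entries into the positive orthant, as required for a Betti diagram.

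To extract the specific representative in the proposition I would normalise the ray by demanding $\pi_{0,c_0}=1$. This forces the proportionality constant to equal $\prod_{k=1}^{h}(c_k-c_0)$, and substituting and cancelling the factor $c_i-c_0$ (which appears both in the constant and in $\prod_{k\neq i}(c_i-c_k)$ for $i\geq 1$) collapses $\pi_{i,c_i}$ to $\prod_{1\leq k\leq h,\,k\neq i}\frac{c_k-c_0}{c_k-c_i}$, matching the stated formula.

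The main care required lies in the final normalisation. As written the formula produces the unique representative on the Herzog-K\"uhl ray satisfying $\pi_{0,c_0}=1$, and for some shifts this is not yet an integer vector (already with $\bc=(6,4,1)$ the second entry is a half-integer). Reconciling the stated ``smallest integer solution'' $\pi(\bc)$ with the displayed expression then requires clearing denominators by the least common multiple of the entries; since the entire Herzog-K\"uhl solution space is a single line, the two representatives differ only by a known positive rational scalar, so the formula characterises $\pi(\bc)$ up to that final rescaling, which is all the later arguments in the chapter will need.
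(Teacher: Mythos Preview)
The paper does not prove this proposition at all; it simply cites \cite{BS-Original-2006}, Section 2.1. Your approach via the Vandermonde kernel and the divided-difference identity is the standard one and is essentially what that reference does, so the overall strategy is fine. Your observation at the end about integrality is also well taken: the displayed formula is the $\pi_{0,c_0}=1$ normalisation of the Herzog--K\"uhl ray, not literally the smallest integer point, and the two differ by a positive rational scalar.

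There is, however, a genuine sign slip in your final normalisation step, and it is worth flagging because the formula as printed in the paper carries the same slip. When you set $\pi_{0,c_0}=1$ you write the proportionality constant as $\prod_{k=1}^{h}(c_k-c_0)$, but from $\pi_{0,c_0}=1/\prod_{k\neq 0}(c_0-c_k)$ the constant is $\prod_{k=1}^{h}(c_0-c_k)=(-1)^{h}\prod_{k=1}^{h}(c_k-c_0)$. Tracking signs carefully through the cancellation of $c_i-c_0$ and the flip $c_i-c_k=-(c_k-c_i)$ over the remaining $h-1$ factors gives
\[
\pi_{i,c_i}=(-1)^{i-1}\prod_{0\neq k\neq i}\frac{c_k-c_0}{c_k-c_i},
\]
not the unsigned product. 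A quick sanity check: for $\bc=(3,2,1,0)$ the Koszul complex has Betti numbers $(1,3,3,1)$, whereas the formula as stated in the proposition yields $\pi_{2,c_2}=\frac{(1-0)(3-0)}{(1-2)(3-2)}=-3$. The sign of $\prod_{0\neq k\neq i}(c_k-c_i)$ is $(-1)^{i-1}$, so the extra factor $(-1)^{i-1}$ is exactly what is needed to make every $\pi_{i,c_i}$ positive; equivalently one may write $|c_k-c_i|$ in the denominator. Your argument is otherwise correct, but you should repair this step rather than declare that it ``matches the stated formula''.
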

	
	To show that the rays spanned by these pure diagrams are the extremal rays of the cone $\calC(\ba,\bfb)$, we need to prove two things. First, we need to show that they actually lie in the cone: that is, for every decreasing sequence $\bc$ in $\ZZ^{h+1}$ with $\ba\leq \bc \leq \bfb$ the ray $t\cdot \pi(\bc)$ actually contains a Betti diagram of a Cohen-Macaulay module of codimension $h$ (this is not immediate from the above discussion; Proposition \ref{Proposition: pi(c)} shows that any pure diagram of shift type $\bc$ corresponding to a Cohen-Macaulay module of height $h$ must lie on the ray $t\cdot \pi(\bc)$ \textit{if it exists}, but does not guarantee the existence of such a diagram).
	
	Second, we must show that the diagrams $\pi(\bc)$ satisfy the conditions of extremal rays. It is easy to see that $\pi(\bc)$ cannot be decomposed into the sum of two linearly independent vectors in the cone (indeed, if $\pi(\bc)=\beta^1+\beta^2$ then $\beta^1$ and $\beta^2$ must also lie on the ray $t\cdot \pi(\bc)$ by considering the position of zero entries), so what remains to show is that the diagram of any Cohen-Macaulay module of codimension $h$ can be decomposed into a rational sum of pure diagrams.
	
	The two claims above are the Boij-S\"{o}derberg conjectures (Theorems 1.9 and 1.11 in \cite{Floy}). The first conjecture was subsequently proven by Eisenbud, Fl\o{}ystad and Weyman in \cite{BS1-Proof-Paper},and the second by Eisenbud and Schreyer in \cite{BS2-Proof-Paper}.
	\begin{thm}[First Boij-S\"{o}derberg Conjecture]\label{Theorem: BS Conjecture 1}
		For any strictly decreasing sequence of integers $\bc=(c_h,\dots,c_0)$, there exists a Cohen-Macaulay graded $R$-module $M$ of codimension $h$ with a pure resolution of shift type $\bc$.
	\end{thm}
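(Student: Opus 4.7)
The plan is to construct, for each strictly decreasing integer sequence $\bc=(c_h,\dots,c_0)$, an explicit Cohen-Macaulay $R$-module $M$ of codimension $h$ whose minimal free resolution is pure of shift type $\bc$. Because $R$ carries a natural $GL_n(\KK)$-action by linear change of variables, my first move would be to seek an \emph{equivariant} construction: each free module $R(-c_i)^{\beta_i}$ in the sought-after resolution should arise as $R \otimes_{\KK} S_{\lambda^{(i)}}(\KK^n)$, where $S_{\lambda^{(i)}}$ is the Schur functor associated to a partition $\lambda^{(i)}$ whose size and shape encode the gaps in $\bc$. This rigidifies the problem dramatically: by Pieri's rule the space of $GL_n$-equivariant maps in each required degree is at most one-dimensional, so there is essentially no freedom in choosing the differentials once the partitions are fixed.

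The combinatorial input is a chain of partitions $\lambda^{(0)} \subset \lambda^{(1)} \subset \cdots \subset \lambda^{(h)}$ in which $\lambda^{(i)}$ is obtained from $\lambda^{(i-1)}$ by adjoining a horizontal strip of size exactly $c_i - c_{i-1}$, with the initial partition $\lambda^{(0)}$ of length $h$ chosen so that the generating degree lands at $c_0$. With these partitions fixed, the essentially-unique equivariant maps assemble into a candidate complex
$$F_\bullet: \; R\otimes_{\KK} S_{\lambda^{(h)}}\KK^n \to \cdots \to R\otimes_{\KK} S_{\lambda^{(1)}}\KK^n \to R\otimes_{\KK} S_{\lambda^{(0)}}\KK^n,$$
and I would set $M := \operatorname{coker}(F_1 \to F_0)$.

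The main obstacle is proving that $F_\bullet$ is acyclic; the fact that compositions of consecutive differentials vanish, and that the resulting homology is concentrated in degree zero, does not follow purely formally. I would attack this in two stages. First, compute the alternating sum of $GL_n$-characters of the terms using the Cauchy identity and the Jacobi-Trudi formula, and verify that the result matches the character of a module with the Hilbert series dictated by $\pi(\bc)$; this is a purely representation-theoretic check. Second, upgrade this numerical identity to genuine exactness via the Buchsbaum-Eisenbud acyclicity criterion, which reduces the problem to showing that the ideals of maximal minors of the differentials have the expected codimensions. The $GL_n$-equivariance identifies these Fitting ideals with classical determinantal ideals, whose codimensions are known.

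Once acyclicity is established, $M$ has projective dimension exactly $h$, so by the Auslander-Buchsbaum formula (Corollary \ref{Corollary: Auslander-Buchsbaum for R}) we get $\depth M = n-h$; combined with the bound $\dim M \leq n - h$ (which follows from the fact that the support of $M$ is annihilated by the entries of the first differential, whose ideal has codimension at least $h$), this forces $\dim M = \depth M = n-h$, so $M$ is Cohen-Macaulay of codimension $h$. By construction its Betti diagram is pure of shift type $\bc$, and hence by Proposition \ref{Proposition: pi(c)} it is a positive integer multiple of $\pi(\bc)$, as required.
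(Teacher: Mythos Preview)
The paper does not give its own proof of this theorem: it simply attributes the result to Eisenbud, Fl\o{}ystad and Weyman \cite{BS1-Proof-Paper} (with the second conjecture, and an alternative proof of the first, attributed to Eisenbud and Schreyer \cite{BS2-Proof-Paper}). Your proposal is a reasonable sketch of precisely the Eisenbud--Fl\o{}ystad--Weyman equivariant construction, so in spirit you are following the route the paper cites rather than diverging from it.

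That said, there is a genuine gap relative to the hypotheses in force in this thesis. The paper fixes a field $\KK$ of \emph{arbitrary} characteristic, and the representation-theoretic machinery you invoke---Schur functors with the clean Pieri rule, one-dimensional spaces of equivariant maps, Cauchy and Jacobi--Trudi identities at the level of $GL_n$-modules---requires characteristic zero to work as stated. In positive characteristic the polynomial representations of $GL_n$ are no longer semisimple, the Schur modules need not be irreducible, and the ``essentially unique equivariant differential'' step breaks down. The characteristic-free proof is the Eisenbud--Schreyer construction in \cite{BS2-Proof-Paper}, which produces the desired module as a pushforward of a line bundle from a product of projective spaces and establishes purity via a Beilinson-type spectral sequence; this avoids representation theory entirely. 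If you want a proof valid at the generality the thesis assumes, you should either restrict to characteristic zero explicitly or switch to the pushforward argument.
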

	
	\begin{thm}[Second Boij-S\"{o}derberg Conjecture]\label{Theorem: BS Conjecture 2}
		Let $M$ be a Cohen-Macaulay graded $R$-module of codimension $h$ whose Betti diagram lies inside $\calV(\ba,\bfb)$. There exist positive rational numbers $q_i$ and a chain of decreasing sequences $\bc^1 < \dots < \bc^r$ in $\ZZ^{h+1}$ such that
		$$\beta(M) = q_1 \pi(\bc^1)+\dots + q_r \pi (\bc^r).$$
	\end{thm}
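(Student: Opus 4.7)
The plan is to prove the theorem by a greedy algorithm, producing the decomposition iteratively from the ``top'' of the Betti diagram downward. Write $\beta = \beta(M)$. Since $M$ is Cohen-Macaulay of codimension $h$, Corollary \ref{Corollary: Auslander-Buchsbaum for R} gives $\pdim M = h$, so exactly the columns $i = 0, 1, \dots, h$ of $\beta$ are nonzero. For each such $i$, define $c^1_i = \min\{d : \beta_{i,d} \neq 0\}$, producing a sequence $\bc^1 = (c^1_h, \dots, c^1_0)$. The first thing I would check is that $\bc^1$ is strictly decreasing: in a minimal graded free resolution, every nonzero entry of the differential matrices has strictly positive degree, which forces the minimal shift at homological degree $i+1$ to exceed the minimal shift at homological degree $i$. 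Thus $\bc^1$ is a legal shift sequence, and by Proposition \ref{Proposition: pi(c)} the pure diagram $\pi(\bc^1)$ is well-defined.

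The next step is subtraction. Set $q_1 = \min_i \beta_{i,c^1_i}/\pi_{i,c^1_i}(\bc^1)$, which is a strictly positive rational, and let $\beta^{(2)} = \beta - q_1 \pi(\bc^1)$. By construction $\beta^{(2)}$ has non-negative entries, and at least one entry at a position $(i, c^1_i)$ has become zero. Moreover, since both $\beta$ and $\pi(\bc^1)$ satisfy the Herzog-K\"uhl equations $\HK_0 = \dots = \HK_{h-1} = 0$, so does $\beta^{(2)}$. Assuming $\beta^{(2)} \in \calC(\ba,\bfb)$, I would iterate: define $\bc^2$ from $\beta^{(2)}$ by the same recipe, subtract an appropriate multiple of $\pi(\bc^2)$, and so on. Because $\beta^{(2)}$ has acquired a zero at some position $(i_0, c^1_{i_0})$ where $\beta$ did not, we must have $c^2_{i_0} > c^1_{i_0}$, forcing $\bc^1 < \bc^2$ strictly in the product order. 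Since all shifts lie in the finite window $\II(\ba,\bfb)$, this chain must terminate in finitely many steps, yielding the required decomposition.

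The main obstacle is the inductive step: verifying that $\beta^{(2)}$ actually lies in the cone $\calC(\ba,\bfb)$, rather than merely satisfying non-negativity and the Herzog-K\"uhl relations. These two conditions together are not sufficient in general, and additional linear inequalities are needed to cut out the cone. The natural route, following Eisenbud and Schreyer, is to use the pairing of Betti tables against cohomology tables of coherent sheaves on $\PP^{n-1}$: one shows that each such cohomology table $\gamma$ induces a non-negative linear functional $\langle \cdot, \gamma \rangle$ on $\calC(\ba, \bfb)$, that these functionals collectively cut out the cone, and that $\pi(\bc^1)$ is chosen so that $\langle \pi(\bc^1), \gamma \rangle = 0$ whenever $\gamma$ is aligned with the ``top strand'' determined by $\bc^1$. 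Subtracting $q_1 \pi(\bc^1)$ then preserves the sign of every such pairing, placing $\beta^{(2)}$ back inside the cone. This pairing-and-positivity argument is the technical heart of the proof and the only place where the combinatorial book-keeping of the greedy algorithm is insufficient on its own.
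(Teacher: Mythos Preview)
The paper does not prove this theorem; it is stated in the background chapter as a known result and attributed to Eisenbud and Schreyer \cite{BS2-Proof-Paper}, with no proof given. Your sketch is a faithful outline of the Eisenbud--Schreyer argument: the greedy subtraction of pure diagrams along the minimal-shift strand, with the crucial closure step handled by the Betti-table/cohomology-table pairing. That is indeed the proof in the literature, and you have correctly identified the one genuinely hard step.

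One minor clarification worth making: the greedy algorithm and the observation that it terminates with a chain $\bc^1 < \bc^2 < \cdots$ already appear in Boij and S\"{o}derberg's original paper \cite{BS-Original-2006}; what Eisenbud and Schreyer supplied was precisely the positivity of the pairing functionals, which simultaneously establishes that the subtracted diagram remains in the cone and (dually) characterises the facets of $\calC(\ba,\bfb)$. So your final paragraph is the entire content of the theorem, not just a technical obstacle.
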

	
	Boij and S\"{o}derberg later extended this second result to the non-Cohen-Macaulay case (\cite{BS-Non-CM} Theorem 2). In order to state this revised result, we will use the notation $\ZZ^{\leq n+1}$ to denote the set of integer sequences of length at most $n+1$. We can extend our earlier partial order on $\ZZ^{h+1}$ to this set by stipulating that $\ba \leq \bfb$ for sequences $\ba=(a_{p_1},\dots,a_0)$ and $\bfb=(b_{p_2},\dots,b_0)$ in $\ZZ^{\leq n+1}$ whenever $p_1\geq p_2$ and $a_i\leq b_i$ for each $0\leq i \leq p_1$ . 
	\begin{thm}[Second Boij-S\"{o}derberg Conjecture for Arbitrary Modules]\label{Theorem: BS Conjecture 2 non-CM}
		Let $M$ be an arbitrary $R$-module. There exist positive rational numbers $q_i$ and a chain of decreasing sequences $\bc^1<\dots<\bc^r$ in $\ZZ^{\leq n+1}$ such that $$\beta(M) = q_1 \pi(\bc^1)+\dots + q_r \pi (\bc^r).$$
	\end{thm}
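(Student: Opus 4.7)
The plan is to extend Theorem \ref{Theorem: BS Conjecture 2} via a greedy decomposition algorithm that now allows pure diagrams $\pi(\bc)$ indexed by shift sequences of any length $p+1 \leq n+1$. The conceptual point is that the cone generated by Betti diagrams of \emph{all} finitely generated graded $R$-modules has its extremal rays spanned by pure diagrams of Cohen-Macaulay modules of every codimension $0 \leq p \leq n$: for each strictly decreasing $\bc \in \ZZ^{p+1}$, Theorem \ref{Theorem: BS Conjecture 1} supplies a Cohen-Macaulay module of codimension $p$ whose diagram spans the ray through $\pi(\bc)$. Note that $\pi(\bc)$ can be regarded as a vector in $\Vn$ by extending with zeros in columns $p+1, \dots, n$.

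Given $\beta = \beta(M)$, I would first read off the \emph{top strand}: let $p$ be the largest homological index at which $\beta$ has a nonzero entry, and for each $0 \leq i \leq p$ with some $\beta_{i,d} \neq 0$ set $c_i := \min\{d : \beta_{i,d} \neq 0\}$. Minimality of the free resolution forces the $c_i$ to be strictly increasing in $i$, so $\bc^1 := (c_p,\dots,c_0)$ is a valid strictly decreasing sequence in $\ZZ^{\leq n+1}$. I would then subtract the maximal positive rational multiple $q_1 \pi(\bc^1)$ from $\beta$ for which the difference $\beta^{(1)} := \beta - q_1 \pi(\bc^1)$ still lies in the cone of Betti diagrams. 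Reading off the new top strand of $\beta^{(1)}$ produces $\bc^2$, and iterating yields the desired expression $\beta(M) = \sum_{i=1}^r q_i \pi(\bc^i)$.

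The hard part is verifying that (a) $\beta^{(1)}$ still lies in the Betti cone and (b) its top strand $\bc^2$ strictly dominates $\bc^1$ in the extended partial order on $\ZZ^{\leq n+1}$. Part (b) follows because maximality of $q_1$ forces at least one entry of $\beta$ along the strand of $\bc^1$ to be zeroed out, so either some $c_i$ strictly increases or the support contracts to a shorter strand, both of which push $\bc^2$ strictly above $\bc^1$. Part (a) is the real obstacle: the cleanest route, following \cite{BS-Non-CM}, is to invoke the duality with the Eisenbud-Schreyer cohomology tables, which characterise the Betti cone inside $\Vn$ as the set of diagrams pairing nonnegatively against every such table. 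A careful vanishing analysis of these pairings against $\pi(\bc^1)$ shows that the choice of $q_1$ as large as possible while preserving nonnegative entries also automatically preserves nonnegativity of every cohomology pairing, so that $\beta^{(1)}$ stays inside the cone. Termination is then immediate from the finiteness of the support of $\beta$ combined with the fact that each step strictly advances $\bc^i$ in a well-founded partial order on the finite set of shift sequences contained in that support.
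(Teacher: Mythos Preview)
The paper does not prove this theorem. It is stated in Chapter~\ref{Chapter: Introduction} as background material from the literature, with the attribution ``Boij and S\"{o}derberg later extended this second result to the non-Cohen-Macaulay case (\cite{BS-Non-CM} Theorem 2).'' The introduction to that chapter makes this explicit: ``none of the results in this chapter are new.'' So there is no proof in the paper to compare your proposal against.

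Your sketch is in fact a reasonable outline of the argument in \cite{BS-Non-CM} itself: the greedy subtraction of pure diagrams along the top strand, with the Eisenbud--Schreyer pairing used to certify that the remainder stays in the cone. If you were writing this up as an independent proof you would need to be more careful about part (a) --- the claim that maximising $q_1$ subject to entrywise nonnegativity automatically preserves nonnegativity of all cohomology pairings is the heart of the matter and is not automatic; it requires the explicit form of the Eisenbud--Schreyer functionals and a case analysis. But for the purposes of this thesis, the theorem is simply quoted, and no proof is expected.
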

	
	Together, Theorem \ref{Theorem: BS Conjecture 1} and Theorem \ref{Theorem: BS Conjecture 2 non-CM} provide a complete classification of all Betti diagrams of $R$-modules up to multiplication by a positive integer. Theorem \ref{Theorem: BS Conjecture 1} also allows us to classify the vector space spanned by the cone $\calC(\ba,\bfb)$.
	
	\begin{defn}\label{Definition: Boij-Soderberg Space}
		We define
		\begin{enumerate}
			\item $\calV^{\HK }(\ba,\bfb)= \left\{ \beta \in \calV(\ba,\bfb) : \HK_0(\beta)=\dots=\HK_{h-1}(\beta)=0 \right\}$.
			\item $\WW(\ba,\bfb)$ to be the subspace of $\calV(\ba,\bfb)$ generated by the diagrams in $\calC(\ba,\bfb)$.
		\end{enumerate}
	\end{defn}
	
	The former space $\calV^{\HK }(\ba,\bfb)$ has a basis consisting of pure diagrams $\pi(\bc)$ (this is Proposition 1.8 in \cite{Floy}).
	\begin{prop}\label{Proposition: Basis for V-HK}
		For any maximal chain $\ba = \bc^1 < \dots < \bc^r = \bfb$ in $\ZZ^{h+1}$, the diagrams $\pi(\bc^1),\dots, \pi(\bc^r)$ form a basis for $\calV^{\HK }(\ba,\bfb)$.
	\end{prop}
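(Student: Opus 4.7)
The plan is to establish three claims: (a) each $\pi(\bc^i)$ lies in $\calV^{\HK}(\ba,\bfb)$; (b) $\dim \calV^{\HK}(\ba,\bfb) = r$; and (c) the diagrams $\pi(\bc^1),\ldots,\pi(\bc^r)$ are linearly independent. Together (b) and (c) then yield a basis. For (a), the support of $\pi(\bc^i)$ consists of the $h+1$ positions $(j, c^i_j)$, and since $\ba \leq \bc^i \leq \bfb$ we have $a_j \leq c^i_j \leq b_j$ for each $j$, so the support lies in $\II(\ba,\bfb)$ and $\pi(\bc^i) \in \calV(\ba,\bfb)$. By Proposition \ref{Proposition: pi(c)} the diagram is the integer ray solution to the Herzog-K\"{u}hl equations, hence automatically satisfies them.

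For (b), the ambient space $\calV(\ba,\bfb)$ has dimension $\sum_{j=0}^h (b_j - a_j + 1)$. I would show that the $h$ functionals $\HK_0,\ldots,\HK_{h-1}$ restrict to linearly independent functionals on $\calV(\ba,\bfb)$ via a Vandermonde argument: if $p(t) = \sum_{j=0}^{h-1}\lambda_j t^j$ and $\sum_j \lambda_j \HK_j$ vanishes on every basis vector $e_{i,d}$ indexed by $(i,d)\in\II(\ba,\bfb)$, then $(-1)^i p(d)=0$ for each such $(i,d)$. Since $\bfb$ is strictly decreasing, the positions $(j,b_j)$ already provide $h+1$ distinct zeros of the degree-$(h-1)$ polynomial $p$, forcing $p\equiv 0$. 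Thus $\dim \calV^{\HK}(\ba,\bfb) = \sum_{j=0}^h (b_j - a_j) + 1$. On the other hand, in the poset of strictly decreasing sequences in $\ZZ^{h+1}$ under the componentwise order, every cover relation increments exactly one coordinate by $1$, so any maximal chain from $\ba$ to $\bfb$ has length $\sum_{j=0}^h (b_j - a_j)$, and therefore $r = \sum_{j=0}^h (b_j - a_j) + 1$, matching the dimension exactly.

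For (c), I exploit the chain structure to produce a triangular incidence matrix. For each $i \geq 2$, the cover relation $\bc^{i-1} < \bc^i$ increments a unique coordinate; calling its index $j_i$ gives $c^i_{j_i} = c^{i-1}_{j_i} + 1$ and $c^i_k = c^{i-1}_k$ for $k\neq j_i$. The position $P_i := (j_i, c^i_{j_i})$ lies in $\Supp \pi(\bc^i)$, while for any $k < i$ we have $\bc^k \leq \bc^{i-1}$, so $c^k_{j_i} \leq c^{i-1}_{j_i} < c^i_{j_i}$, showing $P_i \notin \Supp \pi(\bc^k)$. Taking $P_1$ to be any position in $\Supp \pi(\bc^1)$, the matrix whose $(i,k)$-entry is $\pi(\bc^k)_{P_i}$ is lower triangular with nonzero diagonal entries (the diagonal entries are values of $\pi(\bc^i)$ at positions in its own support, computed via Proposition \ref{Proposition: pi(c)}), hence invertible. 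This yields the linear independence of $\pi(\bc^1),\ldots,\pi(\bc^r)$. The most delicate ingredient is the Vandermonde step verifying that the $h$ Herzog-K\"{u}hl equations cut out a subspace of exactly codimension $h$; the triangularity argument, by contrast, is essentially mechanical once the pivot positions $P_i$ are extracted from the cover relations, and no separate argument is needed for existence of a maximal chain (one can always be constructed top-down by saturating the largest coordinate first, then the next, exploiting strict decrease of $\bfb$).
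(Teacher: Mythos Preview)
Your argument is correct. The paper does not supply its own proof of this proposition; it simply cites it as Proposition~1.8 in Fl\o{}ystad's survey \cite{Floy}. So there is no in-paper proof to compare against, but your three-step plan is sound and self-contained.

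A few minor points. In part~(c) you write ``lower triangular'', but what you have actually shown is that $M_{ik}=\pi(\bc^k)_{P_i}$ vanishes for $k<i$, which is upper triangularity; this is harmless since either shape with nonzero diagonal gives invertibility. You might also want to note explicitly why the pivot positions $P_1,\dots,P_r$ are distinct (or, more simply, avoid the matrix language altogether and argue directly: evaluating a dependence relation $\sum_k\alpha_k\pi(\bc^k)=0$ at $P_r$ kills all terms with $k<r$, forcing $\alpha_r=0$, then descend). In part~(b), your gradedness claim for the poset of strictly decreasing sequences deserves one sentence of justification: given $\bc<\bc'$ with total increment at least $2$, take the largest index $j$ with $c_j<c'_j$; then $c_{j+1}=c'_{j+1}>c'_j\geq c_j+1$, so incrementing $c_j$ by $1$ preserves strict decrease and produces a strictly intermediate sequence. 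This confirms that every cover increases $\sum_j c_j$ by exactly $1$, and hence that $r=\sum_j(b_j-a_j)+1$ matches your dimension count.
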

	The latter space $\WW(\ba,\bfb)$ must be a subspace of $\calV^{\HK }(\ba,\bfb)$, because all of its generating Betti diagrams satisfy the Herzog-K\"{u}hl Equations. In fact, it is a consequence of the first Boij-S\"{o}derberg conjecture that these two spaces are equal (this is Corollary 1.10 in \cite{Floy}).
	\begin{cor}\label{Corollary: BS Conjecture 1 Vector Space}
		The spaces $\WW(\ba,\bfb)$ and $\calV^{\HK}(\ba,\bfb)$ are equal.
	\end{cor}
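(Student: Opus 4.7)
The plan is to prove the equality by a standard double inclusion, with the forward inclusion being nearly immediate and the reverse relying crucially on the first Boij-Söderberg conjecture (Theorem \ref{Theorem: BS Conjecture 1}).

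First I would dispatch the inclusion $\WW(\ba,\bfb)\subseteq \calV^{\HK}(\ba,\bfb)$. By Definition \ref{Definition: Boij-Soderberg Cone and Indexing Window}, $\WW(\ba,\bfb)$ is spanned by Betti diagrams $\beta(M)$ where $M$ is a Cohen-Macaulay $R$-module of codimension $h$ with $\beta(M)\in \calV(\ba,\bfb)$. By Lemma \ref{Lemma: HK Equations}, each such $\beta(M)$ satisfies $\HK_j(\beta(M))=0$ for $0\le j\le h-1$; since each $\HK_j$ is a linear functional on $\Vn$, the subspace it cuts out is closed under rational linear combinations, so $\WW(\ba,\bfb)\subseteq \calV^{\HK}(\ba,\bfb)$.

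For the reverse inclusion, I would reduce to a statement about the pure diagrams $\pi(\bc)$. By Proposition \ref{Proposition: Basis for V-HK}, given any maximal chain $\ba=\bc^1<\dots<\bc^r=\bfb$ in $\ZZ^{h+1}$, the pure diagrams $\pi(\bc^1),\dots,\pi(\bc^r)$ form a basis of $\calV^{\HK}(\ba,\bfb)$. It therefore suffices to show that each $\pi(\bc^k)$ lies in $\WW(\ba,\bfb)$. Fix such a $\bc=\bc^k$; note that $\ba\leq \bc \leq \bfb$, so for each $0\le i \le h$ we have $a_i \le c_i \le b_i$, which means $(i,c_i)\in \II(\ba,\bfb)$ and hence $\pi(\bc)\in \calV(\ba,\bfb)$.

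The main (and only non-trivial) step is to produce an actual Cohen-Macaulay module realising this pure diagram, and this is exactly where Theorem \ref{Theorem: BS Conjecture 1} intervenes. It guarantees the existence of a Cohen-Macaulay $R$-module $M$ of codimension $h$ with a pure resolution of shift type $\bc$. By Proposition \ref{Proposition: pi(c)}, any such Cohen-Macaulay diagram must lie on the ray $t\cdot \pi(\bc)$, so $\beta(M)=q\,\pi(\bc)$ for some positive rational $q$; and since $\pi(\bc)\in \calV(\ba,\bfb)$, so does $\beta(M)$. By definition of $\WW(\ba,\bfb)$ we conclude $\beta(M)\in \WW(\ba,\bfb)$, whence $\pi(\bc)=\tfrac{1}{q}\beta(M)\in \WW(\ba,\bfb)$, completing the proof. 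The hard part is entirely offloaded onto Theorem \ref{Theorem: BS Conjecture 1}; without it, one only obtains the containment $\WW(\ba,\bfb)\subseteq \calV^{\HK}(\ba,\bfb)$, and equality would not be available.
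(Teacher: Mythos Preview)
Your proof is correct and follows essentially the same approach as the paper's: both establish $\WW(\ba,\bfb)\subseteq\calV^{\HK}(\ba,\bfb)$ via the Herzog--K\"uhl equations, and for the reverse inclusion use Theorem~\ref{Theorem: BS Conjecture 1} to place each $\pi(\bc)$ inside $\WW(\ba,\bfb)$, then invoke Proposition~\ref{Proposition: Basis for V-HK} to conclude these span $\calV^{\HK}(\ba,\bfb)$. Your version is more explicit in checking that $\pi(\bc)\in\calV(\ba,\bfb)$ and in passing from the module $M$ to the ray $t\cdot\pi(\bc)$, but the argument is the same.
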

	\begin{proof}
		By Theorem \ref{Theorem: BS Conjecture 1}, the space $\WW(\ba,\bfb)$ contains all the diagrams $\pi(\bc)$ for strictly decreasing sequences $\bc$ in $\ZZ^{h+1}$ with $\ba \leq \bc \leq \bfb$, and these diagrams generate $\calV^{\HK}(\ba,\bfb)$ by Proposition \ref{Proposition: Basis for V-HK}
	\end{proof}
	
	The key results of this thesis (Theorems \ref{Theorem: dimDn}, \ref{Theorem: dimCn},  \ref{Theorem: dimDnh} and \ref{Theorem: dimDnh} im Chapter \ref{Chapter: Dimensions}, and Theorem \ref{Theorem: PR Complexes of Any Degree Type} in Chapter \ref{Chapter: PR Complexes and Degree Types}) can be seen as analogues of either the first Boij-S\"{o}derberg conjecture or its above corollary, for Betti cones generated exclusively by diagrams of squarefree monomial ideals.
	
	In order to study the diagrams of squarefree monomial ideals, we must now delve into the world of combinatorics.
	
	\section{Results from Combinatorics}\label{Subsection: Results from Combinatorics}
	
	\subsection{Simplicial Complexes and Stanley-Reisner Ideals}\label{Subsection: Simplicial Complexes}
	Simplicial complexes are important combinatorial and topological objects, defined as follows.
	
	\begin{defn}\label{Definition: Simplicial Complex}
		A \textit{simplicial complex} $\Delta$ on vertex set $V$ is a set of subsets of $V$ such that for any $G \subseteq F \subseteq V$, if $F$ is in $\Delta$ then $G$ is also in $\Delta$.
		
		We often denote the vertex set of a complex $\Delta$ as $V(\Delta)$.
	\end{defn}
	\begin{rem}
		For our purposes the vertex set $V$ will always be finite.
	\end{rem}
	
	\begin{defn}\label{Definition: Simplicial Complex Key Terminology and Constructions}
		Let $\Delta$ be a simplicial complex on a vertex set $V$ of size $n$.
		\begin{enumerate}
			\item We refer to the elements of $\Delta$ as \textit{simplices} or \textit{faces}.
			\item A \textit{facet} of $\Delta$ is a face of $\Delta$ that is maximal with respect to inclusion.
			\item A \textit{minimal nonface} of $\Delta$ is a subset of $V$ that is not a face of $\Delta$ and is minimal with this property with respect to inclusion.  Note that $\Delta$ need not contain every singleton subset of $V$. A singleton minimal nonface of $\Delta$ is called a \textit{missing vertex}.
			\item A vertex in $V$ which is contained in only one facet of $\Delta$ is called a \textit{free vertex}.
			\item The \textit{dimension} of a face $\sigma$ in $\Delta$, denoted $\dim \sigma$, is $|\sigma|-1$. The dimension of $\Delta$, denoted $\dim \Delta$, is the dimension of its largest facet. If all facets of $\Delta$ have the same dimension, we say $\Delta$ is \textit{pure}.
			\item The \textit{codimension} of $\Delta$, denoted $\codim \Delta$, is $n-\dim \Delta -1$.
			\item For a face $\sigma\in \Delta$ we define the \textit{link} of $\sigma$ in $\Delta$ to be the complex $$\lkds=\{\tau \in \Delta : \sigma \cap \tau = \emptyset, \sigma \cup \tau \in \Delta\}.$$
			\item We define the \textit{Alexander Dual} of $\Delta$ to be the complex $$\Delta^* = \{F \subseteq V : V - F \notin \Delta\}.$$
			\item For two complexes $\Delta_1$ and $\Delta_2$ on disjoint vertex sets $V_1$ and $V_2$ we define the \textit{join} of $\Delta_1$ and $\Delta_2$ on vertex set $V_1 \sqcup V_2$ as $$ \Delta_1 \ast \Delta_2 = \{\sigma_1 \sqcup \sigma_1 : \sigma_1 \in \Delta_1, \sigma_2 \in \Delta_2 \}.$$
			\item The join of $\Delta$ with a single vertex is called the \textit{cone} over $\Delta$. We denote it by  $C\Delta$.
			\item The join of $\Delta$ with two disjoint vertices is called the \textit{suspension} of $\Delta$. We denote it by $S\Delta$.
		\end{enumerate}
	\end{defn}
	
	We often write simplicial complexes in \textit{facet notation}, as follows. 
	\begin{notation}\label{Notation: Facet Notation for Complexes}
		For subsets $F_1$,...,$F_m$ in a vertex set $V$, we use $\langle F_1,...,F_m\rangle$ to denote the smallest simplicial complex on $V$ containing $F_1$,...,$F_m$. Note that if the set $\{F_1,...,F_m\}$ is irredundant (i.e. no pair of subsets $F_i$ and $F_j$ satisfy $F_i \subseteq F_j$), then this is the same as the complex on $V$ with facets $F_1,...,F_m$.
	\end{notation}
	
	\begin{rem}
		Suppose $\sigma$ is a face in a simplicial complex $\Delta$, and $F_1,\dots,F_m$ are all the facets of $\Delta$ which contain $\sigma$. We have $\lkds = \langle F_1 -\sigma, \dots, F_m- \sigma \rangle$.
	\end{rem}
	
	\begin{ex}
		Suppose $\Delta$ is the complex  $\langle \{1,2,3\},\{3,4\} \rangle$ on vertex set $[4]$. We can draw $\Delta$ as follows.
		\begin{center}
			\begin{tikzpicture}[scale = 0.75]
				\tikzstyle{point}=[circle,thick,draw=black,fill=black,inner sep=0pt,minimum width=4pt,minimum height=4pt]
				
				\node (a)[point,label=left:$1$] at (0,1) {};
				\node (b)[point,label=left:$2$] at (0,-1) {};
				\node (c)[point,label=above:$3$] at (2,0) {};
				\node (d)[point,label=above:$4$] at (4,0) {};
				
				\begin{scope}[on background layer]
					\draw[fill=gray] (a.center) -- (b.center) -- (c.center) -- cycle;
					\draw[fill=gray] (c.center) -- (d.center);
				\end{scope}
			\end{tikzpicture}
		\end{center}
		\begin{itemize}
			\item This complex has dimension $2$, but it is not pure because it also has a facet of dimension $1$.
			\item The minimal nonfaces of $\Delta$ are $\{1,4\}$ and $\{2,4\}$, and hence the Alexander dual $\Delta^*$ has facets $[4]-\{1,4\}=\{2,3\}$ and $[4]-\{2,4\}=\{1,3\}$.
			\item 
			The cone $C\Delta$ looks like the following.
			$$\begin{tikzpicture}[scale=1][line join = round, line cap = round]
				
				
				\coordinate [label=above:5] (5) at ({.5*sqrt(3)},{sqrt(2)},-.5);
				\coordinate [label=left:1] (1) at ({-.5*sqrt(3)},0,-.5);
				\coordinate [label=below:2] (2) at (0,0,1);
				\coordinate [label=below:3] (3) at ({.5*sqrt(3)},0,-.5);
				\coordinate [label=right:4] (4) at ({1.5*sqrt(3)},0,-1.3);
				
				\begin{scope}
					\draw (1)--(3);
					\draw[fill=darkgray,fill opacity=.5] (2)--(1)--(5)--cycle;
					\draw[fill=gray,fill opacity=.5] (3)--(2)--(5)--cycle;
					\draw[fill=gray,fill opacity=.5] (3)--(4)--(5)--cycle;
					\draw (2)--(1);
					\draw (2)--(3);
					\draw (3)--(5);
					\draw (2)--(5);
					\draw (1)--(5);
					\draw (3)--(4);
				\end{scope}
			\end{tikzpicture}$$
		\end{itemize}
	\end{ex}

	We now review some key subcomplex constructions.
	\begin{defn}\label{Definition: Subcomplexes}
		Let $\Delta$ be a simplicial complex on vertex set $V$.
		\begin{enumerate}
			\item For a subset $U\subset V$, we define the \textit{induced subcomplex} of $\Delta$ on $U$ to be the complex $$\Delta_U=\{\sigma \in \Delta : \sigma \subseteq U\}.$$
			\item For a face $g$ in $\Delta$ we define the \textit{deletion} of $g$ from $\Delta$ to be the complex $$\Delta-g = \{\sigma \in \Delta : \sigma \nsupseteq g \}.$$
		\end{enumerate}
	\end{defn}
	\begin{rem}
		We will sometimes denote the induced subcomplex $\Delta_U$ as $\Delta|_U$ to avoid ambiguity. 
	\end{rem}
	
	The complex $\Delta - g$ is the largest subcomplex of $\Delta$ that does not contain $g$. Note that, except in the case where $g$ is a vertex, this is larger than the induced subcomplex $\Delta_{V-g}$, because it contains all faces of $\Delta$ which intersect with $g$ strictly along its boundary. The following example illustrates the difference between the two constructions.
	
	\begin{ex}
		Let $\Delta$ be the complex
		\begin{center}
			\begin{tikzpicture}[scale = 0.75]
				\tikzstyle{point}=[circle,thick,draw=black,fill=black,inner sep=0pt,minimum width=3pt,minimum height=3pt]
				\node (a)[point,label=above:$1$] at (0,3.5) {};
				\node (b)[point,label=above:$2$] at (2,3) {};
				\node (c)[point,label=above:$3$] at (4,3.5) {};
				\node (d)[point,label=left:$4$] at (1.3,1.8) {};
				\node (e)[point,label=right:$5$] at (2.7,1.8) {};
				\node (f)[point,label=left:$6$] at (2,0) {};	
				
				\begin{scope}[on background layer]
					\draw[fill=gray] (a.center) -- (b.center) -- (d.center) -- cycle;
					\draw[fill=gray] (b.center) -- (c.center) -- (e.center) -- cycle;
					\draw[fill=gray]   (d.center) -- (e.center) -- (f.center) -- cycle;
				\end{scope}
			\end{tikzpicture}
		\end{center}
		on vertex set $[6]$, and let $g$ denote the edge \{2,4\}.
		
		
		The complex $\Delta - g$ is equal to
		\begin{center}
			\begin{tikzpicture}[scale = 0.75]
				\tikzstyle{point}=[circle,thick,draw=black,fill=black,inner sep=0pt,minimum width=3pt,minimum height=3pt]
				\node (a)[point,label=above:$1$] at (0,3.5) {};
				\node (b)[point,label=above:$2$] at (2,3) {};
				\node (c)[point,label=above:$3$] at (4,3.5) {};
				\node (d)[point,label=left:$4$] at (1.3,1.8) {};
				\node (e)[point,label=right:$5$] at (2.7,1.8) {};
				\node (f)[point,label=left:$6$] at (2,0) {};	
				
				\begin{scope}[on background layer]
					\draw (d.center) -- (a.center) -- (b.center);
					\draw[fill=gray] (b.center) -- (c.center) -- (e.center) -- cycle;
					\draw[fill=gray]   (d.center) -- (e.center) -- (f.center) -- cycle;
				\end{scope}
			\end{tikzpicture}
		\end{center}
		whereas the induced subcomplex $\Delta_{[6]-g}$ is equal to
		\begin{center}
			\begin{tikzpicture}[scale = 0.75]
				\tikzstyle{point}=[circle,thick,draw=black,fill=black,inner sep=0pt,minimum width=3pt,minimum height=3pt]
				\node (a)[point,label=above:$1$] at (0,3.5) {};
				\node (c)[point,label=above:$3$] at (4,3.5) {};
				\node (e)[point,label=right:$5$] at (2.7,1.8) {};
				\node (f)[point,label=left:$6$] at (2,0) {};	
				
				\begin{scope}[on background layer]
					\draw(c.center) -- (e.center);
					\draw (e.center) -- (f.center);
				\end{scope}
			\end{tikzpicture}
		\end{center}
	\end{ex}
	
	We now review some examples of important complexes, which will be crucial to our work going forward.
	\begin{defn}\label{Definition: Trivial Complexes}
		There are two trivial simplicial complexes on any vertex set.
		\begin{enumerate}
			\item The \textit{irrelevant complex} is the set $\{\emptyset\}$.
			\item The \textit{void complex} is the empty set $\emptyset$.
		\end{enumerate}
	\end{defn}
	\begin{rem}\label{Remark: void does not equal irrelevant}
		Note that the irrelevant complex and the void complex are \textit{not} the same complex. The former has a single face, of dimension $-1$, while the latter has no faces at all. In particular, the former complex has $\nth[st]{(-1)}$ reduced homology, while the latter is acyclic.
	\end{rem}
	\begin{defn}\label{Definition: Simplex and Boundary of Simplex}
		Let $j\geq -1$ be an integer, and let $V$ be a vertex set of size $j+1$.
		\begin{enumerate}
			\item The \textit{full simplex} on vertex set $V$ is the complex whose faces are every subset of $V$. We often call $\Delta$ the \textit{$j$-simplex}, and denote it by $\Delta^j$.
			\item The \textit{boundary of the simplex} on vertex set $V$ is the complex whose faces are every proper subset of $V$. We often call $\Delta$ the \textit{boundary of the $j$-simplex}, and denote it by $\partial \Delta^j$.
		\end{enumerate}
	\end{defn}
	\begin{rem}\label{Remark: j-Simplex Isomorphism classes}
		Technically speaking, the $j$-simplex and its boundary are actually isomorphsim classes of complexes rather than individual complexes, because they can be defined on any vertex set of size $j+1$. By convention, we will assume they are defined on vertex set $[j+1]$ unless otherwise stated.
	\end{rem}
	\begin{defn}\label{Definition: Skeleton Complex}
		Let $r$ be an integer greater than or equal to $-1$, and let $\Delta$ be any complex. We define the \textit{$r$-skeleton of $\Delta$} to be the complex
		$$\Skel_r(\Delta) = \{\sigma \in \Delta : \dim \sigma \leq r\}$$
		If $\Delta$ is a full simplex on vertex set $V$ we often denote this as $\Skel_r(V)$.
	\end{defn}
	\begin{rem}\label{Remark: Skeleton Complexes}
		\begin{enumerate}
			\item The $j$-simplex $\Delta^j$ is equal to $\Skel_j([j+1])$.
			\item The boundary of the $j$-simplex $\partial \Delta^j$ is equal to $\Skel_{j-1}([j+1])$.
			\item For any complex $\Delta$ and any $r\geq \dim \Delta$ we have $\Skel_r(\Delta) = \Delta$.
			\item For any complex $\Delta$ we have $\Skel_{-1}(\Delta)$ is the irrelevant complex $\{\emptyset\}$ on $V(\Delta)$, and for $r<-1$ we have that $\Skel_{r}(\Delta)$ is the void complex $\emptyset$.
		\end{enumerate}
	\end{rem}
	\begin{ex}
		The $3$-simplex $\Delta^3$ on vertex set $[4]$ has the following nontrivial skeleton complexes.
		\begin{center}
			\begin{tabular}{ c c c c }
				
				\begin{tikzpicture}[line join = round, line cap = round]
					\coordinate [label=above:4] (4) at (0,{sqrt(2)},0);
					\coordinate [label=left:3] (3) at ({-.5*sqrt(3)},0,-.5);
					\coordinate [label=below:2] (2) at (0,0,1);
					\coordinate [label=right:1] (1) at ({.5*sqrt(3)},0,-.5);
					
					\begin{scope}
						\draw (1)--(3);
						\draw[fill=darkgray,fill opacity=1] (2)--(1)--(4)--cycle;
						\draw[fill=black,fill opacity=1] (3)--(2)--(4)--cycle;
						\draw (2)--(1);
						\draw (2)--(3);
						\draw (3)--(4);
						\draw (2)--(4);
						\draw (1)--(4);
					\end{scope}
				\end{tikzpicture} & \begin{tikzpicture}[line join = round, line cap = round]
					
					\coordinate [label=above:4] (4) at (0,{sqrt(2)},0);
					\coordinate [label=left:3] (3) at ({-.5*sqrt(3)},0,-.5);
					\coordinate [label=below:2] (2) at (0,0,1);
					\coordinate [label=right:1] (1) at ({.5*sqrt(3)},0,-.5);
					
					\begin{scope}
						\draw (1)--(3);
						\draw[fill=lightgray,fill opacity=.5] (2)--(1)--(4)--cycle;
						\draw[fill=gray,fill opacity=.5] (3)--(2)--(4)--cycle;
						\draw (2)--(1);
						\draw (2)--(3);
						\draw (3)--(4);
						\draw (2)--(4);
						\draw (1)--(4);
					\end{scope}
				\end{tikzpicture}  & \begin{tikzpicture}[line join = round, line cap = round]
					
					\coordinate [label=above:4] (4) at (0,{sqrt(2)},0);
					\coordinate [label=left:3] (3) at ({-.5*sqrt(3)},0,-.5);
					\coordinate [label=below:2] (2) at (0,0,1);
					\coordinate [label=right:1] (1) at ({.5*sqrt(3)},0,-.5);
					
					\begin{scope}
						\draw (1)--(3);
						\draw (2)--(1);
						\draw (2)--(3);
						\draw (3)--(4);
						\draw (2)--(4);
						\draw (1)--(4);
					\end{scope}
				\end{tikzpicture}
				& 	\begin{tikzpicture}[line join = round, line cap = round]
					
					\tikzstyle{point}=[circle,thick,draw=black,fill=black,inner sep=0pt,minimum width=2pt,minimum height=2pt]
					\node (4) [point,label=above:4] at (0,{sqrt(2)},0) {};
					\node (3) [point,label=left:3] at ({-.5*sqrt(3)},0,-.5) {};
					\node (2) [point,label=below:2] at (0,0,1) {};
					\node (1) [point,label=right:1] at ({.5*sqrt(3)},0,-.5) {};
					
				\end{tikzpicture} \\
				$\Skel_3(\Delta^3)=\Delta^3$& $\Skel_2(\Delta^3) = \partial \Delta^3$& $\Skel_1(\Delta^3)$ & $\Skel_0(\Delta^3)$
			\end{tabular}
		\end{center}
		The darker shading of the image of $\Skel_3(\Delta^3)$ indicates that this complex contains the $3$-dimensional face $\{1,2,3,4\}$, while the lighter shading for $\Skel_2(\Delta^3)$ indicates that this complex only contains the bounding $2$-simplices of the face $\{1,2,3,4\}$.
	\end{ex}
	
	\begin{defn}\label{Definition: Cross Polytope}
		The $d$-dimensional \textit{cross polytope} (also known as the $d$-dimensional \textit{orthoplex}) is the simplicial complex $O^d$, defined recursively for $d\geq -1$ as follows: 
		\begin{align*}
			O^{-1} &= \{\emptyset\}\\
			O^{d+1}&= S(O^d).
		\end{align*}
	\end{defn}
	\begin{ex}\label{Example: Cross Polytopes}
		The cross polytopes of dimensions $-1$, $0$, $1$ and $2$ are shown below.
		\begin{center}
			\begin{tabular}{ c c c c }
				$\{\emptyset\}$& 
				&
				& \begin{tikzpicture}
					\tikzstyle{point}=[circle,thick,draw=black,fill=black,inner sep=0pt,minimum width=2pt,minimum height=2pt]
					\node (a)[point] at (0,0) {};
					\node (b)[point] at (1,0) {};
				\end{tikzpicture}\\
				&&&\\
				$O^{-1}$& & & $O^0$\\
				\textit{(Irrelevant complex)} & & & \textit{(Disjoint vertices)} \\
				&&&\\
				\begin{tikzpicture}[scale=1]
					\tikzstyle{point}=[circle,thick,draw=black,fill=black,inner sep=0pt,minimum width=2pt,minimum height=2pt]
					\node (a)[point] at (1,0) {};
					\node (b)[point] at (0,1) {};
					\node (c)[point] at (1,2) {};
					\node (d)[point] at (2,1) {};
					
					\draw (a.center) -- (b.center) -- (c.center) -- (d.center) -- cycle;
				\end{tikzpicture}&
				&
				& 	\begin{tikzpicture}[line join=bevel,z=-5.5]
					\coordinate (A1) at (0,0,-1);
					\coordinate (A2) at (-1,0,0);
					\coordinate (A3) at (0,0,1);
					\coordinate (A4) at (1,0,0);
					\coordinate (B1) at (0,1,0);
					\coordinate (C1) at (0,-1,0);
					
					\draw (A1) -- (A2) -- (B1) -- cycle;
					\draw (A4) -- (A1) -- (B1) -- cycle;
					\draw (A1) -- (A2) -- (C1) -- cycle;
					\draw (A4) -- (A1) -- (C1) -- cycle;
					\draw [fill opacity=0.7,fill=lightgray!80!gray] (A2) -- (A3) -- (B1) -- cycle;
					\draw [fill opacity=0.7,fill=gray!70!lightgray] (A3) -- (A4) -- (B1) -- cycle;
					\draw [fill opacity=0.7,fill=darkgray!30!gray] (A2) -- (A3) -- (C1) -- cycle;
					\draw [fill opacity=0.7,fill=darkgray!30!gray] (A3) -- (A4) -- (C1) -- cycle;
				\end{tikzpicture}\\
				&&&\\
				$O^1$& & & $O^2$\\
				\textit{(Square)} & & & \textit{(Octahedron)}
			\end{tabular}
		\end{center}
	\end{ex}
	
	Having reviewed some key examples of simplicial complexes, we now move on to the Stanley-Reisner correspondence. For any simplicial complex on vertex set $[n]$, we may assign a unique corresponding squarefree monomial ideal. 
	\begin{defn}\label{Definition: SR Ideals}
		Let $\Delta$ be a simplicial complex on vertex set $[n]$. We define the \textit{Stanley-Reisner ideal} of $\Delta$ to be $$I_\Delta=\langle \mathbf{x}^\sigma : \sigma \subseteq [n], \sigma \notin \Delta \rangle$$
		where $\mathbf{x}^\sigma$ denotes the squarefree monomial $\prod_{i\in \sigma}x_i$ in $R$.
		
		We define the \textit{Stanley-Reisner ring} of $\Delta$ to be $$\KK[\Delta]=R/I_\Delta.$$
	\end{defn}
	\begin{rem}\label{Remark: SR Ideals Arbitrary Vertex Sets}
		We also extend this definition to complexes $\Delta$ on arbitrary vertex set $V$ of size $n$ by first choosing an ordering on $V$, which allows us to define an isomorphism from $\Delta$ to a complex on $[n]$. Technically this makes the Stanley-Reisner ideal dependent on the specific ordering we choose, but we still use the phrase `\textit{the}' Stanley-Reisner ideal anyway, because all possible orderings give us the same ideal up to isomorphism.
	\end{rem}
	
	For a simplicial complex $\Delta$ the minimal generators of $I_\Delta$ correspond to the minimal nonfaces of $\Delta$. In particular, note that every squarefree monomial ideal is uniquely determined by its minimal generators, and similarly every simplicial complex is uniquely determined by its minimal nonfaces. Thus, the assignment $\Delta \mapsto I_\Delta$ gives us a one-to-one correspondence from simplicial complexes on $[n]$ to squarefree monomial ideals in $R$ (this is Theorem 1.7 in \cite{CCA}).
	\begin{prop}\label{Proposition: SR 1-1 Correspondence}
		The assignment $\Delta \mapsto I_\Delta$ gives a bijection $$\{\text{Simplicial complexes on } [n]\}\isomto \{\text{Squarefree monomials in } R\}.$$
	\end{prop}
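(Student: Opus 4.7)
The plan is to construct an explicit inverse to the map $\Delta \mapsto I_\Delta$, and verify that the two maps compose to the identity on both sides. The basic dictionary is that subsets $\sigma \subseteq [n]$ correspond bijectively to squarefree monomials $\mathbf{x}^\sigma$ in $R$, so the content of the proposition is really about which collections of such monomials arise as minimal nonfaces versus as minimal generators.

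First I would define the candidate inverse: given a squarefree monomial ideal $I \subseteq R$, set
\[
\Delta(I) = \{ \sigma \subseteq [n] : \mathbf{x}^\sigma \notin I \}.
\]
I would then verify that $\Delta(I)$ is a simplicial complex. This is the key closure check: if $\sigma \in \Delta(I)$ and $\tau \subseteq \sigma$, then $\mathbf{x}^\tau$ divides $\mathbf{x}^\sigma$, so if $\mathbf{x}^\tau$ were in $I$ then $\mathbf{x}^\sigma = \mathbf{x}^{\sigma \setminus \tau} \cdot \mathbf{x}^\tau$ would also lie in $I$, contradicting $\sigma \in \Delta(I)$.

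Next I would show the two maps are mutually inverse. For $\Delta(I_\Delta) = \Delta$, I need that for each $\sigma \subseteq [n]$, $\mathbf{x}^\sigma \in I_\Delta$ if and only if $\sigma$ is a nonface of $\Delta$. The ``if'' direction is immediate from the definition of $I_\Delta$. For ``only if'', if $\mathbf{x}^\sigma \in I_\Delta$, then $\mathbf{x}^\sigma$ is a polynomial combination of the generators $\mathbf{x}^\tau$ where $\tau$ runs over nonfaces; since $I_\Delta$ is a monomial ideal, standard monomial-ideal membership (every monomial in the ideal is divisible by one of the monomial generators) gives some nonface $\tau$ with $\tau \subseteq \sigma$, and then $\sigma$ itself is a nonface by the definition of a simplicial complex. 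For the reverse composition $I_{\Delta(I)} = I$, I would use that both ideals are squarefree monomial ideals and therefore determined by which squarefree monomials they contain; by construction $\mathbf{x}^\sigma \in I_{\Delta(I)}$ iff $\sigma \notin \Delta(I)$ iff $\mathbf{x}^\sigma \in I$, and then invoke the standard fact that a squarefree monomial ideal equals the ideal generated by all squarefree monomials it contains.

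There is no real obstacle here; the only step that requires care is the divisibility/membership argument for monomial ideals, which is the standard fact that if $m$ is a monomial lying in an ideal $J$ generated by monomials $m_1, \dots, m_r$, then some $m_i$ divides $m$. This is what ensures that the set-theoretic correspondence between faces/nonfaces and monomials outside/inside $I$ actually matches the generators $\mathbf{x}^\sigma$ of $I_\Delta$ with the minimal nonfaces of $\Delta$, and it is the single ingredient doing the work. With that in hand, both compositions reduce to tautological unwinding of the definitions.
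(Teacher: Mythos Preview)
Your proposal is correct and follows essentially the same approach as the paper: both rest on the dictionary between nonfaces of $\Delta$ and squarefree monomials in $I_\Delta$, with the standard monomial-ideal divisibility fact doing the work. The paper merely sketches this (observing that minimal generators of $I_\Delta$ correspond to minimal nonfaces of $\Delta$, and that each side is determined by these) and cites \cite{CCA} Theorem 1.7, whereas you flesh out the inverse map $I \mapsto \Delta(I)$ explicitly and verify both compositions; the underlying argument is the same.
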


	This one-to-one correspondence allows us to reframe algebraic properties of Stanley-Reisner ideals and rings in terms of combinatorial properties of their corresponding complexes, and vice versa. For example, we can compute the height of the ideal $I_\Delta$, and hence the Krull-dimension of the ring $\KK[\Delta]$, respectively from the codimension and dimension of $\Delta$.
	
	\begin{lem}\label{Lemma: height I-Delta}
		For any simplicial complex $\Delta$ on vertex set $[n]$, we have 
		$$\height I_\Delta = n - \dim \Delta - 1 = \codim \Delta.$$
	\end{lem}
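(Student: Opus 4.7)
The plan is to compute $\height I_\Delta$ via the identity $\height I_\Delta = n - \dim(R/I_\Delta) = n - \dim \KK[\Delta]$, and then to show that $\dim \KK[\Delta] = \dim \Delta + 1$. The codimension identity $n - \dim \Delta - 1 = \codim \Delta$ is then immediate from the definition of $\codim \Delta$ given in Definition \ref{Definition: Simplicial Complex Key Terminology and Constructions}.

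First, I would describe the minimal primes of $I_\Delta$ combinatorially. For each facet $F$ of $\Delta$, let $P_F = \langle x_i : i \in [n] \setminus F \rangle$. I would check that each $P_F$ is a prime ideal containing $I_\Delta$: any minimal nonface $\sigma$ of $\Delta$ cannot be contained in $F$ (since $F \in \Delta$), so $\mathbf{x}^\sigma$ involves at least one variable $x_i$ with $i \notin F$, hence $\mathbf{x}^\sigma \in P_F$. Conversely, if $P$ is any prime containing $I_\Delta$, let $F = \{ i \in [n] : x_i \notin P \}$. I would argue that $F$ must be a face of $\Delta$: if $F \notin \Delta$, then $F$ contains a minimal nonface $\sigma$, so $\mathbf{x}^\sigma \in I_\Delta \subseteq P$, forcing some $x_i$ with $i \in \sigma \subseteq F$ to lie in $P$, a contradiction. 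Extending $F$ to a facet $F'$ then gives $P_{F'} \subseteq P$. Hence the minimal primes of $I_\Delta$ are exactly the $P_F$ as $F$ ranges over the facets of $\Delta$.

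Next, for each facet $F$, the quotient $R/P_F$ is isomorphic to the polynomial ring in the $|F|$ variables indexed by $F$, and so has Krull-dimension $|F| = \dim F + 1$. Since the Krull-dimension of $R/I_\Delta$ equals the maximum of $\dim R/P$ over the minimal primes $P$ of $I_\Delta$, we obtain
\[
\dim \KK[\Delta] = \max_{F \text{ facet of } \Delta} (\dim F + 1) = \dim \Delta + 1,
\]
using that $\dim \Delta$ is the maximum dimension of a facet.

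Combining the two steps gives $\height I_\Delta = n - \dim \KK[\Delta] = n - \dim \Delta - 1$, which by definition equals $\codim \Delta$. The main subtlety is the identification of the minimal primes of $I_\Delta$ with the facets of $\Delta$; once that combinatorial correspondence is in hand, the rest is bookkeeping. No genuine obstacle is expected, as this is a classical computation, but care is needed with the trivial cases (e.g.\ the irrelevant and void complexes, where the convention $\dim \{\emptyset\} = -1$ ensures the formula still reads correctly).
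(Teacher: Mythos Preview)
Your proof is correct and uses essentially the same idea as the paper: both identify the primes $P_F$ corresponding to faces (facets) of $\Delta$ as the minimal primes of $I_\Delta$, and read off the result from the height/dimension of these primes. The only minor differences are that the paper computes $\height I_\Delta$ directly and derives $\dim \KK[\Delta]$ as a corollary (whereas you reverse this order), and that the paper restricts to monomial primes and cites \cite{MonIdeals} for the fact that all minimal primes of a squarefree monomial ideal are monomial, while your argument for arbitrary primes $P$ (via $F=\{i:x_i\notin P\}$) sidesteps this citation and is slightly more self-contained.
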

	\begin{proof}
		For a face $F$ of $\Delta$ of size $d+1$, we define $P_F$ to be the ideal in $R$ generated by the vertices $x_i$ where $i\in [n] - F$. This is a monomial prime ideal in $R$ of height $n-d-1$, and because every nonface of $\Delta$ shares at least one vertex with $[n]-F$ we must have $P_F \supseteq I_\Delta$.
		
		Conversely, any monomial prime ideal $P$ in $R$ containing $I_\Delta$ is of the form $\langle x_{i_1},\dots,x_{i_h} \rangle$ for some $1\leq i_1 < \dots < i_h \leq n$ such that each nonface of $\Delta$ contains at least one of the vertices $x_{i_1},\dots,x_{i_h}$. Thus the set $F_P = [n]-\{x_{i_1},\dots,x_{i_h}\}$ is a face of $\Delta$.
		
		This gives us a one-to-one correspondence between faces of $\Delta$ and monomial prime ideals in $R$ containing $I_\Delta$, which sends faces of dimension $d$ to ideals of height $n-d-1$. Under this correspondence, facets of maximum dimension are sent to monomial primes containing $I_\Delta$ of minimal height. Because $I_\Delta$ is a squarefree monomial ideal then all of the minimal primes of $I_\Delta$ are monomial (see \cite{MonIdeals} Corollary 1.3.6), and the result follows.
	\end{proof}
	
	\begin{cor}\label{Corollary: dim k[Delta]}
		For any simplicial complex $\Delta$, we have
		$$\dim \KK[\Delta] = \dim \Delta + 1.$$
	\end{cor}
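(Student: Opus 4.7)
The plan is to deduce this immediately from the preceding Lemma \ref{Lemma: height I-Delta}, using the standard relationship between height and Krull-dimension in the polynomial ring $R$. Since $R$ is a Cohen-Macaulay ring of dimension $n$, for any ideal $I$ in $R$ we have the formula $\dim R/I = n - \height I$ (this is the unmixedness/catenary property of the polynomial ring).

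Applied to the Stanley-Reisner ideal $I_\Delta$, this gives $\dim \KK[\Delta] = \dim R/I_\Delta = n - \height I_\Delta$. By Lemma \ref{Lemma: height I-Delta} we have $\height I_\Delta = n - \dim \Delta - 1$, so substituting yields $\dim \KK[\Delta] = n - (n - \dim \Delta - 1) = \dim \Delta + 1$.

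There is essentially no obstacle here: the entire content of the corollary has already been packaged into the lemma, and the corollary is a one-line arithmetic rearrangement using the well-known identity $\dim R/I + \height I = n$ valid in the polynomial ring $R$. The only remark worth making is that one could cite \cite{Eisenbud} or another standard reference for this identity if the reader is not assumed to know it, but otherwise the proof is a direct substitution.
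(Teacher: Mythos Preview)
Your proof is correct and follows exactly the same approach as the paper: invoke the identity $\dim R/I = n - \height I$ for ideals in the polynomial ring $R$, and substitute the value of $\height I_\Delta$ from Lemma~\ref{Lemma: height I-Delta}. The paper's proof is the same one-line deduction, just stated more tersely.
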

	\begin{proof}
		For any ideal $I$ in $R$ we have $\dim R/I = n - \height I$, so this follows directly from Lemma \ref{Lemma: height I-Delta}.
	\end{proof}
	
	For our purposes the most important example of this combinatorial reframing is Hochster's Formula, which is the subject of the following section.
	
	\subsection{Hochster's Formula and Simplicial Homology}\label{Subsection: Hochster's Formula}
	As we have seen already, the Stanley-Reisner construction allows us to translate algebraic problems into combinatorial ones, and vice versa, by reframing the algebraic invariants of squarefree monomial ideals in terms of combinatorial invariants of their corresponding complexes.
	
	In this section we present a crucial result of Hochster's, which reframes the Betti diagrams of Stanley-Reisner ideals in terms of simplicial homology. As mentioned already, all of our homology is over $\KK$, and we use the notation $\Hred_i(\Delta)$ to denote the $\nth{i}$ reduced homology group of $\Delta$ with coefficients in $\KK$.
	
	\begin{thm}[Hochster's Formula]\label{Theorem: Hochster's Formula}
		Let $\Delta$ be a simplicial complex on a vertex set $V$ of size $n$. For any integers $0\leq i \leq n$ and $d$, we have
		\begin{equation*}
			\beta_{i,d}(I_\Delta) =\sum_{U\in {V \choose d}} \dim_\KK \Hred_{d-i-2}(\Delta_U)
		\end{equation*}
		where ${V \choose d}$ denotes the subsets of $V$ of size $d$.
	\end{thm}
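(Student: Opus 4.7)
My approach is to compute the graded Betti numbers via the tensor product with the Koszul complex, exploit the $\ZZ^n$-multigrading induced by the monomial structure of $I_\Delta$, and identify the resulting chain complex in each squarefree multidegree with the augmented simplicial chain complex of the corresponding induced subcomplex.

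The plan is to work with $R/I_\Delta$ rather than $I_\Delta$ directly, since the shift in homological degree between $\beta_{i,d}(I_\Delta)$ and $\beta_{i+1,d}(R/I_\Delta)$ noted in Remark \ref{Remark: HK Equations} translates neatly between the two formulations. Using the identification
\[
\beta_{i+1,d}(R/I_\Delta) = \dim_\KK \Tor_{i+1}^R(R/I_\Delta, \KK)_d,
\]
I would compute $\Tor^R_\bullet(R/I_\Delta,\KK)$ by tensoring $R/I_\Delta$ with the Koszul complex $K_\bullet$ on the variables $x_1,\dots,x_n$, which is a minimal free resolution of $\KK$. Recall that $K_i$ has basis $\{e_\sigma : \sigma \subseteq [n],\ |\sigma|=i\}$, where $e_\sigma$ carries $\ZZ^n$-multidegree equal to the characteristic vector of $\sigma$, and the differential is $\partial(e_\sigma)=\sum_{j\in\sigma}(-1)^{\text{sgn}(j,\sigma)} x_j e_{\sigma\setminus j}$.

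Since $I_\Delta$ is generated by squarefree monomials, $R/I_\Delta$ inherits a $\ZZ^n$-grading, and hence so does the complex $K_\bullet\otimes_R R/I_\Delta$; moreover only squarefree multidegrees can contribute to $\Tor_i^R(R/I_\Delta,\KK)_d$. Fix then a subset $U\subseteq [n]$ with $|U|=d$. The key observation is that the multidegree-$U$ strand $(K_\bullet\otimes R/I_\Delta)_U$ has, in homological position $i$, a copy of $\KK$ for each pair $(\sigma,\tau)$ with $\sigma\sqcup\tau = U$, $|\sigma|=i$, and $\mathbf{x}^\tau\notin I_\Delta$ (equivalently $\tau\in\Delta$). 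Re-indexing by $\tau=U\setminus\sigma$, a basis is given by the faces $\tau\in\Delta_U$ of cardinality $d-i$. A careful bookkeeping of signs shows that under this re-indexing the Koszul differential becomes (up to a fixed global sign depending on an ordering of $U$) the usual simplicial boundary map on $\Delta_U$. In particular the strand $(K_\bullet\otimes R/I_\Delta)_U$ is isomorphic, as a chain complex, to the augmented chain complex of $\Delta_U$ shifted so that dimension-$(k-1)$ faces sit in homological degree $d-k$.

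The conclusion then follows by taking homology: $\Tor_{i+1}^R(R/I_\Delta,\KK)_U\cong \Hred_{d-i-2}(\Delta_U)$, and summing over all $U\in\binom{V}{d}$ gives
\[
\beta_{i,d}(I_\Delta)=\beta_{i+1,d}(R/I_\Delta)=\sum_{U\in\binom{V}{d}}\dim_\KK \Hred_{d-i-2}(\Delta_U).
\]
The main obstacle is the sign bookkeeping in the identification of the Koszul differential with the simplicial boundary map after the substitution $\tau=U\setminus\sigma$; everything else is a formal consequence of multigrading and the fact that the Koszul complex resolves $\KK$. A secondary subtlety is the boundary case $i=d$, where the summand $\tau=\emptyset$ reproduces the augmentation map of the reduced chain complex and therefore the \emph{reduced} (rather than unreduced) homology appears on the right-hand side; this is exactly as one would want, given that $\Delta_U=\{\emptyset\}$ can occur when $U$ contains no face of $\Delta$.
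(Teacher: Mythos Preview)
The paper does not provide its own proof of this theorem; it simply cites \cite{MonIdeals}, Theorem 8.1.1. Your proposal outlines exactly the standard argument found in that reference: compute $\Tor^R_\bullet(R/I_\Delta,\KK)$ via the Koszul resolution of $\KK$, exploit the $\ZZ^n$-multigrading to reduce to squarefree multidegrees, and identify the resulting strand in multidegree $U$ with the augmented simplicial chain complex of $\Delta_U$ (shifted appropriately). So your approach is correct and is essentially the one the paper defers to.
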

	For a proof of Hochster's Formula see \cite{MonIdeals}, Theorem 8.1.1.
	
	\begin{rem}\label{Remark: Missing Vertices Row 0 of Betti Table}
		Hochster's Formula shows us that the Betti numbers of the form $\beta_{i,i+1}(I_\Delta)$ come from induced subcomplexes of $\Delta$ with $\nth[st]{(-1)}$ reduced homology. The only simplicial complex which has $\nth[st]{(-1)}$ reduced homology is the irrelevant complex $\{\emptyset\}$, and for any nonempty subset $U\subseteq V$, we have $\Delta_U=\{\emptyset\}$ if and only if $U$ consists solely of missing vertices of $\Delta$. In particular, if $\Delta$ has $m$ missing vertices, then for any $0\leq i \leq n-1$ we have
		\begin{align*}
			\beta_{i,i+1}(I_\Delta) &= \sum_{U\in {V \choose i+1}} \dim_\KK \Hred_{-1}(\Delta_U)\\
			&= \#\left\{U\in {V \choose i+1} : \Delta_U = \{\emptyset\}\right\}\\
			&= {m \choose i+1}\, .
		\end{align*}
	\end{rem}
	
	Sometimes it will be more useful for us to study squarefree monomial ideals using a \textit{dual} Stanley-Reisner construction, which we lay out below.
	
	Note that every simplicial complex $\Delta$ is equal to the Alexander dual of its own Alexander dual $\Delta^*$ (i.e. we have $(\Delta^*)^*=\Delta$). In particular, this means that every squarefree monomial ideal in $R$ can be viewed uniquely as the dual Stanley-Reisner ideal $I_{\Delta^*}$ of some complex $\Delta$.
	
	\begin{prop}\label{Proposition: Dual Stanley Reisner Ideal}
		Let $\Delta$ be a simplicial complex on vertex set $[n]$. The dual Stanley-Reisner ideal $\Idstar$ is given by $$\Idstar=\langle \mathbf{x}^\sigma : \sigma \subseteq [n], [n]-\sigma \in \Delta \rangle$$ where $\mathbf{x}^\sigma$ denotes the squarefree monomial $\prod_{i\in \sigma}x_i$ in $R$.
	\end{prop}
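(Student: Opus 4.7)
The plan is to prove this by directly unwinding the two defining formulas that are in play: the definition of the Stanley--Reisner ideal (Definition \ref{Definition: SR Ideals}) and the definition of the Alexander dual (part (8) of Definition \ref{Definition: Simplicial Complex Key Terminology and Constructions}). The statement is really a change--of--variables identity, so no new machinery is required.

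First I would write down $I_{\Delta^*}$ verbatim from Definition \ref{Definition: SR Ideals}, namely
\[
I_{\Delta^*} = \langle \mathbf{x}^\sigma : \sigma \subseteq [n],\ \sigma \notin \Delta^* \rangle.
\]
The task is then to rewrite the condition $\sigma \notin \Delta^*$ in the form $[n]-\sigma \in \Delta$. By the definition of the Alexander dual, $\sigma \in \Delta^*$ if and only if $[n]-\sigma \notin \Delta$; negating both sides gives $\sigma \notin \Delta^*$ if and only if $[n]-\sigma \in \Delta$. Substituting this equivalent description of the indexing set back into the displayed generating set yields the claimed formula.

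The only subtlety worth addressing in passing is that one should confirm $\Delta^*$ genuinely is a simplicial complex (so that the Stanley--Reisner construction even applies to it), but this follows immediately: if $F \in \Delta^*$ and $G \subseteq F$ then $[n]-G \supseteq [n]-F \notin \Delta$, and since $\Delta$ is closed under subsets the larger set $[n]-G$ cannot lie in $\Delta$ either, so $G \in \Delta^*$. There is no genuine obstacle here; the whole proof is a two--line substitution, and I would simply present it as such.
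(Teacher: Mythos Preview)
Your proof is correct and takes essentially the same approach as the paper: both arguments simply unwind the definition of $I_{\Delta^*}$ and use the defining equivalence $\sigma \notin \Delta^* \iff [n]-\sigma \in \Delta$. The paper's version is terser (it omits your verification that $\Delta^*$ is a simplicial complex), but the substance is identical.
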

	\begin{proof}
		By construction, the nonfaces of the complex $\Delta^*$ are the complements of the faces of $\Delta$. The result follows from Definition \ref{Definition: SR Ideals}.
	\end{proof}
	
	\begin{notation}\label{Notation: Dual Stanley-Reisner Ideals}
		We sometimes denote the \textit{dual Stanley-Reisner ideal} $I_{\Delta^*}$ of $\Delta$ by $I^*_{\Delta}$.
	\end{notation}
	
	There is an Alexander dual variant of Hochster's Formula, due to Eagon and Reiner, which allows us to compute the Betti numbers of the dual Stanley-Reisner ideal $I_{\Delta^*}$ directly using combinatorial and homological data from the complex $\Delta$ (this is \cite{Eagon-Reiner} Proposition 1, rephrased in terms of Betti numbers of Stanley-Reisner ideals as opposed to Betti polynomials of Stanley-Reisner rings; see this article for a proof).
	
	\begin{thm}[Hochster's Formula, Alexander Dual Variant] \label{Theorem: ADHF}
		Let $\Delta$ be a simplicial complex on a vertex set $V$ of size $n$. For any integers $0\leq i\leq n$ and $d$, we have
		\begin{equation*}
			\beta_{i,d}(\Idstar)=\sum_{\sigma\in \Delta, |\sigma|=n-d} \dim_\KK \Hred_{i-1}(\lkds) 
		\end{equation*}
	\end{thm}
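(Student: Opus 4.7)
The plan is to derive Theorem \ref{Theorem: ADHF} from the original Hochster's Formula (Theorem \ref{Theorem: Hochster's Formula}) applied to the squarefree monomial ideal $\Idstar$, together with combinatorial and topological Alexander duality. Viewing $\Idstar$ as the Stanley-Reisner ideal of the complex $\Delta^*$ on vertex set $V$ and invoking Theorem \ref{Theorem: Hochster's Formula} directly gives
$$\beta_{i,d}(\Idstar) = \sum_{U \in \binom{V}{d}} \dim_\KK \Hred_{d-i-2}((\Delta^*)_U),$$
so the task reduces to matching, for each $U$, the homology $\Hred_{d-i-2}((\Delta^*)_U)$ with $\Hred_{i-1}(\lkds[V-U])$.

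The first step is a purely combinatorial identification: for each $U \subseteq V$ of size $d$, writing $\sigma = V - U$, I would show that $(\Delta^*)_U$ coincides with the Alexander dual of $\lkds$ taken within the ambient vertex set $U$. Unpacking definitions, a subset $\tau \subseteq U$ lies in $(\Delta^*)_U$ iff $V - \tau \notin \Delta$, while $\tau$ lies in $(\lkds)^*$ (computed on $U$) iff $U - \tau \notin \lkds$; since $\sigma$ is disjoint from $U - \tau$, the latter is equivalent to $\sigma \cup (U - \tau) = V - \tau \notin \Delta$, so the two complexes agree.

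The second step is to invoke topological Alexander duality for simplicial complexes, which asserts that for any complex $\Gamma$ on a vertex set of size $d$ one has $\Hred_j(\Gamma; \KK) \cong \Hred_{d-j-3}(\Gamma^*; \KK)$ for all $j$. Setting $\Gamma = \lkds$ (viewed with ambient vertex set $U$) and $j = i-1$ yields
$$\Hred_{d-i-2}((\Delta^*)_U) = \Hred_{d-i-2}((\lkds)^*) \cong \Hred_{i-1}(\lkds).$$
Substituting into the Hochster expression and reindexing the sum by $\sigma = V - U$ gives the claim, where one may let $\sigma$ range only over faces of $\Delta$ of size $n-d$: a subset $\sigma \notin \Delta$ produces a void link with zero reduced homology in every degree (equivalently, the corresponding $(\Delta^*)_U$ is the full simplex on $U$, which is acyclic), so such terms vanish.

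The main obstacle is the careful bookkeeping of the ambient vertex set in the combinatorial Alexander duality step, together with tracking the degree shift $j \mapsto d-j-3$ through the topological duality; once those are in place, the remainder is a routine substitution and reindexing.
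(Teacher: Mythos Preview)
Your proof is correct and follows the standard route: apply the original Hochster's Formula to $\Delta^*$, identify $(\Delta^*)_U$ with the Alexander dual of $\lkds[V-U]$ on vertex set $U$, and then invoke combinatorial Alexander duality over the field $\KK$ to match the homological degrees. The bookkeeping you describe (the ambient vertex set for the dual, the degree shift $j \mapsto d-j-3$, and the observation that non-faces $\sigma \notin \Delta$ contribute nothing) is exactly right.

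The paper itself does not give a proof of this theorem at all: it simply attributes the result to Eagon and Reiner and directs the reader to \cite{Eagon-Reiner} for a proof. Your argument is in fact the approach taken in that reference, so you have reconstructed the cited proof rather than deviated from it.
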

	\begin{rem}\label{Remark: Link of Emptyset and Facets}
		Let $\Delta$ be a simplicial complex.
		\begin{enumerate}
			\item We have $\lkds[\emptyset]=\Delta$, which tells us that  $\beta_{i,n}(\Idstar)=\Hred_{i-1}(\Delta)$ for any integer $1\leq i \leq n$.
			\item For any facet $F$ of $\Delta$ we have $\lkds[F]=\{\emptyset\}$, which has only $\nth[st]{(-1)}$ homology of dimension $1$. Thus for any integer $d$, the Betti number $\beta_{0,d}(\Idstar)$ is equal to the number of facets of $\Delta$ of size $n-d$. In particular, the minimum degree of a generator of $\Idstar$ is equal to the codimension of $\Delta$. 
		\end{enumerate}
	\end{rem}

	Both Hochster's Formula and its Alexander Dual variant will be fundamental to our work going forwards, and for this reason most of our proofs centre around the computation of simplicial homology. We devote the rest of this section to some important results which can simplify these computations.
	
	Firstly, note that for any $j\geq -1$, the definition of the homology group $\Hred_j(\Delta)$ (as given in e.g. \cite{Hatcher}, pages 104-6) relies solely on the faces of $\Delta$ of dimension $j+1$ and $j$, and this data is contained in the skeleton complex $\Skel_r(\Delta)$ for any $r>j$. This gives us the following lemma.
	
	\begin{lem}\label{Lemma: Hj Delta = Hj Skel(Delta)}
		Let $\Delta$ be a simplicial complex, and let $r>j\geq -1$ be two integers. We have $\Hred_j(\Delta)=\Hred_j(\Skel_r(\Delta))$.
	\end{lem}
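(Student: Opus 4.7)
The plan is to unpack the definition of reduced simplicial homology and observe that $\Hred_j(\Delta)$ depends only on the faces of $\Delta$ of dimensions $j-1$, $j$, and $j+1$ (interpreting the dimension-$(-1)$ piece as the augmentation whenever $j=-1$ or $j=0$).

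First I would recall that the reduced simplicial chain complex of $\Delta$ over $\KK$ is
\[
\dots \to C_{j+1}(\Delta) \xrightarrow{\partial_{j+1}} C_j(\Delta) \xrightarrow{\partial_j} C_{j-1}(\Delta)\to \dots \to C_0(\Delta) \xrightarrow{\varepsilon} \KK \to 0,
\]
where $C_k(\Delta)$ is the $\KK$-vector space with basis the $k$-dimensional faces of $\Delta$, and by definition $\Hred_j(\Delta)=\ker \partial_j / \mathrm{im}\, \partial_{j+1}$ for $j\geq 0$, with the analogous formula involving $\varepsilon$ in degree $-1$. In particular, computing $\Hred_j(\Delta)$ uses only the three chain groups $C_{j+1}(\Delta)$, $C_j(\Delta)$, $C_{j-1}(\Delta)$ and the boundary maps between them.

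Next I would observe that by the definition of the $r$-skeleton (Definition \ref{Definition: Skeleton Complex}), the faces of $\Skel_r(\Delta)$ are exactly the faces of $\Delta$ of dimension at most $r$. Since $r>j$ we have $r\geq j+1$, so $\Skel_r(\Delta)$ contains every face of $\Delta$ of dimension $\leq j+1$. Consequently $C_k(\Skel_r(\Delta))=C_k(\Delta)$ for all $k\leq j+1$, and in particular for $k\in\{j-1,j,j+1\}$. The boundary maps of $\Skel_r(\Delta)$ between these three chain groups are the restrictions of those of $\Delta$, and since the chain groups coincide the maps agree verbatim.

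Putting these together, the sub-complex
\[
C_{j+1}\xrightarrow{\partial_{j+1}} C_j \xrightarrow{\partial_j} C_{j-1}
\]
is literally the same for $\Delta$ and for $\Skel_r(\Delta)$, so their $j$-th reduced homology groups coincide. The only case requiring a separate glance is $j=-1$, where the relevant piece of the chain complex is $C_0\xrightarrow{\varepsilon}\KK$; this is determined by $C_0$, which is unchanged since $r>-1$ forces $r\geq 0$. No step here poses a real obstacle — the lemma is essentially a direct consequence of the definition once one notes that $r>j$ guarantees the $(j+1)$-faces survive in the skeleton.
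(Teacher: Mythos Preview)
Your proof is correct and follows essentially the same reasoning as the paper: the paper does not give a formal proof but simply observes, just before stating the lemma, that the definition of $\Hred_j(\Delta)$ relies only on the faces of $\Delta$ of dimensions $j+1$ and $j$, and that this data is contained in $\Skel_r(\Delta)$ for any $r>j$. Your argument is a careful elaboration of exactly this point, with the added precision of tracking the $(j-1)$-dimensional faces and the $j=-1$ case explicitly.
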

	
	Next we introduce two important results which allow us to compute the homology of certain complexes from the homologies of their subcomplexes. Both of these results have more general topological analogues, regarding the singular homology of any topological space; but we present only the cases where the space in question is a simplicial complex. For a proof of the Mayer-Vietoris Sequence see \cite{Hatcher} pages 149-150; for a proof of the K\"{u}nneth Formula see \cite{Hatcher} page 276, Corollary 3B.7.
	
	\begin{prop}[Reduced Mayer-Vietoris Sequence for Simplicial Complexes]\label{Proposition: MVS}
		Let $\Delta$ be a simplicial complex and suppose we have $\Delta = A \cup B$ for two subcomplexes $A$ and $B$ with nonempty intersection. There is a long exact sequence
		\begin{align*}
			\dots \rightarrow \Hred_{r+1}(\Delta) \rightarrow \Hred_r(A\cap B)\rightarrow \Hred_r(A)\oplus \Hred_r(B) \rightarrow &\Hred_r(\Delta)\rightarrow \dots\\
			&\dots \rightarrow \Hred_0(\Delta)\rightarrow 0.
		\end{align*}
	\end{prop}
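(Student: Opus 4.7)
The plan is to derive the long exact sequence from a short exact sequence of chain complexes via the standard zig-zag (snake) lemma, which is the simplicial analogue of the argument in Hatcher. Specifically, I would work with the augmented simplicial chain complexes $\widetilde{C}_*(A\cap B)$, $\widetilde{C}_*(A)$, $\widetilde{C}_*(B)$ and $\widetilde{C}_*(\Delta)$ over $\KK$, whose homology groups are by definition the reduced homologies $\Hred_r$.

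The central construction is the sequence
$$ 0 \longrightarrow \widetilde{C}_*(A\cap B) \xrightarrow{\ \varphi\ } \widetilde{C}_*(A) \oplus \widetilde{C}_*(B) \xrightarrow{\ \psi\ } \widetilde{C}_*(\Delta) \longrightarrow 0 $$
where $\varphi(\sigma) = (\sigma,-\sigma)$ and $\psi(a,b)=a+b$, with both maps defined using the fact that any face of $A\cap B$ sits in both $A$ and $B$, and conversely every face of $\Delta=A\cup B$ lies in $A$ or $B$. First I would check that $\varphi$ and $\psi$ are chain maps; this is immediate since the boundary operators on $A$, $B$, $A\cap B$ and $\Delta$ all agree on the shared faces. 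Injectivity of $\varphi$ and surjectivity of $\psi$ are then routine, and exactness in the middle reduces to the observation that if $a+b=0$ in $\widetilde{C}_r(\Delta)$, then every face appearing in $a$ with nonzero coefficient must also appear in $b$, hence lies in $A\cap B$, so $(a,b)$ can be written in the form $(c,-c)$ with $c\in \widetilde{C}_r(A\cap B)$.

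The hypothesis that $A\cap B$ is nonempty enters precisely at the augmentation level $r=-1$, and this is the main subtlety of the proof. Each of $\widetilde{C}_{-1}(A)$, $\widetilde{C}_{-1}(B)$ and $\widetilde{C}_{-1}(\Delta)$ equals $\KK$ (since each of $A$, $B$, $\Delta$ is nonempty), and we need $\widetilde{C}_{-1}(A\cap B)$ to also be nonzero for exactness of the sequence at the degree $-1$ piece; this fails if $A\cap B$ is void, in which case the long exact sequence breaks down at the bottom. Assuming $A\cap B\neq \emptyset$, however, we get $\widetilde{C}_{-1}(A\cap B)=\KK$ as well, and exactness at degree $-1$ follows by direct inspection.

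Finally, I would feed the short exact sequence of chain complexes into the zig-zag lemma. This produces the long exact sequence
$$ \cdots \to \Hred_{r+1}(\Delta) \xrightarrow{\partial} \Hred_r(A\cap B) \xrightarrow{\varphi_*} \Hred_r(A)\oplus \Hred_r(B) \xrightarrow{\psi_*} \Hred_r(\Delta) \to \cdots $$
which terminates at $\Hred_0(\Delta)\to 0$ because all chain groups vanish in negative degrees beyond the augmentation. The connecting homomorphism $\partial$ is constructed in the usual way from the snake lemma diagram chase, and this completes the proof. The essential work is thus purely formal once the exactness at the augmentation level is handled, which is the only place where the nonempty intersection hypothesis is genuinely used.
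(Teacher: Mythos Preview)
Your proof is correct and follows the standard approach. The paper does not actually prove this proposition; it simply cites Hatcher (pages 149--150) for a proof, so your argument via the short exact sequence of augmented chain complexes and the zig-zag lemma is exactly the kind of proof the citation points to, just specialized to the simplicial setting.
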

	
	\begin{prop}[K\"{u}nneth Formula for Joins of Simplicial Complexes]\label{Proposition: Kunneth Formula}
		Let $\Delta$ be a simplicial complex and suppose we have $\Delta = A \ast B$ for two subcomplexes $A$ and $B$. For any integer $r\geq -1$ we have an isomorphism
		\begin{equation*}
			\Hred_{r+1}(\Delta)\cong \sum_{i+j=r}\Hred_i(A)\otimes\Hred_j(B).
		\end{equation*}
	\end{prop}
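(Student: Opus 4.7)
The cleanest strategy is to identify the (augmented) simplicial chain complex of $A \ast B$ with a shift of the tensor product of the augmented simplicial chain complexes of $A$ and $B$, and then invoke the algebraic K\"unneth formula over the field $\KK$. The combinatorial input comes from Definition \ref{Definition: Simplicial Complex Key Terminology and Constructions}(8): every face of $A\ast B$ is uniquely of the form $\sigma\sqcup\tau$ with $\sigma\in A$ and $\tau\in B$, where either factor is allowed to be the empty face, and $\dim(\sigma\sqcup\tau)=\dim\sigma+\dim\tau+1$. Treating the empty face as the generator in augmented degree $-1$, this bijection of bases would give, after a careful choice of sign conventions compatible with a fixed ordering of the vertices of $A\sqcup B$, a graded $\KK$-linear isomorphism
\begin{equation*}
\tilde{C}_{r+1}(A\ast B)\;\cong\;\bigoplus_{i+j=r}\tilde{C}_i(A)\otimes_\KK \tilde{C}_j(B).
\end{equation*}

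Next I would verify that this isomorphism is compatible with differentials, i.e.\ that the simplicial boundary on $A\ast B$ corresponds to the standard tensor-product differential $d(a\otimes b)=da\otimes b+(-1)^{|a|}\,a\otimes db$. This is a direct computation: the boundary of $\sigma\sqcup\tau$ in $A\ast B$ splits into the terms that delete a vertex of $\sigma$ (giving $\partial\sigma\sqcup\tau$) and those that delete a vertex of $\tau$ (giving $\sigma\sqcup\partial\tau$), and the sign conventions arising from the fixed vertex ordering produce exactly the Koszul sign $(-1)^{|\sigma|}$ on the second summand. The main obstacle is carrying out this sign bookkeeping and, in particular, verifying that the $(-1)$-st augmented generator (the empty face) plays the role of a unit element on each side so that boundaries landing at augmented degree $-1$ match up correctly.

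Once the chain-level isomorphism of complexes $\tilde{C}_\bullet(A\ast B)\cong(\tilde{C}_\bullet(A)\otimes\tilde{C}_\bullet(B))[1]$ is in hand, the algebraic K\"unneth formula over the field $\KK$ applies: since every $\KK$-module is free, all Tor terms vanish, and one obtains
\begin{equation*}
\tilde{H}_{r+1}(A\ast B)\;\cong\;\bigoplus_{i+j=r}\tilde{H}_i(A)\otimes_\KK \tilde{H}_j(B),
\end{equation*}
as desired. As a sanity check, the formula handles the trivial cases consistently: when $B=\{\emptyset\}$ is the irrelevant complex one has $A\ast B=A$ and $\tilde{H}_{-1}(B)=\KK$, recovering $\tilde{H}_{r+1}(A)=\tilde{H}_r(A)\otimes\KK$ for $j=-1,\,i=r$; and when $B=S^0$ is two disjoint vertices, $A\ast B=SA$ and the identity reduces to the suspension isomorphism $\tilde{H}_{r+1}(SA)\cong \tilde{H}_r(A)$. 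A reader uncomfortable with the chain-level signs can instead quote the topological K\"unneth theorem for joins (\cite{Hatcher} p.~276) after observing that $|A\ast B|$ is homeomorphic to the topological join $|A|\ast|B|$, which is homotopy equivalent to $S(|A|\wedge|B|)$.
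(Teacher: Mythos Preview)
The paper does not give its own proof of this proposition: it simply refers the reader to \cite{Hatcher}, page 276, Corollary 3B.7. Your proposal therefore goes beyond what the paper does, supplying an actual argument rather than a citation.

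Your chain-level approach is correct and standard. The key combinatorial observation --- that the faces of $A\ast B$ are in bijection with pairs $(\sigma,\tau)\in A\times B$ (including the empty face on each side), with $\dim(\sigma\sqcup\tau)=\dim\sigma+\dim\tau+1$ --- yields exactly the shifted tensor product identification $\tilde{C}_\bullet(A\ast B)\cong(\tilde{C}_\bullet(A)\otimes\tilde{C}_\bullet(B))[1]$ of augmented chain complexes, and the boundary check you describe is routine. Over the field $\KK$ the algebraic K\"unneth formula has no $\Tor$ correction, giving the result. Your sanity checks (the irrelevant complex and the suspension) and your fallback to the topological join via $|A\ast B|\cong|A|\ast|B|$ are both appropriate, the latter being essentially what the paper's citation to Hatcher invokes.
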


	Many of the results in this thesis are about the \textit{shapes} of Betti diagrams of Stanley-Reisner ideals (i.e. the indices for which the Betti numbers are nonzero) rather than the values of specific Betti numbers. Thus we will often be more interested in the degrees at which a complex has nontrivial homology than in what those nontrivial homologies are. For this reason, we sometimes make use of the following notation.
	
	\begin{defn}\label{Definition: Homology Index Sets}
		Let $\Delta$ be a simplicial complex. We define the \textit{homology index set of} $\Delta$ as $h(\Delta)=\{i\in \ZZ : \Hred_i(\Delta) \neq 0\}$. 
	\end{defn}
	\begin{rem}
		We may add homology index sets $A$ and $B$ together using the rule $A+B=\{i+j: i\in A, j\in B\}$. If $A$ and $B$ are both singletons, then this is the same as adding the elements of those singletons together; and if either set is empty then the resulting sum is also empty.
	\end{rem}
	
	Using this notation we have the following corollary to the K\"{u}nneth Formula.
	\begin{cor}\label{Corollary: homology index set of joins}
		Let $\Delta_1,\dots,\Delta_m$ be simplicial complexes, and let $\circledast_{j=1}^m \Delta_j$ denote the join of all of them. We have that $h(\circledast_{j=1}^m \Delta_j)=\sum_{j=1}^m h(\Delta_j) + \{m-1\}$.
	\end{cor}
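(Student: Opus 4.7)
The plan is to reduce the statement to the two-complex case via induction on $m$, and then derive the two-complex case as an immediate consequence of the K\"{u}nneth Formula (Proposition \ref{Proposition: Kunneth Formula}).

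For the base case $m=1$, the statement reads $h(\Delta_1) = h(\Delta_1) + \{0\}$, which is trivially true. For the two-complex case, I would rewrite the K\"{u}nneth isomorphism $\Hred_{r+1}(A*B) \cong \bigoplus_{i+j=r} \Hred_i(A) \otimes \Hred_j(B)$ and use the fact that, since we are working over a field $\KK$, the tensor product of two $\KK$-vector spaces is nonzero if and only if both factors are nonzero. Hence $r+1 \in h(A*B)$ if and only if there exist $i \in h(A)$ and $j \in h(B)$ with $i+j=r$, which is equivalent to $r+1 \in h(A) + h(B) + \{1\}$. This establishes the equality $h(A*B) = h(A) + h(B) + \{1\}$.

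For the inductive step, assume the formula holds for $m-1$ complexes. Since the join operation is associative (as $(A*B)*C$ and $A*(B*C)$ both consist precisely of unions $\sigma_A \sqcup \sigma_B \sqcup \sigma_C$ with each $\sigma_X \in X$), we may write $\circledast_{j=1}^m \Delta_j = (\circledast_{j=1}^{m-1} \Delta_j) * \Delta_m$. Applying the two-complex case and then the inductive hypothesis gives
\begin{align*}
    h\bigl(\circledast_{j=1}^m \Delta_j\bigr) &= h\bigl(\circledast_{j=1}^{m-1} \Delta_j\bigr) + h(\Delta_m) + \{1\}\\
    &= \Bigl(\sum_{j=1}^{m-1} h(\Delta_j) + \{m-2\}\Bigr) + h(\Delta_m) + \{1\}\\
    &= \sum_{j=1}^m h(\Delta_j) + \{m-1\}.
\end{align*}

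The only subtle point, which I would flag in the writeup, is what happens when one of the $h(\Delta_j)$ is empty (i.e.\ when $\Delta_j$ is acyclic). By the convention that $A + \emptyset = \emptyset$, the right-hand side collapses to the empty set; on the left-hand side, the iterated K\"{u}nneth decomposition has every summand containing a factor $\Hred_k(\Delta_j) = 0$, so every reduced homology group of the total join vanishes and $h(\circledast_j \Delta_j) = \emptyset$ as well. This confirms consistency and means the induction requires no separate case analysis.
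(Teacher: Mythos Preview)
Your proof is correct and follows essentially the same route as the paper: induction on $m$, with the inductive step reducing to the two-complex identity $h(A\ast B)=h(A)+h(B)+\{1\}$ coming from the K\"{u}nneth Formula. The paper is slightly terser (it simply cites Proposition \ref{Proposition: Kunneth Formula} for that two-complex step and does not spell out the associativity or empty-set considerations), but the argument is the same.
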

	\begin{proof}
		We prove this by induction on $m\geq 1$. For the base case $m=1$, there is nothing to prove. For the inductive step, assuming that $h(\circledast_{j=1}^m \Delta_j)=\sum_{j=1}^m h(\Delta_j) + \{m-1\}$, we have
		\begin{align*}
			h(\circledast_{j=1}^{m+1} \Delta_i) &= h((\circledast_{j=1}^m \Delta_j)\ast \Delta_{m+1}) &\\
			&= h(\circledast_{j=1}^m \Delta_j)+ h(\Delta_{m+1}) +\{1\} &\text{by Prop. \ref{Proposition: Kunneth Formula}}\\
			&= (\sum_{j=1}^m h(\Delta_j) + \{m-1\})+h(\Delta_{m+1}) +\{1\} &\text{by ind. hyp.}\\
			&= \sum_{j=1}^{m+1} h(\Delta_j) + \{m\}.
		\end{align*}
	\end{proof}
	
	Another crucial tool for computing homology comes from the following result (see e.g. \cite{Hatcher}, Corollary 2.11).
	\begin{prop}\label{Proposition: Homotopy Equiv => Homology Equiv}
		If two topological spaces are homotopy equivalent, they are homology equivalent.
	\end{prop}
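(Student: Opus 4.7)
This is a classical result from algebraic topology whose proof is standard (indeed the paper itself points to Hatcher Corollary 2.11), so my plan would be to sketch the proof through singular homology and then invoke the standard comparison between simplicial and singular homology for the combinatorial case we actually care about.

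The plan is to work at the level of singular chain complexes. First, I would recall that any continuous map $f : X \to Y$ between topological spaces induces a chain map $f_\sharp : C_\bullet(X) \to C_\bullet(Y)$ on singular chain complexes, defined by post-composition: $f_\sharp(\sigma) = f \circ \sigma$ for each singular simplex $\sigma$. This respects boundaries because boundary maps are defined via face inclusions, and hence passes to a map $f_* : \Hred_\bullet(X) \to \Hred_\bullet(Y)$ on reduced homology. It is immediate from the definition that $\mathrm{id}_*$ is the identity and $(g \circ f)_* = g_* \circ f_*$, so this assignment is functorial.

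The key step is the homotopy invariance lemma: if two maps $f, g : X \to Y$ are homotopic via $H : X \times [0,1] \to Y$, then $f_\sharp$ and $g_\sharp$ are chain homotopic, i.e.\ there exists $P : C_\bullet(X) \to C_{\bullet+1}(Y)$ with $\partial P + P \partial = g_\sharp - f_\sharp$. I would construct $P$ using the standard prism operator, subdividing the prism $\Delta^n \times [0,1]$ into $(n+1)$-simplices and defining $P(\sigma)$ as the signed sum of the compositions of $H \circ (\sigma \times \mathrm{id})$ with those simplices. A direct combinatorial calculation verifies the chain homotopy identity. Since chain-homotopic chain maps induce the same map on homology, we get $f_* = g_*$ whenever $f \simeq g$.

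From here the conclusion is formal: if $f : X \to Y$ and $h : Y \to X$ satisfy $h \circ f \simeq \mathrm{id}_X$ and $f \circ h \simeq \mathrm{id}_Y$, then by functoriality and homotopy invariance $h_* \circ f_* = \mathrm{id}$ and $f_* \circ h_* = \mathrm{id}$, so $f_*$ is an isomorphism on each reduced homology group. Finally, since the reduced simplicial homology $\Hred_i(\Delta)$ of a simplicial complex used throughout this thesis agrees with the reduced singular homology of its geometric realisation (a standard result, e.g.\ Hatcher Theorem 2.27), the statement transfers to the simplicial setting in which it is applied.

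The main obstacle, and the only non-formal piece, is the construction and verification of the prism operator $P$; everything else is bookkeeping. Since this is entirely standard and orthogonal to the combinatorial content of the thesis, I would simply cite \cite{Hatcher} Corollary 2.11 rather than reproduce the prism calculation in full.
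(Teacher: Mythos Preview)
Your proposal is correct and aligns with the paper's approach: the paper does not supply a proof at all but simply cites \cite{Hatcher} Corollary 2.11, exactly as you conclude you would do. Your sketch of the prism-operator argument is accurate and more detailed than anything the paper provides, but ultimately both you and the paper treat this as a black-box citation.
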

	
	In particular, every deformation retraction is a homotopy equivalence, and for this reason we will often make use of deformation retractions when computing homology. As an example, this allows us to find the homology of cones and suspensions.
	
	\begin{cor}\label{Corollary: Homology of Cone}
		Let $\Delta$ be a simplicial complex. The cone $C\Delta$ is acyclic.
	\end{cor}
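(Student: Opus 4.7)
The plan is to invoke Corollary \ref{Corollary: homology index set of joins} (equivalently the K\"unneth Formula) and reduce to the trivial homology of a point. By definition $C\Delta = \Delta \ast \{v\}$ where $\{v\}$ is the one-vertex complex. First I would compute $h(\{v\})$: the complex $\{v\}$ is nonempty (so $\Hred_{-1}(\{v\}) = 0$) and has chain complex $0 \to \KK \to 0$ in nonnegative degrees, with augmentation $\KK \to \KK$ an isomorphism, so all higher reduced homologies vanish. Hence $h(\{v\}) = \emptyset$.

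Applying Corollary \ref{Corollary: homology index set of joins} with $m = 2$ then gives
\[
h(C\Delta) = h(\Delta \ast \{v\}) = h(\Delta) + h(\{v\}) + \{1\} = h(\Delta) + \emptyset + \{1\} = \emptyset,
\]
where in the last step we use the convention from the Remark following Definition \ref{Definition: Homology Index Sets} that a sum involving the empty set is empty. So $\Hred_i(C\Delta) = 0$ for every integer $i$, which is precisely the statement that $C\Delta$ is acyclic.

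There is no real obstacle here; the only subtlety is making sure the $i = -1$ case is included (so that ``acyclic'' really means trivial reduced homology in \emph{every} degree, not just nonnegative ones), which is handled uniformly by working with the index set $h(-)$. An alternative route, more in the spirit of the sentence immediately preceding the corollary, is to observe that $C\Delta$ deformation retracts onto its apex vertex $v$ (slide each point along the segment joining it to $v$), so by Proposition \ref{Proposition: Homotopy Equiv => Homology Equiv} it has the homology of a point, namely trivial reduced homology.
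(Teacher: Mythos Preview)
Your proof is correct. Your primary argument via Corollary \ref{Corollary: homology index set of joins} is a different route from the paper's, which instead takes exactly the ``alternative'' you sketch at the end: it observes that $C\Delta$ deformation retracts onto the apex vertex $v$ and invokes Proposition \ref{Proposition: Homotopy Equiv => Homology Equiv}. Your K\"unneth-based argument has the virtue of exercising the homology-index-set machinery just introduced and avoids any explicit homotopy, while the paper's argument is more geometric and sidesteps the K\"unneth formula entirely. Since you already identified the deformation-retraction route, there is nothing to add.
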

	\begin{proof}
		The cone $C\Delta$ is the join of $\Delta$ with a single vertex $v$, and it deformation retracts on to the vertex $v$, which is acyclic.
	\end{proof}
	\begin{cor}\label{Corollary: Homology of Suspension}
		Let $\Delta$ be a simplicial complex. For any integer $i\geq -1$ the suspension $S\Delta$ has homology $\Hred_i(S\Delta)=\Hred_{i-1}(\Delta)$.
	\end{cor}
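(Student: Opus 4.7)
The plan is to apply the Künneth formula for joins (Proposition~\ref{Proposition: Kunneth Formula}) directly, using the fact that the suspension $S\Delta$ is by definition the join of $\Delta$ with the two-vertex complex $S^0 = \{\{v_1\},\{v_2\},\emptyset\}$.

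First I would compute the reduced homology of $S^0$ explicitly. Since $S^0$ consists of two disjoint vertices, a straightforward chain-complex computation (or just observing that $S^0$ has one more connected component than a point) yields
\[
\Hred_j(S^0) = \begin{cases} \KK & \text{if } j=0,\\ 0 & \text{otherwise.}\end{cases}
\]

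Next, I would plug $A = \Delta$ and $B = S^0$ into the Künneth formula. Writing $r = i-1$, this gives
\[
\Hred_i(S\Delta) = \Hred_{r+1}(\Delta \ast S^0) \cong \bigoplus_{s+t=r} \Hred_s(\Delta)\otimes \Hred_t(S^0).
\]
All terms with $t \neq 0$ vanish by the computation above, so the only surviving summand is $s = r$, $t = 0$, yielding
\[
\Hred_i(S\Delta) \cong \Hred_{i-1}(\Delta)\otimes_\KK \KK \cong \Hred_{i-1}(\Delta).
\]

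There is essentially no obstacle here; the corollary is a direct application of Proposition~\ref{Proposition: Kunneth Formula} once one has the homology of the two-point complex in hand. The only small care needed is checking the edge case $i = -1$, where both sides are zero (the suspension of any complex is nonempty and connected in the sense that it has no $\nth[st]{(-1)}$ reduced homology; and $\Hred_{-2}(\Delta) = 0$ by convention), which the formula handles automatically.
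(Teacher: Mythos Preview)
Your proof is correct, but the paper takes a different route: it decomposes $S\Delta$ as the union of the two cones $A = \Delta \ast \{u\}$ and $B = \Delta \ast \{v\}$, invokes Corollary~\ref{Corollary: Homology of Cone} to see that both are acyclic, observes that $A \cap B = \Delta$, and then reads off the isomorphism $\Hred_i(S\Delta) \cong \Hred_{i-1}(\Delta)$ from the Mayer-Vietoris sequence (Proposition~\ref{Proposition: MVS}).

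Your K\"{u}nneth argument is arguably more direct, since it applies Proposition~\ref{Proposition: Kunneth Formula} in one shot rather than splitting into pieces and chasing an exact sequence; on the other hand, the paper's Mayer-Vietoris approach is self-contained once one knows cones are acyclic, and avoids the (mild) bookkeeping of tensoring with $\KK$. One small wrinkle in your version: Proposition~\ref{Proposition: Kunneth Formula} is stated only for $r \geq -1$, so strictly speaking it does not cover $\Hred_{-1}(S\Delta)$ directly --- you correctly handle this by a separate observation, though note that your claim ``the suspension of any complex is nonempty'' fails for the void complex (whose suspension is again void); the conclusion $\Hred_{-1}(S\Delta) = 0 = \Hred_{-2}(\Delta)$ still holds in that degenerate case.
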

	\begin{proof}
		The suspension $S\Delta$ is the join of $\Delta$ with the complex consisting of two disjoint vertices $u$ and $v$. We can decompose $\Delta$ into the subcomplexes $A\cup B$ where $\Delta \ast \{u\}$ and $B=\Delta \ast\{v\}$. Both of these subcomplexes are cones and are hence acyclic by Corollary \ref{Corollary: Homology of Cone}. They intersect at $\Delta$ itself, and hence for each $i\geq -1$, the Mayer-Vietoris sequence gives us the isomorphism $$0\rightarrow \Hred_i(S\Delta)\rightarrow \Hred_{i-1}(\Delta)\rightarrow 0.$$
	\end{proof}
	
	We end this section by presenting a particularly useful lemma for finding deformation retracts. It provides sufficient conditions for ensuring that a complex $\Delta$ deformation retracts on to $\Delta - g$, for a given face $g\in \Delta$ (recall that $\Delta-g$ is the deletion of $g$ from $\Delta$, as defined in Definition \ref{Definition: Subcomplexes}). We suspect this lemma exists in the literature, but have been unable to find it.
	\begin{lem}\label{Lemma: Deformation Retract}
		Let $\Delta$ be a simplicial complex, and let $g\subsetneqq f$ be faces of $\Delta$ such that every facet of $\Delta$ that contains $g$ also contains $f$. There is a deformation retraction $\Delta \rightsquigarrow \Delta - g$, obtained by identifying $g$ with a vertex in $f-g$.
	\end{lem}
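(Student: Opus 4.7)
The plan is to construct the deformation retract by performing an elementary simplicial collapse of the pair $(g, g \cup \{v\})$ within the closed star of $g$, extended across the link $L = \link_\Delta(g \cup \{v\})$ via a join.

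First, I would pick any $v \in f - g$ and verify that $v$ is a ``cone vertex'' of $\link_\Delta g$: given $\tau \in \link_\Delta g$ with $v \notin \tau$, the face $\tau \cup g$ sits inside some facet $F$ of $\Delta$ containing $g$; by hypothesis $f \subseteq F$, so $v \in F$, giving $\tau \cup g \cup \{v\} \subseteq F$ in $\Delta$ and hence $\tau \cup \{v\} \in \link_\Delta g$. This yields $\link_\Delta g = \{v\} \ast L$, so the closed star of $g$ (the subcomplex of faces of $\Delta$ contained in some facet containing $g$) admits the join decomposition $\langle g \cup \{v\}\rangle \ast L$. Within this closed star, the faces that do \emph{not} contain $g$ form the subcomplex $\mathcal{A} = (\langle g \cup \{v\}\rangle - g) \ast L$, and $\mathcal{A}$ is precisely the intersection of the closed star with $\Delta - g$.

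The key point is that inside the simplex $\langle g \cup \{v\}\rangle$, the face $g$ is a free face: the only proper superset of $g$ among the faces of $\langle g \cup \{v\}\rangle$ is the top face $g \cup \{v\}$ itself. Removing the pair $(g, g \cup \{v\})$ is thus a valid elementary collapse, so $\langle g \cup \{v\}\rangle \rightsquigarrow \langle g \cup \{v\}\rangle - g$, realised geometrically by sliding the open face $g$ towards $v$. Since deformation retracts are preserved under joins (we take the join of this collapse with the identity on $L$), we obtain a deformation retract of the closed star onto $\mathcal{A}$. Finally, $\Delta$ decomposes as $(\text{closed star of }g) \cup (\Delta - g)$ with intersection exactly $\mathcal{A}$; extending the retract by the identity on $\Delta - g$ then yields $\Delta \rightsquigarrow \Delta - g$, with the motion of the retract geometrically identifying the open simplex on $g$ with $v$ as described in the statement.

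The main subtlety I anticipate is the careful bookkeeping around the two decompositions --- verifying that the closed star and its antistar really split as the claimed joins with $L$, which relies essentially on the cone-vertex property established at the outset. A second point worth emphasising is that $g$ is typically \emph{not} a free face of $g \cup \{v\}$ inside all of $\Delta$ (other faces containing $g$ arise from further elements of $L$); this is precisely why the argument must first pass through the simplex $\langle g \cup \{v\}\rangle$ alone, and only then extend by the join with $L$.
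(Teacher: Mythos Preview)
Your proof is correct, and it takes a genuinely different route from the paper's. The paper works directly on the geometric realization: writing $g = \{v_1,\dots,v_k\}$ and picking $v_{k+1} \in f - g$, it sets $\lambda = \min\{\lambda_1,\dots,\lambda_k\}$ for a point $\bp = \sum \lambda_j e_j$ and defines an explicit linear homotopy $\varphi(\bp,t)$ that transfers mass $\lambda$ from each of $e_1,\dots,e_k$ over to $e_{k+1}$, then verifies by hand that $\varphi$ is continuous, fixes $X_{\Delta - g}$, stays inside $X_\Delta$ throughout (this is where the hypothesis on $f$ enters), and lands in $X_{\Delta - g}$ at $t = 1$. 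Your argument instead identifies the closed star of $g$ as the join $\langle g \cup \{v\}\rangle \ast L$, performs the elementary collapse of the free pair $(g, g\cup\{v\})$ on the simplex factor, pushes this across the join, and glues with the identity on $\Delta - g$. The paper's approach is entirely self-contained and needs no background on collapses or joins; yours is more structural and makes transparent \emph{why} the retraction exists --- the cone-vertex property of $v$ in $\link_\Delta g$ is exactly what the hypothesis on $f$ buys, and everything else is standard machinery. Both proofs ultimately describe the same geometric motion (sliding $g$ towards $v$), but yours packages it combinatorially while the paper writes down the coordinates.
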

	\begin{proof}
		Suppose $\Delta$ has $n$ vertices $v_1,\dots,v_n$ with $g=\{v_1,\dots,v_k\}$ and $v_{k+1}$ in $f-g$.
		
		Let $X=X_{\Delta}$ be the geometric realization of $\Delta$ in $\RR^n$ and $X_{\Delta-g}$ the geometric realization of $\Delta -g$. For each nonempty face $\sigma =\{v_{i_1},\dots,v_{i_r}\}$ of $\Delta$, we define $X_\sigma$ to be the set $\left\{\lambda_1 e_{i_1}+\dots \lambda_r e_{i_r} : \lambda_1, \dots, \lambda_r > 0, \sum_{j=1}^r \lambda_j = 1\right\} \subset X$, where $\{e_1,\dots,e_n\}$ is the canonical basis of $\RR^n$. Note that $X=\bigcup_{\sigma \in \Delta} X_\sigma$, while $X_{\Delta-g}=\bigcup_{\sigma \in \Delta, \sigma \nsupseteq g} X_\sigma$.
		
		Let $\bp$ be a point in $X$. We may write it as $\bp = \sum_{j=1}^n \lambda_j e_j$ where the coefficients $\lambda_j$ are all nonnegative and sum to $1$. In particular, $\bp$ lies inside $X_\sigma$ for some $\sigma\in \Delta$ containing $g$ if and only if all of $\lambda_1,\dots,\lambda_k$ are positive.
		
		We define $\lambda = \min \{\lambda_1, \dots, \lambda_k\}$. This allows us to rewrite the point $\bp$ as
		\begin{equation}\label{Equation: bp}
			\bp = \lambda(e_1 + \dots + e_k) + \sum_{i=1}^{k} (\lambda_i - \lambda) e_i + \lambda_{k+1} e_{k+1} + y
		\end{equation}
		for some $y$ in the span of $\{e_{k+2},\dots,e_n\}$. Note that the coefficients $(\lambda_i-\lambda)$ are all nonnegative, and (by the definition of $\lambda$) at least one of them is zero. Note also that $\lambda$ itself is nonzero if and only if all of $\lambda_1,\dots,\lambda_k$ are positive, which occurs if and only if $\bp$ is in $X_\sigma$ for some $\sigma$ containing $g$. In other words, we have $\lambda = 0$ if and only if $\bp$ lies inside $X_{\Delta-g}$.
		
		Using this notation for the points in $X$, we can define a function $\varphi: X \times [0,1] \rightarrow \RR^n$ as follows. For $\bp$ as in Equation (\ref{Equation: bp}) and $0\leq t \leq 1$ we define 
		\begin{equation}\label{Equation: varphi}
			\varphi(\bp, t) = \lambda (1-t)(e_1 + \dots + e_k) + \sum_{i=1}^{k} (\lambda_i - \lambda) e_i + (\lambda_{k+1} + k \lambda t) e_{k+1} + y.
		\end{equation}
		
		We claim that $\varphi$ is a deformation retraction from $X$ to $X_{\Delta-g}$.
		
		First, note that $\varphi$ is continuous. Indeed, the function $\lambda=\min \{\lambda_1,\dots,\lambda_k\}$ is continuous in the variables $\lambda_i$, and hence so is each summand in Equation (\ref{Equation: varphi}). Each summand is also continuous in $t$.
		
		Next, we note that $\varphi(*,0)$ is the identity on $X$. Moreover, for $\bp$ in $X_{\Delta-g}$ we have $\lambda=0$, and hence $\varphi(\bp,t)=\bp$ for every $0\leq t \leq 1$.
		
		It remains to show that the image of $\varphi(*,t)$ lies inside $X$ for every value of $t$, and in particular that the image of $\varphi(*,1)$ is $X_{\Delta-g}$.
		
		For the former claim, we note that the sum of the coefficients of $e_1,\dots,e_n$ in the decomposition of $\varphi(\bp,t)$ is the same as the sum of these coefficients in the decomposition of $\bp$ (which is $1$), and all of these coefficients are nonnegative. We may assume that $\bp$ lies inside $X_{\sigma}$ for some face $\sigma$ in $\Delta$ containing $g$ (otherwise $\bp$ lies inside $X_{\Delta-g}$ and we are already done). By our assumption on $g$ and $f$ we have that $\sigma\cup \{v_{k+1}\}$ is also a face of $\Delta$. If $\sigma = \{v_1,\dots,v_k,v_{i_1},\dots,v_{i_r}\}$ for some $k<i_1<\dots<i_r\leq n$, then the vectors in the decomposition of $\varphi(\bp,t)$ with strictly positive coefficients are all contained in $\{e_1,\dots,e_k,e_{i_1},\dots,e_{i_r}\}\cup\{e_{k+1}\}$. We conclude that $\varphi(\bp,t)$ lies inside $X_{\tau}$ for some $\tau \subseteq \sigma \cup \{v_{k+1}\}$, and hence inside $X$.
		
		For the latter claim, we note that the coefficients of the vectors $e_1,\dots, e_k$ in $\varphi(\bp,1)$ are $\lambda_1-\lambda,\dots,\lambda_k-\lambda$, and at least one of these must be zero by the definition of $\lambda$. Thus $\varphi(\bp,1)$ lies inside $X_{\Delta-g}$.
	\end{proof}
	
	In particular we have the following corollary, which will be sufficient for our needs in most (but not all) cases.
	\begin{cor}\label{Corollary: Deformation Retract}
		Let $\Delta$ be a simplicial complex, and let $a$ and $b$ be distinct vertices of $\Delta$ satisfying the following conditions.
		\begin{enumerate}
			\item There is at least one facet of $\Delta$ containing $a$ (i.e. it is not a missing vertex).
			\item Every facet of $\Delta$ containing $a$ also contains $b$.
		\end{enumerate}
		There is a deformation retraction $\Delta \leadsto \Delta - \{a\}$ given by the map $a\mapsto b$.
	\end{cor}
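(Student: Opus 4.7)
The plan is to derive this corollary directly from Lemma \ref{Lemma: Deformation Retract} by choosing an appropriate pair of faces $g \subsetneqq f$ to feed into that lemma.

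The natural choice is $g = \{a\}$ and $f = \{a, b\}$. First I would verify that both are genuine faces of $\Delta$: condition (1) guarantees that $a$ lies in some facet, so $g = \{a\} \in \Delta$; and condition (2) together with (1) ensures that at least one facet of $\Delta$ contains both $a$ and $b$, so $f = \{a,b\} \in \Delta$. The strict inclusion $g \subsetneqq f$ holds because $a$ and $b$ are distinct.

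Next I would check the hypothesis of Lemma \ref{Lemma: Deformation Retract} that every facet of $\Delta$ containing $g$ also contains $f$. A facet contains $g = \{a\}$ precisely when it contains the vertex $a$, and condition (2) of the corollary then forces it to contain $b$ as well, hence to contain $f = \{a,b\}$. So the hypothesis is satisfied.

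Applying Lemma \ref{Lemma: Deformation Retract} therefore yields a deformation retraction $\Delta \rightsquigarrow \Delta - g = \Delta - \{a\}$, obtained by identifying $g$ with a vertex in $f - g$. Since $f - g = \{b\}$ is a single vertex, this identification is precisely the map $a \mapsto b$, which is exactly the map claimed in the corollary. There is no real obstacle here — the whole content is choosing $(g,f) = (\{a\},\{a,b\})$ and translating the two numbered conditions into the single hypothesis of the lemma.
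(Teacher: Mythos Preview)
Your proposal is correct and matches the paper's proof exactly: the paper also sets $g=\{a\}$ and $f=\{a,b\}$, notes that these are faces of $\Delta$ by assumptions (1) and (2), and applies Lemma~\ref{Lemma: Deformation Retract}. You have in fact spelled out more of the verification than the paper does.
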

	\begin{proof}
		This comes from Lemma \ref{Lemma: Deformation Retract}, setting $g=\{a\}$ and $f=\{a,b\}$. Note that $g$ is a face of $\Delta$ by assumption (1), and $f$ is a face of $\Delta$ by assumption (2).
	\end{proof}
	
	\subsection{Graphs and Edge Ideals}\label{Subsection: Graphs}
	A $1$- or $0$-dimensional simplicial complex with no missing vertices contains the same data as a graph. The following graph-theoretic definitions are standard, and fundamental to our work going forwards.
	\begin{defn}\label{Definition: Graph}
		A \textit{graph} $G=(V,E)$ consists of a set of \textit{vertices} $V$ and a set of \textit{edges} $E$ consisting of subsets of $V$ of size 2.
		
		We often denote the vertex set and edge set of a graph $G$ as $V(G)$ and $E(G)$ respectively.
	\end{defn}
	
	\begin{defn}\label{Definition: Graph Theory Key Terminology and Constructions}
		Let $G=(V,E)$ be a graph, and $m$ any positive integer.
		\begin{enumerate}
			\item We define the \textit{complement} $G^c$ of $G$ to be the graph on vertex set $V$ with edge set $\{\{x,y\}: x, y \in V \{x,y\}\notin E\}$.
			\item For a subset $U\subset V$ we define the \textit{induced subgraph} of $G$ to be the graph on vertex set $U$ with edge set $\{\{x,y\}:x,y\in U,\{x,y\}\in E\}$.
			\item A \textit{vertex cover} for $G$ is a subset $U\subset V$ such that for every edge $e\in E$ we have $e\cap U \neq \emptyset$.
			\item A \textit{matching} in $G$ is a collection $M$ of pairwise disjoint edges in $G$. We say $M$ is a \textit{maximal matching} if it is maximal with respect to inclusion.
			\item A \textit{cycle of length} $m$ in $G$ (also called an $m$\textit{-cycle}) is a sequence of edges in $G$ of the form $\{v_1,v_2\},\dots,\{v_{m-1},v_m\},\{v_m,v_1\}$ for some vertices $v_1,\dots,v_m\in V$.
			\item We say a vertex $v\in V$ is \textit{isolated} in $G$ if it is contained in no edges of $G$. We say $v$ is \textit{universal} in $G$ if it is contained in every edge of $G$.
		\end{enumerate}
	\end{defn}
	\begin{ex}
		Let $G$ be the graph on vertex set $[5]$ with edges $\{1,2\}$, $\{1,4\}$, $\{2,3\}$, $\{2,4\}$ and $\{3,4\}$. We can draw $G$ as follows.
		\begin{center}
			\begin{tikzpicture}[scale = 0.8]
				\tikzstyle{point}=[circle,thick,draw=black,fill=black,inner sep=0pt,minimum width=2pt,minimum height=2pt]
				\node (a)[point, label=left: $1$] at (0,0) {};
				\node (b)[point, label=left: $2$] at (0,2) {};
				\node (c)[point, label=right: $3$] at (2,2) {};
				\node (d)[point, label=right: $4$] at (2,0) {};
				\node (e)[point, label=right: $5$] at (3.5,1) {};
				
				\draw (a.center) -- (b.center) -- (c.center) -- (d.center) -- cycle;
				\draw (b.center) -- (d.center);
			\end{tikzpicture}
		\end{center}
		\begin{itemize}
			\item Every edge in $G$ contains either $2$ or $4$ so the set $\{2,4\}$ is a vertex cover for $G$.
			\item The edges $\{1,2\}$ and $\{3,4\}$ form a maximal matching in $G$, as do the edges $\{1,4\}$ and $\{2,3\}$.
			\item $G$ contains two 3-cycles (the cycles $\{1,2\},\{2,4\},\{4,1\}$ and $\{2,3\},\{3,4\},\{4,2\}$) and one 4-cycle (the cycle $\{1,2\},\{2,3\},\{3,4\},\{4,1\}$).
			\item 	The complement $G^c$ of $G$ is the graph with edges $\{1,5\},\{2,5\},\{3,5\},\{4,5\}$ and $\{1,3\}$, which we can draw as follows.
			\begin{center}
				\begin{tikzpicture}[scale = 0.8]
					\tikzstyle{point}=[circle,thick,draw=black,fill=black,inner sep=0pt,minimum width=2pt,minimum height=2pt]
					\node (1)[point, label=left: $1$] at (0,0) {};
					\node (2)[point, label=right: $2$] at (3.4,2) {};
					\node (3)[point, label=left: $3$] at (0,2) {};
					\node (4)[point, label=right: $4$] at (3.4,0) {};
					\node (5)[point, label=above: $5$] at (1.7,1) {};
					
					\draw (1.center) -- (3.center);
					\draw (1.center) -- (5.center);
					\draw (2.center) -- (5.center);
					\draw (3.center) -- (5.center);
					\draw (4.center) -- (5.center);
				\end{tikzpicture} 
			\end{center}
			\item Note that the vertex $5$ is isolated in $G$, and therefore universal in $G^c$.
		\end{itemize}
	\end{ex}
	
	As mentioned above, a graph may be viewed as a $1$-dimensional simplicial complex with no missing vertices. However, there are (at least) two other important ways of associating complexes to graphs, as laid out below.
	
	\begin{defn}\label{Definition: Complexes from Graphs}
		Let $G=(V,E)$ be a graph.
		\begin{enumerate}
			\item We define the \textit{complex of cliques} of $G$, $\Cl(G)$, to be the complex on $V$ whose faces are the \textit{cliques} in $V$ - i.e. those subsets $\{v_1,\dots,v_r\}\subset V$ such that for each $1\leq i < j \leq r$ we have $\{v_i,v_j\}\in E$.
			\item We define the \textit{independence complex} of $G$, $\Ind(G)$, to be the complex on $V$ whose faces are all the \textit{independent subsets} of $V$ - i.e. those subsets $\{v_1,\dots,v_r\}\subset V$ such that for each $1\leq i < j \leq r$ we have $\{v_i,v_j\}\notin E$.
		\end{enumerate}
	\end{defn}
	\begin{rem}\label{Remark: Ind(G)=Cl(G^c)}
		The independence complex of $G$ is equal to the complex of cliques of $G^c$ (i.e. $\Ind(G)=\Cl(G^c)$).
	\end{rem}
	
	We now review some examples of important graphs, which will be crucial to our work going forward.
	\begin{defn}\label{Definition: Important Graphs}
		Let $m$ be any positive integer.
		\begin{enumerate}
			\item The \textit{complete graph} on $m$ vertices, denoted $K_m$, is the graph on vertex set $[m]$ with all possible edges (i.e. its edge set is ${[m] \choose 2} = \{U\subseteq [m] : |U| = 2\}$).
			We sometimes denote the graph $K_2$, consisting of a single edge, as $L$.
			\item The \textit{empty graph} on $m$ vertices, denoted $E_m$, is the graph on vertex set $[m]$ with no edges (i.e. its edge set is $\emptyset$).
			\item The \textit{cyclic graph of order} $m$, denoted $C_m$, is the graph on vertex set $[m]$ with edge set $\{\{1,2\},\{2,3\},\dots,\{m-1,m\},\{m,1\}\}$ (i.e. the edges of $C_m$ form a single cycle of length $m$).
		\end{enumerate}
	\end{defn}
	\begin{rem}\label{Remark: Graph Isomorphism Classes}
		Just as with our convention for the vertex sets of $\Delta^j$ and $\partial \Delta^j$ in Remark \ref{Remark: j-Simplex Isomorphism classes}, our convention for the vertex sets of the graphs $K_m$, $E_m$ and $C_m$ is arbitrary. Again, these graphs are better understood as isomorphism classes, and we could have chosen a graph on any vertex set of size $m$ as our representative for these classes. Similarly, we could have chosen any $m$-cycle for the edges of $C_m$.
	\end{rem}
	\begin{ex}
		For the case $m=5$ we have
		\begin{center}
			\begin{tabular}{ c c c c c }
				
				\begin{tikzpicture}[scale = 1.25]
					\tikzstyle{point}=[circle,thick,draw=black,fill=black,inner sep=0pt,minimum width=2pt,minimum height=2pt]
					\node (a)[point] at (0,1) {};
					\node (b)[point] at (0.951,0.309) {};
					\node (c)[point] at (0.588,-0.809) {};
					\node (d)[point] at (-0.588,-0.809) {};
					\node (e)[point] at (-0.951,0.309) {};
					
					\draw (a.center) -- (b.center) -- (c.center) -- (d.center) -- (e.center) -- cycle;
					\draw (a.center) -- (c.center) -- (e.center) -- (b.center) -- (d.center) -- cycle;
				\end{tikzpicture} &  & \begin{tikzpicture}[scale = 1.25]
					\tikzstyle{point}=[circle,thick,draw=black,fill=black,inner sep=0pt,minimum width=2pt,minimum height=2pt]
					\node (a)[point] at (0,1) {};
					\node (b)[point] at (0.951,0.309) {};
					\node (c)[point] at (0.588,-0.809) {};
					\node (d)[point] at (-0.588,-0.809) {};
					\node (e)[point] at (-0.951,0.309) {};
					
				\end{tikzpicture}
				& & \begin{tikzpicture}[scale = 1.25]
					\tikzstyle{point}=[circle,thick,draw=black,fill=black,inner sep=0pt,minimum width=2pt,minimum height=2pt]
					\node (a)[point] at (0,1) {};
					\node (b)[point] at (0.951,0.309) {};
					\node (c)[point] at (0.588,-0.809) {};
					\node (d)[point] at (-0.588,-0.809) {};
					\node (e)[point] at (-0.951,0.309) {};
					
					\draw (a.center) -- (b.center) -- (c.center) -- (d.center) -- (e.center) -- cycle;
				\end{tikzpicture}\\
				$K_5$& & $E_5$ & & $C_5$
			\end{tabular}
		\end{center}
	\end{ex}
	
	Just as the Stanley-Reisner correspondence allows us to assign a unique squarefree monomial ideal to every simplicial complex, there is a similar assignment of ideals to graphs.
	\begin{defn}\label{Definition: Edge Ideals}
		Let $G$ be a graph on vertex set $[n]$. We define the \textit{edge ideal} of $G$ to be the ideal $$I(G)=\langle x_ix_j : \{i,j\}\in E(G) \rangle$$
	\end{defn}
	\begin{rem}\label{Remark: Edge Ideals Arbitrary Vertex Sets}
		As with Stanley-Reisner ideals we extend this definition to graphs $G$ on arbitrary vertex sets $V$ of size $n$ by first choosing an ordering on $V$. Again, all possible orderings give us the same ideal up to isomorphism, which we refer to as `\textit{the}' edge ideal.
	\end{rem}
	
	While Stanley-Reisner ideals account for all squarefree monomial ideals, edge ideals account for all squarefree monomial ideals generated entirely in degree $2$. Because the edge ideal of a graph \textit{is} a squarefree monomial ideal, Proposition \ref{Proposition: SR 1-1 Correspondence} tells us that it must be the Stanley-Reisner ideal of some complex. In fact, it is the Stanley-Reisner ideal of the independence complex of the graph.
	\begin{prop}\label{Proposition: I(G) = I_Ind(G)}
		Let $G$ be a graph on vertex set $[n]$. We have $$I(G) = I_{\Ind(G)}$$
	\end{prop}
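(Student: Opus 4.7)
The plan is to show that the two ideals have the same set of minimal generators, using the definitions directly and the fact (already noted in the text after Proposition \ref{Proposition: SR 1-1 Correspondence}) that the minimal generators of a Stanley--Reisner ideal $I_\Delta$ correspond bijectively to the minimal nonfaces of $\Delta$.

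First I would unpack the independence complex $\Ind(G)$: by Definition \ref{Definition: Complexes from Graphs}, a subset $\sigma\subseteq [n]$ is a face of $\Ind(G)$ precisely when it contains no edge of $G$, i.e.\ for every pair $i<j$ in $\sigma$ we have $\{i,j\}\notin E(G)$. Consequently, $\sigma$ is a \emph{nonface} of $\Ind(G)$ if and only if it contains some edge of $G$ as a subset. This makes the minimal nonfaces easy to identify: a minimal nonface is a subset that contains an edge but has no proper subset containing an edge, which forces the minimal nonface itself to be an edge. Conversely, every edge $\{i,j\}\in E(G)$ is visibly a nonface of $\Ind(G)$, and its only proper subsets (singletons and $\emptyset$) are independent, so each edge is already minimal. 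Thus the set of minimal nonfaces of $\Ind(G)$ is exactly $E(G)$.

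Next I would translate this into generators. By Definition \ref{Definition: SR Ideals}, $I_{\Ind(G)}$ is generated by the monomials $\mathbf{x}^\sigma$ as $\sigma$ ranges over nonfaces of $\Ind(G)$, and the minimal such generators correspond to minimal nonfaces. By the previous paragraph, these minimal generators are exactly the monomials $x_ix_j$ for $\{i,j\}\in E(G)$, which is precisely the generating set of $I(G)$ in Definition \ref{Definition: Edge Ideals}. Hence $I(G)=I_{\Ind(G)}$.

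I do not anticipate any real obstacle: the statement is essentially a translation between two equivalent encodings (edge set of a graph versus minimal nonfaces of its independence complex), and the only bookkeeping point is to observe that edges of $G$ are automatically \emph{minimal} among nonfaces of $\Ind(G)$, which is immediate because every singleton is an independent set.
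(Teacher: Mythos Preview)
Your proof is correct and follows essentially the same approach as the paper's: both identify the nonfaces of $\Ind(G)$ as the subsets containing an edge, deduce that the minimal nonfaces are precisely the edges of $G$, and conclude via the correspondence between minimal nonfaces and minimal generators of the Stanley--Reisner ideal. Your version simply spells out the minimality argument in more detail than the paper does.
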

	\begin{proof}
		The nonfaces of $\Ind(G)$ are all the sets $U\subseteq V$ which contain an edge of $G$. Thus the minimal nonfaces of $\Ind(G)$ are the edges of $G$. The result follows.
	\end{proof}
	
	
	\chapter{Dimensions of Betti Cones}\label{Chapter: Dimensions}
	In this chapter we study the dimensions of the Betti cones on Stanley-Reisner ideals and Edge ideals, which we denote by $\Dn$ and $\Cn$ respectively. We also fix a positive integer $h < n$, and introduce the subcones $\Dnh$ and $\Cnh$, generated respectively by Stanley-Reisner ideals and edge ideals of height $h$. Recall that these cones live inside the infinite-dimensional vector space $\Vn=\bigoplus_{d\in \ZZ} \QQ^{n+1}$.
	
	We make use of some notational short-hands throughout: for a complex $\Delta$ we use $\beta(\Delta)$ to denote the Betti diagram $\beta(I_\Delta)$; and similarly for a graph $G$ we use $\beta(G)$ to denote the Betti diagram $\beta(I(G))$.
	\begin{rem}\label{Remark: Ambiguity in beta notation}
		As observed at the start of Section \ref{Subsection: Graphs}, every graph is a $1$- or $0$-dimensional simplicial complex. Nevertheless, using the notation $\beta(G)$ to represent the diagram of the edge ideal of $G$ (as opposed to the Stanley-Reisner ideal obtained from viewing $G$ as a simplicial complex) should not result in ambiguity. It will usually be clear both from context and from our choice of notation whether we are viewing the structure in question as a graph or a complex, and hence whether its corresponding ideal is obtained using the edge ideal or Stanley-Reisner construction. In the few cases where there is a genuine risk of misinterpretation we will use the notation $\beta(I_\Delta)$ and $\beta(I(G))$ for clarity.
	\end{rem}
	
	Specifically we prove the following results about our four cones.
	
	\begin{thm}\label{Theorem: dimDn}
		Let $\Dn$ be the Betti cone generated by all Betti diagrams of Stanley-Reisner ideals in $R$. We have
		$$\dim \Dn = \frac{n(n+1)}{2}.$$
	\end{thm}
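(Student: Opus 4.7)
The plan is to prove the theorem in two parts: establishing the upper bound $\dim \Dn \leq n(n+1)/2$ by identifying the support of any Betti diagram of a Stanley-Reisner ideal, then matching it by exhibiting $n(n+1)/2$ linearly independent diagrams in $\Dn$.

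For the upper bound, I would combine Hochster's formula (Theorem~\ref{Theorem: Hochster's Formula}) with the fact that $\widetilde{H}_j(\Delta_U) = 0$ unless $-1 \le j \le |U| - 2$. Substituting $|U|=d$ and $j=d-i-2$ into Hochster's formula forces $i+1 \le d \le n$ whenever $\beta_{i,d}(I_\Delta)\neq 0$; together with the bound $i \le n-1$ coming from the Hilbert Syzygy Theorem (Remark~\ref{Remark: Proj-dim}), every diagram in $\Dn$ is supported on the set $S = \{(i,d) : 0 \le i \le n-1,\ i+1 \le d \le n\}$. A direct count gives $|S| = \sum_{i=0}^{n-1}(n-i) = n(n+1)/2$, so $\Dn$ lies inside a coordinate subspace of this dimension.

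For the lower bound I would construct one explicit diagram for each position in $S$, organized by the row index $r = d-i$ of the Betti display. For row $1$ (the linear strand), let $\Delta^{(k)}$ be the complex on $[n]$ with missing vertices $\{1,\dots,k\}$ and the full simplex on $\{k+1,\dots,n\}$, for $1 \le k \le n$. Then $I_{\Delta^{(k)}} = (x_1,\dots,x_k)$ has a Koszul resolution, so $\beta(I_{\Delta^{(k)}})$ is supported only in row $1$ with $\beta_{k-1,k}\neq 0$. For each row $r \ge 2$ and each $r \le m \le n$, take the complex $\Skel_{r-2}(\Delta^{m-1}) \ast \Delta^{n-m-1}$ on $[n]$; the simplex factor attaches free vertices, which by Remark~\ref{Remark: Betti diagrams of ideals only depend on generators} does not alter the Betti diagram. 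Hochster's formula combined with the identity
\[
\widetilde{H}_{r-2}\bigl(\Skel_{r-2}(\Delta^{d-1})\bigr) \cong \KK^{\binom{d-1}{r-1}} \quad \text{for } d \ge r,
\]
with all other reduced homologies vanishing, yields a diagram supported only in row $r$ whose maximum nonzero column is exactly $m-r$.

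The key verification is this homology calculation for skeletons of simplices, which is standard: the only possibly nonzero reduced homology of $\Skel_r(\Delta^{d-1})$ lives in the top degree, so its dimension is pinned down by the reduced Euler characteristic. Granted this, linear independence of the constructed diagrams is immediate: diagrams from different rows have disjoint supports, and within each row the diagrams form an upper-triangular system in the maximum nonzero column. Totalling gives $n + \sum_{r=2}^{n}(n-r+1) = n(n+1)/2$ diagrams, matching the upper bound. The main obstacle is not conceptual but organizational: carefully tracking indices through Hochster's formula, the join construction, and the two separate families (Koszul for $r=1$, skeleton for $r \ge 2$).
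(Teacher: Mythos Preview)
Your proposal is correct and follows essentially the same route as the paper. The paper's lower bound uses the single family $\Skel_{d-i-2}([d])$ uniformly across all $(i,d)\in\II(\Dn)$ together with an ``$(i,d)_\prec$-initial'' ordering argument, whereas you split off the top row and describe those complexes via Koszul resolutions of $(x_1,\dots,x_k)$; but since $I_{\Skel_{-1}([k])}$ \emph{is} $(x_1,\dots,x_k)$, your two families are in fact the same family as the paper's, and your ``disjoint rows plus upper-triangular within each row'' is exactly the paper's initiality argument in different words.
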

	
	\begin{thm}\label{Theorem: dimCn}
		Let $\Cn$ be the Betti cone generated by all Betti diagrams of edge ideals in $R$. We have
		$$\dim \Cn = \begin{cases}
			r^2  &\text{ if } n=2r \\
			r^2 + r &\text{ if } n=2r+1.\\
		\end{cases}$$
	\end{thm}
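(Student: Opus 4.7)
The plan is to follow the two-step template of the chapter: establish an explicit coordinate subspace of $\Vn$ containing $\Cn$ by compiling linear restrictions that every edge ideal satisfies, and then exhibit a spanning family of linearly independent Betti diagrams. Throughout I write $\II(\Cn) = \{(i,d) : i+2 \leq d \leq \min(2i+2,\, n)\}$.

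For the upper bound, let $G$ be a graph on at most $n$ vertices and set $\beta = \beta(I(G))$. Since $I(G)$ is generated purely in degree $2$, we have $\beta_{0,d}=0$ for $d\neq 2$, and minimality of the graded resolution forces $\beta_{i,d}=0$ whenever $d<i+2$. The Taylor complex on the edges of $G$ furnishes a (non-minimal) free resolution of $R/I(G)$ whose generator in homological degree $i+1$ has multidegree equal to the $\mathrm{lcm}$ of some $i+1$ edges, of total degree at most $2(i+1)$; since minimal Betti numbers are bounded entry-wise by Taylor Betti numbers, this gives $\beta_{i,d}=0$ for $d>2i+2$. Finally, Hochster's Formula (Theorem \ref{Theorem: Hochster's Formula}) forces $\beta_{i,d}=0$ for $d>n$. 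Hence $\Cn$ lies in the coordinate subspace spanned by the positions in $\II(\Cn)$, and the enumeration $\sum_{i=0}^{n-2}\min(i+1,\,n-i-1)$, split according to the parity of $n$, yields $|\II(\Cn)|=r^2$ when $n=2r$ and $|\II(\Cn)|=r^2+r$ when $n=2r+1$.

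For the lower bound, for each $(i,d)\in\II(\Cn)$ set $\ell = d-i-2$, so $0 \leq \ell \leq \min(i,\,n-i-2)$, and consider the graph
\[G^{(i,\ell)} = \ell K_2 \sqcup K_{i-\ell+2}\]
on exactly $d$ vertices. The ideal $I(G^{(i,\ell)}) = I(\ell K_2) + I(K_{i-\ell+2})$ lives in two disjoint variable sets, so the minimal resolution of $R/I(G^{(i,\ell)})$ is the tensor product of the Koszul resolution of $I(\ell K_2)$ (supported on $(j,2j)$ for $0 \leq j \leq \ell$) and the pure $2$-linear resolution of $R/I(K_{i-\ell+2})$ (supported on $(0,0)$ and $(k,k+1)$ for $1 \leq k \leq i-\ell+1$). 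Tracking the Minkowski sum of these two supports, the unique pairing landing at homological degree $i+1$ is $(\ell,2\ell) + (i-\ell+1,\,i-\ell+2)$, which shows that $\pdim I(G^{(i,\ell)}) = i$ and that the only nonzero $\beta_{i,d'}(I(G^{(i,\ell)}))$ occurs at $d' = i+\ell+2$, with value $i-\ell+1 > 0$.

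Linear independence of $\{\beta(I(G^{(i,\ell)})) : (i,\,i+\ell+2)\in\II(\Cn)\}$ then follows from a leading-term argument: in any relation $\sum c_{(i,\ell)}\,\beta(I(G^{(i,\ell)})) = 0$, pick the largest $i_0$ with some $c_{(i_0,\ell)} \neq 0$; terms with $i > i_0$ have vanishing coefficient and terms with $i < i_0$ contribute nothing in homological degree $i_0$ (since $\pdim I(G^{(i,\ell)}) = i$), so only the $G^{(i_0,\ell)}$ contribute there, each with a unique nonzero entry at a distinct column $d = i_0 + \ell + 2$, forcing every $c_{(i_0,\ell)}$ to vanish and contradicting maximality. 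The main obstacle is the tensor-product bookkeeping in the previous paragraph: proving \emph{uniqueness} of the pairing landing at homological degree $i+1$ requires a careful case analysis ruling out all other combinations of supports from the two factor resolutions, so that row $i$ of the Betti diagram contains only one entry.
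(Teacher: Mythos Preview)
Your argument is correct and follows essentially the same strategy as the paper: the same family of graphs $K_{i-\ell+2}\sqcup \ell K_2$ is used to produce a linearly independent set of diagrams indexed by $\II(\Cn)$, with the same vanishing region giving the upper bound. The paper obtains the bound $d\leq 2i+2$ by citing \cite{Katz} rather than via the Taylor complex, and computes the relevant Betti diagrams inductively through a suspension formula (Corollary~\ref{Corollary: Betti G+L}) together with a row-first ordering $\prec$ rather than via tensor products and a column-first leading-term argument, but these are cosmetic differences.
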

	
	\begin{thm}\label{Theorem: dimDnh}
		Let $\Dnh$ be the Betti cone generated by all Betti diagrams of Stanley-Reisner ideals in $R$ of height $h$. We have
		$$\dim \Dnh = \frac{n(n-1)}{2}-\frac{h(h-1)}{2}+1.$$
	\end{thm}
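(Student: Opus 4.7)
The plan is to sandwich $\dim\Dnh$ between matching upper and lower bounds: exhibit a subspace $\WW\subseteq\Vn$ of dimension $\frac{n(n-1)}{2}-\frac{h(h-1)}{2}+1$ containing every Betti diagram of a Stanley-Reisner ideal of height $h$, and then produce enough such diagrams to span $\WW$.

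For the upper bound, I would compile three restrictions on any $\beta=\beta(I_\Delta)\in\Dnh$: (a) the Herzog--K\"uhl equations $\HK_j(\beta)=0$ for $1\leq j\leq h-1$ (Lemma \ref{Lemma: HK Equations} and Remark \ref{Remark: HK Equations}); (b) the regularity cap $\beta_{i,d}=0$ whenever $d-i>n-h+1$, obtained by applying Hochster's Formula to the fact that every induced subcomplex $\Delta_U$ satisfies $\dim\Delta_U\leq\dim\Delta=n-h-1$; and (c) the missing-vertex cap $\beta_{i,i+1}=0$ for $i\geq h$, obtained from the identity $\beta_{i,i+1}(I_\Delta)=\binom{m}{i+1}$ of Remark \ref{Remark: Missing Vertices Row 0 of Betti Table} together with the observation that the $m$ missing vertices of $\Delta$ force a monomial prime of height $m$ inside $I_\Delta$ and hence $m\leq h$. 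Enumerating the positions $(i,d)$ surviving (b) and (c), then subtracting the $h-1$ independent relations from (a), gives exactly the claimed dimension.

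The main work lies in the lower bound: showing the restrictions (a)--(c) are the \emph{only} linear relations, by constructing Stanley-Reisner ideals of height $h$ whose Betti diagrams span $\WW$. A natural first family is $\{\Skel_{n-h-1}([n-k])\}_{0\leq k\leq h}$, regarded as complexes on $[n]$ with $k$ missing vertices; Hochster's Formula shows their diagrams are pure and supported on the two extremal rows of $\WW$ (the row $d=i+n-h+1$ from the top-dimensional homology of the skeleton, and the row $d=i+1$ from the missing vertices). To activate the interior positions $(i,d)$ with $1<d-i<n-h+1$, I would enrich this family with disjoint unions and joins of smaller skeleta, using the K\"unneth Formula (Proposition \ref{Proposition: Kunneth Formula}) and the Mayer--Vietoris Sequence (Proposition \ref{Proposition: MVS}) to compute the homologies of the induced subcomplexes that contribute via Hochster's Formula. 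The main obstacle will be precisely these interior positions, where neither (b) nor (c) is saturated: the diagrams must be chosen with enough care that, organised by $(d-i,-i)$ or a similar triangular ordering, each new one contributes a genuinely new direction of $\WW$.
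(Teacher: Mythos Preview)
Your upper bound is correct and matches the paper's exactly: the three restrictions (a), (b), (c) cut out the space $\WW(\Dnh)$ of the right dimension, and your arguments for each are sound. (One minor point: your justification of (c) via ``forcing a monomial prime of height $m$'' is slightly awkward---the cleaner argument, which the paper uses, is that a facet of size $n-h$ leaves room for at most $h$ missing vertices.)

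The lower bound, however, is where your proposal has a genuine gap. You correctly identify it as the main work and correctly propose the strategy of building an initial family with respect to a triangular ordering, but your execution is too vague to constitute a proof. Two specific problems:

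First, your proposed ordering ``by $(d-i,-i)$ or similar'' will not work without refinement. The issue is that the natural way to push a codimension-$h'$ complex to codimension $h$ (adding $h-h'$ isolated vertices, which is essentially what your ``disjoint unions'' will amount to) adds nonzero Betti numbers \emph{to the right} of the existing ones in the same row, as the paper shows in Proposition~\ref{Proposition: Betti adding Empty}. This destroys initiality under the naive ordering. The paper resolves this by introducing a genuinely different ordering $\prec_h$ (Definition~\ref{Definition: Order Dnhtilde}) that reverses the first $h$ positions in each row beyond row~1, precisely to accommodate this rightward smearing.

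Second, your proposed constructions are not concrete enough. Your first family $\{\Skel_{n-h-1}([n-k])\}$ gives only $h+1$ diagrams, living on the two extremal rows; you then gesture at ``disjoint unions and joins of smaller skeleta'' for everything else. The paper's actual construction is much more specific: it proceeds by induction on $h$, using suspension $\Delta\mapsto S\Delta$ (which increases codimension by $1$ and shifts initiality from $(i-1,d-2)_{\prec_{h-1}}$ to $(i,d)_{\prec_h}$, Lemma~\ref{Lemma: SDelta initiality Dnh}) to transport diagrams from $\Dnhtilde[n][h-1]$ to $\Dnhtilde$. The base case $h=1$ uses starred simplices $\Delta^{d-2}\Star_{d-i-2}v$ (Corollary~\ref{Corollary: Betti Diagram of Starred Simplex}). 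The inductive step leaves three families of boundary indices unaccounted for---row~1, column~0, and the initial segment of row~2---and each requires its own explicit construction: $\Delta^{d-h-1}+\Skel_0([h])$, $\partial\Delta^{d-1}+\Skel_0([h-1])$, and $\Cl(C_d)+\Skel_0([h+2-d])$ respectively (Corollaries~\ref{Corollary: Betti simplex-adding-empty}--\ref{Corollary: Betti cyclic-adding-Em}). None of these is a join or disjoint union of skeleta in any obvious sense; the cyclic-graph family in particular is not something your heuristic would produce.

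Your plan is not wrong in spirit---adding isolated vertices \emph{is} the key device, and suspension \emph{is} a join---but the interior positions require a level of combinatorial precision that your proposal does not yet reach.
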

	
	\begin{thm}\label{Theorem: dimCnh}
		Let $\Cnh$ be the Betti cone generated by all Betti diagrams of edge ideals in $R$ of height $h$. We have $$\dim \Cnh = h(n-h-1)+1.$$
	\end{thm}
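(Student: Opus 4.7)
I would follow the two-stage template used for Theorems \ref{Theorem: dimDn}, \ref{Theorem: dimCn} and \ref{Theorem: dimDnh}: first identify a subspace $\calV\subseteq\Vn$ of dimension $h(n-h-1)+1$ containing every diagram of an edge ideal of height $h$, then match this upper bound by constructing edge ideals of height $h$ whose diagrams span $\calV$.

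\textbf{Upper bound.} I compile the linear relations satisfied by every diagram $\beta(I(G))$ of an edge ideal of height $h$. Most come directly from earlier observations: degree-$2$ generation forces $\beta_{0,d}=0$ for $d\neq 2$; the ring has only $n$ variables, so $\beta_{i,d}=0$ for $d>n$; independence complexes of graphs have no missing vertices, so by Remark \ref{Remark: Missing Vertices Row 0 of Betti Table} one has $\beta_{i,i+1}=0$; the regularity bound for edge ideals of height $h$ established earlier in the chapter forces $\beta_{i,d}=0$ when $d-i>\min\{h,n-h\}+1$; and Remark \ref{Remark: HK Equations} supplies the $h-1$ Herzog-K\"uhl relations $\HK_j(\beta)=0$ for $1\leq j\leq h-1$. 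The constraint peculiar to edge ideals (not present for arbitrary squarefree monomial ideals) is $\beta_{i,d}=0$ whenever $d>2i+2$: via Hochster's Formula this reduces to the topological fact that a graph on fewer than $2j+2$ vertices has independence complex with vanishing $\Hred_j$, which in turn follows from a standard Mayer-Vietoris argument on a closed neighborhood of a vertex. A direct bookkeeping shows the positions left unconstrained by these vanishing conditions form a ``fat triangle'' of $h(n-h)$ coordinates, so after factoring out the $h-1$ independent Herzog-K\"uhl relations one obtains $\dim \calV = h(n-h-1)+1$.

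\textbf{Lower bound.} To show $\dim \WW(\Cnh) \geq h(n-h-1)+1$ I would exhibit enough edge ideals of height $h$ for their diagrams to span $\calV$. Three families of anchors present themselves. The complete bipartite graph $K_{h,n-h}$ and its subgraphs are $2$-linear and populate the bottom row $d-i=2$. The disjoint-edge graph $hL$ padded by isolated vertices has a Koszul complete-intersection edge ideal whose diagram realizes the tight diagonal entries $(i,2i+2)$ of the fat triangle, in particular hitting its top row. Cycles and cycle complements, with Betti diagrams computed in \cite{Jacques} and \cite{Ramos}, padded by isolated vertices then populate intermediate rows. These blocks can be modified by adding isolated vertices (which preserves $h$ and shifts the diagram predictably, as explored elsewhere in this chapter) and by taking disjoint unions (corresponding to tensor products of resolutions).

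\textbf{Main obstacle.} I expect the bulk of the work to lie in the lower-bound construction. Producing one edge ideal per allowed position is not enough: the diagrams must be of full rank modulo the Herzog-K\"uhl relations, and the restriction to graphs precludes the starring-type operations available in the Stanley-Reisner setting of Theorem \ref{Theorem: dimDnh}. I would therefore proceed by induction on $h$ (or on $n-h$), anchoring the extreme rows of the fat triangle with $I(K_{h,n-h})$ and $I(hL)$, and filling in intermediate positions by disjoint unions of cycles, paths and single edges combined with the isolated-vertex shift, with the diagrams ordered so that a lower-triangular rank argument separates them.
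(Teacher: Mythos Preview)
Your upper bound is correct and matches the paper exactly: the indexing set $\II(\Cnh)$ has size $h(n-h)$, and the $h-1$ Herzog--K\"uhl relations cut this down to $h(n-h-1)+1$.

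For the lower bound you miss the paper's key shortcut. Rather than build a fresh family of graphs, the paper observes that the $(i,d)_{\prec_h}$-initial complexes $\Gamma^h_{i,d}$ already constructed in the proof of Theorem~\ref{Theorem: dimDnh} are, for every index $(i,d)\in\II(\Cnh)$, independence complexes of graphs. Concretely: the row-$1$ complexes $\Delta^{d-h-1}+\Skel_0([h])$ are $\Cl(K_{d-h}+E_h)$; the row-$2$ complexes $\Cl(C_d)+\Skel_0([h+2-d])$ are $\Cl(C_d+E_{h+2-d})$; and every other $\Gamma^h_{i,d}$ is a suspension $S\Gamma^{h-1}_{i-1,d-2}$, which on the graph side is just $G^{h-1}_{i-1,d-2}+L$. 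The only complexes from the $\Dnh$ proof that are \emph{not} clique complexes are the column-$0$ examples $\partial\Delta^{d-1}+\Skel_0([h-1])$ for $d\geq 3$, and these indices lie outside $\II(\Cnh)$ by the constraint $d\leq 2i+2$. So the entire lower bound is a one-paragraph corollary of Theorem~\ref{Theorem: dimDnh}.

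Your proposed ingredients are not far off in spirit---cycle complements and disjoint edges do appear---but you are assembling them from scratch with a different organizing principle ($K_{h,n-h}$, $hL$, tensor products), and you correctly anticipate that verifying full rank this way would be laborious. The paper's reuse of the $\prec_h$-initiality framework from $\Dnh$ eliminates that labour entirely: linear independence is inherited automatically, and the only thing left to check is that each complex is a clique complex.
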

	
	\begin{rem}\label{Remark: Dntilde and Dnhtilde Dimension}
		Some definitions of simplicial complexes require that the complexes have no missing vertices. This is equivalent to requiring that the generators of the corresponding Stanley-Reisner ideals have degree at least two. We do not use this convention in this thesis, and as such, our cones $\Dn$ and $\Dnh$ contain diagrams corresponding to Stanley-Reisner ideals with generators of degree one.
		
		However, for the benefit of readers who prefer this convention, we denote the cone generated by diagrams of Stanley-Reisner ideals whose generators have degree at least two by $\Dntilde$. The dimension of this slightly smaller cone can be computed as $$\dim \Dntilde = \dim \Dn - n = \frac{n(n-1)}{2}.$$ Similarly, we let $\Dnhtilde$ denote the cone generated by diagrams of Stanley-Reisner ideals of height $h$ whose generators have degree at least two. The dimension of this cone is given by $$\dim \Dnhtilde = \dim \Dnh - h = \frac{n(n-1)}{2}-\frac{h(h+1)}{2}+1.$$
		
		Note that the generators of edge ideals all have degree exactly two, and thus we have the following inclusion of cones.
		\begin{center}
			\begin{tikzcd}
				\Cn \arrow[r, phantom, "\subset"]
				& \Dntilde \arrow[r, phantom, "\subset"]
				& \Dn \\
				\Cnh  \arrow[u, phantom, sloped, "\subset"] \arrow[r, phantom, "\subset"]
				& \Dnhtilde \arrow[u, phantom, sloped, "\subset"] \arrow[r, phantom, "\subset"]
				& \Dnh \arrow[u, phantom, sloped, "\subset"]
			\end{tikzcd}
		\end{center}
		
		We will show how our proof for the dimension of $\Dn$ can be modified to account for the subcone $\Dntilde$ in remarks along the way; and we will present our proofs for the cones $\Dnh$ and $\Dnhtilde$ together.
	\end{rem}
	
	\section{Motivation}\label{Subsection: Motivation Dimensions}
	There are a number of motivations for finding the dimensions of these cones. Most immediately, computing dimension is a crucial first step in understanding any cone: not only because the cone's dimension is a fundamental property in itself, but also because it may prove useful in further classifications. Specifically, any cone $\calC$ in $\Vn$ can be classified as the intersection of a number of defining halfspaces (see \cite{convex}, Theorem 4.5). These are halfspaces $\calH$ of $\Vn$ of the form $\{\beta \in \Vn : f(\beta) \geq 0 \}$ for some linear form $f(\beta)$, such that
	\begin{itemize}
		\item $\calC$ is completely contained in $\calH$.
		\item The intersection of $\calC$ with the boundary of $\calH$ (that is, the hyperplane $\{\beta \in \Vn : f(\beta) = 0 \}$)  has dimension $\dim \calC - 1$.
	\end{itemize}
	To determine whether a given halfspace $\calH$ of $\Vn$ is a defining halfspace for $\calC$, we need to check whether it satifies these two properties; and for the latter property, this is far easier to do if we know the dimension of $\calC$.
	
	Moreover, our proofs for these results will, in each case, require us to find the cone's minimal ambient vector space (i.e. the minimal subspace of $\Vn$ containing the cone). In this sense our four theorems above can be seen as analogues to Corollary \ref{Corollary: BS Conjecture 1 Vector Space} of the first Boij-S\"{o}derberg Conjecture, which told us the minimal subspace of $\Vn$ containing the cone $\calC(\ba,\bfb)$.
	
	Corollary \ref{Corollary: BS Conjecture 1 Vector Space} is, in essence, a result about the linear dependency relations satisfied by the diagrams in the cone $\calC(\ba,\bfb)$. Indeed, it tells us that (up to linear combination) the Herzog-K\"{u}hl Equations are the \textit{only} relations satisfied by all diagrams of Cohen-Macaulay modules of codimension $h$. Similarly, in finding the minimal subspaces for our cones, we will present (again, up to linear combination) every linear dependency relation which is satisfied by all diagrams in the cone.
	
	\section{Key Tools}
	Our proofs for all four results will proceed in roughly the same way: first, we bound the dimension from above by finding a finite-dimensional subspace of $\Vn=\bigoplus_{d\in \ZZ} \QQ^{n+1}$ containing the cone; then we bound it from below by exhibiting an appropriately sized linearly independent set of Betti diagrams lying in the cone (thus, in the process, showing that the subspace we found is in fact minimal). We begin by presenting some key tools which will help us in constructing our linearly independent sets of Betti diagrams.
	
	\subsection{Diagrams of Specific Families of Complexes and Graphs}\label{Subsection: Betti diagrams of Complexes and Graphs}
	
	In this section we present the Betti diagrams of some specific families of complexes and graphs, along with an important proposition and corollary (Lemma \ref{Lemma: Betti S-Delta} and Corollary \ref{Corollary: Betti G+L}) which help us construct Betti diagrams of slightly more complicated complexes and graphs. 
	
	In what follows, we use the notation $\Delta^j$, $\partial \Delta^j$ and $\Skel_r(V)$ from Definitions \ref{Definition: Simplex and Boundary of Simplex} and \ref{Definition: Skeleton Complex}, for integers $j\geq 0$ and $r\geq -1$ and vertex set $V$; and the notation $K_m$, $E_m$, $C_m$ and $L$ from Definition \ref{Definition: Important Graphs}, for positive integers $m$. We also write the disjoint union of graphs and complexes in additive notation, so for example we would use $C_5+2L$ to denote the graph
	\begin{center}
		\begin{tikzpicture}[scale = 1.25]
			\tikzstyle{point}=[circle,thick,draw=black,fill=black,inner sep=0pt,minimum width=2pt,minimum height=2pt]
			\node (a)[point] at (0,1) {};
			\node (b)[point] at (0.951,0.309) {};
			\node (c)[point] at (0.588,-0.809) {};
			\node (d)[point] at (-0.588,-0.809) {};
			\node (e)[point] at (-0.951,0.309) {};
			
			\node (f)[point] at (1.5,1) {};
			\node (g)[point] at (1.5,-0.809) {};
			
			\node (h)[point] at (2.5,1) {};
			\node (i)[point] at (2.5,-0.809) {};
			
			\draw (a.center) -- (b.center) -- (c.center) -- (d.center) -- (e.center) -- cycle;
			\draw (f.center) -- (g.center);
			\draw (h.center) -- (i.center);
		\end{tikzpicture}
	\end{center}
	
	We begin by finding the homology (and hence the Betti diagrams) of skeleton complexes, which will allow us to derive the diagrams of a number of other families of complexes and graphs.
	\begin{lem}\label{Lemma: Homology of Skeleton Complexes}
		Let $m$ and $r$ be integers and set $\Delta = \Skel_r([m])$ with $m\geq 1$ and $r\geq -1$. For any integer $i\geq -1$ we have $$\dim_{\KK} \Hred_i(\Delta)=\begin{cases}
			{m-1 \choose r+1}	& \text{ if } i=r\\
			0 					& \text{ otherwise.}
		\end{cases}$$
	\end{lem}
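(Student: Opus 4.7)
The plan is to induct on $m \geq 1$, with the claim naturally splitting into a concentration statement (all reduced homologies vanish outside degree $r$) and a dimension count (Pascal's identity) at degree $r$.

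For base cases, I would first handle $m = 1$ by direct inspection: $\Skel_{-1}([1])$ is the irrelevant complex $\{\emptyset\}$ (with $\Hred_{-1} = \KK$ and all other reduced homologies zero), while $\Skel_r([1])$ for $r \geq 0$ is the $0$-simplex, which is acyclic. In both subcases this matches $\binom{0}{r+1}$. I would also dispose of the case $r = -1$ for arbitrary $m$ the same way, since $\Skel_{-1}([m]) = \{\emptyset\}$ and $\binom{m-1}{0} = 1$.

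For the inductive step, fix $m \geq 2$ and $r \geq 0$, write $\Delta = \Skel_r([m])$, and decompose $\Delta = A \cup B$ where $A$ is the closed star of the vertex $m$ (i.e.\ $A = \{m\} \ast \lkds[\{m\}]$) and $B = \Delta - \{m\}$. Then $B = \Skel_r([m-1])$ by definition, and a direct check shows $\lkds[\{m\}] = \Skel_{r-1}([m-1])$: a subset $\tau \subseteq [m-1]$ lies in the link exactly when $|\tau \cup \{m\}| \leq r+1$, i.e.\ $|\tau| \leq r$. The intersection $A \cap B$ is exactly this link, which is nonempty (it contains $\emptyset$), and $A$ is a cone, hence acyclic by Corollary \ref{Corollary: Homology of Cone}.

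Applying the reduced Mayer-Vietoris sequence (Proposition \ref{Proposition: MVS}) with $\Hred_*(A) = 0$, I would extract for each $i$ the exact segment
\begin{equation*}
\Hred_i(\Skel_{r-1}([m-1])) \to \Hred_i(\Skel_r([m-1])) \to \Hred_i(\Delta) \to \Hred_{i-1}(\Skel_{r-1}([m-1])) \to \Hred_{i-1}(\Skel_r([m-1])).
\end{equation*}
By the inductive hypothesis the first group is nonzero only for $i = r-1$ and the second only for $i = r$, so in every degree at least one of the source or target of the leftmost and rightmost maps vanishes, forcing both maps to be zero. Thus the sequence breaks into short exact sequences
\begin{equation*}
0 \to \Hred_i(\Skel_r([m-1])) \to \Hred_i(\Delta) \to \Hred_{i-1}(\Skel_{r-1}([m-1])) \to 0
\end{equation*}
for every $i$. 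These force $\Hred_i(\Delta) = 0$ whenever $i \neq r$, and for $i = r$ Pascal's identity gives $\dim \Hred_r(\Delta) = \binom{m-2}{r+1} + \binom{m-2}{r} = \binom{m-1}{r+1}$.

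The only mildly delicate point is checking that the connecting maps in Mayer-Vietoris really do vanish; this follows cleanly from the induction hypothesis because the two candidate non-zero degrees ($r-1$ for the link, $r$ for the deletion) never coincide. I do not expect a substantial obstacle beyond careful bookkeeping of the edge cases where $\binom{m-1}{r+1}$ is trivially zero (e.g.\ $r \geq m-1$), which are consistent with both the Mayer-Vietoris analysis and the fact that $\Skel_r([m]) = \Delta^{m-1}$ is acyclic in that range.
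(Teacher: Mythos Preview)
Your proposal is correct and takes essentially the same approach as the paper: both decompose $\Delta$ into the closed star of the vertex $m$ and the deletion $\Delta - \{m\}$, identify the intersection as $\Skel_{r-1}([m-1])$, and apply Mayer--Vietoris together with the inductive hypothesis and Pascal's identity. Your treatment is slightly more explicit about why the connecting maps vanish and about the edge case $r \geq m-1$, but the argument is the same.
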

	\begin{proof}
		We proceed by double induction on $r\geq -1$ and $m\geq 1$. Note that for any $m\geq 1$, the complex $\Skel_{-1}([m])$ is equal to the irrelevant complex $\{\emptyset\}$, which has only $\nth[st]{(-1)}$ homology of dimension ${m - 1 \choose 0}=1$. And for any $r\geq 0$, the complex $\Skel_{r}([1])$ is a single point. This is acyclic, which means it has $\nth{r}$ homology of dimension ${0 \choose r+1}=0$. This proves the result in the base cases $r=-1$ and $m=1$.
		
		Now suppose that $m>1$,  and define, for $\Delta=\Skel_r([m])$,
		\begin{align*}
			A &= \langle \sigma \in \Delta : m \in \sigma \rangle \\
			B &= \langle \sigma \in \Delta : m \notin \sigma \rangle\
		\end{align*}
		so that $\Delta$ decomposes into the union $A\cup B$. Note that $A$ is a cone over the vertex $m$, and is therefore acyclic. Also $B$ contains every subset of $[m-1]$ of dimension less than or equal to $r$, so it is equal to $\Skel_r([m-1])$. By induction this has only $\nth{r}$ homology of dimension ${m-2 \choose r+1}$. Meanwhile the intersection $A\cap B$ contains every subset of $[m-1]$ of dimension less than or equal to $r-1$, so it is equal to $\Skel_{r-1}([m-1])$, which has only $\nth[st]{(r-1)}$ homology of dimension ${m-2 \choose r}$ by induction.
		
		The Mayer-Vietoris Sequence (Proposition \ref{Proposition: MVS}) gives us the following short exact sequence
		\begin{equation*}
			0 \rightarrow \Hred_r(B) \rightarrow \Hred_r(\Delta) \rightarrow \Hred_{r-1}(A\cap B)\rightarrow 0\, .
		\end{equation*}
		Thus $\Delta$ has only $\nth{r}$ homology, and the dimension of this homology is ${m-2 \choose r+1}+{m-2 \choose r}={m-1 \choose r+1}$.
	\end{proof}
	
	\begin{lem}\label{Lemma: Betti Skeleton Complexes}
		Let $m$ and $r$ be integers and set $\Delta = \Skel_r([m])$. For any integers $i$ and $d$, we have
		$$\beta_{i,d}(\Delta)=
		\begin{cases}
			{m \choose i+r+2}	{i+r+1 \choose r+1}	& \text{if } 0 \leq i \leq m-r-2 \text{ and } d=i+r+2\\
			0 		& \text{otherwise.}
		\end{cases}$$
		In particular, the diagram $\beta(\Skel_r([m]))$ has the following shape.
		\begin{equation*}
			\begin{bmatrix}
				\beta_{0,r+2} & \dots & \beta_{m-r-2,m}\\
			\end{bmatrix}
		\end{equation*}
	\end{lem}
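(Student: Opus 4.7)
The plan is to apply Hochster's Formula (Theorem \ref{Theorem: Hochster's Formula}) directly, using the homology computation of Lemma \ref{Lemma: Homology of Skeleton Complexes} to evaluate the summands.

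First, I would fix a subset $U \subseteq [m]$ of size $d$ and identify the induced subcomplex $\Delta_U$. Since $\Delta = \Skel_r([m])$ consists of all subsets of $[m]$ of cardinality at most $r+1$, the induced subcomplex $\Delta_U$ is precisely the set of subsets of $U$ of cardinality at most $r+1$, that is, $\Skel_r(U) \cong \Skel_r([d])$. Here I would split into two cases according to whether $d \le r+1$ or $d \ge r+2$. In the former case $\Delta_U$ is the full simplex $\Delta^{d-1}$, which is acyclic, so it contributes nothing to the sum in Hochster's Formula. In the latter case, Lemma \ref{Lemma: Homology of Skeleton Complexes} (applied with the parameter $m$ there replaced by $d$) gives
$$\dim_\KK \Hred_j(\Delta_U) = \begin{cases} \binom{d-1}{r+1} & \text{if } j = r \\ 0 & \text{otherwise.} \end{cases}$$

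Next I would substitute into Hochster's Formula. The summand is nonzero only when $d-i-2 = r$, i.e.\ when $d = i+r+2$, so $\beta_{i,d}(\Delta) = 0$ for all other pairs. When $d = i+r+2$, we also need $d \le m$, which is exactly the constraint $i \le m - r - 2$; combined with $i \ge 0$ this gives the stated range. In that case each of the $\binom{m}{d} = \binom{m}{i+r+2}$ subsets $U$ contributes $\binom{d-1}{r+1} = \binom{i+r+1}{r+1}$, so
$$\beta_{i,i+r+2}(\Delta) = \binom{m}{i+r+2}\binom{i+r+1}{r+1}.$$

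There is no serious obstacle here: the whole argument is a routine unpacking of Hochster's Formula in light of the homology of skeleton complexes already established. The only small point of care is the boundary case $d \le r+1$, where one must recognise that $\Skel_r(U) = \Delta^{d-1}$ is the \emph{full} simplex and therefore acyclic (so contributes nothing), as opposed to $\Skel_{d-1}([d])$ for which the formula of Lemma \ref{Lemma: Homology of Skeleton Complexes} would instead give a nonzero value at $j = d-1$. Keeping this case distinction straight is the only thing to watch. The shape assertion then follows immediately: the nonzero entries of $\beta(\Skel_r([m]))$ occur exactly at positions $(i, i+r+2)$ for $0 \le i \le m-r-2$, which is a single row beginning in column $0$ at degree $r+2$.
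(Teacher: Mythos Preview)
Your proof is correct and follows exactly the same approach as the paper: identify each induced subcomplex $\Delta_U$ as $\Skel_r([d])$, apply Lemma \ref{Lemma: Homology of Skeleton Complexes}, and read off the Betti numbers via Hochster's Formula. Your case distinction for $d \le r+1$ is unnecessary, since the binomial coefficient $\binom{d-1}{r+1}$ already vanishes in that range (and your parenthetical worry about $\Skel_{d-1}([d])$ is misplaced: the lemma's formula gives $\binom{d-1}{d} = 0$ there too).
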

	\begin{proof}
		For any subset $U\subseteq [m]$ of size $d$, the induced subcomplex $\Delta_U$ is isomorphic to $\Skel_r([d])$. Thus Hochster's Formula tells us that $$\beta_{i,d}(I_\Delta) =\sum_{U\in {[m] \choose d}} \dim_\KK \Hred_{d-i-2}(\Skel_r([d])).$$ By Lemma \ref{Lemma: Homology of Skeleton Complexes}, this must be zero unless $d=r+i+2$, in which case we have ${m \choose r+i+2}$ subsets $U$ of $[m]$ of size $d$, each of which gives us an induced subcomplex with homology of dimension ${i+r+1 \choose r+1}$. The result follows.
	\end{proof}
	
	\begin{cor}\label{Corollary: Betti Diagram of Simplex}
		Let $j$ be a nonnegative integer, and let $\Delta^j$ and $\partial \Delta^j$ denote, respectively, the $j$-simplex and its boundary, on vertex set $[j+1]$. We have
		\begin{enumerate}
			\item $\beta(\Delta^j)=0$.
			\item $\beta_{i,d}(\partial \Delta^j)=\begin{cases}
				1 & \text{ if } (i,d) = (0,j+1)\\
				0 & \text{ otherwise.}
			\end{cases}$
			
			In particular, $\beta(\partial \Delta^j)$ has the shape $\begin{bmatrix} \beta_{0,j+1} \end{bmatrix}$.
		\end{enumerate}
	\end{cor}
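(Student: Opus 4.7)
The plan is to derive both parts as immediate specializations of Lemma~\ref{Lemma: Betti Skeleton Complexes}, using the observations recorded in Remark~\ref{Remark: Skeleton Complexes}, namely that $\Delta^j=\Skel_j([j+1])$ and $\partial\Delta^j=\Skel_{j-1}([j+1])$.

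For part~(1), I would set $m=j+1$ and $r=j$ in Lemma~\ref{Lemma: Betti Skeleton Complexes}. The lemma gives a nonzero Betti number only for indices $i$ in the range $0\le i \le m-r-2 = -1$, which is empty. Hence every entry of $\beta(\Delta^j)$ vanishes, as expected (the full simplex corresponds to the zero ideal, whose minimal free resolution is trivial).

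For part~(2), I would set $m=j+1$ and $r=j-1$. Now the admissible range $0\le i\le m-r-2=0$ consists only of $i=0$, forcing $d=i+r+2=j+1$. The lemma then yields
\[
\beta_{0,j+1}(\partial\Delta^j)={j+1\choose j+1}{j\choose j}=1,
\]
and all other Betti numbers are zero, giving precisely the claimed shape $\begin{bmatrix}\beta_{0,j+1}\end{bmatrix}$.

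No step here is a genuine obstacle; the entire content is bookkeeping with the formula just proved. The only mild care required is verifying the boundary case $j=0$, where $\partial\Delta^0=\Skel_{-1}([1])$ is the irrelevant complex $\{\emptyset\}$: the formula with $r=-1$ still produces $\beta_{0,1}=1$, consistent with the fact that this complex corresponds to the principal squarefree monomial ideal $\langle x_1\rangle$, whose minimal graded free resolution is $0\to R(-1)\to\langle x_1\rangle\to 0$.
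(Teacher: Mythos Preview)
Your proof is correct and takes essentially the same approach as the paper, which simply notes that $\Delta^j=\Skel_j([j+1])$ and $\partial\Delta^j=\Skel_{j-1}([j+1])$ and invokes Lemma~\ref{Lemma: Betti Skeleton Complexes}. You have merely supplied the explicit parameter substitutions and the $j=0$ sanity check that the paper leaves implicit.
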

	\begin{proof}
		These results follow immediately from Lemma \ref{Lemma: Betti Skeleton Complexes}, noting that $\Delta^j=\Skel_j([j+1])$ and $\partial \Delta^j = \Skel_{j-1}([j+1])$.
	\end{proof}
	\begin{rem}
		These results can also be seen from the fact that $\Delta^j$ has no missing faces and hence its Stanley-Reisner ideal is the zero ideal; and $\partial \Delta^j$ has a single missing face of degree $j+1$ and hence its Stanley-Reisner ideal admits the resolution $R(-(j+1))\rightarrow I_{\partial \Delta^j}$.
	\end{rem}
	
	We now move on to the Betti diagrams of our three important families of graphs.
	\begin{prop}\label{Proposition: Betti-En}
		Let $m$ be a positive integer, and let $E_m$ denote the empty graph on $m$ vertices. We have $\beta(E_m)=0$.
	\end{prop}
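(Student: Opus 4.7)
The statement is essentially immediate from the definitions, so the plan is short. The graph $E_m$ has vertex set $[m]$ and empty edge set, so by Definition \ref{Definition: Edge Ideals} the edge ideal $I(E_m) = \langle x_i x_j : \{i,j\} \in E(E_m)\rangle$ is generated by the empty set, i.e.\ $I(E_m)$ is the zero ideal. The zero ideal admits the trivial free resolution, so all of its graded Betti numbers vanish, and therefore $\beta(E_m) = 0$. That is the entire proof.

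If one prefers a route through the combinatorics, an alternative plan is to invoke Proposition \ref{Proposition: I(G) = I_Ind(G)}, which identifies $I(E_m)$ with $I_{\Ind(E_m)}$. Since $E_m$ has no edges, every subset of $V(E_m)$ is independent, so $\Ind(E_m)$ is the full simplex $\Delta^{m-1}$ on $[m]$. Then Corollary \ref{Corollary: Betti Diagram of Simplex}(1) gives $\beta(\Delta^{m-1}) = 0$, and we are done. There is no real obstacle here; the only subtlety worth flagging is the convention that the edge ideal of an edgeless graph is the zero ideal (as opposed to, say, the irrelevant ideal), but this is fixed by Definition \ref{Definition: Edge Ideals}.
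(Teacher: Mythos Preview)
Your proposal is correct and matches the paper exactly: the paper's proof is your second (combinatorial) route via $\Ind(E_m)=\Delta^{m-1}$ and Corollary \ref{Corollary: Betti Diagram of Simplex}, and the paper then notes in a remark precisely your first (direct) argument that $I(E_m)$ is the zero ideal.
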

	\begin{proof}
		The independence complex of $E_m$ is $\Delta^{m-1}$, so the result follows from Corollary \ref{Corollary: Betti Diagram of Simplex}.
	\end{proof}
	\begin{rem}
		Again, this result can also be seen directly from the fact that the edge ideal of $E_m$ is the zero ideal.
	\end{rem}
	
	\begin{prop}\label{Proposition: Betti-Kn}
		Let $m$ be a positive integer, and let $K_m$ denote the complete graph on $m$ vertices. For any integers $i$ and $d$, we have
		$$\beta_{i,d}(K_m)=
		\begin{cases}
			(i+1){m\choose i+2} & \text{if } 0\leq i\leq m-2 \text{ and } d=i+2\\
			0 & \text{otherwise.}
		\end{cases}$$
		In particular, the diagram $\beta(K_m)$ has the following shape.
		\begin{equation*}
			\begin{bmatrix}
				\beta_{0,2} & \dots & \beta_{m-2,m}\\
			\end{bmatrix}
		\end{equation*}
	\end{prop}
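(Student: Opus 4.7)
The plan is to reduce this computation to the already-established Betti diagram of a skeleton complex (Lemma \ref{Lemma: Betti Skeleton Complexes}) via the independence-complex correspondence.

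First, I would invoke Proposition \ref{Proposition: I(G) = I_Ind(G)} to rewrite $I(K_m)$ as the Stanley-Reisner ideal $I_{\Ind(K_m)}$, so that $\beta(K_m) = \beta(I_{\Ind(K_m)})$. Next, I would identify the independence complex $\Ind(K_m)$ explicitly: since every pair of distinct vertices in $K_m$ forms an edge, the only independent subsets of $[m]$ are $\emptyset$ and the singletons $\{1\},\dots,\{m\}$. Thus $\Ind(K_m) = \Skel_0([m])$.

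With this identification in hand, the rest is a direct substitution. Applying Lemma \ref{Lemma: Betti Skeleton Complexes} with $r = 0$, we obtain
\[
\beta_{i,d}(K_m) = \beta_{i,d}(\Skel_0([m])) = \begin{cases} \binom{m}{i+2}\binom{i+1}{1} & \text{if } 0 \leq i \leq m-2 \text{ and } d = i+2,\\ 0 & \text{otherwise,}\end{cases}
\]
and since $\binom{i+1}{1} = i+1$, this gives exactly the formula claimed.

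There is no real obstacle here: the entire content has been packaged into the earlier lemma on skeleton complexes, and the proposition amounts to the combinatorial observation $\Ind(K_m) = \Skel_0([m])$. If one wanted a self-contained derivation without appealing to Lemma \ref{Lemma: Betti Skeleton Complexes}, one could instead apply Hochster's Formula (Theorem \ref{Theorem: Hochster's Formula}) directly: for any $U \in \binom{[m]}{d}$ the induced subcomplex $\Ind(K_m)_U$ is $d$ disjoint vertices, which has reduced homology $\KK^{d-1}$ concentrated in degree $0$, forcing $d-i-2 = 0$, i.e.\ $d = i+2$, and yielding $\binom{m}{i+2}(i+1)$ as the sum over all such $U$.
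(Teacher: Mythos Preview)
Your proof is correct and matches the paper's approach exactly: identify $\Ind(K_m) = \Skel_0([m])$ and invoke Lemma \ref{Lemma: Betti Skeleton Complexes} with $r=0$. The alternative direct Hochster computation you sketch is also valid but not needed.
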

	\begin{proof}
		The independence complex of $K_m$ is equal to $\Skel_0([m])$, so the result follows from Lemma \ref{Lemma: Betti Skeleton Complexes}. 
	\end{proof}
	
	\begin{prop}\label{Proposition: Betti-Cn}
		Let $m$ be a positive integer, and suppose $G$ is the complement of the cyclic graph on $m$ vertices, $C_m$. For any integers $i$ and $d$, we have
		\begin{equation*}
			\beta_{i,d}(G)=
			\begin{cases}
				\frac{m(i+1)}{m-i-2}{m-2\choose i+2}& \text{if } 0\leq i \leq m-4 \text{ and } d=i+2\\
				1 & \text{if } (i,d) = (m-3,m) \\
				0 & \text{otherwise.}
			\end{cases}
		\end{equation*}
		In particular, the diagram $\beta((C_m)^c)$ has the following shape.
		\begin{equation*}
			\begin{bmatrix}
				\beta_{0,2} & \dots & \beta_{m-4,m-2} & \\
				& & &\beta_{m-3,m}\\ 
			\end{bmatrix}
		\end{equation*}
	\end{prop}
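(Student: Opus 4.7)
The plan is to compute $\beta((C_m)^c)$ via Hochster's Formula applied to the Stanley–Reisner ideal of the independence complex $\Ind((C_m)^c)$. By Proposition \ref{Proposition: I(G) = I_Ind(G)} and Remark \ref{Remark: Ind(G)=Cl(G^c)}, we have $I((C_m)^c) = I_{\Ind((C_m)^c)} = I_{\Cl(C_m)}$. Assuming $m \geq 4$ (so that $C_m$ is triangle-free), the clique complex $\Cl(C_m)$ is just the one-dimensional simplicial complex $\Delta$ on $[m]$ whose edges are the edges of the cycle. Hochster's Formula then reduces the problem to computing $\dim_\KK \Hred_{d-i-2}(\Delta_U)$ for subsets $U \subseteq [m]$ of size $d$, and summing.

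The key step is a case split on the induced subcomplexes $\Delta_U$. If $U = [m]$, then $\Delta_U = C_m$ is a single topological cycle, so $\Hred_1(\Delta_U) \cong \KK$ and all other reduced homology vanishes; forcing $d - i - 2 = 1$, i.e.\ $(i,d) = (m-3, m)$, contributes $\beta_{m-3,m} = 1$. If $U \subsetneq [m]$, the vertices of $[m] \setminus U$ break the cycle into $k(U)$ maximal arcs of consecutive vertices in $U$, and $\Delta_U$ is a disjoint union of $k(U)$ paths. Each path is contractible, so $\Delta_U$ has only $0^\text{th}$ reduced homology, of dimension $k(U) - 1$. Hence nonzero contributions from proper subsets occur only when $d - i - 2 = 0$, i.e.\ $d = i + 2$, giving the claimed shape.

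For the remaining cases $0 \leq i \leq m-4$, $d = i+2$, we have
\[
\beta_{i, i+2}(I_\Delta) = \sum_{|U| = d} (k(U) - 1) \;=\; \left(\sum_{|U|=d} k(U)\right) - \binom{m}{d}.
\]
To evaluate $\sum_{|U|=d} k(U)$, count each arc by its cyclically leftmost vertex: $k(U) = \sum_{v \in [m]} \mathbb{1}[v \in U,\, v-1 \notin U]$ (indices mod $m$). Swapping the order of summation, for each fixed $v$ the number of $d$-subsets $U$ containing $v$ but not $v-1$ is $\binom{m-2}{d-1}$, so $\sum_{|U|=d} k(U) = m \binom{m-2}{d-1}$. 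Thus $\beta_{i,i+2} = m\binom{m-2}{d-1} - \binom{m}{d}$, which simplifies (using $dm - d^2 - m + 1 = (d-1)(m-d-1)$) to $\tfrac{m(d-1)}{m-d}\binom{m-2}{d} = \tfrac{m(i+1)}{m-i-2}\binom{m-2}{i+2}$, matching the stated formula.

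The conceptually interesting step is the case split and the observation that proper induced subcomplexes of $C_m$ are disjoint unions of paths; once that is in hand the remainder is a short double-counting exercise and an algebraic simplification. The only real obstacle is bookkeeping, together with the small cases ($m \leq 3$): for $m = 3$ one has $\Cl(C_3) = \Delta^2$ and the ideal is zero, so the proposition should be read with the implicit assumption $m \geq 4$ (consistent with the shape $(c_p,\ldots,c_0) = (m, m-2, \ldots, 2)$ requiring $m - 3 \geq 1$).
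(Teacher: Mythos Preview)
Your proof is correct and complete. The paper does not actually prove this result: its entire proof reads ``See Theorem 2.3.3 in \cite{Ramos}'', deferring to an external reference. You have instead supplied a direct, self-contained argument via Hochster's Formula applied to $I_{\Cl(C_m)}$, which is a genuinely different (and more informative) route.

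The structure of your argument is sound: the identification of $\Ind((C_m)^c)$ with the $1$-skeleton $C_m$ for $m\geq 4$, the case split on $U=[m]$ versus $U\subsetneq[m]$, the observation that proper induced subcomplexes are disjoint unions of paths with $\Hred_0$ of dimension $k(U)-1$, and the double-counting of arc-starting vertices giving $\sum_{|U|=d}k(U)=m\binom{m-2}{d-1}$ are all correct. Your algebraic simplification of $m\binom{m-2}{d-1}-\binom{m}{d}$ to $\frac{m(d-1)}{m-d}\binom{m-2}{d}$ via the factorisation $dm-d^2-m+1=(d-1)(m-d-1)$ also checks out. Your remark on the small cases $m\leq 3$ is apt: the proposition as stated fails for $m=3$ (where the formula predicts $\beta_{0,3}=1$ but the ideal is zero), and the paper only ever invokes it for $m\geq 4$.
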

	\begin{proof}
		See Theorem 2.3.3 in \cite{Ramos}.
	\end{proof}
	
	We end with some results about how the operations of coning and suspension affect (or rather, in the former case do \textit{not} affect) Betti diagrams. In particular, Lemma \ref{Lemma: Betti S-Delta} and Corollary \ref{Corollary: Betti G+L} will be crucial to our construction of Betti diagrams in this section, as they allow us to take a diagram we understand already and increase its regularity by exactly 1.
	
	\begin{lem}\label{Lemma: Betti C-Delta}
		Let $\Delta$ be a simplicial complex and $C\Delta$ the cone over $\Delta$. We have $\beta(\Delta)=\beta(C\Delta)$.
	\end{lem}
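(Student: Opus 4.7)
The plan is to compare the minimal generators of $I_\Delta$ and $I_{C\Delta}$, and then invoke Remark \ref{Remark: Betti diagrams of ideals only depend on generators} to conclude that the Betti diagrams agree. Write $V=V(\Delta)$, let $v\notin V$ be the apex vertex, so that $C\Delta$ has vertex set $V\sqcup\{v\}$. First I would observe that $v$ lies in some facet of every face of $C\Delta$ that permits it, so a subset $\sigma\cup\{v\}$ of $V\sqcup\{v\}$ is a face of $C\Delta$ iff $\sigma\in\Delta$.

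From this I would deduce that the minimal nonfaces of $C\Delta$ coincide with those of $\Delta$. Indeed, any nonface $\tau\subseteq V\sqcup\{v\}$ of $C\Delta$ containing $v$ must have $\tau-\{v\}\notin\Delta$, so $\tau-\{v\}$ is already a nonface of $C\Delta$ strictly contained in $\tau$; hence $\tau$ is not minimal. Conversely, any nonface of $C\Delta$ disjoint from $v$ is exactly a nonface of $\Delta$, with the same notion of inclusion. So the minimal nonface sets coincide, which means $I_\Delta$ and $I_{C\Delta}$ have the same squarefree monomial minimal generators in degree equal to the size of the corresponding nonface.

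Finally, $I_\Delta$ sits in $R=\KK[x_1,\dots,x_n]$ while $I_{C\Delta}$ sits in $S=R[x_{n+1}]$. By Remark \ref{Remark: Betti diagrams of ideals only depend on generators}, the Betti diagram of an ideal is unchanged under extension of the ambient polynomial ring by faithfully flat base change; since $I_{C\Delta}=I_\Delta S$, we conclude $\beta(C\Delta)=\beta(I_{C\Delta})=\beta(I_\Delta)=\beta(\Delta)$.

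There is no real obstacle here; the only thing to be careful about is the distinction between $I_\Delta$ and $I_{C\Delta}$ living in different polynomial rings, which is precisely what Remark \ref{Remark: Betti diagrams of ideals only depend on generators} is set up to handle. An alternative, equally short route would use Hochster's Formula directly: for $U\subseteq V\sqcup\{v\}$, if $v\in U$ then $(C\Delta)_U$ is a cone and hence acyclic by Corollary \ref{Corollary: Homology of Cone}, contributing nothing, while if $v\notin U$ then $(C\Delta)_U=\Delta_U$, so the sum in Theorem \ref{Theorem: Hochster's Formula} for $\beta_{i,d}(I_{C\Delta})$ reduces to the sum for $\beta_{i,d}(I_\Delta)$.
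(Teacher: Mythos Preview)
Your proposal is correct and follows essentially the same approach as the paper: identify that the minimal nonfaces of $C\Delta$ coincide with those of $\Delta$, then invoke Remark \ref{Remark: Betti diagrams of ideals only depend on generators}. You simply supply more detail for the minimal-nonface claim than the paper does, and your alternative route via Hochster's Formula is a valid extra observation not present in the paper's proof.
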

	\begin{proof}
		The minimal nonfaces of $C\Delta$ are the same as the minimal nonfaces of $\Delta$. This means that their Stanley-Reisner ideals have the same generators, and thus have the same Betti diagrams by Remark \ref{Remark: Betti diagrams of ideals only depend on generators}.
	\end{proof}
	
	\begin{cor}\label{Corollary: Betti Graphs With Isolated Vertices}
		Let $G$ be a graph and let $G+v$ denote the graph obtained by adding a single isolated vertex $v$ to $G$. We have $\beta(G+v) = \beta(G)$.
	\end{cor}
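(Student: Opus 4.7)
The plan is to reduce this corollary to the preceding lemma (Lemma \ref{Lemma: Betti C-Delta}) via the relationship between edge ideals and independence complexes established in Proposition \ref{Proposition: I(G) = I_Ind(G)}.

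First I would observe that, since $v$ is isolated in $G+v$, every subset of $V(G+v)$ of the form $\sigma \cup \{v\}$ with $\sigma \subseteq V(G)$ is independent in $G+v$ if and only if $\sigma$ is independent in $G$. Thus the independent sets of $G+v$ are precisely the sets of the form $\sigma$ or $\sigma \cup \{v\}$ for $\sigma \in \Ind(G)$, which is exactly the face set of the cone $C(\Ind(G))$ over $v$. Hence $\Ind(G+v) = C(\Ind(G))$.

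Combining this with Proposition \ref{Proposition: I(G) = I_Ind(G)} and Lemma \ref{Lemma: Betti C-Delta} gives
\[
\beta(I(G+v)) \;=\; \beta(I_{\Ind(G+v)}) \;=\; \beta(I_{C(\Ind(G))}) \;=\; \beta(I_{\Ind(G)}) \;=\; \beta(I(G)),
\]
as desired. Alternatively, and more directly, one can invoke Remark \ref{Remark: Betti diagrams of ideals only depend on generators}: the edge ideals $I(G)$ and $I(G+v)$ have literally the same generators (no edge involves $v$), so passing from the ambient ring for $G$ to the ambient ring for $G+v$ is exactly the faithfully flat extension considered in that remark, and therefore does not alter the Betti diagram.

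There is no real obstacle here; the substance of the result is absorbed into the cited lemma and remark. The only point to take care of is the identification $\Ind(G+v) = C(\Ind(G))$, which is immediate from the definition of isolated vertex.
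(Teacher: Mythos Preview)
Your proof is correct and follows the same approach as the paper: identify $\Ind(G+v)$ with the cone $C(\Ind(G))$ and then invoke Lemma~\ref{Lemma: Betti C-Delta}. Your added justification of the identification and the alternative argument via Remark~\ref{Remark: Betti diagrams of ideals only depend on generators} are both fine but go beyond what the paper records.
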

	\begin{proof}
		The independence complex $\Ind(G+v)$ is equal to $C\Ind(G)$, so this follows from Lemma \ref{Lemma: Betti C-Delta}. 
	\end{proof}
	
	\begin{lem}\label{Lemma: Betti S-Delta}
		Let $\Delta$ be a simplicial complex and $S\Delta$ the suspension of $\Delta$. For any integers $0\leq i\leq n$ and $d$, we have $$\beta_{i,d}(S\Delta)=\begin{cases}
			\beta_{0,2}(\Delta) + 1 &\text{ if } (i,d)=(0,2)\\
			\beta_{i,d}(\Delta)+\beta_{i-1,d-2}(\Delta) & \text{ otherwise.}
		\end{cases}$$
	\end{lem}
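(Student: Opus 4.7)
The plan is to apply Hochster's Formula (Theorem \ref{Theorem: Hochster's Formula}) directly to $I_{S\Delta}$, and then partition the sum according to the intersection of each induced subcomplex with the two suspension vertices. Write $V=V(\Delta)$ and $V(S\Delta)=V\sqcup\{u,v\}$, where $u,v$ are the two additional vertices used to form the suspension. For a subset $U\subseteq V(S\Delta)$ of size $d$, there are four cases depending on $U\cap\{u,v\}$: (i) $U\subseteq V$; (ii) $U=U'\sqcup\{u\}$ with $U'\subseteq V$; (iii) $U=U'\sqcup\{v\}$ with $U'\subseteq V$; (iv) $U=U'\sqcup\{u,v\}$ with $U'\subseteq V$.

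First I would analyse each case combinatorially. In case (i), $(S\Delta)_U=\Delta_U$ outright; in cases (ii) and (iii), $(S\Delta)_U$ is the cone $C(\Delta_{U'})$, which is acyclic by Corollary \ref{Corollary: Homology of Cone} and therefore contributes $0$ to the Hochster sum; in case (iv), $(S\Delta)_U=S(\Delta_{U'})$, whose homology is shifted by one via Corollary \ref{Corollary: Homology of Suspension}, giving $\dim_\KK\Hred_{d-i-2}(S(\Delta_{U'}))=\dim_\KK\Hred_{d-i-3}(\Delta_{U'})$.

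Second, I would reassemble the surviving contributions. Case (i) yields $\sum_{U\subseteq V,\;|U|=d}\dim_\KK\Hred_{d-i-2}(\Delta_U)=\beta_{i,d}(\Delta)$ by Hochster's Formula again. For case (iv), substituting $|U'|=d-2$ and using the shifted homology produces $\sum_{U'\subseteq V,\;|U'|=d-2}\dim_\KK\Hred_{(d-2)-(i-1)-2}(\Delta_{U'})=\beta_{i-1,d-2}(\Delta)$, which is the desired second summand whenever $(i,d)\neq(0,2)$.

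The only subtlety, and the reason for the special case, lies in the boundary situation $(i,d)=(0,2)$, where case (iv) forces $U'=\emptyset$. Then $\Delta_{U'}=\{\emptyset\}$ is the irrelevant complex, which (see Remark \ref{Remark: void does not equal irrelevant}) has $\dim_\KK\Hred_{-1}=1$, so $S(\Delta_{U'})$ consists of two disjoint points and contributes $\dim_\KK\Hred_0=1$ to the Hochster sum. Since $\beta_{-1,0}(\Delta)$ is not a defined Betti number, this contribution must be recorded separately as the $+1$ appearing in the statement. I do not expect any real obstacle here; the whole argument is a careful bookkeeping of Hochster's Formula across the four cases, with Corollaries \ref{Corollary: Homology of Cone} and \ref{Corollary: Homology of Suspension} doing all the topological work.
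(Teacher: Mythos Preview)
Your proposal is correct and follows essentially the same approach as the paper: both apply Hochster's Formula to $S\Delta$, split the sum over subsets $U$ according to which of the two suspension vertices lie in $U$, discard the cone cases via Corollary~\ref{Corollary: Homology of Cone}, and use Corollary~\ref{Corollary: Homology of Suspension} for the case where both suspension vertices are present. Your explicit treatment of the $(0,2)$ boundary case via $\Delta_\emptyset=\{\emptyset\}$ is exactly the mechanism behind the paper's special case as well.
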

	\begin{proof}
		Let $i$ and $d$ be integers with $0\leq i \leq n$, and let $V$ denote the vertex set of $\Delta$. We can denote the vertex set of $S\Delta$ as $\tV=V\cup \{x,y\}$ for some additional vertices $x$ and $y$.
		
		Suppose $U$ is a subset of $\tV$ of size $d$. If exactly one of the vertices $x$ or $y$ is in $U$, then $(S\Delta)|_U$ is a cone over this vertex, and is hence acyclic. Thus in order for $(S\Delta)|_U$ to have homology, it must either contain neither of the vertices $x$ and $y$, or both of them. In the former case, the induced subcomplex $(S\Delta)|_U$ is equal to $\Delta_U$. In the latter, it is equal to $S(\Delta_{U-\{x,y\}})$.
		
		Thus, by Hochster's Formula and Corollary \ref{Corollary: Homology of Suspension}, we have
		\begin{align*}
			\beta_{i,d}(S\Delta)&=\sum_{U\in {V \choose d}} \dim_\KK \Hred_{d-i-2}(\Delta_U)+ \sum_{U\in {V \choose d-2}} \dim_\KK \Hred_{d-i-2}(S(\Delta_U))\\
			&= \beta_{i,d}(\Delta) +\sum_{U\in {V \choose d-2}}\dim_\KK \Hred_{d-i-3}(\Delta_U)\\
			&= \beta_{i,d}(\Delta) +\sum_{U\in {V \choose d-2}}\dim_\KK \Hred_{(d-2)-(i-1)-2}(\Delta_U)\\
			&=\begin{cases}
				\beta_{0,2}(\Delta) + 1 &\text{ if } (i,d)=(0,2)\\
				\beta_{i,d}(\Delta)+\beta_{i-1,d-2}(\Delta) & \text{ otherwise.}
			\end{cases}
		\end{align*}
	\end{proof}
	
	\begin{cor}\label{Corollary: Betti G+L}
		Let $G$ be a graph, and $L$ the graph consisting of a single edge between two vertices. For any integers $i$ and $d$, we have
		$$\beta_{i,d}(G+L)=\begin{cases}
			\beta_{0,2}(G) + 1 &\text{ if } (i,d)=(0,2)\\
			\beta_{i,d}(G)+\beta_{i-1,d-2}(G) & \text{ otherwise.}
		\end{cases}$$
	\end{cor}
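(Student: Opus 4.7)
The plan is to reduce this to the already-established Lemma \ref{Lemma: Betti S-Delta} by exhibiting the independence complex of $G + L$ as the suspension of the independence complex of $G$. Once this identification is made, the corollary follows immediately by combining Proposition \ref{Proposition: I(G) = I_Ind(G)} (which equates edge ideals with Stanley–Reisner ideals of independence complexes) with the suspension formula for Betti numbers.

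First I would set up notation: let the two vertices of $L$ be $x$ and $y$, so $V(G+L) = V(G) \sqcup \{x, y\}$ and $E(G+L) = E(G) \sqcup \{\{x,y\}\}$. The key combinatorial step is to verify the identity
\begin{equation*}
\Ind(G + L) \;=\; S\bigl(\Ind(G)\bigr).
\end{equation*}
A subset $U \subseteq V(G) \sqcup \{x,y\}$ is independent in $G+L$ precisely when $U \cap V(G)$ is independent in $G$ and $U$ does not contain both $x$ and $y$. Thus every face of $\Ind(G+L)$ has one of the forms $\sigma$, $\sigma \cup \{x\}$, or $\sigma \cup \{y\}$ for some $\sigma \in \Ind(G)$, which is exactly the face description of the join of $\Ind(G)$ with the two-vertex complex $\{x\} + \{y\}$, i.e.\ the suspension $S(\Ind(G))$.

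Having established this identification, I would conclude by writing
\begin{equation*}
\beta(G + L) \;=\; \beta(I_{\Ind(G+L)}) \;=\; \beta(I_{S\Ind(G)})
\end{equation*}
using Proposition \ref{Proposition: I(G) = I_Ind(G)}, and similarly $\beta(G) = \beta(I_{\Ind(G)})$. Applying Lemma \ref{Lemma: Betti S-Delta} to the complex $\Delta = \Ind(G)$ then yields the stated formula for every $(i,d)$.

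There is no real obstacle here: the proof is a one-line reduction once the equality $\Ind(G+L) = S(\Ind(G))$ is noted. The only point requiring any care is the verification of that equality, which amounts to unpacking the definitions of the independence complex, the suspension, and the disjoint union of graphs.
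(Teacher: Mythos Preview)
Your proposal is correct and matches the paper's proof exactly: the paper simply states that $\Ind(G+L) = S\Ind(G)$ and invokes Lemma \ref{Lemma: Betti S-Delta}. You have supplied the verification of that equality in more detail than the paper does, but the approach is identical.
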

	\begin{proof}
		The independence complex $\Ind(G+L)$ is equal to $S \Ind(G)$, so this follows from Lemma \ref{Lemma: Betti S-Delta}.
	\end{proof}

	\subsection{Indexing Sets and Initiality}\label{Subsection: Initiality}
	In the following sections, we establish formulae for the dimensions of our cones. Our proofs proceed by showing that the formulae given are both upper and lower bounds for the dimensions of the cones.
	
	To find a lower bound $l$ for the dimension of a convex cone $\calC$, it suffices to find a linearly independent set of $l$ vectors lying in $\calC$, as this shows that the smallest vector space containing $\calC$ must have dimension at least $l$. In this section, for ease of explanation, we present terminology for a simple condition that ensures linear independence. We then describe our general method for finding the dimensions of our cones, using this condition.
	
	Suppose $\calC$ lives inside the rational vector space $\calV=\bigoplus_{i\in \II} \QQ$ for some finite indexing set $\II$. For a vector $v$ in $\calV$ and an index $i\in \II$, let $v_i$ denote the $i^\text{th}$ coordinate of $v$. Also suppose we have a strict total ordering $\prec$ on $\II$.
	
	\begin{defn}\label{Definition: Initial}
		Let $v\in \calV$ and $i\in \II$. We say $v$ is \textit{$i$-initial with respect to $\prec$} (which we often write as \textit{$i_\prec$-initial}, or just \textit{$i$-initial} when doing so does not result in ambiguity) if
		\begin{enumerate}
			\item The component $v_i$ is nonzero;
			\item For every $j\in \II$ such that $i\prec j$, the component $v_j$ equals zero.
		\end{enumerate}
	\end{defn}
	\begin{ex}
		The rational vector space $\QQ^3$ can be thought of as $\bigoplus_{i\in [3]}\QQ e_i$, where $\{e_1,e_2,e_3\}$ is the canonical basis of $\QQ^3$. Ordering the indexing set $[3]$ in the standard way with $1\prec 2 \prec 3$, we have that the vector $(1,0,0)$ is $1_\prec$-initial, the vector $(1,1,0)$ is $2_\prec$-initial, and the vector $(1,1,1)$ is $3_\prec$-initial.
	\end{ex}
	If $X=\{v^i\}_{i\in \II}$ is a set of vectors lying in $\calC$ such that for each $i\in \II$, $v^i$ is $i_\prec$-initial, then $X$ must be linearly independent. So to find a linearly independent set of vectors in $\calC$, it suffices to define an order $\prec$ on some appropriately sized subset $\JJ \subseteq \II$, and find an $i_\prec$-initial vector lying in $\calC$ for each $i$ in $\JJ$.
	
	In our case, all of our cones live inside the infinite dimensional vector space $\Vn = \bigoplus_{d\in \ZZ} \QQ^{n+1}$. Thus we use the following method to find their dimensions.
	
	
	\begin{mdframed}
		\begin{meth}\label{Method: Dimension of Cones}
			Let $\calC$ be a cone lying inside $\Vn$. The following suffices to demonstrate that $\dim \calC = D$.
			\begin{enumerate}
				\item Find a finite indexing set $\II(\calC)\subset \{0,\dots,n\}\times \ZZ$ such that $\calC \subset \bigoplus_{(i,d)\in \II(\calC)} \QQ$.
				\item Find a subspace $\WW(\calC)\leq\bigoplus_{(i,d)\in \II(\calC)} \QQ$ containing $\calC$, with $\dim \WW(\calC)=D$ (this shows that $\dim \calC \leq D$).
				\item Define an ordering $\prec$ on $\II(\calC)$.
				\item Find a subset $\JJ\subseteq \II(\calC)$ of size $D$, and a set of diagrams $\{B^{i,d}:(i,d)\in \JJ\}$ inside $\calC$ such that for each $(i,d)\in \JJ$, the diagram $B^{i,d}$ is $(i,d)_\prec$-initial (this shows that $\dim \calC \geq D$).
			\end{enumerate}
		\end{meth}
	\end{mdframed}
	
	\section{Dimension of $\Dn$}\label{Subsection: dimDn}
	In this section, we prove Theorem \ref{Theorem: dimDn} on the dimension of the cone $\Dn$, generated by all diagrams of Stanley-Reisner ideals.
	
	\subsection{Upper Bound}\label{Subsection: ub Dn}
	We start by bounding the dimension from above, by following the first two steps of Method \ref{Method: Dimension of Cones}. In particular we work towards finding a finite indexing set $\II(\Dn)\subset \{0,...,n\}\times\ZZ$ of size $\frac{n(n+1)}{2}$ such that for every diagram $\beta\in \Dn$, and any pair of integers $0\leq i\leq n$ and $d$ with $(i,d)\notin \II(\Dn)$, we have $\beta_{i,d}=0$. This will demonstrate that $\Dn$ actually lies inside the finite-dimensional vector space $\WW(\Dn) = \bigoplus_{(i,d)\in \II(\Dn)}\QQ$, and hence we have $\dim \Dn\leq \dim \WW(\Dn) = \frac{n(n+1)}{2}$.
	
	We begin with the following well-known restrictions on the Betti diagrams of Stanley-Reisner ideals.
	
	\begin{prop}\label{Proposition: Inequalities Dn}
		Let $\Delta$ be a simplicial complex on vertex set $[n]$ and fix $\beta=\beta(\Delta)$. Suppose $i$ and $d$ are integers with $0\leq i\leq n$. We have $\beta_{i,d}=0$ if either of the following conditions hold:
		\begin{enumerate}
			\item $d > n$
			\item $d \leq i$
		\end{enumerate}
	\end{prop}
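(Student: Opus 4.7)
The plan is to prove both vanishing statements as direct consequences of Hochster's Formula (Theorem \ref{Theorem: Hochster's Formula}), which tells us that
\[
\beta_{i,d}(I_\Delta) = \sum_{U \in \binom{[n]}{d}} \dim_\KK \Hred_{d-i-2}(\Delta_U).
\]
Each condition forces every summand on the right-hand side to be zero, for different reasons.

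For condition (1), I would simply observe that if $d > n$ then the indexing set $\binom{[n]}{d}$ is empty, since $[n]$ has no subsets of cardinality greater than $n$. The sum is thus vacuous, and $\beta_{i,d} = 0$.

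For condition (2), the argument is homological rather than combinatorial. If $d \leq i$ then $d - i - 2 \leq -2$, so each summand is a reduced homology group $\Hred_j(\Delta_U)$ with $j \leq -2$. Reduced simplicial homology vanishes in all degrees below $-1$ (recalling, as noted in Remark \ref{Remark: void does not equal irrelevant}, that the only nontrivial negative-degree reduced homology arises in degree $-1$ for the irrelevant complex). Hence every term in the sum is zero, giving $\beta_{i,d} = 0$.

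Neither step presents any genuine obstacle; the proposition is really a packaging of two basic facts about Hochster's Formula, namely that the indexing set is a subset of $\binom{[n]}{d}$ and that the homological degree $d-i-2$ must be at least $-1$ for any contribution. I would present the two cases as a brief two-paragraph proof, cite Theorem \ref{Theorem: Hochster's Formula} at the start, and invoke Remark \ref{Remark: void does not equal irrelevant} (or an equivalent basic fact about reduced homology) when dismissing the degrees $j \leq -2$.
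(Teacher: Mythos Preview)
Your proposal is correct and follows essentially the same approach as the paper: both arguments invoke Hochster's Formula and then observe that condition (1) makes the indexing set $\binom{[n]}{d}$ empty, while condition (2) forces the homological degree $d-i-2$ to be at most $-2$, where reduced homology vanishes. The paper's version is simply terser, omitting the explicit reference to Remark~\ref{Remark: void does not equal irrelevant}.
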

	\begin{proof}
		Both of these conditions are immediate results of Hochster's Formula.
		\begin{enumerate}
			\item There are no subsets of $[n]$ of size greater than $n$.
			\item If $d\leq i$ then $d-i-2\leq -2$, so no induced subcomplex of $\Delta$ has homology at degree $d-i-2$.
		\end{enumerate}
	\end{proof}
	\begin{rem}
		Part (2) of this result can also be seen from more elementary considerations. Specifically, the degrees of the generators of a squarefree monomial ideal are at least one, so the result must hold for $i=0$; and the degrees of the maps in any minimal free resolution are at least one, so the general result follows by induction on $i$.
	\end{rem}
	
	These inequalities give us a much clearer picture of the shape of the Betti diagrams in $\Dn$. In particular they look like the following.
	\begin{equation}\label{Equation: Dn Matrix}
		\begin{bmatrix}
			\beta_{0,1} & \beta_{1,2} & \dots & \dots & \dots & \beta_{n-1,n}  \\
			\beta_{0,2} & \beta_{1,3} & \dots & \dots & \beta_{n-2,n} & \\
			\beta_{0,3} & \beta_{1,4} & \dots & \beta_{n-3,n} &\\
			\vdots &\vdots & \vdots & & &\\
			\beta_{0,n-1} &\beta_{1,n} & & & & \\
			\beta_{0,n}
		\end{bmatrix}
	\end{equation}
	
	Thus, we may define our indexing set $\II(\Dn)$ and subspace $\WW(\Dn)$ as follows.
	\begin{defn}\label{Definition: II(Dn) and WW(Dn)}
		We define
		\begin{enumerate}
			\item $\II(\Dn) := \left\{(i,d)\in \{0,...,n\}\times \ZZ: 0\leq i < d\leq n \right\}$.
			\item $\WW(\Dn) := \bigoplus_{(i,d)\in \II(\Dn)}\QQ$.
		\end{enumerate}
	\end{defn}
	
	By Proposition \ref{Proposition: Inequalities Dn}, the cone $\Dn$ must lie in $\WW(\Dn)$ as desired.
	
	For ease of explanation, it will sometimes be useful for us to refer to individual rows of $\II(\Dn)$.
	\begin{defn}\label{Definition: Rows of Index Set}
		Let $(i,d)\in \II(\Dn)$. We say $(i,d)$ is in row $\rho$ if we have $d-i-1=\rho$.
	\end{defn}
	
	We can arrange the elements of $\II(\Dn)$ in rows as in Equation (\ref{Equation: Dn Matrix}).
	\begin{equation*}\label{Equation: I(Dn) Matrix}
		\begin{matrix}
			(0,1) & (1,2) & \dots & \dots & \dots & (n-1,n)\\
			(0,2) & (1,3) & \dots & \dots & (n-2,n)&\\
			(0,3)& (1,4) & \dots & (n-3,n) & &\\
			\vdots& \vdots & \vdots & & &\\
			(0,n-1)& (1,n) & & & & \\
			(0,n)& & & & &
		\end{matrix}
	\end{equation*}
	
	We can see that row 0 of $\II(\Dn)$ has $n$ elements, row 1 has $n-1$ elements, and so on. In general, for each $0\leq i \leq n-1$, row $i$ of $\II(\Dn)$ has $n-i$ elements. Hence we have
	\begin{align*}
		|\II(\Dn)|&= \sum_{i=0}^{n-1} (n-i)\\
		&= \frac{n(n+1)}{2}
	\end{align*}
	which means that the expression in Theorem \ref{Theorem: dimCn} is an upper bound for $\dim \Dn$.
	
	\begin{rem}\label{Remark: dimDntilde Upper Bound}
		We can modify the above argument to find an indexing set for the cone $\Dntilde$, generated by diagrams of complexes with no missing vertices. Specifically, if $\Delta$ is a complex with no missing vertices, then for any nonempty subset $\emptyset \neq U \subset V(\Delta)$, the induced subcomplex $\Delta_U$ cannot be equal to $\{\emptyset\}$, and hence $\Hred_{-1}(\Delta_U)=0$. By Hochster's formula this means that $\beta_{0,1}(\Delta)=\beta_{1,2}(\Delta)=...=\beta_{n-1,n}(\Delta)=0$. Thus for diagrams in the cone $\Dntilde$, the top row of Betti numbers in Equation (\ref{Equation: Dn Matrix}) are all zero. In particular the indexing set $\II(\Dntilde)$ is equal to $\II(\Dn)-\{(0,1),\dots,(n-1,n)\}$, which has size $\frac{n(n-1)}{2}$.
	\end{rem}
	
	\subsection{Lower Bound}\label{Subsection: lb Dn}
	To complete our proof of Theorem \ref{Theorem: dimDn}, we need to show that the space $\WW(\Dn)$ is in fact the \textit{minimal} subspace of $\Vn$ containing $\Dn$, by following the last two steps of Method \ref{Method: Dimension of Cones}.
	
	In particular we present an ordering $\prec$ on $\II(\Dn)$ and a set of $(i,d_\prec)$-initial diagrams in $\Dn$ for each $(i,d)$ in $\II(\Dn)$. The ordering we choose is as follows. 
	\begin{defn}\label{Definition: Order Dn}
		For any two pairs $(i,d)$ and $(i',d')$ in $\II(\Dn)$ we write $(i,d)\prec (i',d')$ if $d-i< d'-i'$, or $d-i= d'-i'$and $i<i'$.
	\end{defn}
	\begin{rem}
		In other words we say $(i,d)\prec(i',d')$ if $(i,d)$ lies in a lower numbered row, or if they both lie in the same row with $i<i'$.
	\end{rem}
	
	For convenience, we extend the terminology of Definition \ref{Definition: Initial} by declaring a complex $\Delta$ to be $(i,d)_\prec$-initial if the diagram $\beta(\Delta)$ is $(i,d)_\prec$-initial.
	
	Thus we need to find a set of $(i,d)_\prec$-initial complexes on vertex set $[n]$. The following lemma will be helpful in this, because it allows us to broaden our search from complexes with exactly $n$ vertices to complexes with at \textit{most} $n$ vertices.
	\begin{lem}\label{Lemma: Dm in Dn}
		For any positive integer $m<n$, we have $\calD_m\subset \Dn$.
	\end{lem}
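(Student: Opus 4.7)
The plan is to show that any Betti diagram arising in $\calD_m$ can be realised as the Betti diagram of a Stanley-Reisner ideal in $R=\KK[x_1,\dots,x_n]$, by enlarging the underlying complex via coning.

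Let $\Delta$ be a simplicial complex on vertex set $[m]$, so that $\beta(I_\Delta)$ is a generator of the cone $\calD_m$. I would construct the complex $\Delta'$ on vertex set $[n]$ obtained from $\Delta$ by coning over the extra vertices $m+1, m+2, \dots, n$ one at a time. Equivalently, $\Delta'$ is the join of $\Delta$ with the full simplex on $\{m+1,\dots,n\}$; its facets are precisely the sets $F\cup\{m+1,\dots,n\}$ for $F$ a facet of $\Delta$. The minimal nonfaces of $\Delta'$ are then exactly the minimal nonfaces of $\Delta$, viewed as subsets of $[n]$, so the Stanley-Reisner ideal $I_{\Delta'}$ is generated in $R$ by the same squarefree monomials that generate $I_\Delta$ in $\KK[x_1,\dots,x_m]$.

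Applying Lemma \ref{Lemma: Betti C-Delta} inductively $n-m$ times, coning never alters the Betti diagram, so $\beta(\Delta')=\beta(\Delta)$ as vectors in $\Vn$ (where we view $\beta(\Delta)$ as lying in $\Vn$ in the natural way, by padding with zeros). Alternatively, one can appeal directly to Remark \ref{Remark: Betti diagrams of ideals only depend on generators}, which says that the Betti diagram of an ideal depends only on its generators and not on the ambient polynomial ring, since $R$ is a faithfully flat extension of $\KK[x_1,\dots,x_m]$. Either route shows that every generating ray of $\calD_m$ lies in $\Dn$.

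Since $\calD_m$ is the convex cone generated by these rays, and $\Dn$ is convex, it follows that $\calD_m\subseteq \Dn$ as required. There is no serious obstacle here: the only subtlety is being careful that the Betti diagram of $\Delta$ viewed inside $\Vn$ is genuinely the same vector (not merely a relabelling), which is exactly what either the coning lemma or the faithful flatness argument provides.
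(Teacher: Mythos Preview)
Your proof is correct and follows essentially the same approach as the paper: extend $\Delta$ to a complex on $[n]$ by repeated coning and invoke Lemma~\ref{Lemma: Betti C-Delta} to see that the Betti diagram is unchanged. Your write-up is more detailed (the explicit description of $\Delta'$ as a join, the alternative appeal to Remark~\ref{Remark: Betti diagrams of ideals only depend on generators}, and the final convexity remark), but the core argument is the same.
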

	\begin{proof}
		If $\Delta$ is a complex on vertex set $[m]$, then by Lemma \ref{Lemma: Betti C-Delta}, we can extend it to a graph on $[n]$ by taking a cone over it a total of $n-m$ times, without affecting its Betti diagram. This means the diagram $\beta(\Delta)$ lies inside $\Dn$, and the result follows.
	\end{proof}
	
	There is now an obvious candidate for our family of $(i,d)_\prec$-initial complexes.
	\begin{prop}\label{Proposition: Skeleton complexes are (i,d)-initial}
		Let the ordering $\prec$ on $\II(\Dn)$ be as in Definition \ref{Definition: Order Dn}, and let $(i,d)\in \II(\Dn)$. The complex $\Skel_{d-i-2}([d])$ is $(i,d)_\prec$-initial.
	\end{prop}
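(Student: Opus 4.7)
The plan is a direct verification using the Betti number formula from Lemma \ref{Lemma: Betti Skeleton Complexes} together with Lemma \ref{Lemma: Dm in Dn}. First, since $(i,d) \in \II(\Dn)$ forces $d \leq n$, Lemma \ref{Lemma: Dm in Dn} tells us that any diagram coming from a complex on vertex set $[d]$ does sit inside $\Dn$, so there is no issue with the ambient polynomial ring. Hence it suffices to establish initiality of $\beta(\Skel_{d-i-2}([d]))$ viewed as a vector in $\WW(\Dn)$.

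Next, I would apply Lemma \ref{Lemma: Betti Skeleton Complexes} with $m = d$ and $r = d-i-2$. Substituting into the formula shows that the only nonzero Betti numbers occur at positions $(i', d')$ with $d' = i' + (d-i-2) + 2 = i' + d - i$ and $0 \leq i' \leq m - r - 2 = i$. In particular every nonzero entry satisfies $d' - i' = d - i$, i.e.\ it lies in row $d - i - 1$ in the sense of Definition \ref{Definition: Rows of Index Set}, and among those entries the largest value of $i'$ is exactly $i$. At the position $(i,d)$ itself the formula returns $\binom{d}{d}\binom{d-1}{d-i-1} = \binom{d-1}{d-i-1}$, which is strictly positive, so coordinate $(i,d)$ of the diagram is nonzero.

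It remains to check that every $(i',d') \in \II(\Dn)$ with $(i,d) \prec (i',d')$ is a zero coordinate. Unpacking Definition \ref{Definition: Order Dn}, such a pair either satisfies $d'-i' > d-i$, in which case $(i',d')$ lies in a strictly higher-numbered row than all nonzero entries of the diagram, or else $d'-i' = d-i$ with $i' > i$, in which case $(i',d')$ lies in the correct row but to the right of the rightmost nonzero entry. Both cases force $\beta_{i',d'}(\Skel_{d-i-2}([d])) = 0$, giving the required $(i,d)_\prec$-initiality.

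There is no genuine obstacle here: the entire content is bookkeeping around the indexing set $\II(\Dn)$ and the explicit formula of Lemma \ref{Lemma: Betti Skeleton Complexes}. The one place to be careful is the edge case $d - i - 2 = -1$, where $\Skel_{-1}([d])$ is the irrelevant complex $\{\emptyset\}$ on $d$ vertices; here the formula still gives the correct value $\beta_{i,d} = \binom{d-1}{d-1} = 1$ (a single missing face contributes to the top row), matching Remark \ref{Remark: Missing Vertices Row 0 of Betti Table}, so no separate treatment is needed.
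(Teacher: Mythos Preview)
Your proof is correct and follows the same approach as the paper: both invoke Lemma~\ref{Lemma: Betti Skeleton Complexes} and read off initiality from the shape of the Betti diagram after the substitution $m=d$, $r=d-i-2$. The paper compresses the verification into a single sentence (observing that $\Skel_r([m])$ is $(m-r-2,m)$-initial and then substituting), whereas you spell out the row-and-column bookkeeping explicitly; your appeal to Lemma~\ref{Lemma: Dm in Dn} is not strictly needed for the initiality statement itself---the paper defers that step to the proof of Theorem~\ref{Theorem: dimDn}---but it does no harm.
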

	\begin{proof}
		By Lemma \ref{Lemma: Betti Skeleton Complexes} the complex $\Skel_r([m])$ is $(m-r-2,m)$-initial. The result follows from substituting $d=m$ and $i=m-r-2$.
	\end{proof}
	\begin{ex}
		Suppose we have $n=4$. The following diagram depicts, for each index $(i,d)\in \II(\Dn[4])$, an $(i,d)$-initial complex on up to $4$ vertices (we use a $\times$ symbol to denote missing vertices).
		\begin{center}
			\begin{tabular}{ c c c c c c c }
				\begin{tikzpicture}[scale=0.4]
					\tikzstyle{point}=[circle,thick,draw=black,fill=black,inner sep=0pt,minimum width=2pt,minimum height=2pt]
					\node[scale=1.2] at (0,0) {$\times$};
				\end{tikzpicture}& 
				& \begin{tikzpicture}[scale=0.4]
					\tikzstyle{point}=[circle,thick,draw=black,fill=black,inner sep=0pt,minimum width=2pt,minimum height=2pt]
					\node[scale=1.2] at (0,0) {$\times$};
					\node[scale=1.2] at (2,0) {$\times$};
				\end{tikzpicture} & & \begin{tikzpicture}[scale=0.4]
					\tikzstyle{point}=[circle,thick,draw=black,fill=black,inner sep=0pt,minimum width=2pt,minimum height=2pt]
					\node[scale=1.2] at (0,0) {$\times$};
					\node[scale=1.2] at (2,0) {$\times$};
					\node[scale=1.2] at (1,1.7) {$\times$};
				\end{tikzpicture} & & \begin{tikzpicture}[scale=0.4]
					\tikzstyle{point}=[circle,thick,draw=black,fill=black,inner sep=0pt,minimum width=2pt,minimum height=2pt]
					\node[scale=1.2] at (0,0) {$\times$};
					\node[scale=1.2] at (2,0) {$\times$};
					\node[scale=1.2] at (0,2) {$\times$};
					\node[scale=1.2] at (2,2) {$\times$};
				\end{tikzpicture} \\
				$\Skel_{-1}([1])$& & $\Skel_{-1}([2])$ & & $\Skel_{-1}([3])$ & & $\Skel_{-1}([4])$\\
				$(0,1)$-initial& & $(1,2)$-initial & & $(2,3)$-initial& & $(3,4)$-initial\\
				
				& & & & & & \\
				
				\begin{tikzpicture}[scale=0.4]
					\tikzstyle{point}=[circle,thick,draw=black,fill=black,inner sep=0pt,minimum width=2pt,minimum height=2pt]
					\node[point, scale=1.2] at (0,0) {};
					\node[point, scale=1.2] at (2,0) {};
				\end{tikzpicture}
				& & \begin{tikzpicture}[scale=0.4]
					\tikzstyle{point}=[circle,thick,draw=black,fill=black,inner sep=0pt,minimum width=2pt,minimum height=2pt]
					\node[point, scale=1.2] at (0,0) {};
					\node[point, scale=1.2] at (2,0) {};
					\node[point, scale=1.2] at (1,1.7) {};
				\end{tikzpicture} & & \begin{tikzpicture}[scale=0.4]
					\tikzstyle{point}=[circle,thick,draw=black,fill=black,inner sep=0pt,minimum width=2pt,minimum height=2pt]
					\node[point, scale=1.2] at (0,0) {};
					\node[point, scale=1.2] at (2,0) {};
					\node[point, scale=1.2] at (0,2) {};
					\node[point, scale=1.2] at (2,2) {};
				\end{tikzpicture} & & \\
				$\Skel_0([2])$& & $\Skel_0([3])$ & & $\Skel_0([4])$ & & \\
				$(0,2)$-initial& & $(1,3)$-initial & & $(2,4)$-initial& &\\
				
				& & & & & & \\
				
				\begin{tikzpicture}[scale=0.5]
					\tikzstyle{point}=[circle,thick,draw=black,fill=black,inner sep=0pt,minimum width=2pt,minimum height=2pt]
					\node[point, scale=1.2] (1) at (0,0) {};
					\node[point, scale=1.2] (2) at (2,0) {};
					\node[point, scale=1.2] (3) at (1,1.7) {};
					
					\draw (1) -- (2) -- (3) -- (1);
				\end{tikzpicture} & & \begin{tikzpicture}[scale=0.5]
					\tikzstyle{point}=[circle,thick,draw=black,fill=black,inner sep=0pt,minimum width=2pt,minimum height=2pt]
					\node[point, scale=1.2] (1) at (0,0) {};
					\node[point, scale=1.2] (2) at (2,0) {};
					\node[point, scale=1.2] (3) at (0,2) {};
					\node[point, scale=1.2] (4) at (2,2) {};
					
					\draw (1) -- (2) -- (3) -- (4) -- (1);
					\draw (1) -- (3);
					\draw (2) -- (4);
				\end{tikzpicture} & & & & \\
				$\Skel_1([3])$& & $\Skel_2([4])$ & & & & \\
				$(0,3)$-initial& & $(1,4)$-initial & & & &\\
				
				& & & & & & \\
				
				\begin{tikzpicture}[scale=0.55][line join = round, line cap = round]
					
					\coordinate (4) at (0,{sqrt(2)},0);
					\coordinate (3) at ({-.5*sqrt(3)},0,-.5);
					\coordinate (2) at (0,0,1);
					\coordinate (1) at ({.5*sqrt(3)},0,-.5);
					
					\begin{scope}
						\draw (1)--(3);
						\draw[fill=lightgray,fill opacity=.5] (2)--(1)--(4)--cycle;
						\draw[fill=gray,fill opacity=.5] (3)--(2)--(4)--cycle;
						\draw (2)--(1);
						\draw (2)--(3);
						\draw (3)--(4);
						\draw (2)--(4);
						\draw (1)--(4);
					\end{scope}
				\end{tikzpicture}& & & & & & \\
				$\Skel_2([4])$& & & & & & \\
				$(0,4)$-initial& & & & & &
			\end{tabular}
		\end{center}
		Note that while many of these complexes have fewer than $4$ vertices, their Betti diagrams all still lie inside $\Dn[4]$ by Lemma \ref{Lemma: Dm in Dn}.
	\end{ex}
	
	Putting these results together we can now prove Theorem \ref{Theorem: dimDn}.
	\begin{proof}[Proof of Theorem \ref{Theorem: dimDn}]
		We have already seen that $$\dim \Dn \leq 
		|\II(\Dn)| = \frac{n(n+1)}{2}.$$
		We can show that $\dim \Dn \geq |\II(\Dn)|$ by finding a linearly independent set of diagrams in $\Dn$ of size $|\II(\Dn)|$.
		
		To that end, let $(i,d)$ be an index in $\II(\Dn)$ and define the ordering $\prec$ on $\II(\Dn)$ as in Definition \ref{Definition: Order Dn}. Proposition \ref{Proposition: Skeleton complexes are (i,d)-initial} tells us that the complex $\Skel_{d-i-2}([d])$ is $(i,d)_{\prec}$-initial. It also has $d$ vertices, and because $d\leq n$, Lemma \ref{Lemma: Dm in Dn} tells us that its diagram lies inside $\Dn$. Thus we have an $(i,d)$-initial diagram in $\Dn$ for each $(i,d)$ in $\II(\Dn)$. This completes the proof.
	\end{proof}
	
	\begin{rem}\label{Remark: dimDntilde Lower Bound}
		If we restrict the ordering $\prec$ to the subset $\II(\Dntilde)\subset \II(\Dn)$ (as defined in Remark \ref{Remark: dimDntilde Upper Bound}), the above proof also shows that the cone $\Dntilde$ contains an $(i,d)$-initial diagram for every index $(i,d)$ in $\II(\Dntilde)$, and hence we have $\dim \Dntilde = |\II(\Dntilde)|=\frac{n(n-1)}{2}$. This is because all of the $(i,d)$-initial complexes chosen for the indices $(i,d)$ in $\II(\Dntilde)$ are $r$-skeletons for some $r\geq 0$, and therefore have no missing vertices, so their diagrams also lie inside $\Dntilde$.
	\end{rem}
	
	Before we move on to finding the dimension of our three subcones of $\Dn$ we note the following important lemma, which we can use to construct $(i,d)_\prec$-initial diagrams in $\Dn$ recursively. While we did not require this lemma to demonstrate the dimension of $\Dn$, analogous results will be crucial in demonstrating the dimensions of the three subcones we are about to consider.
	
	\begin{lem}\label{Lemma: SDelta initiality}
		Let $\prec$ be as in Definition \ref{Definition: Order Dn}, suppose $n>2$, and let $(i,d)$ be an index in $\II(\Dn)$ with $d-i\geq 2$. If $\Delta$ is an $(i-1,d-2)_\prec$-initial complex then the complex $S\Delta$ is $(i,d)_\prec$-initial.
	\end{lem}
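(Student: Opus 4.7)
The plan is to apply Lemma \ref{Lemma: Betti S-Delta} to express each Betti number of $S\Delta$ as a sum of two Betti numbers of $\Delta$, then read off both initiality conditions directly. Note first that since $(i-1,d-2)$ must itself be a valid index in $\II(\Dn)$, we have $i\geq 1$, hence $(i,d)\neq (0,2)$, so the ``otherwise'' case of Lemma \ref{Lemma: Betti S-Delta} applies at $(i,d)$.

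For nonvanishing at $(i,d)$, I would compute $\beta_{i,d}(S\Delta)=\beta_{i,d}(\Delta)+\beta_{i-1,d-2}(\Delta)$. The index $(i-1,d-2)$ lies in row $d-i-2$, strictly below the row $d-i-1$ containing $(i,d)$, so $(i-1,d-2)\prec(i,d)$. Initiality of $\Delta$ at $(i-1,d-2)$ forces $\beta_{i,d}(\Delta)=0$, while $\beta_{i-1,d-2}(\Delta)\neq 0$ by hypothesis, giving $\beta_{i,d}(S\Delta)\neq 0$.

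For vanishing at an arbitrary $(i',d')\succ(i,d)$, the key observation is that the shift $(i',d')\mapsto(i'-1,d'-2)$ uniformly decreases both the row number and the first coordinate by $1$, and therefore preserves the order $\prec$. Thus whenever $i'\geq 1$ and $(i,d)\prec(i',d')$, we obtain $(i-1,d-2)\prec(i'-1,d'-2)$, so that initiality of $\Delta$ gives $\beta_{i'-1,d'-2}(\Delta)=0$. Combined with $\beta_{i',d'}(\Delta)=0$ (which follows from $(i-1,d-2)\prec(i,d)\prec(i',d')$), Lemma \ref{Lemma: Betti S-Delta} yields $\beta_{i',d'}(S\Delta)=0$. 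The case $i'=0$ with $d'\neq 2$ is analogous: here $\beta_{-1,d'-2}(\Delta)$ is zero by convention, and since $i\geq 1$ the relation $(i,d)\prec(0,d')$ forces strict row inequality, giving $(i-1,d-2)\prec(0,d')$ and hence $\beta_{0,d'}(\Delta)=0$.

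The only remaining worry is the exceptional case $(i',d')=(0,2)$, where Lemma \ref{Lemma: Betti S-Delta} gives $\beta_{0,2}(S\Delta)=\beta_{0,2}(\Delta)+1>0$, so we must rule out $(i,d)\prec(0,2)$. But $(0,2)$ lies in row $1$, whereas $(i,d)$ lies in row $d-i-1\geq 1$: if $d-i>2$ then $(0,2)\prec(i,d)$, and if $d-i=2$ then the two share row $1$ while $0<i$, still giving $(0,2)\prec(i,d)$. Either way $(i,d)\not\prec(0,2)$, completing the argument. I expect the only real obstacle to be bookkeeping these boundary cases $i'=0$ and $(i',d')=(0,2)$ within the definition of $\prec$; the underlying idea---that suspension shifts initiality cleanly from $(i-1,d-2)$ to $(i,d)$---is an immediate consequence of Lemma \ref{Lemma: Betti S-Delta}.
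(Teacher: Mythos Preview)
Your proof is correct and follows essentially the same approach as the paper: apply Lemma~\ref{Lemma: Betti S-Delta} to write $\beta_{i',d'}(S\Delta)=\beta_{i',d'}(\Delta)+\beta_{i'-1,d'-2}(\Delta)$ and deduce both initiality conditions from the order-preserving nature of the shift $(i',d')\mapsto(i'-1,d'-2)$. If anything you are more careful than the paper, which does not explicitly address the boundary cases $i'=0$ and $(i',d')=(0,2)$ that you isolate and handle.
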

	\begin{proof}
		By Lemma \ref{Lemma: Betti S-Delta}, we have $\beta_{i,d}(S\Delta) = \beta_{i,d}(\Delta) + \beta_{i-1,d-2}(\Delta)$. By the $(i-1,d-2)$-initiality of $\beta(\Delta)$, we have $\beta_{i-1,d-2}(\Delta) \neq 0$, so $\beta_{i,d}(S\Delta)$ must be nonzero too.
		
		Now let $(i',d')\in \II(\Dn)$ with $(i,d)\prec (i',d')$. Again, by Lemma \ref{Lemma: Betti S-Delta} we have $\beta_{i',d'}(S\Delta) = \beta_{i',d'}(\Delta) + \beta_{i'-1,d'-2}(\Delta)$. We must have $(i-1,d-2)\prec (i'-1,d'-2)$, and also $(i'-1,d'-2)\prec (i',d')$ because they are in different rows. Hence, by the $(i-1,d-2)$-initiality of $\beta(\Delta)$, both the terms $\beta_{i',d'}(\Delta)$ and $\beta_{i'-1,d'-2}(\Delta)$ are zero, and $\beta_{i',d'}(S\Delta)$ is zero too.
		
		This shows that $\beta(S\Delta)$ is $(i,d)_\prec$-initial as required.
	\end{proof}
	
	\section{Dimension of $\Cn$}\label{Subsection: dimCn}
	In this section, we prove Theorem \ref{Theorem: dimCn}, on the dimension of the subcone $\Cn\subset \Dn$ generated by diagrams of edge ideals.
	
	\subsection{Upper Bound}\label{Subsection: ub Cn}
	As before, we start by bounding the dimension from above, by finding an appropriately sized indexing set $\II(\Cn)$ at which the diagrams of $\Cn$ are all nonzero, and hence a vector space $\WW(\Cn)$ containing $\Cn$ of the appropriate dimension. Note that we must have $\II(\Cn)\subseteq \II(\Dn)$ and hence $\WW(\Cn)\leq \WW(\Dn)$ because every edge ideal is a Stanley-Reisner ideal. Moreover, every generator of an edge ideal has degree two, and hence we have that $\II(\Cn)$ is actually a subset of $\II(\Dntilde)$, so it does not contain the indices $(0,1),\dots,(n-1,n)$.
	
	To find our indexing set $\II(\Cn)$, we need to obtain further restrictions on the positions of the nonzero values of the diagrams in $\Cn$. The following result turns out to be sufficient.
	\begin{prop}\label{Proposition: Inequalities Cn}
		Let $G$ be a graph on vertex set $[n]$, and set $\beta=\beta(G)$. For any integers $i$ and $d$ with $d > 2i +2$, we have $\beta_{i,d} = 0$.
	\end{prop}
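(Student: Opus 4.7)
The approach is to apply Hochster's Formula (Theorem \ref{Theorem: Hochster's Formula}) to the equality $I(G) = I_{\Ind(G)}$ from Proposition \ref{Proposition: I(G) = I_Ind(G)}, which gives
\begin{equation*}
\beta_{i,d}(G) = \sum_{U \in \binom{[n]}{d}} \dim_\KK \Hred_{d-i-2}(\Ind(G|_U)),
\end{equation*}
since an independent set of $G$ contained in $U$ is exactly an independent set of the induced subgraph $G|_U$. It therefore suffices to establish the following combinatorial statement: for any graph $H$ on $m$ vertices, if $\Hred_k(\Ind(H)) \neq 0$ then $m \geq 2k + 2$. Setting $m = d$ and $k = d-i-2$, the hypothesis $d > 2i+2$ is exactly $m < 2k+2$, forcing every summand above to vanish.

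I would prove the combinatorial claim by joint induction on $m$ and $k$. The base cases $k \leq 0$ are direct: $\Hred_{-1}(\Ind(H)) \neq 0$ forces $\Ind(H) = \{\emptyset\}$, hence $m = 0$; and $\Hred_0(\Ind(H)) \neq 0$ means $\Ind(H)$ is disconnected, which requires at least two vertices. For the inductive step, I would first note that if $H$ has an isolated vertex $v$, then $\Ind(H)$ is a cone over $v$, hence acyclic by Corollary \ref{Corollary: Homology of Cone}, contradicting $\Hred_k \neq 0$. So we may assume every vertex of $H$ has a neighbor; pick any vertex $v$, so $|N[v]| \geq 2$.

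Writing $\Ind(H) = A \cup B$ with $A = \Ind(H - v)$ (the faces of $\Ind(H)$ avoiding $v$) and $B = v \ast \Ind(H - N[v])$ (the closed star of $v$), one checks that $A \cap B = \Ind(H - N[v])$, and $B$ is a cone, hence acyclic. The reduced Mayer-Vietoris sequence (Proposition \ref{Proposition: MVS}) then yields the exact strand
\begin{equation*}
\Hred_k(\Ind(H - v)) \longrightarrow \Hred_k(\Ind(H)) \longrightarrow \Hred_{k-1}(\Ind(H - N[v])).
\end{equation*}
If $\Hred_k(\Ind(H)) \neq 0$, then either the left term is nonzero, in which case induction on vertex count gives $m - 1 \geq 2k + 2$; or the right term is nonzero, in which case induction yields $m - |N[v]| \geq 2(k-1) + 2 = 2k$, so $m \geq 2k + |N[v]| \geq 2k + 2$.

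The main obstacle I anticipate is bookkeeping around the decomposition $A \cup B$: one must verify carefully that $B$ equals $v \ast \Ind(H - N[v])$ rather than some larger subcomplex, and that $A \cap B = \Ind(H - N[v])$ (not, for instance, $\Ind(H - v)$). As a slicker alternative, one could bypass the induction entirely by invoking the Taylor resolution of $I(G)$: since every generator has degree $2$, its $i$-th free module is generated in degrees at most $2(i+1) = 2i+2$, and this immediately bounds the minimal graded Betti numbers $\beta_{i,d}(G) = 0$ for $d > 2i+2$.
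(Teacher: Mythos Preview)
Your argument is correct. The Mayer--Vietoris induction on $\Ind(H)$ via the decomposition into $\Ind(H-v)$ and the closed star $v * \Ind(H - N[v])$ is a standard and valid route to the bound $m \geq 2k+2$; your bookkeeping on $A \cap B = \Ind(H - N[v])$ is fine, and the induction (on the number of vertices, carrying all $k$ at once) terminates cleanly. The Taylor-resolution alternative you sketch at the end is also correct and is in fact the quickest way to see the statement: since each generator of $I(G)$ has degree $2$, the lcm of any $i+1$ generators has degree at most $2(i+1)$, so the Taylor complex places no shifts above $2i+2$ in homological degree $i$.

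The paper, by contrast, does not prove this proposition at all: it simply cites Lemma~2.2 of \cite{Katz}. So your proposal is not so much a different approach as a self-contained replacement for an external reference. Either of your two arguments would serve; the Taylor one is shorter and avoids the topological machinery, while the Mayer--Vietoris one is closer in spirit to the Hochster-formula methods used elsewhere in the thesis and gives slightly more combinatorial insight into \emph{why} the bound is $2i+2$ (namely, that each step in building up homology costs at least two vertices).
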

	\begin{proof}
		This is Lemma 2.2 in \cite{Katz}.
	\end{proof}
	
	This additional inequality gives us a clearer picture of the shape of the Betti diagrams in $\Cn$. Specifically, if $n = 2r$ is even, then the diagrams $\beta\in \Cn$ look like this.
	\begin{equation}\label{Equation: Cn Even Matrix}
		\begin{bmatrix}
			\beta_{0,2} & \beta_{1,3} & \beta_{2,4} & \dots & \dots & \beta_{n-3,n-1} & \beta_{n-2,n}\\
			& \beta_{1,4} & \beta_{2,5} & \dots & \dots & \beta_{n-3,n} &\\
			& & \ddots & & & &\\
			& & & \beta_{r-1,2r} & & & \\
		\end{bmatrix}
	\end{equation}
	If $n = 2r+1$ is odd, then they look like this.
	\begin{equation}\label{Equation: Cn Odd Matrix}
		\begin{bmatrix}
			\beta_{0,2} & \beta_{1,3} & \beta_{2,4} & \dots & \dots & \dots & \beta_{n-3,n-1} & \beta_{n-2,n}\\
			& \beta_{1,4} & \beta_{2,5} & \dots & \dots & \dots & \beta_{n-3,n} &\\
			& & \ddots & & & & &\\
			& & & \beta_{r-1,2r} & \beta_{r, 2r+1} & & & \\
		\end{bmatrix}
	\end{equation}
	
	Thus, we may define our indexing set $\II(\Cn)$ and subspace $\WW(\Cn)$ as follows.
	\begin{defn}\label{Definition: II(Cn) and WW(Cn)}
		We define
		\begin{enumerate}
			\item $\II(\Cn) := \left\{(i,d)\in \{0,...,n\}\times \ZZ: i+2\leq d\leq \min\{2i+2,n\} \right\}$.
			\item $\WW(\Cn) := \bigoplus_{(i,d)\in \II(\Cn)}\QQ$.
		\end{enumerate}
	\end{defn}
	
	By Proposition \ref{Proposition: Inequalities Cn}, the cone $\Cn$ must lie in $\WW(\Cn)$ as desired.
	
	Just as with $\II(\Dn)$ we can arrange the elements of $\II(\Cn)$ in rows to match the above Betti diagrams (we number the rows of $\II(\Cn)$ using the same convention we used for the rows of $\II(\Dn)$ in Definition \ref{Definition: Rows of Index Set}).
	So if $n=2r$ then the rows of $\II(\calC_{2r})$ look like the following.
	\begin{equation*}
		\begin{matrix}
			(0,2) & (1,3) & (2,4) & \dots & \dots & (n-3,n-1) & (n-2,n)\\
			& (1,4) & (2,5) & \dots & \dots & (n-3,n) &\\
			& & \ddots & & & &\\
			& & & (r-1,2r) & & & \\
		\end{matrix}
	\end{equation*}
	And if $n=2r+1$ then the rows of $\II(\calC_{2r+1})$ look like the following.
	\begin{equation*}
		\begin{matrix}
			(0,2) & (1,3) & (2,4) & \dots & \dots & \dots & (n-3,n-1) & (n-2,n)\\
			& (1,4) & (2,5) & \dots & \dots & \dots & (n-3,n) &\\
			& & & \ddots & & & & &\\
			& & & & (r-1,2r) & (r,2r+1) &  & & \\
		\end{matrix}
	\end{equation*}
	
	We can see that row 1 of $\II(\Cn)$ has $n-1$ elements, row 2 has $n-3$ elements, and so on. In general, for each $1\leq i \leq r$, row $i$ of $\II(\Cn)$ has $n-2i+1$ elements.
	
	Hence we have
	\begin{align*}
		|\II(\calC_{2r})|&= \sum_{i=1}^r (2r-2i+1)\\
		&= \sum_{i=1}^r (2r) - \sum_{i=1}^{r}(2i-1)\\
		&= 2r^2 - r^2\\
		&= r^2
	\end{align*}
	and
	\begin{align*}
		|\II(\calC_{2r+1})|&=\sum_{i=1}^r (2r+1-2i+1)\\
		&= \sum_{i=1}^r (2r+1) - \sum_{i=1}^{r}(2i-1)\\
		&= (2r^2 + r) - r^2\\
		&= r^2 + r \, .
	\end{align*}
	
	Therefore, the expressions in Theorem \ref{Theorem: dimCn} are upper bounds for $\dim \Cn$.
	
	\subsection{Lower Bound}\label{Subsection: lb Cn}
	We are left with the task of showing that the space $\WW(\Cn)$ is in fact the minimal subspace of $\WW(\Dn)$ containing the cone $\Cn$, which we do by finding an appropriately sized linearly independent set of diagrams in $\Cn$, following Method \ref{Method: Dimension of Cones}. Many of the constructions and lemmas we need are analogues of the ones we employed for the cone $\Dn$.
	
	For our ordering $\prec$ on $\II(\Cn)$ we simply take the restriction of the ordering on $\II(\Dn)$ given in Definition \ref{Definition: Order Dn}. And just as we did with complexes, we now extend the terminology of Definition \ref{Definition: Initial} to graphs by declaring a graph $G$ to be $(i,d)_\prec$-initial if the diagram $\beta(G)$ is $(i,d)_\prec$-initial.
	
	The cone $\Cn$ also admits a direct analogue of Lemma \ref{Lemma: Dm in Dn}, which allows us to broaden our search from graphs with exactly $n$ vertices to graphs with at most $n$ vertices.
	\begin{lem}\label{Lemma: Cm in Cn}
		For any positive integer $m < n$, we have $\Cn[m] \subset \Cn$.
	\end{lem}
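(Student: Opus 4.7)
The plan is to mimic the strategy of Lemma \ref{Lemma: Dm in Dn}, but using the analogous graph-theoretic operation of adjoining isolated vertices in place of coning. Specifically, any graph $G$ on vertex set $[m]$ can be extended to a graph $G'$ on vertex set $[n]$ by adding $n-m$ isolated vertices $m+1,\dots,n$, and I would argue that $\beta(G)=\beta(G')$, whence $\beta(G)\in \Cn$.

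For the equality of Betti diagrams I would cite Corollary \ref{Corollary: Betti Graphs With Isolated Vertices}, which states precisely that adding an isolated vertex to a graph leaves the Betti diagram of its edge ideal unchanged. Applying this corollary $n-m$ times (once for each new vertex $m+1,\dots,n$) gives $\beta(G')=\beta(G)$. Since $G'$ is a graph on $[n]$, its Betti diagram lies in $\Cn$ by definition, and the inclusion $\Cn[m]\subset \Cn$ follows.

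There is no significant obstacle here; the argument is essentially a one-line invocation of the earlier corollary, exactly parallel to how Lemma \ref{Lemma: Dm in Dn} invoked Lemma \ref{Lemma: Betti C-Delta}. The only point worth noting is that whereas in the Stanley-Reisner setting we cone (which corresponds to adding a universal vertex to the Stanley-Reisner complex), in the edge-ideal setting the analogous operation is to add an isolated vertex to the graph; this is natural because under the correspondence $G\leftrightarrow \Ind(G)$, an isolated vertex of $G$ becomes a universal vertex of $\Ind(G)$, so the two constructions agree at the level of independence complexes. This is implicit in the proof of Corollary \ref{Corollary: Betti Graphs With Isolated Vertices} and needs no further elaboration.
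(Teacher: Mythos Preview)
Your proposal is correct and matches the paper's proof essentially line for line: extend $G$ to a graph on $[n]$ by adding isolated vertices and invoke Corollary \ref{Corollary: Betti Graphs With Isolated Vertices} to conclude the Betti diagram is unchanged. The paper's argument is the same one-line invocation, parallel to Lemma \ref{Lemma: Dm in Dn}, exactly as you anticipated.
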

	\begin{proof}
		If $G$ is a graph on vertex set $[m]$, then by Corollary \ref{Corollary: Betti Graphs With Isolated Vertices}, we can extend it to a graph on $[n]$ by adding some isolated vertices, without affecting its Betti diagram. This means the diagram $\beta(G)$ lies inside $\Cn$, and the result follows.
	\end{proof}
	
	Unfortunately we cannot use the diagrams of skeleton complexes as our $(i,d)$-initial diagrams in $\Cn$, because not all skeleton complexes are independence complexes of graphs, and hence the majority of the $(i,d)$-initial diagrams we found in the last section do not lie in the cone $\Cn$.
	
	However, the diagrams of the 0-skeletons $\Skel_0([m])$  \textit{do} lie in $\Cn$, because we have $\Ind(K_m)=\Skel_0([m])$. Thus we can find $(i,d)$-initial diagrams for every $(i,d)$ in row 1 of $\II(\Cn)$. To find the rest, we use the following analogue of Lemma \ref{Lemma: SDelta initiality}.
	\begin{lem}\label{Lemma: G+L initiality}
		Let $\prec$ be as in Definition \ref{Definition: Order Dn}, suppose $n>2$, and let $(i,d)$ be an index in $\II(\Cn)$ with with $d-i\geq 3$. If $\Delta$ is an $(i-1,d-2)_\prec$-initial graph then the graph $G+L$ is $(i,d)_\prec$-initial.
	\end{lem}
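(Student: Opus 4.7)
The plan is to mirror the proof of Lemma~\ref{Lemma: SDelta initiality} almost verbatim, replacing the suspension formula from Lemma~\ref{Lemma: Betti S-Delta} with its graph analogue from Corollary~\ref{Corollary: Betti G+L}. The hypothesis $d - i \geq 3$ (rather than $d - i \geq 2$) is exactly what is needed to ensure that the index $(i,d)$ avoids the exceptional case $(0,2)$ in Corollary~\ref{Corollary: Betti G+L}, so the clean recursion $\beta_{i,d}(G+L) = \beta_{i,d}(G) + \beta_{i-1,d-2}(G)$ applies throughout.

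First I would verify nonvanishing at $(i,d)$: since $d - i \geq 3$ we are not at $(0,2)$, so Corollary~\ref{Corollary: Betti G+L} gives $\beta_{i,d}(G+L) = \beta_{i,d}(G) + \beta_{i-1,d-2}(G)$. By the $(i-1,d-2)_\prec$-initiality of $G$, the second summand is strictly positive; since Betti numbers are nonnegative, the sum is nonzero.

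Next I would verify vanishing at every $(i',d') \in \II(\Cn)$ with $(i,d) \prec (i',d')$. Such $(i',d')$ must satisfy $d' - i' \geq d - i \geq 3$, so it too is not $(0,2)$ and Corollary~\ref{Corollary: Betti G+L} gives $\beta_{i',d'}(G+L) = \beta_{i',d'}(G) + \beta_{i'-1,d'-2}(G)$. Two quick checks on the ordering do the rest. For the first term, $(i'-1,d'-2) \prec (i',d')$ since $(d'-2)-(i'-1) = d'-i'-1 < d'-i'$, so $(i'-1,d'-2)$ lies in a strictly lower row than $(i',d')$; but because $(i,d) \prec (i',d')$ we have $(i-1,d-2)$ in the same row as $(i,d)$ or below, and so $(i-1,d-2) \prec (i'-1,d'-2) \prec (i',d')$. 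For the second term, we need $(i-1,d-2) \prec (i',d')$, which is immediate from the previous chain. Combining with the initiality of $G$ at $(i-1,d-2)$, both terms vanish, so $\beta_{i',d'}(G+L) = 0$.

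The argument is essentially routine, so there is no serious obstacle; the only care required is in confirming the order relation $(i-1,d-2) \prec (i'-1,d'-2)$, which splits into the two cases of Definition~\ref{Definition: Order Dn} (strict row inequality versus equal row with $i < i'$) and in each case reduces to a two-unit shift preserving the relevant inequality. The hypothesis $n > 2$ is used implicitly to guarantee that the ambient polynomial ring admits enough variables to accommodate the two new vertices of $L$, via Lemma~\ref{Lemma: Cm in Cn}.
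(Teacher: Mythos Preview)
Your proof is correct. The paper, however, takes a much shorter route: it simply observes that $\Ind(G+L) = S\Ind(G)$ and invokes Lemma~\ref{Lemma: SDelta initiality} directly, since $G$ being $(i-1,d-2)_\prec$-initial as a graph is the same as $\Ind(G)$ being $(i-1,d-2)_\prec$-initial as a complex (via Proposition~\ref{Proposition: I(G) = I_Ind(G)}). Your approach instead reproves Lemma~\ref{Lemma: SDelta initiality} in the graph setting using Corollary~\ref{Corollary: Betti G+L}, which works but duplicates effort already done. One small clarification: the real reason the hypothesis is $d-i \geq 3$ rather than $d-i \geq 2$ is not to avoid the exceptional case $(0,2)$ in the Betti formula, but to ensure that $(i-1,d-2)$ actually lies in $\II(\Cn)$ (which requires $(d-2)-(i-1) \geq 2$), so that the phrase ``$(i-1,d-2)_\prec$-initial graph'' is meaningful.
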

	\begin{proof}
		This follows directly from Lemma \ref{Lemma: SDelta initiality} because $\Ind(G+L)=S\Ind(G)$.
	\end{proof}
	
	Before we explain the procedure for finding $(i,d)$-initial graphs in the general case, we present a specific example to illustrate the basic principle.
	\begin{ex}
		The set $\II(\calC_6)$ has size $3^2 = 9$, and it looks like the following.
		\begin{equation*}
			\begin{matrix}
				(0,2) & (1,3) & (2,4) & (3,5) & (4,6)\\
				& (1,4) & (2,5) & (3,6) &\\
				& & (2,6) & &
			\end{matrix}
		\end{equation*}
		
		So we want to find nine linearly independent diagrams in $\mathcal{C}_6$, one for each $(i,d)\in \II(\calC_6)$. The ordering $\prec$ on $\II(\calC_6)$ is $(0,2)\prec (1,3) \prec (2,4) \prec (3,5) \prec (4,6) \prec (1,4) \prec (2,5) \prec (3,6) \prec (2,6)$.
		
		By Proposition \ref{Proposition: Betti-Kn}, we see that the complete graph on $2$ vertices, $K_2$, is $(0,2)$-initial. Similarly, $K_3$ is $(1,3)$-initial, $K_4$ is $(2,4)$-initial, $K_5$ is $(3,5)$-initial and $K_6$ is $(4,6)$-initial.
		
		From the above, and Lemma \ref{Lemma: G+L initiality}, we also find that $K_2+L$ is $(1,4)$-initial, $K_3+L$ is $(2,5)$-initial and $K_4+L$ is $(3,6)$-initial. Similarly, we can see that $K_2+2L$ is $(2,6)$-initial.
		
		So placing each graph in its corresponding position in $\II(\Cn)$, we get the following.
		\begin{center}
			\begin{tabular}{ c c c c c c c c c }
				\begin{tikzpicture}[scale = 0.4]
					\tikzstyle{point}=[circle,thick,draw=black,fill=black,inner sep=0pt,minimum width=2pt,minimum height=2pt]
					\node (a)[point, scale=1.2] at (1,0) {};
					\node (b)[point, scale=1.2] at (1,1.7) {};
					
					\draw (a.center) -- (b.center);
				\end{tikzpicture} && \begin{tikzpicture}[scale = 0.4]
					\tikzstyle{point}=[circle,thick,draw=black,fill=black,inner sep=0pt,minimum width=2pt,minimum height=2pt]
					\node (a)[point] at (0,0) {};
					\node (b)[point] at (2,0) {};
					\node (c)[point] at (1,1.7) {};
					
					\draw (a.center) -- (b.center) -- (c.center) -- cycle;
				\end{tikzpicture} && \begin{tikzpicture}[scale=0.4]
					\tikzstyle{point}=[circle,thick,draw=black,fill=black,inner sep=0pt,minimum width=2pt,minimum height=2pt]
					\node[point, scale=1.2] (1) at (0,0) {};
					\node[point, scale=1.2] (2) at (2,0) {};
					\node[point, scale=1.2] (3) at (0,2) {};
					\node[point, scale=1.2] (4) at (2,2) {};
					
					\draw (1) -- (2) -- (3) -- (4) -- (1);
					\draw (1) -- (3);
					\draw (2) -- (4);
				\end{tikzpicture} && \begin{tikzpicture}[scale = 0.57]
					\tikzstyle{point}=[circle,thick,draw=black,fill=black,inner sep=0pt,minimum width=2pt,minimum height=2pt]
					\node (a)[point] at (0,1) {};
					\node (b)[point] at (0.951,0.309) {};
					\node (c)[point] at (0.588,-0.809) {};
					\node (d)[point] at (-0.588,-0.809) {};
					\node (e)[point] at (-0.951,0.309) {};
					
					\draw (a.center) -- (b.center) -- (c.center) -- (d.center) -- (e.center) -- cycle;
					\draw (a.center) -- (c.center) -- (e.center) -- (b.center) -- (d.center) -- cycle;
				\end{tikzpicture} && \begin{tikzpicture}[scale = 0.3]
					\tikzstyle{point}=[circle,thick,draw=black,fill=black,inner sep=0pt,minimum width=2pt,minimum height=2pt]
					\node (1)[point] at (0,0) {};
					\node (2)[point] at (2,0) {};
					\node (3)[point] at (3,1.7) {};
					\node (4)[point] at (2,3.4) {};
					\node (5)[point] at (0,3.4) {};
					\node (6)[point] at (-1,1.7) {};
					
					\draw (1.center) -- (2.center) -- (3.center) -- (4.center) -- (5.center) -- (6.center) -- cycle;
					\draw (1.center) -- (3.center) -- (5.center) -- cycle;
					\draw (2.center) -- (4.center) -- (6.center) -- cycle;
					\draw (1.center) -- (4.center);
					\draw (2.center) -- (5.center);
					\draw (3.center) -- (6.center);
				\end{tikzpicture} \\
				$K_2$&& $K_3$ && $K_4$ && $K_5$ && $K_6$  \\
				$(0,2)$-initial && $(1,3)$-initial && $(2,4)$-initial && $(3,5)$-initial && $(4,6)$-initial\\
				& & & & & & & & \\
				
				&& \begin{tikzpicture}[scale=0.4]
					\tikzstyle{point}=[circle,thick,draw=black,fill=black,inner sep=0pt,minimum width=2pt,minimum height=2pt]
					\node[point, scale=1.2] (1) at (0,0) {};
					\node[point, scale=1.2] (2) at (2,0) {};
					\node[point, scale=1.2] (3) at (0,2) {};
					\node[point, scale=1.2] (4) at (2,2) {};
					
					\draw (1) -- (3);
					\draw (2) -- (4);
				\end{tikzpicture} && \begin{tikzpicture}[scale = 0.4]
					\tikzstyle{point}=[circle,thick,draw=black,fill=black,inner sep=0pt,minimum width=2pt,minimum height=2pt]
					\node (a)[point] at (0,0) {};
					\node (b)[point] at (2,0) {};
					\node (c)[point] at (1,1.7) {};
					
					\node (d)[point] at (3.4,0) {};
					\node (e)[point] at (3.4,1.7) {};
					
					\draw (a.center) -- (b.center) -- (c.center) -- cycle;
					\draw (d.center) -- (e.center);
				\end{tikzpicture}  && \begin{tikzpicture}[scale=0.4]
					\tikzstyle{point}=[circle,thick,draw=black,fill=black,inner sep=0pt,minimum width=2pt,minimum height=2pt]
					\node[point, scale=1.2] (1) at (0,0) {};
					\node[point, scale=1.2] (2) at (2,0) {};
					\node[point, scale=1.2] (3) at (0,2) {};
					\node[point, scale=1.2] (4) at (2,2) {};
					
					\node[point, scale=1.2] (5) at (3.4,0) {};
					\node[point, scale=1.2] (6) at (3.4,2) {};
					
					\draw (1) -- (2) -- (3) -- (4) -- (1);
					\draw (1) -- (3);
					\draw (2) -- (4);
					\draw (5) -- (6);
				\end{tikzpicture} && \\
				&& $K_2+L$ && $K_3+L$ && $K_4+L$ &&  \\
				&& $(1,4)$-initial && $(2,5)$-initial && $(3,6)$-initial &&\\
				& & & & & & & & \\
				
				&& && \begin{tikzpicture}[scale=0.4]
					\tikzstyle{point}=[circle,thick,draw=black,fill=black,inner sep=0pt,minimum width=2pt,minimum height=2pt]
					\node[point, scale=1.2] (1) at (0,0) {};
					\node[point, scale=1.2] (2) at (2,0) {};
					\node[point, scale=1.2] (3) at (0,2) {};
					\node[point, scale=1.2] (4) at (2,2) {};
					\node[point, scale=1.2] (5) at (4,0) {};
					\node[point, scale=1.2] (6) at (4,2) {};
					
					\draw (1) -- (3);
					\draw (2) -- (4);
					\draw (5) -- (6);
				\end{tikzpicture} && && \\
				&& && $K_2+2L$ &&  &&  \\
				&&  && $(2,6)$-initial &&  &&\\
			\end{tabular}
		\end{center}
		
		All of the graphs $K_2,K_3,K_4,K_6,K_2+L,K_3+L,K_4+L$ and $K_2+2L$ have $6$ vertices or fewer, so by Lemma \ref{Lemma: Cm in Cn}, their diagrams all lie in $\mathcal{C}_6$ as required.\\
	\end{ex}
	
	In the above example, the graphs associated to the top row of $\II(\Cn)$ were the complete graphs on $n$ or fewer vertices, and we found graphs for each subsequent row by adding disjoint edges to the graphs we had already found. We can generalise this process to arbitrary values of $n$, and thus prove Theorem \ref{Theorem: dimCn}, as below.
	
	\begin{proof}[Proof of Theorem \ref{Theorem: dimCn}]
		We have already seen that $$\dim \Cn \leq 
		|\II(\Cn)| = \begin{cases}
			r^2  &\text{ if } n=2r \\
			r^2 + r &\text{ if } n=2r+1.\\
		\end{cases}$$
		
		We now show that $\dim\Cn \geq |\II(\Cn)|$ by exhibiting a set of graphs $\{G_{i,d}:(i,d)\in \II(\Cn)\}$ such that for each $(i,d)\in \II(\Cn)$, $G_{i,d}$ is $(i,d)$-initial (with respect to the ordering $\prec$ given in Definition \ref{Definition: Order Dn}) and has $d$ vertices (by Lemma \ref{Lemma: Cm in Cn} this is sufficent to ensure that the diagrams $\beta(G_{i,d})$ lie in $\Cn$, because for each $(i,d)\in \II(\Cn)$ we have $d\leq n$).
		
		We proceed by induction on $n\geq 1$. The set $\II(\calC_1)$ is empty, so for the base case $n=1$ there is nothing to prove.
		
		For the inductive step, suppose that $n>1$ and that we have a set $\{G_{i,d}:(i,d)\in \II(\calC_{n-1})\}$ where each $G_{i,d}$ is an $(i,d)$-initial graph on $d$ vertices. The set $\II(\calC_{n-1})$ is a subset of $\II(\Cn)$, so we can extend our set of graphs to a set $\{G_{i,d}:(i,d)\in \II(\Cn)\}$ by adding graphs $G_{i,d}$ for the values of $(i,d)$ in $\II(\Cn)-\II(\calC_{n-1})$.
		
		By Proposition \ref{Proposition: Betti-Kn}, the complete graph $K_n$ is $(n-2,n)$-initial and has $n$ vertices, so we set $G_{n-2,n}=K_n$. For every other value of $(i,d)$ in $\II(\Cn)-\II(\calC_{n-1})$, the index $(i-1,d-2)$ is in $\II(\calC_{n-1})$, and hence we define $G_{i,d}=G_{i-1,d-2}+L$. This graph has $(d-2)+2=d$ vertices and by Lemma \ref{Lemma: G+L initiality}, we know it must be $(i,d)$-initial. This completes the proof.
	\end{proof}
	
	\section{Dimension of $\Dnh$}\label{Subsection: dimDnh}
	Now that we have found the dimension of our larger cones $\Dn$ and $\Cn$, we turn our attention to the subcones $\Dnh$ and $\Cnh$ generated by diagrams of ideals of height $h$.
	
	We begin with the cone $\Dnh$ generated by diagrams of Stanley-Reisner ideals of height $h$ (or equivalently, all complexes of codimension $h$), and work towards proving Theorem \ref{Theorem: dimDnh}.
	
	\subsection{Upper Bound}\label{Subsection: ub Dnh}
	As before, we begin by searching for an indexing set $\II(\Dnh)$ and a minimal subspace $\WW(\Dnh)$.
	
	Recall that the diagrams $\beta(\Delta)$ that generate this cone come from complexes of codimension $h$, by Lemma \ref{Lemma: height I-Delta}. This allows us to derive the two following results.
	
	\begin{prop}\label{Proposition: Inequalities Dnh 1}
		Let $\Delta$ be a complex on vertex set $[n]$ of codimension $h$, and set $\beta=\beta(\Delta)$. For any integers $i$ and $d$ with $d - i > n-h+1$, we have $\beta_{i,d} = 0$.
	\end{prop}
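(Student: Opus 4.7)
The plan is to apply Hochster's Formula (Theorem \ref{Theorem: Hochster's Formula}) directly, together with the basic observation that reduced simplicial homology of a complex vanishes above its dimension.

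First I would translate the hypothesis on codimension into a dimension bound. Since $\Delta$ has codimension $h$, we have $\dim \Delta = n - h - 1$. Crucially, any induced subcomplex $\Delta_U$ of $\Delta$ is itself a simplicial complex whose faces are faces of $\Delta$, so $\dim \Delta_U \leq \dim \Delta = n - h - 1$.

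Next, I would recall that for any simplicial complex $\Gamma$, the reduced homology group $\Hred_j(\Gamma)$ vanishes whenever $j > \dim \Gamma$, simply because there are no $j$-chains in that range. Applying this to $\Delta_U$ gives $\Hred_j(\Delta_U) = 0$ for all $j > n - h - 1$ and every $U \subseteq [n]$.

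Finally, I would invoke Hochster's Formula to conclude. Given $i$ and $d$ with $d - i > n - h + 1$, we have $d - i - 2 > n - h - 1$, so
\[
\beta_{i,d}(I_\Delta) = \sum_{U \in \binom{[n]}{d}} \dim_\KK \Hred_{d-i-2}(\Delta_U) = 0,
\]
since every summand vanishes by the dimension bound above. There is no real obstacle here; the argument is essentially a one-line deduction from Hochster's Formula once one notes that induced subcomplexes inherit the dimension bound from $\Delta$.
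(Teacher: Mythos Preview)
Your proof is correct and follows essentially the same approach as the paper: both arguments observe that $\codim\Delta=h$ forces $\dim\Delta_U\leq n-h-1$ for every induced subcomplex, so $\Hred_{d-i-2}(\Delta_U)=0$ whenever $d-i-2>n-h-1$, and then apply Hochster's Formula.
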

	\begin{proof}
		The fact that $\codim \Delta=h$ means that $\Delta$, along with all of its induced subcomplexes, has no faces of dimension higher than $n-h-1$. In particular, for any subset $U\subseteq [n]$ and any $j> n-h-1$, we must have $\Hred_j(\Delta_U) = 0$. Thus by Hochster's Formula, we have that $\beta_{i,d}=0$ whenever $d-i-2>n-h-1$. The result follows.
	\end{proof}
	\begin{prop}\label{Proposition: Inequalities Dnh 2}
		Let $\Delta$ be a complex on vertex set $[n]$ of codimension $h$, and set $\beta=\beta(\Delta)$. For any integers $h\leq i \leq n-1$ we have $\beta_{i,i+1}=0$.
	\end{prop}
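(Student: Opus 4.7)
The plan is to combine the observation from Remark \ref{Remark: Missing Vertices Row 0 of Betti Table} with a simple counting argument about missing vertices of $\Delta$.

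First, I would recall that Remark \ref{Remark: Missing Vertices Row 0 of Betti Table} gives an exact formula: if $\Delta$ has exactly $m$ missing vertices, then for every $0\leq i\leq n-1$ we have $\beta_{i,i+1}(I_\Delta)=\binom{m}{i+1}$. This shows that the row $\{\beta_{i,i+1}\}_i$ of the Betti diagram is controlled entirely by the number of missing vertices of $\Delta$.

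Next I would bound $m$ using the codimension hypothesis. Since $\codim \Delta = h$, the complex has dimension $n-h-1$, so it possesses at least one facet $F$ of size $n-h$. Every vertex of $F$ is a face of $\Delta$ and therefore is not a missing vertex. Consequently the number of missing vertices satisfies $m\leq n-|F|=h$.

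Combining the two, for any integer $i$ with $h\leq i\leq n-1$ we have $i+1>h\geq m$, and therefore $\binom{m}{i+1}=0$, which yields $\beta_{i,i+1}=0$. The main step is really just the bound $m\leq h$; the rest is immediate from the formula of Remark \ref{Remark: Missing Vertices Row 0 of Betti Table}, so I do not anticipate any substantial obstacle.
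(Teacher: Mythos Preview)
Your proposal is correct and is essentially identical to the paper's own proof: both observe that a facet of size $n-h$ forces the number of missing vertices to be at most $h$, and then invoke Remark \ref{Remark: Missing Vertices Row 0 of Betti Table} to conclude that $\beta_{i,i+1}=\binom{m}{i+1}=0$ for $i\geq h$.
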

	\begin{proof}
		Because $\Delta$ has codimension $h$ it must have a facet of size $n-h$. Thus it can have at most $h$ missing vertices, which means that the Betti numbers $\beta_{h,h+1},\dots,\beta_{n-1,n}$ must all be zero by Remark \ref{Remark: Missing Vertices Row 0 of Betti Table}.
	\end{proof}
	
	These inequalities give us a much clearer picture of what the diagrams in $\Dnh$ look like.
	\begin{equation}\label{Equation: Dnh Matrix}
		\begin{bmatrix}
			\beta_{0,1} & \dots & \beta_{h-1,h} &  & & &\\
			\beta_{0,2} & \dots & \beta_{h-1,h+1} & \dots & \dots &\dots & \beta_{n-2,n} \\
			\beta_{0,3} & \dots & \beta_{h-1,h+2} & \dots & \dots & \beta_{n-3,n} & \\
			\vdots &\vdots & \vdots & \vdots & \vdots & & &\\
			\beta_{0,n-h} &\dots & \beta_{h-1,n-1} & \beta_{h,n} & & & & \\
			\beta_{0,n-h+1} &\dots & \beta_{h-1,n} & & & & & \\
		\end{bmatrix}
	\end{equation}
	
	We have now found all the restriction we need in order to define our indexing set $\II(\Dnh)$.
	\begin{defn}\label{Definition: II(Dnh)}
		We define $$\II(\Dnh) := \left\{(i,d)\in \II(\Dn): d-i \leq n-h+1 \}-\{(h,h+1),\dots, (n-1,n)\right\}.$$
	\end{defn}
	
	Unlike in the cases of the cones $\Cn$ and $\Dn$, the minimal subspace $\WW(\Dnh)$ containing the cone $\Dnh$ is not simply the space carved out by this indexing set, $\bigoplus_{(i,d)\in \II(\Dnh)}\QQ$. This is because, as well as satisfying the conditions of Propositions \ref{Proposition: Inequalities Dnh 1} and \ref{Proposition: Inequalities Dnh 2}, we know from Remark \ref{Remark: HK Equations} that the diagrams $\beta$ in $\Dnh$ must also satisfy the Herzog-K\"{u}hl equations $\HK_1(\beta)=\dots = \HK_{h-1}(\beta)=0$. Thus the space $\WW(\Dnh)$ is actually equal to the following.
	\begin{defn}\label{Definition: WW(Dnh)}
		We define $$\WW(\Dnh) := \left\{\beta \in \bigoplus_{(i,d)\in \II(\Dnh)}\QQ : \HK_1(\beta)=\dots =\HK_{h-1}(\beta)=0\right\}.$$
	\end{defn}
	
	To prove that the formula given in Theorem \ref{Theorem: dimDnh} is an upper bound for $\dim \Dnh$, we need to show that $\dim \WW(\Dnh)=\frac{n(n-1)}{2}-\frac{h(h-1)}{2}+1$. This is the content of the following proposition.
	\begin{prop}\label{Proposition: dim WW(Dnh)}
		Let $\II(\Dnh)$ and $\WW(\Dnh)$ be as in Definitions \ref{Definition: II(Dnh)} and \ref{Definition: WW(Dnh)}. We have
		\begin{enumerate}
			\item $|\II(\Dnh)|=\frac{n(n-1)}{2}-\frac{h(h-1)}{2}+h$.
			\item $\dim \WW(\Dnh)=\frac{n(n-1)}{2}-\frac{h(h-1)}{2}+1$.
		\end{enumerate}
	\end{prop}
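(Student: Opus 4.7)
The plan is to handle the two parts separately, with the first being purely combinatorial bookkeeping and the second reducing to a Vandermonde nonsingularity argument.

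For part (1), I would count $\II(\Dnh)$ row by row, using the row convention of Definition \ref{Definition: Rows of Index Set} (so that an index $(i,d)$ lies in row $\rho$ when $d-i-1=\rho$). In $\II(\Dn)$, row $\rho$ contains $n-\rho$ indices, for $\rho = 0, 1, \dots, n-1$. Passing from $\II(\Dn)$ to $\II(\Dnh)$ removes two pieces: the condition $d-i \leq n-h+1$ eliminates every index in row $\rho$ for $\rho \geq n-h+1$, and the explicit exclusion $\{(h,h+1),\dots,(n-1,n)\}$ removes $n-h$ of the original $n$ indices from row $0$, leaving exactly $h$ indices there. Rows $1$ through $n-h$ survive in full, contributing $\sum_{\rho=1}^{n-h} (n-\rho) = \sum_{k=h}^{n-1} k = \tfrac{n(n-1)}{2}-\tfrac{h(h-1)}{2}$. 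Adding the $h$ surviving indices from row $0$ gives the stated formula.

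For part (2), I would observe that by construction $\WW(\Dnh)$ is the intersection of the ambient space $\bigoplus_{(i,d)\in \II(\Dnh)}\QQ$ with the $h-1$ hyperplanes defined by $\HK_1,\dots,\HK_{h-1}$. Consequently $\dim \WW(\Dnh) \geq |\II(\Dnh)|-(h-1)$, with equality precisely when these $h-1$ linear functionals, viewed as elements of the dual space, are linearly independent. Granting this, part (1) immediately yields $\dim \WW(\Dnh) = \tfrac{n(n-1)}{2}-\tfrac{h(h-1)}{2}+h-(h-1) = \tfrac{n(n-1)}{2}-\tfrac{h(h-1)}{2}+1$, as claimed.

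The core step, therefore, is proving the linear independence of $\HK_1,\dots,\HK_{h-1}$ on $\bigoplus_{(i,d)\in \II(\Dnh)}\QQ$. I would do this by restricting attention to the coordinates indexed by row $1$, namely $(0,2),(1,3),\dots,(h-2,h)$. These all lie in $\II(\Dnh)$ because $h \leq n$ and row $1$ is preserved completely. On the coordinate $\beta_{i,i+2}$, the functional $\HK_j$ takes the value $(-1)^i(i+2)^j$, so the restriction of $\HK_1,\dots,\HK_{h-1}$ to these coordinates is encoded by the $(h-1)\times(h-1)$ matrix $M_{j,i} = (-1)^i(i+2)^j$ for $j=1,\dots,h-1$ and $i=0,\dots,h-2$. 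Pulling the scalar $(-1)^i(i+2)$ out of column $i$, we reduce $\det M$ to $\pm \prod_{i=0}^{h-2}(i+2)$ times the Vandermonde determinant on the distinct nodes $2,3,\dots,h$, which is nonzero. This gives the required linear independence.

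The straightforward step is part (1), which is just careful row counting; the only real obstacle is identifying the right set of coordinates in part (2) so that the restricted system has the clean Vandermonde form. Once the row $1$ coordinates $\beta_{i,i+2}$ are singled out, the signs factor out cleanly and the argument reduces to the standard Vandermonde determinant being nonzero.
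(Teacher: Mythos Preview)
Your proof is correct and essentially matches the paper's. Part (1) is identical: row-by-row counting giving $h + \sum_{\rho=1}^{n-h}(n-\rho)$. For part (2), both reduce to a Vandermonde argument, but implemented slightly differently: the paper introduces auxiliary variables $t_d = \sum_i (-1)^i \beta_{i,d}$ and observes that $\HK_j(\beta) = \sum_d d^j t_d$, so the relations are encoded by the Vandermonde-type matrix $(d^j)_{1\le d\le n,\,1\le j\le h-1}$ with independent columns; you instead restrict the functionals directly to the $h-1$ coordinates $\beta_{0,2},\dots,\beta_{h-2,h}$ from row $1$ and get a square $(h-1)\times(h-1)$ matrix that factors as a diagonal matrix times a Vandermonde on nodes $2,\dots,h$. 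Your version is marginally more self-contained in that it avoids any implicit appeal to surjectivity of $\beta\mapsto(t_1,\dots,t_n)$, but both are the same argument at heart.
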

	\begin{proof}
		For part (1), we start by arranging the elements of $\II(\Dnh)$ in rows as in Equation (\ref{Equation: Dnh Matrix}).
		\begin{equation*}\label{Equation: I(Dnh) Matrix}
			\begin{matrix}
				(0,1) & \dots & (h-1,h) &  & & &\\
				(0,2) & \dots & (h-1,h+1) & \dots & \dots &\dots & (n-2,n) \\
				(0,3) & \dots & (h-1,h+2) & \dots & \dots & (n-3,n) & \\
				\vdots &\vdots & \vdots & \vdots & \vdots & & &\\
				(0,n-h) &\dots & (h-1,n-1) & (h,n) & & & & \\
				(0,n-h+1) &\dots & (h-1,n) & & & & & \\
			\end{matrix}
		\end{equation*}
		Labelling the rows of this set as in Definition \ref{Definition: Rows of Index Set} we see that row 0 of $\II(\Dnh)$ has $h$ elements. Also row 1 has $n-1$ elements, row 2 has $n-2$ elements, and so on. In general, for each $1\leq i \leq n-h$, row $i$ of $\II(\Dnh)$ has $n-i$ elements. This means we have
		\begin{align*}
			|\II(\Dnh)|&= h+\sum_{i=1}^{n-h} (n-i)\\
			&= h+\sum_{i=h}^{n-1} i\\
			&= h+ \sum_{i=1}^{n-1} i - \sum_{i=1}^{h-1} i\\
			&= \frac{n(n-1)}{2}-\frac{h(h-1)}{2}+h.
		\end{align*}
		
		For part (2), we need to show that the Herzog-K\"{u}hl equations satisfied by the diagrams in $\Dnh$ are linearly independent. To this end we define, for a diagram $\beta$ in $\Dnh$ and for each $1 \leq d \leq n$, the variable
		\begin{align*}
			t_d = \sum_{i} (-1)^i \beta_{i,d}.
		\end{align*}
		This allows us to express the relations $\HK_1(\beta)= ... = \HK_{h-1}(\beta)=0$ as
		\begin{equation*}
			(t_1,...,t_n)
			\begin{pmatrix}
				1 & \dots & 1\\
				2 & \dots & 2^{h-1}\\
				\vdots & \vdots & \vdots\\
				n & \dots & n^{h-1}
			\end{pmatrix}
			=0\, .
		\end{equation*}
		The matrix of coefficients given above is a Vandermonde matrix with distinct rows, which means in particular that all of its columns are linearly independent.
		
		Thus we have
		\begin{align*}
			\dim (\WW(\Dnh)) &= |\II(\Dnh)| - (h-1)\\
			&= \frac{n(n-1)}{2}-\frac{h(h-1)}{2}+1\, .
		\end{align*}
	\end{proof}
	
	\begin{rem}\label{Remark: dimDnhtilde Upper Bound}
		The above proof can be modified for the cone $\Dnhtilde$, generated by diagrams of complexes with no missing vertices and codimension $h$. As noted in Remark \ref{Remark: dimDntilde Upper Bound}, if $\Delta$ is a complex with no missing vertices, then the Betti numbers $\beta_{0,1}(\Delta),\dots,\beta_{h-1,h}(\Delta)$ are all zero. Thus the indexing set $\II(\Dnhtilde)$ is equal to $\II(\Dnh)-\{(0,1),\dots,(h-1,h)\}$, which has cardinality $|\II(\Dnh)|-h$. Consequently the dimension of the minimal subspace $\WW(\Dnhtilde)$ containing $\Dnhtilde$ is equal to $\dim\WW(\Dnh)-h$, which is $\frac{n(n-1)}{2}-\frac{h(h+1)}{2}+1$.
	\end{rem}
	
	\subsection{Lower Bound, $h=1$ Case}\label{Subsection: lb Dnh}
	Once again, we now need to show that the space $\WW(\Dnh)$ is the minimal subspace of $\WW(\Dn)$ containing $\Dnh$, by following the last two steps of Method \ref{Method: Dimension of Cones}. Thus we search for a linearly independent set of diagrams in $\Dnh$ of size $\frac{n(n-1)}{2}-\frac{h(h-1)}{2}+1$. 
	
	For reasons that will become apparent, we will treat the cases $h=1$ and $h>1$ separately. The majority of this section is devoted solely to the $h=1$ case. However we begin with two lemmas that hold for \textit{any} value of $h$, starting with the following analogue of Lemma \ref{Lemma: Dm in Dn}, which allows us to broaden our search from complexes with exactly $n$ vertices to complexes with at most $n$ vertices, just as we did with our larger cone $\Dn$.
	\begin{lem}\label{Lemma: Dmh in Dnh}
		For any positive integer $m<n$, we have $\Dnh[m]\subset \Dnh$.
	\end{lem}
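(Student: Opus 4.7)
The plan is to mimic the proof of Lemma 2.3 ($\calD_m \subset \Dn$), but to check that the added operation of coning preserves not only the Betti diagram but also the codimension (and thus the height of the Stanley--Reisner ideal). The argument then boils down to taking cones repeatedly.

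First I would take a simplicial complex $\Delta$ on vertex set $[m]$ with $\height I_\Delta = h$; by Lemma 1.31 this is equivalent to saying $\codim \Delta = h$, i.e.\ $\dim \Delta = m - h - 1$. I would then show that coning preserves codimension: if $C\Delta$ denotes the cone over $\Delta$, then $C\Delta$ lives on $m+1$ vertices and has $\dim C\Delta = \dim \Delta + 1$, so
\begin{equation*}
\codim C\Delta = (m+1) - (\dim \Delta + 1) - 1 = m - \dim \Delta - 1 = \codim \Delta = h.
\end{equation*}
Applying Lemma 1.31 once more, $\height I_{C\Delta} = h$.

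Next, by Lemma 2.12 we have $\beta(C\Delta) = \beta(\Delta)$. Iterating the coning operation $n - m$ times yields a complex $C^{n-m}\Delta$ on $n$ vertices with the same codimension $h$ and the same Betti diagram as $\Delta$. Hence $\beta(\Delta) = \beta(C^{n-m}\Delta) \in \Dnh$, which proves the containment $\Dnh[m] \subset \Dnh$.

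There is no real obstacle here: everything reduces to the two easy observations that coning increments both the vertex count and the dimension by one (so codimension is invariant) and that coning leaves the minimal nonfaces, and hence the Stanley--Reisner ideal and its Betti diagram, unchanged. The one subtlety worth flagging is that Lemma 2.12's equality of Betti diagrams compares diagrams of ideals living in polynomial rings of different sizes, but this ambiguity was already addressed in Remark 1.14, which states that the Betti diagram of an ideal depends only on its generators and not on the number of ambient variables.
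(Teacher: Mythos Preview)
Your proof is correct and follows essentially the same approach as the paper: both take iterated cones $C^{n-m}(\Delta)$, verify that coning preserves codimension (you do it one step at a time, the paper computes it for $C^{n-m}(\Delta)$ in one go), and invoke the lemma that coning preserves the Betti diagram. The extra remarks you make about Lemma~1.31 and Remark~1.14 are accurate and just make explicit what the paper leaves implicit.
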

	\begin{proof}
		If $\Delta$ is a complex on vertex set $[m]$, then by Lemma \ref{Lemma: Betti C-Delta} the cone $C^{n-m}(\Delta)$ is a complex on vertex set $[n]$ with the same Betti diagram as $\Delta$. Moreover we have
		\begin{align*}
			\codim C^{n-m}(\Delta) &= n - \dim C^{n-m}(\Delta) - 1\\
			&= n - (n-m + \dim \Delta) - 1\\
			&= m - \dim \Delta - 1\\
			&= \codim \Delta\, .
		\end{align*}
		Thus the diagram $\beta(\Delta)$ lies inside $\Dnh$.
	\end{proof}
	
	An identical proof shows that for an integer $m<n$ the cone $\widetilde{\calD}_m^h$ lies inside the cone $\Dnhtilde$. In fact the following lemma allows us to restrict our attention solely to the cone $\Dnhtilde$.
	\begin{lem}\label{Lemma: Dimension of Dnhtilde => Dimension of Dnh}
		Let $\Dnhtilde$ denote the cone generated by diagrams of complexes on $n$ vertices, with codimension $h$ and no missing vertices. Let $\calBtilde$ be a linearly independent set of diagrams in the cone $\Dnhtilde$. We may extend $\calBtilde$ to a linearly independent set of diagrams in the cone $\Dnh$ of size $|\calB|=|\calBtilde|+h$.
	\end{lem}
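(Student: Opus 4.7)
The plan is to produce $h$ diagrams in $\Dnh \setminus \Dnhtilde$ which, together with $\calBtilde$, form a linearly independent set. The key observation, following Remark \ref{Remark: dimDnhtilde Upper Bound}, is that the difference between $\WW(\Dnh)$ and $\WW(\Dnhtilde)$ lies entirely in the ``top-row'' coordinates $(0,1), (1,2), \ldots, (h-1,h)$: these vanish for diagrams coming from complexes with no missing vertices. So I just need $h$ diagrams in $\Dnh$ whose top-row projections form a linearly independent family.

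For each $1 \leq k \leq h$, I would construct a complex $\Delta_k$ on $[n]$ with exactly $k$ missing vertices and codimension $h$. Explicitly, take
\[
\Delta_k = \langle \{k+1, k+2, \ldots, n-h+k\},\ \{n-h+k+1\},\ \{n-h+k+2\},\ \ldots,\ \{n\}\rangle.
\]
The first set is a facet of size $n-h$, forcing $\codim \Delta_k = h$, while the singleton facets cover every non-missing vertex, so the missing vertices are precisely $\{1\},\ldots,\{k\}$. The inequalities $1 \leq k \leq h < n$ guarantee both that the big facet contains at least one element ($n-h \geq 1$) and that the list of singleton facets is (possibly empty and) well-defined ($n-h+k+1 \leq n+1$). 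Since $\codim \Delta_k = h$, we have $\beta(\Delta_k) \in \Dnh$.

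By Remark \ref{Remark: Missing Vertices Row 0 of Betti Table}, $\beta_{i,i+1}(\Delta_k) = \binom{k}{i+1}$. Restricting to the top-row coordinates $(0,1), \ldots, (h-1,h)$, the diagram $\beta(\Delta_k)$ yields the vector $(\binom{k}{1}, \binom{k}{2}, \ldots, \binom{k}{k}, 0, \ldots, 0)$, whose last nonzero entry sits in position $k-1$ and equals $1$. So, as $k$ ranges over $1, \ldots, h$, these vectors are initial (in the sense of Definition \ref{Definition: Initial}) for the natural ordering on the top row, hence linearly independent.

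I would then set $\calB = \calBtilde \cup \{\beta(\Delta_1), \ldots, \beta(\Delta_h)\}$ and verify linear independence by projection. Any relation
\[
\sum_{B \in \calBtilde} c_B\, B \,+\, \sum_{k=1}^h d_k\, \beta(\Delta_k) \,=\, 0,
\]
restricted to the top-row coordinates, annihilates every $B \in \calBtilde$ (since those diagrams lie in $\WW(\Dnhtilde)$ and so have vanishing top-row entries). What remains is a relation among the top-row projections of the $\beta(\Delta_k)$, which is an upper-triangular system with diagonal entries $\binom{k}{k}=1$, forcing $d_1=\cdots=d_h=0$. The leftover identity $\sum_B c_B B = 0$ then gives $c_B = 0$ for all $B$ by the linear independence of $\calBtilde$. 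The argument is essentially mechanical; the only point requiring care is verifying that each $\Delta_k$ really has $k$ missing vertices and codimension $h$, which is a direct check on the facet list above.
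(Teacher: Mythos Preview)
Your proof is correct and takes essentially the same approach as the paper: both arguments construct, for each $1\leq k\leq h$, a codimension-$h$ complex with exactly $k$ missing vertices, use Remark~\ref{Remark: Missing Vertices Row 0 of Betti Table} to see that the top-row coordinates form a triangular system, and conclude that these $h$ new diagrams together with $\calBtilde$ (whose top-row entries vanish) are linearly independent. The only cosmetic difference is that the paper phrases the independence via $(d-1,d)_\prec$-initiality under an ordering placing the top-row indices last, while you phrase it via projection to the top row; these are equivalent.
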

	\begin{proof}
		We fix an arbitrary ordering $\prec$ on $\II(\Dnhtilde)$ and extend it to an odering on $\II(\Dnh)= \II(\Dnhtilde)\cup \{(0,1),\dots,(h-1,h)\}$ by stipulating that for any index $(i,d)\in \II(\Dnhtilde)$ we have $(i,d)\prec (0,1) \prec \dots \prec (h-1,h)$.
		
		It suffices to find an $(i,d)_\prec$-initial diagram in $\Dnh$ for each index $(i,d)$ in $\{(0,1),\dots,(h-1,h)\}$. To this end, we fix an integer $1\leq d \leq h$, and consider the diagram $\beta$ corresponding to the complex $\Delta^{n-h-1}+\Skel_0([d-h])$ on a vertex set of size $n$. This complex has codimension $h$. It also has $d$ missing vertices, so by Remark \ref{Remark: Missing Vertices Row 0 of Betti Table} the Betti numbers $\beta_{0,1},\dots,\beta_{d-1,d}$ are all nonzero, while the Betti numbers $\beta_{d,d+1},\dots,\beta_{h-1,h}$ are all zero. Thus $\beta$ is a $(d-1,d)_\prec$-initial diagram lying in the cone $\Dnh$, and the result follows.
	\end{proof}
	
	In particular Lemma \ref{Lemma: Dimension of Dnhtilde => Dimension of Dnh} allows us to extend any basis for the vector space $\WW(\Dnhtilde)$ to a basis for $\WW(\Dnh)$. Thus, to find an appropriately sized linearly independent set of diagrams in $\Dnh$, it suffices to find a linearly independent  set of diagrams in the \textit{subcone} $\Dnhtilde$ which spans $\WW(\Dnhtilde)$.
	
	We now proceed to finding these diagrams in the case where $h=1$. In this case, we define our ordering on the indexing set $\II(\Dnhtilde[n][1])$ to be the restriction of the ordering $\prec$ on $\II(\Dn)$ given in Definition \ref{Definition: Order Dn}.
	
	Note that the indexing set $\II(\Dnhtilde[n][1])$ is the same as the indexing set $\II(\Dntilde)$, which means it contains the following indices.
	\begin{equation*}\label{Equation: I(Dn1tilde) Matrix}
		\begin{matrix}
			(0,2) & (1,3) & \dots & \dots & (n-2,n)&\\
			(0,3)& (1,4) & \dots & (n-3,n) & &\\
			\vdots& \vdots & \vdots & & &\\
			(0,n-1)& (1,n) & & & & \\
			(0,n)& & & & &
		\end{matrix}
	\end{equation*}
	Note also that for any $h$ we have $\dim \WW(\Dnhtilde) = |\II(\Dnhtilde)|-(h-1)$, and so in particular we have that $\dim \WW(\Dnhtilde[n][1]) = |\II(\Dnhtilde[n][1])|$. Thus we are looking for  $(i,d)_\prec$-initial complexes corresponding to \textit{every} index $(i,d)$ in $\II(\Dnhtilde[n][1])$ (in the general case we will be searching for complexes corresponding to all but $h-1$ of the indices in $\II(\Dnhtilde)$).
	
	To build these complexes we use the following elegant construction from \cite{Klivans} page 3.
	\begin{defn}\label{Definition: Starring Operation}
		Let $r\geq -1$ be an integer, and let $\Delta$ be a simplicial complex on vertex set $V$. For an additional vertex $v$ outside of $V$, we define the \textit{starred complex} $$\Delta \Star_r v := \Delta \cup \{\sigma \cup \{v\} : \sigma \in \Delta, |\sigma| = r \}$$
		on vertex set $V\cup \{v\}$.
	\end{defn}
	\begin{rem}\label{Remark: Starring Operation}
		The operation $\huge{\textunderscore}\Star_r v$ is a generalisation of the coning operation. In particular, if $\dim \Delta = d$, then $\Delta\Star_{d+1} v$ is equal to the join $\Delta \ast v$, which is just the cone $C(\Delta)$. We list some other specific cases below for clarity.
		\begin{enumerate}
			\item The complex $\Delta \Star_{-1} v$ is obtained from $\Delta$ by adding a single missing vertex $v$ to $\Delta$ (i.e. the vertex $v$ is not a face of $\Delta \Star_{-1} v$ but it is an element of the vertex set of $\Delta \Star_{-1} v$).
			\item The complex $\Delta \Star_0 v$ is obtained by adding a single isolated vertex $v$ to $\Delta$ (i.e. we have $\Delta \Star_0 v = \Delta + v$).
			\item The complex $\Delta \Star_1 v$ is obtained by adding the vertex $v$ to $\Delta$, along with every edge $\{x,v\}$ for vertices $x$ in $\Delta$.
		\end{enumerate}
	\end{rem}
	\begin{ex}\label{Example: Starring Operation}
		Suppose $\Delta$ is the boundary of the $2$-simplex (which has dimension $1$). We depict the complexes $\Delta\Star_r v$ below, for $r\in \{-1,0,1,2\}$. As always, we denote missing vertices with a $\times$ symbol.
		\begin{center}
			\begin{tabular}{ c c c c }
				\begin{tikzpicture}[scale=0.8]
					\tikzstyle{point}=[circle,thick,draw=black,fill=black,inner sep=0pt,minimum width=2pt,minimum height=2pt]
					\node (a)[point] at (0,0) {};
					\node (b)[point] at (2,0) {};
					\node (c)[point] at (1,1.7) {};
					\node[scale=1.2] at (3,0.85) {$\times$};
					
					\draw (a.center) -- (b.center) -- (c.center) -- cycle;
				\end{tikzpicture}& 
				&
				& \begin{tikzpicture}[scale=0.8]
					\tikzstyle{point}=[circle,thick,draw=black,fill=black,inner sep=0pt,minimum width=2pt,minimum height=2pt]
					\node (a)[point] at (0,0) {};
					\node (b)[point] at (2,0) {};
					\node (c)[point] at (1,1.7) {};
					\node[point] at (3,0.85) {};
					
					\draw (a.center) -- (b.center) -- (c.center) -- cycle;
				\end{tikzpicture}\\
				&&&\\
				$\Delta \Star_{-1} v$& & & $\Delta \Star_0 v$\\
				&&&\\
				\begin{tikzpicture}[scale=0.9]
					\tikzstyle{point}=[circle,thick,draw=black,fill=black,inner sep=0pt,minimum width=2pt,minimum height=2pt]
					\node (a)[point] at (0,0) {};
					\node (b)[point] at (2,0) {};
					\node (c)[point] at (1,1.7) {};
					\node (d)[point] at (1,0.58) {};
					
					\draw (a.center) -- (b.center) -- (d.center) -- cycle;
					\draw (b.center) -- (c.center) -- (d.center) -- cycle;
					\draw (c.center) -- (a.center) -- (d.center) -- cycle;
				\end{tikzpicture}&
				&
				& \begin{tikzpicture}[scale=0.9]
					\tikzstyle{point}=[circle,thick,draw=black,fill=black,inner sep=0pt,minimum width=2pt,minimum height=2pt]
					\node (a)[point] at (0,0) {};
					\node (b)[point] at (2,0) {};
					\node (c)[point] at (1,1.7) {};
					\node (d)[point] at (1,0.58) {};
					
					\begin{scope}[on background layer]
						\draw[fill=gray] (a.center) -- (b.center) -- (d.center) -- cycle;
						\draw[fill=gray] (b.center) -- (c.center) -- (d.center) -- cycle;
						\draw[fill=gray] (c.center) -- (a.center) -- (d.center) -- cycle;
					\end{scope}
				\end{tikzpicture}\\
				&&&\\
				$\Delta \Star_1 v$& & & $\Delta \Star_2 v$
			\end{tabular}
		\end{center}
	\end{ex}
	
	To work out how the starring operation affects Betti diagrams, we must first look at how it affects homology. The following lemma allows us to compute the homology of $\Delta \Star_r v$ in particular cases, which will be sufficient for our needs.
	\begin{lem}\label{Lemma: Starring and Homology}
		Let $\Delta$ be a simplicial complex on vertex set $V$ and define $\Gamma = \Delta \Star_r v$ for some integer $r\geq 0$.
		\begin{enumerate}
			\item For any $j< r$ we have $\Hred_j(\Gamma) = 0$.
			\item For any $j> r$ we have $\Hred_j(\Gamma) = \Hred_j(\Delta)$.
			\item If $\Delta$ is acyclic we have $\Hred_r(\Gamma) = \Hred_{r-1}(\Skel_{r-1}(\Delta))$.
		\end{enumerate}
	\end{lem}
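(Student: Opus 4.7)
My plan is to apply the Mayer--Vietoris sequence (Proposition \ref{Proposition: MVS}) to the decomposition $\Gamma = A \cup B$, where $A = \Delta$ and $B$ is the closed star of $v$ in $\Gamma$, i.e.\ the subcomplex on the faces of $\Gamma$ that either contain $v$ or can be extended by $v$ to a face of $\Gamma$. Unpacking Definition \ref{Definition: Starring Operation}, one checks that $L := A \cap B = \lkds[v][\Gamma] = \Skel_{r-1}(\Delta)$, and then $B = L \ast \{v\}$ is a cone and hence acyclic by Corollary \ref{Corollary: Homology of Cone}. The Mayer--Vietoris sequence therefore reduces to
\begin{equation*}
\cdots \to \Hred_j(L) \to \Hred_j(\Delta) \to \Hred_j(\Gamma) \to \Hred_{j-1}(L) \to \Hred_{j-1}(\Delta) \to \cdots.
\end{equation*}

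The whole argument then rests on a single comparison between the homology of $\Delta$ and that of its skeleton $L = \Skel_{r-1}(\Delta)$, which I would establish by inspecting chain complexes. Because $L$ and $\Delta$ share exactly the same faces of size at most $r$, the inclusion induces (a) an isomorphism $\Hred_j(L) \cong \Hred_j(\Delta)$ for every $j \leq r-2$ (their chain complexes agree in all relevant degrees), (b) a surjection $\Hred_{r-1}(L) \twoheadrightarrow \Hred_{r-1}(\Delta)$ whose kernel is the image of $\partial_r \colon C_r(\Delta) \to C_{r-1}(\Delta)$, and (c) the vanishing $\Hred_j(L) = 0$ for all $j \geq r$, since $L$ has dimension at most $r - 1$.

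With these three facts the three parts of the lemma fall out of the Mayer--Vietoris sequence by chasing the relevant five-term segments. For part (1) with $j \leq r - 2$, both flanking maps are isomorphisms by (a), so $\Hred_j(\Gamma) = 0$; for the remaining case $j = r - 1$ the incoming map is surjective by (b) and the outgoing map is an isomorphism by (a), again forcing $\Hred_{r-1}(\Gamma) = 0$. For part (2), when $j \geq r+1$, both $\Hred_j(L)$ and $\Hred_{j-1}(L)$ vanish by (c), so the sequence collapses to $\Hred_j(\Delta) \cong \Hred_j(\Gamma)$. For part (3), the acyclicity of $\Delta$ makes the segment around degree $r$ read $0 \to \Hred_r(\Gamma) \to \Hred_{r-1}(L) \to 0$, yielding $\Hred_r(\Gamma) \cong \Hred_{r-1}(\Skel_{r-1}(\Delta))$.

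I expect the only genuinely delicate step to be the combinatorial identification $L = \Skel_{r-1}(\Delta)$, which requires showing that the simplicial complex generated by $\Delta$ and the sets $\sigma \cup \{v\}$ with $|\sigma| = r$ really does contain $\tau \cup \{v\}$ for every face $\tau \in \Delta$ with $|\tau| \leq r$. Everything else is a short diagram chase, and the comparison (a)--(c) between $L$ and $\Delta$ is a standard consequence of the fact that taking a skeleton truncates the chain complex without altering lower-degree chain groups, so the whole proof should be compact once the setup is in place.
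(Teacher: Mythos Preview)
Your proposal is correct and, for parts (2) and (3), essentially identical to the paper's proof: the same decomposition $\Gamma = \Delta \cup (\text{closed star of } v)$, the same identification of the intersection as $\Skel_{r-1}(\Delta)$, and the same Mayer--Vietoris collapse. The only divergence is in part (1). The paper does not run Mayer--Vietoris there; instead it observes that for $j < r$ the homology $\Hred_j(\Gamma)$ depends only on $\Skel_r(\Gamma)$ (Lemma \ref{Lemma: Hj Delta = Hj Skel(Delta)}), and that $\Skel_r(\Gamma) = \Skel_r(C\Delta)$, so $\Hred_j(\Gamma) = \Hred_j(C\Delta) = 0$. Your route---staying inside Mayer--Vietoris and using that the inclusion $\Skel_{r-1}(\Delta) \hookrightarrow \Delta$ induces isomorphisms on $\Hred_j$ for $j \le r-2$ and a surjection on $\Hred_{r-1}$---is equally valid and has the virtue of handling all three parts with a single tool, at the cost of a short extra argument about inclusion-induced maps; the paper's cone trick is slicker for part (1) alone but splits the proof across two mechanisms.
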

	\begin{proof}
		For part (1), fix some $j<r$. By Lemma \ref{Lemma: Hj Delta = Hj Skel(Delta)} we know that the $\nth{j}$ homology of $\Gamma$ is dependent only on the $r$-skeleton of $\Gamma$, which is the same as the $r$-skeleton of the cone $C\Delta$. Thus we have \begin{align*}
			\Hred_j(\Gamma) &= \Hred_j(\Skel_r(\Gamma))\\
			&= \Hred_j(\Skel_r(C\Delta))\\
			&= \Hred_j(C\Delta)\\
			&= 0.
		\end{align*}
		
		Now we move on to parts (2) and (3). We begin by decomposing $\Gamma$ into the union $\Delta \cup A$ where $A$ is the subcomplex of $\Gamma$ generated by all faces containing $v$. This latter subcomplex of $\Gamma$ is a cone over $v$ and is therefore acyclic. Moreover, the intersection $\Delta \cap A$ consists of all the faces $\sigma$ of $\Delta$ whose union with $v$ is a face of $\Gamma$. By the definition of the $\huge{\textunderscore}\Star_r v$ operation, $\sigma \cup \{v\}$ is contained in $\Gamma$ if and only if $\dim \sigma \leq r-1$. Thus we have $\Delta \cap A = \Skel_{r-1}(\Delta)$.
		
		The Mayer-Vietoris sequence gives us an exact sequence \begin{equation}\label{Equation: MVS}
			\rightarrow \Hred_j(\Delta) \rightarrow \Hred_j(\Gamma) \rightarrow \Hred_{j-1}(\Skel_{r-1}(\Delta)) \rightarrow \Hred_{j-1}(\Delta) \rightarrow \Hred_{j-1}(\Gamma) \rightarrow \dots
		\end{equation}
		
		For $j\geq r$, we have $\Hred_j(\Skel_{r-1}(\Delta))=0$. This means that for each $j>r$ we have the exact sequence $$0\rightarrow \Hred_j(\Delta) \rightarrow \Hred_j(\Gamma) \rightarrow 0$$ which proves part (2). The Mayer-Vietoris sequence in Equation (\ref{Equation: MVS}) also includes the maps $$\dots\rightarrow \Hred_r(\Delta) \rightarrow \Hred_r(\Gamma) \rightarrow \Hred_{r-1}(\Skel_{r-1}(\Delta)) \rightarrow \Hred_{r-1}(\Delta) \rightarrow \dots$$ so if $\Delta$ is acyclic this tells us that $\Hred_r(\Gamma)$ is isomorphic to $\Hred_{r-1}(\Skel_{r-1}(\Delta))$, proving part (3).
	\end{proof}
	
	Using this result we can find the homology, and hence the Betti diagrams, of the starred complex $\Delta^m \Star_r v$ for integers $0\leq r \leq m \leq n-2$ (where $\Delta^m$ denotes the $m$-simplex). These complexes will give us our $(i,d)_\prec$-initial diagrams for the cone $\Dnhtilde[n][1]$. 
	\begin{cor}\label{Corollary: Homology of Starred Simplex}
		Let $m$ and $r$ be integers with $0\leq r \leq m \leq n-2$, and define $\Gamma=\Delta^m \Star_r v$. For any integer $j\geq -1$ we have $$\dim_{\KK} \Hred_j(\Gamma)=\begin{cases}
			{m \choose r}& \text{ if } j=r\\
			0& \text{ otherwise.}
		\end{cases}$$
	\end{cor}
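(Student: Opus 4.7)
The plan is to apply Lemma \ref{Lemma: Starring and Homology} directly, exploiting the fact that $\Delta^m$ is a full simplex and therefore acyclic, and then reduce the remaining computation to Lemma \ref{Lemma: Homology of Skeleton Complexes}. There is essentially no new content to extract beyond combining the two preceding results, so I expect the proof to be short and mechanical rather than genuinely obstructed.

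First, I would handle the vanishing ranges. By part (1) of Lemma \ref{Lemma: Starring and Homology}, $\Hred_j(\Gamma) = 0$ whenever $j < r$. For $j > r$, part (2) of the same lemma gives $\Hred_j(\Gamma) = \Hred_j(\Delta^m)$, which is zero since the full $m$-simplex is contractible and therefore acyclic. So the only degree at which $\Gamma$ can have nontrivial homology is $j = r$.

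Next, to compute $\Hred_r(\Gamma)$, I would invoke part (3) of Lemma \ref{Lemma: Starring and Homology}, which applies precisely because $\Delta^m$ is acyclic, yielding
\[
\Hred_r(\Gamma) \;\cong\; \Hred_{r-1}\bigl(\Skel_{r-1}(\Delta^m)\bigr) \;=\; \Hred_{r-1}\bigl(\Skel_{r-1}([m+1])\bigr).
\]
Now I would apply Lemma \ref{Lemma: Homology of Skeleton Complexes} with the substitutions (number of vertices $= m+1$, skeleton parameter $= r-1$, homology degree $= r-1$): this gives dimension $\binom{(m+1)-1}{(r-1)+1} = \binom{m}{r}$, as required.

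The only subtlety worth flagging is the edge case $r = 0$, where part (3) of Lemma \ref{Lemma: Starring and Homology} produces $\Hred_{-1}(\Skel_{-1}(\Delta^m)) = \Hred_{-1}(\{\emptyset\})$, which is one-dimensional; this matches $\binom{m}{0} = 1$, consistent with the statement. I expect no genuine obstacle: the whole argument is a one-step assembly of the two preceding lemmas, and the main care needed is simply tracking indices so that the binomial coefficient comes out as $\binom{m}{r}$ rather than some shift of it.
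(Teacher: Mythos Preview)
Your proposal is correct and follows essentially the same approach as the paper: apply all three parts of Lemma \ref{Lemma: Starring and Homology} using the acyclicity of $\Delta^m$, then invoke Lemma \ref{Lemma: Homology of Skeleton Complexes} on $\Skel_{r-1}([m+1])$ to obtain $\binom{m}{r}$. The paper's version is terser (it simply states that all other homologies vanish without separating the $j<r$ and $j>r$ cases), but the logic is identical.
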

	\begin{proof}
		The complex $\Delta^m$ is acyclic, so by Lemma \ref{Lemma: Starring and Homology} we have that  $\Hred_r(\Gamma)=\Hred_{r-1}(\Skel_{r-1}(\Delta^m))$, and all other homogies are zero. Note that $\Skel_{r-1}(\Delta^m)$ can be rewritten as $\Skel_{r-1}([m+1])$, which has $\nth[st]{(r-1)}$ homology of dimension ${m \choose r}$ by Lemma \ref{Lemma: Homology of Skeleton Complexes}.
	\end{proof}
	
	\begin{cor}\label{Corollary: Betti Diagram of Starred Simplex}
		Let $m$ and $r$ be integers with $0\leq r \leq m \leq n-2$, and define $\Gamma=\Delta^m \Star_r v$. For any index $(i,d)\in \II(\Dnhtilde[n][1])$ we have
		\begin{enumerate}
			\item $\beta_{i,d}(\Gamma)=\begin{cases}
				{m + 1 \choose d-1}{d-2 \choose r}& \text{ if } 0\leq i \leq m-r \text{ and } d=i+r+2\\
				0& \text{ otherwise.}
			\end{cases}$
			
			In particular the diagram $\beta(\Gamma)$ has the following shape.
			\begin{equation*}
				\begin{bmatrix}
					\beta_{0,r+2} & \dots & \beta_{m-r,m+2}\\
				\end{bmatrix}
			\end{equation*}
			\item The Betti diagram of the complex $\Delta^{d-2}\Star_{d-i-2} v$ is an $(i,d)_\prec$-initial diagram lying inside $\Dnhtilde[n][1]$.
		\end{enumerate}
	\end{cor}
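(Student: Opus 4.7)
The plan is to prove part (1) by a direct application of Hochster's Formula, using the homology computation of Corollary~\ref{Corollary: Homology of Starred Simplex} as the main input, and then to read off part (2) from the shape of the resulting diagram.

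\textbf{Part (1).} I will fix the vertex set of $\Gamma$ as $V=[m+1]\cup\{v\}$, which has size $m+2$, and analyse the induced subcomplex $\Gamma_U$ case by case for each $U\in\binom{V}{d}$, feeding the result into Hochster's Formula. There are two cases: if $v\notin U$ then $U\subseteq [m+1]$, so $\Gamma_U=\Delta^{d-1}$ (a full simplex), which is acyclic and contributes nothing. If $v\in U$, write $U=\{v\}\cup U'$ with $U'\subseteq [m+1]$ of size $d-1$. Here one must be careful: by taking subsets, the definition of the starring operation implicitly forces $\Gamma$ to contain every set $\tau\cup\{v\}$ with $\tau\in\Delta^m$ and $|\tau|\le r$. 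So $\Gamma_U$ consists of all subsets of $U'$ together with all $\tau\cup\{v\}$ for $\tau\subseteq U'$ with $|\tau|\le r$. If $d-1\ge r$ this is exactly $\Delta^{d-2}\Star_r v$ on vertex set $U$; if $d-1<r$ every subset of $U'$ appears with $\{v\}$ attached, so $\Gamma_U$ is the full simplex on $U$ and is again acyclic. Thus only the subsets $U$ with $v\in U$ and $d-1\ge r$ contribute, and Corollary~\ref{Corollary: Homology of Starred Simplex} tells us each one contributes $\binom{d-2}{r}$ to $\Hred_{d-i-2}(\Gamma_U)$ precisely when $d-i-2=r$, i.e.\ $d=i+r+2$. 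Counting the $\binom{m+1}{d-1}$ such subsets $U$ gives the stated formula, with nonvanishing requiring $0\le i\le m-r$ (so that $\binom{m+1}{d-1}$ and $\binom{d-2}{r}$ are both positive). The shape of the diagram follows immediately, since the indices $(i,d)=(i,i+r+2)$ all lie on the row $d-i-1=r+1$.

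\textbf{Part (2).} The natural candidate is $\Gamma=\Delta^{d-2}\Star_{d-i-2}v$, i.e.\ the choice $m=d-2$, $r=d-i-2$. I will first check these parameters are admissible: since $(i,d)\in \II(\Dnhtilde[n][1])$ we have $0\le i$, $d\ge i+2$ and $d\le n$, so $0\le r\le m\le n-2$. Applying part (1), the nonzero entries of $\beta(\Gamma)$ are at positions $(i',d')$ with $d'-i'=d-i$ and $0\le i'\le i$, so the entry at $(i,d)$ is nonzero while every $(i',d')$ with $(i,d)\prec(i',d')$ (either in a strictly higher row or in the same row with $i'>i$) has $\beta_{i',d'}(\Gamma)=0$. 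This is exactly $(i,d)_\prec$-initiality. Finally, $\Gamma$ has $d\le n$ vertices, none missing (every singleton of $[m+1]$ is a face of $\Delta^m$, and $\{v\}\in\Gamma$ since $r\le m$), and $\dim\Gamma=m=d-2$, so its codimension when viewed on its $d$ vertices equals $1$; by Lemma~\ref{Lemma: Dmh in Dnh} the diagram lies in $\Dnh[n][1]$, and since coning preserves the absence of missing vertices, in fact in $\Dnhtilde[n][1]$.

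The only genuinely delicate point is the case analysis for $\Gamma_U$ in part (1): it is tempting to dismiss the small subsets $U$ (those with $d-1<r$) by hand-waving that ``$v$ is not yet connected to much''; in fact the opposite is true — the closure condition forces the simplex on $U$ to be complete, which is why no spurious homology appears in low columns. Once this observation is made, the rest is a bookkeeping exercise.
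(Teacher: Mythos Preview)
Your proof is correct and follows essentially the same approach as the paper: apply Hochster's Formula, observe that subsets not containing $v$ give full simplices, identify $\Gamma_{U'\cup\{v\}}$ with $\Delta^{d-2}\Star_r v$, and invoke Corollary~\ref{Corollary: Homology of Starred Simplex}. You are in fact slightly more careful than the paper in two places: you explicitly dispose of the case $d-1<r$ (where the induced subcomplex is a full simplex rather than a genuine starred simplex), and you verify that $\Delta^{d-2}\Star_{d-i-2}v$ has no missing vertices so that its diagram lies in $\Dnhtilde[n][1]$ rather than merely $\Dnh[n][1]$.
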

	\begin{proof}
		Fix some index $(i,d)\in \II(\Dnhtilde[n][1])$, and pick some vertex set $V$ for $\Delta^m$ of size $m+1$. Hochster's Formula gives us that $$\beta_{i,d}(\Gamma)=\sum_{U\in {V\cup \{v\} \choose d}} \dim_{\KK}\Hred_{d-i-2}(\Gamma_U).$$ In particular for any subset $U\subset V$ of size $d$ we have $\Gamma_U=\Delta^m|_U$, which is the full simplex on the vertices of $U$, and is hence acyclic.
		
		Thus the only subsets of $V\cup \{v\}$ which contribute to the Betti numbers of $\Gamma$ are the ones that contain $v$, so we have $$\beta_{i,d}(\Gamma)=\sum_{U\in {V \choose d-1}} \dim_{\KK}\Hred_{d-i-2}(\Gamma_{U\cup \{v\}}).$$ Note that for any subset $U\subseteq V$ of size $d-1$ we have $\Gamma_{U\cup \{v\}}=\Delta^m|_U \Star_r v$, and $\Delta^m|_U$ is isomorphic to the $(d-2)$-simplex $\Delta^{d-2}$.
		
		Hence by Corollary \ref{Corollary: Homology of Starred Simplex}, the induced subcomplex $\Gamma_U$ has homology only at degree $r$ and this homology has dimension ${d-2 \choose r}$. The number of subsets of $U\subseteq V$ of size $d-1$ is equal to ${m+1 \choose d-1}$. This proves claim (1).
		
		If our integers $m$ and $r$ satisfy $(m-r,m+2)=(i,d)$, then the Betti diagram of $\Gamma$ is $(i,d)_\prec$-initial. The values $m=d-2$ and $r=d-i-2$ satisfy this condition. Moreover the complex $\Delta^{d-2}\Star_{d-i-2} v$ has codimension $1$ and $d$ vertices (so in particular it has no more than $n$ vertices). Thus its Betti diagram lies inside $\Dnh[n][1]$. This proves claim (2).
	\end{proof}
	
	\begin{ex}\label{Example: (i,d)-initial Diagrams for Dn1tilde}
		In the case where $n=4$, we have the following $(i,d)_\prec$-initial diagrams in the cone $\Dnhtilde[4][1]$.
		\begin{center}
			\begin{tabular}{ c c c c c }
				\begin{tikzpicture}[scale=0.6]
					\tikzstyle{point}=[circle,thick,draw=black,fill=black,inner sep=0pt,minimum width=2pt,minimum height=2pt]
					\node[point, scale=1.2] at (0,0) {};
					\node[point, scale=1.2] at (2,0) {};
				\end{tikzpicture}
				& & \begin{tikzpicture}[scale = 0.6]
					\tikzstyle{point}=[circle,thick,draw=black,fill=black,inner sep=0pt,minimum width=2pt,minimum height=2pt]
					\node (a)[point, scale=1.2] at (1,0) {};
					\node (b)[point, scale=1.2] at (1,1.7) {};
					\node (e)[point, scale=1.2] at (3,0.9) {};
					
					\draw (a.center) -- (b.center);
				\end{tikzpicture} & & \begin{tikzpicture}[scale = 0.6]
					\tikzstyle{point}=[circle,thick,draw=black,fill=black,inner sep=0pt,minimum width=2pt,minimum height=2pt]
					\node (a)[point] at (0,0) {};
					\node (b)[point] at (2,0) {};
					\node (c)[point] at (1,1.7) {};
					\node (d)[point] at (3.2,0.9) {};
					
					\begin{scope}[on background layer]
						\draw[fill=gray] (a.center) -- (b.center) -- (c.center) -- cycle;
					\end{scope}
				\end{tikzpicture} \\
				$\Delta^0 \Star_0 v$& & $\Delta^1 \Star_0 v$ & & $\Delta^2 \Star_0 v$  \\
				$(0,2)$-initial& & $(1,3)$-initial & & $(2,4)$-initial\\
				
				& & & & \\
				
				\begin{tikzpicture}[scale=0.6]
					\tikzstyle{point}=[circle,thick,draw=black,fill=black,inner sep=0pt,minimum width=2pt,minimum height=2pt]
					\node[point] (1) at (0,0) {};
					\node[point] (2) at (2,0) {};
					\node[point] (3) at (1,1.7) {};
					
					\draw (1) -- (2) -- (3) -- (1);
				\end{tikzpicture} & & \begin{tikzpicture}[scale=0.65]
					\tikzstyle{point}=[circle,thick,draw=black,fill=black,inner sep=0pt,minimum width=2pt,minimum height=2pt]
					\node (a)[point] at (0,0) {};
					\node (b)[point] at (2,0) {};
					\node (c)[point] at (1,1.7) {};
					\node (d)[point] at (1,0.58) {};
					
					\begin{scope}[on background layer]
						\draw[fill=gray] (a.center) -- (b.center) -- (d.center) -- cycle;
						\draw (b.center) -- (c.center) -- (d.center) -- cycle;
						\draw (c.center) -- (a.center) -- (d.center) -- cycle;
					\end{scope}
				\end{tikzpicture} & &  \\
				$\Delta^1 \Star_1 v$& & $\Delta^2 \Star_1 v$ & & \\
				$(0,3)$-initial& & $(1,4)$-initial & &\\
				
				& & & &\\
				
				\begin{tikzpicture}[scale=0.65][line join = round, line cap = round]
					
					\coordinate (4) at (0,{sqrt(2)},0);
					\coordinate (3) at ({-.5*sqrt(3)},0,-.5);
					\coordinate (2) at (0,0,1);
					\coordinate (1) at ({.5*sqrt(3)},0,-.5);
					
					\begin{scope}
						\draw (1)--(3);
						\draw[fill=lightgray,fill opacity=.5] (2)--(1)--(4)--cycle;
						\draw[fill=gray,fill opacity=.5] (3)--(2)--(4)--cycle;
						\draw (2)--(1);
						\draw (2)--(3);
						\draw (3)--(4);
						\draw (2)--(4);
						\draw (1)--(4);
					\end{scope}
				\end{tikzpicture}& & & & \\
				$\Delta^2 \Star_2 v$& & & & \\
				$(0,4)$-initial& & & &
			\end{tabular}
		\end{center}
	\end{ex}
	
	We have now found $(i,d)_\prec$-initial diagrams for every index $(i,d)$ in $\II(\Dnhtilde[n][1])$, which is sufficient to prove Theorem \ref{Theorem: dimDnh} in the case where $h=1$. We proceed to the general case.
	
	\subsection{Lower Bound, $h>1$ Case}\label{Subsection: lb Dnh h>1}
	For this section we assume that $h>1$. We wish to find an ordering on the index set $\II(\Dnhtilde)$, and a set of $(i,d)$-initial diagrams for all but $h-1$ of the indices $(i,d)\in \II(\Dnhtilde)$ with respect to this ordering.
	
	However, in this case we will no longer be able to use the restriction of the ordering $\prec$ we used for $\II(\Dn)$. Instead we define the following refined ordering $\prec_h$ on $\II(\Dnhtilde)$. 
	\begin{defn}\label{Definition: Order Dnhtilde}
		For any two pairs $(i,d)$ and $(i',d')$ in $\II(\Dnhtilde)$ we write $(i,d)\prec_h (i',d')$ $(i,d)\prec_h (i',d')$ if and only if any one of the following conditions hold.
		\begin{enumerate}
			\item $d-i<d'-i'$.
			\item $d-i=d'-i'=2$ and $i<i'$
			\item $d-i=d'-i'>2$, $i'\geq h$, and $i<i'$.
			\item $d-i=d'-i'>2$, $i,i'<h$ and $i>i'$.
		\end{enumerate}
	\end{defn}
	\begin{rem}\label{Remark: prec1 = prec}
		\begin{enumerate}
			\item The ordering $\prec_h$ is the same as the ordering $\prec$ except that for all rows after row 1 we \textit{reverse} the ordering of the first $h$ elements (i.e. those for which $0\leq i \leq h-1$).
			
			For example, the indexing set $\II(\Dnhtilde[6][3])$ contains the elements $$\begin{matrix}
				(0,2)& (1,3) & (2,4) & (3,5) & (4,6)\\
				(0,3)& (1,4) & (2,5) & (3,6)\\
				(0,4)& (1,5) & (2,6) & \\
			\end{matrix}$$
			and the ordering $\prec_3$ agrees with the ordering $\prec$ for row 1, so we have $$(0,2)\prec_3 (1,3)\prec_3 (2,4) \prec_3 (3,5) \prec_3 (4,6).$$ Then for rows 2 and 3 we reverse the ordering of the first three elements, so we get $$(2,5) \prec_3 (1,4) \prec_3 (0,3) \prec_3 (3,6)$$ and $$(2,6) \prec_3 (1,5) \prec_3 (0,4).$$
			
			\item We also extend this ordering to the $h=1$ case. Note that in this case the ordering $\prec_1$ is in fact the same as the ordering $\prec$. Crucially, this means that the complexes we found in the last section are $(i,d)$-initial with respect to the ordering $\prec_1$.
		\end{enumerate}
	\end{rem}
	
	The ordering $\prec_h$ may initially seem a little counter-intuitive, but it is actually well-suited to our purposes. The main reason it is useful to us comes from a proposition later on in this section (Proposition \ref{Proposition: Betti adding Empty}) which details how we can adjust the shape of the Betti diagram $\beta(\Delta)$ by adding a number of isolated vertices to $\Delta$. This allows us to increase the codimension of $\Delta$ while only adding nonzero entries in the Betti diagram to the \textit{right} of pre-existing nonzero entries.
	
	For example suppose $\Delta$ is the codimension $2$ complex 
	$$\begin{tikzpicture}[scale=0.6]
		\tikzstyle{point}=[circle,thick,draw=black,fill=black,inner sep=0pt,minimum width=2pt,minimum height=2pt]
		\node (a)[point] at (0,0) {};
		\node (b)[point] at (0,2) {};
		\node (c)[point] at (2,2) {};
		\node (d)[point] at (2,0) {};
		
		\draw (a.center) -- (b.center) -- (c.center) -- (d.center) -- cycle;
	\end{tikzpicture}$$ which has the following Betti diagram. $$\begin{array}{c | cc}
		& 0 & 1 \\
		\hline
		2& 2 & .\\
		3& . & 1
	\end{array}$$ The only nonzero entry in row 2 of this diagram is $\beta_{1,4}$, and there are no nonzero entries in lower rows. This means the diagram is $(1,4)$-initial with respect to every ordering we have considered so far.
	
	Now suppose we want to use $\Delta$ to construct a new $(1,4)$-initial complex of codimension $3$. We can obtain a complex of codimension $3$ by adding a single isolated vertex, giving us the following. $$\begin{tikzpicture}[scale=0.6]
		\tikzstyle{point}=[circle,thick,draw=black,fill=black,inner sep=0pt,minimum width=2pt,minimum height=2pt]
		\node (a)[point] at (0,0) {};
		\node (b)[point] at (0,2) {};
		\node (c)[point] at (2,2) {};
		\node (d)[point] at (2,0) {};
		\node (e)[point] at (3.5,1) {};
		
		\draw (a.center) -- (b.center) -- (c.center) -- (d.center) -- cycle;
	\end{tikzpicture}$$ This has the Betti diagram $$\begin{array}{c | cccc}
		& 0 & 1 & 2 & 3 \\
		\hline
		2& 6 & 8 & 4 & 1\\
		3& . & 1 & 1 & .
	\end{array}$$ which means it is $(1,4)$-initial with respect to $\prec_3$ but \textit{not} with respect to our original ordering $\prec$.
	
	Proposition \ref{Proposition: Betti adding Empty} will make this idea precise. Before we present it, however, we first note the following two results. The latter result is an analogue of Lemma \ref{Lemma: SDelta initiality} for the ordering $\prec_h$.
	\begin{lem}\label{Lemma: Ordering Dnh Technical Result}
		Let $\prec_h$ be as in Definition \ref{Definition: Order Dnhtilde} and let $(i,d)$ be an index in $\II(\Dnhtilde)$ such that
		\begin{enumerate}
			\item $d-i\geq 3$.
			\item $i \neq 0$.
			\item $(i,d)\notin \{(1,4),(2,5),\dots, (h-1,h+2)\}$.
		\end{enumerate}
		For any other index $(i',d')$ in $\II(\Dnhtilde)$ such that $i'\neq 0$ and $(i,d)\prec_h (i',d')$, we have $(i-1,d-2)\prec_{h-1}(i'-1,d'-2)$
	\end{lem}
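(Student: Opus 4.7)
The plan is to perform a straightforward case analysis on which of the four conditions of Definition~\ref{Definition: Order Dnhtilde} witnesses the relation $(i,d)\prec_h (i',d')$, and in each case verify that the shifted pair $(i-1,d-2), (i'-1,d'-2)$ satisfies one of the conditions of $\prec_{h-1}$. The key observation is that subtracting $1$ from both coordinates and $2$ from the degree preserves differences of rows: $(d-2)-(i-1) = d-i-1$, so row~$\rho$ gets mapped to row $\rho-1$, and the comparison of $i$-coordinates is preserved. Meanwhile the threshold $h$ gets replaced by $h-1$, which matches the shift $i \mapsto i-1$.

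Concretely, I would argue as follows. If condition~(1) holds, i.e.\ $d-i<d'-i'$, then $d-i-1<d'-i'-1$, giving condition~(1) of $\prec_{h-1}$ for the shifted pair. Condition~(2) cannot occur because our hypothesis $d-i \geq 3$ rules it out. If condition~(3) holds ($d-i=d'-i'>2$, $i'\geq h$, $i<i'$), then the shifted pair sits in row $d-i-1 \geq 2$; if that row equals~$2$ we get condition~(2) of $\prec_{h-1}$ from $i-1 < i'-1$, while if it is strictly larger we get condition~(3) of $\prec_{h-1}$ from $i'-1 \geq h-1$ together with $i-1 < i'-1$. Finally, if condition~(4) holds ($d-i=d'-i'>2$, $i,i'<h$, $i>i'$), then here is precisely where the third hypothesis becomes essential: together with $i\neq 0$ and $i<h$ it excludes $(i,d)\in\{(1,4),\dots,(h-1,h+2)\}$, i.e.\ rules out the case $d-i=3$, so in fact $d-i\geq 4$ and hence the shifted row $d-i-1 \geq 3 > 2$. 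Then $i-1,i'-1<h-1$ (since $i,i'\leq h-1$) and $i-1>i'-1$, so condition~(4) of $\prec_{h-1}$ applies.

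The main obstacle, and the whole reason the third hypothesis is included, is the delicate boundary behaviour at row~$3$. Without excluding indices in the set $\{(1,4),\dots,(h-1,h+2)\}$, a pair $(i,d),(i',d')$ related by condition~(4) of $\prec_h$ with $d-i=3$ would shift to row~$2$, where condition~(2) of $\prec_{h-1}$ demands $i-1<i'-1$, the \emph{opposite} of what condition~(4) supplies. The hypotheses $i \neq 0$ and $(i,d) \notin \{(1,4),\dots,(h-1,h+2)\}$ together guarantee that both the shifted first coordinate $i-1$ is still nonnegative and that the troublesome row‑$3$‑to‑row‑$2$ transition in condition~(4) never arises. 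Once these are safely excluded, the verification in each surviving case is mechanical.
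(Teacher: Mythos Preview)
The proposal is correct and takes essentially the same approach as the paper: a case analysis on the four conditions of Definition~\ref{Definition: Order Dnhtilde}, ruling out condition~(2) by hypothesis~(1) and using hypothesis~(3) together with $i<h$ to force $d-i\geq 4$ in the condition-(4) case, so that the shifted pair lands in row at least~$3$ where condition~(4) of $\prec_{h-1}$ applies. Your explicit splitting of case~(3) into the subcases where the shifted row equals~$2$ versus exceeds~$2$ is slightly more detailed than the paper's, which handles both at once by remarking that the conclusion holds ``regardless of the precise value of $(d-2)-(i-1)$''.
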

	\begin{proof}
		By the first condition we cannot have $d-i=d'-i'=2$, so this leaves us with three possible cases. 
		\begin{itemize}
			\item Suppose first that $d-i<d'-i'$. This means we have $(d-2)-(i-1)<(d'-2)-(i'-1)$, and hence $(i-1,d-2)\prec_{h-1} (i'-1,d'-2)$.
			\item Next, suppose we have $d-i=d'-i'\geq 3$, $i'\geq h$ and $i<i'$. In this case we have $(d-2)-(i-1)=(d'-2)-(i'-1)\geq 2$, $i'-1\geq h-1$ and $i-1 < i'-1$. Regardless of the precise value of $(d-2)-(i-1)$, we have $(i-1,d-2)\prec_{h-1} (i'-1,d'-2)$.
			\item Finally, suppose $d-i=d'-i'\geq 3$, $i,i'< h$ and $i>i'$. Because $(i,d)$ is not contained in the set $\{(1,4),\dots, (h-1,h+2)\}$, and we know $i<h$, we must in fact have $d-i\geq 4$. This gives us that $(d-2)-(i-1)=(d'-2)-(i'-1)\geq 3$, $i'-1\geq h-1$ and $i-1 < i'-1$, and therefore $(i-1,d-2)\prec_{h-1} (i'-1,d'-2)$ as desired.
		\end{itemize}
	\end{proof}
	
	\begin{lem}\label{Lemma: SDelta initiality Dnh}
		Let $\prec_h$ be as in Definition \ref{Definition: Order Dnhtilde} and let $(i,d)$ be an index in $\II(\Dnhtilde)$ satisfying the three conditions of Lemma \ref{Lemma: Ordering Dnh Technical Result}. If $\Delta$ is an $(i-1,d-2)_{\prec_{h-1}}$-initial complex of codimension $h-1$ then the complex $S\Delta$ is $(i,d)_{\prec_h}$-initial and has codimension $h$.
	\end{lem}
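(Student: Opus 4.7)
The plan is to verify the two claims of the lemma separately. The codimension claim is the easier one: if $\Delta$ has codimension $h-1$ on a vertex set of size $m$, then $\dim \Delta = m-h$; since $S\Delta$ is the join of $\Delta$ with a pair of disjoint vertices (a complex of dimension $0$), we have $\dim S\Delta = \dim \Delta + 1 = m-h+1$ on $m+2$ vertices, giving $\codim S\Delta = h$.

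For the initiality claim I would apply Lemma \ref{Lemma: Betti S-Delta} throughout. Since $d-i \geq 3$, the nonzero entry we want at $(i,d)$ is computed by the generic formula $\beta_{i,d}(S\Delta) = \beta_{i,d}(\Delta) + \beta_{i-1,d-2}(\Delta)$. The key observation is that $(i-1, d-2)$ lies in row $d-i-2$ while $(i,d)$ lies in row $d-i-1$, so condition (1) of Definition \ref{Definition: Order Dnhtilde} gives $(i-1, d-2) \prec_{h-1} (i,d)$. By the $(i-1,d-2)_{\prec_{h-1}}$-initiality of $\Delta$, the first summand vanishes while the second is nonzero, so $\beta_{i,d}(S\Delta) \neq 0$.

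It then remains to show $\beta_{i',d'}(S\Delta) = 0$ for every $(i',d') \in \II(\Dnhtilde)$ with $(i,d) \prec_h (i',d')$. When $i' \geq 1$, Lemma \ref{Lemma: Ordering Dnh Technical Result} (whose hypotheses are precisely conditions (1)--(3) on $(i,d)$) gives $(i-1,d-2) \prec_{h-1} (i'-1, d'-2)$, and a direct row comparison shows $(i-1, d-2) \prec_{h-1} (i', d')$ as well. Both summands in Lemma \ref{Lemma: Betti S-Delta} then vanish by initiality of $\Delta$. When $i' = 0$, only the $\beta_{0, d'}(\Delta)$ summand could contribute, and since $d' \geq 3$ under our hypotheses we need only verify $(i-1, d-2) \prec_{h-1} (0, d')$; this follows by a direct row comparison, handling both the case where $(0, d')$ lies in a strictly higher row than $(i,d)$ (and hence strictly above $(i-1, d-2)$) and the case of coincident rows permitted by condition (4) of $\prec_h$.

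The main obstacle is the bookkeeping around the ordering $\prec_h$ and its interaction with the shift $(j, e) \mapsto (j-1, e-2)$. In particular, Lemma \ref{Lemma: Ordering Dnh Technical Result} explicitly excludes the $i' = 0$ case, so that case requires its own row-comparison argument; conditions (2) and (3) of the current lemma turn out not to be needed there, only condition (1).
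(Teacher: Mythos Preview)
Your proposal is correct and follows essentially the same approach as the paper's proof: verify the codimension by counting vertices versus dimension, then use Lemma~\ref{Lemma: Betti S-Delta} together with Lemma~\ref{Lemma: Ordering Dnh Technical Result} to check initiality term by term. Your treatment of the $i'=0$ case is in fact slightly more careful than the paper's, which notes only that $\beta_{-1,d'-2}(\Delta)=0$ by definition and leaves the vanishing of $\beta_{0,d'}(\Delta)$ implicit; your row-comparison argument makes this explicit.
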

	\begin{proof}
		Taking the suspension of a complex increases its dimension by 1 but its number of vertices by 2, and hence it increases the codimension by 1. So $S\Delta$ must have codimension $h$.
		
		By the assumptions on $(i,d)$ we know $(i,d)\neq (0,2)$, and hence by Lemma \ref{Lemma: Betti S-Delta} we have $\beta_{i,d}(S\Delta)=\beta_{i-1,d-2}(\Delta)+\beta_{i,d}(\Delta)$. By the $(i-1,d-2)_{\prec_{h-1}}$-initality of $\Delta$, we have $\beta_{i-1,d-2}(\Delta)\neq 0$ and hence $\beta_{i,d}(\Delta)$ must be nonzero as well.
		
		Now fix some index $(i',d')$ in $\II(\Dnhtilde)$ with $(i,d)\prec_h (i',d')$. This condition ensures that $(i',d')\neq (0,2)$ and hence Lemma \ref{Lemma: Betti S-Delta} tells us that $\beta_{i',d'}(S\Delta)=\beta_{i'-1,d'-2}(\Delta)+\beta_{i',d'}(\Delta)$. By Lemma \ref{Lemma: Ordering Dnh Technical Result} we have $(i-1,d-2)\prec_{h-1} (i'-1,d'-2)\prec_{h-1}(i',d')$ (unless $i'$ is equal to zero, in which case $\beta_{i'-1,d'-2}(\Delta)$ is also equal to zero by definition). Thus by the $(i,d)_{\prec_{h-1}}$-initiality of $\Delta$, the Betti numbers $\beta_{i'-1,d'-2}(\Delta)$ and $\beta_{i',d'}(\Delta)$ are both zero, so their sum must also be zero.
	\end{proof}
	
	\begin{rem}
		A crucial aspect of Lemma \ref{Lemma: SDelta initiality Dnh} (which is perhaps easy to overlook on a first reading) is that the initiality properties of the complexes $\Delta$ and $S\Delta$ are with respect to two \textit{different} orderings. The complex $\Delta$ is $(i-1,d-2)$-initial with respect to $\prec_{h-1}$, while its suspension $S\Delta$ is $(i,d)$-initial with respect to $\prec_h$.
	\end{rem}

	We can use Lemma \ref{Lemma: SDelta initiality Dnh} to construct a linearly independent set of diagrams in $\Dnhtilde$ recursively from linearly independent diagrams in $\Dnhtilde[n][h-1]$, as long as we can also find $(i,d)_{\prec_h}$-initial diagrams for indices $(i,d)$ which do \textit{not} satisfy the three conditions of Lemma \ref{Lemma: Ordering Dnh Technical Result}.
	
	Before we demonstrate how to do this, recall that we do not need to find $(i,d)_{\prec_h}$-initial diagrams for \textit{every} index in $\II(\Dnhtilde)$. We need to find diagrams for all but $h-1$ of them (to show that $\dim \Dnhtilde = |\II(\Dnhtilde)|-(h-1)$).
	
	In fact, due to the way we have constructed the ordering $\prec_h$, the cone $\Dnhtilde$ contains no $(i,d)_{\prec_h}$-initial diagrams for the indices $(i,d)\in \{(0,2),(1,3),\dots,(h-1,h+1)\}$. This is because every diagram $\beta$ in the cone must have projective dimension of at least $h$ by the Auslander-Buchsbaum Formula (Corollary \ref{Corollary: Auslander-Buchsbaum for R}), and hence must have $\beta_{i,d}\neq 0$ for some $(i,d)\in \II(\Dnhtilde)$ with $(0,2) \prec_h (1,3)\prec_h\dots \prec_h (h-1,h+1) \prec_h (i,d)$. For this reason we search for $(i,d)_{\prec_h}$-initial diagrams for indices $(i,d)$ in $\II(\Dnhtilde)-\{(0,2),(1,3),\dots,(h-1,h+1)\}$.
	
	So the indices $(i,d)$ in $\II(\Dnhtilde)$ for which we still need to find $(i,d)_{\prec_h}$-initial diagrams are
	\begin{enumerate}
		\item The indices $(d-2,d)$ for $h+1 \leq d \leq n$, in row 1.
		\item The indices $(0,d)$ for $3\leq d \leq n-h+1$, in column 0.
		\item The indices $(d-3,d)$ for $4 \leq d \leq h+2$, in row 2.
	\end{enumerate}
	
	We now introduce Proposition \ref{Proposition: Betti adding Empty}, which we can use to construct diagrams for all three of the above cases.
	\begin{prop}\label{Proposition: Betti adding Empty}
		Let $m$ and $l$ be positive integers. Suppose $\Delta$ is a complex on a vertex set of size $m$, and let $\beta =\beta(I_\Delta)$. We define $\tDelta = \Delta+\Skel_0([l])$ and  $\tbeta=\beta(I_{\tDelta})$.
		\begin{enumerate}
			\item for every $0\leq i \leq m + l -2$, we have $\tbeta_{i,i+2}\neq 0$.
			\item for every $(i,d)$ in $\II(\Dn)$ with $d-i\geq 3$, we have $\tbeta_{i,d}=\sum_{j=0}^{i} {l \choose i - j} \beta_{j,j+d-i}$.
		\end{enumerate}
	\end{prop}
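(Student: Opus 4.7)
The plan is to apply Hochster's Formula to $\tDelta=\Delta+\Skel_0([l])$, exploiting the fact that any subset $U\subseteq V(\tDelta)$ splits uniquely as $U=U_1\sqcup U_2$ with $U_1\subseteq V(\Delta)$ and $U_2\subseteq [l]$, giving a corresponding disjoint union decomposition $\tDelta_U=\Delta_{U_1}\sqcup \Skel_0(U_2)$. Since $\Delta_{U_1}$ and $\Skel_0(U_2)$ share only the empty face, the reduced Mayer-Vietoris sequence of Proposition \ref{Proposition: MVS} (applied with $A\cap B = \{\emptyset\}$, whose reduced homology is concentrated in degree $-1$) yields $\Hred_r(\tDelta_U)\cong \Hred_r(\Delta_{U_1})\oplus\Hred_r(\Skel_0(U_2))$ for every $r\geq 1$.

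For part (2), I would fix $(i,d)\in \II(\Dn)$ with $d-i\geq 3$ and expand $\tbeta_{i,d}$ via Hochster's Formula as $\sum_{|U|=d}\dim_\KK \Hred_{d-i-2}(\tDelta_U)$. Because $d-i-2\geq 1$ the decomposition above applies, and since $\Skel_0(U_2)$ is $0$-dimensional only $\Hred_{d-i-2}(\Delta_{U_1})$ contributes. Grouping the sum by $k=|U_2|$, there are $\binom{l}{k}$ choices of $U_2$ of size $k$, each paired with an arbitrary $U_1\subseteq V(\Delta)$ of size $d-k$; since $(d-k)-(i-k)-2=d-i-2$, a second application of Hochster's Formula (this time to $\Delta$) identifies the inner sum as $\beta_{i-k,d-k}$. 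Re-indexing via $j=i-k$ then produces the claimed formula $\sum_{j=0}^{i}\binom{l}{i-j}\beta_{j,j+d-i}$.

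For part (1), Hochster's Formula gives $\tbeta_{i,i+2}=\sum_{|U|=i+2}\dim_\KK \Hred_0(\tDelta_U)$, a sum of nonnegative terms, so it suffices to exhibit a single $U$ of size $i+2$ for which $\tDelta_U$ has at least two connected components. For $0\leq i\leq m+l-2$ the required size is at most $|V(\tDelta)|=m+l$, so such a $U$ exists: when $l\geq 2$, I would choose $U$ to contain two of the isolated vertices of $\Skel_0([l])$, which automatically appear as two distinct singleton components of $\tDelta_U$; when $l=1$, I would choose $U$ to contain the unique isolated vertex of $\Skel_0([l])$ together with a present vertex of $\Delta$ and any $i$ further vertices, again yielding a disconnected induced subcomplex.

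The argument presents no serious obstacle beyond careful bookkeeping of the disjoint union in Hochster's Formula; the one delicate point is the hypothesis $d-i\geq 3$ in part (2), which is precisely what lets us ignore the extra $\KK$ summand appearing in $\Hred_0$ (but not in higher reduced homology) of a disjoint union of two nonempty complexes. That extra summand is what obstructs a clean closed form on the row $d-i=2$, but it is simultaneously the source of the forced nonvanishing needed in part (1).
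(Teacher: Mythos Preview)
Your proposal is correct and follows essentially the same route as the paper: both arguments apply Hochster's Formula, use that adding isolated vertices leaves reduced homology in degrees $\geq 1$ unchanged (you justify this explicitly via Mayer-Vietoris, the paper simply asserts it), and then group the Hochster sum according to how many of the $l$ isolated vertices lie in $U$ before re-indexing. Your case split on $l$ in part~(1) is a cosmetic variation on the paper's choice of one vertex from each side; both arguments implicitly use that $\Delta$ has at least one non-missing vertex in the $l=1$ case, which is unavoidable since the statement fails otherwise.
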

	\begin{rem}\label{Remark: Skel_0(l) is E_l}
		Note that $\Skel_0([l])$ is the complex consisting of $l$ isolated vertices. In other words, it contains the same data as the empty graph $E_l$. We use the notation $\Skel_0([l])$ to indicate that we are viewing the structure as a complex rather than a graph, and hence the corresponding ideal $I_{\tDelta}$ is obtained using the Stanley-Reisner construction rather than the edge ideal construction (i.e. it is obtained from $I_\Delta$ by adding generators corresponding to the \textit{nonfaces} of $\Skel_0([l])$).
	\end{rem}
	\begin{proof}
		We label the vertex set of $\Delta$ as $V$ and the vertex set of $\tDelta=\Delta+\Skel_0([l])$ as $\tV=V\sqcup [l]$.
		
		For part (1), we note that for any subset $U\subseteq \tV$ that contains both a vertex in $\Delta$ and a vertex in $\Skel_0([l])$, the complex $\tDelta_U$ must be disconnected. By Hochster's Formula, we get that the Betti numbers $\widetilde{\beta}_{0,2},...,\widetilde{\beta}_{m+l-2,m+l}$ are all nonzero.
		
		For part (2), suppose $(i,d)\in \II(\Dn)$ with $d-i\geq 3$. The addition of isolated vertices to $\Delta$ has no effect on homologies of degree greater than zero. This means that for a subset $U\subseteq \tV$, the only part of $U$ that contributes to the $(d-i-2)^\text{nd}$ homology of $\tDelta_U$ is $U\cap V$, and so we have $\Hred_{d-i-2}(\tDelta_U) = \Hred_{d-i-2}(\Delta_{U\cap V})$. This homology is zero if $r=|U\cap V|<d-i$. Meanwhile, for each $d-i\leq r \leq d$ there are precisely ${l \choose d-r}$ ways of extending a subset $S\subseteq V$ of size $r$ to a subset $\widetilde{S}\subseteq \tV$ of size $d$. Thus, by Hochster's Formula, we get
		\begin{align*}
			\tbeta_{i,d} &=\sum_{\substack{U\in {\tV\choose r}}} \dim_\KK \Hred_{d-i-2}(\tDelta_U)\\
			&=\sum_{r=d-i}^{d} \text{      } \sum_{\substack{U\in {\tV\choose d}\\ |U\cap V|=r}} \dim_\KK \Hred_{d-i-2}(\tDelta_U)\\
			&=\sum_{r=d-i}^d \text{      } \sum_{\substack{U\in {V\choose r}}} {l \choose d-r}\dim_\KK \Hred_{d-i-2}(\Delta_U)\\
			&=\sum_{r=d-i}^d \text{      } \sum_{\substack{U \in {V\choose r}}} {l \choose d-r} \dim_\KK \Hred_{r-(r-(d-i))-2}(\Delta_U)\\
			&=\sum_{r=d-i}^d {l \choose d-r} \beta_{r-(d-i),r}\\
			&=\sum_{j=0}^i {l \choose i-j}\beta_{j,j+d-i} \, .
		\end{align*}
	\end{proof}
	
	\begin{cor}\label{Corollary: Betti simplex-adding-empty}
		Suppose $d$ is a positive integer with $h+1 \leq d \leq n$. Let $\Delta^{d-h-1}$ be the $(d-h-1)$-simplex on a vertex set of size $d-h$. For $\tDelta=\Delta^{d-h-1}+\Skel_0([h])$ we have
		\begin{equation*}
			\beta(I_{\tDelta}) =
			\begin{bmatrix}
				\beta_{0,2} & \dots & \beta_{d-2,d}
			\end{bmatrix}\, .
		\end{equation*}
		
		In particular $\beta(I_{\tDelta})$ is $(d-2,d)_{\prec_h}$-initial.
	\end{cor}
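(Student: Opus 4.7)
The plan is to combine the two parts of Proposition \ref{Proposition: Betti adding Empty} with the vanishing of $\beta(\Delta^{d-h-1})$. First I would observe that the full simplex $\Delta^{d-h-1}$ has zero Stanley-Reisner ideal, so Corollary \ref{Corollary: Betti Diagram of Simplex} (or a direct application of Hochster's Formula, since all induced subcomplexes of a full simplex are acyclic) gives $\beta(\Delta^{d-h-1})=0$. Writing $\beta=\beta(\Delta^{d-h-1})$ and $\tbeta=\beta(I_{\tDelta})$, I can then feed this vanishing into Proposition \ref{Proposition: Betti adding Empty}(2): for every $(i,d')\in \II(\Dn)$ with $d'-i\geq 3$,
\[
\tbeta_{i,d'} \;=\; \sum_{j=0}^{i}\binom{h}{i-j}\beta_{j,j+d'-i} \;=\; 0.
\]
Thus every row of $\tbeta$ other than row $1$ (where $d'-i=2$) is identically zero.

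Next I would apply Proposition \ref{Proposition: Betti adding Empty}(1) with $m=d-h$ and $l=h$ to conclude that $\tbeta_{i,i+2}\neq 0$ for all $0\leq i\leq d-2$. Since $\tDelta$ has only $d$ vertices in total, Hochster's Formula immediately forces $\tbeta_{i,i+2}=0$ for $i>d-2$, because there are no subsets of $V(\tDelta)$ of size greater than $d$. Combining these observations, the nonzero entries of $\tbeta$ occupy exactly the positions $(0,2),(1,3),\dots,(d-2,d)$, which is precisely the shape claimed.

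For the initiality claim, recall from Definition \ref{Definition: Order Dnhtilde} that $(d-2,d)$ sits in row $1$ (where $d'-i=2$), and in this row $\prec_h$ agrees with the natural increasing order in the first coordinate; every index $(i',d')$ with $(d-2,d)\prec_h(i',d')$ either lies in a strictly later row (so $d'-i'\geq 3$ and the entry vanishes by the first paragraph) or lies in row $1$ with $i'>d-2$ (so the entry vanishes by the vertex-count argument above). Since the distinguished entry $\tbeta_{d-2,d}$ is itself nonzero, this establishes that $\tbeta$ is $(d-2,d)_{\prec_h}$-initial. Finally, to justify that $\tbeta$ belongs to $\Dnhtilde$, I would note that $\tDelta$ has no missing vertices (the simplex contributes all its vertices and $\Skel_0([h])$ contributes $h$ isolated but present vertices) and has codimension $d-(d-h-1)-1=h$ in its ambient vertex set, and then invoke the remark following Lemma \ref{Lemma: Dmh in Dnh} that $\widetilde{\calD}_d^{\,h}\subset \Dnhtilde$. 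No step looks like a genuine obstacle here — the only thing to be careful about is matching the indexing conventions of Proposition \ref{Proposition: Betti adding Empty} to the variables of the corollary.
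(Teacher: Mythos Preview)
Your argument is correct and follows exactly the same route as the paper's proof, which is simply ``By Corollary \ref{Corollary: Betti Diagram of Simplex} we know that the Betti diagram of $\Delta^{d-h-1}$ is zero. The result follows from Proposition \ref{Proposition: Betti adding Empty}.'' You have merely unpacked the two parts of Proposition \ref{Proposition: Betti adding Empty} explicitly and added the verification of $(d-2,d)_{\prec_h}$-initiality and membership in $\Dnhtilde$, which the paper leaves implicit.
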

	\begin{proof}
		By Corollary \ref{Corollary: Betti Diagram of Simplex} we know that the Betti diagram of $\Delta^{d-h-1}$ is zero. The result follows from Proposition \ref{Proposition: Betti adding Empty}.
	\end{proof}
	
	\begin{cor}\label{Corollary: Betti boundary-simplex-adding-empty}
		Suppose $d$ is a positive integer with $3 \leq d \leq n-h+1$. Let $\partial \Delta^{d-1}$ be the boundary of the $(d-1)$-simplex on a vertex set of size $d$. For $\tDelta=\partial \Delta^{d-1}+\Skel_0([h-1])$ we have
		\begin{equation*}
			\beta(I_{\tDelta}) = 
			\begin{bmatrix}
				\beta_{0,2} & \dots & \dots & \dots & \beta_{d+h-3,d+h-1}\\
				&  &  &  & \\
				&  &  &  & \\
				&  &  &  & \\
				\beta_{0,d}& \dots & \beta_{h-1,d+h-1} & &
			\end{bmatrix}\, .
		\end{equation*}
		In particular $\beta(I_{\tDelta})$ is $(0,d)_{\prec_h}$-initial.
	\end{cor}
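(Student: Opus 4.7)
The plan is to apply Proposition \ref{Proposition: Betti adding Empty} to the complex $\Delta = \partial \Delta^{d-1}$ with $l = h-1$ added isolated vertices. First, by Corollary \ref{Corollary: Betti Diagram of Simplex}, the Betti diagram $\beta(\partial \Delta^{d-1})$ has exactly one nonzero entry, namely $\beta_{0,d} = 1$. I would then invoke the two parts of Proposition \ref{Proposition: Betti adding Empty} in turn.

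Part (1) immediately produces the top row of the claimed diagram: the Betti numbers $\tbeta_{i, i+2}$ are nonzero for $0 \leq i \leq d + h - 3$. Part (2) gives, for each $(i, d')$ in $\II(\Dn)$ with $d' - i \geq 3$,
$$\tbeta_{i, d'} = \sum_{j=0}^{i} {h-1 \choose i-j} \beta_{j, j+d'-i}.$$
Since the only nonzero summand on the right occurs when $(j, j+d'-i) = (0, d)$, this collapses to $\tbeta_{i, d+i} = {h-1 \choose i}$ for $0 \leq i \leq h-1$, with all other entries $\tbeta_{i, d'}$ satisfying $d' - i \geq 3$ equal to zero. This simultaneously produces the bottom row of the claimed diagram and rules out any nonzero entries strictly between the two rows.

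For the $(0,d)_{\prec_h}$-initiality claim, I would first note that $\tbeta_{0,d} = {h-1 \choose 0} = 1$ is nonzero, and then check that every index $(i', d')$ in $\II(\Dnhtilde)$ with $(0, d) \prec_h (i', d')$ satisfies $\tbeta_{i', d'} = 0$. By the definition of $\prec_h$, such an index either lies in a strictly higher row (so $d' - i' > d$), in which case the computation above gives $\tbeta_{i', d'} = 0$; or it lies in the same row $d-1$ with $i' \geq h$ (case 3 of the ordering definition, since case 4 cannot apply when $i = 0$), in which case $\tbeta_{i', d'} = {h-1 \choose i'} = 0$.

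No serious obstacle arises here: the argument is essentially a direct substitution into Proposition \ref{Proposition: Betti adding Empty}. The one point requiring care is the choice $l = h - 1$ rather than $l = h$, which is precisely what makes the bottom-row entries vanish beyond $i = h-1$, and which is in turn exactly what is required to match the reverse-ordering of the first $h$ elements in each row under $\prec_h$.
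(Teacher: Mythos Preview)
Your proposal is correct and follows essentially the same approach as the paper's proof, which is simply a direct application of Proposition \ref{Proposition: Betti adding Empty} to $\Delta=\partial\Delta^{d-1}$ (whose only nonzero Betti number is $\beta_{0,d}$ by Corollary \ref{Corollary: Betti Diagram of Simplex}) with $l=h-1$. Your version is more explicit than the paper's, in particular spelling out the $(0,d)_{\prec_h}$-initiality check via the case analysis of Definition \ref{Definition: Order Dnhtilde}, which the paper leaves implicit.
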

	\begin{proof}
		By Corollary \ref{Corollary: Betti Diagram of Simplex} we know the only nonzero Betti number of $\partial \Delta^{d-1}$ is $\beta_{0,d}$. The result follows from Proposition \ref{Proposition: Betti adding Empty}.
	\end{proof}
	
	\begin{cor}\label{Corollary: Betti cyclic-adding-Em}
		Suppose that $h\leq n-2$ and that $d$ is a positive integer with $4\leq d \leq h+2$. Let $C_d$ be the cyclic graph on $d$ vertices. For $\tDelta = \Cl(C_d) + \Skel_0([h+2-d])$ we have
		\begin{equation*}
			\beta(I_{\tDelta}) =
			\begin{bmatrix}
				\beta_{0,2} & \dots & \dots & \dots & \beta_{h-1,h+1} & \beta_{h,h+2}\\
				& & \beta_{d-3,d} & \dots & \beta_{h-1,h+2} &
			\end{bmatrix}\, .
		\end{equation*}
		
		In particular $\beta(I_{\tDelta})$ is $(d-3,d)_{\prec_h}$-initial.
	\end{cor}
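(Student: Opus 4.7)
The plan is to reduce the statement to a direct application of Proposition \ref{Proposition: Betti adding Empty}, using Proposition \ref{Proposition: Betti-Cn} as the input Betti diagram.

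First I would observe that $\Cl(C_d)$ is the $1$-dimensional complex whose faces are the vertices and edges of $C_d$. Its minimal nonfaces are exactly the non-edges of $C_d$, i.e.\ the edges of $(C_d)^c$, so its Stanley-Reisner ideal coincides with $I((C_d)^c)$. Under the assumption $d \geq 4$, Proposition \ref{Proposition: Betti-Cn} then gives the Betti diagram of $\Cl(C_d)$ explicitly: the only nonzero entries are $\beta_{0,2},\dots,\beta_{d-4,d-2}$ in row $1$ and the single entry $\beta_{d-3,d} = 1$ in row $2$.

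Next I would apply Proposition \ref{Proposition: Betti adding Empty} with $m = d$ and $l = h+2-d$ (this is nonnegative by the hypothesis $d \leq h+2$, and the case $l=0$ is trivial). Part $(1)$ immediately yields $\tbeta_{i,i+2}\neq 0$ for $0 \leq i \leq m + l - 2 = h$, which accounts for every entry in row $1$ of the claimed diagram. For part $(2)$, I would apply the formula $\tbeta_{i,d'} = \sum_{j=0}^i \binom{l}{i-j}\beta_{j,j+d'-i}$ for pairs $(i,d')$ with $d' - i \geq 3$. Since the only nonzero $\beta_{j,j+k}(\Cl(C_d))$ with $k \geq 3$ is $\beta_{d-3,d} = 1$ (with $k=3$), every entry with $d'-i \geq 4$ vanishes, while entries in row $2$ reduce to
\[
\tbeta_{i,i+3} = \binom{h+2-d}{i-(d-3)},
\]
which is nonzero precisely when $d-3 \leq i \leq h-1$. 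This recovers exactly the row $2$ entries $\beta_{d-3,d},\dots,\beta_{h-1,h+2}$ in the claimed diagram.

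Finally I would verify the $(d-3,d)_{\prec_h}$-initiality claim. The value at $(d-3,d)$ is $\binom{h+2-d}{0} = 1 \neq 0$, and I would check that every $(i',d') \in \II(\Dnhtilde)$ with $(d-3,d) \prec_h (i',d')$ gives a zero Betti number. The shape of the computed diagram splits this check by row: anything in row $\geq 3$ is automatically zero; in row $2$ the nonzero entries occupy exactly the interval $d-3 \leq i' \leq h-1$, so both the tail $i' \geq h$ (case $(3)$ of Definition \ref{Definition: Order Dnhtilde}) and the head $i' < d-3$ (case $(4)$) vanish; no row $1$ index satisfies $(d-3,d) \prec_h (i',d')$ since $d-(d-3) = 3 > 2$. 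The only real work here is the bookkeeping of the order $\prec_h$; the homological content is fully handled by Propositions \ref{Proposition: Betti-Cn} and \ref{Proposition: Betti adding Empty}, so I do not anticipate a significant obstacle.
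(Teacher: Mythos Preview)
Your proposal is correct and follows essentially the same approach as the paper: identify $I_{\Cl(C_d)}$ with the edge ideal of $(C_d)^c$ to invoke Proposition~\ref{Proposition: Betti-Cn}, then apply Proposition~\ref{Proposition: Betti adding Empty} with $l = h+2-d$. Your explicit verification of the $(d-3,d)_{\prec_h}$-initiality via the case split in Definition~\ref{Definition: Order Dnhtilde} is more detailed than the paper's treatment (which simply reads the claim off the displayed shape), but the content is the same.
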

	\begin{rem}\label{Remark: Cl(Cm) as a complex not a graph}
		For $d\geq 4$, the clique complex $\Cl(C_d)$ contains no additional faces to the graph $C_d$, so its facets are simply the edges of $C_d$. As in Remark \ref{Remark: Skel_0(l) is E_l}, our reason for using the notation $\Cl(C_d)$ here rather than $C_d$ is to indicate that we are viewing this structure as a complex rather than a graph, and hence the corresponding ideal $I_{\tDelta}$ is constructed using the Stanley-Reisner construction rather than the edge ideal construction (i.e. it is obtained from $I_\Delta$ by adding generators corresponding to the \textit{nonfaces} of $\Cl(C_d)$). 
	\end{rem}
	\begin{proof}
		Suppose $G$ is the complement of the graph $C_d$. We have $\Cl(C_d)=\Ind(G)$ by Remark \ref{Remark: Ind(G)=Cl(G^c)}. From Proposition \ref{Proposition: Betti-Cn} we know that the edge ideal $I(G)$ has Betti diagram $$\begin{bmatrix}
			\beta_{0,2} & \dots & \beta_{d-4,d-2} & \\
			& & &\beta_{d-3,d}\\ 
		\end{bmatrix}$$ and hence so does the Stanley-Reisner ideal $I_{\Cl(C_d)}$ by Proposition \ref{Proposition: I(G) = I_Ind(G)}. The result follows from Proposition \ref{Proposition: Betti adding Empty}.
	\end{proof}
	
	\begin{ex}
		Consider the cone $\Dnhtilde[6][3]$. For this cone we need to find ten complexes $\Gamma^3_{i,d}$, for indices $(i,d)\in \II(\Dnhtilde[6][3])$ such that the diagram $\beta(\Gamma^3_{i,d})$ is $(i,d)_{\prec_3}$-initial. We need these complexes to have codimension $3$, and no more than $6$ vertices. Table \ref{Table: D63-tilde Diagrams Table} shows ten such complexes.
		
		For each index $(i,d)\in \II(\Dnhtilde[6][3])$ the table also shows the shape of the diagram $\beta(\Gamma^3_{i,d})$, chiefly to demonstrate that it is indeed $(i,d)$-inital as required.  In order to demonstrate this fact more clearly we denote the Betti number $\beta_{i,d}$ itself in bold. For most of our diagrams, there are indices $(i',d')\in \II(\Dnhtilde[6][3])$ for which the entry $\beta_{i',d'}$ is zero even though $(i',d')\prec_3 (i,d)$ (and hence $(i,d)_{\prec_3}$-initiality would permit $\beta_{i',d'}$ to be nonzero). We denote these entries in the Betti diagram by a $-$ sign.
		\begin{center}
			\begin{table}[p]
				\begin{tabular}{ c|c|c|l }
					$(i,d)$& Complex $\Gamma^3_{i,d}$ & Image of $\Gamma^3_{i,d}$ & Shape of Betti Diagram $\beta(\Gamma^3_{i,d})$ \\
					\hline
					&&&\\
					$(2,4)$&$\Delta^0 + \Skel_0([3])$ & \begin{tikzpicture}[scale = 0.4]
						\tikzstyle{point}=[circle,thick,draw=black,fill=black,inner sep=0pt,minimum width=2pt,minimum height=2pt]
						\node (a)[point] at (1,0.9) {};
						\node (d)[point] at (3,0.2) {};
						\node (e)[point] at (4,0.9) {};
						\node (f)[point] at (3,1.6) {};
					\end{tikzpicture} & $\begin{bmatrix}
						\beta_{0,2} & \beta_{1,3} & \boldsymbol{\beta_{2,4}}
					\end{bmatrix}$\\
					&&&\\
					$(3,5)$&$\Delta^1 + \Skel_0([3])$ & \begin{tikzpicture}[scale = 0.4]
						\tikzstyle{point}=[circle,thick,draw=black,fill=black,inner sep=0pt,minimum width=2pt,minimum height=2pt]
						\node (a)[point] at (1,0) {};
						\node (b)[point] at (1,1.7) {};
						\node (d)[point] at (3,0.2) {};
						\node (e)[point] at (4,0.9) {};
						\node (f)[point] at (3,1.6) {};
						
						\draw (a.center) -- (b.center);
					\end{tikzpicture} & $\begin{bmatrix}
						\beta_{0,2} & \beta_{1,3} & \beta_{2,4} & \boldsymbol{\beta_{3,5}}
					\end{bmatrix}$\\
					&&& \\
					$(4,6)$&$\Delta^2 + \Skel_0([3])$ & \begin{tikzpicture}[scale = 0.5]
						\tikzstyle{point}=[circle,thick,draw=black,fill=black,inner sep=0pt,minimum width=2pt,minimum height=2pt]
						\node (a)[point] at (0,0) {};
						\node (b)[point] at (2,0) {};
						\node (c)[point] at (1,1.7) {};
						\node (d)[point] at (3,0.2) {};
						\node (e)[point] at (4,0.9) {};
						\node (f)[point] at (3,1.6) {};
						
						\begin{scope}[on background layer]
							\draw[fill=gray] (a.center) -- (b.center) -- (c.center) -- cycle;
						\end{scope}
					\end{tikzpicture} & $\begin{bmatrix}
						\beta_{0,2} & \beta_{1,3} & \beta_{2,4} & \beta_{3,5}& \boldsymbol{\beta_{4,6}}
					\end{bmatrix}$\\
					&&&\\
					\hline
					&&&\\
					$(2,5)$& $\Cl(C_5)$ & \begin{tikzpicture}[scale = 0.6]
						\tikzstyle{point}=[circle,thick,draw=black,fill=black,inner sep=0pt,minimum width=2pt,minimum height=2pt]
						\node (a)[point] at (0,1) {};
						\node (b)[point] at (0.951,0.309) {};
						\node (c)[point] at (0.588,-0.809) {};
						\node (d)[point] at (-0.588,-0.809) {};
						\node (e)[point] at (-0.951,0.309) {};
						
						\draw (a.center) -- (b.center) -- (c.center) -- (d.center) -- (e.center) -- cycle;
					\end{tikzpicture} & $\begin{bmatrix}
						\beta_{0,2} & \beta_{1,3} & - & - & -\\
						& &\boldsymbol{\beta_{2,5}}
					\end{bmatrix}$\\
					&&& \\
					$(1,4)$&$\Cl(C_4) + \Skel_0([1])$ & \begin{tikzpicture}[scale = 0.4]
						\tikzstyle{point}=[circle,thick,draw=black,fill=black,inner sep=0pt,minimum width=2pt,minimum height=2pt]
						\node (a)[point] at (0,0) {};
						\node (b)[point] at (0,2) {};
						\node (c)[point] at (2,2) {};
						\node (d)[point] at (2,0) {};
						\node (e)[point] at (3.5,1) {};
						
						\draw (a.center) -- (b.center) -- (c.center) -- (d.center) -- cycle;
					\end{tikzpicture} & $\begin{bmatrix}
						\beta_{0,2} & \beta_{1,3} & \beta_{2,4} & \beta_{3,5} & -\\
						&\boldsymbol{\beta_{1,4}}& \beta_{2,5} &
					\end{bmatrix}$\\
					&&&\\
					$(0,3)$&$\partial \Delta^2 + \Skel_0([2])$ & \begin{tikzpicture}[scale = 0.5]
						\tikzstyle{point}=[circle,thick,draw=black,fill=black,inner sep=0pt,minimum width=2pt,minimum height=2pt]
						\node (a)[point] at (0,0) {};
						\node (b)[point] at (2,0) {};
						\node (c)[point] at (1,1.7) {};
						\node (d)[point] at (3,0.1) {};
						\node (e)[point] at (3,1.6) {};
						
						\draw (a.center) -- (b.center) -- (c.center) -- cycle;
					\end{tikzpicture} &  $\begin{bmatrix}
						\beta_{0,2} & \beta_{1,3} & \beta_{2,4} & \beta_{3,5} & -\\
						\boldsymbol{\beta_{0,3}} &\beta_{1,4}&\beta_{2,5} &
					\end{bmatrix}$\\
					&&&\\
					$(3,6)$&$S(\Delta^1 + \Skel_0([2]))$ & \begin{tikzpicture}[scale = 0.5]
						\tikzstyle{point}=[circle,thick,draw=black,fill=black,inner sep=0pt,minimum width=2pt,minimum height=2pt]
						\node (a)[point] at (0,1) {};
						\node (b)[point] at (1,1) {};
						\node (c)[point] at (2,1) {};
						\node (d)[point] at (3,1) {};
						\node (e)[point] at (1.5,0) {};
						\node (f)[point] at (1.5,2) {};
						
						\begin{scope}[on background layer]
							\draw[fill=gray] (e.center) -- (a.center) -- (b.center) -- cycle;
							\draw[fill=gray] (f.center) -- (a.center) -- (b.center) -- cycle;
							\draw (e.center) -- (c.center) -- (f.center) -- (d.center) -- cycle;
						\end{scope}
					\end{tikzpicture} & $\begin{bmatrix}
						\beta_{0,2} & \beta_{1,3} & \beta_{2,4}&\beta_{3,5}&\beta_{4,6}\\
						- &\beta_{1,4}&\beta_{2,5}&\boldsymbol{\beta_{3,6}}&
					\end{bmatrix}$\\
					&&&\\
					\hline
					&&&\\
					$(2,6)$&$S^2(\Delta^0 \Star_0 v)$ & \begin{tikzpicture}[scale=0.6][line join=bevel,z=-5.5]
						\coordinate (A1) at (0,0,-1);
						\coordinate (A2) at (-1,0,0);
						\coordinate (A3) at (0,0,1);
						\coordinate (A4) at (1,0,0);
						\coordinate (B1) at (0,1,0);
						\coordinate (C1) at (0,-1,0);
						
						\draw (A1) -- (A2) -- (B1) -- cycle;
						\draw (A4) -- (A1) -- (B1) -- cycle;
						\draw (A1) -- (A2) -- (C1) -- cycle;
						\draw (A4) -- (A1) -- (C1) -- cycle;
						\draw [fill opacity=0.7,fill=lightgray!80!gray] (A2) -- (A3) -- (B1) -- cycle;
						\draw [fill opacity=0.7,fill=gray!70!lightgray] (A3) -- (A4) -- (B1) -- cycle;
						\draw [fill opacity=0.7,fill=darkgray!30!gray] (A2) -- (A3) -- (C1) -- cycle;
						\draw [fill opacity=0.7,fill=darkgray!30!gray] (A3) -- (A4) -- (C1) -- cycle;
					\end{tikzpicture}& $\begin{bmatrix}
						\beta_{0,2} & - & - & - & -\\
						- &\beta_{1,4}& - & -\\
						& & \boldsymbol{\beta_{2,6}}
					\end{bmatrix}$\\
					&&& \\
					$(1,5)$& $S(\partial \Delta^2 + \Skel_0([1]))$ & \begin{tikzpicture}[scale = 0.6][line join = round, line cap = round]
						\tikzstyle{point}=[circle,thick,draw=black,fill=black,inner sep=0pt,minimum width=2pt,minimum height=2pt]
						
						\coordinate (3) at ({-.5*sqrt(3)},0,-.5);
						\coordinate (2) at (.2,0,1) {};
						\coordinate (1) at ({.5*sqrt(3)},0,-.5);
						
						\node [point] (a) at (1.7,0,-.5) {};
						
						\coordinate (4) at (0,{.8*sqrt(2)},0);
						\coordinate (5) at (0,-{.8*sqrt(2)},0);

						\begin{scope}
							\draw (1)--(3);
							\draw[fill=gray,fill opacity=.5] (2)--(1)--(4)--cycle;
							\draw[fill=darkgray,fill opacity=.5] (3)--(2)--(4)--cycle;
							\draw (2)--(1);
							\draw (2)--(3);
							\draw (3)--(4);
							\draw (2)--(4);
							\draw (1)--(4);
							
							\draw[fill=gray,fill opacity=.5] (2)--(1)--(5)--cycle;
							\draw[fill=darkgray,fill opacity=.5] (3)--(2)--(5)--cycle;
							\draw (3)--(5);
							\draw (2)--(5);
							\draw (1)--(5);
							
							\draw (4) -- (a);
							\draw (5) -- (a);
						\end{scope}
					\end{tikzpicture} & $\begin{bmatrix}
						\beta_{0,2} & \beta_{1,3} & \beta_{2,4}& -& -\\
						\beta_{0,3} & \beta_{1,4} & \beta_{2,5}& \beta_{3,6}&\\
						&\boldsymbol{\beta_{1,5}} & \beta_{2,6} & &
					\end{bmatrix}$\\
					&&& \\
					$(0,4)$&$\partial \Delta^3 + \Skel_0([2])$ & \begin{tikzpicture}[scale = 0.6][line join = round, line cap = round]
						\tikzstyle{point}=[circle,thick,draw=black,fill=black,inner sep=0pt,minimum width=2pt,minimum height=2pt]
						
						\coordinate (4) at (0,{sqrt(2)},0);
						\coordinate (3) at ({-.5*sqrt(3)},0,-.5);
						\coordinate (2) at (0,0,1) {};
						\coordinate (1) at ({.5*sqrt(3)},0,-.5);
						\node [point] (a) at (1.5,0,-.5) {};
						\node [point] (b) at (1.5,1,-.5) {};
						
						\begin{scope}
							\draw (1)--(3);
							\draw[fill=gray,fill opacity=.5] (2)--(1)--(4)--cycle;
							\draw[fill=darkgray,fill opacity=.5] (3)--(2)--(4)--cycle;
							\draw (2)--(1);
							\draw (2)--(3);
							\draw (3)--(4);
							\draw (2)--(4);
							\draw (1)--(4);
						\end{scope}
					\end{tikzpicture} & $\begin{bmatrix}
						\beta_{0,2} & \beta_{1,3} & \beta_{2,4}&\beta_{3,5}&\beta_{4,6}\\
						- & - & - & - &\\
						\boldsymbol{\beta_{0,4}} & \beta_{1,5} & \beta_{2,6} & &
					\end{bmatrix}$\\
				\end{tabular}
				\caption{\label{Table: D63-tilde Diagrams Table}\textit{$(i,d)_{\prec_3}$-initial complexes with diagrams in the cone $\Dnhtilde[6][3]$}}
			\end{table}
		\end{center}
	\end{ex}
	\pagebreak
	We now have all the ingredients we need to prove Theorem \ref{Theorem: dimDnh}.
	\begin{proof}[Proof of Theorem \ref{Theorem: dimDnh}]
		We have already seen that $$\dim \Dnh \leq 
		\dim (\WW(\Dnh)) = \frac{n(n-1)}{2}-\frac{h(h-1)}{2}+1.$$ We can show that $\dim \Dnh \geq \dim (\WW(\Dnh))$ by finding a linearly independent set of diagrams in $\Dnh$ of size $|\II(\Dnh)|-(h-1)$. In fact, by Lemma \ref{Lemma: Dimension of Dnhtilde => Dimension of Dnh} it suffices to find a linearly independent set of diagrams in $\Dnhtilde$ of size $|\II(\Dnhtilde)|-(h-1)$.
		
		To this end, we work towards exhibiting a set of complexes $\{\Gamma^h_{i,d} : (i,d)\in \II(\Dnhtilde)-\{(0,2),\dots,(h-2,h)\}\}$ such that for each $(i,d)\in\II(\Dnhtilde)-\{(0,2),\dots,(h-2,h)\}$, the complex $\Gamma^h_{i,d}$ has at most $n$ vertices, codimension $h$, and is $(i,d)$-initial with respect to the ordering $\prec_h$ given in Definition \ref{Definition: Order Dn}. In particular, for each index $(i,d)\in \II(\Dnhtilde)-\{(0,2),\dots,(h-2,h)\})$, we will show that the complex $\Gamma_{i,d}^h$ has a vertex set either of size $d$ (which is less than or equal to $n$ by Proposition \ref{Proposition: Inequalities Dn}) or of size $h+d-i-1$ (which is less than or equal to $n$ by Proposition \ref{Proposition: Inequalities Dnh 1}).
		
		We proceed by induction on $h\geq 1$. For any index $(i,d)$ in $\II(\Dnhtilde[n][1])$ the complex $\Delta^{d-2}\Star_{d-i-2} v$ is $(i,d)_{\prec_1}$-initial by Corollary \ref{Corollary: Homology of Starred Simplex}. Moreover it has $d$ vertices and codimension $1$. This proves the base case $h=1$.
		
		Now assume $h>1$, and pick some index $(i,d)\in \II(\Dnhtilde)-\{(0,2),\dots,(h-2,h)\}$. There are four cases to consider.
		
		First, if $(i,d)$ is in row 1 (i.e. $d-i=2$) with $h+1\leq d \leq n$, we define $\Gamma^h_{i,d}$ to be the complex $\Delta^{d-h-1}+\Skel_0([h])$. This has $d$ vertices and codimension $h$, and it is $(i,d)_{\prec_h}$-initial by Corollary \ref{Corollary: Betti simplex-adding-empty}.
		
		Next, if $(i,d)$ is in column 0 (i.e. $i=0$) with $3\leq d \leq n+h-1$, we define $\Gamma^h_{i,d}$ to be the complex $\partial \Delta^{d-1}+\Skel_0([h-1])$. This has $d+h-1$ vertices (which is the same as $d+h-i-1$ vertices because $i=0$) and codimension $h$. Moreover it is $(i,d)_{\prec_h}$-initial by Corollary \ref{Corollary: Betti boundary-simplex-adding-empty}.
		
		Next, if $(i,d)$ is in row 2 (i.e. $d-i=3$) with $4\leq d \leq h+2$, we define $\Gamma^h_{i,d}$ to be the complex $\Cl(C_d)+\Skel_0([h+2-d])$. This has $h+2$ vertices (which is the same as $d+h-i-1$ vertices because $d-i=3$) and codimension $h$. Moreover it is $(i,d)_{\prec_h}$-initial by Corollary \ref{Corollary: Betti cyclic-adding-Em}.
		
		Finally, for all other indices $(i,d)$ in $\II(\Dnhtilde)-\{(0,2),\dots,(h-2,h)\}$, the index $(i-1,d-2)$ lies inside the indexing set $\II(\Dnhtilde[n][h-1])-\{(0,2),\dots,(h-3,h-1)\}$, and hence by induction we have already found an $(i-1,d-2)_{\prec_{h-1}}$-initial complex $\Gamma^{h-1}_{i-1,d-2}$ with codimension $h-1$ on a vertex set whose size is either $d-1$ or $(h-1)+(d-2)-(i-1)-1$. We define $$\Gamma^h_{i,d}= S \Gamma^{h-1}_{i-1,d-2}.$$ By Lemma \ref{Lemma: SDelta initiality Dnh} this complex has codimension $h$ and is $(i,d)_{\prec_h}$-initial. Also it has exactly two more vertices than $\Gamma^{h-1}_{i-1,d-2}$ which means it has either $d$ vertices or $h+d-i-1$ vertices, as required. This completes the proof.
	\end{proof}
	
	\section{Dimension of $\Cnh$}\label{Subsection: dimCnh}
	We now arrive at our final cone $\Cnh$, generated by the diagrams of edge ideals of height $h$. Note that this is a subcone of all the other cones we've studied so far. We work towards proving Theorem \ref{Theorem: dimCnh} on the cone's dimension.
	
	\subsection{Upper Bound}\label{Subsection: ub Cnh}
	Once again, we begin by searching for an indexing subset $\II(\Cnh)\subset \II(\Cn)$, and hence a minimal subspace $\WW(\Cnh)$ containing $\Cnh$.
	
	To find our indexing subset, we need to obtain some additional restrictions on the positions of the nonzero values of the diagrams in $\Cnh$. To help us with this, we need two important lemmas (these are, respectively, Theorem 4.4 in \cite{SFMon} and Corollary 7.2.4 in \cite{Vil}). Before reading these lemmas, it may be helpful to recall the concepts of vertex covers and matchings as given in Definition \ref{Definition: Graph Theory Key Terminology and Constructions}, and of regularity as given in Definition \ref{Definition: Regularity}.
	
	
	\begin{lem}\label{Lemma: Height of Edge Ideals}
		Let $G$ be a graph. The following are equivalent.
		\begin{enumerate}
			\item The height of $I(G)$ is equal to $h$.
			\item $G$ has a minimally sized vertex cover of size $h$.
		\end{enumerate}
	\end{lem}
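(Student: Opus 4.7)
The plan is to leverage the combinatorial machinery already developed in the excerpt, particularly the link between edge ideals and Stanley-Reisner ideals of independence complexes (Proposition \ref{Proposition: I(G) = I_Ind(G)}), together with the formula for the height of a Stanley-Reisner ideal in terms of the dimension of the underlying complex (Lemma \ref{Lemma: height I-Delta}). The key bridge to establish is a bijection between the facets of $\Ind(G)$ and the minimal vertex covers of $G$, by sending a facet $F$ to its complement $V(G)-F$.

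First, I would observe that by Proposition \ref{Proposition: I(G) = I_Ind(G)} we have $I(G)=I_{\Ind(G)}$, so Lemma \ref{Lemma: height I-Delta} gives $\height I(G) = \codim \Ind(G) = n - \dim \Ind(G) - 1$. Thus the height of $I(G)$ equals $n$ minus the size of a maximum facet of $\Ind(G)$.

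Second, I would check that a subset $U\subseteq V(G)$ is an independent set of $G$ if and only if its complement $V(G)-U$ is a vertex cover of $G$. This is essentially a matter of unwinding definitions: $U$ contains no edge precisely when every edge meets $V(G)-U$. Under this complementation, maximal independent sets (i.e., facets of $\Ind(G)$) of maximum cardinality correspond to minimal vertex covers of minimum cardinality, and the size of such a cover equals $n$ minus the size of the corresponding facet.

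Combining these two steps yields $\height I(G) = n - (\dim \Ind(G) + 1)$ on the one hand, which equals the minimum size of a vertex cover of $G$ on the other, establishing the equivalence. I do not foresee any significant obstacle here; the result is essentially a repackaging of Lemma \ref{Lemma: height I-Delta} through the complementation bijection, and the bulk of the substantive work (the primary decomposition argument for squarefree monomial ideals) has already been carried out in the proof of Lemma \ref{Lemma: height I-Delta}.
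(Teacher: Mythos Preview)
Your proof is correct. The paper does not actually prove this lemma; it simply cites it as Theorem~4.4 in \cite{SFMon}. Your argument, by contrast, gives a self-contained proof using only machinery already established in the thesis (Proposition~\ref{Proposition: I(G) = I_Ind(G)} and Lemma~\ref{Lemma: height I-Delta}), together with the elementary observation that complementation sets up a bijection between maximal independent sets and minimal vertex covers. This is arguably preferable in context, since the substantive work---the correspondence between faces of $\Delta$ and monomial primes over $I_\Delta$---has already been done in the proof of Lemma~\ref{Lemma: height I-Delta}, and your argument makes clear that the edge-ideal statement is just a specialisation of that result.
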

	\begin{lem}\label{Lemma: Regularity of Edge Ideals}
		Let $G$ be a graph, and let $\alpha$ be the minimum size of a maximal matching in $G$. We have $\reg I(G)\leq \alpha + 1$.
	\end{lem}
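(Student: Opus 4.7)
The plan is to prove this by strong induction on the number of vertices of $G$, using short exact sequences to reduce $I(G)$ to edge ideals of smaller graphs with strictly smaller minimum maximal matching number. When $G$ has no edges, $I(G)$ is the zero ideal and the bound is vacuous; this constitutes the base case. For the inductive step, fix a minimum maximal matching $M = \{e_1, \ldots, e_\alpha\}$ of $G$ and choose an edge $e_1 = \{u,v\} \in M$, with $v$ selected cleverly among the two endpoints (see below).

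The key tool is the standard short exact sequence
\[ 0 \to (R/(I(G) : x_v))(-1) \xrightarrow{\,\cdot x_v\,} R/I(G) \to R/(I(G) + (x_v)) \to 0, \]
which by standard regularity estimates for short exact sequences yields
\[ \reg(R/I(G)) \leq \max\{\reg(R/(I(G):x_v)) + 1,\ \reg(R/(I(G) + (x_v)))\}. \]
A direct computation identifies
\[ (I(G), x_v) = (x_v) + I(G - v), \qquad (I(G) : x_v) = (x_w : w \in N(v)) + I(G - N[v]), \]
where $N[v]$ is the closed neighborhood of $v$. Since the additional linear generators $(x_v)$ and $(x_w : w \in N(v))$ involve variables that do not appear among the edge-ideal generators, quotienting by these linear forms does not alter the regularity; the two flanking regularities therefore coincide with $\reg(R'/I(G - v))$ and $\reg(R''/I(G - N[v]))$ in appropriately smaller polynomial rings.

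The inductive step now hinges on the combinatorial claim that $v$ can be chosen so that $\alpha(G - v) \leq \alpha$ and $\alpha(G - N[v]) \leq \alpha - 1$. Granted these inequalities, the inductive hypothesis furnishes $\reg(I(G - v)) \leq \alpha + 1$ and $\reg(I(G - N[v])) \leq \alpha$, equivalently $\reg(R/I(G-v)) \leq \alpha$ and $\reg(R/I(G-N[v])) \leq \alpha - 1$. Plugging these into the short exact sequence bound produces $\reg(R/I(G)) \leq \alpha$, i.e.\ $\reg(I(G)) \leq \alpha + 1$, completing the induction.

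The principal obstacle is the matching-theoretic claim above. The second bound, $\alpha(G - N[v]) \leq \alpha - 1$, is the cleaner of the two: the set of those $e_i$ with $i \geq 2$ that lie entirely in $V \setminus N[v]$ forms a matching in $G - N[v]$ of size at most $\alpha - 1$, and by the maximality of $M$ in $G$, any edge of $G - N[v]$ disjoint from this restriction is also disjoint from all of $M$, a contradiction; after possibly pruning the matching remains maximal in $G - N[v]$. The first bound, $\alpha(G - v) \leq \alpha$, is the genuinely delicate step, because removing $v$ can destabilise $M \setminus \{e_1\}$ if $u$ has a neighbor outside $V(M \setminus e_1)$, and naively extending could blow up the matching size. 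Here one must exploit the minimality of $M$: if no choice of endpoint of $e_1$ (nor indeed of any edge in $M$) allowed both bounds to hold simultaneously, a swap argument would produce a maximal matching of $G$ strictly smaller than $M$, contradicting the choice of $M$ as a \emph{minimum} maximal matching. Organising this case analysis cleanly is where most of the work lies.
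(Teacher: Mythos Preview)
The paper does not prove this lemma at all; it simply cites it from the literature (Villarreal, Corollary~7.2.4). Your inductive strategy via the colon/sum short exact sequence is the standard route to this result, and it does go through --- but your sketch of the two matching bounds is off in both cases, in opposite directions.

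First, the bound you call ``genuinely delicate'' is not: $\alpha(G-v)\leq\alpha(G)$ holds for \emph{every} vertex $v$, with no clever choice and no appeal to minimality of $M$. If $v$ is not $M$-saturated, $M$ itself is maximal in $G-v$; if $\{u,v\}\in M$, then $M\setminus\{u,v\}$ is either already maximal in $G-v$ or becomes so after adding a single edge through $u$, giving size at most $|M|$. No swap argument is needed.

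Second, the bound you call ``cleaner'' is where your argument actually breaks. Your claim that any edge of $G-N[v]$ disjoint from the restriction $M'=\{e\in M: e\cap N[v]=\emptyset\}$ must be disjoint from all of $M$ is false: such an edge may meet some $e_j\in M$ with one endpoint in $N(v)$ and the other in $V\setminus N[v]$. The fix is a counting argument. Extend $M'$ greedily to a maximal matching $M''$ in $G-N[v]$. Each added edge $f\in M''\setminus M'$ meets some $e\in M\setminus M'$ at the unique endpoint of $e$ lying outside $N[v]$; since $M''$ is a matching, this gives an injection from $M''\setminus M'$ into $\{e\in M\setminus M': e\not\subseteq N[v]\}$. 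The edge $e_1=\{u,v\}$ lies entirely in $N[v]$ and is therefore excluded from the image, so $|M''\setminus M'|\leq |M\setminus M'|-1$, whence $|M''|\leq |M|-1=\alpha-1$.

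With these two corrections your induction closes exactly as you describe.
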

	
	Using these two lemmas, we can prove the following proposition.
	\begin{prop}\label{Proposition: Inequalities Cnh}
		Consider $\beta\in \Cnh$. For every $(i,d)\in S_n$ satisfying $d - i > h 
		+ 1$, we have $\beta_{i,d}=0$.
	\end{prop}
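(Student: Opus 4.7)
The plan is to deduce the claim directly from the two lemmas cited just before the proposition, together with the definition of regularity (Definition \ref{Definition: Regularity}). Recall that $\reg I(G) = \max\{d-i : \beta_{i,d}(I(G))\neq 0\}$, so it suffices to show $\reg I(G) \leq h+1$ whenever $G$ is a graph whose edge ideal has height $h$. The contrapositive then gives $\beta_{i,d}=0$ for any $(i,d)$ with $d-i>h+1$.

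Given a graph $G$ with $\height I(G) = h$, Lemma \ref{Lemma: Height of Edge Ideals} produces a vertex cover $C$ of $G$ of minimum size $h$. The key combinatorial observation I will use is that for any matching $M$ in $G$ and any vertex cover $C'$ of $G$, we must have $|M|\leq|C'|$: the edges of $M$ are pairwise disjoint, so covering all of them requires at least $|M|$ distinct vertices. Specialising to a minimum-sized maximal matching of $G$, whose size is $\alpha$ by the notation of Lemma \ref{Lemma: Regularity of Edge Ideals}, and to the minimum vertex cover $C$, we obtain
\[
\alpha \leq |C| = h.
\]
Applying Lemma \ref{Lemma: Regularity of Edge Ideals} then yields $\reg I(G) \leq \alpha + 1 \leq h+1$, as required.

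There is no serious obstacle here; the proposition is essentially a packaging of the two cited lemmas with the elementary fact $\alpha \leq \tau(G)$ relating matchings and vertex covers. The only subtlety worth noting explicitly is that the quantity $\alpha$ in Lemma \ref{Lemma: Regularity of Edge Ideals} is the \emph{minimum} size of a maximal matching rather than the maximum matching number $\nu(G)$; but since any maximal matching is in particular a matching, the inequality $|M|\leq|C'|$ applies to it, so the argument goes through unchanged. Once this is observed, the proof is just a two-line chain of inequalities followed by invoking the definition of regularity to convert the bound on $\reg I(G)$ into the vanishing statement for $\beta_{i,d}$.
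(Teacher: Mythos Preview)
Your proposal is correct and follows essentially the same approach as the paper: obtain a minimum vertex cover of size $h$ from Lemma \ref{Lemma: Height of Edge Ideals}, use the elementary inequality between matching size and vertex cover size to bound $\alpha \leq h$, and then apply Lemma \ref{Lemma: Regularity of Edge Ideals} to conclude $\reg I(G) \leq h+1$. The paper phrases the middle step slightly differently (``every edge contains at least one cover vertex, so no matching has more than $h$ edges''), but this is exactly the combinatorial observation you spell out.
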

	\begin{proof}
		Let $G$ be a graph on $n$ vertices whose edge ideal has height $h$. We need to show that $\reg I(G)\leq h+1$. By Lemma \ref{Lemma: Height of Edge Ideals}, there is a minimal vertex cover $\{x_{i_1},...,x_{i_h}\}$ for $G$, which means that every edge in $E(G)$ contains at least one of $x_{i_1},...x_{i_h}$. Hence, no matching in $G$ can consist of more than $h$ edges, so the minimal size of a maximal matching in $G$ must be less than or equal to $h$. By Lemma \ref{Lemma: Regularity of Edge Ideals}, we have $\reg I(G)\leq h+1$.
	\end{proof}
	
	Because $\Cnh\subset \Dnh$, we already know from Proposition \ref{Proposition: Inequalities Dnh 1} that the diagrams $\beta$ in $\Cnh$ satisfy $\beta_{i,d}=0$ whenever $d-i>n-h+1$. Along with Proposition \ref{Proposition: Inequalities Cnh} this tells us that in fact $\beta_{i,d}=0$ whenever $d-i>\min\{h,n-h\}+1$. This gives us a much clearer picture of what the diagrams in $\Cnh$ look like. Setting $m=\min\{h,n-h\}$, the diagrams $\beta\in \Cnh$ look like the following.
	\begin{equation}
		\begin{bmatrix}\label{Equation: Cnh Matrix}
			\beta_{0,2} & \dots & \dots & \dots & \dots & \dots & \beta_{n-2,n}\\
			&\ddots & & & & &\\
			& & \beta_{m-1,2m} &\dots &\beta_{n-m-1,n} & \\
		\end{bmatrix}
	\end{equation}
	
	Thus, we may define our indexing set $\II(\Cnh)$ and our subspaces $(W_n^h)'$ and $W_n^h$ as follows.
	\begin{defn}\label{Definition: II(Cnh)}
		We define $$\II(\Cnh) := \left\{(i,d)\in \II(\Cn): d-i \leq \min\{h,n-h\}+1\right\}.$$
	\end{defn}
	
	As with the cone $\Dnh$, we know that as well as living inside the vector space carved out by this indexing set, the diagrams $\beta$ in $\Cnh$ must also satisfy the Herzog-K\"{u}hl equations $\HK_1(\beta)=\dots = \HK_{h-1}(\beta)=0$ by Remark \ref{Remark: HK Equations}. Thus we define our vector space $\WW(\Cnh)$ as follows.
	\begin{defn}\label{Definition: WW(Cnh)}
		We define $$\WW(\Cnh) := \left\{\beta \in \bigoplus_{(i,d)\in \II(\Cnh)}\QQ : \HK_1(\beta)=\dots =\HK_{h-1}(\beta)=0 \right\}.$$
	\end{defn}
	
	To prove that the fomula given in Theorem \ref{Theorem: dimCnh} is an upper bound for $\dim \Cnh$, we need to show that $\dim (\WW(\Cnh))=h(n-h-1)+1$, which we do below.
	\begin{prop}\label{Proposition: dim WW(Cnh)}
		Let $\II(\Cnh)$ and $\WW(\Cnh)$ be as in Definitions \ref{Definition: II(Cnh)} and \ref{Definition: WW(Cnh)}. We have
		\begin{enumerate}
			\item $|\II(\Cnh)|=h(n-h)$.
			\item $\dim \WW(\Dnh)=h(n-h-1)+1$.
		\end{enumerate}
	\end{prop}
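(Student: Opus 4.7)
The proof will follow the template of Proposition \ref{Proposition: dim WW(Dnh)} very closely, with only minor adjustments for the smaller indexing set.

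For part (1), the plan is to arrange the elements of $\II(\Cnh)$ in rows according to the convention $\rho = d - i - 1$ as in Definition \ref{Definition: Rows of Index Set}. A given pair $(i,d)$ lies in $\II(\Cnh)$ precisely when $i+2 \leq d \leq 2i+2$, $d \leq n$, and $d - i \leq \min\{h,n-h\} + 1$. Setting $m = \min\{h, n-h\}$, row $\rho$ consists of the indices $(\rho-1, 2\rho), (\rho, 2\rho+1), \ldots, (n-\rho-1, n)$ for each $1 \leq \rho \leq m$, contributing $n - 2\rho + 1$ elements. Summing gives
\[
|\II(\Cnh)| = \sum_{\rho=1}^{m}(n - 2\rho + 1) = mn - m(m+1) + m = m(n-m),
\]
and since $m(n-m) = h(n-h)$ regardless of whether $m = h$ or $m = n-h$, the claim follows.

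For part (2), the plan is to apply the same Vandermonde argument used in Proposition \ref{Proposition: dim WW(Dnh)}. For $2 \leq d \leq n$, define $t_d = \sum_i (-1)^i \beta_{i,d}$, so that the Herzog-K\"uhl equations $\HK_1(\beta) = \dots = \HK_{h-1}(\beta) = 0$ become
\[
(t_2, \dots, t_n)\begin{pmatrix} 2 & \cdots & 2^{h-1} \\ 3 & \cdots & 3^{h-1} \\ \vdots & \vdots & \vdots \\ n & \cdots & n^{h-1} \end{pmatrix} = 0.
\]
The first step is to confirm that for every $2 \leq d \leq n$ there exists at least one index $(i,d) \in \II(\Cnh)$, so that the linear functionals $\beta \mapsto t_d$ are truly independent coordinates on $\bigoplus_{(i,d)\in \II(\Cnh)} \QQ$; this comes from checking that $\max\{\lceil (d-2)/2 \rceil, d-m-1\} \leq d-2$ whenever $d \geq 2$ and $m \geq 1$. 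The second step is to show the coefficient matrix above has rank $h-1$: extract the $(h-1) \times (h-1)$ submatrix coming from rows $d = 2, 3, \ldots, h$, factor out $d$ from each row, and recognise the remaining matrix as a standard Vandermonde matrix in the distinct values $2, 3, \ldots, h$, hence invertible. This gives exactly $h-1$ independent relations, so
\[
\dim \WW(\Cnh) = |\II(\Cnh)| - (h-1) = h(n-h) - h + 1 = h(n-h-1) + 1.
\]

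There is no significant obstacle here; the only subtlety is making sure the Vandermonde argument from the $\Dnh$ case still applies when the degrees $d$ start at $2$ instead of $1$, which is handled by the sub-matrix trick above. The combinatorial bookkeeping for part (1), in particular the unified formula $m(n-m) = h(n-h)$, is routine once the case split on $m$ is made explicit.
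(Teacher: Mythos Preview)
Your proposal is correct and follows the same approach as the paper. For part (1) you do the identical row-by-row count, and for part (2) you invoke the same Vandermonde argument from Proposition~\ref{Proposition: dim WW(Dnh)}; the paper simply cites that earlier proof without further comment, whereas you add the useful extra checks that every degree $2\leq d\leq n$ actually occurs in $\II(\Cnh)$ and that the submatrix on rows $d=2,\dots,h$ suffices for full rank---this is added care, not a different route.
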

	\begin{proof}
		For part (1), we set $m=\min\{h,n-h\}$, and arrange the elements of $\II(\Cnh)$ in rows as in Equation (\ref{Equation: Cnh Matrix}).
		\begin{equation*}\label{Equation: II(Cnh)}
			\begin{matrix}
				(0,2) & \dots & \dots & \dots & \dots & \dots & (n-1,n)\\
				&\ddots & & & & &\\
				& & (m-1,2m) &\dots &(n-m-1,n) & \\
			\end{matrix}
		\end{equation*}
		As noted in Section \ref{Subsection: ub Cn}, for each $1\leq i\leq m$, row $i$ of $\II(\Cnh)$ has $n-2i+1$ elements, and hence we have
		\begin{align*}
			|\II(\Cnh)|&= \sum_{i=1}^m (n-2i+1)\\
			&= \sum_{i=1}^m n - \sum_{i=1}^m(2i-1)\\
			&= nm - m^2\\
			&= m(n-m)\\
			&=h(n-h)\, .
		\end{align*}
		
		For part (2), as noted in the proof of Proposition \ref{Proposition: dim WW(Dnh)}, the Herzog-K\"{u}hl equations $\HK_1(\beta)= 0, ..., \HK_{h-1}(\beta)=0$ are linearly independent, and so we have
		\begin{align*}
			\dim (\WW(\Cnh)) &= |\II(\Cnh)|-(h-1)\\
			&= h(n-h)-h+1\\
			&= h(n-h-1)+1\, .
		\end{align*}
	\end{proof}
	
	\subsection{Lower Bound}\label{Subsection: lb Cnh}
	To complete our proof of Theorem \ref{Theorem: dimCnh}, it only remains to show that the space $\WW(\Cnh)$ is the \textit{minimal} subspace of $\WW(\Cn)$ containing $\Cnh$, by finding an appropriately sized linearly independent set of diagrams lying in the cone $\Cnh$.
	
	In fact we have already found these diagrams: for every index $(i,d)$ in $\II(\Cnh)-\{(0,2),\dots,(h-2,h)\}$ we will show that the $(i,d)_{\prec_h}$-initial diagram we found inside $\Dnh$ in the previous section actually lies inside the cone $\Cnh$ as well. We proceed to the proof immediately.
	
	\begin{proof}[Proof of Theorem \ref{Theorem: dimCnh}]
		We have already seen that $$\dim \Cnh \leq 
		\dim (\WW(\Cnh)) = h(n-h-1)+1.$$ We can show that $\dim \Cnh \geq \dim (\WW(\Cnh))$ by finding a linearly independent set of diagrams in $\Cnh$ of size $|\II(\Cnh)|-(h-1)$.
		
		In our proof on the dimension of the cone $\Dnh$, we constructed, for each index $(i,d)\in \II(\Dnh)-\{(0,2),\dots,(h-2,h)\}$, an $(i,d)_{\prec_h}$-initial complex $\Gamma^h_{i,d}$ of codimension $h$ on at most $n$ vertices. Thus to find our linearly independent diagrams in $\Cnh$ it suffices to show that for each index $(i,d)\in \II(\Cnh)\{(0,2),\dots,(h-1,h+1)\}$ the complex $\Gamma^h_{i,d}$ is in fact the independence complex of some graph $G^h_{i,d}$, and hence lies inside $\Cnh$. We will demonstrate this in some cases by showing that $\Gamma^h_{i,d}$ is the complex of cliques of some graph, which is the same as the independence complex of the graph's complement by Remark \ref{Remark: Ind(G)=Cl(G^c)}.
		
		We proceed by induction on $h\geq 1$. Suppose first that our index $(i,d)$ lies in $\{(h-1,h+1),\dots,(n-2,n)\}$. In this case we defined the complex $\Gamma^h_{i,d}$ to be $\Delta^{d-h-1}+\Skel_0([h])$, which is equal to the complex of cliques $\Cl(K_{d-h}+E_h)$, so we set $G^h_{i,d}=(K_{d-h}+E_h)^c$. Note that this is true even for the base case $h=1$, where we defined $\Gamma^1_{i,d}$ to be the starred complex $\Delta^{d-2} \Star_0 v$, because this complex is equal to the disjoint union $\Delta^{d-2}+\Skel_0([1])$. In particular, the indexing set $\II(\Cnh[n][1])$ contains only the indices $(0,2),\dots,(n-2,n)$, so this proves the base case.
		
		Next suppose we have $(i,d) \in \{(1,4), \dots, (h-1,h+2)\}$. Here we defined the complex $\Gamma^h_{i,d}$ to be $\Cl(C_d) + \Skel_0([h+2-d])$. This is equal to $\Cl(C_d + E_{h+2-d})$, so we set $G^h_{i,d}=(C_d + E_{h+2-d})^c$.
		
		For every other index $(i,d)$ in $\II(\Cnh)$, we defined the complex $\Gamma^h_{i,d}$ to be the suspension $S (\Gamma^{h-1}_{i-1,d-2})$. By induction the complex $\Gamma^{h-1}_{i-1,d-2}$ is equal to $\Ind(G^{h-1}_{i-1,d-2})$ for some graph $G^{h-1}_{i-1,d-2}$ and hence we may define $G^h_{i,d}=G^{h-1}_{i-1,d-2}+L$.
	\end{proof}
	\begin{rem}
		Note that the indices $(0,d)$ in $\II(\Dnhtilde)$ with $3\leq d \leq n-h+1$, for which we found the complexes $\Gamma^h_{i,d}=\partial \Delta^{d-1}+\Skel_0([h-1])$, lie outside the indexing set $\II(\Cnh)$ (by Proposition \ref{Proposition: Inequalities Cn}). Thus it does not present an issue to this proof that their corresponding complexes are not independence complexes of graphs.
	\end{rem}
	
	\section{Concluding Remarks}
	Our proofs for these results demonstrate, up to linear combination, all the linear dependency relations that are satisfied by every diagram in the cone. For each cone $\calC$ we started by finding the cone's indexing set $\II(\calC)$, which showed us the possible shapes of the diagrams in the cone, and thus gave us cofinitely many relations of the form $\beta_{i,d}=0$ for indices $(i,d)$ outside of $\II(\calC)$. For the cones $\Dn$, $\Dntilde$ and $\Cn$, these relations are the only ones we have. For the cones $\Dnh$, $\Dnhtilde$ and $\Cnh$ we also have the Herzog-K\"{u}hl equations $\HK_1(\beta)=\dots=\HK_{h-1}(\beta)=0$.
	
	
	
	
	
	\chapter{PR Complexes and Degree Types}\label{Chapter: PR Complexes and Degree Types}
	Our aim over the next three chapters is to investigate the possible shapes of pure Betti diagrams arising from Stanley-Reisner ideals. In particular, we work towards proving Theorem \ref{Theorem: PR Complexes of Any Degree Type}, stated below, which is a partial analogue to the first Boij-S\"{o}derberg Conjecture, for the cone $\Dn$.
	
	
	By the `\textit{shape}' of a Betti diagram, we mean the possible positions of its nonzero entries. Formally, we can define a Betti diagram's shape as follows.
	\begin{defn}\label{Definition: Shape of Betti Diagram}
		The \textit{shape} of a Betti diagram $\beta$ is the set $$S(\beta)=\left\{(i,d)\in \{0,\dots,n\}\times \ZZ : \beta_{i,d}\neq 0 \right\}.$$
	\end{defn}
	
	We have already found some restrictions on the shapes of the diagrams in $\Dn$ in Chapter \ref{Chapter: Dimensions} (specifically, they are all subsets of $\II(\Dn)$). Now we narrow our attention slightly to the \textit{pure} diagrams in the cone. Our goal is to answer the following question.
	\begin{qu}\label{Question: Shapes of Betti diagrams}
		What are the possible shapes of pure diagrams arising from Stanley-Reisner ideals?
	\end{qu}
	
	In the case of pure diagrams, the shape of the diagram contains the same information as its shift type (as given in Definition \ref{Definition: Pure Diagrams and Shift Type}). This allows us to revise Question \ref{Question: Shapes of Betti diagrams} as follows.
	\begin{qu}\label{Question: Shift Types}
		For a given strictly decreasing sequence $\bc=(c_p,\dots,c_0)$ of positive integers, can we find a Stanley-Reisner ideal $I$ such that $\beta(I)$ is pure with shift type $\bc$?
	\end{qu}
	
	As it turns out, the answer to this question is \textit{no}, in general. For example, the results of the previous chapter demonstrate that no Stanley-Reisner ideal can have a pure resolution with shift type $(5,2)$. Indeed, any such ideal $I$ would have to be generated in degree $2$, which would make it an edge ideal. But the index $(1,5)$ lies outside of $\II(\Cn)$, so we must have $\beta_{1,5}(I)=0$.

	However we can ask a broader question. We start by introducing some new terminology, loosely mirroring terminology found in \cite{Bruns-Hibi} (page 1203), which is also focussed on classifying the pure diagrams of Stanley-Reisner ideals.
	
	\begin{defn}
		Suppose we have a Stanley-Reisner ideal $I$ with a pure resolution
		$$0\rightarrow R(-c_p)^{\beta_{p,c_p}} \rightarrow ... \rightarrow R(-c_1)^{\beta_{1,c_1}} \rightarrow R(-c_0)^{\beta_{0,c_0}}\rightarrow I.$$
		We say that this pure resolution (and the corresponding Betti diagram) has \textit{degree type} $(c_p-c_{p-1},...,c_1-c_0)$.
	\end{defn}
	\begin{rem}
		Our notion of degree type differs slightly from that found in \cite{Bruns-Hibi}, which defines it as the sequence $(c_p-c_{p-1},...,c_1-c_0,c_0)$. This is the sequence of degrees of the maps in the resolution (i.e. the degrees of the elements in the matrices representing those maps). Our version of degree type records the degrees of all of these maps except for $R(-c_0)^{\beta_{0,c_0}}\rightarrow I$.
	\end{rem}
	
	The degree type of a pure resolution contains slightly less information than its shift type, but the shift type is uniquely determined by the degree type and the value of $c_0$. Thus Question \ref{Question: Shift Types} could be widened as follows.
	\begin{qu}\label{Question: Degree Types}
		For a given sequence $\bd=(d_p,\dots,d_1)$ of positive integers, can we find a Stanley-Reisner ideal $I$ such that $\beta(I)$ is pure with degree type $\bd$?
	\end{qu}
	
	Answering Question \ref{Question: Degree Types} would still go a significant way towards classifying the possible shapes of pure Betti diagrams of Stanley-Reisner ideals. We will show that the answer to this question is, in fact, \textit{yes}, by proving the following theorem.
	
	\begin{thm}\label{Theorem: PR Complexes of Any Degree Type}
		Let $\bd=(d_p,\dots,d_1)$ be any sequence of positive integers. There exists a simplicial complex $\Delta$ such that the Betti diagram $\beta(I_{\Delta})$ is pure with degree type $\bd$.
	\end{thm}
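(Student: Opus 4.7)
The strategy is to construct the required complex $\Delta$ inductively, exploiting Alexander duality together with a family of PR-preserving operations that incrementally modify the degree type of the dual Stanley-Reisner ideal. Since every squarefree monomial ideal arises as the dual Stanley-Reisner ideal $I_{\Gamma^*}$ of a unique complex $\Gamma$ (an immediate consequence of Proposition~\ref{Proposition: SR 1-1 Correspondence} applied after taking the Alexander dual), it suffices to produce a PR complex $\Gamma$ whose dual Stanley-Reisner ideal admits a pure resolution of degree type $\bd$; the desired complex is then $\Delta = \Gamma^*$.

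For the base case of the induction I need a PR complex whose dual Stanley-Reisner ideal has degree type $(1,1,\dots,1)$ of length $p$, i.e.\ a linear resolution. By the Eagon-Reiner theorem cited in the chapter overview, such complexes are precisely the Cohen-Macaulay simplicial complexes of appropriate codimension; any explicit example, for instance the boundary of a simplex $\partial\Delta^p$ or the cross polytope $O^p$, provides a suitable starting point.

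For the inductive step I would apply, in sequence, the operations $\phi_p, \phi_{p-1}, \dots, \phi_1$ advertised in the chapter introduction. Each $\phi_i$ is designed to transform a PR complex of degree type $(d_p,\dots,d_{i+1},k,1,\dots,1)$ into a PR complex of degree type $(d_p,\dots,d_{i+1},k+1,1,\dots,1)$. Starting from the all-ones degree type of length $p$, I would first apply $\phi_p$ exactly $d_p-1$ times to reach $(d_p,1,\dots,1)$, then $\phi_{p-1}$ exactly $d_{p-1}-1$ times to reach $(d_p,d_{p-1},1,\dots,1)$, and so on down to $\phi_1$. Each application is legal because at every stage the entries strictly below the position being incremented are all equal to $1$, matching the hypothesis of the operation. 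After the final round the resulting complex is PR with degree type $(d_p,d_{p-1},\dots,d_1)$, as required.

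The real obstacle is not this bookkeeping but the construction and analysis of the operations $\phi_i$ themselves (together with their refinements $f^\lambda$ and $f^{\free}$, which may be needed to handle boundary cases the plain $\phi_i$ do not reach). Verifying that each $\phi_i$ preserves the PR property \emph{and} modifies the degree type exactly as claimed reduces, via the Alexander dual variant of Hochster's formula (Theorem~\ref{Theorem: ADHF}), to a careful control of the links of the transformed complex and of the degrees at which those links carry nontrivial reduced homology. I expect that this is where the bulk of the effort lies: one must define $\phi_i$ by a local combinatorial modification (a starring or barycentric-style move applied to the faces whose links contribute at the critical homological degree), and then use the K\"unneth formula, Mayer-Vietoris, the deformation retraction tools of Lemma~\ref{Lemma: Deformation Retract}, and the link-poset framework of this chapter to verify that every nonzero entry of $\beta(I_{\phi_i(\Gamma)^*})$ lands in the predicted single column of each homological degree.
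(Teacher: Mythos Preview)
Your plan matches the paper's proof exactly: start from the boundary of the $p$-simplex (degree type $(1,\dots,1)$) and apply $\phi_p^{d_p-1}\cdots\phi_1^{d_1-1}$, with Theorem~\ref{Theorem: Phi_i Operations Degree Type} guaranteeing each application preserves PR and increments the correct entry of the degree type. One small correction: the auxiliary operations $f^\lambda$ and $f^{\free}$ are presented only as motivating examples and are not needed for any boundary cases---the $\phi_i$ alone suffice (indeed $\phi_1$ coincides with $f^{\free}$ on PR complexes).
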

	
	Over the next three chapters we work towards proving Theorem \ref{Theorem: PR Complexes of Any Degree Type}. In this chapter we introduce the family of PR complexes, complexes whose dual Stanley-Reisner ideals have pure resolutions; in Chapter \ref{Chapter: Families of PR Complexes} we look at some interesting  subfamilies of PR complexes, along with their corresponding degree types and Betti diagrams; and finally, in Chapter \ref{Chapter: Generating Degree Types}, we prove Theorem \ref{Theorem: PR Complexes of Any Degree Type} by presenting an algorithm for generating a PR complex of any given degree type.
	
	\section{Motivation}\label{Subsection: Motivation PR}
	
	We came to study this topic through reading the results of \cite{Bruns-Hibi}, which is also focussed on the construction of Stanley-Reisner ideals with pure resolutions, and itself builds on earlier work from \cite{Fro} and \cite{Bruns-Hibi-earlier} (this latter article was completed in 1993 but later published in 1998). We hope that our work adds something to these efforts.
	
	One reason the study of pure Betti diagrams is of particular interest in general is because of the Boij-S\"{o}derberg conjectures: as we saw in Section \ref{Subsection: Boij-Soderberg}, the extremal rays of the cone $\calC(\ba,\bfb)$ are the pure diagrams corresponding to Cohen-Macaulay modules; and these are also the extremal rays of the wider Betti cone generated by \textit{all} diagrams of $R$-modules in the same window. 
	
	It should be noted, however, that the same is \textit{not} true for the cone $\Dn$. While the pure diagrams in $\Dn$ which correspond to Cohen-Macaulay modules must be extremal rays of $\Dn$ (because they are extremal in the wider cone generated by all $R$-modules, in some appropriately sized window containing the index set $\II(\Dn)$), many pure diagrams in $\Dn$ do \textit{not} correspond to Cohen-Macaulay modules. Moreover, as a proper subcone of the wider cone generated by all $R$-modules, many of the extremal rays of $\Dn$ are not extremal in the wider cone. In general, there are many extremal rays of $\Dn$ which are not pure (see Example $\ref{Example: non-pure extremal ray}$ in Section \ref{Subsection: Extremal Rays and Defining Halfspaces}), and many pure diagrams which are not extremal (see Example $\ref{Example: non-extremal pure diagram}$ in Section \ref{Subsection: Extremal Rays and Defining Halfspaces}).
	
	Nevertheless, classifying the possible shapes of pure diagrams in $\Dn$ can still help us in understanding its extremal rays. Most notably, if $\Dn$ contains a pure diagram of shape $S$ then it must contain an extremal ray of shape $S$.
	
	To see why, suppose we were to write a diagram $\beta$ in $\Dn$ as a sum of extremal rays $\beta=\sum_{j} \alpha^j$. This gives us that $S(\beta) = \bigcup_j S(\alpha^j)$. Hence, for every $j$, we have $S(\alpha_j)\subseteq S(\beta)$, and moreover, every index $(i,d)$ in $S(\beta)$ must be contained in at least one of the sets $S(\alpha^j)$. Suppose now that $\beta$ is pure, and choose $\alpha^j$ such that $S(\alpha^j)$ contains the index $(i,d)$ for which $i$ is maximal. It follows that $S(\alpha^j)=S(\beta)$.
	
	Thus, by finding the shapes of the pure diagrams in $\Dn$ we find the shapes of some of its extremal rays.
	
	Perhaps more significantly, studying the shapes of the pure diagrams of Stanley-Reisner ideals also allows us to investigate the extent to which the Boij-S\"{o}derberg conjectures hold true of the cones generated by these diagrams. As mentioned at the start of this chapter, our key theorem (Theorem \ref{Theorem: PR Complexes of Any Degree Type}) can be seen as a partial analogue to the first Boij-S\"{o}derberg Conjecture (Theorem \ref{Theorem: BS Conjecture 1}) for Betti diagrams of Stanley-Reisner ideals. However, there are two key differences between the two theorems. The first is that Theorem \ref{Theorem: PR Complexes of Any Degree Type} is a result about degree types rather than shift types (we have already noted that the shift type analogue of Theorem \ref{Theorem: PR Complexes of Any Degree Type} is \textit{not} true). The second is that Theorem \ref{Theorem: PR Complexes of Any Degree Type} places no conditions on the number of vertices of the complex $\Delta$, so it doesn't give us a specific value of $n$ for which the diagram $\beta(I_\Delta)$ is contained in the cone $\Dn$. What it tells us is that there is \textit{some} value of $n$ for which there exists a pure Betti diagram of degree type $\bd$ in $\Dn$ (and thus in all the cones $\Dn[m]$ for $m\geq n$).

	\section{An Introduction to PR Complexes}\label{Subsection: PR Complexes}
	Our aim going forward is to construct Stanley-Reisner ideals with pure resolutions of varying degree types. In particular we wish to construct simplicial complexes whose \textit{dual} Stanley-Reisner ideals have pure Betti diagrams.
	
	The Alexander Dual version of Hochster's Formula (Theorem \ref{Theorem: ADHF}, or ADHF for short) gives us a combinatorial description of these complexes, as shown below.
	
	\begin{cor}\label{Corollary: PR Complexes}
		Let $\Delta$ be a simplicial complex on $[n]$. The diagram $\beta=\beta(\Idstar)$ is pure if and only if $\Delta$ satisfies the following condition:
		
		For every $i\geq -1$, and every face $\sigma,\tau \in \Delta$, if $\Hred_i(\lkds)\neq 0 \neq \Hred_i(\link_\Delta \tau)$ then $|\sigma|=|\tau|$.
	\end{cor}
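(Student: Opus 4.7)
The statement is essentially a direct unpacking of the Alexander dual variant of Hochster's Formula (Theorem \ref{Theorem: ADHF}), which expresses
$$\beta_{i,d}(\Idstar)=\sum_{\sigma\in \Delta,\ |\sigma|=n-d} \dim_\KK \Hred_{i-1}(\lkds)$$
as a sum of nonnegative quantities indexed by faces of $\Delta$ of a fixed cardinality. The plan is therefore to prove both directions by reading this formula in each direction, with no homological or algebraic input beyond ADHF itself.

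For the forward direction, I would assume $\beta=\beta(\Idstar)$ is pure, fix $i\geq -1$, and take any two faces $\sigma,\tau\in\Delta$ with $\Hred_i(\lkds)\neq 0\neq \Hred_i(\link_\Delta \tau)$. Setting $d_1=n-|\sigma|$ and $d_2=n-|\tau|$ and applying ADHF at homological index $i+1$, the summand indexed by $\sigma$ contributes a positive quantity to $\beta_{i+1,d_1}$ and the summand indexed by $\tau$ contributes a positive quantity to $\beta_{i+1,d_2}$. Since all summands are nonnegative, both of these Betti numbers are nonzero, and purity at homological degree $i+1$ forces $d_1=d_2$, hence $|\sigma|=|\tau|$.

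For the converse, I would assume the combinatorial condition and fix any $i\geq 0$. If $\beta_{i,d_1}$ and $\beta_{i,d_2}$ are both nonzero, then ADHF produces faces $\sigma,\tau\in\Delta$ with $|\sigma|=n-d_1$, $|\tau|=n-d_2$, and $\Hred_{i-1}(\lkds)\neq 0\neq \Hred_{i-1}(\link_\Delta \tau)$. The hypothesis (applied at homological index $i-1$) then gives $|\sigma|=|\tau|$, so $d_1=d_2$, which is exactly the definition of purity at homological degree $i$.

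There is really no main obstacle here: the only subtlety is being careful about the index shift between the $i$ appearing in the statement (which refers to homology of links) and the homological degree of $\beta$ appearing in ADHF (which is shifted by one, corresponding to the $\Hred_{i-1}$ in the formula). Aside from checking that index bookkeeping, the proof is a one-line observation once ADHF is available, and it also justifies the convention that the $i=-1$ case is included (corresponding to the column $d=n$ of $\beta$, where links with nontrivial $\Hred_{-1}$ are exactly the facets, as recorded in Remark \ref{Remark: Link of Emptyset and Facets}).
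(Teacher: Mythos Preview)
Your proposal is correct and follows exactly the same approach as the paper: both directions are immediate from ADHF, translating purity of column $i$ of $\beta(\Idstar)$ into the statement that all faces whose links have nontrivial $(i-1)$st homology share a common cardinality. Your write-up is in fact more careful than the paper's about the index shift between the $i$ in the statement and the $i-1$ in ADHF, but the underlying argument is identical.
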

	\begin{proof}
		The diagram $\beta$ is pure if and only if for every $i$, there exists at most one $c_i$ such that $\beta_{i,c_i}\neq 0$. By ADHF, this holds if and only if there are no two faces of different sizes in $\Delta$ whose links both have nontrivial homology at the same degree.
	\end{proof}
	
	\begin{defn}\label{Definition: PR Complexes}
		We refer to complexes which satisfy the condition in Corollary \ref{Corollary: PR Complexes} as \textit{PR complexes} (over $\KK$), where \textit{PR} stands for \textit{Pure Resolution}.
	\end{defn}
	\begin{rem}
		All of our work in this thesis is done over the arbitrary field $\KK$. Hence, for the rest of this thesis we will simply use the phrase `\textit{PR}' to mean `\textit{PR over} $\KK$'. While the PR condition is generally dependent on our choice of field, it is worth remarking that every PR complex presented in this thesis satisfies the PR condition over any field.
	\end{rem}
	
	We will examine some key properties of PR examples in due course, but for now, we observe the following immediate result of the PR condition.
	
	\begin{lem}\label{Lemma: PR complexes are pure}
		All PR complexes are pure (i.e. their facets all have the same dimension).
	\end{lem}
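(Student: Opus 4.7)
The plan is to exploit the PR condition with the distinguished homological degree $i = -1$, applied to the links of facets. Recall from Remark~\ref{Remark: Link of Emptyset and Facets} that for any facet $F$ of $\Delta$, the link $\link_\Delta F$ equals the irrelevant complex $\{\emptyset\}$, and from Remark~\ref{Remark: void does not equal irrelevant} that the irrelevant complex has nontrivial $(-1)^{\text{st}}$ reduced homology (of dimension one).

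So the argument is essentially immediate. Let $F_1$ and $F_2$ be any two facets of $\Delta$. Then $\link_\Delta F_1 = \link_\Delta F_2 = \{\emptyset\}$, and hence
\[
\Hred_{-1}(\link_\Delta F_1) \neq 0 \neq \Hred_{-1}(\link_\Delta F_2).
\]
Applying the PR condition from Corollary~\ref{Corollary: PR Complexes} with $i = -1$, $\sigma = F_1$, $\tau = F_2$, we conclude that $|F_1| = |F_2|$, and therefore $\dim F_1 = \dim F_2$. Since $F_1$ and $F_2$ were arbitrary facets, all facets of $\Delta$ have the same dimension, which is the definition of $\Delta$ being pure.

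There is no real obstacle here: the lemma is a direct consequence of unpacking the PR condition at the extremal homological degree, together with the elementary observation that facets of a complex always have an irrelevant link. The only subtlety worth mentioning is the distinction between the irrelevant complex $\{\emptyset\}$ and the void complex $\emptyset$ (flagged in Remark~\ref{Remark: void does not equal irrelevant}), since the argument would fail if one conflated these two and assumed the link of a facet were acyclic.
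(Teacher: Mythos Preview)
Your proof is correct and takes essentially the same approach as the paper: the paper's proof is a two-sentence version of exactly this argument, observing that the link of any facet is the irrelevant complex with nontrivial $(-1)^{\text{st}}$ homology, so the PR condition forces all facets to have the same size.
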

	\begin{proof}
		The link of a facet is the irrelevant complex $\{\emptyset\}$ which has nontrivial $\nth[st]{(-1)}$ homology. Thus a PR complex cannot have two facets of different sizes.
	\end{proof}
	
	The following standard lemma will be particularly helpful to us when studying the links in PR complexes.
	\begin{lem}\label{Lemma: links in links}
		Let $\Delta$ be a simplicial complex, $\sigma$ a face of $\Delta$ and $\tau \subseteq \sigma$. We have $\lkds=\link_{\lkds[\tau]}(\sigma - \tau)$.
	\end{lem}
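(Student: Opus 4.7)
The plan is to prove this by unpacking the definitions on both sides and checking that the resulting membership conditions on a subset $\rho$ coincide. By Definition \ref{Definition: Simplicial Complex Key Terminology and Constructions}(7), the left-hand side is
\[
\lkds = \{\rho \in \Delta : \rho \cap \sigma = \emptyset,\ \rho \cup \sigma \in \Delta\},
\]
so it suffices to show that the right-hand side expands to exactly the same set.

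First I would spell out the outer link: $\link_{\lkds[\tau]}(\sigma - \tau)$ consists of those $\rho \in \lkds[\tau]$ with $\rho \cap (\sigma - \tau) = \emptyset$ and $\rho \cup (\sigma - \tau) \in \lkds[\tau]$. Unpacking the condition $\rho \in \lkds[\tau]$ gives $\rho \in \Delta$, $\rho \cap \tau = \emptyset$, and $\rho \cup \tau \in \Delta$; unpacking $\rho \cup (\sigma - \tau) \in \lkds[\tau]$ gives $\rho \cup (\sigma - \tau) \in \Delta$, $(\rho \cup (\sigma-\tau)) \cap \tau = \emptyset$, and $(\rho \cup (\sigma - \tau)) \cup \tau \in \Delta$.

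Next I would simplify these conditions using $\tau \subseteq \sigma$. The disjointness requirements $\rho \cap \tau = \emptyset$ and $\rho \cap (\sigma - \tau) = \emptyset$ combine to $\rho \cap \sigma = \emptyset$ (and $(\sigma - \tau) \cap \tau = \emptyset$ is automatic). For the face conditions, note $(\rho \cup (\sigma-\tau)) \cup \tau = \rho \cup \sigma$, so the condition $\rho \cup \sigma \in \Delta$ appears. The remaining conditions $\rho \cup \tau \in \Delta$ and $\rho \cup (\sigma - \tau) \in \Delta$ are then automatic, because any subset of a face of $\Delta$ is a face of $\Delta$ and both sets are contained in $\rho \cup \sigma$.

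Thus the right-hand side reduces to $\{\rho \in \Delta : \rho \cap \sigma = \emptyset,\ \rho \cup \sigma \in \Delta\}$, which is exactly $\lkds$. There is no real obstacle here; the only thing to be careful about is keeping track of which conditions are automatic and which are genuine, particularly the implication that subsets of faces are faces (the defining property of a simplicial complex from Definition \ref{Definition: Simplicial Complex}), which is what makes the redundant face conditions drop out.
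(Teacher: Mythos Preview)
Your proof is correct and takes essentially the same approach as the paper: both unfold the definition of the link and verify that the membership conditions on a face coincide. The paper's version is simply a more compressed chain of biconditionals using disjoint-union notation $f\sqcup\sigma\in\Delta$, whereas you separate out the disjointness and face conditions explicitly.
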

	\begin{proof}
		For any face $f$ of $\Delta$ we have
		\begin{align*}
			f\in \lkds &\Leftrightarrow f \sqcup \sigma \in \Delta\\
			&\Leftrightarrow f \sqcup (\sigma-\tau)\sqcup \tau \in \Delta\\
			&\Leftrightarrow f \sqcup (\sigma-\tau) \in \lkds[\tau]\\
			&\Leftrightarrow f \in \link_{\lkds[\tau]}(\sigma - \tau) \,.
		\end{align*}
	\end{proof}
	This lemma shows that links can be computed piecewise, and in any order we choose. More specifically, to compute the link of a face $\sigma$ in a complex $\Delta$, we can start by computing the link $L=\lkds[x]$ for some vertex $x\in\sigma$, and then compute the link of $\sigma-x$ in $L$. This technique lends itself readily to inductive arguments, and often allows us to restrict our attention to the links of vertices.\\
	
	The PR property is a condition on the homology of links. For this reason it will be useful for us to extend the concept of homology index sets (Definition \ref{Definition: Homology Index Sets}) in the following way.
	\begin{defn}\label{Definition: Homology Index Sets With Links}
		Let $\Delta$ be a simplicial complex.
		\begin{itemize}
			\item For a face $\sigma$ in $\Delta$ we define the \textit{homology index set of} $\Delta$ \textit{at} $\sigma$ to be the set $h(\Delta, \sigma)=\{i\in \ZZ : \Hred_i(\lkds) \neq 0\}$. Note that under this definition, we have $h(\Delta)=h(\Delta,\emptyset)$.
			\item For a natural number $m$ we define the \textit{complete homology index set} of $\Delta$ at $m$ as $\hh(\Delta,m)=\bigcup_{\sigma\in \Delta, |\sigma|=m} h(\Delta,\sigma)$.
		\end{itemize}
	\end{defn}
	
	Using this notation for homology index sets, we can present an alternate definition for PR complexes.
	\begin{prop}\label{Proposition: Alternate PR Definition}
		Let $\Delta$ be a simplicial complex. The following are equivalent.
		\begin{enumerate}
			\item $\Delta$ is PR.
			\item For any $\sigma$ and $\tau$ in $\Delta$ with $|\sigma|\neq |\tau|$, we have $h(\Delta,\sigma)\cap h(\Delta,\tau)=\emptyset$.
			\item For any distinct integers $m_1\neq m_2$, we have $\hh(\Delta,m_1)\cap \hh(\Delta,m_2) = \emptyset$.
		\end{enumerate}
	\end{prop}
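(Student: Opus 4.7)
The proposition is essentially a reformulation of Corollary \ref{Corollary: PR Complexes} in the notation of Definition \ref{Definition: Homology Index Sets With Links}, so the plan is to verify the equivalences by straightforward unpacking of definitions rather than any substantive argument.

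First I would establish $(1) \Leftrightarrow (2)$. By Corollary \ref{Corollary: PR Complexes}, $\Delta$ is PR precisely when the implication ``$\Hred_i(\lkds) \neq 0$ and $\Hred_i(\lkds[\tau]) \neq 0$ for the same $i$ $\implies$ $|\sigma|=|\tau|$'' holds for all faces $\sigma,\tau$. Translating into the notation of homology index sets, $\Hred_i(\lkds) \neq 0$ is exactly the statement $i \in h(\Delta,\sigma)$, so the PR condition reads: whenever there exists an integer $i$ with $i \in h(\Delta,\sigma) \cap h(\Delta,\tau)$, we must have $|\sigma|=|\tau|$. The contrapositive is exactly condition (2): if $|\sigma| \neq |\tau|$ then $h(\Delta,\sigma) \cap h(\Delta,\tau) = \emptyset$.

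Next I would establish $(2) \Leftrightarrow (3)$. For the forward direction, pick $m_1 \neq m_2$ and suppose $i \in \hh(\Delta,m_1) \cap \hh(\Delta,m_2)$. By the definition of $\hh$, there are faces $\sigma, \tau \in \Delta$ with $|\sigma|=m_1$, $|\tau|=m_2$, and $i \in h(\Delta,\sigma) \cap h(\Delta,\tau)$. Since $m_1 \neq m_2$ forces $|\sigma| \neq |\tau|$, this contradicts (2); so $\hh(\Delta,m_1) \cap \hh(\Delta,m_2)$ must be empty. Conversely, if (2) fails, there are faces $\sigma,\tau$ with $|\sigma| \neq |\tau|$ and some $i \in h(\Delta,\sigma) \cap h(\Delta,\tau)$; then setting $m_1 = |\sigma|$ and $m_2 = |\tau|$, we have $i \in \hh(\Delta,m_1) \cap \hh(\Delta,m_2)$, so (3) fails too.

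There is no real obstacle here, since each step is a direct translation between equivalent formulations. The only minor care needed is in the $(2) \Rightarrow (3)$ direction, to ensure that any index in $\hh(\Delta,m_j)$ can actually be realised as lying in $h(\Delta,\sigma)$ for a face $\sigma$ of size exactly $m_j$, which is immediate from the definition of the complete homology index set as a union over faces of the prescribed size.
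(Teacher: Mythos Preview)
Your proof is correct and follows the same approach as the paper, which simply states that the proposition is a rephrasing of the PR condition in terms of homology index sets. You have spelled out in detail the unpacking of definitions that the paper leaves implicit, but there is no substantive difference in strategy.
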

	\begin{proof}
		This is a rephrasing of the PR condition in terms of homology index sets.
	\end{proof}
	
	We can also use ADHF to derive an entirely combinatorial description of the degree type of a PR complex. In order to do this thoroughly, it will be useful to examine the homology index sets of PR complexes in more depth.
	
	\subsection{Homology Index Sets and Degree Types}
	In this section we investigate the homology index sets of PR complexes as given in Definition \ref{Definition: Homology Index Sets With Links}. Our aim is to provide a combinatorial reframing of degree types, and then use this reframing to give a concrete description of the homology index sets of PR complexes with a given degree type. We also introduce the notion of the \textit{offset} of a PR complex, which is the minimum size of a face whose link has homology.
	
	We begin with the following elementary observation: for any face $\sigma$ in an arbitrary simplicial complex $\Delta$, the dimension of $\lkds$ is at most $\dim \Delta - |\sigma|$. This dimension gives us an upper bound on the degrees of nontrivial homologies for $\lkds$, which means, very roughly speaking, that as the size of $\sigma$ increases, the degrees of the nontrivial homologies of $\lkds$ tend to decrease. The following lemma makes this idea more precise.
	
	\begin{lem}\label{Lemma: Homology of descending links}
		Let $\Delta$ be any simplicial complex. Suppose we have some face $\sigma$ in $\Delta$ such that $\Hred_j(\lkds)\neq 0$ for some $j\geq 0$. There exists a chain of simplices $\sigma = \tau_j \subsetneqq \tau_{j-1} \subsetneqq \dots \subsetneqq \tau_0 \subsetneqq \tau_{-1}$ in $\Delta$ such that for each $-1\leq i\leq j$, we have $\Hred_i(\lkds[\tau]_i)\neq 0$.
	\end{lem}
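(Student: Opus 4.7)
The plan is to build the chain one link at a time, reducing the problem to the following key subclaim: for any simplicial complex $\Gamma$ and any integer $i\geq 0$ with $\Hred_i(\Gamma)\neq 0$, there exists a nonempty face $\rho\in\Gamma$ such that $\Hred_{i-1}(\link_\Gamma\rho)\neq 0$. Granting this, the lemma follows by iterating downward from $\tau_j=\sigma$: for each $i$ from $j$ down to $0$, apply the subclaim to $\Gamma=\link_\Delta\tau_i$ to produce a nonempty face $\rho$ of this link, and set $\tau_{i-1}=\tau_i\cup\rho$. The identity $\link_\Delta\tau_{i-1}=\link_{\link_\Delta\tau_i}\rho$ given by Lemma \ref{Lemma: links in links} then guarantees $\Hred_{i-1}(\link_\Delta\tau_{i-1})\neq 0$, and $\tau_{i-1}\supsetneq\tau_i$ because $\rho$ is nonempty. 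Once we have reached $\tau_0$, the link $\link_\Delta\tau_0$ is disconnected and in particular is not the irrelevant complex, so $\tau_0$ is not a facet of $\Delta$, and we may take $\tau_{-1}$ to be any facet of $\Delta$ strictly containing $\tau_0$.

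I would prove the subclaim by induction on the number of vertices of $\Gamma$; the hypothesis is vacuous when $\Gamma$ has at most one vertex. For the inductive step, fix a vertex $v\in V(\Gamma)$ and decompose $\Gamma$ as the union of its closed star at $v$ (which is the cone $\{v\}\ast\link_\Gamma v$, hence acyclic by Corollary \ref{Corollary: Homology of Cone}) with the deletion $\Gamma-v$; the intersection is precisely $\link_\Gamma v$, so the reduced Mayer--Vietoris sequence of Proposition \ref{Proposition: MVS} collapses to
\[\Hred_i(\Gamma-v)\longrightarrow\Hred_i(\Gamma)\longrightarrow\Hred_{i-1}(\link_\Gamma v).\]
If $\Hred_{i-1}(\link_\Gamma v)\neq 0$ we are done by taking $\rho=\{v\}$. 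Otherwise the first arrow is surjective, hence $\Hred_i(\Gamma-v)\neq 0$, and by the inductive hypothesis there is a nonempty face $\rho'\in\Gamma-v$ with $\Hred_{i-1}(\link_{\Gamma-v}\rho')\neq 0$. A direct check from the definitions gives $\link_{\Gamma-v}\rho'=\link_\Gamma\rho'-v$, so if $v$ is not a vertex of $\link_\Gamma\rho'$ the two coincide and we may take $\rho=\rho'$; otherwise a second Mayer--Vietoris decomposition of $\link_\Gamma\rho'$ along $v$, combined with Lemma \ref{Lemma: links in links} to identify $\link_{\link_\Gamma\rho'}v$ with $\link_\Gamma(\rho'\cup\{v\})$, produces the exact fragment
\[\Hred_{i-1}\bigl(\link_\Gamma(\rho'\cup\{v\})\bigr)\longrightarrow\Hred_{i-1}(\link_\Gamma\rho'-v)\longrightarrow\Hred_{i-1}(\link_\Gamma\rho'),\]
which forces at least one of $\link_\Gamma\rho'$ or $\link_\Gamma(\rho'\cup\{v\})$ to have nonzero $\Hred_{i-1}$, and we take $\rho$ accordingly.

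The main obstacle is precisely this second subcase: the face $\rho'$ returned by the induction on $\Gamma-v$ has a link in $\Gamma-v$ which can differ from its link in $\Gamma$ by the attachment of $v$, and nonvanishing of $\Hred_{i-1}$ need not be preserved under that reattachment. The auxiliary Mayer--Vietoris argument above handles this by producing, when necessary, a slightly larger face $\rho'\cup\{v\}$ that works when $\rho'$ itself does not. The flexibility to allow $\rho$ (and hence the successive increments $\tau_{i-1}\setminus\tau_i$) to be larger than a single vertex is genuinely needed: a triangulated $2$-torus has $\Hred_1\neq 0$, yet every vertex link is a circle with $\Hred_0=0$, so the passage from $\tau_1=\emptyset$ to $\tau_0$ must enlarge by an edge rather than by a single vertex.
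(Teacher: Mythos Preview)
Your proof is correct but takes a genuinely different route from the paper's. The paper argues algebraically: after reducing to $\sigma=\emptyset$ via Lemma~\ref{Lemma: links in links}, it invokes ADHF to translate $\Hred_j(\Delta)\neq 0$ into $\beta_{j+1,n}(\Idstar)\neq 0$, then uses the structure of minimal graded free resolutions (nonzero entries of the differential have positive degree) to conclude that $\beta_{j,d}(\Idstar)\neq 0$ for some $d<n$, and translates this back via ADHF into the existence of a nonempty face whose link has $(j-1)$st homology. Your argument is entirely topological, establishing the same one-step descent by a vertex induction driven by Mayer--Vietoris, with a secondary Mayer--Vietoris pass to repair the possible discrepancy between $\link_{\Gamma-v}\rho'$ and $\link_\Gamma\rho'$. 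The paper's proof is shorter because ADHF is already in hand, while yours is self-contained at the level of simplicial homology and would survive outside the Stanley--Reisner framework; your torus remark also nicely illustrates why the increment $\tau_{i-1}\setminus\tau_i$ cannot in general be taken to be a single vertex. One small redundancy: since you handle $\tau_{-1}$ directly by choosing a facet, your subclaim is only invoked for $i\geq 1$, so the second Mayer--Vietoris fragment lives in degrees $\geq 0$ and stays within the range of Proposition~\ref{Proposition: MVS} as stated.
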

	\begin{proof}
		
		
		We prove this algebraically, setting $\beta=\beta(\Idstar)$. It suffices to show that there is some face $\sigma \subsetneqq \tau \in \Delta$ for which $\Hred_{j-1}(\lkds[\tau])\neq 0$. The result then follows by induction on $j$.
		
		By replacing $\Delta$ with $\lkds$ (and using Lemma \ref{Lemma: links in links}), we may assume that $\sigma = \emptyset$, and thus we need only find a nonempty face $\tau$ in $\Delta$ whose link has $\nth[st]{(j-1)}$ homology. To find a candidate for $\tau$, note that by ADHF we have that $\beta_{j+1,n}\neq 0$. Hence there must be some $d<n$ such that $\beta_{j,d}\neq 0$. This means (again, by ADHF) that there exists some nonempty face $\tau$ in $\Delta$ of size $n-d$ for which $\Hred_{j-1}(\lkds)\neq 0$, as required.
	\end{proof}

	\begin{cor}\label{Corollary: PR Complex links have single homology}
		Let $\Delta$ be a PR complex. Every link in $\Delta$ has at most one nontrivial homology.
	\end{cor}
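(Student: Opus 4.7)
The plan is to derive a contradiction with the PR property by producing two faces of different sizes whose links both have nontrivial homology at the same degree. The key input will be Lemma \ref{Lemma: Homology of descending links}, applied directly to $\Delta$ and an offending face.

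Specifically, I would proceed as follows. Suppose for contradiction that $\Delta$ is PR and that some face $\sigma \in \Delta$ has $\Hred_{j_1}(\lkds) \neq 0$ and $\Hred_{j_2}(\lkds) \neq 0$ for two distinct integers $-1 \leq j_1 < j_2$. Note that necessarily $j_2 \geq 0$ since $j_2 > j_1 \geq -1$, so Lemma \ref{Lemma: Homology of descending links} applies to $\sigma$ with $j = j_2$. This yields a strictly increasing chain of faces
\[
\sigma = \tau_{j_2} \subsetneqq \tau_{j_2 - 1} \subsetneqq \dots \subsetneqq \tau_{-1}
\]
in $\Delta$ such that $\Hred_i(\lkds[\tau_i]) \neq 0$ for each $-1 \leq i \leq j_2$.

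Now I would focus on the face $\tau_{j_1}$. Since $j_1 < j_2$ and the chain is strictly increasing, we have $|\tau_{j_1}| > |\tau_{j_2}| = |\sigma|$, so $|\tau_{j_1}| \neq |\sigma|$. On the other hand, $\Hred_{j_1}(\lkds[\tau_{j_1}]) \neq 0$ by the conclusion of Lemma \ref{Lemma: Homology of descending links}, and $\Hred_{j_1}(\lkds) \neq 0$ by assumption. This exhibits two faces of $\Delta$ of different sizes whose links both have nontrivial homology at degree $j_1$, directly contradicting the PR condition of Corollary \ref{Corollary: PR Complexes} (equivalently, Proposition \ref{Proposition: Alternate PR Definition}).

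There is no real obstacle here: the essential content is already packaged in Lemma \ref{Lemma: Homology of descending links}, and the proof is simply a matter of combining this lemma with the defining property of PR complexes. The only minor point to handle cleanly is the edge case in which the ``higher'' homology degree $j_2$ might coincide with $-1$, which I rule out immediately by observing that $j_1 \geq -1$ forces $j_2 \geq 0$, so the hypothesis of Lemma \ref{Lemma: Homology of descending links} is satisfied. Consequently, one does not even need to invoke Lemma \ref{Lemma: links in links}; the argument is a single application of the descending chain construction to the face $\sigma$ itself.
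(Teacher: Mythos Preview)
Your proof is correct and follows essentially the same approach as the paper's: both apply Lemma \ref{Lemma: Homology of descending links} to the face $\sigma$ at the higher homology degree to produce a strictly larger face whose link has nontrivial homology at the lower degree, then invoke the PR condition for the contradiction. Your version is slightly more explicit in naming the face $\tau_{j_1}$ from the descending chain and in checking that $j_2 \geq 0$ so the lemma applies, but the argument is the same.
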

	\begin{proof}
		Let $\sigma$ be a face of $\Delta$ and suppose for contradiction that $\Hred_i(\lkds)\neq 0\neq \Hred_j(\lkds)$ for some $i < j$. By Lemma \ref{Lemma: Homology of descending links}, there exists some $\tau \supsetneqq \sigma$ such that $\Hred_i(\link_\Delta \tau)\neq 0$. But this contradicts the fact that $\Delta$ is PR, because $|\tau|>|\sigma|$. 
	\end{proof}
	\begin{rem}
		In particular, the complex $\Delta$ is equal to $\link_\Delta \emptyset$, so it must have at most one nontrivial homology itself.
	\end{rem}
	
	Another way of phrasing Corollary \ref{Corollary: PR Complex links have single homology} is that for PR complexes $\Delta$, the homology index sets $h(\Delta,\sigma)$ are all either empty or singletons. Moreover, as the next corollary demonstrates, the indices in the nonempty homology index sets $h(\Delta,\sigma)$ decrease as $|\sigma|$ increases (making our observation at the start of this section exact, for PR complexes).
	\begin{cor}\label{Corollary: Homology index sets decreasing sequence}
		Let $\Delta$ be a PR complex, and suppose $\sigma_1$ and $\sigma_2$ are faces of $\Delta$ with $h(\Delta,\sigma_1)=\{i_1\}$ and $h(\Delta,\sigma_2)=\{i_2\}$. If $|\sigma_1|<|\sigma_2|$ then $i_1>i_2$.
	\end{cor}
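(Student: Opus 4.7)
The plan is to prove the contrapositive by contradiction: assume $|\sigma_1| < |\sigma_2|$ while $i_1 \leq i_2$, and derive a violation of the PR property. The main engine will be Lemma \ref{Lemma: Homology of descending links}, which, starting from a face whose link has $j$-th homology (for $j \geq 0$), produces a strictly ascending chain of overfaces whose links realize every homology degree from $-1$ up to $j$. This is exactly the shape of argument we need: from the face $\sigma_2$ the Lemma will hand us a face at least as large as $\sigma_2$ (hence strictly larger than $\sigma_1$) whose link has $i_1$-th homology, and this will conflict with the existence of $\sigma_1$.

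In detail, I would first treat the principal case $i_2 \geq 0$. Applying Lemma \ref{Lemma: Homology of descending links} to $\sigma_2$ with $j = i_2$ produces a chain $\sigma_2 = \tau_{i_2} \subsetneqq \tau_{i_2-1} \subsetneqq \dots \subsetneqq \tau_{-1}$ in $\Delta$ such that $\Hred_i(\link_\Delta \tau_i) \neq 0$ for each $-1 \leq i \leq i_2$. Since $i_1 \in h(\Delta,\sigma_1)$ we have $i_1 \geq -1$, and by assumption $i_1 \leq i_2$, so the face $\tau_{i_1}$ is well defined. It satisfies $|\tau_{i_1}| \geq |\sigma_2| > |\sigma_1|$ (with equality in the first inequality precisely when $i_1 = i_2$, in which case $\tau_{i_1} = \sigma_2$). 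In every subcase, $\tau_{i_1}$ and $\sigma_1$ are faces of different sizes whose links both have nontrivial $i_1$-th homology, a direct violation of the PR property via Proposition \ref{Proposition: Alternate PR Definition}.

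The remaining case $i_2 = -1$ must be dispatched separately because Lemma \ref{Lemma: Homology of descending links} requires $j \geq 0$. Here I would observe that $\Hred_{-1}(\link_\Delta \sigma_2) \neq 0$ forces $\link_\Delta \sigma_2 = \{\emptyset\}$, so $\sigma_2$ is a facet of $\Delta$. Since $\Delta$ is PR it is pure by Lemma \ref{Lemma: PR complexes are pure}, so every facet has size $\dim \Delta + 1$; hence $|\sigma_1| < |\sigma_2| = \dim \Delta + 1$ means $\sigma_1$ is not a facet, whence $i_1 \geq 0 > i_2$, contradicting $i_1 \leq i_2$.

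I do not anticipate a substantive obstacle: Lemma \ref{Lemma: Homology of descending links} does essentially all of the work, and the rest of the argument is essentially an application of the PR definition. The only subtlety is remembering to dispose of the boundary case $i_2 = -1$ separately, where the Lemma's hypothesis fails, and there purity of PR complexes (Lemma \ref{Lemma: PR complexes are pure}) resolves matters immediately.
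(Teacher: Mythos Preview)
Your proof is correct and uses essentially the same approach as the paper: both rely on Lemma~\ref{Lemma: Homology of descending links} applied to $\sigma_2$ to produce a face of size at least $|\sigma_2|$ whose link has $i_1$-th homology, contradicting the PR condition. The only cosmetic difference is in the case split: the paper separates $i_1 = i_2$ (direct contradiction with PR) from $i_1 < i_2$ (invoke the Lemma, which is legitimate since then $i_2 \geq 0$), whereas you separate $i_2 \geq 0$ from $i_2 = -1$ and dispatch the latter via purity. Your boundary case is slightly more elaborate than necessary, since $i_2 = -1$ with $i_1 \leq i_2$ forces $i_1 = i_2$, which the paper handles in one line.
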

	\begin{proof}
		We cannot have $i_1=i_2$ as this directly contradicts the PR condition. If $i_1< i_2$, then by Lemma \ref{Lemma: Homology of descending links} we can find some face $\sigma_3$ of $\Delta$ strictly containing $\sigma_2$ such that $\Hred_{i_1}(\lkds_3)\neq 0$, which also contradicts the PR property because $|\sigma_3|>|\sigma_1|$.
	\end{proof}
	
	Using these results about the homology index sets of PR complexes, we are now able to give an entirely combinatorial description of the degree type of a PR complex $\Delta$, which agrees with the degree type of the Betti diagram $\beta(\Idstar)$.
	
	\begin{defn}\label{Definition: PR Complex Degree Types}
		Let $\Delta$ be a PR Complex, and let $p$ be the maximum index for which there exists some face $\sigma$ in $\Delta$ such that $\Hred_{p-1}(\lkds)\neq 0$.
		
		For each $0\leq i\leq p$, we define $s_i$ to be the size $|\sigma|$ of the faces $\sigma$ of $\Delta$ for which $\Hred_{i-1}(\lkds)\neq 0$, and for each $1\leq i\leq p$, we define $d_i=s_{i-1}-s_i$.
		
		We call the sequence $(d_p,\dots,d_1)$ the \textit{degree type} of $\Delta$.
	\end{defn}
	\begin{rem}
		The integers $s_{p-1},\dots,s_0$ are well-defined by Lemma \ref{Lemma: Homology of descending links}, and form a strictly decreasing sequence by Corollary \ref{Corollary: Homology index sets decreasing sequence}. Thus the degree type $(d_p,\dots,d_1)$ must consist of positive integers.
		
		To see why this notion of the degree type is the same as the degree type of the pure diagram $\beta(\Idstar)$, suppose that $$0\rightarrow R(-c_p)^{\beta_{p,c_p}} \rightarrow ... \rightarrow R(-c_1)^{\beta_{1,c_1}} \rightarrow R(-c_0)^{\beta_{0,c_0}}\rightarrow \Idstar$$
		is a minimal graded free resolution of $\Idstar$. By definition the degree type of this resolution is the sequence $(d_p,\dots,d_1)$ where for each $1\leq i \leq p$ we define $d_i=c_i-c_{i-1}$. By ADHF, for each $1\leq i \leq p$ we have $c_i=n-s_i$ and $c_{i-1}=n-s_{i-1}$, and hence we have $d_i=(n-s_i)-(n-s_{i-1})=s_{i-1}-s_i$.
	\end{rem}
	
	\begin{defn}\label{Definition: offset}
		Let $\Delta$ be a PR complex and let $s_0,\dots,s_p$ be as in Definition \ref{Definition: PR Complex Degree Types} above. We call the value $s_p$ (i.e. the minimum size of a face of $\Delta$ whose link has homology) the \textit{offset} of $\Delta$.
	\end{defn}
	
	Note that a PR complex $\Delta$ has offset 0 if and only if it has nontrivial homology itself (because $\lkds[\emptyset]$ is equal to $\Delta$). Taken together, the degree type and offset of a PR complex $\Delta$ determine its dimension, as shown below.
	
	\begin{prop}\label{Proposition: Dimension of PR Complexes}
		Let $\Delta$ be a PR complex with degree type $\bd$ and offset $s$. We have $\dim \Delta = s + \sum \bd - 1$.
	\end{prop}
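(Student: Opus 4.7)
The plan is to show that $s + \sum \bd - 1$ telescopes to $s_0 - 1$, and then identify $s_0 - 1$ with $\dim \Delta$ via the purity of PR complexes.

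First, I would unpack the definition of degree type. By Definition \ref{Definition: PR Complex Degree Types}, we have $d_i = s_{i-1} - s_i$ for $1 \leq i \leq p$, so the sum telescopes:
\begin{equation*}
\sum \bd \;=\; \sum_{i=1}^{p} (s_{i-1} - s_i) \;=\; s_0 - s_p.
\end{equation*}
Since the offset is $s = s_p$, the claimed quantity becomes $s + \sum \bd - 1 = s_p + (s_0 - s_p) - 1 = s_0 - 1$. So the theorem reduces to showing $\dim \Delta = s_0 - 1$.

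Next I would identify $s_0$ combinatorially. Recall that $s_0$ is defined as the size of faces $\sigma$ for which $\Hred_{-1}(\lkds) \neq 0$. By Remark \ref{Remark: void does not equal irrelevant}, the only simplicial complex with nontrivial $(-1)$-th reduced homology is the irrelevant complex $\{\emptyset\}$. A link $\lkds$ equals $\{\emptyset\}$ precisely when $\sigma$ is a facet of $\Delta$ (it is a face, and no proper extension of $\sigma$ lies in $\Delta$). Hence $s_0$ is exactly the common size of the facets of $\Delta$ whose link is the irrelevant complex.

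Finally, I would invoke Lemma \ref{Lemma: PR complexes are pure}: every PR complex is pure, so all facets have the same dimension, and therefore the same size. This guarantees that $s_0$ is well-defined as $|F|$ for any facet $F$, and gives $\dim \Delta = |F| - 1 = s_0 - 1$, completing the proof. There is no genuine obstacle here; the only subtlety worth flagging explicitly is the distinction between the irrelevant and void complexes (so that $s_0$ really does pick out facets rather than nothing at all), and the appeal to purity to ensure $s_0$ is unambiguous.
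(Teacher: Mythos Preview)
Your proof is correct and follows essentially the same approach as the paper: both telescope $\sum \bd = s_0 - s_p$, identify $s_p$ with the offset and $s_0$ with the common facet size (via Lemma~\ref{Lemma: PR complexes are pure}), and conclude $\dim \Delta = s_0 - 1$. Your version is slightly more explicit about why $s_0$ picks out the facets (via the characterization of links with nontrivial $(-1)$-st homology), but this is just an elaboration of the same argument.
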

	\begin{proof}
		Using the notation of Definition \ref{Definition: PR Complex Degree Types} we have $\sum \bd = \sum_{i=1}^p d_i = \sum_{i=1}^p (s_i - s_{i-1}) = s_0 - s_p$. The value $s_p$ is the offset of $\Delta$ (i.e. $s=s_p$), and the value $s_0$ is the size of the facets of $\Delta$ (all of which are the same by Lemma \ref{Lemma: PR complexes are pure}). Thus $\dim \Delta = s_0 - 1 = s_p + (s_0 - s_p) - 1 = s + \sum \bd - 1$.
	\end{proof}
	
	Proposition \ref{Proposition: Dimension of PR Complexes} shows us that PR complexes with offset 0 are minimal PR complexes, in the sense that they are the PR complexes of their given degree type with minimal dimension. In fact, they are minimal in an even stronger sense: namely, any PR complex with degree type $\bd$ contains a link with the same degree type and offset $0$. To explain why, we require the following lemma, which shows that any link in a PR complex is also PR.

	\begin{lem}\label{Lemma: Links in PR complexes also PR}
		Let $\Delta$ be a PR complex with degree type $\bd=(d_p,\dots,d_1)$ and offset $s$, and let $\sigma$ be a face of $\Delta$. The complex $\lkds$ is also PR, with degree type $\bd'$ and offset $s'$ such that
		\begin{enumerate}
			\item $\bd'=(d_j,\dots,d_1)$ is a subsequence of $\bd$.
			\item $s'$ satisfies $s' + \sum \bd' + |\sigma| = s +\sum \bd$.
		\end{enumerate}
	\end{lem}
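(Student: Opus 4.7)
The plan is to reduce everything to the correspondence $\tau' \leftrightarrow \tau = \sigma \sqcup \tau'$ between faces of $\lkds$ and faces of $\Delta$ containing $\sigma$. By Lemma \ref{Lemma: links in links} this correspondence preserves links, since $\link_{\lkds}\tau' = \link_\Delta \tau$, and it shifts face sizes by a constant, namely $|\tau| = |\tau'| + |\sigma|$. Consequently any two faces of $\lkds$ of distinct sizes correspond to two faces of $\Delta$ of distinct sizes with the same link homologies. The PR property of $\Delta$ (Proposition \ref{Proposition: Alternate PR Definition}) then transfers immediately to $\lkds$, proving PR-ness.

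To pin down the degree type, let $s_0 > s_1 > \cdots > s_p$ be the distinguished face sizes attached to $\Delta$ in Definition \ref{Definition: PR Complex Degree Types}, and let $j \in \{0,1,\dots,p\}$ denote the largest index for which some face $\tau' \in \lkds$ satisfies $\Hred_{j-1}(\link_{\lkds}\tau') \neq 0$. This is well-defined because $\sigma$ is contained in some facet of $\Delta$, whose link is $\{\emptyset\}$, and this facet corresponds under the bijection to a face of $\lkds$ whose link has nontrivial $(-1)$st homology; hence $j \geq 0$. Provided $j \geq 1$, I would invoke Lemma \ref{Lemma: Homology of descending links} inside $\lkds$, starting from a face realising homology degree $j-1$, to produce a chain of faces in $\lkds$ realising every degree from $j-1$ down to $-1$. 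Translated back via the bijection, this yields, for each $0 \leq i \leq j$, a face of $\Delta$ containing $\sigma$ whose link has $(i-1)$st homology. By the PR property and Corollary \ref{Corollary: Homology index sets decreasing sequence}, every such face of $\Delta$ has size exactly $s_i$, so the corresponding face of $\lkds$ has size $s_i - |\sigma|$. Hence $s'_i = s_i - |\sigma|$ for $0 \leq i \leq j$, giving $d'_i = s'_{i-1} - s'_i = s_{i-1} - s_i = d_i$ for $1 \leq i \leq j$. This is exactly the claim $\bd' = (d_j, \dots, d_1)$.

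The offset identity is then a direct computation: since the offset of $\lkds$ is $s' = s'_j = s_j - |\sigma|$,
$$s' + \sum \bd' + |\sigma| \;=\; (s_j - |\sigma|) + (s_0 - s_j) + |\sigma| \;=\; s_0 \;=\; s_p + (s_0 - s_p) \;=\; s + \sum \bd,$$
where both sides equal the common facet size $s_0$ of $\Delta$ (using Lemma \ref{Lemma: PR complexes are pure}). The only point that requires genuine care is the claim that the indices realised by $\lkds$ form an unbroken initial segment $\{0,1,\dots,j\} \subseteq \{0,1,\dots,p\}$ with no gaps; this is precisely what Lemma \ref{Lemma: Homology of descending links} supplies, and I expect it to be the only nontrivial step in the argument.
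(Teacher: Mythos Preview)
Your proof is correct and close in spirit to the paper's, but the presentation differs. The paper argues algebraically via ADHF: writing $\delta=\lkds$, it observes that the ADHF sum for $\beta_{i,d}(I_\delta^*)$ is literally a subsum of the ADHF sum for $\beta_{i,d}(I_\Delta^*)$ (the former runs over faces of $\Delta$ of size $n-d$ that contain $\sigma$, the latter over all such faces), giving $\beta_{i,d}(I_\delta^*)\le\beta_{i,d}(I_\Delta^*)$. Purity of $\beta(I_\delta^*)$ and the truncated form $(d_j,\dots,d_1)$ of the degree type then drop out from the shape of the Betti diagram. For part (2) the paper invokes Proposition \ref{Proposition: Dimension of PR Complexes} on both $\Delta$ and $\lkds$.

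You instead stay combinatorial throughout: the same bijection $\tau'\leftrightarrow\sigma\sqcup\tau'$ is used, but you phrase everything in terms of homology index sets and appeal to Lemma \ref{Lemma: Homology of descending links} to get the no-gaps property that the paper extracts implicitly from the Betti diagram having consecutive nonzero columns. Your offset computation unwinds Proposition \ref{Proposition: Dimension of PR Complexes} into a direct facet-size identity. The two arguments are equivalent; yours is slightly more self-contained in that it never leaves the link-homology picture, while the paper's inequality $\beta_{i,d}(I_\delta^*)\le\beta_{i,d}(I_\Delta^*)$ is a clean one-line statement that also gives information about Betti \emph{values}, not just shapes.
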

	\begin{proof}
		We assume $\Delta$ has a vertex set of size $n$, and we set $\delta=\lkds$ for notational convenience. Let $0\leq i \leq p$ and $d$ be integers. From ADHF, we have $$\beta_{i,d}(I_\delta^*)=\sum_{\substack{\tau \in \delta\\ |\tau|=n-|\sigma|-d}}\dim_{\KK}\Hred_{i-1}(\link_\delta \tau).$$ By Lemma \ref{Lemma: links in links}, the complex $\link_\delta \tau$ is equal to $\link_\Delta (\tau \sqcup \sigma)$, and hence the sum on the right-hand side of this equation is equal to $$\sum_{\substack{f\in \Delta\\ \sigma \subseteq f\\ |f|=n-d}} \dim_{\KK} \Hred_{i-1}(\link_\Delta f)$$ which appears as a summand in the ADHF decomposition of $\beta_{i,d}(I_\Delta^*)$. We conclude that $\beta_{i,d}(I_\delta^*)\leq \beta_{i,d}(I_\Delta^*)$. Thus $\beta(I_\delta^*)$ is pure and its degree type is a subsequence of the degree type of $\beta(I_\Delta^*)$.
		
		For the second part of the lemma, suppose $\bd'$ and $s'$ are the degree type and offset of $\lkds$. By Proposition \ref{Proposition: Dimension of PR Complexes} we have
		\begin{align*}
			s'+\sum \bd' - 1 &= \dim (\lkds)\\
			&= \dim \Delta - |\sigma|\\
			&= s+ \sum \bd - 1 - |\sigma|
		\end{align*}
		and the result follows.
	\end{proof}
	
	\begin{cor}\label{Corollary: Offset 0 => Minimal}
		Let $\Delta$ be a PR complex with degree type $\bd$. There exists a face $\sigma \in \Delta$ such that $\lkds$ is PR with degree type $\bd$ and offset $0$.
	\end{cor}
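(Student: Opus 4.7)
The plan is to combine Lemma \ref{Lemma: Links in PR complexes also PR} with the definition of offset. By definition of the offset $s$ of $\Delta$, there exists a face $\sigma$ of size $s$ whose link has nontrivial homology; and in fact, by Definition \ref{Definition: PR Complex Degree Types} together with Corollary \ref{Corollary: Homology index sets decreasing sequence}, any such minimal face must satisfy $\Hred_{p-1}(\lkds) \neq 0$, where $p$ is the length of $\bd$. I will choose this $\sigma$ as the candidate.

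Now I apply Lemma \ref{Lemma: Links in PR complexes also PR} to $\sigma$. It gives that $\delta := \lkds$ is PR with degree type $\bd' = (d_j,\dots,d_1)$ a subsequence of $\bd$, and offset $s'$ satisfying
\begin{equation*}
s' + \sum \bd' + |\sigma| = s + \sum \bd.
\end{equation*}
Since $|\sigma| = s$, this simplifies to $s' + \sum \bd' = \sum \bd$.

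The key observation is that $\delta$ has nontrivial $(p-1)$-th homology (since $\link_\delta \emptyset = \delta$ and $\Hred_{p-1}(\delta) = \Hred_{p-1}(\lkds) \neq 0$), so $\emptyset$ is itself a face of $\delta$ whose link has homology. Hence the offset of $\delta$ is $s' = 0$. Substituting back gives $\sum \bd' = \sum \bd$; but $\bd'$ is a subsequence of $\bd$ and all entries of $\bd$ are positive integers, so this forces $\bd' = \bd$.

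There is no real obstacle here: the whole argument is essentially bookkeeping once the right $\sigma$ is chosen, and the only subtle point is confirming that the face $\sigma$ witnessing the offset has link with homology \emph{in degree $p-1$}, which is immediate from Definition \ref{Definition: PR Complex Degree Types} (the offset is defined as $s_p$, and $h(\Delta,\sigma) = \{p-1\}$ for any $\sigma$ with $|\sigma| = s_p$).
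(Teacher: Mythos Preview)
Your proof is correct and follows essentially the same approach as the paper: choose a face $\sigma$ of size equal to the offset $s$ with $\Hred_{p-1}(\lkds)\neq 0$, apply Lemma \ref{Lemma: Links in PR complexes also PR}, observe that the link has offset $0$ since it has homology, and then use the numerical relation $s'+\sum\bd'+|\sigma|=s+\sum\bd$ together with $|\sigma|=s$ to force $\sum\bd'=\sum\bd$ and hence $\bd'=\bd$.
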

	\begin{proof}
		Suppose $\Delta$ has offset $s$. By definition this means that there exists some face $\sigma \in \Delta$ of size $s$ such that $\Hred_{p-1}(\lkds)\neq 0$. By Lemma \ref{Lemma: Links in PR complexes also PR} the complex $\lkds$ is PR, and its degree type is some subsequence $\bd'$ of $\bd$. Also $\lkds$ has offset $0$ because it has homology. Thus, from the second part of Lemma \ref{Lemma: Links in PR complexes also PR}, we have $\sum \bd' + |\sigma|=s+\sum \bd$. Because $|\sigma|=s$, this gives us $\sum \bd'= \sum\bd$, and hence $\bd'=\bd$.
	\end{proof}
	
	We end this section by demonstrating how the complete homology sets of a PR complex can be computed from its degree type and offset.
	\begin{prop}\label{Proposition: Alternate PR Definition With Degree Type}
		Let $\Delta$ be a simplicial complex. The following are equivalent.
		\begin{enumerate}
			\item $\Delta$ is a PR complex with degree type $(d_p,\dots,d_1)$ and offset $s$.
			\item $\hh(\Delta,m) = \begin{cases}
				\{r-1\} & \text{ if } m = s + \sum_{j=r+1}^p d_j \text{ for some } 0\leq r \leq p\\
				\emptyset & \text{ otherwise.}\\
			\end{cases}$
		\end{enumerate}
	\end{prop}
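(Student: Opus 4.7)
The plan is to prove both directions by directly translating between the degree-type/offset data of Definition \ref{Definition: PR Complex Degree Types} and the complete homology index sets of Definition \ref{Definition: Homology Index Sets With Links}, making use of the PR characterisation from Proposition \ref{Proposition: Alternate PR Definition} and the single-homology property from Corollary \ref{Corollary: PR Complex links have single homology}.

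For the forward direction, I would start by fixing the integers $s_0 > s_1 > \cdots > s_p$ of Definition \ref{Definition: PR Complex Degree Types}, noting that $s_p = s$ by the definition of offset and that telescoping $d_i = s_{i-1} - s_i$ gives the closed form $s_r = s + \sum_{j=r+1}^p d_j$ for each $0 \le r \le p$. The key observation is that, by the very definition of $s_r$, a face $\sigma$ satisfies $\Hred_{r-1}(\lkds) \ne 0$ only if $|\sigma| = s_r$, and conversely by Lemma \ref{Lemma: Homology of descending links} such a face exists. Combining this with Corollary \ref{Corollary: PR Complex links have single homology} (each link contributes at most one index), I would deduce that $\hh(\Delta,m)$ equals $\{r-1\}$ precisely when $m = s_r$, and is empty otherwise; this is exactly the claimed formula.

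For the reverse direction, assume the formula for $\hh(\Delta,m)$ holds. I would first verify the PR condition via part (3) of Proposition \ref{Proposition: Alternate PR Definition}: for $m_1 \ne m_2$, each of $\hh(\Delta,m_1)$ and $\hh(\Delta,m_2)$ is either empty or a singleton $\{r_i - 1\}$ with $m_i = s + \sum_{j=r_i+1}^p d_j$, and since the map $r \mapsto s + \sum_{j=r+1}^p d_j$ is injective (the $d_j$ being positive), distinct $m_i$ force distinct $r_i$, so the two singletons are disjoint. Then, reading off the sizes $m$ for which $\hh(\Delta,m)$ is nonempty, I would recover the integers $s_r$ of Definition \ref{Definition: PR Complex Degree Types}, verify that consecutive differences yield precisely the $d_i$, and check that $s_p = s$ realises the offset.

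I do not expect any genuine obstacle here; the proof is essentially a bookkeeping exercise that cashes in the work done in Corollaries \ref{Corollary: PR Complex links have single homology} and \ref{Corollary: Homology index sets decreasing sequence}. The only point that warrants care is ensuring that $p$ is correctly identified in both directions as the maximum index for which some link carries homology, which follows immediately from the fact that $d_p$ is the first entry of the degree type and that $\hh(\Delta,s) = \{p-1\}$ is nonempty.
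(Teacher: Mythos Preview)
Your proposal is correct and follows essentially the same approach as the paper's proof: both directions proceed by translating between the sequence $s_0>\cdots>s_p$ of Definition \ref{Definition: PR Complex Degree Types} (via the telescoping identity $s_r=s+\sum_{j=r+1}^p d_j$) and the complete homology index sets, invoking Proposition \ref{Proposition: Alternate PR Definition} for the PR characterisation and Corollary \ref{Corollary: PR Complex links have single homology} for the singleton property. The only cosmetic difference is that the paper appeals to Corollary \ref{Corollary: Homology index sets decreasing sequence} to pass from singleton $h(\Delta,\sigma)$ to singleton $\hh(\Delta,m)$, whereas you obtain this directly from the injectivity of $r\mapsto s_r$; these are equivalent observations.
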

	\begin{proof}	
		Let $\Delta$ be a PR complex of the specified degree type and offset. Let $s_p<\dots<s_0$ be as in Definition \ref{Definition: PR Complex Degree Types} (so that $s_p = s$). We know from Corollary \ref{Corollary: PR Complex links have single homology} that every homology index set of $\Delta$ is either a singleton or empty, and from Corollary \ref{Corollary: Homology index sets decreasing sequence} we deduce that the same is true of the \textit{complete} homology index sets. By definition of $s_p,\dots,s_0$, the nonempty complete homology index sets are precisely the sets $\hh(\Delta,s_p),\dots,\hh(\Delta,s_0)$.
		
		Specifically, for each $0\leq r \leq p$, the set $\hh(\Delta,s_r)$ is the singleton $\{r-1\}$, and we have $s_r = s_p + (s_{p-1} - s_p) + \dots +(s_r - s_{r+1}) = s + \sum_{j=r+1}^p d_j$.
		
		Conversely, if $\Delta$ satisfies condition (2), then $\Delta$ must be PR by Proposition \ref{Proposition: Alternate PR Definition}, and we can recover its degree type and offset from the values of $m$ for which $\hh(\Delta,m)$ are nonempty.
	\end{proof}
	
	Before we proceed to look at some specific examples of PR complexes, we take a brief detour to talk about the relationship between the PR condition and the Cohen-Macaulay condition due to Reisner.
	
	\subsection{PR Complexes and Cohen-Macaulay Complexes}
	Readers familiar with Reisner's Criterion for Cohen-Macaulay complexes may well notice a striking similarity between this criterion and the PR condition, and this is not a coincidence.
	
	Reisner's Criterion is the following (this is \cite{CCA} Theorem 5.53, rephrased in terms of homology index sets; for a proof see \cite{CCA} Theorem 13.37).
	\begin{thm}[Resiner's Criterion for Cohen-Macaulayness]\label{Theorem: Reisner's Criterion}
		Let $\Delta$ be a simplicial complex. The Stanley-Reisner ring $\KK[\Delta]$ is Cohen-Macaulay if and only if for any $\sigma$ in $\Delta$, the homology index set $h(\Delta,\sigma)$ is either empty or the singleton $\{\dim \Delta - |\sigma|\}$.
	\end{thm}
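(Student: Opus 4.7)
My plan is to derive Reisner's criterion as a combinatorial translation of the Eagon-Reiner theorem (\cite{Eagon-Reiner} Theorem 3), recalled in the introduction of this chapter: $\KK[\Delta]$ is Cohen-Macaulay if and only if $\Idstar$ admits a linear resolution. Once that translation is set up, the equivalence reduces to a single bookkeeping calculation using the Alexander Dual Hochster's Formula (Theorem \ref{Theorem: ADHF}).

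First I would pin down the starting degree of a minimal free resolution of $\Idstar$. By part (2) of Remark \ref{Remark: Link of Emptyset and Facets}, the minimum generator degree of $\Idstar$ equals $c := \codim \Delta = n - \dim \Delta - 1$. A linear resolution of $\Idstar$ is then, by definition, the statement that $\beta_{i,d}(\Idstar) = 0$ whenever $d \neq c + i$. Combining this with ADHF, which gives
\begin{equation*}
\beta_{i,d}(\Idstar) \;=\; \sum_{\sigma \in \Delta,\, |\sigma| = n - d} \dim_\KK \Hred_{i-1}(\lkds),
\end{equation*}
and noting that each summand is non-negative, linearity of the resolution is equivalent to the combinatorial vanishing statement that $\Hred_{i-1}(\lkds) = 0$ for every face $\sigma \in \Delta$ and every pair $(i,d)$ satisfying $|\sigma| = n - d$ and $d \neq c + i$.

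The remaining step is to check that this vanishing statement is exactly Reisner's condition. Setting $m = |\sigma| = n - d$, the equality $d - i = c = n - \dim \Delta - 1$ rearranges to $i - 1 = \dim \Delta - m$. Hence the ADHF vanishing above says precisely that for every $\sigma \in \Delta$, the homology $\Hred_j(\lkds)$ vanishes unless $j = \dim \Delta - |\sigma|$; equivalently, $h(\Delta,\sigma) \subseteq \{\dim \Delta - |\sigma|\}$. Both implications now follow: in the forward direction, Eagon-Reiner plus ADHF kill every off-diagonal homology; in the reverse direction, the hypothesis on links forces $\beta_{i,d}(\Idstar) = 0$ whenever $d \neq c + i$ by the ADHF identity, yielding a linear resolution of $\Idstar$ and hence Cohen-Macaulayness of $\KK[\Delta]$ via Eagon-Reiner.

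The main obstacle is the reliance on the Eagon-Reiner theorem, which in parts of the literature is itself derived \emph{from} Reisner's criterion, so one must be careful that an independent proof of Eagon-Reiner is invoked to avoid circularity. A fully self-contained alternative would instead use Hochster's formula for the local cohomology modules of $\KK[\Delta]$, expressing each $H^i_{\mathfrak{m}}(\KK[\Delta])$ as a multigraded direct sum of reduced simplicial cohomologies of links of faces of $\Delta$, and then apply the characterization of Cohen-Macaulayness as the vanishing of $H^i_{\mathfrak{m}}(\KK[\Delta])$ for $i$ strictly below the Krull dimension $\dim \Delta + 1$ (Corollary \ref{Corollary: dim k[Delta]}). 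That route is longer and requires setting up graded local cohomology, but since Eagon-Reiner is available in the introduction, the short route through pure-resolution bookkeeping seems the cleanest.
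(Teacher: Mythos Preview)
Your argument is correct, and the circularity concern you raise is the genuine issue. The paper does not actually prove Theorem~\ref{Theorem: Reisner's Criterion} at the point where it is stated; it simply cites \cite{CCA} Theorem 13.37 for a proof. However, a few paragraphs later the paper makes essentially the same observation you do: after stating the Eagon--Reiner Theorem (Theorem~\ref{Theorem: CM iff Linear}) and rephrasing it as Corollary~\ref{Corollary: CM iff PR of deg type (1...1)}, the paper remarks that one can ``recover Reisner's Criterion'' from Proposition~\ref{Proposition: Alternate PR Definition With Degree Type}, and this is carried out as Corollary~\ref{Corollary: PR (1...1) Definition}. That route---Eagon--Reiner plus the PR bookkeeping built on ADHF---is exactly your route, just packaged through the degree-type language rather than the direct ADHF computation you give. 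So your proposal matches the paper's own informal derivation, and your direct calculation with $c=\codim\Delta$ and $i-1=\dim\Delta-|\sigma|$ is a slightly more streamlined version of the same idea.

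Your caveat about circularity is well placed and is the reason the paper treats Reisner's criterion as an external input rather than a consequence: in \cite{Eagon-Reiner} the Eagon--Reiner theorem is itself proved \emph{using} Reisner's criterion, so deriving Reisner from Eagon--Reiner is only a consistency check, not an independent proof. The local-cohomology route you sketch at the end (via Hochster's formula for $H^i_{\mathfrak m}(\KK[\Delta])$ and the depth characterisation of Cohen--Macaulayness) is the standard self-contained argument, and is what \cite{CCA} Theorem 13.37 does.
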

	
	\begin{defn}
		We refer to complexes that satisfy Reisner's Criterion as \textit{Cohen-Macaulay complexes} (over $\KK$).
	\end{defn}
	\begin{rem}
		Just as with PR complexes, we will henceforth omit the phrase `\textit{over} $\KK$' when talking about Cohen-Macaulay complexes.
	\end{rem}
	
	Every Cohen-Macaulay complex satisfies the PR condition, and hence Cohen-Macaulay complexes are a subfamily of PR complexes. We can see this both combinatorially and algebraically.
	
	From a combinatorial perspective, suppose we have two differently sized faces $\sigma$ and $\tau$ of a Cohen-Macaulay complex $\Delta$. We have $\dim \Delta - |\sigma|\neq \dim \Delta - |\tau|$, and hence the homology index sets $h(\Delta,\sigma)$ and $h(\Delta,\tau)$ are disjoint. Thus $\Delta$ is PR by Proposition \ref{Proposition: Alternate PR Definition}.
	
	From an algebraic perspective, the result comes from the following theorem of Eagon and Reiner (see \cite{Eagon-Reiner} Theorem 3).
	\begin{thm}[Eagon-Reiner Theorem]\label{Theorem: CM iff Linear}
		Let $\Delta$ be a simplicial complex. The following are equivalent.
		\begin{enumerate}
			\item $\KK[\Delta]$ is Cohen-Macaulay.
			\item $I_{\Delta^*}$ has a linear resolution.
		\end{enumerate}
	\end{thm}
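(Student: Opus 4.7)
The plan is to read the equivalence directly off the Alexander dual variant of Hochster's formula (Theorem \ref{Theorem: ADHF}), using Reisner's criterion (Theorem \ref{Theorem: Reisner's Criterion}) as the combinatorial translation of the Cohen-Macaulay condition. The strategy is entirely local: the Betti numbers of $I_{\Delta^*}$ in each homological degree split as a sum over faces of $\Delta$ of homologies of their links, and ``linear resolution'' is precisely the statement that within each homological degree only a single shift can occur. So both statements will be equivalent to a parallel vanishing condition on link homologies.

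First I would set up what a linear resolution means here. By Remark \ref{Remark: Link of Emptyset and Facets}(2), the minimal generators of $I_{\Delta^*}$ have degree equal to $\codim \Delta$, and they correspond to the facets of $\Delta$. Thus $I_{\Delta^*}$ has a linear resolution if and only if $\beta_{i,d}(I_{\Delta^*}) = 0$ whenever $d \neq i + \codim \Delta$. Next, I would apply ADHF:
\[
\beta_{i,d}(I_{\Delta^*}) \;=\; \sum_{\substack{\sigma \in \Delta \\ |\sigma| = n-d}} \dim_{\KK} \Hred_{i-1}(\lkds).
\]
Since all summands are nonnegative, the vanishing of $\beta_{i,d}(I_{\Delta^*})$ for all $d \neq i + \codim \Delta$ is equivalent to $\Hred_{i-1}(\lkds) = 0$ for every face $\sigma \in \Delta$ with $|\sigma| \neq n - i - \codim \Delta = \dim \Delta + 1 - i$.

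Reindexing by $j = i-1$ and rewriting $|\sigma| \neq \dim \Delta - j$ gives the condition: for every face $\sigma \in \Delta$ and every $j$, $\Hred_j(\lkds) \neq 0$ implies $j = \dim \Delta - |\sigma|$. This is precisely Reisner's criterion, so by Theorem \ref{Theorem: Reisner's Criterion} it holds if and only if $\KK[\Delta]$ is Cohen-Macaulay. This closes the equivalence in both directions simultaneously.

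The main obstacle is not any deep homological argument but the bookkeeping: one must be careful that the minimum generator degree of $I_{\Delta^*}$ is $\codim \Delta$ (and not something else) and that the index shift between ADHF's $i-1$ and Reisner's $\dim \Delta - |\sigma|$ matches up cleanly with $\codim \Delta = n - \dim \Delta - 1$. Once these identifications are handled, the result is a direct corollary of Theorem \ref{Theorem: ADHF} together with Theorem \ref{Theorem: Reisner's Criterion}, and no further machinery is required.
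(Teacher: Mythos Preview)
Your proof is correct. The paper does not supply its own proof of this theorem; it simply states the result and cites \cite{Eagon-Reiner} Theorem 3. Your argument is essentially the standard one (and indeed the one in the original Eagon-Reiner paper): read the equivalence off ADHF together with Reisner's criterion, with the index translation $d = i + \codim\Delta \Leftrightarrow |\sigma| = \dim\Delta - j$ doing all the work. Since the paper has already stated both Theorem~\ref{Theorem: ADHF} and Theorem~\ref{Theorem: Reisner's Criterion} as black boxes, your derivation fits cleanly into its framework and is exactly the kind of proof one would expect here.
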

	
	A \textit{linear resolution} is a pure resolution of degree type $(1,...,1)$. Hence we can rephrase the Eagon-Reiner Theorem as follows.
	\begin{cor}\label{Corollary: CM iff PR of deg type (1...1)}
		Let $\Delta$ be a simplicial complex. The following are equivalent.
		\begin{enumerate}
			\item $\Delta$ is Cohen-Macaulay.
			\item $\Delta$ is PR with degree type $(1,\dots,1)$.
		\end{enumerate}
	\end{cor}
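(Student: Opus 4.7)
My plan is to deduce this corollary as a direct restatement of the Eagon-Reiner Theorem (Theorem \ref{Theorem: CM iff Linear}) in the newly introduced language of PR complexes and combinatorial degree types. Eagon-Reiner says that $\Delta$ is Cohen-Macaulay if and only if $I_{\Delta^*}$ admits a linear resolution, so the only work is to verify that "$I_{\Delta^*}$ has a linear resolution" and "$\Delta$ is PR with degree type $(1,\dots,1)$" are genuinely synonymous conditions.

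To establish this bridge I would simply unpack the relevant definitions. By definition, a linear resolution is a pure resolution in which every pair of consecutive shifts $c_i,c_{i-1}$ satisfies $c_i - c_{i-1} = 1$, so it is a pure resolution with degree type $(1,\dots,1)$ in the sense of the paragraph preceding Question \ref{Question: Degree Types}. On the other side, Definition \ref{Definition: PR Complexes} says $\Delta$ is PR exactly when $I_{\Delta^*}$ has a pure resolution, and the remark following Definition \ref{Definition: PR Complex Degree Types} confirms that the combinatorial degree type of a PR complex agrees with the algebraic degree type of the associated Betti diagram $\beta(I_{\Delta^*})$. Chaining these equivalences with Eagon-Reiner immediately yields the corollary.

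I do not foresee any serious obstacle: the corollary is essentially a lexical translation of Eagon-Reiner, and the author explicitly motivates it in those terms. For completeness one could also give a purely combinatorial proof that avoids Eagon-Reiner by invoking Reisner's Criterion (Theorem \ref{Theorem: Reisner's Criterion}) and Proposition \ref{Proposition: Alternate PR Definition With Degree Type}: if $\Delta$ is Cohen-Macaulay then $h(\Delta,\sigma) \subseteq \{\dim\Delta - |\sigma|\}$ for every face $\sigma$, from which one reads off directly that the values $s_i$ appearing in Definition \ref{Definition: PR Complex Degree Types} are precisely $\dim\Delta + 1 - i$ and hence every $d_i = 1$; conversely, if $\Delta$ is PR with degree type $(1,\dots,1)$ and offset $s$, then Proposition \ref{Proposition: Dimension of PR Complexes} gives $\dim\Delta = s + p - 1$, and comparing this with the description of $\hh(\Delta,m)$ in Proposition \ref{Proposition: Alternate PR Definition With Degree Type} forces $h(\Delta,\sigma) \subseteq \{\dim\Delta - |\sigma|\}$ for every $\sigma$, which is Reisner's condition.
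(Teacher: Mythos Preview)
Your proposal is correct and matches the paper's approach exactly: the paper does not even give a separate proof block, but simply states in the text preceding the corollary that a linear resolution is a pure resolution of degree type $(1,\dots,1)$, so the corollary is a direct rephrasing of the Eagon-Reiner Theorem. Your alternative combinatorial route via Reisner's Criterion and Proposition~\ref{Proposition: Alternate PR Definition With Degree Type} is essentially what the paper carries out immediately afterwards as Corollary~\ref{Corollary: PR (1...1) Definition}.
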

	
	Using this description of Cohen-Macaulay complexes we can recover Reisner's Criterion from Proposition \ref{Proposition: Alternate PR Definition With Degree Type}.
	\begin{cor}\label{Corollary: PR (1...1) Definition}
		Let $\Delta$ be a simplicial complex. The following are equivalent.
		\begin{enumerate}
			\item $\Delta$ is a PR complex with degree type $\underbrace{(1,\dots,1)}_{p}$ and offset $s$.
			\item $\hh(\Delta,m) = \begin{cases}
				\{\dim \Delta-m\} & \text{ if } s\leq m \leq s+p\\
				\emptyset & \text{ otherwise.}\\
			\end{cases}$
		\end{enumerate}
	\end{cor}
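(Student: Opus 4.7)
The plan is to derive this corollary as a direct specialisation of Proposition \ref{Proposition: Alternate PR Definition With Degree Type} to the case where every $d_i$ equals $1$, combined with the dimension formula from Proposition \ref{Proposition: Dimension of PR Complexes}. Both directions are essentially just bookkeeping once we substitute these values in, so I expect no significant obstacle; the main task is only to reconcile the two different ways of indexing the nonempty complete homology index sets.

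For the $(1)\Rightarrow(2)$ direction, I would begin by specialising the formula in Proposition \ref{Proposition: Alternate PR Definition With Degree Type}. With $d_i = 1$ for every $1 \leq i \leq p$, the expression $s + \sum_{j=r+1}^p d_j$ reduces to $s + (p-r)$, so the values of $m$ for which $\hh(\Delta,m)$ is nonempty are exactly the integers $m = s+p-r$ with $0 \leq r \leq p$, i.e.\ $s \leq m \leq s+p$. For each such $m$, Proposition \ref{Proposition: Alternate PR Definition With Degree Type} gives $\hh(\Delta,m) = \{r-1\}$ where $r = s+p-m$. To rewrite $r-1$ in terms of $\dim \Delta$ and $m$, I would invoke Proposition \ref{Proposition: Dimension of PR Complexes}, which for degree type $(1,\dots,1)$ of length $p$ and offset $s$ gives $\dim \Delta = s + p - 1$. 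Thus $r - 1 = (s+p-m) - 1 = \dim \Delta - m$, matching the singleton claimed in condition (2).

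For the converse $(2)\Rightarrow(1)$, I would run the same computation backwards. Given condition (2), the values of $m$ for which $\hh(\Delta,m) \neq \emptyset$ are consecutive integers $s, s+1, \dots, s+p$, so reading these off as the values $s_p < s_{p-1} < \dots < s_0$ in Definition \ref{Definition: PR Complex Degree Types} (where $s_r = s+p-r$) yields consecutive differences $d_i = s_{i-1} - s_i = 1$ for every $1 \leq i \leq p$. Since the sets $\hh(\Delta,m)$ form a collection of singletons $\{\dim\Delta - m\}$, which take distinct values for distinct $m$, condition (3) of Proposition \ref{Proposition: Alternate PR Definition} is satisfied, so $\Delta$ is PR. Its degree type and offset then agree with those claimed in (1) by construction.

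In short, the corollary amounts to reindexing Proposition \ref{Proposition: Alternate PR Definition With Degree Type} using $r = s + p - m$ and applying the dimension formula $\dim \Delta = s + p - 1$, so no new combinatorial or homological input is required beyond what is already in the preceding results.
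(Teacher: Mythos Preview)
Your proposal is correct and follows essentially the same approach as the paper: specialise Proposition \ref{Proposition: Alternate PR Definition With Degree Type} to $d_i=1$, so that the nonempty $\hh(\Delta,m)$ occur exactly for $m=s+p-r$ with value $\{r-1\}=\{s+p-m-1\}$, and then rewrite this as $\{\dim\Delta-m\}$ via the dimension formula $\dim\Delta=s+p-1$ from Proposition \ref{Proposition: Dimension of PR Complexes}. The only cosmetic difference is that the paper invokes the biconditional in Proposition \ref{Proposition: Alternate PR Definition With Degree Type} directly for both directions, whereas you unpack the $(2)\Rightarrow(1)$ direction via Proposition \ref{Proposition: Alternate PR Definition} and Definition \ref{Definition: PR Complex Degree Types}; the content is the same.
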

	\begin{proof}
		From Proposition \ref{Proposition: Alternate PR Definition With Degree Type}, we know that condition (1) is satisfied if and only if the nonempty complete homology index sets $\hh(\Delta,m)$ occur only at the values $m= s + p - r$ for $0\leq r\leq p$, and are equal to $\{r-1\}$. Equivalently, they occur only for values of $m$ between $s$ and $s+p$, and are equal to $\{s+p-m-1\}$. By Proposition \ref{Proposition: Dimension of PR Complexes} we have $\dim \Delta = s + p -1$. The result follows.
	\end{proof}

	\begin{rem}\label{Remark: PR (dp,...,di,1,...,1) Definition}
		A substantially identical proof shows us that if $\Delta$ has offset $s$ and degree type $(d_p,\dots,d_i,1,\dots,1)$, then for any $m > s+\sum_{j=i}^p d_j$ we have $\hh(\Delta,m)=\{\dim \Delta - m\}$. This will be crucial to our proof of Theorem \ref{Theorem: PR Complexes of Any Degree Type} in Chapter \ref{Chapter: Generating Degree Types}.\\
		
	\end{rem}
	
	Corollary \ref{Corollary: CM iff PR of deg type (1...1)} tells us that we can determine whether a PR complex is Cohen-Macaulay purely from its degree type. In fact, if we also know the offset of the complex, we have enough information to recover both the depth and dimension of the Stanley-Reisner ring (note that we are talking about the Stanley-Reisner ring of the complex itself here, rather than the dual Stanley-Reisner ring).
	
	\begin{lem}\label{Lemma: PR depth and dimension}
		Let $\Delta$ be a PR complex with degree type $\bd=(d_p,\dots,d_1)$ and offset $s$. We have the following.
		\begin{enumerate}
			\item $\dim \KK[\Delta] = s + \sum \bd$.
			\item $\depth \KK[\Delta] = s + p$.
		\end{enumerate}
	\end{lem}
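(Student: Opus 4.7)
The plan is to handle the two parts separately. Part (1) is essentially immediate: Corollary \ref{Corollary: dim k[Delta]} gives $\dim \KK[\Delta] = \dim \Delta + 1$, and Proposition \ref{Proposition: Dimension of PR Complexes} tells us $\dim \Delta = s + \sum \bd - 1$. Combining these yields $\dim \KK[\Delta] = s + \sum \bd$.

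For part (2), my plan is to combine Terai's formula $\pdim \KK[\Delta] = \reg I_{\Delta^*}$ with the Auslander-Buchsbaum formula (Corollary \ref{Corollary: Auslander-Buchsbaum for R}), which together give $\depth \KK[\Delta] = n - \reg I_{\Delta^*}$. First I would read off the shifts of the pure resolution of $I_{\Delta^*}$ using ADHF together with Proposition \ref{Proposition: Alternate PR Definition With Degree Type}: the latter tells us $\Hred_{i-1}(\lkds) \neq 0$ occurs precisely for $|\sigma| = s_i := s + \sum_{j=i+1}^p d_j$, so by ADHF the shift at homological degree $i$ is $c_i = n - s_i$. In particular $c_p = n - s$. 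Then $\reg I_{\Delta^*} = \max_{0 \leq i \leq p}(c_i - i)$, and I would verify this maximum is attained at $i = p$ by the estimate
$$(c_{i+1} - (i+1)) - (c_i - i) = (s_i - s_{i+1}) - 1 = d_{i+1} - 1 \geq 0,$$
which holds because each $d_j \geq 1$. Hence $\reg I_{\Delta^*} = c_p - p = n - s - p$, and Auslander-Buchsbaum gives $\depth \KK[\Delta] = s + p$ as required.

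As a self-contained alternative (avoiding Terai's formula), one could prove part (2) via the Hochster formula for local cohomology, which expresses
$$\depth \KK[\Delta] = \min\{|F| + j + 1 : F \in \Delta,\ \Hred_j(\link_\Delta F; \KK) \neq 0\}.$$
Using Proposition \ref{Proposition: Alternate PR Definition With Degree Type} the relevant pairs are $(|F|, j) = (s_r, r-1)$ for $0 \leq r \leq p$, giving $|F| + j + 1 = s_r + r$; the difference $(s_{r+1} + r + 1) - (s_r + r) = 1 - d_{r+1} \leq 0$ shows this is non-increasing in $r$, so the minimum is attained at $r = p$, yielding $s_p + p = s + p$. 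Existence of a face realising this minimum is guaranteed by the definition of offset.

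There is no serious obstacle: the main content of part (2) is the bookkeeping computation that the monotonicity $d_j \geq 1$ forces the extremal shift (resp.\ extremal face size) to lie at the end of the resolution. Whichever route one takes, the one external input required is either Terai's formula or the Hochster local cohomology formula, both of which are standard in this setting.
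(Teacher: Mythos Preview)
Your proposal is correct and your primary route for part (2) is essentially the same as the paper's: it too combines Auslander--Buchsbaum with the identity $\pdim \KK[\Delta] = \reg I_{\Delta^*}$ (cited there as Theorem 5.99 of \cite{CCA}), reads off $c_p = n - s$, and concludes $\reg I_{\Delta^*} = c_p - p$. Your explicit monotonicity check that the maximum of $c_i - i$ is attained at $i = p$ is a detail the paper leaves implicit, and your alternative local-cohomology argument is a valid extra route the paper does not pursue.
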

	\begin{proof}
		Recall from Corollary \ref{Corollary: dim k[Delta]} that $\dim \KK[\Delta]$ is equal to $\dim \Delta + 1$, so part (1) follows immediately from Proposition \ref{Proposition: Dimension of PR Complexes}.
		
		For part (2), suppose $\Delta$ has $n$ vertices. The Auslander-Buchsbaum Formula (Theorem \ref{Theorem: Auslander-Buchsbaum}) tells us that $\depth \KK[\Delta] = n - \pdim \KK[\Delta]$. Moreover, Theorem 5.99 of \cite{CCA} tells us that $\pdim \KK[\Delta] = \reg \Idstar$.
		
		Let $\beta = \beta(\Idstar)$. Because $\Delta$ is PR of degree type $(d_p,\dots,d_1)$, we know the nonzero Betti numbers of $\Idstar$ are $\beta_{0,c_0},\dots, \beta_{p,c_p}$ with the shifts $c_0,\dots,c_p$ given by $c_p=n-s$ and $c_{i-1}=c_i-d_i$ for each $p\geq i \geq 1$. Recall from Definition \ref{Definition: Regularity} that $\reg \Idstar$ is the maximum value of $c-i$ such that $\beta_{i,c}(\Idstar)\neq 0$, which is $c_p-p$. Hence we have $\depth \KK[\Delta] = n - (c_p-p) = n - (n - s - p)= s+ p$.
	\end{proof}
	
	In particular, the difference between the dimension and the depth of $\KK[\Delta]$ is $\sum \bd - p = \sum_{i=1}^p (d_i-1)$, which is wholly dependent on the degree type of $\Delta$. Hence, the degree type of a PR complex contains a measure of the extent to which that complex fails to be Cohen-Macaulay. Seen in this light, Theorem \ref{Theorem: PR Complexes of Any Degree Type} tells us that there exist PR complexes which are arbitrarily far away from satisfying the Cohen-Macaulay property.
	
	\subsection{Examples of PR Complexes}\label{Subsection: PR Examples}
	
	We now consider a few examples of PR complexes, along with their corresponding Betti diagrams and degree types. All of these examples will be integral in motivating the constructions of PR complexes going forward.
	
	\begin{ex}\label{Example: PR Tetrahedron}
		Let $\Delta$ be the boundary of the $3$-simplex on vertex set $[4]$, and let $\beta=\beta(\Idstar)$.
		\begin{center}
			\begin{tikzpicture}[line join = round, line cap = round]
				
				\coordinate [label=above:4] (4) at (0,{sqrt(2)},0);
				\coordinate [label=left:3] (3) at ({-.5*sqrt(3)},0,-.5);
				\coordinate [label=below:2] (2) at (0,0,1);
				\coordinate [label=right:1] (1) at ({.5*sqrt(3)},0,-.5);
				
				\begin{scope}
					\draw (1)--(3);
					\draw[fill=lightgray,fill opacity=.5] (2)--(1)--(4)--cycle;
					\draw[fill=gray,fill opacity=.5] (3)--(2)--(4)--cycle;
					\draw (2)--(1);
					\draw (2)--(3);
					\draw (3)--(4);
					\draw (2)--(4);
					\draw (1)--(4);
				\end{scope}
			\end{tikzpicture}
		\end{center}
		
		
		
		
		
		
		This complex has only $\nth[nd]{2}$ homology. It has four vertices with links of the form 
		\begin{tikzpicture}[scale = 0.2]
			\tikzstyle{point}=[circle,thick,draw=black,fill=black,inner sep=0pt,minimum width=2pt,minimum height=2pt]
			\node (a)[point] at (0,0) {};
			\node (b)[point] at (2,0) {};
			\node (c)[point] at (1,1.7) {};
			
			\draw (a.center) -- (b.center) -- (c.center) -- cycle;
		\end{tikzpicture}, six edges with links of the form \begin{tikzpicture}[scale = 0.2]
			\tikzstyle{point}=[circle,thick,draw=black,fill=black,inner sep=0pt,minimum width=2pt,minimum height=2pt]
			\node (a)[point] at (0,0) {};
			\node (c)[point] at (1.5,0) {};
		\end{tikzpicture}, and four 2-dimensional facets.
		
		Using ADHF we get that $\beta$ is equal to 
		$$\begin{array}{c | cccc}
			& 0 & 1 & 2 & 3\\
			\hline
			1 & 4 & 6 & 4 & 1\\ 
		\end{array}$$
		which means $\Delta$ is PR with shift type $(4,3,2,1)$ and degree type $(1,1,1)$.
	\end{ex}
	
	The above example generalises as follows.
	\begin{ex}\label{Example: PR Simplex}
		Let $\Delta$ be the boundary of the $n$-simplex $\partial \Delta^n$ on vertex set $[n+1]$, and let $\beta=\beta(\Idstar)$. For any integer $1\leq d \leq n+1$ there are ${n+1 \choose d}$ faces of $\Delta$ of size $n+1-d$, and for any such face $\sigma$, the complex $\lkds$ is the boundary of the $(n-|\sigma|)$-simplex on vertex set $[n+1]-\sigma$, which has homology only at degree $n-|\sigma|-1= d-2$ (and this homology has dimension $1$). Thus, $\beta$ is equal to
		$$\begin{array}{c | cccc }
			& 0 & \dots & n-1 & n \\
			\hline
			1 & {n+1 \choose 1} & \dots & {n+1 \choose n} & {n+1 \choose n+1}\\ 
		\end{array}$$
		which means $\Delta$ is PR with degree type $(\underbrace{1,\dots,1}_{n})$ (i.e. it is Cohen-Maculay).
	\end{ex}
	Hence we can obtain PR complexes with degree type $(\underbrace{1,\dots,1}_{n})$ for any $n$.
	\begin{rem}
		We can compute a minimal resolution of the ideal in Example \ref{Example: PR Simplex} directly using the Koszul complex construction, as discussed below.
		
		If $\Delta$ is the the boundary of the $(n-1)$-simplex $\partial \Delta^{n-1}$ on vertex set $[n]$, then its dual Stanley-Reisner ideal $\Idstar$ is the maximal ideal $\langle x_1,\dots,x_n\rangle$ in the polynomial ring $R=\KK[x_1,\dots,x_n]$.
		
		The sequence $x_1,\dots,x_n$ is regular, and so its corresponding Koszul complex is a minimal free resolution of $R/\Idstar$ (see \cite{C-M} Corollary 1.6.14). The corresponding resolution of $\Idstar$ is
		\[
		\begin{tikzcd}[row sep=1.5em,column sep=1.5em]
			0 \arrow{r} &R(-n) \arrow{r}{d} &R(-(n-1))^{{n \choose n-1}} \arrow{r}{d} &\dots \arrow{r}{d} &R(-1)^{n\choose 1} \arrow{r}{d} &\Idstar \arrow{r} &0.
		\end{tikzcd}
		\]
		The free modules $R(-i)^{n \choose i}$ are the exterior algebras $\bigwedge^i R^n$ of $R^n$. If$\{e_1,\dots,e_n\}$ is a basis for $R^n$, then $\bigwedge^i R^n$ has a basis consisting of elements of the form $e_{r_1}\wedge \dots \wedge e_{r_i}$ for $r_1<\dots<r_i$, and the differential maps $d$ are given by $$d(e_{r_1}\wedge \dots \wedge e_{r_i})=\sum_{j=1}^i (-1)^{j+1} e_{r_1}\wedge \dots \wedge \widehat{e_{r_j}}\wedge \dots \wedge e_{r_i}$$
		where the symbol $\widehat{e_{r_j}}$ represents that the element $e_{r_j}$ is omitted from this product. For more details on exterior algebras and Koszul complexes, see \cite{C-M} Section 1.6.
	\end{rem}
	
	\begin{rem}\label{Remark: Simplicial Spheres are CM}
		In fact Example \ref{Example: PR Simplex} is an instance of another even more general result, namely that every simplicial sphere is Cohen-Macaulay (see \cite{C-M} Section 5.4), and more specifically, any simplicial $n$-sphere has degree type $(\underbrace{1,\dots,1}_{n})$. 
	\end{rem}
	
	Next we turn our attention to a special case of Example \ref{Example: PR Simplex}: the boundary of the $1$-simplex. This is the complex consisting of two disjoint vertices, and it is PR with corresponding Betti diagram $\begin{array}{c | cc}
		& 0 & 1\\
		\hline
		1 	&	2	&	1
	\end{array}$ and degree type $(1)$. This special case admits another generalisation, as follows.
	
	\begin{ex}\label{Example: PR Disjoint Simplices}
		Let $\Delta$ be the complex $\Delta^n+\Delta^n$ consisting of two disjoint $n$-simplices, and let $\beta=\beta(\Idstar)$. The complex itself has only $\nth{0}$ homology, and all of its proper links are acyclic, except for the links of its two facets, each of which has size $n+1$. Thus $\beta$ is equal to
		$$\begin{array}{c | cc}
			& 0 & 1\\
			\hline
			n+1 	&	2		&	.	\\
			\vdots	&	.		&	.	\\
			2n+1	&	.		&	1
		\end{array}$$
		which means $\Delta$ is PR with degree type $(n+1)$
	\end{ex}
	Hence we can obtain PR complexes with degree type $(n)$ for any $n$.
	\begin{rem}
		Just as with Example \ref{Example: PR Simplex} we can compute the ideal in Example \ref{Example: PR Disjoint Simplices} directly using the Koszul complex. Specifically, if $\Delta$ is the disjoint union of two $(n-1)$-simplices $\Delta^{n-1}+\Delta^{n-1}$ on vertex set $[2n]$ then its dual Stanley-Reisner ideal $\Idstar$ is the ideal $\langle x_1\dots x_n, x_{n+1}\dots x_{2n} \rangle$ in the polynomial ring $R=\KK[x_1,\dots,x_{2n}]$. For notational convenience, we set $f=x_1\dots x_{n+1}$ and $g = x_{n+1}\dots x_{2n}$. Because $f,g$ is a regular sequence in $R$, its corresponding Koszul complex is a minimal free resolution of $R/\Idstar$. The corresponding resolution of $\Idstar$ is
		$$\begin{tikzcd}[ampersand replacement=\&]
			0 \rar \& R(-2n) \rar{ \begin{pmatrix}
					g \\ -f
			\end{pmatrix} }\&[1em] R(-n)^2 \rar{ \begin{pmatrix}
					f & g
			\end{pmatrix} } \&[3em] \Idstar \rar \& 0.
		\end{tikzcd}$$
	\end{rem}
	
	The following three examples are of PR complexes with more interesting degree types.
	
	\begin{ex}\label{Example: PR Zelda Symbol}
		Let $\Delta$ be the complex
		$$\begin{tikzpicture}[scale = 0.75]
			\tikzstyle{point}=[circle,thick,draw=black,fill=black,inner sep=0pt,minimum width=4pt,minimum height=4pt]
			\node (a)[point,label=above:$2$] at (0,3.4) {};
			\node (b)[point,label=above:$3$] at (2,3.4) {};
			\node (c)[point,label=above:$4$] at (4,3.4) {};
			\node (d)[point,label=left:$1$] at (1,1.7) {};
			\node (e)[point,label=right:$5$] at (3,1.7) {};
			\node (f)[point,label=left:$6$] at (2,0) {};	
			
			\begin{scope}[on background layer]
				\draw[fill=gray] (a.center) -- (b.center) -- (d.center) -- cycle;
				\draw[fill=gray] (b.center) -- (c.center) -- (e.center) -- cycle;
				\draw[fill=gray]   (d.center) -- (e.center) -- (f.center) -- cycle;
			\end{scope}
		\end{tikzpicture}$$
		on vertex set $[6]$, and let $\beta=\beta(\Idstar)$. 
		
		
			
			
			
			This complex has only $\nth[st]{1}$ homology. It has three vertices with links of the form \begin{tikzpicture}[scale = 0.2]
				\tikzstyle{point}=[circle,thick,draw=black,fill=black,inner sep=0pt,minimum width=2pt,minimum height=2pt]
				\node (a)[point] at (0,0) {};
				\node (b)[point] at (0,2) {};
				\node (c)[point] at (1.5,0) {};
				\node (d)[point] at (1.5,2) {};
				
				\draw[fill=gray] (a.center) -- (b.center);
				\draw[fill=gray] (c.center) -- (d.center);
			\end{tikzpicture}, three facets of size 3, and the links of all of its other faces are acyclic. Using ADHF we see that $\beta$ is equal to $$\begin{array}{c | ccc }
				& 0 & 1 & 2\\
				\hline
				3 & 3 & . & .\\ 
				4 & . & 3 & 1\\ 
			\end{array}$$
			which means $\Delta$ is PR, with shift type $(6,5,3)$ and degree type $(1,2)$.
		\end{ex}

		\begin{ex}\label{Example: PR 3D Zelda Symbol}
			Let $\Delta$ be the complex on vertex set $[9]$, consisting of three tetrahedra, connected together at vertices as shown below.
			
			
			$$\begin{tikzpicture}[scale = 0.75]
				\tikzstyle{point}=[circle,thick,draw=black,fill=black,inner sep=0pt,minimum width=4pt,minimum height=4pt]
				\node (a)[point,label=above:$2$] at (0,3.4) {};
				\node (b)[point,label=above:$4$] at (2,3.4) {};
				\node (c)[point,label=above:$5$] at (4,3.4) {};
				\node (d)[point,label=left:$1$] at (0.8,1.7) {};
				\node (e)[point,label=right:$7$] at (2.8,1.7) {};
				\node (f)[point,label=left:$8$] at (1.8,0) {};
				
				\node (g)[point,label=above:$3$] at (1,3.9) {};
				\node (h)[point,label=above:$6$] at (3,3.9) {};
				\node (i)[point,label=above:$9$] at (2,2.2) {};
				
				\draw [fill=lightgray, fill opacity=.5] (a.center) -- (b.center) -- (d.center) -- cycle;
				\draw [fill=lightgray, fill opacity=.5] (b.center) -- (c.center) -- (e.center) -- cycle;
				\draw [fill=lightgray, fill opacity=.7] (d.center) -- (e.center) -- (f.center) -- cycle;
				
				\draw[fill=black, fill opacity=1] (a.center)--(d.center)--(g.center)--cycle;
				\draw[fill=darkgray,fill opacity=.7] (b.center)--(d.center)--(g.center)--cycle;
				
				\draw[fill=black,fill opacity=1] (b.center)--(e.center)--(h.center)--cycle;
				\draw[fill=darkgray,fill opacity=.7] (c.center)--(e.center)--(h.center)--cycle;
				
				\draw[fill=black,fill opacity=1] (d.center)--(f.center)--(i.center)--cycle;
				\draw[fill=darkgray,fill opacity=.7] (e.center)--(f.center)--(i.center)--cycle;
			\end{tikzpicture}$$
			Let $\beta=\beta(\Idstar)$. This complex has only $\nth[st]{1}$ homology. It has three vertices with links of the form \begin{tikzpicture}[scale = 0.2]
				\tikzstyle{point}=[circle,thick,draw=black,fill=black,inner sep=0pt,minimum width=2pt,minimum height=2pt]
				\node (a)[point] at (0,0) {};
				\node (b)[point] at (2,0) {};
				\node (c)[point] at (1,1.7) {};
				\node (d)[point] at (3,0) {};
				\node (e)[point] at (5,0) {};
				\node (f)[point] at (4,1.7) {};
				
				\draw[fill=gray] (a.center) -- (b.center) -- (c.center) -- cycle;
				\draw[fill=gray] (d.center) -- (e.center) -- (f.center) -- cycle;
			\end{tikzpicture}, three facets of size 4, and the links of all of its other faces are acyclic. Using ADHF we see that $\beta$ is equal to $$\begin{array}{c | ccc}
				&	0 & 1 & 2\\
				\hline
				5 & 3 & . & .\\
				6 & . & . & .\\
				7 & . & 3 & 1\\ 
			\end{array}$$
			which means $\Delta$ is PR, with shift type $(9,8,5)$ and degree type $(1,3)$.
		\end{ex}

		\begin{ex}\label{Example: PR 21 Complex}
			Let $\Delta$ be the complex
			$$\begin{tikzpicture}[scale = 0.75]
				\tikzstyle{point}=[circle,thick,draw=black,fill=black,inner sep=0pt,minimum width=3pt,minimum height=3pt]
				\node (a)[point,label=above:$5$] at (0,3.5) {};
				\node (b)[point,label=above:$3$] at (2,2.7) {};
				\node (c)[point,label=above:$4$] at (4,3.5) {};
				\node (d)[point,label=left:$1$] at (1.5,2) {};
				\node (e)[point,label=right:$2$] at (2.5,2) {};
				\node (f)[point,label=left:$6$] at (2,0) {};	
				
				\begin{scope}[on background layer]
					\draw[fill=gray] (a.center) -- (b.center) -- (d.center) -- cycle;
					\draw[fill=gray] (b.center) -- (c.center) -- (e.center) -- cycle;
					\draw[fill=gray]   (d.center) -- (e.center) -- (f.center) -- cycle;
					
					\draw[fill=gray] (a.center) -- (b.center) -- (c.center) -- cycle;
					\draw[fill=gray] (a.center) -- (f.center) -- (d.center) -- cycle;
					\draw[fill=gray] (c.center) -- (f.center) -- (e.center) -- cycle;
				\end{scope}
			\end{tikzpicture}$$
			on vertex set $[6]$, and let $\beta=\beta(\Idstar)$. This complex has only $\nth[st]{1}$ homology. It has six edges with links of the form \begin{tikzpicture}[scale = 0.2]
				\tikzstyle{point}=[circle,thick,draw=black,fill=black,inner sep=0pt,minimum width=2pt,minimum height=2pt]
				\node (a)[point] at (0,0) {};
				\node (b)[point] at (2,0) {};
			\end{tikzpicture}, six facets of size 6, and the links of all of its other faces are acyclic. Thus using ADHF, we see that $\beta$ is equal to
			$$\begin{array}{c | ccc}
				& 0 & 1 & 2\\
				\hline
				3 & 6 & 6 & .\\
				4 & . & . & 1\\ 
			\end{array}$$
			which means $\Delta$ is PR, with shift type $(6,4,3)$ and degree type $(2,1)$.
		\end{ex}
		
		All of the above examples of PR complexes have nontrivial homology at some degree, and therefore have offset $0$. The following is an example of a PR complex with a nonzero offset.
		\begin{ex}\label{Example: PR Complex Offset 2}
			Let $\Delta$ be the complex
			$$\begin{tikzpicture}[scale = 0.75]
				\tikzstyle{point}=[circle,thick,draw=black,fill=black,inner sep=0pt,minimum width=3pt,minimum height=3pt]
				\node (a)[point,label=below:$1$] at (0,0) {};
				\node (b)[point,label=above:$2$] at (1,2) {};
				\node (c)[point,label=below:$3$] at (2,0) {};
				\node (d)[point,label=above:$4$] at (3,2) {};
				\node (e)[point,label=below:$5$] at (4,0) {};	
				\node (f)[point,label=above:$6$] at (5,2) {};	
				
				\begin{scope}[on background layer]
					\draw[fill=gray] (a.center) -- (b.center) -- (c.center) -- cycle;
					\draw[fill=gray] (b.center) -- (c.center) -- (d.center) -- cycle;
					\draw[fill=gray] (c.center) -- (d.center) -- (e.center) -- cycle;
					\draw[fill=gray] (d.center) -- (e.center) -- (f.center) -- cycle;
				\end{scope}
			\end{tikzpicture}$$
			on vertex set $[6]$, and let $\beta=\beta(\Idstar)$. This complex has three edges with links of the form $\begin{tikzpicture}[scale = 0.2]
				\tikzstyle{point}=[circle,thick,draw=black,fill=black,inner sep=0pt,minimum width=2pt,minimum height=2pt]
				\node (a)[point] at (0,0) {};
				\node (b)[point] at (1.5,0) {};
			\end{tikzpicture}$ and four facets of size $3$. The links of all of its other faces are acyclic. Thus using ADHF, we see that $\beta$ is equal to
			$$\begin{array}{c | cc}
				& 0 & 1\\
				\hline
				3 & 4 & 3 \\
			\end{array}$$
			which means $\Delta$ is PR, with shift type $(4,3)$, degree type $(1)$. It also has offset $2$ because the smallest faces whose links have homology have size $2$.
		\end{ex}
		
		We end this section by considering some \textit{non}-examples. The following three complexes are \textit{almost} PR, in the sense that their corresponding Betti diagrams only have a single column with multiple entries, and hence they only fail to be PR in one aspect.
		\begin{ex}\label{Example: NOT-PR 1}
			Let $\Delta$ be the complex
			$$\begin{tikzpicture}[scale = 0.6]
				\tikzstyle{point}=[circle,thick,draw=black,fill=black,inner sep=0pt,minimum width=3pt,minimum height=3pt]
				\node (a)[point, label=left:\footnotesize $1$] at (0,0){};
				\node (b)[point, label=right:\footnotesize $2$] at (4,0){};
				\node (c)[point, label=right:\footnotesize $4$] at (4,4){};
				\node (d)[point, label=left:\footnotesize $3$] at (0,4){};	
				\draw[black] (a.center) -- (b.center) -- (c.center) -- (d.center) -- cycle;
				\begin{scope}[on background layer]
					\draw[fill=gray] (b.center) -- (c.center) -- (d.center) -- cycle;
				\end{scope}
			\end{tikzpicture}$$
			on vertex set $[4]$, and let $\beta=\beta(\Idstar)$.
			
			This complex is not pure, because it has one $2$-dimensional facet and two $1$-dimensional facets. Thus it cannot be PR, because there are multiple faces of different sizes whose links have $\nth[st]{(-1)}$ homology. The Betti diagram $\beta$ is equal to
			$$\begin{array}{c | ccc}
				& 0 & 1 & 2\\
				\hline
				1 & 1 & . & .\\
				2 & 2 & 3 & 1\\ 
			\end{array}$$
		\end{ex}
		
		\begin{ex}\label{Example: NOT-PR 2}
			Let $\Delta$ be the complex
			$$\begin{tikzpicture}[scale = 0.75]
				\tikzstyle{point}=[circle,thick,draw=black,fill=black,inner sep=0pt,minimum width=4pt,minimum height=4pt]
				\node (a)[point,label=above:$5$] at (0,3.4) {};
				\node (b)[point,label=above:$3$] at (2,3.4) {};
				\node (c)[point,label=above:$4$] at (4,3.4) {};
				\node (d)[point,label=left:$1$] at (1,1.7) {};
				\node (e)[point,label=right:$2$] at (3,1.7) {};
				\node (f)[point,label=left:$6$] at (2,0) {};
				\node (g)[point,label=above:$7$] at (2,1.5) {};	
				
				\begin{scope}[on background layer]
					\draw[fill=gray] (a.center) -- (b.center) -- (d.center) -- cycle;
					\draw[fill=gray] (b.center) -- (c.center) -- (e.center) -- cycle;
					\draw[fill=gray]   (d.center) -- (f.center) -- (g.center) -- cycle;
					\draw[fill=gray]   (e.center) -- (f.center) -- (g.center) -- cycle;
				\end{scope}
			\end{tikzpicture}$$
			on vertex set $[7]$, and let $\beta=\beta(\Idstar)$.
			
			This complex is pure, but the intersections between adjacent facets are not all of the same size: the facets $\{1,6,7\}$ and $\{2,6,7\}$ intersect at a line, while all other adjacent facets intersect at vertices. This means that $\Delta$ has one disconnected link corresponding to a face of size $2$, and three corresponding to faces of size $1$. Thus it cannot be PR, because there are multiple faces of different sizes whose links have $\nth{0}$ homology. The Betti diagram $\beta$ is equal to
			$$\begin{array}{c | ccc}
				& 0 & 1 & 2\\
				\hline
				4 & 4 & 1 & .\\
				5 & . & 3 & 1
			\end{array}$$
		\end{ex}
		
		\begin{ex}\label{Example: NOT-PR 3}
			Let $\Delta$ be the complex
			$$\begin{tikzpicture}[scale = 1]
				\tikzstyle{point}=[circle,thick,draw=black,fill=black,inner sep=0pt,minimum width=3pt,minimum height=3pt]
				\node (a)[point,label=above:$5$] at (0,3.5) {};
				\node (b)[point,label=above:$3$] at (2,2.7) {};
				\node (c)[point,label=above:$4$] at (4,3.5) {};
				\node (d)[point,label=left:$1$] at (1.5,2) {};
				\node (e)[point,label=right:$2$] at (2.5,2) {};
				\node (f)[point,label=left:$6$] at (2,0) {};
				\node (g)[point,label=above:$7$] at (2,1.3) {};
				
				\begin{scope}[on background layer]
					\draw[fill=gray] (a.center) -- (b.center) -- (d.center) -- cycle;
					\draw[fill=gray] (b.center) -- (c.center) -- (e.center) -- cycle;
					\draw[fill=gray]   (d.center) -- (e.center) -- (f.center) -- cycle;
					
					\draw[fill=gray] (a.center) -- (b.center) -- (c.center) -- cycle;
					\draw[fill=gray] (a.center) -- (f.center) -- (d.center) -- cycle;
					\draw[fill=gray] (c.center) -- (f.center) -- (e.center) -- cycle;
					
					\draw (d.center) -- (g.center);
					\draw (e.center) -- (g.center);
					\draw (f.center) -- (g.center);
				\end{scope}
			\end{tikzpicture}$$
			on vertex set $[7]$, and let $\beta=\beta(\Idstar)$.
			
			This complex is pure, and all disconnected links correspond to faces of the same size, but both the complex itself and the the link of the vertex $7$ have nontrivial $\nth[st]{1}$ homology, so it is not a PR complex. The Betti diagram $\beta$ is equal to
			$$\begin{array}{c | ccc}
				& 0 & 1 & 2\\
				\hline
				4 & 8 & 9 & 1\\
				5 & . & . & 1\\ 
			\end{array}$$
		\end{ex}
		
		\section{Tools for Studying PR Complexes}
		In this section we consider two tools for studying PR complexes: \textit{maximal intersections} and \textit{the link poset}.
		
		
		
		\subsection{Maximal Intersections}\label{Subsection: Maximal Intersections}
		One of the first combinatorial properties we noted about PR complexes was that they are pure (i.e. all of their facets have the same size). In fact, this property is equivalent to the condition that column $0$ of $\beta(\Idstar)$ has at most a single nonzero entry. In other words, for \textit{any} complex $\Delta$ we have that $\Delta$ is pure if and only if the total Betti number $\beta_0(\Idstar)$ is pure, where the purity of a total Betti number is defined as follows.
		
		\begin{defn}\label{Definition: Purity of Single Column of Betti Diagram}
			Let $\beta$ be a Betti diagram. We say the total Betti number $\beta_i$ is \textit{pure} if there only exists at most a single integer $c$ such that $\beta_{i,c}\neq 0$.
		\end{defn}
		
		In this section we examine the necessary and sufficient conditions for $\beta_1(\Idstar)$ to be pure, for an arbitrary fixed complex $\Delta$ (in particular, these are conditions satisfied by every PR complex). We begin with the following definition.
		
		\begin{defn}\label{Definition: Maximal Intersections}
			A \textit{maximal intersection} in $\Delta$ is a face $\sigma \in \Delta$ that is an intersection of more than one facet of $\Delta$, and is maximal with this property with respect to inclusion.
		\end{defn}
		\begin{rem}
			It is possible for a maximal intersection to be empty, if no two facets of $\Delta$ intersect.
		\end{rem}
		
		Note that if $\sigma=F_1\cap \dots \cap F_n$ is a maximal intersection, then for every $1\leq i < j\leq n$ we have $\sigma \subseteq F_i\cap F_j$, and hence by maximality $\sigma = F_i\cap F_j$. Thus every maximal intersection can always be expressed as the intersection of two facets.
		
		\begin{defn}\label{Definition: Adjacent Facets}
			Whenever $F_1\cap F_2$ is a maximal intersection we say that $F_1$ and $F_2$ are \textit{adjacent}.
		\end{defn}
		\begin{ex}\label{Example: Adjacent Facets}
			In the complex
			$$\begin{tikzpicture}[scale = 0.85]
				\tikzstyle{point}=[circle,thick,draw=black,fill=black,inner sep=0pt,minimum width=3pt,minimum height=3pt]
				\node (a)[point,label=below:$1$] at (0,0) {};
				\node (b)[point,label=above:$2$] at (1,1.7) {};
				\node (c)[point,label=below:$3$] at (2,0) {};
				\node (d)[point,label=above:$4$] at (3,1.7) {};
				\node (e)[point,label=below:$5$] at (4,0) {};
				
				\node (f)[label= {$F_1$}] at (1,0.1) {};
				\node (g)[label= {$F_2$}] at (2,0.6) {};
				\node (h)[label= {$F_3$}] at (3,0.1) {};

				\begin{scope}[on background layer]
					\draw[fill=gray, fill opacity = .8] (a.center) -- (b.center) -- (c.center) -- cycle;
					\draw[fill=gray, fill opacity = .8] (b.center) -- (c.center) -- (d.center) -- cycle;
					\draw[fill=gray, fill opacity = .8] (c.center) -- (d.center) -- (e.center) -- cycle;
				\end{scope}
			\end{tikzpicture}$$ the facets $F_1$ and $F_2$ are adjacent (because they intersect at the maximal intersection $\{2,3\}$) as are the facets $F_2$ and $F_3$ (which intersect at the maximal intersection $\{3,4\}$). The facets $F_1$ and $F_3$ intersect at the vertex $\{3\}$ but this intersection is not maximal, so these two facets are not adjacent.
		\end{ex}
		
		The following proposition gives us a second characterisation of maximal intersections.
		\begin{prop}\label{Proposition: Maximal Intersections Links}
			Let $\sigma$ be a face of $\Delta$. The following are equivalent.
			\begin{enumerate}
				\item $\sigma$ is a maximal intersection.
				\item $\lkds$ is a disjoint union of multiple simplices.
			\end{enumerate}
		\end{prop}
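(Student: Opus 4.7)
The plan is to translate both conditions into statements about the facets of $\Delta$ containing $\sigma$. Recall that if $F_1, \dots, F_m$ are exactly the facets of $\Delta$ containing $\sigma$, then $\lkds = \langle F_1 - \sigma, \dots, F_m - \sigma \rangle$, and the facets of $\lkds$ are precisely the sets $F_i - \sigma$. So $\lkds$ is a disjoint union of multiple simplices if and only if $m \geq 2$ and the sets $F_i - \sigma$ are pairwise disjoint. The key algebraic observation is that, since each $F_i$ contains $\sigma$,
\[
(F_i - \sigma) \cap (F_j - \sigma) = (F_i \cap F_j) - \sigma,
\]
so pairwise disjointness of the $F_i - \sigma$ is equivalent to $F_i \cap F_j = \sigma$ for all $i \neq j$.

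For $(1) \Rightarrow (2)$, suppose $\sigma$ is a maximal intersection. By definition, there exist at least two facets containing $\sigma$, so $m \geq 2$. For any pair $F_i, F_j$ of such facets, the intersection $F_i \cap F_j$ is itself a face which is an intersection of more than one facet, and contains $\sigma$; maximality of $\sigma$ then forces $F_i \cap F_j = \sigma$. By the observation above, the facets of $\lkds$ are pairwise disjoint, so $\lkds$ is a disjoint union of $m \geq 2$ simplices.

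For $(2) \Rightarrow (1)$, suppose $\lkds$ is a disjoint union of multiple simplices. Then $m \geq 2$ and the facets $F_i - \sigma$ of $\lkds$ are pairwise disjoint, so by the observation $F_i \cap F_j = \sigma$ for all $i \neq j$. In particular, $\sigma$ is an intersection of more than one facet of $\Delta$. It remains to prove maximality, which I expect is the step requiring the most care. Suppose for contradiction that $\tau \supsetneq \sigma$ is also an intersection of more than one facet of $\Delta$, say $\tau = F \cap F'$ with $F \neq F'$ two facets both containing $\tau$ (hence $\sigma$). Then $F$ and $F'$ must appear among $F_1, \dots, F_m$, yet $F \cap F' = \tau \supsetneq \sigma$ contradicts the pairwise intersection condition just established. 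Hence no such $\tau$ exists, and $\sigma$ is a maximal intersection.

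The argument is essentially bookkeeping once one notes the identity $(F_i - \sigma) \cap (F_j - \sigma) = (F_i \cap F_j) - \sigma$; the only subtle point is making sure that maximality in (1) is used in the right direction to conclude disjointness, and that disjointness in (2) is used to exclude the existence of a strictly larger intersection in the reverse direction.
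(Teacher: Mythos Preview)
Your proof is correct and follows essentially the same approach as the paper: both translate the conditions into statements about the facets $F_1,\dots,F_m$ containing $\sigma$ and use that $(F_i-\sigma)\cap(F_j-\sigma)=(F_i\cap F_j)-\sigma$. One very minor quibble: in the $(2)\Rightarrow(1)$ direction you write ``say $\tau = F\cap F'$'', but a priori $\tau$ might be an intersection of three or more facets; however, taking any two of them as $F,F'$ gives $\tau \subseteq F\cap F'$, and since $F,F'$ lie among the $F_i$ you still get $F\cap F' = \sigma$, contradicting $\tau \supsetneq \sigma$ --- so the argument goes through unchanged.
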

		\begin{proof}
			Suppose $\sigma$ is a maximal intersection, and let $F_1,\dots,F_n$ be all of the facets which contain it, so that $\sigma = F_1\cap\dots\cap F_n$. This means that $\lkds =\langle F_1 -\sigma, \dots, F_n-\sigma\rangle$. Note that for any $1\leq i<j\leq n$ we have $\sigma = F_i \cap F_j$ and hence $(F_i-\sigma)\cap(F_j-\sigma)=\emptyset$. Thus the defining facets of $\link_\Delta \sigma$ are pairwise disjoint.
			
			Now suppose that $\lkds$ is a disjoint union of multiple simplices $\tau_1,\dots,\tau_n$, and for each $1\leq i \leq n$, define $F_i=\tau_i\sqcup \sigma$. We have $\sigma = F_1\cap \dots \cap F_n$, and the only way to extend $\sigma$ to a larger face $\widetilde{\sigma}$ of $\Delta$ is to add vertices from one of the simplices $\tau_i$, in which case $\widetilde{\sigma}$ would be contained in only the facet $F_i$. Thus $\sigma$ is a maximal intersection.
		\end{proof}
		\begin{cor}
			Let $\Delta$ be a simplicial complex, and let $\sigma$ be in $\Delta$. A face $\tau$ of $L=\link_\Delta \sigma$ is a maximal intersection in $L$ if and only if $\tau \sqcup \sigma$ is a maximal intersection in $\Delta$.
		\end{cor}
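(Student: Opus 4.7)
The plan is to deduce this corollary directly from the characterization of maximal intersections given in Proposition \ref{Proposition: Maximal Intersections Links}, combined with the iteration formula for links in Lemma \ref{Lemma: links in links}. Both halves of the equivalence come from the same chain of reasoning, so the proof will essentially be a single computation rather than two separate arguments.

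First I would note that since $\tau$ is a face of $L = \link_\Delta \sigma$, by the definition of the link the union $\tau \sqcup \sigma$ is automatically a face of $\Delta$, so the statement ``$\tau \sqcup \sigma$ is a maximal intersection in $\Delta$'' is at least well-formed. Next I would apply Proposition \ref{Proposition: Maximal Intersections Links} in the complex $L$ to rewrite the condition that $\tau$ is a maximal intersection in $L$ as the condition that $\link_L \tau$ is a disjoint union of multiple simplices.

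The key step is to invoke Lemma \ref{Lemma: links in links}, which gives the identity $\link_L \tau = \link_{\link_\Delta \sigma} \tau = \link_\Delta (\tau \sqcup \sigma)$. This identifies the complex whose topology we are examining with a link inside $\Delta$ itself. Applying Proposition \ref{Proposition: Maximal Intersections Links} once more, this time inside $\Delta$, translates the statement ``$\link_\Delta (\tau \sqcup \sigma)$ is a disjoint union of multiple simplices'' into the statement ``$\tau \sqcup \sigma$ is a maximal intersection in $\Delta$.'' Chaining these equivalences produces the desired biconditional.

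There is no real obstacle here; the proof is a one-line application of two previously established results, and the only thing to be mildly careful about is confirming that $\tau \sqcup \sigma$ is indeed a face of $\Delta$ so that the statement on the right-hand side makes sense. The corollary is really just the observation that the notion of ``maximal intersection'' behaves well under the link-of-links operation, which is exactly the content of Lemma \ref{Lemma: links in links} combined with the link-theoretic characterization of maximal intersections.
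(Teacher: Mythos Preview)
Your proof is correct and takes essentially the same approach as the paper: use the identity $\link_L \tau = \link_\Delta(\tau \sqcup \sigma)$ from Lemma~\ref{Lemma: links in links} together with the link-theoretic characterization of maximal intersections in Proposition~\ref{Proposition: Maximal Intersections Links}. The paper's proof is simply a terser version of what you wrote.
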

		\begin{proof}
			Note that $\link_L \tau = \link_\Delta (\tau \sqcup \sigma)$, so this follows directly from Proposition \ref{Proposition: Maximal Intersections Links}.
		\end{proof}
		
		Proposition $\ref{Proposition: Maximal Intersections Links}$ shows that for any maximal intersection $\sigma$ in $\Delta$, we have that $\Hred_0(\link_\Delta \sigma)\neq 0$. In fact, in the case where $\beta_1(\Idstar)$ is pure, we have more than this.
		
		\begin{prop}\label{Proposition: beta_1 Purity Condition}
			The total Betti number $\beta_1(\Idstar)$ is pure if and only if $\Delta$ satisfies the following two conditions.
			\begin{enumerate}
				\item The only disconnected links in $\Delta$ are the links of maximal intersections.
				\item All maximal intersections in $\Delta$ have the same size.
			\end{enumerate}
		\end{prop}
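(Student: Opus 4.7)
The plan is to apply ADHF to the row $i = 1$ of the Betti diagram. ADHF gives
$$\beta_{1,d}(\Idstar) = \sum_{\sigma \in \Delta,\ |\sigma|=n-d} \dim_\KK \Hred_0(\lkds),$$
so $\beta_1(\Idstar)$ is pure if and only if there is at most one value of $d$ for which this sum is nonzero, equivalently, if and only if all faces $\sigma \in \Delta$ with disconnected link have the same size. The proposition thus reduces to proving that this ``equal sizes'' condition on disconnected-link faces is equivalent to (1) and (2) together.

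The reverse direction is the easier one. Assuming (1) and (2), Proposition \ref{Proposition: Maximal Intersections Links} tells us that maximal intersections have disconnected links; by (1) they are the \emph{only} faces with disconnected links; and by (2) they all have a common size. Hence every disconnected-link face has the same size, and $\beta_1(\Idstar)$ is pure.

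For the forward direction, assume $\beta_1(\Idstar)$ is pure. Maximal intersections have disconnected links (Proposition \ref{Proposition: Maximal Intersections Links}) and hence must all be of the same size by purity, giving (2). For (1), the key lemma to prove is: if $\sigma \in \Delta$ has disconnected link, then $\sigma$ is contained in some maximal intersection $\sigma'$ with $|\sigma'| \geq |\sigma|$. Granted this, if $\sigma$ were not itself a maximal intersection we would have $|\sigma| < |\sigma'|$ with both faces contributing disconnected links, contradicting purity; hence every disconnected-link face is a maximal intersection, proving (1). To prove the lemma, pick vertices $v,w$ in distinct components of $\lkds$ and facets $F_1 \supseteq \sigma \cup \{v\}$, $F_2 \supseteq \sigma \cup \{w\}$. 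Any shared vertex of $F_1-\sigma$ and $F_2-\sigma$ would connect the two components in $\lkds$, so $(F_1-\sigma)\cap(F_2-\sigma)=\emptyset$, forcing $\sigma = F_1 \cap F_2$. Thus $\sigma$ is already an intersection of two facets, and by finiteness of $\Delta$ we may enlarge it to a maximal such intersection $\sigma' \supseteq \sigma$.

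I expect the main obstacle to be executing the forward direction cleanly, specifically the step that extracts a maximal intersection from an arbitrary disconnected-link face. The subtlety is that Proposition \ref{Proposition: Maximal Intersections Links} characterises maximal intersections in terms of links that decompose as disjoint unions of \emph{simplices}, whereas a merely disconnected link need not have this form; bridging this gap requires the explicit vertex-picking construction above rather than a direct appeal to Proposition \ref{Proposition: Maximal Intersections Links}.
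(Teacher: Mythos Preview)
Your proposal is correct and follows essentially the same approach as the paper's proof: both directions hinge on ADHF at $i=1$ and on the observation that any face with disconnected link sits inside a maximal intersection, so that a strict containment would produce two disconnected-link faces of different sizes. The paper's forward direction is terser---it simply asserts that such a $\sigma$ ``must be an intersection of multiple facets of $\Delta$, and is therefore contained in a maximal intersection''---whereas you supply an explicit argument showing $\sigma = F_1 \cap F_2$ for well-chosen facets; this extra care is not strictly necessary (it suffices that $\sigma$ lies in two facets, hence in their intersection, hence in some maximal intersection), but it is correct and fills in exactly the step the paper leaves implicit.
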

		\begin{proof}
			Suppose $\beta_1(\Idstar)$ is pure, and let $\sigma \in \Delta$ be a face whose link is disconnected. In particular, $\sigma$ must be an intersection of multiple facets of $\Delta$, and is therefore contained in a maximal intersection $\tau$. If the containment were strict then we would have two faces of $\Delta$ of different sizes both of which have links with nontrivial $\nth{0}$ homology, which would contradict the purity of $\beta_1(\Idstar)$. We cannot have two maximal intersections of different sizes for the same reason.
			
			Conversely, suppose that the only faces of $\Delta$ with disconnected links are maximal intersections, and all of these have size $s$. Then by ADHF we have that $\beta_{1,d}(\Idstar)$ is nonzero if and only if $d=n-s$, where $n$ is the number of vertices of $\Delta$.
		\end{proof}
		
		\begin{prop}\label{Proposition: Every Facet Contains a Maximal Intersection}
			Suppose that $\Delta$ is a complex with more than one facet, and that $\beta_1(\Idstar)$ is pure. Let $F$ be a facet of $\Delta$ and $\tau$ a face contained in $F$ and at least one other facet. There exists some maximal intersection $M$ such that $\tau \subseteq M \subsetneqq F $. In particular, every facet of $\Delta$ contains a maximal intersection.
		\end{prop}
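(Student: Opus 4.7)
My approach to this proposition is to exhibit a maximal intersection $M$ explicitly, by intersecting $F$ with another facet containing $\tau$, and then use Proposition \ref{Proposition: beta_1 Purity Condition} to argue that this intersection is in fact maximal.

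In detail: since $\tau$ is contained in $F$ and in at least one other facet, pick a facet $F'\neq F$ with $\tau\subseteq F'$, and set $M = F\cap F'$. The containment $\tau\subseteq M$ is immediate. The containment $M\subsetneqq F$ is also immediate: both $F$ and $F'$ are facets (hence maximal faces) with $F\neq F'$, so $F\not\subseteq F'$, giving $F\cap F'\subsetneqq F$. Thus the real content is to verify that $M$ is a \emph{maximal} intersection, not merely an intersection of two facets.

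For this, I would examine the link $\lkds[M]$. By construction it contains both $F-M$ and $F'-M$, which are nonempty faces (since $M\subsetneqq F$ and $M\subsetneqq F'$), and they are disjoint because $(F-M)\cap(F'-M)=(F\cap F')-M = \emptyset$. Hence $\lkds[M]$ is disconnected, so $\Hred_0(\lkds[M])\neq 0$. Now I invoke the hypothesis: since $\beta_1(\Idstar)$ is pure, condition (1) of Proposition \ref{Proposition: beta_1 Purity Condition} tells us that every face of $\Delta$ with a disconnected link is itself a maximal intersection. Therefore $M$ is a maximal intersection, completing the proof of the first claim.

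For the ``in particular'' statement, I would apply the first claim with $\tau=\emptyset$. The empty face is contained in every facet of $\Delta$, and by assumption $\Delta$ has at least two facets, so $\emptyset$ is contained in $F$ and in at least one other facet. The first part then produces a maximal intersection $M$ with $\emptyset\subseteq M\subsetneqq F$, i.e.\ $M\subseteq F$, so $F$ contains a maximal intersection. No step here looks delicate; the only subtlety worth flagging is being careful that $F-M$ and $F'-M$ are genuinely nonempty (which follows from $F\neq F'$ combined with the fact that both are facets), so that the disconnectedness of $\lkds[M]$ is legitimate and Proposition \ref{Proposition: beta_1 Purity Condition}(1) actually applies.
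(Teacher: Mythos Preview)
Your argument has a genuine gap at the step ``Hence $\lkds[M]$ is disconnected.'' Having two nonempty disjoint faces in a simplicial complex does not make it disconnected: there may be other facets of $\Delta$ containing $M$ whose contributions to the link connect $F-M$ and $F'-M$. Concretely, take $\Delta$ with facets $F_1=\{1,2,3\}$, $F_2=\{2,3,4\}$, $F_3=\{3,4,5\}$ (this is Example \ref{Example: Adjacent Facets}). With $F=F_1$, $F'=F_3$, your $M=F_1\cap F_3=\{3\}$, and $\link_\Delta\{3\}=\langle\{1,2\},\{2,4\},\{4,5\}\rangle$ is connected, even though $F_1-M=\{1,2\}$ and $F_3-M=\{4,5\}$ are disjoint. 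So $\{3\}$ is \emph{not} a maximal intersection, and Proposition \ref{Proposition: beta_1 Purity Condition}(1) does not apply to it.

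This is precisely why the paper argues iteratively. It starts with $F\cap G_1$ as you do, but then observes that if $\link_\Delta(F\cap G_1)$ is connected, the component containing $F-F\cap G_1$ must meet the contribution of some other facet $G_2$, forcing $F\cap G_1\subsetneqq F\cap G_2$. Repeating this produces a strictly increasing chain of intersections inside $F$, which must terminate at some $F\cap G_m$ whose link is disconnected; only then can Proposition \ref{Proposition: beta_1 Purity Condition} be invoked. Your one-shot choice of $M$ skips this climb and therefore cannot guarantee maximality.
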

		\begin{proof}
			Let $G_1$ be another facet containing $\tau$, so that $\tau \subseteq F\cap G_1$. If $L=\link_\Delta (F\cap G_1)$ is disconnected, then $F\cap G_1$ must be a maximal intersection by Proposition \ref{Proposition: beta_1 Purity Condition}. Otherwise $L$ is connected, which means there must be some other facet $G_2$ containing $F\cap G_1$ such that $G_2-F\cap G_1$ has nonempty intersection with $F-F\cap G_1$. Hence we have $F\cap G_1 \subsetneqq F\cap G_2$.
			
			Continuing in this way we obtain a sequence $\tau \subseteq F\cap G_1 \subsetneqq F\cap G_2 \subsetneqq F\cap G_3 \subsetneqq \dots$, and this sequence must terminate, because $\Delta$ only has a finite number of facets. The sequence can only terminate when we have found a facet $G_m$ such that $\link_\Delta(F\cap G_m)$ is disconnected, which means $M=F\cap G_m$ is a maximal intersection satisfying $\tau \subseteq M \subsetneqq F$.
		\end{proof}
		
		Note that Proposition \ref{Proposition: Every Facet Contains a Maximal Intersection} is not true in general for arbitrary complexes. For instance, in the complex
		\begin{center}
			\begin{tikzpicture}[scale = 0.75]
				\tikzstyle{point}=[circle,thick,draw=black,fill=black,inner sep=0pt,minimum width=4pt,minimum height=4pt]
				
				\node (a)[point,label=left:$1$] at (-0.5,2.5) {};
				\node (b)[point,label=left:$2$] at (-0.5,0.9) {};
				\node (c)[point,label=left:$5$] at (2,3.4) {};
				\node (d)[point,label=above:$3$] at (1,1.7) {};
				\node (e)[point,label=above:$4$] at (3,1.7) {};
				\node (f)[point,label=left:$6$] at (2,0) {};	
				
				\node (g)[label= {$F_1$}] at (0,1.1) {};
				\node (h)[label= {$F_2$}] at (2,1.7) {};
				\node (i)[label= {$F_3$}] at (2,0.5) {};
				
				\begin{scope}[on background layer]
					\draw[fill=gray, fill opacity = .8] (a.center) -- (b.center) -- (d.center) -- cycle;
					\draw[fill=gray, fill opacity = .8] (c.center) -- (d.center) -- (e.center) -- cycle;
					\draw[fill=gray, fill opacity = .8] (d.center) -- (e.center) -- (f.center) -- cycle;
				\end{scope}
			\end{tikzpicture}
		\end{center}
		the facet $F_1$ contains no maximal intersections, because while it intersects with both of the other two facets at the vertex $\{3\}$, this intersection is strictly contained in the maximal intersection $\{3,4\}$. In other words, $F_1$ is not adjacent to any other facets.
		
		
		\subsection{The Link Poset}
		In the final part of this chapter we present a poset structure that has arisen naturally from our studies of PR complexes.
		
		We have found this structure to be particularly useful in developing an intuition about the PR condition. For this reason we will give examples of the link posets for a number of the complexes we look at in the next chapter.
		
		\begin{defn}\label{Definition: Link Poset}
			Let $\Delta$ be a simplicial complex. We define the \textit{link poset} $P_\Delta$ of $\Delta$ to be the set of complexes $\lkds$ for $\sigma\in \Delta$. We impose a poset structure on this set by defining $\delta_1\geq \delta_2$ whenever there is some $\tau \in \delta_1$ such that $\delta_2=\link_{\delta_1}\tau$.
		\end{defn}
		\begin{notation}\label{Notation: Link Poset}
			When representing link posets pictorially we will often use $\delta_1 \rightarrow \delta_2$ to denote the relation $\delta_1 \geq \delta_2$, and more specifically $\delta_1 \arrowX{\tau} \delta_2$ to denote the relation $\link_{\delta_1}\tau = \delta_2$. We will sometimes indicate which complexes in the chain have homology by enclosing them in a rectangle. When we do this, we will make a note of the degree(s) of the nontrivial homologies directly below the complex, and denote acyclic complexes with a $\times$ symbol. For example the following diagram
			\[
			\begin{tikzcd}[row sep=0.5em]
				|[draw=red, line width =.5mm, rectangle]|\delta_1 \arrow{r}{\tau}& \delta_2\\
				\Hred_0& \times
			\end{tikzcd}
			\]
			denotes that $\delta_2=\link_{\delta_1}\tau$, and $\Hred_0(\delta_1)\neq 0$, while $\delta_2$ is acyclic.
		\end{notation}
		\begin{rem}\label{Remark: Faces not Part of Data of Link Poset}
			In general, a complex may contain multiple faces which have the same link, and hence there may be multiple options for the face $\tau$ in the notation $\delta_1 \arrowX{\tau} \delta_2$. For example, if $\Delta$ is the complex $$\begin{tikzpicture}[scale = 0.5]
				\tikzstyle{point}=[circle,thick,draw=black,fill=black,inner sep=0pt,minimum width=3pt,minimum height=3pt]
				
				\node (1)[point, label=below:$x$] at (9,1) {};
				\node (2)[point, label=below:$y$] at (12,1) {};
				\node (3)[point, label=above:$z$] at (10.5,2.4) {};
				
				\draw (1.center) -- (3.center) -- (2.center);
			\end{tikzpicture}$$ then the vertices $x$ and $y$ both have the same link in $\Delta$, and hence we could denote the relation
			$$\begin{tikzpicture}[scale = 0.35]
				\tikzstyle{point}=[circle,thick,draw=black,fill=black,inner sep=0pt,minimum width=3pt,minimum height=3pt]
				
				\node (1)[point, label=below:$x$] at (9,1) {};
				\node (2)[point, label=below:$y$] at (12,1) {};
				\node (3)[point, label=above:$z$] at (10.5,2.4) {};
				
				\draw (1.center) -- (3.center) -- (2.center);
				
				\node (b1) at (13,1.7) {};
				\node (b2) at (16,1.7) {};
				\node (y)  at (14.5,1.6) {};
				\draw[->] (b1) -- (b2);
				
				\node (one)[point, label=above:$z$] at (17,1.7) {};
			\end{tikzpicture}$$
			as either
			$$\begin{tikzpicture}[scale = 0.35]
				\tikzstyle{point}=[circle,thick,draw=black,fill=black,inner sep=0pt,minimum width=3pt,minimum height=3pt]
				
				\node (1)[point, label=below:$x$] at (9,1) {};
				\node (2)[point, label=below:$y$] at (12,1) {};
				\node (3)[point, label=above:$z$] at (10.5,2.4) {};
				
				\draw (1.center) -- (3.center) -- (2.center);
				
				\node (b1) at (13,1.7) {};
				\node (b2) at (16,1.7) {};
				\node (y)[label=above:$x$]  at (14.5,1.6) {};
				\draw[->] (b1) -- (b2);
				
				\node (one)[point, label=above:$z$] at (17,1.7) {};
			\end{tikzpicture}$$
			or
			$$\begin{tikzpicture}[scale = 0.35]
				\tikzstyle{point}=[circle,thick,draw=black,fill=black,inner sep=0pt,minimum width=3pt,minimum height=3pt]
				
				\node (1)[point, label=below:$x$] at (9,1) {};
				\node (2)[point, label=below:$y$] at (12,1) {};
				\node (3)[point, label=above:$z$] at (10.5,2.4) {};
				
				\draw (1.center) -- (3.center) -- (2.center);
				
				\node (b1) at (13,1.7) {};
				\node (b2) at (16,1.7) {};
				\node (y)[label=above:$y$]  at (14.5,1.6) {};
				\draw[->] (b1) -- (b2);
				
				\node (one)[point, label=above:$z$] at (17,1.7) {};
			\end{tikzpicture}$$
		\end{rem}
		
		Note that for any complex $\Delta$, the link poset $P_\Delta$ contains both a maximal element $\Delta$ (which is $\link_\Delta \emptyset$) and a minimal element $\{\emptyset\}$ (which is $\link_\Delta F$ for any facet $F$ in $\Delta$).
		\begin{defn}\label{Definition: Maximal Chains}
			We say a chain $\Delta=\delta_0>...>\delta_m=\{\emptyset\}$ in $P_\Delta$ is \textit{maximal} if for each $1\leq i \leq m$, there is some vertex $v_i$ in $\delta_i$ such that $\delta_i= \link_{\delta_{i-1}}v_i$.
		\end{defn}
		
		By Lemma \ref{Lemma: links in links}, every chain in $P_\Delta$ can be extended to a maximal chain. Note that for any facet $F$ of $\Delta$, an ordering $v_1,\dots,v_m$ of the elements of $F$ corresponds to a maximal chain $\Delta>\link_{\Delta}\{v_1\}>\link_{\Delta}\{v_1,v_2\}>\dots>\link_{\Delta}\{v_1,...,v_m\}=\{\emptyset\}$ in $P_\Delta$. However this correspondence between ordered facets and maximal chains is not bijective, because it is possible for two distinct ordered facets to correspond to the same maximal chain. For example, if $\Delta$ is the complex presented in Remark \ref{Remark: Faces not Part of Data of Link Poset}, then the maximal chain $$\begin{tikzpicture}[scale = 0.5]
			\tikzstyle{point}=[circle,thick,draw=black,fill=black,inner sep=0pt,minimum width=3pt,minimum height=3pt]
			
			\node (1)[point, label=below:$x$] at (9,1) {};
			\node (2)[point, label=below:$y$] at (12,1) {};
			\node (4)[point, label=above:$z$] at (10.5,2.4) {};
			
			\draw (1.center) -- (4.center) -- (2.center);
			
			\node (b1) at (13,1.7) {};
			\node (b2) at (16,1.7) {};
			\node (y)  at (14.5,1.6) {};
			\draw[->] (b1) -- (b2);
			
			\node (one)[point, label=above:$z$] at (17,1.7) {};
			
			\node (c1) at (18.5,1.7) {};
			\node (c2) at (21.5,1.7) {};
			\node (z)  at (20,1.6) {};
			\draw[->] (c1) -- (c2);
			
			\node (q)[label=right:$\{\emptyset\}$] at (21.5,1.7){};
		\end{tikzpicture}$$ corresponds to both the facets $\{x,z\}$ and $\{y,z\}$.

		Our definitions of purity, degree type and offset extend naturally to maximal chains in $P_\Delta$ as follows.
		\begin{defn}\label{Definition: Maximal Chain Purity and Degree Type}
			Let $\Delta=\delta_0>...>\delta_m=\{\emptyset\}$ be a maximal chain in $P_\Delta$. We say this chain is \textit{pure} if it satisfies the following two conditions. 
			\begin{enumerate}
				\item For each $0\leq i \leq m$, the homology index set $h(\delta_i)$ is either empty or a singleton.
				\item If $\delta_{i_p}>\dots>\delta_{i_0}$ are all the complexes in the chain with homology, the degrees of these homologies form a decreasing sequence.
			\end{enumerate}
			If the decreasing sequence of homology degrees in condition (2) is consecutive (i.e. if it is the sequence $p-1,p-2,\dots,0,-1$) we say the chain is \textit{totally pure}. In this case we define the \textit{degree type} of the chain to be  $(i_{p-1}-i_p,\dots,i_0-i_1)$, and the \textit{offset} to be $i_p$. 
		\end{defn}

		It should be noted that while every maximal chain in the link poset of a PR complex is pure, they need not all be \textit{totally} pure. Moreover, in the case where the offset of the complex is nonzero, the link poset may even contain totally pure chains of a different degree type and offset to the complex itself.
		\begin{ex}\label{Example: Link poset pure chain that is not totally pure}
			The complex in Example \ref{Example: PR Zelda Symbol} has degree type $(1,2)$ and offset $0$. Its link poset contains the maximal chain

		$$\tikz[remember picture, 
		overlay]{
			\draw[line width = .7mm, draw=red] (0,0.9) rectangle (2.6,3.6);
			\draw[line width = .7mm, draw=red] (9.48,0.9) rectangle (10.48,3.6);
		}
		\begin{tikzpicture}[scale = 0.5]
			\tikzstyle{point}=[circle,thick,draw=black,fill=black,inner sep=0pt,minimum width=3pt,minimum height=3pt]
			\node (a)[point,label=above:$2$] at (0,3.4) {};
			\node (b)[point,label=above:$3$] at (2,3.4) {};
			\node (c)[point,label=above:$4$] at (4,3.4) {};
			\node (d)[point,label=left:$1$] at (1,1.7) {};
			\node (e)[point,label=right:$5$] at (3,1.7) {};
			\node (f)[point,label=left:$6$] at (2,0) {};
			\node (h1) [label=below: \large $\Hred_1$] at (2,-0.4) {};
			
			\begin{scope}[on background layer]
				\draw[fill=gray] (a.center) -- (b.center) -- (d.center) -- cycle;
				\draw[fill=gray] (b.center) -- (c.center) -- (e.center) -- cycle;
				\draw[fill=gray]   (d.center) -- (e.center) -- (f.center) -- cycle;
			\end{scope}
			
			\node (a1) at (5,1.7) {};
			\node (a2) at (8,1.7) {};
			\node (x)[label=above:$2$]  at (6.5,1.6) {};
			\draw[->] (a1) -- (a2);
			
			\node (1)[point,label=below:$1$] at (9,0.4) {};
			\node (3)[point,label=above:$3$] at (9,3) {};
			\node (h2) [label=below: \Large $\times$] at (9,-0.65) {};
			
			\draw (1.center)--(3.center);
			
			\node (b1) at (10,1.7) {};
			\node (b2) at (13,1.7) {};
			\node (y)[label=above:$1$]  at (11.5,1.6) {};
			\draw[->] (b1) -- (b2);
			
			\node (p)[point,label=above:$3$] at (14,1.7){};
			\node (h3) [label=below: \Large $\times$] at (14,-0.65) {};
			
			\node (c1) at (15,1.7) {};
			\node (c2) at (18,1.7) {};
			\node (z)[label=above:$3$]  at (16.5,1.6) {};
			\draw[->] (c1) -- (c2);
			
			\node (q)[label=right:\Large$\{\emptyset\}$] at (18,1.7){};
			\node (h4) [label=below: \large $\Hred_{-1}$] at (19.5,-0.4) {};
		\end{tikzpicture}$$
		which is pure, because the homology index sets of its constituent complexes are either empty or singletons, and the degrees in the singleton sets form the descending sequence $1,-1$. But this sequence is not consecutive, so the chain is \textit{not} totally pure. However, if we swap the order in which we take links of the vertices $1$ and $2$ we obtain the maximal chain
		$$\tikz[remember picture, 
		overlay]{
			\draw[line width = .7mm, draw=red] (0,0.87) rectangle (2.6,3.55);
			\draw[line width = .7mm, draw=red] (4.42,0.87) rectangle (6.04,3.55);
			\draw[line width = .7mm, draw=red] (10.47,0.87) rectangle (11.47,3.55);
		}
		\begin{tikzpicture}[scale = 0.5]
			\tikzstyle{point}=[circle,thick,draw=black,fill=black,inner sep=0pt,minimum width=3pt,minimum height=3pt]
			\node (a)[point,label=above:$2$] at (0,3.4) {};
			\node (b)[point,label=above:$3$] at (2,3.4) {};
			\node (c)[point,label=above:$4$] at (4,3.4) {};
			\node (d)[point,label=left:$1$] at (1,1.7) {};
			\node (e)[point,label=right:$5$] at (3,1.7) {};
			\node (f)[point,label=left:$6$] at (2,0) {};
			\node (h1) [label=below: \large $\Hred_1$] at (2,-0.4) {};
			
			\begin{scope}[on background layer]
				\draw[fill=gray] (a.center) -- (b.center) -- (d.center) -- cycle;
				\draw[fill=gray] (b.center) -- (c.center) -- (e.center) -- cycle;
				\draw[fill=gray]   (d.center) -- (e.center) -- (f.center) -- cycle;
			\end{scope}
			
			\node (a1) at (5,1.7) {};
			\node (a2) at (8,1.7) {};
			\node (x)[label=above:$1$]  at (6.5,1.6) {};
			\draw[->] (a1) -- (a2);
			
			\node (2)[point,label=below:$2$] at (9,0.4) {};
			\node (3)[point,label=above:$3$] at (9,3) {};
			\node (5)[point,label=below:$5$] at (11,0.4) {};
			\node (6)[point,label=above:$6$] at (11,3) {};
			\node (h2) [label=below: $\Hred_0$] at (10,-0.4) {};
			
			\draw (2.center)--(3.center);
			\draw (5.center)--(6.center);
			
			\node (b1) at (12,1.7) {};
			\node (b2) at (15,1.7) {};
			\node (y)[label=above:$2$]  at (13.5,1.6) {};
			\draw[->] (b1) -- (b2);
			
			\node (p)[point,label=above:$3$] at (16,1.7){};
			\node (h3) [label=below: \Large $\times$] at (16,-0.65) {};
			
			\node (c1) at (17,1.7) {};
			\node (c2) at (20,1.7) {};
			\node (z)[label=above:$3$]  at (18.5,1.6) {};
			\draw[->] (c1) -- (c2);
			
			\node (q)[label=right:\Large$\{\emptyset\}$] at (20,1.7){};
			\node (h4)[label=below:\large$\Hred_{-1}$] at (21.5,-0.4){};
		\end{tikzpicture}$$
		which \textit{is} totally pure, with the same degree type and offset as the complex itself (i.e. degree type $(1,2)$ and offset $0$). We can compute the degree type of the chain by counting the number of arrows separating the complexes with homology: there is one arrow separating the complex with $\nth[st]{1}$ homology from the complex with $\nth[th]{0}$ homology, and there are two arrows separating the complex with $\nth[th]{0}$ homology from the complex with $\nth[st]{(-1)}$ homology.
	\end{ex}
	
	\begin{ex}\label{Example: Link poset max chain with different degree type and offset}
		The PR complex
		$$\begin{tikzpicture}[scale=1]
			\tikzstyle{point}=[circle,thick,draw=black,fill=black,inner sep=0pt,minimum width=3pt,minimum height=3pt]
			\node (a)[point, label=left:1] at (0,0) {};
			\node (b)[point, label=right:2] at (2,0) {};
			\node (c)[point, label=above:3] at (1,1.7) {};
			\node (d)[point, label=above left:4] at (1,0.58) {};
			
			\begin{scope}[on background layer]
				\draw[fill=gray] (a.center) -- (b.center) -- (d.center) -- cycle;
				\draw[fill=gray] (b.center) -- (c.center) -- (d.center) -- cycle;
				\draw[fill=gray] (c.center) -- (a.center) -- (d.center) -- cycle;
			\end{scope}
		\end{tikzpicture}$$
		has offset $1$ and degree type $(1,1)$. This is because the complex itself is acyclic, and all of its faces have acyclic links except for one vertex ($\{4\}$) with a link of the form \begin{tikzpicture}[scale = 0.2]
			\tikzstyle{point}=[circle,thick,draw=black,fill=black,inner sep=0pt,minimum width=2pt,minimum height=2pt]
			\node (a)[point] at (0,0) {};
			\node (b)[point] at (2,0) {};
			\node (c)[point] at (1,1.7) {};
			
			\draw (a.center) -- (b.center) -- (c.center) -- cycle;
		\end{tikzpicture}, three edges ($\{1,4\}$, $\{2,4\}$ and $\{3,4\}$) with links of the form \begin{tikzpicture}[scale = 0.2]
			\tikzstyle{point}=[circle,thick,draw=black,fill=black,inner sep=0pt,minimum width=2pt,minimum height=2pt]
			\node (a)[point] at (0,0) {};
			\node (c)[point] at (1.5,0) {};
		\end{tikzpicture}, and three facets of size $3$.
		
		However it contains the totally pure maximal chain
		$$\tikz[remember picture, 
		overlay]{
			\draw[line width = .7mm, draw=red] (8.75,0.9) rectangle (10.35,3.6);
			\draw[line width = .7mm, draw=red] (12.3,0.9) rectangle (13.3,3.6);
		}
		\begin{tikzpicture}[scale = 0.5]
			\tikzstyle{point}=[circle,thick,draw=black,fill=black,inner sep=0pt,minimum width=3pt,minimum height=3pt]
			\node (a)[point, label=left:1] at (0,0) {};
			\node (b)[point, label=right:2] at (4,0) {};
			\node (c)[point, label=above:3] at (2,3.4) {};
			\node (d)[point, label=above left:4] at (2,1.16) {};
			\node (h1) [label=below: \Large $\times$] at (2,-0.65) {};
			
			\begin{scope}[on background layer]
				\draw[fill=gray] (a.center) -- (b.center) -- (d.center) -- cycle;
				\draw[fill=gray] (b.center) -- (c.center) -- (d.center) -- cycle;
				\draw[fill=gray] (c.center) -- (a.center) -- (d.center) -- cycle;
			\end{scope}
			
			\node (a1) at (5,1.7) {};
			\node (a2) at (8,1.7) {};
			\node (x)[label=above:$3$]  at (6.5,1.6) {};
			\draw[->] (a1) -- (a2);
			
			\node (1)[point, label=below:1] at (9,1) {};
			\node (2)[point, label=below:2] at (12,1) {};
			\node (4)[point, label=above:4] at (10.5,2.4) {};
			\node (h2) [label=below: \Large $\times$] at (10.5,-0.65) {};
			
			\draw (1.center) -- (4.center) -- (2.center);
			
			\node (b1) at (13,1.7) {};
			\node (b2) at (16,1.7) {};
			\node (y)[label=above:$4$]  at (14.5,1.6) {};
			\draw[->] (b1) -- (b2);
			
			\node (one)[point, label=below:1] at (17,1.7) {};
			\node (two)[point, label=below:2] at (19,1.7) {};
			\node (h3) [label=below: \large $\Hred_0$] at (18,-0.4) {};
			
			\node (c1) at (20,1.7) {};
			\node (c2) at (23,1.7) {};
			\node (z)[label=above:$2$]  at (21.5,1.6) {};
			\draw[->] (c1) -- (c2);
			
			\node (q)[label=right:\Large$\{\emptyset\}$] at (23,1.7){};
			\node (h4)[label=below:\large$\Hred_{-1}$] at (24.5,-0.4){};
			
		\end{tikzpicture}$$ which has offset $2$ and degree type $(1)$. As in Example \ref{Example: Link poset pure chain that is not totally pure}, we can compute this offset and degree type by counting arrows: there are two arrows before the first complex in the chain with homology, and one arrow separating the complex with $\nth[th]{0}$ homology from the complex with $\nth[st]{(-1)}$ homology.
	\end{ex}
	
	\begin{prop}
		Let $\Delta$ be a PR complex with degree type $\bd=(d_p,\dots,d_1)$ and offset $s$. We have the following.
		\begin{enumerate}
			\item Every maximal chain in $P_\Delta$ is pure.
			\item $P_\Delta$ contains a totally pure maximal chain of degree type $\bd$ and offset $s$.
			\item Every totally pure maximal chain in $P_\Delta$ has degree type some subsequence $\bd'$ of $\bd$ and offset $s'\geq s$ such that $s'+\sum \bd'=s+\sum\bd$.
			\item If $s=0$ then every totally pure maximal chain in $P_\Delta$ has degree type $\bd$ and offset $0$.
		\end{enumerate}
	\end{prop}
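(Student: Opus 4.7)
My plan hinges on Proposition \ref{Proposition: Alternate PR Definition With Degree Type}, which gives a tight dictionary between face-size and homology degree in a PR complex: $\hh(\Delta,m)$ is either empty or a specific singleton $\{r-1\}$, with $r$ determined by $m = s + \sum_{j=r+1}^p d_j$. Once this is internalised, all four parts follow by tracking which positions along a maximal chain can carry homology.

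For part (1), I would write any maximal chain as $\Delta = \delta_0 > \dots > \delta_m = \{\emptyset\}$ with $\delta_i = \lkds[\rho_i]$ for a face $\rho_i$ of size $i$. Lemma \ref{Lemma: Links in PR complexes also PR} gives that each $\delta_i$ is PR, Corollary \ref{Corollary: PR Complex links have single homology} then forces $|h(\delta_i)| \leq 1$, and Corollary \ref{Corollary: Homology index sets decreasing sequence} (applied to the strictly increasing sequence of sizes $|\rho_i|$) makes the recorded homology degrees strictly decrease along the chain.

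Part (2) is where the real work sits. First I would invoke Corollary \ref{Corollary: Offset 0 => Minimal} to pick a face $\sigma \in \Delta$ of size $s$ whose link is PR of degree type $\bd$ and offset $0$. Applied repeatedly, Lemma \ref{Lemma: Homology of descending links} (together with the face-size dictionary above) then produces a chain of faces $\sigma = \tau_{p-1} \subsetneq \tau_{p-2} \subsetneq \dots \subsetneq \tau_{-1}$ in $\Delta$ where $\lkds[\tau_k]$ carries $\nth{k}$ homology; Proposition \ref{Proposition: Alternate PR Definition With Degree Type} pins down the jumps $|\tau_{k-1}| - |\tau_k| = d_k$ and forces $\tau_{-1}$ to be a facet of $\Delta$. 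I would then order the vertices of $\tau_{-1}$ so that the partial sets trace out $\emptyset \subset \tau_{p-1} \subset \tau_{p-2} \subset \dots \subset \tau_{-1}$, producing a maximal chain in $P_\Delta$. At any intermediate face $\rho$ whose size is \emph{not} one of the allowed values $s + \sum_{j=r+1}^p d_j$, Proposition \ref{Proposition: Alternate PR Definition With Degree Type} forces $\hh(\Delta,|\rho|) = \emptyset$, so $\lkds[\rho]$ is acyclic. Hence the complexes with homology occur exactly at positions $s,\, s+d_p,\, s+d_p+d_{p-1},\, \dots,\, s+\sum \bd$ with consecutive degrees $p-1, p-2, \dots, -1$, giving a totally pure chain of offset $s$ and degree type $\bd$.

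For part (3), let a totally pure maximal chain have homology-bearing complexes at positions $i_q > i_{q-1} > \dots > i_0$ with offset $s'$ and degree type $\bd' = (d'_q, \dots, d'_1)$, so $i_q = s'$ and $d'_k = i_{k-1} - i_k$. The complex $\delta_{i_k}$ is $\lkds[\rho]$ for some $\rho$ with $|\rho| = i_k$, and it carries $\nth{(k-1)}$ homology, so by Proposition \ref{Proposition: Alternate PR Definition With Degree Type} we must have $i_k = s + \sum_{j=k+1}^p d_j$. Reading off the top gives $s' = s + \sum_{j=q+1}^p d_j \geq s$; differencing gives $d'_k = d_k$ for $1 \leq k \leq q$, so $\bd'$ is the subsequence $(d_q,\dots,d_1)$ of $\bd$; and comparing $i_0$ yields $s' + \sum \bd' = s + \sum \bd$. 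Part (4) is then immediate: if $s = 0$ then $\Delta = \delta_0$ itself has homology, so $i_q = 0 = s'$, and substituting into $s' = \sum_{j=q+1}^p d_j$ forces the sum to be empty, giving $q = p$ and $\bd' = \bd$.

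The one subtle point, and the main obstacle, is part (2): specifically, the verification that the intermediate links along the explicit chain are acyclic. This is what upgrades the chain from merely pure to \emph{totally} pure (i.e., with consecutive homology degrees), and it is where Proposition \ref{Proposition: Alternate PR Definition With Degree Type} pulls the real weight, since it globally rules out any extra homology at face-sizes away from the designated list $\{s + \sum_{j=r+1}^p d_j : 0 \leq r \leq p\}$.
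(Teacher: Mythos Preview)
Your proposal is correct and follows essentially the same approach as the paper's proof. Both arguments invoke Corollaries \ref{Corollary: PR Complex links have single homology} and \ref{Corollary: Homology index sets decreasing sequence} for part (1), use Lemma \ref{Lemma: Homology of descending links} to build the descending chain of faces in part (2), and appeal to the PR condition to pin down the positions $i_k$ in part (3); your treatment is in fact more explicit than the paper's in justifying (via Proposition \ref{Proposition: Alternate PR Definition With Degree Type}) why the intermediate links along the chain in part (2) are acyclic, a step the paper absorbs into the phrase ``by construction.''
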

	\begin{proof}
		All of the homology index sets of the complexes in $P_\Delta$ are either empty or singletons by Corollary \ref{Corollary: PR Complex links have single homology}, and the nonempty ones form a decreasing sequence by Corollary \ref{Corollary: Homology index sets decreasing sequence}. This proves part (1). 
		
		By the definition of offset, $P_\Delta$ must contain the complex $\delta_s=\lkds$ for some face $\sigma\in \Delta$ of size $s$ such that $h(\delta_s)=\{p-1\}$. Lemma \ref{Lemma: Homology of descending links} gives us a chain of simplices $\sigma = \tau_p \subsetneqq \dots \subsetneqq \tau_0$ such that $h(\Delta, \tau_j)=\{j-1\}$. For each $0\leq j \leq p$ we define $i_j=|\tau_j|$ and $\delta_{i_j} = \lkds[\tau_j]$. We can extend the sequence $\delta_{i_p}> \dots > \delta_{i_0}$ in $P_\Delta$ to a totally pure maximal chain $\Delta=\delta_0>...>\delta_m=\{\emptyset\}$ by Lemma \ref{Lemma: links in links}, and by construction this chain has degree type $\bd=(i_{p-1}-i_p,\dots,i_0-i_1)$ and offset $s=i_p$. This proves part (2).
		
		Now suppose $\Delta=\varepsilon_0>...>\varepsilon_m=\{\emptyset\}$ is a totally pure maximal chain in $P_\Delta$ with degree type $\bd'$ and offset $s'$. Note in particular that it must have the same length as the chain $\Delta=\delta_0>...>\delta_m=\{\emptyset\}$, because all facets of $\Delta$ have the same size, and this length is equal to the sum of the offset and degree type, which gives us $s'+\sum \bd'=s+\sum \bd$. Let $\varepsilon_{i'_r}>\dots > \varepsilon_{i'_0}$ be the complexes in the chain with homology, with the degrees of these homologies forming the consecutive decreasing sequence $r-1,\dots,-1$. By the PR condition we must have $i_j=i_j'$ for each $1\leq j \leq r$, and hence $r\leq p$. Thus $\bd'$ is equal to $(i_{r-1}-i_r,\dots,i_0-i_1)$, which is a subsequence of $\bd$, and $s'$ is equal to $i_r$, which is greater than or equal to $s=i_p$. This proves part (3).
		
		Moreover, if $s=0$, then $\Delta$ itself has $\nth[st]{(p-1)}$ homology, and hence we have $r=p$ and $\varepsilon_{i_r}=\varepsilon_{i_r}=\Delta$. In particular this means $s'=i_p=0$ and $\bd'=\bd$, proving part (4). 
	\end{proof}
	
	The degree type of a PR complex can therefore be defined as the \textit{longest} degree type of the totally pure maximal chains in its link poset; and the offset as the \textit{smallest} offset of the totally pure maximal chains.

	\chapter{Some Families of PR Complexes}\label{Chapter: Families of PR Complexes}
	In this chapter we present some interesting families of highly symmetric PR complexes, along with their Betti diagrams and degree types. In particular, we introduce the family of \textit{cycle complexes}, which have degree types of projective dimension $2$; the family of \textit{intersection complexes}, whose degree types are all those increasing sequences of positive integers for which every difference sequence is also increasing; and the family of \textit{partition complexes}, which have degree types of the form $(\overbrace{1,\dots,1\underbrace{a,1,\dots,1}_{m}}^{p})$, for positive integers $a$, $p$ and $m$.
	
	\section{Motivation}\label{Subsection: Motivation PR Families}
	
	Our motivation for presenting these families is twofold. Firstly, providing more examples of PR complexes allows us to highlight some more of the combinatorial properties common to many complexes in this family, along with methods for proving that the PR condition is satisfied, all of which will (we hope) help the reader to develop an intuition about PR complexes.
	
	Secondly, they help us to investigate the following question.
	\begin{qu}\label{Question: Lower bounds on n for each degree type}
		Let $\bd$ be a sequence of positive integers. What is the lowest value of $n$ for which the cone $\Dn$ contains a pure diagram of degree type $\bd$?
	\end{qu}
	
	Our proof for Theorem \ref{Theorem: PR Complexes of Any Degree Type}, which we will present in the next chapter, is a procedure for generating PR complexes of any given degree type, and hence gives us upper bounds for $n$ in Question \ref{Question: Lower bounds on n for each degree type} for every degree type. However, in general, our procedure produces complexes on very large numbers of vertices, so we suspect these upper bounds are far greater than necessary. In contrast, the families we present in this chapter have a very low number of vertices for their given degree types. In most cases, they are the lowest we have currently found, and in a few cases, we will prove they are the lowest possible. Thus they allow us to considerably lower these bounds for $n$ in Question \ref{Question: Lower bounds on n for each degree type} for certain degree types. 
	
	In order for a PR complex to have a minimal number of vertices for its given degree type, it must have offset zero (i.e. it must have nontrivial homology at some degree). This is a consequence of Corollary \ref{Corollary: Offset 0 => Minimal}: any PR complex $\Delta$ with a nonzero offset contains a nonempty face whose link has the same degree type and offset zero, and this link has fewer vertices than $\Delta$ itself. For this reason, all of the PR complexes we present in this section will have offset zero.
	
	Finding the minimal number of vertices of a PR complex with degree type $\bd$ also tells us all the possible \textit{shift types} of PR complexes with that degree type (as discussed below). This means that an answer to Question \ref{Question: Lower bounds on n for each degree type} for any given degree type $\bd$ gives us an answer to Question \ref{Question: Shift Types} for all shift types with $\bd$ as their difference sequence; and a complete answer to Question \ref{Question: Lower bounds on n for each degree type} for \textit{every} degree type would demonstrate all of the possible shift types of PR complexes, and thus provide a full analogue of the first Boij-S\"{o}derberg conjecture for squarefree monomial ideals.
	
	To see why Questions \ref{Question: Lower bounds on n for each degree type} and \ref{Question: Shift Types} are in fact equivalent, suppose we have found a minimal PR complex $\Delta$ of degree type $\bd$, on $n(\bd)$ vertices. BY ADHF the initial shift $c_0(\bd)$ of the diagram $\beta(\Idstar)$ at degree $0$ is given by the equation $c_0(\bd)=n(\bd)-\dim \Delta - 1=n(\bd)-\sum \bd$. This is the minimal possible initial shift for a PR complex of degree type $\bd$, and along with $\bd$ it fully determines the shift type of $\Delta$.
	
	
	Recall from Remark \ref{Remark: Link of Emptyset and Facets} that the initial shift $c_0$ of a PR complex $\Delta$ is equal to its codimension. Thus if $\Delta$ is a PR complex with shift type $(c_p,\dots,c_0)$, then adding a missing vertex to $\Delta$ gives us a PR complex with shift type $(c_p+1,\dots,c_0+1)$ (because it increases the codimension of the complex by $1$). For example, the boundary of the $2$-simplex
	$$\begin{tikzpicture}[scale=0.8]
		\tikzstyle{point}=[circle,thick,draw=black,fill=black,inner sep=0pt,minimum width=2pt,minimum height=2pt]
		\node (a)[point] at (0,0) {};
		\node (b)[point] at (2,0) {};
		\node (c)[point] at (1,1.7) {};
		
		\draw (a.center) -- (b.center) -- (c.center) -- cycle;
	\end{tikzpicture}$$
	has Betti diagram
	$$\begin{array}{c | ccc}
		& 0 & 1 & 2\\
		\hline
		1 & 3 & 3 & 1 \\ 
	\end{array}$$
	and hence has shift type $(3,2,1)$. Adding a missing vertex to this complex gives us the complex
	$$\begin{tikzpicture}[scale=0.8]
		\tikzstyle{point}=[circle,thick,draw=black,fill=black,inner sep=0pt,minimum width=2pt,minimum height=2pt]
		\node (a)[point] at (0,0) {};
		\node (b)[point] at (2,0) {};
		\node (c)[point] at (1,1.7) {};
		\node[scale=1.2] at (3,0.85) {$\times$};
		
		\draw (a.center) -- (b.center) -- (c.center) -- cycle;
	\end{tikzpicture}$$
	which has Betti diagram
	$$\begin{array}{c | ccc}
		& 0 & 1 & 2\\
		\hline
		2 & 3 & 3 & 1 \\ 
	\end{array}$$
	and hence shift type $(4,3,2)$.
	
	It follows that if there exists a PR complex of degree type $\bd$ and initial shift $c_0$, then there exists a PR complex of degree type $\bd$ and initial shift $c$ for any $c\geq c_0$. Therefore we can determine all possible shift types of PR complexes with degree type $\bd$ by finding minimal initial shift $c_0(\bd)$ for a PR complex with that degree type, or equivalently the minimal number of vertices $n(\bd)$.
	
	%
	
	
	\section{Constructing Complexes With Group Actions}\label{Subsection: Consrtucting Complexes from Groups}
	Many of the PR complexes we have seen already are highly symmetric. With this in mind, we now present a useful tool for constructing symmetric complexes using groups and group actions, along with some relevant notation.
	
	Suppose we have a group $G$ acting on a set $V$. For any subset $F\subseteq V$ and any element $g\in  G$, we define the subset $gF\subseteq V$ to be the set $\{gx:x\in F\}$. This allows us to define the following notation.
	
	\begin{notation}\label{Notation: Complexes from Groups}
		For subsets $F_1,\dots,F_m$ in $V$, and a group $G$ acting on $V$, we use $\langle F_1,\dots, F_m\rangle_G$ to denote the complex $\langle gF_i|1\leq i\leq m, g\in G\rangle$.
	\end{notation}
	
	The following lemma shows how we can exploit the symmetries of complexes constructed in this way to help us understand their links.
	
	\begin{lem}\label{Lemma: link x = link gx}
		Let $F_1,\dots,F_m \subset V$ and let $G$ be a group acting on $V$. Define $\Delta=\langle F_1,\dots,F_m \rangle_G$, and let $\sigma\in \Delta$ and $g\in G$. We have an isomorphism of complexes $\lkds \cong \link_\Delta g\sigma$.
	\end{lem}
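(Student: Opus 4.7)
The plan is to exploit the fact that each group element $g \in G$ induces a simplicial automorphism of $\Delta$, which will then restrict to the desired isomorphism between the links.

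First, I would observe that since $G$ acts on $V$, each element $g \in G$ gives a bijection $V \to V$ sending $x \mapsto gx$, with inverse given by $g^{-1}$. I would extend this to a set-map on subsets of $V$ via $\tau \mapsto g\tau = \{gx : x \in \tau\}$.

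The key step is to verify that this map sends $\Delta$ to $\Delta$ (and hence is an automorphism of $\Delta$, because $g^{-1}$ provides an inverse). By construction of $\Delta = \langle F_1,\dots,F_m \rangle_G$, any face $\tau \in \Delta$ is contained in some $h F_i$ with $h \in G$ and $1 \leq i \leq m$. Then $g\tau \subseteq gh F_i$, and since $gh \in G$, the set $gh F_i$ is a face of $\Delta$ by definition of $\langle F_1,\dots,F_m\rangle_G$. Thus $g\tau \in \Delta$.

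Finally, I would check that this automorphism restricts to a bijection $\lkds \to \link_\Delta g\sigma$. Since the action preserves disjointness and unions (i.e. $g(\tau \cap \sigma) = g\tau \cap g\sigma$ and $g(\tau \cup \sigma) = g\tau \cup g\sigma$, because $g$ acts as a bijection on $V$), we have $\tau \cap \sigma = \emptyset$ and $\tau \cup \sigma \in \Delta$ if and only if $g\tau \cap g\sigma = \emptyset$ and $g\tau \cup g\sigma \in \Delta$. This gives the isomorphism $\lkds \cong \link_\Delta g\sigma$. There is no significant obstacle here; the argument is essentially unpacking the definitions, and the only thing to verify carefully is that the complex $\langle F_1,\dots,F_m\rangle_G$ is genuinely closed under the group action, which follows immediately from the way this notation was introduced.
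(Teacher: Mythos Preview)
Your proposal is correct and takes essentially the same approach as the paper: both arguments use that $g$ induces a simplicial automorphism of $\Delta$ and then check it carries $\lkds$ to $\link_\Delta g\sigma$. The only cosmetic difference is that the paper phrases this via a bijection between the facets of $\Delta$ containing $\sigma$ and those containing $g\sigma$, whereas you verify the link condition elementwise; these are equivalent.
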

	\begin{proof}
		If $F$ is a facet of $\Delta$ containing $\sigma$ then $gF$ is a facet of $\Delta$ containing $g\sigma$. Conversely, for any facet $F$ of $\Delta$ containing $g\sigma$, we must have that $g^{-1}F$ is a facet of $\Delta$ containing $\sigma$. Thus, $g$ gives us a bijection between the facets of $\Delta$ containing $\sigma$ and the facets of $\Delta$ containing $g\sigma$, and therefore  provides an isomorphism between $\lkds$ and $\link_\Delta g\sigma$.
	\end{proof}
	

	\section{PR Complexes of Projective Dimensions $1$ and $2$}\label{Subsection: PR projdim 1 and 2}
	
	In this section we fix two positive integers $a$ and $b$, and present PR complexes of degree type $(a)$ and $(a,b)$ with offset $0$.
	
	\subsection{Projective Dimension $1$: Disjoint Simplices}\label{Subsection: PR projdim 1}
	Suppose $\Delta$ is a PR complex of degree type $(a)$.
	
	This means that $I_{\Delta^*}$ has projective dimension $1$, and hence none of the links in $\Delta$ can have homology at a degree higher than $0$. Thus the only links in $\Delta$ which have homology are the links of maximal intersections and the links of facets. Moreover, if $\Delta$ has offset $0$, it must be disconnected itself, and hence it has only a single maximal intersection, namely the empty set $\emptyset$. This observation leads to the following result.
	\begin{prop}\label{Proposition: Disjoint Simplices are minimal PR}
		If $\Delta$ is a PR complex of degree type $(a)$ with a minimal number of vertices $n$ then it must be of the form given in Example \ref{Example: PR Disjoint Simplices}, consisting of two disjoint $(a-1)$-simplices. In particular, we have $n = 2a$.
	\end{prop}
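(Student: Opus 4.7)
The plan is to translate the PR condition with degree type $(a)$ and offset $0$ into combinatorial restrictions on the facets of $\Delta$, and then use the analysis of maximal intersections from Section~\ref{Subsection: Maximal Intersections} to force the facets to be pairwise disjoint. Minimality of $n$ will then pin down the number of facets.

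First I would apply Proposition~\ref{Proposition: Alternate PR Definition With Degree Type} to the data $p=1$, $d_1=a$, $s=0$. This gives $\hh(\Delta,0)=\{0\}$ and $\hh(\Delta,a)=\{-1\}$, and $\hh(\Delta,m)=\emptyset$ for all other $m$. Combined with Proposition~\ref{Proposition: Dimension of PR Complexes} (which yields $\dim\Delta=a-1$), this tells us three things: $\Delta$ is pure of dimension $a-1$ with facets of size $a$; $\Delta$ itself has nontrivial $\widetilde{H}_0$, so it is disconnected; and every proper link $\lkds$ with $\sigma\neq\emptyset$ is acyclic.

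Next I would analyze maximal intersections. Since $\beta_1(I_{\Delta^*})$ is (trivially) pure here, Proposition~\ref{Proposition: Every Facet Contains a Maximal Intersection} applies as soon as $\Delta$ has more than one facet, so every facet contains some maximal intersection. By Proposition~\ref{Proposition: Maximal Intersections Links}, the link of any maximal intersection is disconnected, hence has nontrivial $\widetilde{H}_0$. But the only face of $\Delta$ whose link has nontrivial $\widetilde{H}_0$ is $\emptyset$, by the homology constraints from the previous step. Therefore the only maximal intersection in $\Delta$ is $\emptyset$, which forces every pair of distinct facets of $\Delta$ to be disjoint (otherwise their intersection would be contained in a nonempty maximal intersection).

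Finally, having concluded that $\Delta$ is a disjoint union of $k$ copies of the $(a-1)$-simplex, I would observe that such a complex has $\dim_\KK\widetilde{H}_0(\Delta)=k-1$, so the PR condition ($\widetilde{H}_0(\Delta)\neq 0$) forces $k\geq 2$, giving $n=ka\geq 2a$. Conversely, for $k=2$ all proper links are simplices (hence acyclic), so two disjoint $(a-1)$-simplices genuinely satisfy all the PR conditions listed above, as already verified in Example~\ref{Example: PR Disjoint Simplices}. Minimality of $n$ therefore forces $k=2$ and $n=2a$. The only real content is the maximal-intersection step; the rest is bookkeeping with the structural results of Chapter~\ref{Chapter: PR Complexes and Degree Types}.
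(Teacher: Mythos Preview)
Your argument is correct and essentially matches the paper's: both use the maximal-intersection machinery from Section~\ref{Subsection: Maximal Intersections} to conclude that $\emptyset$ is the unique maximal intersection (since it is the only face with a disconnected link), forcing the facets to be pairwise disjoint, with minimality then giving exactly two facets. The only omission is that you should first invoke Corollary~\ref{Corollary: Offset 0 => Minimal} to justify taking $s=0$ before applying Proposition~\ref{Proposition: Alternate PR Definition With Degree Type}; the paper does this explicitly, and without it your assumption of offset $0$ is not yet warranted by the hypothesis of minimal $n$.
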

	\begin{proof}
		In order for $\Delta$ to have a minimal number of vertices, it must have offset $0$ by Corollary \ref{Corollary: Offset 0 => Minimal}. Thus we have $\dim \Delta = a - 1$, and $\Hred_0(\Delta)\neq 0$.
		
		By Proposition \ref{Proposition: beta_1 Purity Condition}, the only disconnected links in $\Delta$ are the links of maximal intersections. Thus, because $\Delta$ itself is disconnected, the only maximal intersection in $\Delta$ is the empty set. This means that no two facets of $\Delta$ intersect, and hence $\Delta$ is a disjoint union of facets. By minimality, it is a disjoint union of exactly two facets. This proves the result.
	\end{proof}
	
	\begin{ex}
		Suppose $\Delta$ is the disjoint union of two $2$-simplices $\Delta^2+\Delta^2$, which is PR with degree type $(3)$ and offset $0$. Up to isomorphism, the maximal chains in the link poset $P_\Delta$ all look like the following.
		
		\begin{center}
			\begin{tabular}{ c c c c c c c }
				
				\begin{tikzpicture}[scale = 0.5]
					\tikzstyle{point}=[circle,thick,draw=black,fill=black,inner sep=0pt,minimum width=2pt,minimum height=2pt]
					\node (a)[point] at (0,0) {};
					\node (b)[point] at (2,0) {};
					\node (c)[point] at (1,1.7) {};
					\node (d)[point] at (3,0) {};
					\node (e)[point] at (5,0) {};
					\node (f)[point] at (4,1.7) {};
					
					\draw[fill=gray] (a.center) -- (b.center) -- (c.center) -- cycle;
					\draw[fill=gray] (d.center) -- (e.center) -- (f.center) -- cycle;
				\end{tikzpicture}& \begin{tikzpicture}[scale = 0.5]
					\tikzstyle{point}=[circle,thick,draw=black,fill=black,inner sep=0pt,minimum width=2pt,minimum height=2pt]
					\node (a) at (0,0.75) {};
					\node (b) at (1,0.75) {};
					\node (c) at (0.5,0) {};
					
					\draw[->] (a.center) -- (b.center);
				\end{tikzpicture}   & \begin{tikzpicture}[scale = 0.5]
					\tikzstyle{point}=[circle,thick,draw=black,fill=black,inner sep=0pt,minimum width=2pt,minimum height=2pt]
					\node (a)[point] at (0,0) {};
					\node (b)[point] at (0,1.7) {};
					
					\draw[fill=gray] (a.center) -- (b.center);
				\end{tikzpicture}
				& \begin{tikzpicture}[scale = 0.5]
					\tikzstyle{point}=[circle,thick,draw=black,fill=black,inner sep=0pt,minimum width=2pt,minimum height=2pt]
					\node (a) at (0,0.75) {};
					\node (b) at (1,0.75) {};
					\node (c) at (0.5,0) {};
					
					\draw[->] (a.center) -- (b.center);
				\end{tikzpicture} & \begin{tikzpicture}[scale = 0.5]
					\tikzstyle{point}=[circle,thick,draw=black,fill=black,inner sep=0pt,minimum width=2pt,minimum height=2pt]
					\node (a)[point] at (0,0.75) {};
					\node (c) at (0,0) {};
				\end{tikzpicture}  &
				\begin{tikzpicture}[scale = 0.5]
					\tikzstyle{point}=[circle,thick,draw=black,fill=black,inner sep=0pt,minimum width=2pt,minimum height=2pt]
					\node (a) at (0,0.75) {};
					\node (b) at (1,0.75) {};
					\node (c) at (0.5,0) {};
					
					\draw[->] (a.center) -- (b.center);
				\end{tikzpicture} & \begin{tikzpicture}[scale = 0.5] \node (q)[label=right:\large$\{\emptyset\}$] at (18,1.7){};\end{tikzpicture} \\
				\begin{tikzcd}|[draw=red, line width =.5mm, rectangle]|\Delta^2 + \Delta^2\end{tikzcd}&& $\Delta^1$ && $\Delta^0$ &&\begin{tikzcd}|[draw=red, line width =.5mm, rectangle]|\Delta^{-1}\end{tikzcd}\\
				$\Hred_0$&&$\times$&&$\times$ && $\Hred_{-1}$
			\end{tabular}
		\end{center}
	\end{ex}
	
	\begin{rem}
		More generally, the PR complexes with degree type $(a)$ and offset $0$ are all disjoint unions of $(a-1)$-simplices.
	\end{rem}
	
	\subsection{Projective Dimension $2$: Cycle Complexes}\label{Subsection: PR projdim 2}
	Now we search for PR complexes of degree type $(a,b)$ with a minimal number of vertices. Once again, Corollary \ref{Corollary: Offset 0 => Minimal} tells us that any such complex must have offset $0$.
	
	Suppose $\Delta$ is such a complex. Because $\Idstar$ has projective dimension $2$ and $\Delta$ has offset $0$, we have $h(\Delta)=\{1\}$. Moreover, the only links of nonempty faces in $\Delta$ which have homology are the links of maximal intersections (which have size $a$) and the links of facets (which have size $a+b$).
	%
	%
	
	In this section we present the family of \textit{cycle complexes}, which satisfy these properties for all values of $a$ and $b$. Our basic construction is straightforward: the cycle complex of type $(a,b)$ is generated by a single face $F$ of size $a+b$, under the action of some cyclic group $H=\langle h \rangle$, chosen such that the intersection $F \cap hF$ has size $a$. More specifically, if we choose $n$ to be a sufficiently large multiple of $b$, and let $F$ denote the subset $\{1,\dots,a+b\}$ in $[n]$, then we can choose our cyclic group to be the subgroup of the symmetric group $S_n$ generated by the element $g^b$ where $g$ denotes the $n$-cycle $g=(1\dots n)$. In this case we have $F\cap g^b F = \{1+b,\dots, a+b\}$, which has size $a$ as required.
	\begin{rem}
		For this construction to work, $n$ \textit{must} be a multiple of $b$. This is because the cyclic group generated by $g^b$ in $S_n$ contains $g^{\hcf(b,n)}$, and if we have $h=\hcf(b,n)<b$ then $F\cap g^h F$ has size greater than $a$.
	\end{rem}
	
	Before we can define the cycle complexes explicitly, we will have to be more explicit about what constitutes a \textit{sufficiently large} multiple of $b$ in this context. The central constraint is that our resulting complex must have $\nth[st]{1}$ homology. 

	Suppose we have found integers $m$ and $r$ with $1\leq r \leq b$ such that $a=mb+r$. As the following lemma shows, the smallest value of $n$ for which our construction has $\nth[st]{1}$ homology turns out to be $(2m+3)b$. 
	\begin{rem}
		Note we are choosing $r$ to satisfy $1\leq r \leq b$ here, rather than $0\leq r \leq b-1$ (which is the standard practice for Euclidean division). In particular, if $a$ divides $b$, our value for $r$ would be $b$ rather than $0$; and our value for $m$ would be $\frac{a}{b}-1$ rather than $\frac{a}{b}$.
	\end{rem}
	
	\begin{lem}\label{Lemma: n = (2m+3)b for our construction}
		Suppose $a = mb + r$ for some integers $m\geq 0$ and $1\leq r \leq b$, and define $n=kb$ for some positive integer $k$. Define $g$ to be the $n$-cycle $(1 \dots n)$ in the symmetric group $S_n$. Also define $F_0$ to be the subset $\{1,\dots,a+b\}\subseteq[n]$ and $F_i$ to be the set $g^{ib}F_0$ for each integer $i$. For $\Delta =\langle F_i : i\in \ZZ \rangle$, we have $$\Hred_1(\Delta)=\begin{cases}
			0 &\text{ if } k\leq 2m+2\\
			\KK & \text{ if } k \geq 2m+3.
		\end{cases}$$
	\end{lem}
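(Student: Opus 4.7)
My plan is to identify $\Delta$ with the nerve of its facet cover and then analyse that nerve in two separate regimes.  First I would verify that the cover $\{F_0,\dots,F_{k-1}\}$ of $\Delta$ is a good cover:  each $F_i$ is a simplex and hence contractible, and any nonempty intersection $F_{i_1}\cap\dots\cap F_{i_s}$ is the simplex on the common vertex set, again contractible.  The Nerve Lemma then gives $\Delta\simeq N$, so it suffices to compute $\Hred_1(N)$.  To describe $N$, I would observe that for each vertex $v$ of $\Delta$ the set $\{i\in\ZZ/k\ZZ : v\in F_i\}$ is a cyclic interval of length $m+1$ or $m+2$ (the precise length depending on $v\bmod b$), and that every cyclic interval $T_i:=\{i,i+1,\dots,i+m+1\}\pmod k$ of length $m+2$ occurs in this way:  the intersection $F_i\cap F_{i+m+1}=\{(i+m+1)b+1,\dots,(i+m+1)b+r\}$ has cardinality $r\geq 1$, and each of its elements has facet-set exactly $T_i$.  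Consequently a subset $S\subseteq\ZZ/k\ZZ$ is a face of $N$ if and only if $S\subseteq T_i$ for some $i$.

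For $k\leq 2m+2$ the maximum cyclic distance in $\ZZ/k\ZZ$ is $\lfloor k/2\rfloor\leq m+1$, so every pair of vertices of $N$ lies in a common $T_i$ and the $1$-skeleton of $N$ is the complete graph $K_k$.  Since $H_1(K_k)=0$, every $1$-cycle of $N$ is a $\ZZ$-combination of boundaries of triples $\{i,j,l\}$, and it suffices to bound each such triple.  When $\{i,j,l\}$ is itself a face of $N$ the triangle itself provides a filling; when it is not, I would produce a fourth vertex $v$ such that each of $\{i,j,v\}$, $\{i,l,v\}$ and $\{j,l,v\}$ lies in $N$, so that $\partial\{i,j,l\}=\partial(\{i,j,v\}-\{i,l,v\}+\{j,l,v\})$.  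The existence of $v$ is a case analysis on the three arc gaps of $\{i,j,l\}$:  the hypothesis $k\leq 2m+2$ forces at least one vertex of $\ZZ/k\ZZ\setminus\{i,j,l\}$ to lie within cyclic distance $m+1$ of all three of $i,j,l$ simultaneously.

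For $k\geq 2m+3$ I would exhibit $\Delta$ as a $K(\ZZ,1)$.  Define an ``unwound'' complex $\widetilde\Delta$ on vertex set $\ZZ$ by $\widetilde F_i=\{ib+1,\dots,ib+a+b\}\subseteq\ZZ$ for all $i\in\ZZ$ and $\widetilde\Delta=\bigcup_{i\in\ZZ}\widetilde F_i$.  An induction on the filtration $\widetilde\Delta_j=\widetilde F_0\cup\dots\cup\widetilde F_j$, using that $\widetilde\Delta_j\cap\widetilde F_{j+1}=\widetilde F_j\cap\widetilde F_{j+1}$ is a simplex, shows that each $\widetilde\Delta_j$ is contractible, and the colimit $\widetilde\Delta$ is therefore contractible.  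Translation $v\mapsto v+n$ defines a free $\ZZ$-action on $\widetilde\Delta$ with quotient $\Delta$.  The star of any vertex of $\widetilde\Delta$ spans at most $a+(m+2)b=(2m+2)b+r$ consecutive integers, and this quantity is strictly less than $n=kb$ precisely when $k\geq 2m+3$; in this range the $\ZZ$-action is properly discontinuous, the projection $\widetilde\Delta\to\Delta$ is a covering map, and $\pi_1(\Delta)=\ZZ$.  Combined with the contractibility of the universal cover this gives $\Delta\simeq S^1$ and hence $\Hred_1(\Delta)=\KK$.

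The delicate step will be the intermediate range $m+3\leq k\leq 2m+2$:  here $N$ can carry substantial higher homology (for instance, $k=m+3$ gives $N=\partial\Delta^{k-1}\simeq S^{k-2}$), so the vanishing of $\Hred_1(N)$ cannot be reduced to a contractibility or collapse argument.  The uniform construction of the witness vertex $v$ for every bad triangle, across all admissible $m$ and $k$ in this range, is the main technical content of the proof.
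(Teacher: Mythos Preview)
Your approach is genuinely different from the paper's, and the two halves have very different standings.

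For $k\geq 2m+3$ your covering-space argument is correct and in fact proves more than required: it identifies the full homotopy type $\Delta\simeq S^1$. The key diameter estimate $(2m+2)b+r<kb$ is exactly the condition $k\geq 2m+3$, and the filtration argument for the contractibility of $\widetilde\Delta$ goes through because $\widetilde\Delta_j\cap\widetilde F_{j+1}=\widetilde F_j\cap\widetilde F_{j+1}$ is a simplex.

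For $k\leq 2m+2$, however, the nerve argument has a real gap. The criterion you state for the witness vertex --- that $v$ lie within cyclic distance $m+1$ of all three of $i,j,l$ --- is \emph{not} what guarantees $\{i,j,v\},\{i,l,v\},\{j,l,v\}\in N$. Three points of $\ZZ/k\ZZ$ with all pairwise distances $\leq m+1$ need not fit in any window of length $m+2$ (e.g.\ $\{0,2,4\}\subset\ZZ/6\ZZ$ with $m=2$); what you need is that each triple has \emph{some} gap of size $\geq k-m-1$. So the existence of the filling vertex genuinely requires the arc-gap case analysis you allude to, and you have not carried it out. In the examples I checked a suitable $v$ can always be found (for $k=2m+2$, placing $v$ at offset $m+1-g_3$ inside the arc opposite the smallest gap $g_3$ works), so the strategy is probably salvageable, but the work remains to be done and your stated heuristic points in the wrong direction.

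The paper avoids this difficulty entirely with a single Mayer--Vietoris split that handles all $k\geq m+3$ at once. Writing $A=\langle F_0,\dots,F_{m+1}\rangle$ and $B=\langle F_{m+2},\dots,F_{k-1}\rangle$, the $m+2$ consecutive facets of $A$ share the common vertex $a+b$, so $A$ is a cone; and the facets of $B$ satisfy $F_i\cap F_j\subseteq F_{j-1}$ for $i<j$, so $B$ collapses onto a single facet. Mayer--Vietoris then reduces everything to $\Hred_0(A\cap B)$, and one checks directly that the two simplices $F_0\cap F_{k-1}$ and $F_{m+1}\cap F_{m+2}$ contain all vertices of $A\cap B$; they share the vertex $1$ when $k\leq 2m+2$ and are disjoint (and exhaust $A\cap B$) when $k\geq 2m+3$. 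This bypasses both the nerve and the covering machinery, and handles the intermediate range $m+3\leq k\leq 2m+2$ without any per-triangle case analysis.
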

	\begin{proof}
		The permutation $g$ acts on $F_0$ by adding $1$ to each of its elements (modulo $n$). Thus for any integer $i$ we have $$F_i =\{ib+1,\dots,ib+(a+b)\}=\{ib+1,\dots,(m+i+1)b+r\}$$ where the elements of this set are read modulo $n$.
		
		In particular, we have that for any $0\leq i \leq m+1$, the set $F_i$ contains the element $a+b=(m+1)b+r$. By symmetry, any consecutive sequence of $m+2$ facets $F_i,F_{i+1},\dots,F_{i+{m+1}}$ contains a common element.
		
		Note that $\Delta$ has exactly $k$ facets, namely $F_0,\dots,F_{k-1}$. This means that if $k\leq m+2$, all of the facets of $\Delta$ contain a common element, and so $\Delta$ is acyclic. Thus we may assume that $k\geq m+3$, and decompose $\Delta$ into the union of two subcomplexes
		\begin{align*}
			A&=\langle F_0,\dots,F_{m+1}\rangle\\
			B&=\langle F_{m+2},\dots,F_{k-1} \rangle.
		\end{align*}
		
		As noted above, the facets of $A$ all contain a common element, so $A$ must be acyclic. The facets of $B$ satisfy the conditions of Lemma \ref{Lemma: Facet Sequence Deformation Retract} below, and so $B$ is also acyclic. 
		Thus the Mayer-Vietoris Sequence contains an isomorphism
		\begin{equation*}
			0\rightarrow \Hred_1(\Delta)\rightarrow \Hred_0(A\cap B) \rightarrow 0
		\end{equation*}
		so it suffices to prove that $$\Hred_0(A\cap B)=\begin{cases}
			0 &\text{ if } k\leq 2m+2\\
			\KK & \text{ if } k \geq 2m+3.
		\end{cases}$$
		
		We begin by noting that the vertices of $B$ can be arranged in the (modulo $n$) consecutive sequence $(m+2)b+1,\dots,n,1,\dots,a$. In particular, this means that $B$ contains none of the vertices $a+1,\dots,(m+2)b$, and hence $A\cap B$ also contains none of these vertices. However, $A\cap B$ does contain the faces $F_0\cap F_{k-1}=\{1,\dots,a\}$ and $F_{m+1}\cap F_{m+2}=\{(m+2)b+1,\dots,(2m+2)b+r\}$.
		
		Suppose first that $k\leq 2m+2$. In this case, we have $(2m+2)b+r> n$. This means that the faces $F_0\cap F_{k-1}$ and $F_{m+1}\cap F_{m+2}$ between them contain all the vertices of $B$, and thus all the vertices of $A\cap B$. Moreover the two faces intersect, because they both contain the vertex $1$. It follows that $A\cap B$ is connected.
		
		Now suppose that $k\geq 2m+3$. We can arrange the vertices of $A$ in the consecutive sequence $1,\dots,(2m+2)b+r$, and in particular we now have $(2m+2)b+r<n$. This means that $A$ contains none of the vertices $(2m+2)b+r+1,\dots,n$. Once again we conclude that the faces $F_0\cap F_{k-1}$ and $F_{m+1}\cap F_{m+2}$ taken together comprise all the vertices of $A\cap B$. However, in this case, these two faces are disjoint. We claim that $A\cap B$ is equal to the disjoint union of $F_0\cap F_{k-1}$ and $F_{m+1}\cap F_{m+2}$, and hence that $\Hred_0(A \cap B)=\KK$.
		
		Suppose for contradiction that $A\cap B$ is \textit{not} equal to the disjoint union of these two faces. Because $A\cap B$ contains no vertices outside of these two faces, it must contain an edge connecting a vertex $x \in F_0\cap F_{k-1}$ to a vertex $y\in F_{m+1}\cap F_{m+2}$. The edge $\{x,y\}$ must be contained both in a facet $F_i$ of $A$ for some $0\leq i \leq m+1$ and a facet $F_{m+2+j}$ of $B$ for some $0\leq j \leq k-m-3$. As before, we have \begin{align*}
			F_i &=\{ib+1,\dots,(m+i+1)b+r\}\\
			F_{m+2+j}&=\{(m+2+j)b+1,\dots,n,1,\dots,jb+r\}
		\end{align*} which gives us both $ib+1 \leq x \leq jb+r$ and $(m+2+j)b+1 \leq y \leq (m+i+1)b+r$. But this is a contradiction, because the former equation implies that $i\leq j$ and the latter implies that $i > j$.
		
		
	\end{proof}

	The above lemma does not prove that $(2m+3)b$ is the smallest possible number of vertices for a complex of degree type $(a,b)$; only that it is the smallest possible number of vertices for such a complex constructed in this particular way. However, based partly on computational evidence from the software system Macaulay2 (\cite{M2}), we strongly suspect that it is \textit{also} the minimal number needed to obtain a degree type of $(a,b)$. We prove this explicitly below for the case where $a\leq b$, by showing that a PR complex of degree type $(a,b)$ must have at least $3b$ vertices (this proves the $a\leq b$ case because in this case we have $m=0$ and hence $(2m+3)b=3b$). 
	\begin{prop}\label{Proposition: n = (2m+3)b for some cases}
		Suppose $a$ and $b$ are positive integers and let $\Delta$ be a PR complex of degree type $(a,b)$ on vertex set $[n]$. We must have $n\geq 3b$.
	\end{prop}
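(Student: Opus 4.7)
I would begin by reducing to the case of offset $0$: Corollary \ref{Corollary: Offset 0 => Minimal} provides a face $\sigma \in \Delta$ whose link $L = \link_\Delta \sigma$ is PR with the same degree type $(a,b)$ and offset $0$, and since $L$ has $n - |\sigma| \leq n$ vertices, it suffices to prove the bound for $L$. Assuming $\Delta$ has offset $0$, facets of $\Delta$ have size $a+b$, maximal intersections have size $a$ (Proposition \ref{Proposition: beta_1 Purity Condition}), and $\Hred_1(\Delta) \neq 0$.

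Because every intersection of facets of $\Delta$ is a simplex, hence contractible whenever nonempty, the nerve theorem gives $\Delta \simeq N$, where $N$ is the nerve of the cover of $\Delta$ by its facets. Hence $\Hred_1(N) \neq 0$, so $N$ contains a non-bounding $1$-cycle; pick one $C = F_0 F_1 \cdots F_{k-1} F_0$ of minimum length $k \geq 3$. Minimality forces $C$ to be induced in $N$: any chord would split $C$ into two shorter cycles whose chain sum is $C$, one of which is non-bounding, contradicting minimality. Thus for $k \geq 4$ we have $F_i \cap F_j = \emptyset$ whenever $|i-j| > 1 \pmod k$, and for $k = 3$ the absence of the $2$-simplex $\{F_0,F_1,F_2\}$ from $N$ gives $F_0 \cap F_1 \cap F_2 = \emptyset$.

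The key structural step is to show that the intersections $\sigma_i = F_i \cap F_{i+1}$ each have size exactly $a$. By Proposition \ref{Proposition: Alternate PR Definition With Degree Type}, for offset-$0$ degree type $(a,b)$, any face whose link has nontrivial homology has size in $\{0, a, a+b\}$; as $\sigma_i$ is a nonempty proper face of each $F_i$, the sizes $0$ and $a+b$ are excluded, so either $|\sigma_i| = a$ (the clean case) or $\link_\Delta \sigma_i$ is acyclic. In the latter case, since $\link_\Delta \sigma_i$ contains the two disjoint simplices $F_i - \sigma_i$ and $F_{i+1} - \sigma_i$, its connectedness forces additional ``bridging'' facets of $\Delta$ containing $\sigma_i$. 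One then argues that these bridging facets either produce a non-bounding $1$-cycle in $N$ of length strictly less than $k$ (contradicting minimality) or contribute enough vertices outside $F_0 \cup \cdots \cup F_{k-1}$ to preserve the final bound. Granting $|\sigma_i| = a$ for each $i$, induced-ness gives $\sigma_{i-1} \cap \sigma_i \subseteq F_{i-1} \cap F_i \cap F_{i+1} = \emptyset$, so the $\sigma_i$ are pairwise disjoint inside the facets, and inclusion-exclusion yields
\[
n \;\geq\; |F_0 \cup F_1 \cup \cdots \cup F_{k-1}| \;=\; k(a+b) - ka \;=\; kb \;\geq\; 3b.
\]

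The main obstacle is the structural claim of the third paragraph that $|\sigma_i|$ may be taken to equal $a$. Ruling out the bridged case cleanly---showing that whenever $|\sigma_i| \neq a$ the required bridging either forces the cycle to shorten (contradicting minimality) or carries enough compensating vertices outside the cycle---is where the substantive work lies.
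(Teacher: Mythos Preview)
Your approach works, but the ``main obstacle'' you flag is illusory. You do not need $|\sigma_i| = a$; the inequality $|\sigma_i| \leq a$ suffices for your inclusion-exclusion, and this is immediate: every intersection of two facets is contained in some maximal intersection, and maximal intersections have size exactly $a$ by Proposition~\ref{Proposition: beta_1 Purity Condition}. With that, for $k=3$ you get $n \geq 3(a+b) - 3a = 3b$ directly, and for $k \geq 4$ induced-ness of the cycle kills all triple and higher intersections, giving $n \geq k(a+b) - ka = kb \geq 3b$. No bridging analysis is needed.

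The paper's proof is far more direct and avoids the nerve theorem entirely. It simply picks two adjacent facets $F_1, F_2$ (so $|F_1 \cap F_2| = a$) and \emph{any} third facet $F_3$ (which exists since $\Hred_1(\Delta) \neq 0$ rules out having only two facets), then applies three-set inclusion-exclusion with the single observation that every pairwise intersection has size at most $a$:
\[
n \;\geq\; 3(a+b) - a - |F_1\cap F_3| - |F_2\cap F_3| + |F_1\cap F_2\cap F_3| \;\geq\; 3b.
\]
Your nerve-and-minimal-cycle route does buy something extra---the stronger bound $n \geq kb$ where $k$ is the length of a shortest non-bounding cycle in the nerve---which could be a step toward the conjectured general bound $n \geq (2m+3)b$; for the proposition as stated, though, it is considerable overkill.
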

	\begin{proof}
		Let $F_1$ and $F_2$ be adjacent facets of $\Delta$ (i.e. facets such that the intersection $F_1\cap F_2$ is maximal). We may assume that $\Delta$ has offset $0$, and hence $\Hred_1(\Delta)\neq 0$. Because $\Delta$ has $\nth[st]{1}$ homology, it must have at least three facets, so we may choose some other facet $F_3$. We have $$n\geq |F_1|+|F_2|+|F_3|-|F_1\cap F_2|-|F_1\cap F_3|-|F_2\cap F_3| + |F_1\cap F_2 \cap F_3|.$$ We know that $|F_1|=|F_2|=|F_3|=a+b$, and by assumption we have $|F_1\cap F_2|=a$. Also, all maximal intersections in $\Delta$ have size $a$, so no intersection of facets of $\Delta$ can have size greater than $a$. This gives us																																			
		\begin{align*}
			n &
			\geq 2a + 3b - |F_1\cap F_3| - |F_2 \cap F_3| + |F_1\cap F_2\cap F_3|\\
			&\geq 3b +|F_1\cap F_2 \cap F_3|\\
			&\geq 3b.
		\end{align*}
		
	\end{proof}

	Now that we have found the minimum necessary value for $n$ for our construction to work, we can define the cycle complexes.
	\begin{defn}\label{Definition: Cycle Complexes}
		Suppose $a = mb + r$ for some integers $m\geq 0$ and $1\leq r \leq b$. We let
		\begin{itemize}
			\item $n$ denote the number $(2m+3)b$.
			\item $F$ denote the subset $\{1,\dots,a+b\}\subset [n]$.
			\item $g$ denote the $n$-cycle $(1\dots n)$ in the symmetric group $S_n$.
			\item $H$ denote the cyclic subgroup of $S_n$ generated by $g^b$ (which has a natural action on $[n]$, inherited from $S_n$).
		\end{itemize}	
		
		We define the \textit{cycle complex of type} $(a,b)$ to be the complex $$\fC_{a,b} = \langle F\rangle_{H}$$ on vertex set $[n]$.
	\end{defn}

	\begin{ex}\label{Example: Cycle Complexes}
		\begin{enumerate}
			\item The cycle complex $\fC_{1,1}$ has facets $\{1,2\}$, $\{2,3\}$ and $\{3,1\}$. This is the boundary of the 2-simplex, which is PR with degree type $(1,1)$.
			\item The cycle complex $\fC_{1,2}$ has facets $\{1,2,3\}$, $\{3,4,5\}$ and $\{5,1,2\}$. This is Example \ref{Example: PR Zelda Symbol}, which is PR with degree type $(1,2)$.
			\item The cycle complex $\fC_{1,3}$ has facets $\{1,2,3,4\}$, $\{4,5,6,7\}$ and $\{7,8,9,1\}$. This is Example \ref{Example: PR 3D Zelda Symbol}, which is PR with degree type $(1,3)$.
			\item The cycle complex $\fC_{2,1}$ is the complex
				
				$$\begin{tikzpicture}[scale = 0.75]
					\tikzstyle{point}=[circle,thick,draw=black,fill=black,inner sep=0pt,minimum width=3pt,minimum height=3pt]
					\node (a)[point,label=above:$1$] at (-1.5,3) {};
					\node (b)[point,label=below:$2$] at (0,0) {};
					\node (c)[point,label=above:$3$] at (1.5,3) {};
					\node (d)[point,label=below:$4$] at (3,0) {};
					\node (e)[point,label=above:$5$] at (4.5,3) {};
					\node (f)[point,label=below:$1$] at (6,0) {};
					\node (g)[point,label=above:$2$] at (7.5,3) {};	
					
					\begin{scope}[on background layer]
						\draw[fill=gray] (a.center) -- (b.center) -- (c.center) -- cycle;
						\draw[fill=gray] (b.center) -- (c.center) -- (d.center) -- cycle;
						\draw[fill=gray] (c.center) -- (d.center) -- (e.center) -- cycle;
						\draw[fill=gray]   (d.center) -- (e.center) -- (f.center) -- cycle;
						\draw[fill=gray] (e.center) -- (f.center) -- (g.center) -- cycle;
					\end{scope}
				\end{tikzpicture}$$
				which is a triangulation of the M\"{o}bius strip, and is PR with degree type $(2,1)$. Note that it has one fewer vertex than the complex in Example \ref{Example: PR 21 Complex}.
				\item The cycle complex $\fC_{3,2}$ has facets $\{1,2,3,4,5\}$, $\{3,4,5,6,7\}$, $\{5,6,7,8,9\}$, $\{7,8,9,10,1\}$ and $\{9,10,1,2,3\}$. This complex is PR with degree type $(3,2)$.
			\end{enumerate}
		\end{ex}
		
		We want to prove that the cycle complex $\fC_{a,b}$ is always PR with degree type $(a,b)$. In order to do this, we prepare the following lemma (which we also appealed to in the proof of Lemma \ref{Lemma: n = (2m+3)b for our construction}).
		\begin{lem}\label{Lemma: Facet Sequence Deformation Retract}
			Let $\Delta$ be a complex with facets $F_0,\dots,F_k$ such that for every $0\leq i < j \leq k$ we have
			\begin{enumerate}
				\item $F_i\cap F_{i+1}\neq\emptyset$.
				\item $F_i\cap F_j \subseteq F_{j-1}$.
			\end{enumerate} There is a deformation retraction of $\Delta$ on to the acyclic complex $\langle F_0 \rangle$.
		\end{lem}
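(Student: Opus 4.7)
The plan is to prove the statement by induction on $k$, with the base case $k=0$ being immediate since $\Delta=\langle F_0\rangle$ already. For the inductive step, I would first observe that the restricted subcomplex $\Delta' = \langle F_0,\dots,F_{k-1}\rangle$ inherits both hypotheses of the lemma from $\Delta$. Hence it suffices to establish a single deformation retraction $\Delta \rightsquigarrow \Delta'$, after which the inductive hypothesis will complete the proof by composition.

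To produce this retraction, the central combinatorial observation is that every vertex $v \in F_k \setminus F_{k-1}$ lies in no facet of $\Delta$ other than $F_k$. Indeed, if $v \in F_i$ for some $i<k$, then $v \in F_i \cap F_k \subseteq F_{k-1}$ by hypothesis~(2), contradicting the choice of $v$. Meanwhile, hypothesis~(1) applied to $i=k-1$ gives a vertex $w \in F_k \cap F_{k-1}$. Since the unique facet containing $v$ is $F_k$ and $w \in F_k$, Corollary~\ref{Corollary: Deformation Retract} applies to give a deformation retraction $\Delta \rightsquigarrow \Delta - \{v\}$, realised by the map $v \mapsto w$.

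A short check of facets shows that $\Delta - \{v\} = \langle F_0,\dots,F_{k-1}, F_k - \{v\}\rangle$, and this new sequence of facets still satisfies hypotheses~(1) and~(2): for $i<k$ we have $F_i \cap (F_k-\{v\}) \subseteq F_i \cap F_k \subseteq F_{k-1}$, and $F_{k-1} \cap (F_k - \{v\}) = F_{k-1} \cap F_k$ remains nonempty because $v \notin F_{k-1}$. Iterating this single-vertex retraction a total of $|F_k \setminus F_{k-1}|$ times, the last facet in the sequence is progressively shrunk down to $F_k \cap F_{k-1}$, which is contained in $F_{k-1}$ and therefore ceases to be a facet. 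Composing these retractions yields a deformation retraction $\Delta \rightsquigarrow \Delta'$, as required.

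I do not expect any serious obstacle; the one place to watch carefully is verifying that hypotheses~(1) and~(2) persist after each single-vertex deletion, which is precisely what condition~(2) is engineered to guarantee. An alternative, slightly slicker formulation would be to invoke Lemma~\ref{Lemma: Deformation Retract} directly with $g = F_k\setminus F_{k-1}$ and $f = F_k$, but as the deletion $\Delta - g$ can still contain proper faces of $F_k$ meeting $g$ that are not in $\Delta'$ (e.g.\ a vertex of $g$ together with a vertex of $F_k \cap F_{k-1}$), the one-shot application does not land on $\Delta'$ on the nose, whereas the iterated vertex-by-vertex version described above does.
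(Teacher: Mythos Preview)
Your proposal is correct and follows essentially the same route as the paper: induction on $k$, with the key observation that every vertex of $F_k\setminus F_{k-1}$ is free (the paper phrases this as $F_k\cap(\Delta-F_k)=F_k\cap F_{k-1}$), followed by an appeal to the deformation-retract lemma to collapse $F_k$ onto $F_k\cap F_{k-1}$. The paper's write-up is terser and invokes Lemma~\ref{Lemma: Deformation Retract} in one line without spelling out the vertex-by-vertex iteration or the persistence of the hypotheses, whereas you carry out these checks explicitly via Corollary~\ref{Corollary: Deformation Retract}; but the underlying argument is the same.
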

		\begin{proof}
			First assume $k>0$. By our assumption on the intersections of the facets of $\Delta$ we have $F_k\cap (\Delta-F_k)=F_k \cap F_{k-1}$. Thus all of the vertices of $F_k-F_{k-1}$ are free vertices, which means that we can deform $F_k$ on to $F_k\cap F_{k-1}$ using Lemma \ref{Lemma: Deformation Retract}. The result follows by induction on $k\geq 0$.
		\end{proof}

		\begin{thm}\label{Theorem: Cycle Complexes are PR}
			Let $a$ and $b$ be positive integers. The cycle complex $\fC_{a,b}$ is PR with degree type $(a,b)$.
		\end{thm}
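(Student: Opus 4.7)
The plan is to verify the PR condition and compute the degree type by carefully analyzing, for each face $\sigma \in \fC_{a,b}$, the set of facets containing $\sigma$ and the resulting link. The key observation is that the $H$-action provides enough symmetry, via Lemma \ref{Lemma: link x = link gx}, to reduce to the case $\sigma \subseteq F_0$, so the facets containing $\sigma$ include $F_0$.

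My first step will be to show that for any such $\sigma$, the set $I_\sigma = \{i \in \ZZ/(2m+3)\ZZ : \sigma \subseteq F_i\}$ is a cyclic interval of length $k \leq m+2$ containing $0$. For each vertex $x \in [n]$, writing $I_x = [-l_x, r_x]$ for nonnegative integers with $l_x + r_x + 1 \leq m+2$ (this bound comes from the fact that each vertex lies in at most $m+2$ consecutive facets, as already observed in the proof of Lemma \ref{Lemma: n = (2m+3)b for our construction}), the intersection $I_\sigma = \bigcap_{x \in \sigma} I_x = [-\min_x l_x, \min_x r_x]$ is again a cyclic interval. After applying an appropriate power of $g^b$ using Lemma \ref{Lemma: link x = link gx}, I may assume $I_\sigma = \{0, 1, \dots, k-1\}$, so that $\link_\Delta \sigma = \langle F_0 - \sigma, \dots, F_{k-1} - \sigma \rangle$.

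Next I verify that for $0 \leq i < j \leq k-1 \leq m+1$, direct computation gives $F_i \cap F_j = \{jb+1, \dots, (m+i+1)b + r\} \subseteq \{(j-1)b+1, \dots, (m+j)b+r\} = F_{j-1}$, so condition (2) of Lemma \ref{Lemma: Facet Sequence Deformation Retract} holds for the relabeled facets. Condition (1) holds precisely when $|F_i \cap F_{i+1}| > |\sigma|$, i.e., when $|\sigma| < a$. I now case-split on $|\sigma|$:
\begin{itemize}
	\item $\sigma = \emptyset$: $\link \sigma = \Delta$, which has $\Hred_1 = \KK$ by Lemma \ref{Lemma: n = (2m+3)b for our construction}.
	\item $0 < |\sigma| < a$: Lemma \ref{Lemma: Facet Sequence Deformation Retract} deformation retracts $\link \sigma$ to the simplex $\langle F_0 - \sigma\rangle$, which is acyclic.
	\item $|\sigma| = a$ and $k = 1$: $\link \sigma$ is a single simplex, acyclic.
	\item $|\sigma| = a$ and $k \geq 2$: Then $\sigma \subseteq F_0 \cap F_1$, and since both sides have size $a$, equality holds; moreover any third facet $F_j$ would give $\sigma \subseteq F_0 \cap F_j$, which has size at most $a - b < a$ for $j \geq 2$, so $k = 2$. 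Thus $\sigma = F_0 \cap F_1$ is a maximal intersection and $\link \sigma$ is a disjoint union of two simplices, with $\nth[th]{0}$ homology.
	\item $a < |\sigma| < a+b$: Since every intersection $F_0 \cap F_j$ for $j \neq 0$ has size at most $a$, we must have $k = 1$, so $\link \sigma$ is a simplex, acyclic.
	\item $|\sigma| = a+b$: $\sigma$ is a facet and $\link \sigma = \{\emptyset\}$ with $\nth[st]{(-1)}$ homology.
\end{itemize}

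The nonempty homology index sets therefore occur exactly at face sizes $0$, $a$, and $a+b$, with the degrees $1$, $0$, $-1$ respectively. This verifies the PR condition via Proposition \ref{Proposition: Alternate PR Definition With Degree Type}, and the differences $a - 0 = a$ and $(a+b) - a = b$ give the degree type $(a,b)$. I expect the main obstacle to be carefully justifying the cyclic-interval structure of $I_\sigma$ and bookkeeping the boundary cases ($k = 1$ vs $k = 2$ when $|\sigma| = a$, and the wraparound for $k \leq m+2$ rather than $k = 2m+3$); the modular arithmetic needs to be handled carefully so that the inclusion $F_i \cap F_j \subseteq F_{j-1}$ really follows from the linear-interval formulas.
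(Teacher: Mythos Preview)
Your proof is correct and follows essentially the same approach as the paper's: both reduce via the $H$-action to the case where the facets containing $\sigma$ are a consecutive run $F_0,\dots,F_{k-1}$, then apply Lemma~\ref{Lemma: Facet Sequence Deformation Retract} to the link for the generic $\sigma$ and handle the exceptional cases ($\emptyset$, maximal intersections, facets) separately. The organisational difference is that you structure the argument as a clean case split on $|\sigma|$, whereas the paper first disposes of the three special cases and then treats all remaining nonempty $\sigma$ at once; and you derive consecutiveness of $I_\sigma$ by intersecting the per-vertex intervals $I_x$, while the paper argues directly from $\sigma\subseteq F_0$, $\sigma\not\subseteq F_{-1}$ that $\sigma\subseteq F_0\cap F_k\subseteq F_i$ for all $0\le i\le k$.

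One small point: your citation of Lemma~\ref{Lemma: n = (2m+3)b for our construction} for the bound ``each vertex lies in at most $m+2$ consecutive facets'' is not quite what that proof states---it observes the converse, that any $m+2$ consecutive facets share a vertex. The fact you need (that $I_x$ is a cyclic interval of length $m+1$ or $m+2$) follows from a direct count of how many multiples of $b$ lie in the length-$(a+b)$ interval $[v-a-b,\,v-1]$ modulo $n$, but you should supply this one-line computation rather than appeal to the lemma.
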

		\begin{proof}
			We define the integers $m$, $r$ and $n$, the subset $F\subseteq [n]$, the permutation $g\in S_n$, and the subgroup $H\leq S_n$, all as in Definition \ref{Definition: Cycle Complexes}. The facets of $\fC_{a,b}$ are the sets $F, g^bF,\dots,g^{(2m+2)b}F$. For convenience, we use $F_i$ to denote the facet $g^{ib}F$, so we can rewrite these facets as $F_0,\dots,F_{2m+2}$ (note that we use this notation even for all integers $i$, so for instance we have $F_{-1}=F_{2m+2}$).
			
			By construction all of these facets have size $a+b$, so we have $-1 \in \hh(\Delta,a+b)$. Moreover, for any $0\leq k \leq 2m+2$, the intersection of the facets $F_k\cap F_{k+1}$ is maximal and has size $a$, so we have $0\in \hh(\Delta,a)$. We also have $\Hred_1(\Delta)\neq 0$ by Lemma \ref{Lemma: n = (2m+3)b for our construction}, so $1\in \hh(\Delta,0)$.
			
			It only remains to show that the link of any other nonempty face of $\Delta$ is acyclic: this will demonstrate that $\Delta$ satisfies the PR property, and we can conclude that it has degree type $(a,b)$ by Proposition \ref{Proposition: Alternate PR Definition With Degree Type}.
			
			To this end, suppose $\sigma$ is a nonempty simplex in $\Delta$ which is neither a facet nor a maximal intersection. Because $\sigma$ cannot be contained in every facet of $\Delta$, there must be some integer $i$ such that we have $\sigma \subseteq F_i$ and $\sigma \nsubseteq F_{i-1}$. By symmetry, we may assume that $i=0$, and hence that $\sigma$ is contained in the facet $F_0=\{1,\dots,a+b\}=\{1,\dots,(m+1)b+r\}$ but not fully contained in $F_{-1}\cap F_0=\{1,\dots,a\}=\{1,\dots,mb+r\}$.  Let $k$ be the largest integer below $2m+3$ such that $\sigma$ is contained in $F_k=\{kb+1,\dots,(m+k+1)b+r\}$. We must have that $k\leq m+1$, because otherwise the intersection $F_k \cap F_0$ would be contained in $F_{-1}\cap F_0$. Thus we have that $\sigma$ is contained in $F_0\cap F_k=\{kb+1,\dots,(m+1)b+r\}$, which is itself contained in $F_i$ for every $0\leq i \leq k$. We conclude that the facets of $\Delta$ containing $\sigma$ can be ordered consecutively as $F_0,\dots,F_k$, for some $k\leq m+1$. 
			
			
			Thus $\lkds$ is equal to the complex $\langle F_0-\sigma, \dots, F_k - \sigma \rangle$. Pick two integers $0\leq i < j \leq k$. Because $\sigma$ is not a maximal intersection, the intersection of adjacent facets $(F_i-\sigma)\cap (F_{i+1}-\sigma)$ must be nonempty; and the intersection $(F_i-\sigma)\cap (F_j-\sigma)$ is equal to $\{jb+1,\dots,(m+i+1)b+r\}-\sigma$, which is contained in $F_{j-1}-\sigma$. Hence the facets of $\lkds$ satisfy the conditions of Lemma \ref{Lemma: Facet Sequence Deformation Retract}, which means that $\lkds$ is acyclic.
			
			
		\end{proof}
		
		We can compute the Betti diagrams of cycle complexes exactly, as shown below.
		\begin{cor}\label{Corollary: Cycle Complex Betti Diagrams}
			Let $a$ and $b$ be positive integers, with $a = mb + r$ for some integers $m\geq 0$ and $1\leq r \leq b$, and let $n=(2m+3)b$. The nonzero entries of the Betti diagram $\beta=\beta(I^*_{\fC_{a,b}})$ are $\beta_{0,n-a-b}=\beta_{1,n-b}=2m+3$ and $\beta_{2,n}=1$.
		\end{cor}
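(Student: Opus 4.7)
The plan is to derive the values of the three nonzero Betti numbers as a direct application of ADHF (Theorem \ref{Theorem: ADHF}), using the combinatorial information about $\fC_{a,b}$ that was already established in the proof of Theorem \ref{Theorem: Cycle Complexes are PR}. By that theorem, $\fC_{a,b}$ is PR with degree type $(a,b)$ and offset $0$, so there are exactly three nonzero columns in $\beta(I^*_{\fC_{a,b}})$, one for each of $i=0,1,2$, and Proposition \ref{Proposition: Alternate PR Definition With Degree Type} tells us that the shifts are $c_0=n-(a+b)$, $c_1=n-a$, $c_2=n$, coming respectively from the facets (size $a+b$), the maximal intersections (size $a$), and the empty face.

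First I would compute $\beta_{0,c_0}$. By ADHF, this equals the number of $\sigma\in\fC_{a,b}$ with $|\sigma|=a+b$ and $\lkds=\{\emptyset\}$, i.e.\ the number of facets of $\fC_{a,b}$. By the construction, these are precisely the sets $F_i=g^{ib}F$ for $0\le i\le 2m+2$, and one checks they are pairwise distinct (their minimum elements $ib+1$ are distinct in $[n]$ since $n=(2m+3)b$), giving $2m+3$ facets.

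Next I would compute $\beta_{1,c_1}$. By ADHF this equals $\sum_{|\sigma|=a}\dim_{\KK}\Hred_0(\lkds)$. In the proof of Theorem \ref{Theorem: Cycle Complexes are PR} it was shown that every nonempty, non-facet, non-maximal-intersection $\sigma$ has acyclic link, so the only contributions come from the maximal intersections of size $a$. By the symmetry provided by Lemma \ref{Lemma: link x = link gx} together with the fact (established in Theorem \ref{Theorem: Cycle Complexes are PR}) that the facets containing any given non-facet $\sigma$ can be ordered as a consecutive run $F_i,\dots,F_{i+k}$ with $k\le m+1$, the maximal intersections are exactly the $2m+3$ sets $F_i\cap F_{i+1}$, and by Proposition \ref{Proposition: Maximal Intersections Links} each corresponding link is a disjoint union of two simplices, with $\dim_{\KK}\Hred_0=1$. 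This gives $\beta_{1,c_1}=2m+3$. Finally, $\beta_{2,n}=\dim_{\KK}\Hred_1(\fC_{a,b})=1$ is exactly what Lemma \ref{Lemma: n = (2m+3)b for our construction} established in the case $k=2m+3$.

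The only nontrivial step is verifying that the maximal intersections are exactly $\{F_i\cap F_{i+1}\}_{i=0}^{2m+2}$, and that there are no other faces of size $a$ whose links have $\Hred_0\ne 0$; but both assertions are immediate from the acyclicity result proved for general non-facet, non-maximal-intersection faces inside Theorem \ref{Theorem: Cycle Complexes are PR} (via Lemma \ref{Lemma: Facet Sequence Deformation Retract}), so no new homology computation is needed.
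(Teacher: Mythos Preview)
Your approach is essentially identical to the paper's: count the $2m+3$ facets and the $2m+3$ maximal intersections (each contained in exactly two facets, so each contributing $\dim_\KK\Hred_0=1$), invoke Lemma \ref{Lemma: n = (2m+3)b for our construction} for $\dim_\KK\Hred_1(\fC_{a,b})=1$, and read off the Betti numbers via ADHF. Note that your computed shift $c_1=n-a$ (coming from maximal intersections of size $a$) agrees with the paper's own proof; the ``$n-b$'' appearing in the statement is a typo for ``$n-a$''.
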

		\begin{proof}
			The cycle complex $\fC_{a,b}$ has $2m+3$ facets (each of size $a+b$), and $2m+3$ maximal intersections (each of size $a$, and each contained in only two facets, ensuring that their links have $\nth{0}$ homology of dimension $1$). It also has $\nth[st]{1}$ homology of dimension $1$.
		\end{proof}
		
		\section{Intersection Complexes}\label{Subsection: Intersection Complexes}
		In this section we construct another infinite family of PR complexes. The key motivation for this construction comes from the following lemma.
		\begin{lem}\label{Lemma: link facet intersections}
			Let $\Delta$ be a simplicial complex. If $\sigma\in \Delta$ is such that $\lkds$ has homology, then $\sigma$ is an intersection of facets in $\Delta$.
		\end{lem}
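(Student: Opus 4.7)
The plan is to prove the contrapositive via a simple cone argument. Suppose $\sigma \in \Delta$ is not an intersection of facets; I will show that $\link_\Delta \sigma$ must be acyclic.

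First I would let $F_1, \ldots, F_m$ denote all facets of $\Delta$ containing $\sigma$ (such facets exist since $\sigma \in \Delta$) and set $\tau = \bigcap_{i=1}^m F_i$. By construction $\sigma \subseteq \tau$, and the assumption that $\sigma$ is not an intersection of facets forces the containment to be strict (in particular, $\sigma$ itself is not a facet, since a single facet can be regarded as the ``intersection'' of a one-element collection). Hence I may pick a vertex $v \in \tau - \sigma$.

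The key observation is then that $\link_\Delta \sigma = \langle F_1 - \sigma, \ldots, F_m - \sigma \rangle$, and since $v$ lies in $F_i - \sigma$ for every $i$, it is contained in every facet of this link. This exhibits $\link_\Delta \sigma$ as a cone over $v$, so by Corollary \ref{Corollary: Homology of Cone} it is acyclic, contradicting the hypothesis that $\lkds$ has nontrivial reduced homology.

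There is no real obstacle: the argument is a direct application of the facet description of links together with the acyclicity of cones. The only point that warrants a sentence of care is the edge case where $\sigma$ is itself a facet, which is handled by observing that a facet $F$ is trivially the intersection of the one-element collection $\{F\}$, and so such $\sigma$ automatically qualify as intersections of facets.
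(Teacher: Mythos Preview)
Your proof is correct and essentially identical to the paper's: both prove the contrapositive by taking $v \in \bigcap_i F_i - \sigma$ and observing that $\link_\Delta \sigma$ is a cone over $v$. You add a little more detail on the facet edge case and the explicit facet description of the link, but the argument is the same.
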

		\begin{proof}
			We prove the contrapositive. Suppose that $\sigma$ is not an intersection of facets of $\Delta$ and let $F_1,\dots,F_m$ be all the facets of $\Delta$ that contain $\sigma$. The intersection $\bigcap_{i=1}^m F_i$ must contain $\sigma$. Because $\sigma$ is not an intersection of facets, there must be some vertex $v\in \bigcap_{i=1}^m F_i - \sigma$. This means that $\lkds$ is a cone over $v$, and is therefore acyclic.
		\end{proof}
		In particular, Lemma \ref{Lemma: link facet intersections} tells us that the only faces of a complex $\Delta$ that contribute towards the Betti numbers of $\Idstar$ are the ones that are intersections of facets. 
		Due to this observation, we may expect many PR complexes to exhibit some kind of symmetry around the points where their facets intersect. The following definition gives us one such symmetry condition on facet intersections.
		\begin{defn}\label{Definition: FIS}
			Let $\Delta$ be a complex with facets $F_1,\dots,F_n$. We say that $\Delta$ is \textit{intersectionally symmetric} if for any $1\leq k\leq n$ and any permutation $\alpha$ in $S_n$, we have
			\begin{enumerate}
				\item $|F_1\cap \dots \cap F_k|=|F_{\alpha(1)}\cap \dots \cap F_{\alpha(k)}|$
				\item $\link_\Delta (F_1\cap \dots \cap F_k)\cong \link_\Delta (F_{\alpha(1)}\cap \dots \cap F_{\alpha(k)})$.
			\end{enumerate}
		\end{defn}
		
		Intersectional symmetry is a very strict property. None-the-less it is a property exhibited by many of the examples we saw in Section \ref{Subsection: PR Examples}, including Examples \ref{Example: PR Simplex}, \ref{Example: PR Disjoint Simplices}, \ref{Example: PR Zelda Symbol} and \ref{Example: PR 3D Zelda Symbol}.
		
		Prima facie, there is good reason to hope that many intersectionally symmetric complexes are also PR complexes. Indeed, in order for a complex $\Delta$ to be PR we must have that for any intersection of its facets $\sigma = F_1\cap \dots \cap F_k$, the degrees of the homologies of $\lkds$ are dependent on the size of $\sigma$; and for complexes with intersectional symmetry, both of these statistics are functions of a third factor - the value of $k$.
		
		In fact, as we will show in this section, all complexes with intersectional symmetry turn out to be PR complexes. We begin by presenting an explicit construction of intersectionally symmetric complexes.
		
		\subsection{Defining the Complexes}
		In this section, we define the \textit{intersection complexes}, the family of all complexes with intersectional symmetry, along with some examples.
		
		\begin{defn}\label{Definition: Intersection Complexes}
			Let $\bm=(m_1,\dots,m_n)$ be a sequence of nonnegative integers.
			We define the \emph{intersection complex} $\calI(\bm)$ as follows.
			\begin{enumerate}
				\item The vertices of $\calI(\bm)$ are all symbols of the form $v_S^r$ where $S$ is a subset of $[n]$ and $1\leq r \leq m_{|S|}$.
				\item The facets of $\calI(\bm)$ are the sets $F_1,\dots,F_n$ where 
				$$F_j=\left \{v_S^r \,:\, S\subseteq [n], j\in S, 1 \leq r \leq m_{|S|} \right\}.$$
			\end{enumerate}
		\end{defn}
		
		\begin{rem}\label{Remark: Intersection Complex Observations}
			\begin{enumerate}
				\item The integer $m_i$ is equal to the number of vertices contained in the intersection $F_1\cap\dots \cap F_i$ (or any other intersection of $i$ facets) that are not contained in an intersection of $i+1$ facets.
				\item For any subset $S\subseteq[n]$ of size $j$ and any $1\leq i \leq n$, the vertices $v_S^1,\dots,v_S^{m_j}$ are in the facet $F_i$ if and only if we have $i\in S$.
			\end{enumerate}
		\end{rem}
		
		\begin{rem}
			Note that we could also define intersection complexes in group notation as in Notation \ref{Notation: Complexes from Groups}. Specifically, $S_n$ acts on the vertex $v^r_S$ via $\alpha(v^r_S)=v^r_{\alpha S}$, and hence we may define the intersection complex $\calI(\bm)$ to be the complex $\langle F_1\rangle_{S_n}$.
		\end{rem}
		
		By construction, all intersection complexes have intersectional symmetry. To see that the converse also holds, suppose that $\Delta$ is a complex with intersectional symmetry and let $F_1,...,F_n$ be the facets of $\Delta$. Define $m_n$ to be the number of vertices in all $n$ facets, and label these vertices $v_{[n]}^1,\dots, v_{[n]}^{m_n}$. Now  define $m_{n-1}$ to be the number of vertices in $F_1\cap \dots \cap F_{n-1}$ but not in $F_n$, and label these vertices $v_{[n-1]}^1,\dots, v_{[n-1]}^{m_{n-1}}$. By symmetry we know that any intersection of $n-1$ facets contains $m_{n-1}$ vertices outside of $F_1\cap \dots \cap F_n$, and we can label each of these accordingly. Now we define $m_{n-2}$ to be the number of vertices in the intersection $F_1\cap \dots \cap F_{n-2}$ which are not in the intersection of a larger number of facets. Proceeding in this way we can find a sequence $(m_1,\dots,m_n)$ of nonnegative integers such that $\Delta$ is isomorphic to the intersection complex $\calI(m_1,\dots,m_n)$.
		
		Thus the family of intersection complexes as defined above is the family of all complexes with intersectional symmetry.
		
		\begin{ex}\label{Example: Intersection Complex (0,0,1,0)}
			The boundary of the 3-simplex in Example \ref{Example: PR Tetrahedron} can be thought of as the intersection complex $\calI(0,0,1,0)$, as shown below.
			
			\begin{center}
				\begin{tikzpicture}[line join = round, line cap = round]
					
					\coordinate [label=above:{$v_{\{1,2,3\}}^1$}] (4) at (0,{sqrt(2)},0);
					\coordinate [label=left:{$v_{\{1,2,4\}}^1$}] (3) at ({-.5*sqrt(3)},0,-.5);
					\coordinate [label=below:{$v_{\{1,3,4\}}^1$}] (2) at (0,0,1);
					\coordinate [label=right:{$v_{\{2,3,4\}}^1$}] (1) at ({.5*sqrt(3)},0,-.5);
					
					\begin{scope}
						\draw (1)--(3);
						\draw[fill=lightgray,fill opacity=.5] (2)--(1)--(4)--cycle;
						\draw[fill=gray,fill opacity=.5] (3)--(2)--(4)--cycle;
						\draw (2)--(1);
						\draw (2)--(3);
						\draw (3)--(4);
						\draw (2)--(4);
						\draw (1)--(4);
					\end{scope}
				\end{tikzpicture}
			\end{center}
		\end{ex}
		\begin{rem}\label{Remark: Intersection Complex Boundary of $p$-simplex}
			In fact, Example \ref{Example: Intersection Complex (0,0,1,0)} is a special case of a more general observation: for any $p$, the boundary of the $p$-simplex $\partial \Delta^p$ is equal to the the intersection complex $\calI(\underbrace{0,\dots,0,1}_p,0)$.
		\end{rem}
		
		\begin{ex}\label{Example: Intersection Complex (1,1,0)}
			The $2$-dimensional complex in Example \ref{Example: PR Zelda Symbol} can be thought of as the intersection complex $\calI(1,1,0)$, as shown below.
			$$\begin{tikzpicture}[scale = 0.75]
				\tikzstyle{point}=[circle,thick,draw=black,fill=black,inner sep=0pt,minimum width=4pt,minimum height=4pt]
				\node (a)[point,label=above:{$v_{\{1\}}^1$}] at (0,3.4) {};
				\node (b)[point,label=above:{$v_{\{1,2\}}^1$}] at (2,3.4) {};
				\node (c)[point,label=above:{$v_{\{2\}}^1$}] at (4,3.4) {};
				\node (d)[point,label=left:{$v_{\{1,3\}}^1$}] at (1,1.7) {};
				\node (e)[point,label=right:{$v_{\{2,3\}}^1$}] at (3,1.7) {};
				\node (f)[point,label=left:{$v_{\{3\}}^1$}] at (2,0) {};	
				
				\node (g)[label=above: {$F_1$}] at (1,2.2) {};
				\node (h)[label=above: {$F_2$}] at (3,2.2) {};
				\node (i)[label=above: {$F_3$}] at (2,0.5) {};
				
				\begin{scope}[on background layer]
					\draw[fill=gray, fill opacity = .8] (a.center) -- (b.center) -- (d.center) -- cycle;
					\draw[fill=gray, fill opacity = .8] (b.center) -- (c.center) -- (e.center) -- cycle;
					\draw[fill=gray, fill opacity = .8]   (d.center) -- (e.center) -- (f.center) -- cycle;
				\end{scope}
			\end{tikzpicture}$$
		\end{ex}

		\begin{ex}\label{Example: Intersection Complex (2,1,0)}
			The $3$-dimensional complex in Example \ref{Example: PR 3D Zelda Symbol} can be thought of as the intersection complex $\calI(2,1,0)$, as shown below.
			$$\begin{tikzpicture}[scale = 1]
				\tikzstyle{point}=[circle,thick,draw=black,fill=black,inner sep=0pt,minimum width=4pt,minimum height=4pt]
				\node (a)[point,label=above:$v_{\{1\}}^1$] at (0,3.4) {};
				\node (b)[point,label=above:$v_{\{1,2\}}^1$] at (2,3.4) {};
				\node (c)[point,label=above:$v_{\{2\}}^1$] at (4,3.4) {};
				\node (d)[point,label=left:$v_{\{1,3\}}^1$] at (0.8,1.7) {};
				\node (e)[point,label=right:$v_{\{2,3\}}^1$] at (2.8,1.7) {};
				\node (f)[point,label=left:$v_{\{3\}}^1$] at (1.8,0) {};
				
				\node (g)[point,label=above:$v_{\{1\}}^2$] at (1,3.9) {};
				\node (h)[point,label=above:$v_{\{2\}}^2$] at (3,3.9) {};
				\node (i)[point,label=above:$v_{\{3\}}^2$] at (2,2.2) {};
				
				\begin{scope}[on background layer]
					\draw [fill=lightgray, fill opacity=.5] (a.center) -- (b.center) -- (d.center) -- cycle;
					\draw [fill=lightgray, fill opacity=.5] (b.center) -- (c.center) -- (e.center) -- cycle;
					\draw [fill=lightgray, fill opacity=.5] (d.center) -- (e.center) -- (f.center) -- cycle;
					
					\draw[fill=gray, fill opacity=.8] (a.center)--(d.center)--(g.center)--cycle;
					\draw[fill=lightgray,fill opacity=.5] (b.center)--(d.center)--(g.center)--cycle;
					
					\draw[fill=gray,fill opacity=.8] (b.center)--(e.center)--(h.center)--cycle;
					\draw[fill=lightgray,fill opacity=.5] (c.center)--(e.center)--(h.center)--cycle;
					
					\draw[fill=gray,fill opacity=.8] (d.center)--(f.center)--(i.center)--cycle;
					\draw[fill=lightgray,fill opacity=.5] (e.center)--(f.center)--(i.center)--cycle;
				\end{scope}
			\end{tikzpicture}$$
		\end{ex}
		
		\begin{ex}\label{Example: Intersection Complex Cycle Complex}
			Examples \ref{Example: Intersection Complex (1,1,0)} and \ref{Example: Intersection Complex (2,1,0)} are in fact special cases of the following result: for any positive integers $a$ and $b$ with $a\leq b$, the cycle complex $\fC_{a,b}$ has exactly three facets, and is isomorphic to the intersection complex $\calI(b-a,a,0)$.
		\end{ex}
		
		
		\begin{ex}\label{Example: Intersection Complex (0,1,0,0)}
			The intersection complex $\calI(0,1,0,0)$ is
			$$\begin{tikzpicture}[scale = 1.3]
				\tikzstyle{point}=[circle,thick,draw=black,fill=black,inner sep=0pt,minimum width=3pt,minimum height=3pt]
				\node (a)[point,label=above:$v^1_{\{1,2\}}$] at (0,3.5) {};
				\node (b)[point,label=above:$v^1_{\{1,4\}}$] at (2,2.7) {};
				\node (c)[point,label={[label distance = 0mm]above:$v^1_{\{1,3\}}$}] at (4,3.5) {};
				\node (d)[point,label={[label distance = -2.7mm]below left:$v^1_{\{2,4\}}$}] at (1.6,2.2) {};
				\node (e)[point,label={[label distance = -2.8mm]below right:$v^1_{\{3,4\}}$}] at (2.4,2.2) {};
				\node (f)[point,label=left:$v^1_{\{2,3\}}$] at (2,0) {};
				
				\begin{scope}[on background layer]
					\draw[fill=gray, fill opacity = .8] (d.center) -- (e.center) -- (b.center) -- cycle;
					\draw[fill=gray, fill opacity = .8] (a.center) -- (b.center) -- (c.center) -- cycle;
					\draw[fill=gray, fill opacity = .8] (a.center) -- (f.center) -- (d.center) -- cycle;
					\draw[fill=gray, fill opacity = .8] (c.center) -- (f.center) -- (e.center) -- cycle;
				\end{scope}
			\end{tikzpicture}$$
		\end{ex}
		
		\begin{ex}\label{Example: Intersection Complex Trivial Cases}
			There are some trivial cases of intersection complexes where the defining facets $F_1,\dots,F_n$ are all equal. Namely, for any $m\geq 0$, the complex $\Delta=\calI(\underbrace{0,\dots,0}_{n-1},m)$ contains only the $m$ vertices $v_{[n]}^1,...,v_{[n]}^m$, each of which is in every set $F_1,\dots,F_n$. Thus $\Delta$ is the full simplex on these $m$ vertices.
			
			As a special case of this, the complex $\Delta=\calI(\underbrace{0,\dots,0}_n)$ contains no vertices, so the sets $F_1,\dots,F_n$ are all empty, which means $\Delta$ is the irrelevant complex $\{\emptyset\}$.
		\end{ex}
		
		We can find the size of the intersection of any $i$ facets of an intersection complex as follows.
		
		\begin{lem}\label{Lemma: size of sigma_i}
			Let $\bm=(m_1,\dots,m_n)$ be a sequence of nonnegative integers, and let $\Delta = \calI(\bm)$ be the corresponding intersection complex with facets $F_1,\dots, F_n$. Suppose $\sigma_i = F_1 \cap \dots \cap F_i$ for some $1\leq i \leq n$. Then $|\sigma_i|=\sum_{j=0}^{n-i} \binom{n-i}{j} m_{i+j}$.
		\end{lem}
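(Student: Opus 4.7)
The plan is to prove this by a direct counting argument, classifying the vertices of $\sigma_i$ by the size of their subscript.

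First, I would recall from Remark \ref{Remark: Intersection Complex Observations}(2) that a vertex $v_S^r$ lies in the facet $F_j$ if and only if $j \in S$. It follows immediately that $v_S^r \in \sigma_i = F_1 \cap \dots \cap F_i$ if and only if $\{1,\dots,i\} \subseteq S$. So the task reduces to counting the pairs $(S,r)$ such that $S \subseteq [n]$ contains $\{1,\dots,i\}$ and $1 \leq r \leq m_{|S|}$.

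Next I would stratify by the size of $S$. Every such $S$ has the form $S = \{1,\dots,i\} \sqcup T$ for some $T \subseteq \{i+1,\dots,n\}$, and writing $j = |T|$, we have $j$ ranging from $0$ to $n-i$ and $|S| = i+j$. For each fixed $j$ there are $\binom{n-i}{j}$ choices of $T$ (and hence of $S$), and for each such $S$ there are exactly $m_{i+j}$ admissible values of $r$. Summing the contributions yields
\[
|\sigma_i| = \sum_{j=0}^{n-i} \binom{n-i}{j} m_{i+j},
\]
as required.

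I do not anticipate any real obstacle here: the result is essentially a bookkeeping exercise, and the only thing to be careful of is correctly identifying which subsets $S$ contribute (those containing $\{1,\dots,i\}$) and reindexing the sum by $j = |S| - i$ so that the binomial coefficient $\binom{n-i}{j}$ counts the free choices of the remaining elements of $S$ from $[n] - \{1,\dots,i\}$.
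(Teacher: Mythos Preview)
Your proposal is correct and takes essentially the same approach as the paper: both identify that $v_S^r \in \sigma_i$ if and only if $S \supseteq [i]$, then stratify by $j = |S| - i$ to count $\binom{n-i}{j}$ subsets of size $i+j$ each contributing $m_{i+j}$ vertices.
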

		\begin{proof}
			Let $S$ be a subset of $[n]$ and let $1\leq r\leq m_{|S|}$. The vertex $v_S^r$ is contained in the intersection $F_1\cap \dots \cap F_i$ if and only if we have $S\supseteq [i]$. For each $0\leq j \leq n-i$, there are ${n-i \choose j}$ subsets of $[n]$ containing $[i]$ of size $i+j$ (one for each choice of $j$ elements from the set $\{i+1,...,n\}$). The result follows.
		\end{proof}
		
		We now state our key theorems about intersection complexes, concerning the purity of their corresponding Betti diagrams, and their degree types and Betti numbers. In what follows, we fix a sequence of nonnegative integers $\bm=(m_1,\dots,m_n)$ such that $m_n= 0$ but $\bm\neq 0$. We also let $p$ denote the maximum value of $1\leq i\leq n-1$ for which $m_i\neq 0$. We define $\Delta=\calI(\bm)$ and $\beta=\beta(\Idstar)$.
		
		Note that if $m_n > 0$, then every facet of $\calI(\bm)$ contains the vertices $v_{[n]}^1,\dots,v_{[n]}^{m_n}$, which means $\calI(\bm)$ is a multi-cone over $\calI(m_1,\dots,m_{n-1},0)$, and so the two complexes have the same Betti diagram. The condition $m_n=0$ is therefore harmless.
		
		We wish to prove the following two theorems.
		
		\begin{thm}\label{Theorem: Intersection Complexes Degree Type}
			%
			Let $\bm=(m_1,\dots,m_n)$ be a sequence of nonnegative integers with $m_n=0$, and define $p=\max\{j\in [n] : m_j \neq 0 \}$. The intersection complex $\Delta=\calI(\bm)$ is PR with degree type $(d_p,\dots,d_1)$ where for each $1\leq i\leq p$, $d_i = \sum_{j=0}^{n-i-1} {n-i-1 \choose j} m_{i+j}$.
		\end{thm}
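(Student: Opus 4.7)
The plan is to verify directly that $\Delta = \calI(\bm)$ satisfies condition (2) of Proposition \ref{Proposition: Alternate PR Definition With Degree Type} with offset $s = 0$ and degree type $(d_p, \dots, d_1)$, by computing $\hh(\Delta, m)$ for every $m \geq 0$. By Lemma \ref{Lemma: link facet intersections} only intersections of facets can contribute to these sets, and writing $\Delta = \langle F_1 \rangle_{S_n}$ (with $S_n$ acting via $\alpha \cdot v^r_S = v^r_{\alpha S}$), Lemma \ref{Lemma: link x = link gx} reduces the computation to the canonical representatives $\sigma_i := F_1 \cap \dots \cap F_i$. For $i > p$ every $S \supseteq [i]$ satisfies $m_{|S|} = 0$, so $\sigma_i = \emptyset = \sigma_0$, and only $0 \leq i \leq p$ need be considered. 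A direct count shows that $v_S^r$ lies in $\sigma_i \setminus \sigma_{i+1}$ exactly when $S = [i] \cup T$ for some $T \subseteq \{i+2, \dots, n\}$, giving $|\sigma_i| - |\sigma_{i+1}| = d_i$ and hence $|\sigma_i| = \sum_{j=i}^p d_j$ --- precisely the face sizes prescribed by the proposition after the reindexing $r = i - 1$.

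The technical core is a structural identification of the links. For $1 \leq i \leq p$ I claim
\[
\link_\Delta \sigma_i \;\cong\; \calI(\bm^{(i)}), \qquad m^{(i)}_j := \sum_{k=0}^{n-i}\binom{n-i}{k}\, m_{j+k} \text{ for } 1 \leq j \leq i-1, \quad m^{(i)}_i := 0.
\]
First I would show that $\sigma_i$ is contained in exactly the facets $F_1,\dots,F_i$: the combined hypotheses $1 \leq i \leq p$ and $m_n = 0$ guarantee the existence of some $S \supseteq [i]$ with $i+1 \notin S$ and $m_{|S|} > 0$, producing a vertex of $\sigma_i$ lying outside $F_{i+1}$, and symmetry rules out every $F_j$ with $j > i$. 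Hence $\link_\Delta \sigma_i = \langle F_1 - \sigma_i, \dots, F_i - \sigma_i \rangle$, whose vertices $v_S^r$ are indexed by a non-empty proper subset $U := S \cap [i] \subsetneq [i]$ together with a ``tail'' $T := S \setminus [i]$. Grouping by $U$, the count of vertices with $S \cap [i] = U$ equals $m^{(i)}_{|U|}$, and a vertex of type $U$ lies in $F_j - \sigma_i$ iff $j \in U$; this matches the combinatorial data of $\calI(\bm^{(i)})$ exactly.

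To compute the required homologies I would invoke the Nerve Theorem for any intersection complex $\calI(\bm')$: the closed simplices $\langle F_1 \rangle,\dots,\langle F_{n'} \rangle$ form a good cover, each being contractible and each non-empty intersection being again a simplex, and a subset $S \subseteq [n']$ lies in the nerve iff $\bigcap_{j\in S} F_j \neq \emptyset$ iff $|S| \leq p'$, so the nerve is $\Skel_{p'-1}([n'])$. By Lemma \ref{Lemma: Homology of Skeleton Complexes} this has non-trivial reduced homology only at degree $p'-1$. Applied to $\Delta$ itself this yields $h(\Delta, \emptyset) = \{p-1\}$, and applied to $\link_\Delta \sigma_i \cong \calI(\bm^{(i)})$ --- noting $p^{(i)} = i - 1$ because $m^{(i)}_{i-1} > 0$ whenever $p \geq i-1$ --- it yields $h(\Delta, \sigma_i) = \{i - 2\}$. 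These computations assemble into precisely condition (2) of Proposition \ref{Proposition: Alternate PR Definition With Degree Type}.

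I expect the main obstacle to be the structural lemma of the second paragraph, and specifically the claim that $\sigma_i$ is contained only in $F_1,\dots,F_i$. This is delicate when $m_i = 0$ for some $1 \leq i < p$, since then $\sigma_i$ contains no vertex of the form $v_{[i]}^r$ and one must pick a different witness vertex of $\sigma_i$ outside each $F_j$ with $j > i$; here the hypotheses $m_n = 0$ and $i \leq p$ are both essential and their interplay must be unpacked carefully. Once the structural claim is in place the remainder of the argument is uniform, the Nerve Theorem simultaneously handling the global homology of $\Delta$ and the homology of every relevant link through a single identification with simplex skeletons.
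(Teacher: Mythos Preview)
Your proposal is correct and takes a genuinely different route from the paper.

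The paper's proof (given jointly with Theorem~\ref{Theorem: Intersection Complexes Betti Numbers}) proceeds by deformation retraction: Propositions~\ref{Proposition: Def Retract of Intersection Complex} and~\ref{Proposition: Def Retract of Intersection Complex Link} collapse $\Delta$ and each $\link_\Delta\sigma_i$ onto the ``basic'' complexes $\calI(\eseq{n}{p})$ and $\calI(\eseq{i}{i-1})$, and the homologies of these are then computed by an inductive Mayer--Vietoris argument (Lemma~\ref{Lemma: subcomplexes e^n_p} and Proposition~\ref{Proposition: homology of e^n_p}). Your approach instead identifies $\link_\Delta\sigma_i$ \emph{as} an intersection complex $\calI(\bm^{(i)})$ up to isomorphism --- a stronger statement than the paper's retraction, and one the paper only proves for links of \emph{vertices} (Lemma~\ref{Lemma: Intersection complex link of arbitrary face}), not for the key facet-intersection faces --- and then computes the homology of every intersection complex in one stroke via the Nerve Theorem, recognising the nerve of the facet cover as $\Skel_{p'-1}([n'])$. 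Both arguments ultimately land on the homology of simplex skeletons (Lemma~\ref{Lemma: Homology of Skeleton Complexes}), but yours gets there without induction. The cost is invoking the Nerve Theorem, an external tool not otherwise used in the paper, whereas the paper stays entirely within its Lemma~\ref{Lemma: Deformation Retract}/Mayer--Vietoris toolkit. You have correctly flagged the one genuinely delicate step (showing $\sigma_i$ lies in exactly $F_1,\dots,F_i$, which does need $m_n=0$ and $i\leq p$ together); the only addendum needed is to handle $i=1$ directly, since there $\link_\Delta\sigma_1=\{\emptyset\}$ and the Nerve Theorem degenerates.
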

		
		In particular, the degree types of these intersection complexes are all the positive integer sequences $\bs$ for which every difference sequence of $\bs$ is monotonically increasing.
		
		\begin{thm}\label{Theorem: Intersection Complexes Betti Numbers}
			Let $\Delta$ and $\beta$ be as in Theorem \ref{Theorem: Intersection Complexes Degree Type}, and suppose $\beta$ has nonzero Betii numbers $\beta_{0,c_0},\dots.,\beta_{p,c_p}$. We have the following result. 
			\begin{equation*}
				\beta_{i,c_i}=\begin{cases}
					{n \choose i+1}& \text{if } 1\leq i \leq p-1\\
					{n-1 \choose p}& \text{if } i=p\, . 
				\end{cases}
			\end{equation*}
		\end{thm}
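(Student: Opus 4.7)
My plan is to apply ADHF (Theorem \ref{Theorem: ADHF}) to write each $\beta_{i,c_i}$ as a sum of link homologies over faces of a fixed size, then identify the contributing faces and compute their link homologies using the $S_n$-symmetry of $\calI(\bm)$ together with a Mayer--Vietoris argument of nerve-theoretic flavour. By Lemma \ref{Lemma: link facet intersections}, only intersections of facets contribute, and by the symmetry of $\calI(\bm)$ under permuting $[n]$ the link of any intersection $F_{j_1}\cap\dots\cap F_{j_k}$ of $k$ distinct facets depends, up to isomorphism, only on $k$; call it $L_k$.

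First I would identify which faces contribute to which column. Using the formula for $d_i$ from Theorem \ref{Theorem: Intersection Complexes Degree Type} and the hockey-stick identity, a direct calculation shows that $s_i := \sum_{j=i+1}^{p}d_j$ equals $|F_1\cap\dots\cap F_{i+1}|$ as computed in Lemma \ref{Lemma: size of sigma_i}; in particular $s_p=0$, so for the column $i=p$ the only contributing face is $\emptyset$ with link $\Delta$. A short combinatorial check (using $m_p>0$ and $p\le n-1$) shows that distinct $(i+1)$-subsets of $\{F_1,\dots,F_n\}$ give distinct intersections, producing exactly $\binom{n}{i+1}$ contributing faces for $1\le i\le p-1$.

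Next I would compute the relevant homology dimensions. For $L_{i+1}$ with $1\le i\le p-1$: its facets are the $i+1$ sets $F_j-(F_1\cap\dots\cap F_{i+1})$ with $j\in\{1,\dots,i+1\}$; any intersection of fewer than $i+1$ of them is an intersection of simplices, hence itself a simplex, and contains some vertex $v_S^1$ (chosen by the same style of combinatorial argument as above, since $p\le n-1$ allows us to pad up to size $p$ missing a chosen index), while the intersection of all $i+1$ of them is empty. An inductive Mayer--Vietoris computation on the number of facets in the cover, matching the standard calculation for $\partial\Delta^i$, then yields $L_{i+1}\simeq\partial\Delta^i$ and therefore $\dim_\KK\Hred_{i-1}(L_{i+1})=1$. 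Applying the same method to the cover $F_1,\dots,F_n$ of $\Delta$, together with the observation that $F_{j_1}\cap\dots\cap F_{j_k}$ is empty iff $k>p$, identifies $\Delta$ up to homotopy with $\Skel_{p-1}([n])$; Lemma \ref{Lemma: Homology of Skeleton Complexes} then gives $\dim_\KK\Hred_{p-1}(\Delta)=\binom{n-1}{p}$.

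Assembling these ingredients through ADHF yields the stated formulas. The main obstacle is the inductive Mayer--Vietoris step establishing $L_{i+1}\simeq\partial\Delta^i$ (and the analogous statement for $\Delta$): one must verify at each inductive stage that all lower-order intersections are contractible simplices and that the only surviving class appears in the top degree. Appealing to the nerve lemma would be quicker, but since the excerpt develops its topological toolkit through Mayer--Vietoris (Proposition \ref{Proposition: MVS}) and Künneth (Proposition \ref{Proposition: Kunneth Formula}), doing the argument by induction on the number of facets in the cover fits naturally with the paper's style.
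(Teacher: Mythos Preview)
Your proposal is correct and closely related to the paper's proof, but the route differs in one structural respect. The paper first applies explicit deformation retractions (Propositions \ref{Proposition: Def Retract of Intersection Complex} and \ref{Proposition: Def Retract of Intersection Complex Link}, built on the vertex-folding trick of Corollary \ref{Corollary: Deformation Retract}) to reduce $\Delta$ and each link $\lkds_i$ to the canonical complexes $\calI(\eseq{n}{p})$ and $\calI(\eseq{i}{i-1})=\partial\Delta^{i-1}$, and only then runs the Mayer--Vietoris induction (Proposition \ref{Proposition: homology of e^n_p}) on these simplified models. You instead run the Mayer--Vietoris induction directly on the facet covers of $\Delta$ and of $L_{i+1}$, using only that all proper intersections of facets are nonempty simplices; this is the nerve lemma in disguise, and it identifies the homology with that of $\Skel_{p-1}([n])$ and $\partial\Delta^i$ without the intermediate retraction step. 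The two inductions are formally the same computation (compare your splitting $\Delta=F_n\cup\langle F_1,\dots,F_{n-1}\rangle$ with the decomposition in Lemma \ref{Lemma: subcomplexes e^n_p}), so neither approach is materially shorter. The paper's version has the side benefit of exhibiting the links explicitly as smaller intersection complexes, which is reused elsewhere; your version is conceptually cleaner and lets you invoke Lemma \ref{Lemma: Homology of Skeleton Complexes} directly for the top Betti number rather than reproving it as Proposition \ref{Proposition: homology of e^n_p}.
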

		
		Our proof for these theorems proceeds as follows. First, we make use of some deformation retractions to allow us to restrict our attention to intersection complexes of a particularly simple form. Next we show that these simple intersection complexes each have only a single nontrivial homology group. And then we assemble these pieces together to show that all intersection complexes are PR complexes with the desired degree types and Betti numbers.
		
		\subsection{Deformation Retractions and Links}
		For the rest of this section we let $\eseq{n}{i}$ denote the sequence $(\overbrace{\underbrace{0,\dots,0, 1}_{i},0,\dots,0}^n)$ (that is, the sequence of length $n$ whose only nonzero term is a $1$ at position $i$). We will also use $\eseq{n}{0}$ to denote the zero sequence of length $n$.
		
		The following result shows that all intersection complexes deformation retract on to an intersection complex of the form $\calI(\eseq{j}{i})$ for some $i$ and $j$.
		
		\begin{prop}\label{Proposition: Def Retract of Intersection Complex}
			Let $\bm=(m_1,\dots,m_n)$ be a nonzero sequence in $\ZZ_{\geq 0}^n$ with $m_n=0$, and define $p=\max \{i\in [n-1]: m_i \neq 0\}$. We have a deformation retraction $\calI(\bm)\leadsto \calI(\eseq{n}{p})$.
		\end{prop}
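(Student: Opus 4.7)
The plan is to apply Corollary \ref{Corollary: Deformation Retract} iteratively, removing one vertex at a time, until only the vertices $v_S^1$ with $|S|=p$ remain. The target $\calI(\eseq{n}{p})$ has exactly $\binom{n}{p}$ vertices (one $v_S^1$ for each $S\in\binom{[n]}{p}$) and facets $F_j = \{v_S^1 : |S|=p, j \in S\}$, so it suffices to exhibit a sequence of deletions $a \mapsto b$ that terminates with this complex.

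First I would handle the ``extra multiplicities'' at the top level: for each $S \subseteq [n]$ with $|S|=p$ and each $r$ with $2 \leq r \leq m_p$, I would retract $v_S^r \mapsto v_S^1$. By construction, the facets of $\calI(\bm)$ containing $v_S^r$ are precisely the $F_j$ with $j \in S$, and each of these also contains $v_S^1$. Hence Corollary \ref{Corollary: Deformation Retract} applies, and removing $v_S^r$ one by one produces a deformation retraction onto the subcomplex whose vertex set loses these ``duplicate'' top-level vertices. Note that at each stage, the hypothesis of the corollary remains satisfied, because the facets of the intermediate complexes are obtained from the $F_j$ by simply deleting the vertices already removed, so the membership relation $v_S^1 \in F_j \Leftrightarrow j \in S$ is preserved.

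Next, I would remove every remaining vertex $v_S^r$ with $|S| < p$. Fix any $S' \subseteq [n]$ with $S' \supseteq S$ and $|S'|=p$ (possible because $p \leq n$ and $m_p \neq 0$ guarantees $v_{S'}^1$ exists). The facets containing $v_S^r$ in the current complex are those $F_j$ with $j \in S$, and since $S \subseteq S'$, each such facet also contains $v_{S'}^1$. Again Corollary \ref{Corollary: Deformation Retract} gives a deformation retraction $v_S^r \mapsto v_{S'}^1$. Iterating over all $S$ with $|S|<p$ and all $1 \leq r \leq m_{|S|}$ removes every such vertex. What remains is precisely the vertex set $\{v_S^1 : |S|=p\}$, and the surviving facets are $\{v_S^1 : |S|=p, j \in S\}$ for $j=1,\dots,n$, which is $\calI(\eseq{n}{p})$.

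The only subtle point — and the potential obstacle — is the bookkeeping that ensures Corollary \ref{Corollary: Deformation Retract} continues to apply after each individual deletion, since in principle the facet structure of the intermediate complexes could change in unexpected ways (e.g.\ a former facet could become a proper sub-face of another after losing a vertex). However, the defining facets $F_1,\dots,F_n$ of an intersection complex remain pairwise incomparable after removing any subset of vertices of the form described above: for $j \neq k$, the vertex $v_{\{j\}}^r$ lies in $F_j$ but not in $F_k$ whenever $m_1 \neq 0$, and more generally the distinguishing vertices $v_{S}^r$ with $j \in S, k \notin S$ are never candidates for deletion simultaneously with their counterparts for the other facet. So the $n$ facets remain distinct throughout, the membership relations stay controlled by the subsets $S$, and the inductive argument goes through. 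Composing all these elementary retractions yields the required deformation retraction $\calI(\bm) \leadsto \calI(\eseq{n}{p})$.
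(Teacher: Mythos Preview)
Your proof is correct and follows essentially the same approach as the paper: for each vertex $v_S^r$ outside $\calI(\eseq{n}{p})$, pick any $S'\supseteq S$ with $|S'|=p$ and retract $v_S^r\mapsto v_{S'}^1$ via Corollary \ref{Corollary: Deformation Retract}. The paper does this in a single stroke without the case split or the bookkeeping discussion; your worry about facets collapsing during the iteration is legitimate but has a cleaner resolution than you give --- since the vertices $\{v_S^1:|S|=p\}$ are never deleted and already witness the pairwise incomparability of $F_1,\dots,F_n$ (for $j\neq k$ take $S$ of size $p$ with $j\in S$, $k\notin S$, possible as $p\leq n-1$), the facet structure is preserved throughout.
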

		\begin{proof}
			Let $v_S^r$ be a vertex of $\calI(\bm)$, with $S$ a subset of $[n]$ of size less than or equal to $p$, and $1 \leq r \leq m_{|S|}$. Choose any subset $S'\subseteq [n]$ of size $p$ which contains $S$. The vertex $v_{S'}^1$ lies inside $\calI(\eseq{n}{p})$, and any facet of $\calI(\bm)$ containing $v_S^r$ must also contain $v_{S'}^1$. Thus by Corollary \ref{Corollary: Deformation Retract} there is a deformation retraction $\calI(\bm) \leadsto \calI(\eseq{n}{p})$ obtained by identifying every vertex $v_S^r$ in $\calI(\bm)$ with an appropriate vertex $v_{S'}^1$ in $\calI(\eseq{n}{p})$.
		\end{proof}
		\begin{ex}\label{Example: Intersection complex def retract}
			For the intersection complex $\calI(2,1,0)$ in Example \ref{Example: Intersection Complex (2,1,0)} we have the deformation retraction
			\begin{center}
				\begin{tabular}{ccc}
					\begin{tikzpicture}[scale = 0.8]
						\tikzstyle{point}=[circle,thick,draw=black,fill=black,inner sep=0pt,minimum width=3pt,minimum height=3pt]
						\node (a)[point,label=above:$v_{\{1\}}^1$] at (0,3.4) {};
						\node (b)[point,label=above:$v_{\{1,2\}}^1$] at (2,3.4) {};
						\node (c)[point,label=above:$v_{\{2\}}^1$] at (4,3.4) {};
						\node (d)[point,label=left:$v_{\{1,3\}}^1$] at (0.8,1.7) {};
						\node (e)[point,label=right:$v_{\{2,3\}}^1$] at (2.8,1.7) {};
						\node (f)[point,label=left:$v_{\{3\}}^1$] at (1.8,0) {};
						
						\node (g)[point,label=above:$v_{\{1\}}^2$] at (1,3.9) {};
						\node (h)[point,label=above:$v_{\{2\}}^2$] at (3,3.9) {};
						\node (i)[point,label={[label distance = -1.5mm]above:$v_{\{3\}}^2$}] at (2,2.2) {};
						
						\draw [fill=lightgray, fill opacity=.5] (a.center) -- (b.center) -- (d.center) -- cycle;
						\draw [fill=lightgray, fill opacity=.5] (b.center) -- (c.center) -- (e.center) -- cycle;
						\draw [fill=lightgray, fill opacity=.7] (d.center) -- (e.center) -- (f.center) -- cycle;
						
						\draw[fill=black, fill opacity=1] (a.center)--(d.center)--(g.center)--cycle;
						\draw[fill=darkgray,fill opacity=.7] (b.center)--(d.center)--(g.center)--cycle;
						
						\draw[fill=black,fill opacity=1] (b.center)--(e.center)--(h.center)--cycle;
						\draw[fill=darkgray,fill opacity=.7] (c.center)--(e.center)--(h.center)--cycle;
						
						\draw[fill=black,fill opacity=1] (d.center)--(f.center)--(i.center)--cycle;
						\draw[fill=darkgray,fill opacity=.7] (e.center)--(f.center)--(i.center)--cycle;
					\end{tikzpicture} & & \begin{tikzpicture}[scale=0.8]
						\tikzstyle{point}=[circle,thick,draw=black,fill=black,inner sep=0pt,minimum width=3pt,minimum height=3pt]
						\node (a)[point, label=left:$v_{\{1,3\}}^1$] at (0,0) {};
						\node (b)[point, label=right:$v_{\{2,3\}}^1$] at (2,0) {};
						\node (c)[point, label=above:$v_{\{1,2\}}^1$] at (1,1.7) {};
						
						\node at (0,-1) {};
						
						\draw (a.center) -- (b.center) -- (c.center) -- cycle;
					\end{tikzpicture}\\ 
					$\calI(2,1,0)$& $\rightsquigarrow$ & $\calI(0,1,0)$\\
				\end{tabular}
			\end{center}
		\end{ex}
		
		In fact, not only does the intersection complex $\calI(\bm)$ itself deformation retract onto a complex of the form $\calI(\eseq{j}{i})$, but the link of any intersections of facets in $\calI(\bm)$ also deformation retracts onto a complex of this form. By symmetry (or more explicitly, by Lemma \ref{Lemma: link x = link gx}) it suffices to consider facet intersections of the form $\sigma_i=F_1\cap \dots \cap F_i$.
		\begin{prop}\label{Proposition: Def Retract of Intersection Complex Link}
			Let $\bm=(m_1,\dots,m_n)$ be a nonzero sequence in $\ZZ_{\geq 0}^n$ with $m_n=0$, and let $\Delta = \calI(\bm)$ be the corresponding intersection complex with facets $F_1,\dots, F_n$. Suppose $\sigma_i = F_1 \cap \dots \cap F_i$ for some $1\leq i \leq p$. We have a deformation retraction $\lkds_i\leadsto \calI(\eseq{i}{i-1})$.
		\end{prop}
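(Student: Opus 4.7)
The plan is to recognize $\lkds_i$ as an intersection complex $\calI(\bm')$ and then invoke Proposition \ref{Proposition: Def Retract of Intersection Complex} to reduce it to $\calI(\eseq{i}{i-1})$. First I would identify the facets of $\lkds_i$: a direct calculation via Lemma \ref{Lemma: size of sigma_i} (using $i \leq p \leq n-1$ and $m_p > 0$) shows $|\sigma_i| > |\sigma_{i+1}|$, so $\sigma_i \not\subseteq F_{i+1}$. Applying transpositions in $S_n$ that fix $[i]$ pointwise and swap $i+1$ with any $k > i$, we conclude $\sigma_i \not\subseteq F_k$ for every such $k$. Hence the facets of $\lkds_i$ are exactly $G_k := F_k - \sigma_i$ for $k \in [i]$.

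Next I would show that $\lkds_i \cong \calI(\bm')$ where
\begin{equation*}
m_j' = \sum_{t=0}^{n-i}\binom{n-i}{t}m_{j+t} \ \text{ for } 1\leq j \leq i-1, \qquad m_i' = 0.
\end{equation*}
The subgroup of $S_n$ fixing $[n] - [i]$ pointwise acts on $\lkds_i$ permuting its facets, so $\lkds_i$ is intersectionally symmetric in the sense of Definition \ref{Definition: FIS}; by the classification immediately following that definition, $\lkds_i \cong \calI(\bm')$, where $m_j'$ counts the vertices $v_S^r$ of $\lkds_i$ with $S \cap [i]$ equal to a fixed $j$-subset of $[i]$. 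Summing over extensions $S = (S\cap [i]) \sqcup S''$ with $S''\subseteq [n]-[i]$ gives the stated formula, and $m_i' = 0$ because $G_1 \cap \dots \cap G_i = \emptyset$.

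Finally, I would verify that $\bm'$ satisfies the hypotheses of Proposition \ref{Proposition: Def Retract of Intersection Complex}: its last entry is zero by construction, and $m_{i-1}'$ is nonzero because the summand at $t = p-i+1$ equals $\binom{n-i}{p-i+1}m_p > 0$ (the binomial coefficient is positive since $0 \leq p-i+1 \leq n-i$). Thus the value of $p'$ associated to $\bm'$ equals $i-1$, and Proposition \ref{Proposition: Def Retract of Intersection Complex} yields $\calI(\bm') \leadsto \calI(\eseq{i}{i-1})$. Composing with the isomorphism of the previous step gives the desired $\lkds_i \leadsto \calI(\eseq{i}{i-1})$.

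The main obstacle is justifying the isomorphism $\lkds_i \cong \calI(\bm')$ rigorously: while invoking the classification of intersectionally symmetric complexes suffices in principle, a more hands-on argument would fix a bijection at each nonempty proper subset $T \subsetneq [i]$ between $\{v_S^r \in \lkds_i : S\cap [i] = T\}$ and the $m_{|T|}'$ vertices of $\calI(\bm')$ indexed by $T$, then check that $v_S^r \in G_k$ iff $k \in T$ iff its image lies in $F_k'$, so that the vertex bijection preserves facet membership and extends to a complex isomorphism.
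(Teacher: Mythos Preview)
Your proof is correct and takes a genuinely different route from the paper's. The paper argues directly: it observes that the vertices $v^1_{[i]-\{j\}}$ (for $1 \leq j \leq i$) lie in $\lkds_i$, notes that any vertex $v_S^r \in \lkds_i$ must satisfy $S \cap [i] \subseteq [i]-\{j\}$ for some $j$ (since $v_S^r \notin \sigma_i$), and hence every facet $G_k$ of $\lkds_i$ containing $v_S^r$ also contains $v^1_{[i]-\{j\}}$; Corollary~\ref{Corollary: Deformation Retract} then produces the retraction in one stroke. Your approach is more structural: you first recognize $\lkds_i$ as an intersection complex $\calI(\bm')$ in its own right (via the $\Sym([i])$-action and the classification following Definition~\ref{Definition: FIS}), compute $\bm'$ explicitly, check that its top nonzero index is $i-1$, and then invoke Proposition~\ref{Proposition: Def Retract of Intersection Complex}. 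This buys you the explicit description $\lkds_i \cong \calI(\bm')$ --- essentially anticipating Lemma~\ref{Lemma: Intersection complex link of arbitrary face} for the face $\sigma_i$ --- and avoids any implicit reliance on $m_{i-1} \geq 1$, at the cost of a longer argument and a detour through the classification. The paper's route is shorter and more self-contained; yours makes the recursive structure of the family of intersection complexes more transparent.
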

		\begin{proof}
			The complex $\calI(\eseq{i}{i-1})$ contains precisely those vertices of the form $v^1_{[i]-\{j\}}$ for $1 \leq j \leq i$. All of these vertices are contained in exactly $i-1$ of the facets $F_1,\dots,F_i$, which means none of them is contained in $\sigma_i$ and all are therefore contained in $\lkds_i = \langle F_1 - \sigma_i,\dots, F_i-\sigma_i \rangle$.
			
			Let $v^r_S$ be any vertex in $\lkds_i$, with $S$ a subset of $[n]$ and $1 \leq r \leq m_{|S|}$. Because $v^r_S$ does not lie in $\sigma_i$, there must be some $1\leq j \leq i$ for which $v^r_S$ does not lie in the facet $F_j$. This means that we have $j\notin S$, and hence $S\cap [i] \subseteq [i]-\{j\}$. Thus every facet of $\lkds_i$ containing $v^r_S$ must also contain $v^1_{[i]-\{j\}}$.
			
			Thus by Corollary \ref{Corollary: Deformation Retract} there is a deformation retraction $\lkds_i\leadsto \calI(\eseq{i}{i-1})$ given by identifying every vertex $v_S^r$ in $\lkds_i$ with an appropriate vertex $v^1_{[i]-\{j\}}$ in $\calI(\eseq{i}{i-1})$.
		\end{proof}
		\begin{ex}\label{Example: Intersection complex link def retract}
			For the intersection complex $\Delta=\calI(2,1,0)$ in Example \ref{Example: Intersection Complex (2,1,0)} we have $\sigma_2=F_1\cap F_2 = \{v^1_{1,2}\}$, and we get the following deformation retraction.
			\begin{center}
				\begin{tabular}{ccc}
					\begin{tikzpicture}[scale = 0.8]
						\tikzstyle{point}=[circle,thick,draw=black,fill=black,inner sep=0pt,minimum width=3pt,minimum height=3pt]
						\node (a)[point,label=above:$v_{\{1\}}^1$] at (0,3.4) {};
						\node (g)[point,label=above:$v_{\{1\}}^2$] at (2,3.4) {};
						\node (d)[point,label=left:$v_{\{1,3\}}^1$] at (1,1.7) {};
						
						\node (c)[point,label=above:$v_{\{2\}}^1$] at (5,3.4) {};
						\node (h)[point,label=above:$v_{\{2\}}^2$] at (3,3.4) {};
						\node (e)[point,label=right:$v_{\{2,3\}}^1$] at (4,1.7) {};
						
						\draw[fill=gray] (a.center)--(d.center)--(g.center)--cycle;
						\draw[fill=gray] (c.center)--(e.center)--(h.center)--cycle;
					\end{tikzpicture}  & & \begin{tikzpicture}[scale=0.8]
						\tikzstyle{point}=[circle,thick,draw=black,fill=black,inner sep=0pt,minimum width=3pt,minimum height=3pt]
						\node (a)[point, label=above:$v_{\{1\}}^1$] at (1,2.5) {};
						\node (b)[point, label=above:$v_{\{2\}}^1$] at (3,2.5) {};
						
						\node at (1,1.7) {};
						
					\end{tikzpicture}\\ 
					$\lkds[v^1_{\{1,2\}}]\cong \calI(3,0)$& $\rightsquigarrow$ & $\calI(1,0)$\\
				\end{tabular}
			\end{center}
		\end{ex}
		
		These two deformation retractions allow us to restrict our attention solely to the homologies of the complexes $\calI(\eseq{n}{p})$ for $p=1,\dots,n-1$. To find the homologies, we will make use of the following lemma.
		
		Note that the vertex set of $\calI(\eseq{n}{p})$ contains exactly one vertex $v_S^1$ for each subset $S$ in $[n]$ of size $p$. For ease of notation, we label this vertex as $v_S$ rather than $v_S^1$.
		\begin{lem}\label{Lemma: subcomplexes e^n_p}
			Let $\Delta$ be the complex $\calI(\eseq{n}{p})$ for some $1\leq p \leq n-1$, with facets $F_1,...,F_n$.
			
			Define $A$ to be the subcomplex of $\Delta$ generated by $F_1,...,F_{n-1}$ and $B$ to be the subcomplex of $\Delta$ generated by $F_n$. We have the following results.
			\begin{enumerate}
				\item $A$ deformation retracts on to $\calI(\eseq{n-1}{p})$.
				\item $A\cap B$ is homeomorphic to $\calI(\eseq{n-1}{p-1})$.
			\end{enumerate}
		\end{lem}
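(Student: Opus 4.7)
The plan is to prove part (2) by writing down an explicit simplicial isomorphism, and to prove part (1) by iteratively applying Corollary \ref{Corollary: Deformation Retract} to contract away the vertices of $A$ that do not appear in $\calI(\eseq{n-1}{p})$.

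For part (2), note that $B = \langle F_n \rangle$ is the full simplex on $F_n = \{v_S : n \in S,\ |S| = p\}$, so every subset of $F_n$ is a face of $B$. Hence a collection $\{v_{S_1}, \ldots, v_{S_k}\} \subseteq F_n$ lies in $A \cap B$ if and only if it is contained in some facet $F_j$ with $j \in [n-1]$, equivalently $\bigcap_i (S_i - \{n\}) \neq \emptyset$. Writing $T_i = S_i - \{n\}$, this is precisely the condition for $\{v_{T_1}, \ldots, v_{T_k}\}$ to be a face of $\calI(\eseq{n-1}{p-1})$, so the assignment $v_S \mapsto v_{S - \{n\}}$ gives a simplicial isomorphism, and in particular a homeomorphism.

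For part (1), partition the vertex set of $A$ as $V_1 \sqcup V_2$, where $V_1 = \{v_S : n \notin S\}$ is the natural copy of $V(\calI(\eseq{n-1}{p}))$ inside $V(A)$, and $V_2 = \{v_S \in V(A) : n \in S\}$ (note that $v_{\{n\}}$, if it exists as a vertex of $\Delta$, lies only in $F_n$ and so is automatically excluded from $V(A)$). The goal is to deform every vertex of $V_2$ onto $V_1$. For each $v_S \in V_2$, we have $|S - \{n\}| = p - 1 \leq n - 2$, so we may choose $k(S) \in [n-1] - S$ and set $S^\star = (S - \{n\}) \cup \{k(S)\} \in V_1$. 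Every facet of $A$ containing $v_S$ is $F_j$ for some $j \in S - \{n\}$, and since $S - \{n\} \subseteq S^\star$ we have $v_{S^\star} \in F_j$; Corollary \ref{Corollary: Deformation Retract} therefore applies, yielding a deformation retraction $A \leadsto A - \{v_S\}$.

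The main step, and really the only non-trivial point, is that this procedure can be iterated consistently. After $r$ contractions, the facets of the intermediate complex $A^{(r)}$ are exactly the nonempty sets $F_j \cap V(A^{(r)})$ for $j \in [n-1]$: indeed, any face of $A^{(r)}$ is a subset of some $F_j$ that avoids the removed vertices, and maximality forces equality. Since no vertex of $V_1$ is ever removed, the inclusion $v_{T^\star} \in F_j$ propagates to $v_{T^\star} \in F_j \cap V(A^{(r)})$, so the hypothesis of Corollary \ref{Corollary: Deformation Retract} remains valid for every remaining $v_T \in V_2$ at every stage. After contracting all of $V_2$, the resulting complex is the induced subcomplex $A|_{V_1}$, whose facets $F_j \cap V_1 = \{v_S : j \in S \subseteq [n-1],\ |S| = p\}$ coincide exactly with those of $\calI(\eseq{n-1}{p})$.
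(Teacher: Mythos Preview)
Your proof is correct and follows essentially the same approach as the paper: for part (2) you use the same bijection $v_S \mapsto v_{S-\{n\}}$, and for part (1) you use the same replacement vertex $v_{(S-\{n\})\cup\{k\}}$ together with Corollary \ref{Corollary: Deformation Retract}. Your treatment of the iteration is in fact more careful than the paper's, which simply asserts the retraction without discussing the intermediate complexes; one minor imprecision is the word ``exactly'' in your description of the facets of $A^{(r)}$ (some of the sets $F_j \cap V(A^{(r)})$ could in principle coincide or fail to be maximal), but your argument only uses that every facet is \emph{among} these sets, which you do justify.
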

		\begin{proof}
			For part (1), note that $A$ contains the complex $\calI(\eseq{n-1}{p})$, and the only vertices in $A$ outside of $\calI(\eseq{n-1}{p})$ are those of the form $v_S$ where $S$ is a subset in $[n]$ of size $p$ containing $n$. For any such $S$, we may choose some $l\in [n] - S$. All of the facets $F_1,...,F_{n-1}$ containing the vertex $v_S$ must also contain the vertex $v_{S\cup\{l\}-\{n\}}$, which lies inside $\calI(\eseq{n-1}{p})$, and hence we may identify the vertex $v_S$ with the vertex $v_{S\cup\{l\}-\{n\}}$. This gives us a deformation retract $\Delta$ on to its subcomplex $\calI(\eseq{n-1}{p})$.
			
			For part (2), note that $A \cap B = \langle F_1\cap F_n,....,F_{n-1} \cap F_n \rangle$. Hence the vertices in $A\cap B$ are all those of the form $v_S$ where $S$ is a subset in $[n]$ of size $p$ containing $n$. Thus there is a bijection from the vertex set of $A\cap B$ to the vertex set of $\calI(\eseq{n-1}{p-1})$ given by $v_S\mapsto v_{S-\{n\}}$. This bijection takes each facet $F_i\cap F_n$ to a facet of $\calI(\eseq{n-1}{p-1})$, and hence it is a homeomorphism.
		\end{proof}
		
		Using Lemma \ref{Lemma: subcomplexes e^n_p}, we can find the homology of the complex $\calI(\eseq{n}{p})$.
		\begin{prop}\label{Proposition: homology of e^n_p}
			For all $0\leq p\leq n-1$, the complex $\calI(\eseq{n}{p})$ has only $\nth[st]{(p-1)}$ homology, of dimension ${n-1 \choose p}$.
		\end{prop}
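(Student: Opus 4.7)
My plan is to prove this by induction on $n$, exploiting the decomposition provided by Lemma~\ref{Lemma: subcomplexes e^n_p} together with the Mayer--Vietoris sequence (Proposition~\ref{Proposition: MVS}). The base case is $n=1$, where the only permissible value is $p=0$ and $\calI(\eseq{1}{0})$ is the irrelevant complex $\{\emptyset\}$, which has a unique nontrivial homology in degree $-1$ of dimension $1=\binom{0}{0}$, as required. More generally, the case $p=0$ for arbitrary $n$ is handled in the same way, since $\calI(\eseq{n}{0})=\{\emptyset\}$ by Example~\ref{Example: Intersection Complex Trivial Cases}.

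For the inductive step, fix $n>1$ and $1\le p\le n-1$, set $\Delta=\calI(\eseq{n}{p})$, and decompose $\Delta=A\cup B$ as in Lemma~\ref{Lemma: subcomplexes e^n_p}. The subcomplex $B=\langle F_n\rangle$ is a single simplex and therefore acyclic; $A$ deformation retracts onto $\calI(\eseq{n-1}{p})$, so Proposition~\ref{Proposition: Homotopy Equiv => Homology Equiv} together with the inductive hypothesis gives $\Hred_r(A)\neq 0$ only when $r=p-1$, in which case the dimension is $\binom{n-2}{p}$; and $A\cap B\cong \calI(\eseq{n-1}{p-1})$, which by induction has only $\nth[st]{(p-2)}$ homology of dimension $\binom{n-2}{p-1}$. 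Feeding these into the reduced Mayer--Vietoris sequence, all but one segment is immediately zero, leaving
\begin{equation*}
0\longrightarrow \Hred_{p-1}(A)\longrightarrow \Hred_{p-1}(\Delta)\longrightarrow \Hred_{p-2}(A\cap B)\longrightarrow 0,
\end{equation*}
and $\Hred_r(\Delta)=0$ for all $r\neq p-1$. Counting dimensions and applying Pascal's identity yields $\dim_{\KK}\Hred_{p-1}(\Delta)=\binom{n-2}{p}+\binom{n-2}{p-1}=\binom{n-1}{p}$, which completes the induction.

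The only genuinely delicate point is the boundary value $p=n-1$, which forces the inductive appeal on $A$ to fall onto $\calI(\eseq{n-1}{n-1})$; this complex is not covered by the inductive hypothesis as stated (which requires $p\le n-2$). However, by Example~\ref{Example: Intersection Complex Trivial Cases} the complex $\calI(\eseq{n-1}{n-1})$ is a single vertex and hence acyclic, so $\Hred_r(A)=0$ for all $r$ in this case. The Mayer--Vietoris sequence then forces $\Hred_{n-2}(\Delta)\cong\Hred_{n-3}(A\cap B)$, which has dimension $\binom{n-2}{n-2}=1=\binom{n-1}{n-1}$ by induction on $A\cap B\cong \calI(\eseq{n-1}{n-2})$. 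Apart from this boundary observation, the argument is routine, and I expect the main care to be in tracking indices carefully so the single nonzero homology of $\Delta$ lands precisely in degree $p-1$.
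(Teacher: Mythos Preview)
Your proof is correct and follows essentially the same approach as the paper's: both use induction together with the decomposition of Lemma~\ref{Lemma: subcomplexes e^n_p} and the Mayer--Vietoris sequence, and both handle the boundary case $p=n-1$ by observing that $\calI(\eseq{n-1}{n-1})$ is acyclic (the paper phrases this as ``a full simplex'', you say ``a single vertex'' --- both correct, yours being the more specific description).
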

		\begin{proof}
			We proceed by induction on $n\geq 1$ and $p\geq 0$. For the base cases, if $p=0$, the complex $\calI(\eseq{n}{p})=\{\emptyset\}$ has only $\nth[st]{(-1)}$ homology, of dimension $1$. Similarly if $n=1$, then we must have $p=0$, so the same reasoning applies.
			
			Now suppose $n\geq 2$ and $1\leq p \leq n-1$, and let $\Delta=\calI(\eseq{n}{p})$, with facets $F_1,...,F_n$. We define the subcomplexes $A$ and $B$ of $\Delta$ as in Lemma \ref{Lemma: subcomplexes e^n_p}. Note that $B$ consists of a single facet and is therefore acyclic.
			
			By Lemma \ref{Lemma: subcomplexes e^n_p}, we have that $A$ deformation retracts on to $\calI(\eseq{n-1}{p})$. If $p=n-1$, then $\calI(\eseq{n-1}{p})$ is a full simplex and is therefore acyclic. Otherwise, by the inductive hypothesis, it has only $(p-1)^\text{st}$ homology, of dimension ${n-2 \choose p}$. Either way we have that $\dim_\KK \Hred_{p-1} (\calI(\eseq{n-1}{p})) = {n-2\choose p}$ and all other homologies are zero.
			
			We also have that $A\cap B$ is homeomorphic to $\calI(\eseq{n-1}{p-1})$. By the inductive hypothesis, this has only $(p-2)^\text{nd}$ homology, of dimension ${n-2 \choose p-1}$.
			
			Thus the Mayer-Vietoris Sequence yields an exact sequence
			$$0\rightarrow \Hred_{p-1}(A)\rightarrow \Hred_{p-1}(\Delta)\rightarrow\Hred_{p-2}(A\cap B)\rightarrow 0$$
			which means that $\Delta$ has only $(p-1)^\text{st}$ homology, and this homology has dimension ${n-2 \choose p}+{n-2\choose p-1}={n-1 \choose p}$.
		\end{proof}
		
		%
		Lemma \ref{Lemma: link facet intersections} has allowed us to restrict our attention to the links of intersection of facets; however, before we move on to the proof of Theorems \ref{Theorem: Intersection Complexes Degree Type} and \ref{Theorem: Intersection Complexes Betti Numbers}, it is worth taking a moment to consider the links of \textit{arbitrary} faces in intersection complexes. Notably, it turns out that every link in an intersection complex is also an intersection complex, making this the only family of complexes presented in this chapter which is closed under the operation of taking links. To prove this, it suffices to consider the links of vertices, by Lemma \ref{Lemma: links in links}; and by symmetry considerations we need only consider those vertices of the form $v^{m_i}_{[i]}$ for some integer $1\leq i\leq n$.
		
		\begin{lem}\label{Lemma: Intersection complex link of arbitrary face}
			Let $\bm=(m_1,\dots,m_n)$ be a nonzero sequence in $\ZZ_{\geq 0}^n$. Fix some integer $1\leq i\leq n$, and let $v$ denote vertex $v^{m_i}_{[i]}$ in the intersection complex $\Delta=\calI(\bm)$. There is an isomorphism of complexes $$\lkds[v]\cong\calI(k_1,\dots,k_i)$$ where for each $1\leq l \leq i$ we define $$k_l=\begin{cases}
				\sum_{j=0}^{n-i}{n -i \choose j}m_{j+l} & \text{ if } 1\leq l \leq i-1\\
				\sum_{j=0}^{n-i}{n -i \choose j}m_{j+l} -1& \text{ if } l=i.
			\end{cases}$$.
		\end{lem}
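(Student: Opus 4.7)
The plan is to identify $\lkds[v]$ directly as a complex with intersectional symmetry (in the sense of Definition 2.27) and then invoke the characterization of such complexes by their defining sequences.

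First I would observe that the vertex $v = v^{m_i}_{[i]}$ lies in exactly those facets $F_j$ of $\Delta$ with $j \in [i]$, namely $F_1, \dots, F_i$. By definition of the link, this gives
\[
\lkds[v] = \langle F_1 - \{v\}, \; F_2 - \{v\}, \; \dots, \; F_i - \{v\} \rangle.
\]
In particular, $\lkds[v]$ has exactly $i$ facets. Next I would observe that the symmetric group $S_i$ acts on $\Delta$ by permuting the indices $\{1, \dots, i\}$ in the labels $v^r_S$; this action fixes the subset $[i]$ setwise and therefore fixes the vertex $v$, so it descends to an action on $\lkds[v]$ which permutes its defining facets $F_1 - \{v\}, \dots, F_i - \{v\}$. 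This symmetry implies both conditions of Definition 2.27 for $\lkds[v]$, so $\lkds[v]$ is intersectionally symmetric. By the discussion immediately after Definition 2.27, any intersectionally symmetric complex is isomorphic to an intersection complex $\calI(k_1, \dots, k_i)$, where the sequence $(k_1, \dots, k_i)$ is uniquely determined by recording, for each $1 \leq l \leq i$, the number of vertices in $(F_1 - \{v\}) \cap \dots \cap (F_l - \{v\})$ that do not lie in any larger intersection of the link's facets.

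It therefore remains to compute this sequence and match it against the stated formula. For $1 \leq l \leq i$, the vertices of the link lying in $(F_1 - \{v\}) \cap \dots \cap (F_l - \{v\})$ but outside $F_{l+1}, \dots, F_i$ are exactly the vertices $v^r_S \neq v$ of $\Delta$ whose labelling set satisfies $S \cap [i] = [l]$. Such an $S$ must contain $[l]$, avoid $\{l+1,\dots,i\}$, and is otherwise free to contain any subset of $\{i+1,\dots,n\}$; writing $|S| = l+j$ with $j \in \{0, \dots, n-i\}$ gives $\binom{n-i}{j}$ choices of $S$, each contributing $m_{l+j}$ vertices $v^r_S$. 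Summing,
\[
k_l = \sum_{j=0}^{n-i}\binom{n-i}{j} m_{l+j}
\]
for $l < i$. For $l = i$ the same count applies, except that one of the admissible vertices is $v$ itself (corresponding to $S = [i]$ and $r = m_i$), which must be removed; this yields $k_i = \sum_{j=0}^{n-i}\binom{n-i}{j} m_{i+j} - 1$, matching the formula in the statement.

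I do not expect significant obstacles: the only subtlety is checking that the $S_i$-symmetry really yields intersectional symmetry of the link (rather than some weaker transitivity property), which follows because $S_i$ acts transitively on $l$-subsets of the facet set $\{F_1 - \{v\}, \dots, F_i - \{v\}\}$ for every $l$. The counting step is a direct application of the bijection between vertices $v^r_S$ of $\Delta$ and labelled subsets of $[n]$, combined with the requirement $S \cap [i] = [l]$ which exactly separates the contribution to $k_l$ from contributions to $k_{l+1}, \dots, k_i$.
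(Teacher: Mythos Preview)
Your proposal is correct and follows essentially the same approach as the paper: identify the link as the complex with facets $F_1-\{v\},\dots,F_i-\{v\}$, show it is intersectionally symmetric, invoke the characterization of such complexes as intersection complexes, and then count the vertices $v^r_S$ with $S\cap[i]=[l]$ to read off the sequence $(k_1,\dots,k_i)$. The only difference is in how intersectional symmetry is established: the paper verifies the two conditions of the definition directly (checking that intersection sizes and links of intersections depend only on the number of facets involved, using $\link_L\bigl(\bigcap_{j\in S}(F_j-v)\bigr)=\link_\Delta\bigl(\bigcap_{j\in S}F_j\bigr)$), whereas you argue via the restricted $S_i$-action fixing $v$ and permuting the facets of the link. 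Your route is arguably cleaner and more conceptual, since it makes the symmetry manifest rather than checking it by hand; the paper's route is slightly more explicit about why the link condition holds. The counting step is identical in both.
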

		\begin{proof}
			The facets of $\Delta$ containing $v$ are $F_1,\dots,F_i$, and hence we have $L=\lkds[v]=\langle F_1-v,\dots,F_i-v\rangle$. For any subset $S$ of $[i]$, the size of the intersection $\bigcap_{j\in S}(F_j-v)$ is $|\bigcap_{j\in S_1}F_j|-1$, which is wholly dependent on the size of $S$; and the link of $\bigcap_{j\in S}(F_j -v)$ in $L$ is equal to the link of $\bigcap_{j\in S_1}F_j$ in $\Delta$, which is also wholly dependent on the size of $S$. This shows that $L$ is intersectionally symmetric, and thus must be an intersection complex $\calI(\bk)$ for some sequence $\bk(k_1,\dots,k_i)$ in $\ZZ^n_{\geq 0}$.
			
			Pick some $1\leq l\leq i$. As observed in Remark \ref{Remark: Intersection Complex Observations}, the value $k_l$ is the number of vertices contained in the intersection $(F_1\cap \dots \cap F_l)-v$ which are not contained in an intersection of $l+1$ facets of $L$. These are the vertices of the form $v^r_S$ for subsets $S\subseteq [n]$ such that $S\cap [i]=[l]$.
			
			For each $0\leq j \leq n-i$, there are ${n-i \choose j}$ such subsets of $[n]$ of size $l+j$ (one for each choice of $j$ elements from the set $\{i+1,...,n\}$). Each such subset $S$ has $m_{j+l}$ corresponding vertices $v^1_S,\dots,v_S^{m_{j+l}}$ in $L$, except in the case where $l=i$ and $S=[i]$, in which case $L$ contains only the $m_i-1$ vertices $v_S^1,\dots,v_{[i]}^{m_i-1}$ (because the vertex $v=v_{[i]}^{m_i}$ is not in $L$). The result follows.
		\end{proof}
		
		In particular, every complex in the link poset of an intersection complex is another intersection complex. We give a few examples of the link posets of intersection complexes below. All the complexes $\calI(m_1,\dots,m_n)$ in the chain with $m_n\neq 0$ are cones, and therefore acyclic; and by Propositions \ref{Proposition: Def Retract of Intersection Complex} and \ref{Proposition: homology of e^n_p}, the rest of the complexes have homology index set $h(\calI(\bm))=\{p-1\}$ where $p=\max\{i\in [n-1] : m_i\neq 0\}$.
		\begin{ex}\label{Example: Intersection Complex (0,1,0,0) Link Poset}
			Consider the intersection complex $\Delta=\calI(0,1,0,0)$ in Example \ref{Example: Intersection Complex (0,1,0,0)}. The maximal chains in the link poset $P_\Delta$ all look like the following.
			\begin{center}
				\begin{tabular}{ c c c c c c c }
					\begin{tikzpicture}[scale = 0.5]
						\tikzstyle{point}=[circle,thick,draw=black,fill=black,inner sep=0pt,minimum width=2pt,minimum height=2pt]
						\node (a)[point] at (0,3.5) {};
						\node (b)[point] at (2,2.7) {};
						\node (c)[point] at (4,3.5) {};
						\node (d)[point] at (1.6,2.2) {};
						\node (e)[point] at (2.4,2.2) {};
						\node (f)[point] at (2,0) {};
						
						\begin{scope}[on background layer]
							\draw[fill=gray, fill opacity = .8] (d.center) -- (e.center) -- (b.center) -- cycle;
							\draw[fill=gray, fill opacity = .8] (a.center) -- (b.center) -- (c.center) -- cycle;
							\draw[fill=gray, fill opacity = .8] (a.center) -- (f.center) -- (d.center) -- cycle;
							\draw[fill=gray, fill opacity = .8] (c.center) -- (f.center) -- (e.center) -- cycle;
						\end{scope}
					\end{tikzpicture}& \begin{tikzpicture}[scale = 0.5]
						\tikzstyle{point}=[circle,thick,draw=black,fill=black,inner sep=0pt,minimum width=2pt,minimum height=2pt]
						\node (a) at (0,0.75) {};
						\node (b) at (1,0.75) {};
						\node (c) at (0.5,0) {};
						
						\draw[->] (a.center) -- (b.center);
					\end{tikzpicture} & \begin{tikzpicture}[scale = 0.5]
						\tikzstyle{point}=[circle,thick,draw=black,fill=black,inner sep=0pt,minimum width=2pt,minimum height=2pt]
						\node[point] (a) at (0,0) {};
						\node[point] (b) at (0,2) {};
						\node[point] (c) at (1,0) {};
						\node[point] (d) at (1,2) {};
						
						\draw (a.center) -- (b.center);
						\draw (c.center) -- (d.center);
					\end{tikzpicture}  & \begin{tikzpicture}[scale = 0.5]
						\tikzstyle{point}=[circle,thick,draw=black,fill=black,inner sep=0pt,minimum width=2pt,minimum height=2pt]
						\node (a) at (0,0.75) {};
						\node (b) at (1,0.75) {};
						\node (c) at (0.5,0) {};
						
						\draw[->] (a.center) -- (b.center);
					\end{tikzpicture} & \begin{tikzpicture}[scale = 0.5]
						\tikzstyle{point}=[circle,thick,draw=black,fill=black,inner sep=0pt,minimum width=3pt,minimum height=3pt]
						\node (a)[point] at (0,0.75) {};
						\node (c) at (0,0) {};
					\end{tikzpicture}  &
					\begin{tikzpicture}[scale = 0.5]
						\tikzstyle{point}=[circle,thick,draw=black,fill=black,inner sep=0pt,minimum width=2pt,minimum height=2pt]
						\node (a) at (0,0.75) {};
						\node (b) at (1,0.75) {};
						\node (c) at (0.5,0) {};
						
						\draw[->] (a.center) -- (b.center);
					\end{tikzpicture} & \begin{tikzpicture}[scale = 0.5] \node (q)[label=right:\large$\{\emptyset\}$] at (,){};\end{tikzpicture}\\
					\begin{tikzcd}
						|[draw=red, line width =.5mm, rectangle]|\calI(0,1,0,0)\end{tikzcd}&& \begin{tikzcd}
						|[draw=red, line width =.5mm, rectangle]|\calI(2,0)\end{tikzcd} && $\calI(1)$ && \begin{tikzcd}
						|[draw=red, line width =.5mm, rectangle]|\calI(0)\end{tikzcd}\\
					$\Hred_1$&&$\Hred_0$&&$\times$&&$\Hred_{-1}$
				\end{tabular}
			\end{center}
			This diagram demonstrates that $\calI(0,1,0,0)$ has degree type $(1,2)$.
		\end{ex}
		\begin{ex}\label{Example: Intersection Complex (1,1,1,0) Link Poset}
			Consider the intersection complex $\Delta=\calI(0,1,1,0)$. Without needing to draw the complex, we can use Lemma \ref{Lemma: Intersection complex link of arbitrary face} to find all the possible chains in the link poset $P_\Delta$. In the below diagram we take $\delta_1 \arrowX{i} \delta_2$ to denote that $\delta_1=\link_{\delta_1} v^j_S$ for some set $S$ of size $i$ (i.e. it denotes taking the link of a vertex which lies inside exactly $i$ facets).
			\[
			\begin{tikzcd}[row sep=1.5em,column sep=1.5em]
				|[draw=red, line width =.5mm, rectangle]|\calI(0,1,1,0)  \arrow{dr}{2}\arrow{r}{3} &|[draw=red, line width =.5mm, rectangle]|\calI(2,1,0)\arrow[to=Z, near end, "1"]\arrow{dr}{2}&&&&&\\
				& \calI(3,2)\arrow{dr}{1}\arrow[to=Y, near end, "2"] & |[alias=Y]|\calI(3,1)\arrow{dr}{1}\arrow{r}{2} & |[draw=red, line width =.5mm, rectangle]|\calI(3,0)\arrow{dr}{1} & & & & \\
				& &|[alias=Z]|\calI(4)\arrow{r}{1} & \calI(3)\arrow{r}{1}& \calI(2)\arrow{r}{1}& \calI(1)\arrow{r}{1}& |[draw=red, line width =.5mm, rectangle]|\calI(0)\\
				\Hred_2& \Hred_1 &\times& \Hred_0&\times&\times& \Hred_{-1}
			\end{tikzcd}
			\]
			This diagram demonstrates that $\calI(0,1,1,0)$ has degree type $(1,2,3)$.
		\end{ex}
		
		\subsection{Proving Theorems \ref{Theorem: Intersection Complexes Degree Type} and \ref{Theorem: Intersection Complexes Betti Numbers}}
		
		We now have all the ingredients we need to prove Theorems \ref{Theorem: Intersection Complexes Degree Type} and \ref{Theorem: Intersection Complexes Betti Numbers}. We prove both theorems together below.
		
		\begin{proof}[Proof of Theorems \ref{Theorem: Intersection Complexes Degree Type} and \ref{Theorem: Intersection Complexes Betti Numbers}]
			As in Proposition \ref{Proposition: Def Retract of Intersection Complex Link}, we define $\sigma_i$ to be the intersection of facets $F_1\cap \dots \cap F_i$ for $1\leq i \leq p$. We also define $\sigma_{p+1}=F_1\cap \dots \cap F_{p+1}=\emptyset$. Propositions \ref{Proposition: Def Retract of Intersection Complex}, \ref{Proposition: Def Retract of Intersection Complex Link} and \ref{Proposition: homology of e^n_p} show that for $1\leq i\leq p+1$, the link $\lkds_i$ has only $(i-1)^\text{st}$ homology.
			
			We wish to show that $\Delta=\calI(\bm)$ is a PR complex with the desired degree type and Betti numbers.
			
			To see that $\Delta$ is a PR complex, suppose we have two faces $\tau_1$ and $\tau_2$ of $\Delta$ whose links both have nontrivial homology at the same degree. By Lemma \ref{Lemma: link facet intersections}, $\tau_1$ and $\tau_2$ must be intersections of facets. By the intersectional symmetry of $\Delta$ we may assume that $\tau_1=\sigma_{j_1}$ and $\tau_2 = \sigma_{j_2}$ for some $1\leq j_1,j_2\leq p+1$, and thus their links have $(j_1-1)^\text{st}$ homology and $(j_2-1)^\text{st}$ homology respectively. This means $j_1=j_2$, and in particular $|\tau_1|=|\tau_2|$.
			
			We now move on to the degree type. For any $1\leq i \leq p$, the value $d_i$ as in Definition \ref{Definition: PR Complex Degree Types} is given by $d_i = |\sigma_i|-|\sigma_{i+1}|$. Proposition \ref{Lemma: size of sigma_i} tells us that $|\sigma_i| = \sum_{j=0}^{n-i} {n-i \choose j} m_{i+j}$, and hence we have
			\begin{align*}
				d_i &= \sum_{j=0}^{n-i} {n-i \choose j} m_{i+j} - \sum_{j=0}^{n-i-1} {n-i-1 \choose j} m_{i+j+1}\\
				&= \sum_{j=0}^{n-i} {n-i \choose j} m_{i+j} - \sum_{j=0}^{n-i} {n-i-1 \choose j-1} m_{i+j}\\
				&= \sum_{j=0}^{n-i} ({n-i \choose j}-{n-i-1 \choose j-1}) m_{i+j}\\
				&= \sum_{j=0}^{n-i} {n-i-1 \choose j} m_{i+j}
			\end{align*}
			
			Finally we look at the Betti numbers $\beta_{0,c_0},\dots,\beta_{p,c_p}$. First we note that $\beta_{p,c_p}$ is equal to $\dim_{\KK}\Hred_{p-1}(\Delta)$, which is ${n-1\choose p}$ by Propositions \ref{Proposition: Def Retract of Intersection Complex} and \ref{Proposition: homology of e^n_p}. Let $0\leq i \leq p-1$. For any face $\tau$ of $\Delta$, the complex $\link_\Delta \tau$ has nontrivial $(i-1)^\text{st}$ homology if and only if it is an intersection of $i+1$ facets, in which case this homology has dimension ${i-1\choose i-1}=1$ by Propositions \ref{Proposition: Def Retract of Intersection Complex Link} and \ref{Proposition: homology of e^n_p}. Thus, by Theorem \ref{Theorem: ADHF}, the Betti number $\beta_{i,c_i}$ is equal to the number of intersections of $i+1$ facets in $\Delta$, which is ${n\choose i+1}$.
		\end{proof}
		
		\begin{rem}\label{Remark: Suspected Minimality of Intersection Complexes}
			In general, intersection complexes do \textit{not} have minimal vertex sets for their given degree type. This can be seen from the fact that many distinct intersection complexes have the \textit{same} degree type, despite having different numbers of vertices. For example both the intersection complex $\calI(2,1,0)$ from Example \ref{Example: Intersection Complex (2,1,0)} and the intersection complex $\calI(1,1,0,0)$ (which can be obtained from the complex $\calI(0,1,0,0)$ in Example \ref{Example: Intersection Complex (0,1,0,0)} by adding an additional free vertex to every facet) have degree type $(3,1)$, but the latter complex has $10$ vertices while the former has only $9$. 
			
			However, it seems extremely likely that intersection complexes of the form $\calI(m_1,\dots,m_{n-1},0)$ for which $m_{n-1}\neq 0$ \textit{are} minimal for their given degree type. These are the intersection complexes which deformation retract onto the boundary of a simplex, by Proposition \ref{Proposition: Def Retract of Intersection Complex}. Certainly these complexes have a minimal number of \textit{facets} for their given degree type; indeed, $\calI(m_1,\dots,m_{n-1},0)$ has the same number of facets as the $(n-1)$-simplex, which is the minimum number required for a complex to have homology at degree $n-2$.
		\end{rem}

		\section{Partition Complexes}\label{Subsection: Partition Complexes}
		In this section we construct another infinite family of PR complexes, which we call \textit{partition complexes}. Just as every intersection complex has a corresponding sequence of nonnegative integers $\bm$, every partition complex has three corresponding integers $a$, $p$ and $m$, and we denote them accordingly as $\calP(a,p,m)$. We show that when $a\geq 2$, and $1\leq m\leq p$ the partition complex $\calP(a,p,m)$ has degree type $(\overbrace{1,\dots,1\underbrace{a,1,\dots,1}_{m}}^{p})$.
			
			Just like intersection complexes, partition complexes can be seen as generalisations of the boundary complexes of simplices. Specifically, we saw in the last section that for any sequence of nonnegative integers $\bm=(m_1,\dots,m_{p-1},0)$ with $m_{p-1}\neq 0$, the intersection complex $\calI(\bm)$ deformation retracts onto the boundary of the $p$-simplex. The same is also true for the partition complex $\calP(a,p,m)$ for any integers $a\geq 2$ and $1\leq m \leq p$.
			
			
			Unlike most of the complexes we have seen so far in this chapter, however, we do \textit{not} suspect partition complexes of being minimal examples of their given degree type in general. In fact, in a number of cases we know explicitly that they are \textit{not} minimal (for example, the partition complex of degree type $(2,1)$ is the complex in Example \ref{Example: PR 21 Complex}, which has six vertices, whereas the Cycle Complex $\fC_{2,1}$ in Example \ref{Example: Cycle Complexes} has only five vertices).
			
			However we include their construction anyway, for two reasons. The first is that, in \textit{most} cases, they are still the smallest PR complexes of their given degree type that we have been able to find so far, and so they allow us to lower the bounds for $n$ in Question \ref{Question: Lower bounds on n for each degree type}. The second is that the construction of partition complexes bears a strong similarity to the key construction we will present in Chapter \ref{Chapter: Generating Degree Types} to prove Theorem \ref{Theorem: PR Complexes of Any Degree Type}, and thus may help to illuminate some of the most important steps in that proof.
			
			\subsection{Defining the Complexes}
			In this section we define the family of partition complexes along with some examples, and present some preliminary results about them. For any integer $p\geq 0$ the partition complex $\calP(a,p,m)$ admits a natural symmetry under the action of the symmetric group $\Sym\{0,\dots,p\}$ (which is isomorphic to $S_{p+1}$). For notational convenience we will denote the group $\Sym\{0,\dots,p\}$ by $S^0_p$.
			
			It is worth noting that our main theorem on the degree types of the partition complex $\calP(a,p,m)$ (Theorem \ref{Theorem: Partition Complex Degree Type}) holds only for integers $a\geq 2$ and $1\leq m \leq p$. However, we define our complexes below slightly more broadly, to include the additional cases $p\in \{-1,0\}$ and $m\in \{0,p+1\}$. We do not care about these fringe cases of partition complexes for their own sake, but their construction will be crucial to our proof of Theorem \ref{Theorem: Partition Complex Degree Type}, because they occur as links in the partition complexes that we do care about.
			
			\begin{defn}\label{Definition: Partition Complex Vertex Set}
				For two integers $a\geq 2$ and $p\geq -1$. We define the vertex set $V_p^a$ to be the set consisting of vertices of the form $x_i$ and $y_i^j$ for $0\leq i\leq p$ and $1\leq j \leq a-1$. For convenience, we often write $y_i^1$ simply as $y_i$.
				
				We will sometimes partition $V_p^a$ into subsets $X_p \sqcup Y_p^a$ with $X_p=\{x_i\}_{0\leq i \leq p}$ and $Y_p^a =\{y_i^j\}_{0\leq i \leq p, 1\leq j \leq a-1}$. For reasons that will become apparent we refer to the vertices in $X_p$ as \textit{boundary vertices} and the vertices in $Y_p^a$ as \textit{partition vertices}. We sometimes make a further distinction between those partition vertices $y_i^j$ for which $j\geq 2$ and those for which $j=1$, by referring to them respectively as \textit{upper} and \textit{lower} partition vertices.
			
			For $p\geq 0$ the symmetry group $S^0_p=\Sym\{0,\dots,p\}$ acts on $V_p^a$ via the action
			\begin{align*}
				\sigma(x_i)&=x_{\sigma(i)}\\
				\sigma(y_i^j)&=y_{\sigma(i)}^j\\
			\end{align*}
		\end{defn}
		
		\begin{defn}\label{Definition: Partition Complex Generating Sets}
			Let $a$, $p$ and $i$ be integers with $a\geq 2$, $p\geq -1$ and $1\leq i \leq p+1$.
			
			Let $\PP(a,i)$ denote the set of partitions $\lambda = (\lambda_1,...,\lambda_i)$ of $a+i-2$ into $i$ parts (i.e. $\lambda_1\geq \dots \geq \lambda_i>0$ and $\sum_{j=1}^i\lambda_j=a+i-2$).
			
			For $\lambda = (\lambda_1,...,\lambda_i) \in \PP(a,i)$, we define the $\lambda$-\textit{generating set} $G^\lambda_{p,i}$ as follows.
			\begin{itemize}
				\item $G_{p,i}=\{x_i,\dots,x_p\}\subset X_p$.
				\item $G_{p,i}^\lambda = G_{p,i}\sqcup \{y_r^j:0\leq r\leq i-1, 1\leq j\leq \lambda_{r+1}\}\subset V_p^a$.
			\end{itemize}
		\end{defn}
		
		\begin{defn}\label{Definition: Partition Complexes}
			Let $a$ and $p$ be integers with $a\geq 2$, $p\geq -1$. For a third integer $1\leq m \leq p+1$ we define the \textit{partition complex} $\calP(a,p,m)$ on vertex set $V_p^a$ to be
			$$\calP(a,p,m) = \left \langle G^\lambda_{p,i} : 1\leq i \leq m, \lambda \in \PP(a,i) \right \rangle_{S^0_p}.$$
			We also define the partition complex $\calP(a,p,0)$ to be the irrelevant complex $\{\emptyset\}$ on vertex set $V_p^a$.
		\end{defn}
		
		By definition, partition complexes are symmetric under the action of $S^0_p$.
		
		\begin{rem}\label{Remark: Partitions of a-2}
			For every $1\leq i \leq m$ and every partition $\lambda\in \PP(a,i)$, the $\lambda$-generating set $G^\lambda_{p,i}$ contains the lower partition vertices $y^1_1,\dots,y^1_i$. This means that it must also contain exactly $a-2$ upper partition vertices. Thus for $m\geq 1$ and $p\geq 0$, every facet of the partition complex $\calP(a,p,m)$ contains exactly $a-2$ upper partition vertices.
		\end{rem}
		
		We now consider some special cases and examples to help illustrate the construction. We begin with the cases for small values of $p$.
		\begin{ex}
			\underline{\textbf{The case $p=-1$:}}
			
			The vertex set $V^a_{-1}$ is empty, and we have $\calP(a,-1,0)=\{\emptyset\}$, by definition.
		\end{ex} 
		\begin{ex}
			\underline{\textbf{The case $p=0$:}}
			
			The vertex set $V^a_0$ is equal to $\{x_0,y_0^1,\dots,y_o^{a-1}\}$. The complex  $\calP(a,0,0)$ is the irrelevant complex just as above, and we also have $$\calP(a,0,1)=\langle G^{(a-1)}_{0,1} \rangle = \langle \{y_0^1,\dots,y_0^{a-1}\}\rangle.$$
		\end{ex}
		
		\begin{ex}\label{Example: Partition Complex p=m=1}
			\underline{\textbf{The case $p=m=1$:}} 
			
			The partition complex $\calP(a,1,1)$ has precisely two facets, namely the facets $\{y_0^1,\dots,y_0^{a-1},x_1\}$ and $\{y_1^1,\dots,y_1^{a-1},x_0\}$. Thus it comprises of two disjoint $(a-1)$-simplices, so it is PR with degree type $(a)$.
		\end{ex}
		
		In the above example, the complex $\calP(a,1,1)$ deformation retracts on to the boundary of the full simplex on $X_1=\{x_0,x_1\}$, $\partial \langle X_1 \rangle$. In fact we will see in the next section that for \textit{any} $1\leq m \leq p$, the partition complex $\calP(a,p,m)$ always deformation retracts onto the boundary of the full simplex on $X_p$, $\partial \langle X_p \rangle$.
		\begin{ex}\label{Example: Partition Complex a=2}
			\underline{\textbf{The case $a=2$:}}
			
			The vertex set $V^2_p$ is the set $\{x_0,\dots,x_p,y_0,\dots,y_p\}$. For any positive integer $i$, there is only one partition of $2+i-2=i$ into $i$ parts, namely the partition $\underbrace{(1,\dots,1)}_i$. Thus for any integers $m$ and $p$ with $1 \leq m\leq p+1$, the generating facets of $\calP(2,p,m)$ are the sets
			\begin{align*}
				G_{p,1}^{(1)}&= \{y_0,x_1,\dots,x_p\}\\
				G_{p,2}^{(1,1)}&= \{y_0,y_1,x_2\dots,x_p\}\\
				\vdots\\
				G_{p,m}^{(1,\dots,1)}&=\{y_0,\dots,y_{m-1},x_m,\dots,x_p\}.
			\end{align*}
			The facets generated by these sets under the action of $S^0_p$ are all the sets $F=\{\varepsilon_0,\dots, \varepsilon_p\}$ such that $\varepsilon_i\in \{x_i,y_i\}$ for each $0\leq i \leq p$ and the number of partition vertices in $F$ is less than or equal to $m$.
			
			We have seen two examples of this already in Section \ref{Subsection: PR Examples}. Namely, the complex in Examples \ref{Example: PR Zelda Symbol} and \ref{Example: PR 21 Complex} are, respectively, the partition complexes $\calP(2,2,1)$ and $\calP(2,2,2)$, as shown below.
			\begin{center}
				\begin{tikzpicture}[scale = 1]
					\tikzstyle{point}=[circle,thick,draw=black,fill=black,inner sep=0pt,minimum width=4pt,minimum height=4pt]
					\node (a)[point,label=above:$y_1$] at (0,3.4) {};
					\node (b)[point,label=above:$x_0$] at (2,3.4) {};
					\node (c)[point,label=above:$y_2$] at (4,3.4) {};
					\node (d)[point,label=left:$x_2$] at (1,1.7) {};
					\node (e)[point,label=right:$x_1$] at (3,1.7) {};
					\node (f)[point,label=left:$y_0$] at (2,0) {};	
					
					\begin{scope}[on background layer]
						\draw[fill=gray] (a.center) -- (b.center) -- (d.center) -- cycle;
						\draw[fill=gray] (b.center) -- (c.center) -- (e.center) -- cycle;
						\draw[fill=gray]   (d.center) -- (e.center) -- (f.center) -- cycle;
					\end{scope}
				\end{tikzpicture}
				\begin{tikzpicture}[scale = 1]
					\tikzstyle{point}=[circle,thick,draw=black,fill=black,inner sep=0pt,minimum width=3pt,minimum height=3pt]
					\node (a)[point,label=above:$y_1$] at (0,3.5) {};
					\node (b)[point,label=above:$x_0$] at (2,2.7) {};
					\node (c)[point,label=above:$y_2$] at (4,3.5) {};
					\node (d)[point,label=left:$x_2$] at (1.5,2) {};
					\node (e)[point,label=right:$x_1$] at (2.5,2) {};
					\node (f)[point,label=left:$y_0$] at (2,0) {};	
					
					\begin{scope}[on background layer]
						\draw[fill=gray] (a.center) -- (b.center) -- (d.center) -- cycle;
						\draw[fill=gray] (b.center) -- (c.center) -- (e.center) -- cycle;
						\draw[fill=gray]   (d.center) -- (e.center) -- (f.center) -- cycle;
						
						\draw[fill=gray] (a.center) -- (b.center) -- (c.center) -- cycle;
						\draw[fill=gray] (a.center) -- (f.center) -- (d.center) -- cycle;
						\draw[fill=gray] (c.center) -- (f.center) -- (e.center) -- cycle;
					\end{scope}
				\end{tikzpicture}
			\end{center}
		\end{ex}

		\begin{ex}\label{Example: Partition Complexes and Intersection Complexes}
			For a sequence $\bm = (\underbrace{a-1,0,\dots,0,1}_{p},0)$, the intersection complex $\calI(\bm)$ is equal to the partition complex $\calP(a,p,1)$. If $p=2$, this is also equal to the cycle complex $\fC_{1,a}$.
		\end{ex}
		
		%
		\begin{ex}\label{Example: Partition Complex m=p+1}
			
			\underline{\textbf{The case $m=p+1$:}}
			
			As mentioned at the start of this section, we are not interested in this case for its own sake; we include it only because complexes of this form occur as links in other partition complexes.
			
			Unlike the cases where $1\leq m \leq p$ we will see in the next section that the complex $\calP(a,p,p+1)$ is always acyclic. It may be helpful to think of $\calP(a,p,p+1)$ as the complex $\calP(a,p,p)$ but with some additional facets which `fill in' the homology.
			
			For example, as we saw in Example \ref{Example: Partition Complex p=m=1}, the complex $\calP(3,1,1)$ consists of two disjoint $2$-simplices.
			$$\begin{tikzpicture}[scale = 0.5]
				\tikzstyle{point}=[circle,thick,draw=black,fill=black,inner sep=0pt,minimum width=3pt,minimum height=3pt]
				\node (a)[point,label=above:$x_1$] at (-1.5,3) {};
				\node (b)[point,label=below:$y^2_0$] at (0,0) {};
				\node (c)[point,label=above:$y^1_0$] at (1.5,3) {};
				\node (d)[point,label=below:$y^1_1$] at (3,0) {};
				\node (e)[point,label=above:$y^2_1$] at (4.5,3) {};
				\node (f)[point,label=below:$x_0$] at (6,0) {};
				
				\begin{scope}[on background layer]
					\draw[fill=gray] (a.center) -- (b.center) -- (c.center) -- cycle;
					\draw[fill=gray]   (d.center) -- (e.center) -- (f.center) -- cycle;
				\end{scope}
			\end{tikzpicture}$$
			Meanwhile, the complex $\calP(3,1,2)$ has two additional facets containing only partition vertices, which `bridge the gap' between these two disjoint simplices, thus making the resulting complex acyclic.
			$$\begin{tikzpicture}[scale = 0.5]
				\tikzstyle{point}=[circle,thick,draw=black,fill=black,inner sep=0pt,minimum width=3pt,minimum height=3pt]
				\node (a)[point,label=above:$x_1$] at (-1.5,3) {};
				\node (b)[point,label=below:$y^2_0$] at (0,0) {};
				\node (c)[point,label=above:$y^1_0$] at (1.5,3) {};
				\node (d)[point,label=below:$y^1_1$] at (3,0) {};
				\node (e)[point,label=above:$y^2_1$] at (4.5,3) {};
				\node (f)[point,label=below:$x_0$] at (6,0) {};
				
				\begin{scope}[on background layer]
					\draw[fill=gray] (a.center) -- (b.center) -- (c.center) -- cycle;
					\draw[fill=gray] (b.center) -- (c.center) -- (d.center) -- cycle;
					\draw[fill=gray] (c.center) -- (d.center) -- (e.center) -- cycle;
					\draw[fill=gray]   (d.center) -- (e.center) -- (f.center) -- cycle;
				\end{scope}
			\end{tikzpicture}$$
			
			Similarly, the complex $\calP(2,2,2)$ can be drawn as
			$$\begin{tikzpicture}[scale = 0.8]
				\tikzstyle{point}=[circle,thick,draw=black,fill=black,inner sep=0pt,minimum width=3pt,minimum height=3pt]
				\node (a)[point,label=below:$x_2$] at (-.2,0) {};
				\node (b)[point,label={[label distance = 0mm] below:$y_0$}] at (2,0.8) {};
				\node (c)[point,label=below:$x_1$] at (4.2,0) {};
				\node (d)[point,label={[label distance = -2.5mm] above left:$y_1$}] at (1.5,1.5) {};
				\node (e)[point,label={[label distance = -2.2mm] above right:$y_2$}] at (2.5,1.5) {};
				\node (f)[point,label=above:$x_0$] at (2,3.5) {};	
				
				\begin{scope}[on background layer]
					\draw[fill=gray] (a.center) -- (b.center) -- (d.center) -- cycle;
					\draw[fill=gray] (b.center) -- (c.center) -- (e.center) -- cycle;
					\draw[fill=gray]   (d.center) -- (e.center) -- (f.center) -- cycle;
					
					\draw[fill=gray] (a.center) -- (b.center) -- (c.center) -- cycle;
					\draw[fill=gray] (a.center) -- (f.center) -- (d.center) -- cycle;
					\draw[fill=gray] (c.center) -- (f.center) -- (e.center) -- cycle;
					
				\end{scope}
			\end{tikzpicture} $$
			while the complex $\calP(2,2,3)$ contains the additional facet $\{y_1,y_2,y_3\}$ which `fills in' the hole at the centre, once again making the complex acyclic.
			$$\begin{tikzpicture}[scale = 0.8]
				\tikzstyle{point}=[circle,thick,draw=black,fill=black,inner sep=0pt,minimum width=3pt,minimum height=3pt]
				\node (a)[point,label=below:$x_2$] at (-.2,0) {};
				\node (b)[point,label={[label distance = 0mm] below:$y_0$}] at (2,0.8) {};
				\node (c)[point,label=below:$x_1$] at (4.2,0) {};
				\node (d)[point,label={[label distance = -2.5mm] above left:$y_1$}] at (1.5,1.5) {};
				\node (e)[point,label={[label distance = -2.2mm] above right:$y_2$}] at (2.5,1.5) {};
				\node (f)[point,label=above:$x_0$] at (2,3.5) {};	
				
				\begin{scope}[on background layer]
					\draw[fill=gray] (a.center) -- (b.center) -- (d.center) -- cycle;
					\draw[fill=gray] (b.center) -- (c.center) -- (e.center) -- cycle;
					\draw[fill=gray]   (d.center) -- (e.center) -- (f.center) -- cycle;
					
					\draw[fill=gray] (a.center) -- (b.center) -- (c.center) -- cycle;
					\draw[fill=gray] (a.center) -- (f.center) -- (d.center) -- cycle;
					\draw[fill=gray] (c.center) -- (f.center) -- (e.center) -- cycle;
					
					\draw[fill=gray] (b.center) -- (d.center) -- (e.center) -- cycle;
				\end{scope}
			\end{tikzpicture}$$
			(Note we have adjusted our earlier picture of $\calP(2,2,2)$ in Example \ref{Example: Partition Complex a=2} here, by drawing the boundary vertices on the \textit{outside} of the complex; both depictions of this complex will be useful to us, and we frequently switch between them).
		\end{ex}
		
		The complex $\calP(a,p,p+1)$ can be obtained from $\calP(a,p,p)$ by adding faces consisting entirely of partition vertices. In a similar way, the following construction adds a face consisting entirely of boundary vertices.
		\begin{defn}\label{Definition: Closed Partition Complexes}
			Let $a$, $p$ and $m$ be integers with $a\geq 2$ and $0\leq m\leq p+1$. We define the \textit{closed partition complex} $\barP(a,p,m)$ on vertex set $V_p^a$ to be the complex $\calP(a,p,m)\cup \langle X_p\rangle$.
		\end{defn}
		\begin{rem}\label{Remark: Closed Partition Complex m=0 Cases}
			In the case $m=0$ the closed partition complex $\barP(a,p,0)$ is equal to the full simplex on the set $X_p$. Note that this is acyclic in all cases except for the case $p=-1$. In the case $p=-1$ the set $X_{-1}$ is empty so the full simplex on $X_{-1}$ is simply the irrelevant complex $\{\emptyset\}$.
		\end{rem}
		
		Just as with partition complexes of the form $\calP(a,p,p+1)$, we are interested in closed partition complexes only because they occur in the links of regular partition complexes, and our theorem on the degree types of partition complexes (Theorem \ref{Theorem: Partition Complex Degree Type}) does not extend to them. In fact, closed partition complexes are not even PR in general, except in the case where $a=2$. In all other cases the additional facet $X_p=\{x_0,\dots,x_p\}$ is of lower dimension than all the other facets of $\calP(a,p,m)$, and so $\barP(a,p,m)$ is not pure.
		
		We will see in the next section that the closure operation essentially acts as a kind of `switch' for the homology of a partition complex. Indeed, for every $1\leq m\leq p$ the partition complex $\calP(a,p,m)$ deformation retracts onto the boundary $\partial \langle X_p\rangle$, which means it has homology; whereas the closed partition complex $\barP(a,p,m)$ contains $X_p$ itself as a face, which means it deformation retracts on to the full simplex $\langle X_p\rangle$, and is hence acyclic. Meanwhile the partition complex $\calP(a,p,p+1)$ is acyclic, while its closure $\barP(a,p,p+1)$ has homology. 
		\begin{ex}\label{Example: Closed partition complex example}
			In the case $a=p=2$ we have the following partition complexes and closed partition complexes.
			\begin{center}
				\begin{tabular}{ c c c c c }
					\begin{tikzpicture}[scale = 0.73]
						\tikzstyle{point}=[circle,thick,draw=black,fill=black,inner sep=0pt,minimum width=4pt,minimum height=4pt]
						\node (a)[point,label=above:$y_1$] at (0,3.4) {};
						\node (b)[point,label=above:$x_0$] at (2,3.4) {};
						\node (c)[point,label=above:$y_2$] at (4,3.4) {};
						\node (d)[point,label=left:$x_2$] at (1,1.7) {};
						\node (e)[point,label=right:$x_1$] at (3,1.7) {};
						\node (f)[point,label=below:$y_0$] at (2,0) {};	
						
						\begin{scope}[on background layer]
							\draw[fill=gray] (a.center) -- (b.center) -- (d.center) -- cycle;
							\draw[fill=gray] (b.center) -- (c.center) -- (e.center) -- cycle;
							\draw[fill=gray]   (d.center) -- (e.center) -- (f.center) -- cycle;
						\end{scope}
					\end{tikzpicture}
					&& \begin{tikzpicture}[scale = 0.73]
						\tikzstyle{point}=[circle,thick,draw=black,fill=black,inner sep=0pt,minimum width=3pt,minimum height=3pt]
						\node (a)[point,label=above:$y_1$] at (-.2,3.5) {};
						\node (b)[point,label={[label distance = -1mm] above:$x_0$}] at (2,2.7) {};
						\node (c)[point,label=above:$y_2$] at (4.2,3.5) {};
						\node (d)[point,label={[label distance = -2.5mm] below left:$x_2$}] at (1.5,2) {};
						\node (e)[point,label={[label distance = -2.2mm] below right:$x_1$}] at (2.5,2) {};
						\node (f)[point,label=below:$y_0$] at (2,0) {};	
						
						\begin{scope}[on background layer]
							\draw[fill=gray] (a.center) -- (b.center) -- (d.center) -- cycle;
							\draw[fill=gray] (b.center) -- (c.center) -- (e.center) -- cycle;
							\draw[fill=gray]   (d.center) -- (e.center) -- (f.center) -- cycle;
							
							\draw[fill=gray] (a.center) -- (b.center) -- (c.center) -- cycle;
							\draw[fill=gray] (a.center) -- (f.center) -- (d.center) -- cycle;
							\draw[fill=gray] (c.center) -- (f.center) -- (e.center) -- cycle;
						\end{scope}
					\end{tikzpicture} && \begin{tikzpicture}[scale = 0.73]
						\tikzstyle{point}=[circle,thick,draw=black,fill=black,inner sep=0pt,minimum width=3pt,minimum height=3pt]
						\node (a)[point,label=below:$x_2$] at (-.2,0) {};
						\node (b)[point,label={[label distance = 0mm] below:$y_0$}] at (2,0.8) {};
						\node (c)[point,label=below:$x_1$] at (4.2,0) {};
						\node (d)[point,label={[label distance = -2.5mm] above left:$y_1$}] at (1.5,1.5) {};
						\node (e)[point,label={[label distance = -2.2mm] above right:$y_2$}] at (2.5,1.5) {};
						\node (f)[point,label=above:$x_0$] at (2,3.5) {};	
						
						\begin{scope}[on background layer]
							\draw[fill=gray] (a.center) -- (b.center) -- (d.center) -- cycle;
							\draw[fill=gray] (b.center) -- (c.center) -- (e.center) -- cycle;
							\draw[fill=gray]   (d.center) -- (e.center) -- (f.center) -- cycle;
							
							\draw[fill=gray] (a.center) -- (b.center) -- (c.center) -- cycle;
							\draw[fill=gray] (a.center) -- (f.center) -- (d.center) -- cycle;
							\draw[fill=gray] (c.center) -- (f.center) -- (e.center) -- cycle;
							
							\draw[fill=gray] (b.center) -- (d.center) -- (e.center) -- cycle;
						\end{scope}
					\end{tikzpicture} \\
					$\calP(2,2,1)$&& $\calP(2,2,2)$ && $\calP(2,2,3)$\\
					&& &&\\
					\begin{tikzpicture}[scale = 0.73]
						\tikzstyle{point}=[circle,thick,draw=black,fill=black,inner sep=0pt,minimum width=4pt,minimum height=4pt]
						\node (a)[point,label=above:$y_1$] at (0,3.4) {};
						\node (b)[point,label=above:$x_0$] at (2,3.4) {};
						\node (c)[point,label=above:$y_2$] at (4,3.4) {};
						\node (d)[point,label=left:$x_2$] at (1,1.7) {};
						\node (e)[point,label=right:$x_1$] at (3,1.7) {};
						\node (f)[point,label=below:$y_0$] at (2,0) {};	
						
						\begin{scope}[on background layer]
							\draw[fill=gray] (a.center) -- (b.center) -- (d.center) -- cycle;
							\draw[fill=gray] (b.center) -- (c.center) -- (e.center) -- cycle;
							\draw[fill=gray]   (d.center) -- (e.center) -- (f.center) -- cycle;
							
							\draw[fill=gray] (b.center) -- (d.center) -- (e.center) -- cycle;
						\end{scope}
					\end{tikzpicture}
					&& \begin{tikzpicture}[scale = 0.73]
						\tikzstyle{point}=[circle,thick,draw=black,fill=black,inner sep=0pt,minimum width=3pt,minimum height=3pt]
						\node (a)[point,label=above:$y_1$] at (-.2,3.5) {};
						\node (b)[point,label={[label distance = -1mm] above:$x_0$}] at (2,2.7) {};
						\node (c)[point,label=above:$y_2$] at (4.2,3.5) {};
						\node (d)[point,label={[label distance = -2.5mm] below left:$x_2$}] at (1.5,2) {};
						\node (e)[point,label={[label distance = -2.2mm] below right:$x_1$}] at (2.5,2) {};
						\node (f)[point,label=below:$y_0$] at (2,0) {};	
						
						\begin{scope}[on background layer]
							\draw[fill=gray] (a.center) -- (b.center) -- (d.center) -- cycle;
							\draw[fill=gray] (b.center) -- (c.center) -- (e.center) -- cycle;
							\draw[fill=gray]   (d.center) -- (e.center) -- (f.center) -- cycle;
							
							\draw[fill=gray] (a.center) -- (b.center) -- (c.center) -- cycle;
							\draw[fill=gray] (a.center) -- (f.center) -- (d.center) -- cycle;
							\draw[fill=gray] (c.center) -- (f.center) -- (e.center) -- cycle;
							
							\draw[fill=gray] (b.center) -- (d.center) -- (e.center) -- cycle;
						\end{scope}
					\end{tikzpicture} &&	\begin{tikzpicture}[scale=1.3][line join=bevel,z=-5.5]
						\coordinate [label={[label distance=3mm]above right:$x_1$}] (A1) at (0,0,-1);
						\coordinate [label=left:$x_2$] (A2) at (-1,0,0);
						\coordinate [label={[label distance=2mm]below left: $y_1$}] (A3) at (0,0,1);
						\coordinate [label=right:$y_2$] (A4) at (1,0,0);
						\coordinate [label=above:$x_0$] (B1) at (0,1,0);
						\coordinate [label=below:$y_0$] (C1) at (0,-1,0);
						
						\begin{scope}[on background layer]
							\draw (A1) -- (A2) -- (B1) -- cycle;
							\draw (A4) -- (A1) -- (B1) -- cycle;
							\draw (A1) -- (A2) -- (C1) -- cycle;
							\draw (A4) -- (A1) -- (C1) -- cycle;
							\draw [fill opacity=0.7,fill=lightgray!80!gray] (A2) -- (A3) -- (B1) -- cycle;
							\draw [fill opacity=0.7,fill=gray!70!lightgray] (A3) -- (A4) -- (B1) -- cycle;
							\draw [fill opacity=0.7,fill=darkgray!30!gray] (A2) -- (A3) -- (C1) -- cycle;
							\draw [fill opacity=0.7,fill=darkgray!30!gray] (A3) -- (A4) -- (C1) -- cycle;
						\end{scope}
					\end{tikzpicture}\\
					$\barP(2,2,1)$&& $\barP(2,2,2)$ && $\barP(2,2,3)$
				\end{tabular}
			\end{center}
		\end{ex}
		We now introduce some key notation and terminology which will help us to discuss the faces of partition complexes going forward.
		
		\begin{notation}\label{Notation: sigma-X and sigma-Y and support}
			Let $a$, $p$ and $m$ be integers with $a\geq 2$ and $0\leq m \leq p+1$, and let $\sigma$ be a subset of $V^a_p$. We use
			\begin{enumerate}
				\item $\sigma_X$ to denote the intersection $\sigma \cap X_p$.
				\item $\sigma_Y$ to denote the intersection $\sigma \cap Y^a_p$.
			\end{enumerate}
			We also define the \textit{support} of $\sigma$ to be the set $$\Supp(\sigma) = \left \{i\in \{0,\dots, p\} : x_i \in \sigma \text{ or } y_i^j \in \sigma \text{ for some } 1\leq j\leq a-1\right \}.$$
		\end{notation}
		
		\begin{defn}\label{Definition: Totally Separated and partition complete}
			Let $a$, $p$ and $m$ be integers with $a\geq 2$ and $0\leq m \leq p+1$, and let $F$ be a subset of the vertex set $V^a_p$. We say
			\begin{enumerate}
				\item $F$ is \textit{partition complete} if for every partition vertex $y_i^j$ in $\sigma$, the vertices $y_i^1,\dots,y_i^{j-1}$ are also in $\sigma$.
				\item $F$ is \textit{separated}  if for each $0\leq i \leq p$, $F$ contains at most one of the vertices $x_i$ or $y_i$. We say it is \textit{totally separated} if for each $0\leq i \leq p$ it contains exactly one of the vertices $x_i$ or $y_i$. 
			\end{enumerate}
		\end{defn}
		
		By construction every face of a partition complex is separated, and every facet is both totally separated and partition complete. In fact we have more than this.
		\begin{lem}\label{Lemma: Partition Complex Description of Facets}
			Let $a$, $p$ and $m$ be integers with $a\geq 2$ and $0\leq m \leq p+1$. A subset $F\subseteq V^a_p$ is a facet of the partition complex $\calP(a,p,m)$ if and only if it satisfies the following four conditions.
			\begin{enumerate}
				\item $F$ is partition complete.
				\item $F$ is totally separated.
				\item $|F|=a+p-1$.
				\item $1\leq |\Supp(F_Y)|\leq m$.
			\end{enumerate}
		\end{lem}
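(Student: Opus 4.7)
The plan is to establish both directions by using the explicit description of $\calP(a,p,m)$ as generated by the sets $G^\lambda_{p,i}$ under the $S^0_p$-action. First I will verify that each generator $G^\lambda_{p,i}$ satisfies conditions (1)--(4) and that all such generators have the same cardinality; then I will show, conversely, that any $F$ satisfying (1)--(4) is itself of the form $\sigma \cdot G^\lambda_{p,i}$ for an appropriate $\sigma \in S^0_p$.

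For the forward direction, fix $1 \le i \le m$ and $\lambda = (\lambda_1,\dots,\lambda_i) \in \PP(a,i)$. From the definition, $G^\lambda_{p,i}$ contains the boundary vertices $\{x_i,\dots,x_p\}$ (of size $p-i+1$) and partition vertices $y^j_r$ for $0 \le r \le i-1$, $1 \le j \le \lambda_{r+1}$ (of size $\sum_{j=1}^i \lambda_j = a+i-2$). So $|G^\lambda_{p,i}| = a+p-1$, it is partition complete and totally separated by construction, and $\Supp((G^\lambda_{p,i})_Y) = \{0,\dots,i-1\}$ has size $i \in [1,m]$. All four properties are preserved by the $S^0_p$-action, so every set $\sigma \cdot G^\lambda_{p,i}$ satisfies (1)--(4). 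Since every such set has size $a+p-1$, no proper containments can occur among generators, and hence every facet of $\calP(a,p,m)$ is exactly some $\sigma \cdot G^\lambda_{p,i}$.

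For the reverse direction, suppose $F \subseteq V^a_p$ satisfies (1)--(4). Set $k = |\Supp(F_Y)|$, so $1 \le k \le m$. Total separation forces $F_X = \{x_r : r \in \{0,\dots,p\} - \Supp(F_Y)\}$, which has size $p+1-k$. Partition completeness implies that for each $s \in \Supp(F_Y)$, the partition vertices of $F$ with index $s$ are precisely $y_s^1,\dots,y_s^{\mu_s}$ for some $\mu_s \ge 1$. The cardinality condition $|F| = a+p-1$ then gives $\sum_{s \in \Supp(F_Y)} \mu_s = a+k-2$. Listing the values $\mu_s$ in weakly decreasing order produces a partition $\lambda \in \PP(a,k)$.

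It remains to build a permutation $\sigma \in S^0_p$ with $\sigma \cdot G^\lambda_{p,k} = F$. Define $\sigma$ on $\{0,\dots,k-1\}$ by sending $r$ to the element $s \in \Supp(F_Y)$ whose multiplicity $\mu_s$ equals $\lambda_{r+1}$ (breaking ties arbitrarily), and on $\{k,\dots,p\}$ by any bijection onto $\{0,\dots,p\} - \Supp(F_Y)$. Then $\sigma$ maps $\{x_k,\dots,x_p\}$ bijectively onto $F_X$ and, for each $0 \le r \le k-1$ and $1 \le j \le \lambda_{r+1}$, sends $y^j_r$ to $y^j_{\sigma(r)}$, hitting precisely $F_Y$. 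Thus $F = \sigma \cdot G^\lambda_{p,k}$, which is a facet since all generators have common size $a+p-1$. The one subtlety is just careful bookkeeping for the permutation $\sigma$; once it is assembled, the equality $\sigma \cdot G^\lambda_{p,k} = F$ is a direct check.
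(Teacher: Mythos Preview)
Your proof is correct and follows essentially the same approach as the paper's: verify that each generating set $G^\lambda_{p,i}$ satisfies (1)--(4) and that these conditions are $S^0_p$-invariant, then for the converse use a permutation to bring any $F$ satisfying (1)--(4) into the standard form $G^\lambda_{p,k}$. Your explicit observation that all generators have the same cardinality (hence no proper containments among them, so they are genuinely the facets) is a nice clarification that the paper leaves implicit.
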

		\begin{proof}
			Every facet of $\calP(a,p,m)$ must satisfy these four conditions, because the generating facets $G_{p,i}^\lambda$ satisfy them by construction, and all four conditions are invariant under the action of $S^0_p$.
			
			Conversely, suppose $F$ is a totally separated, partition complete subset of $V^a_p$ of size $a+p-1$ with $|\Supp(F_Y)|=i$ for some $1\leq i \leq m$. We can permute the vertices of $F$ to ensure that $\Supp(F_Y)=\{0,\dots,i-1\}$ for some $g\in S^0_p$. Because $F$ is totally separated we must therefore have $F_X=\{x_i,\dots,x_p\}=G_{p,i}$.
			
			This leaves us with a total of $(a+p-1)-(p+1-i)=a+i-2$ partition vertices in $F$. For each $0\leq r \leq i-1$ we let $j_r$ denote the maximum integer between $1$ and $a-1$ such that the partition vertex $y_r^{j_r}$ is in $F$. Once again, we can permute the vertices of $F$ to ensure that the sequence $\lambda = (j_0,\dots,j_{i-1})$ is monotonically decreasing. The sequence $\lambda$ is a partition of $a+i-2$ into $i$ parts, and because $F$ is partition complete we have (after our permutations of vertices) that $F$ is equal to the generating facet $G_{p,i}^\lambda$.
		\end{proof}
		
		Now that we have demonstrated the construction of partition complexes, and built up the tools we will need to talk about them, we present our main result.
		\begin{thm}\label{Theorem: Partition Complex Degree Type}
			Let $a$, $p$ and $m$ be positive integers with $a\geq 2$ and $1 \leq m\leq p$. The partition complex $\calP(a,p,m)$ is PR with degree type $(\overbrace{1,\dots,1\underbrace{a,1,\dots,1}_{m}}^{p})$.
		\end{thm}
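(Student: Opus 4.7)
The plan is to verify directly that $\calP(a,p,m)$ satisfies the characterisation of PR complexes given in Proposition \ref{Proposition: Alternate PR Definition With Degree Type}: compute the complete homology index set $\hh(\calP(a,p,m), n)$ for each $n \geq 0$ and show it is the singleton $\{r-1\}$ at $n = \sum_{j=r+1}^p d_j$ for each $0 \leq r \leq p$, and empty elsewhere. The proof naturally splits into computing the homology of $\calP(a,p,m)$ itself (the case $n=0$), and computing the homologies of links of faces of each positive size.

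First I would compute $\Hred_\ast(\calP(a,p,m))$. The strategy is to deformation-retract the upper partition vertices away: for each $i$ and each $j \geq 2$, every facet that contains $y_i^j$ also contains $y_i = y_i^1$ by partition completeness, so Corollary \ref{Corollary: Deformation Retract} gives a deformation retraction $\calP(a,p,m) \leadsto \calP(2,p,m)$. This reduces the problem to showing that $\calP(2,p,m)$ has the homotopy type of the $(p-1)$-sphere $\partial\langle X_p\rangle$. I would approach this by decomposing $\calP(2,p,m)$ via the subcomplexes $\{F \in \calP(2,p,m) : |\Supp(F_Y)| \leq k\}$ and applying the Mayer--Vietoris sequence inductively, or by an explicit sequence of collapses that identifies facets with $|\Supp(F_Y)| \geq 1$ down onto $\partial\langle X_p\rangle$. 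The upshot is that $\Hred_{p-1}(\calP(a,p,m)) = \KK$ and all other reduced homologies vanish, establishing $\hh(\calP(a,p,m),0) = \{p-1\}$.

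Next I would analyse the links of faces. By Lemma \ref{Lemma: link x = link gx}, the link of a face depends only on its $S^0_p$-orbit, and such an orbit is determined by two data: the number $b$ of columns containing a boundary vertex $x_i$, and the multiset of partition-vertex counts $\{k_0, \dots, k_{r-1}\}$ for the columns containing partition vertices (each $k_i \in \{1,\ldots,a-1\}$ by separation, running consecutively as $y^1_i, \ldots, y^{k_i}_i$ by partition completeness). The key step is to show that the link of a canonical representative of each orbit is either a partition complex $\calP(a', p', m')$ or a closed partition complex $\barP(a', p', m')$ with strictly smaller parameters, so that an induction on $p$ (with an auxiliary induction on $a$ handling base cases) can be applied. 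Specifically, taking the link of an $x_i$ should drop $p$ and $m$ both by one; taking the link of a complete partition column $\{y_i^1, \ldots, y_i^{a-1}\}$ should drop $p$ by one and close one column; and taking the link of a partial partition column should decrease $a$. Running these computations alongside an induction for the closed complexes $\barP(a,p,m)$ (which should be acyclic for $1 \leq m \leq p$ because $\langle X_p\rangle$ fills in the sphere, and should have $p$-th homology $\KK$ for $m = p+1$) then lets us chain together the required homology computations.

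The main obstacle I anticipate is identifying the link of an arbitrary face with one of the partition or closed partition complexes, and in particular pinning down the three parameters $(a', p', m')$ as a function of the orbit data $(b,\{k_0,\dots,k_{r-1}\})$. The ``gap'' of size $a$ in the degree type is caused by the fact that every facet carries $a-2$ upper partition vertices invisible to the $\Supp$ statistic (Remark \ref{Remark: Partitions of a-2}): a face whose support has size $p-m+1$ and which contains only one partition vertex in one column has a link that must stay acyclic as the face grows by one extra upper partition vertex at a time for $a-1$ successive steps, before regaining nontrivial homology at the next lower degree. Capturing this phenomenon cleanly---probably via a lemma asserting $\lkds \cong \barP(a',p',m')$ for the relevant partial-column faces and then invoking the acyclicity of closed partition complexes---will be the most delicate part of the argument.
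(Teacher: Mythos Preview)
Your overall strategy---verify the characterisation in Proposition \ref{Proposition: Alternate PR Definition With Degree Type} by computing $\hh(\calP(a,p,m),n)$, reduce the global homology to $\calP(2,p,m)$ via the upper-partition-vertex collapse, and then analyse links by $S^0_p$-orbit---matches the paper exactly, and your account of the $n=0$ computation is essentially the paper's Propositions \ref{Proposition: Deformation D(a,p,m) to D(2,p,m)} and \ref{Proposition: Deformation D(2,p,m) to D(1,p,m)}.

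The gap is in the link analysis. First, a minor point: the link of a boundary vertex $x_i$ in $\calP(a,p,m)$ is $\calP(a,p-1,m)$ when $m\leq p$ (Lemma \ref{Lemma: Partition Complex Link of x}), so $p$ drops but $m$ does not. More substantively, your proposed induction assumes that links of faces containing partition vertices are again (closed) partition complexes $\barP(a',p',m')$, but this is false in general: the link of a partition-complete face $\sigma\subseteq Y_p^a$ with $\beta$ lower and $\gamma$ upper partition vertices does \emph{not} stay in the partition-complex family. The paper's key lemma (Proposition \ref{Proposition: Partition Complex Skeleton Link}) shows instead that
\[
\lkds \rightsquigarrow \barP(2,p-\beta,m-\beta)\ast \Skel_{a-\gamma-3}([\beta]),
\]
a \emph{join} of a closed partition complex with a skeleton complex. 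The skeleton factor is precisely what encodes the ``gap of size $a$'' you describe: as $\gamma$ increases from $0$ to $a-2$ the skeleton goes from a full simplex (acyclic) down to $\Skel_{-1}$, and the K\"{u}nneth formula then produces the required shift in homology degree. Your induction cannot close because there is no single triple $(a',p',m')$ that captures this join structure; you need the skeleton complex as a separate ingredient and the K\"{u}nneth formula (Corollary \ref{Corollary: homology index set of joins}) to combine the homologies.
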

		
		In the next section we will make use of some deformation retractions to find the homology of both partition complexes and their closures. We also show how all of the links in partition complexes with homology can be built out of smaller partition complexes and closed partition complexes. Then in Section \ref{Subsection: Proving Partition Complex Theorem} we will put these results together to prove Theorem \ref{Theorem: Partition Complex Degree Type}.
		
		\subsection{Deformation Retractions and Links}
		Just as with intersection complexes, we can find the homology of the links in partition complexes by using some deformation retractions. Once again, our main tools for this are Lemma \ref{Lemma: Deformation Retract} and Corollary \ref{Corollary: Deformation Retract}.
		
		\begin{prop}\label{Proposition: Deformation D(a,p,m) to D(2,p,m)}
			Let $a$, $p$ and $m$ be integers with $a\geq 2$ and $1\leq m\leq p+1$. There is a deformation retraction $\calP(a,p,m)\rightsquigarrow \calP(2,p,m)$ given by the vertex maps $y_i^j\mapsto y^1_i$. There is a similar deformation retraction $\barP(a,p,m)\rightsquigarrow \barP(2,p,m)$.
		\end{prop}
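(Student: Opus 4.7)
The plan is to construct the retraction by iterating Corollary \ref{Corollary: Deformation Retract} once for each upper partition vertex $y_i^j$ (with $2 \leq j \leq a-1$), identifying it with the lower partition vertex $y_i^1 = y_i$. Fix an arbitrary ordering of the $(p+1)(a-2)$ upper partition vertices, and let $\Delta_0 = \calP(a,p,m)$, $\Delta_1, \ldots, \Delta_N$ be the resulting chain of deletions $\Delta_k = \Delta_{k-1} - \{y_i^j\}$, where $(i,j)$ is the $k$-th vertex in the ordering. Composing the deformation retractions $\Delta_{k-1} \rightsquigarrow \Delta_k$ will produce the required map, and it then remains to verify that each step is legitimate and that $\Delta_N = \calP(2,p,m)$.

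To apply Corollary \ref{Corollary: Deformation Retract} at the $k$-th step (collapsing $y_i^j$ onto $y_i^1$), I will check its two hypotheses. The first asks that $y_i^j$ not be a missing vertex of $\Delta_{k-1}$: since the singleton $\{y_i^j\}$ is a face of $\calP(a,p,m)$ (being contained in the generating facet $(0\;i) \cdot G_{p,1}^{(a-1)}$) and contains no previously-retracted vertex, it survives into $\Delta_{k-1}$ and hence lies in some facet there. The second hypothesis, which I expect to be the main obstacle, requires that every facet $F'$ of $\Delta_{k-1}$ containing $y_i^j$ also contains $y_i^1$. For this I will use a lifting argument: since $\Delta_{k-1}$ is a subcomplex of $\calP(a,p,m)$, the face $F'$ is contained in some facet $F$ of $\calP(a,p,m)$, which in turn contains $y_i^j$; by the partition completeness established in Lemma \ref{Lemma: Partition Complex Description of Facets}, $F$ must also contain $y_i^1$. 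Then $F' \cup \{y_i^1\} \subseteq F$ is a face of $\calP(a,p,m)$ still containing no retracted vertex, so it lies in $\Delta_{k-1}$ and properly contains $F'$ unless $y_i^1 \in F'$ already -- which maximality of $F'$ forces.

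Finally, to identify $\Delta_N$ with $\calP(2,p,m)$ I will apply Lemma \ref{Lemma: Partition Complex Description of Facets} in both directions. The projection $y_i^j \mapsto y_i^1$ sends each facet $F$ of $\calP(a,p,m)$ (with $|\Supp(F_Y)| = i \leq m$) to a totally separated, partition-complete subset of size $p+1$ supported on the same indices, which matches the facet description of $\calP(2,p,m)$; conversely, every facet of $\calP(2,p,m)$ lifts to a facet of $\calP(a,p,m)$ by distributing $a-2$ additional upper partition vertices over its partition support via the partition $(a-1,1,\ldots,1) \in \PP(a,i)$. For the closed version, the extra facet $X_p$ of $\barP(a,p,m)$ contains no partition vertex, so it is preserved under every retraction and the condition-(2) argument above is unaffected by its presence; the same iteration therefore yields $\barP(a,p,m) \rightsquigarrow \calP(2,p,m) \cup \langle X_p \rangle = \barP(2,p,m)$.
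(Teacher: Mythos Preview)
Your proposal is correct and follows essentially the same approach as the paper: both use Corollary \ref{Corollary: Deformation Retract} to collapse each upper partition vertex $y_i^j$ (with $j\geq 2$) onto $y_i^1$, relying on the partition completeness of facets. The paper's proof is considerably terser---it simply notes that every facet containing $y_i^j$ also contains $y_i^1$ and invokes the corollary once for all such vertices---whereas you are more careful in verifying that the hypotheses persist at each intermediate deletion $\Delta_{k-1}$ and in explicitly matching $\Delta_N$ with $\calP(2,p,m)$ via Lemma \ref{Lemma: Partition Complex Description of Facets}; this extra rigour is welcome but not a different method.
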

		\begin{proof}
			Let $y_i^j$ be a partition vertex of $\calP(a,p,m)$ for some $j\geq 2$. By construction, every facet of $\calP(a,p,m)$ which contains the vertex $y_i^j$ also contains the vertex $y_i^1$. Thus Corollary \ref{Corollary: Deformation Retract} gives us a deformation retraction of $\calP(a,p,m)$ on to the complex obtained by deleting every partition vertex $y_i^j$ for $j\geq 2$, by identifying each of these vertices with the corresponding vertex $y^1_i$. The complex obtained from these deletions is $\calP(2,p,m)$. The proof for $\barP(a,p,m)$ is identical.
		\end{proof}
		
		\begin{prop}\label{Proposition: Deformation D(2,p,m) to D(1,p,m)}
			Let $p$ and $m$ be positive integers with $1\leq m\leq p$. There is a deformation retraction $\calP(2,p,m)\rightsquigarrow \partial \langle X_p \rangle$. There is a similar deformation retraction $\barP(2,p,m)\rightsquigarrow \langle X_p \rangle$.
		\end{prop}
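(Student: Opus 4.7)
My plan is to prove the proposition by reducing the parameter $m$ step by step, using the two deformation retraction tools from Lemma \ref{Lemma: Deformation Retract} and Corollary \ref{Corollary: Deformation Retract}. Specifically, I will show that for $2 \leq m \leq p$ we have $\calP(2,p,m) \rightsquigarrow \calP(2,p,m-1)$ and (separately) that $\calP(2,p,1) \rightsquigarrow \partial \langle X_p\rangle$. The parallel argument for closed partition complexes will follow essentially the same template.

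For the step $\calP(2,p,m) \rightsquigarrow \calP(2,p,m-1)$, recall from Example \ref{Example: Partition Complex a=2} and Lemma \ref{Lemma: Partition Complex Description of Facets} that the facets of $\calP(2,p,m)$ are precisely the totally separated subsets of $V^2_p$ of size $p+1$ containing between $1$ and $m$ partition vertices. For each $m$-subset $S=\{i_1,\dots,i_m\} \subseteq \{0,\dots,p\}$, the face $g_S = \{y_{i_1},\dots,y_{i_m}\}$ already contains the maximum permitted number of partition vertices, so it is contained in the \emph{unique} facet $F_S = g_S \cup \{x_j : j \notin S\}$. I will apply Lemma \ref{Lemma: Deformation Retract} with this $g_S$ and $f = F_S$ to obtain the deformation retract $\calP(2,p,m) \rightsquigarrow \calP(2,p,m) - g_S$, and then iterate over all $\binom{p+1}{m}$ choices of $S$. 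A key observation is that these removals do not interfere: if $S \neq S'$ are distinct $m$-subsets then $g_S \not\subseteq g_{S'}$, so at each intermediate stage $g_{S'}$ is still a face and its unique facet $F_{S'} \neq F_S$ is still present. After performing all removals, the resulting complex contains exactly those faces of $\calP(2,p,m)$ with at most $m-1$ partition vertices, which is precisely $\calP(2,p,m-1)$.

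The base case $\calP(2,p,1) \rightsquigarrow \partial \langle X_p\rangle$ is more elementary. Each partition vertex $y_i$ is contained in the unique facet $F_i = \{y_i\} \cup (X_p - \{x_i\})$, so Corollary \ref{Corollary: Deformation Retract} applied to $a = y_i$ and any $b = x_j$ with $j \neq i$ gives $\calP(2,p,1) \rightsquigarrow \calP(2,p,1) - \{y_i\}$. Iterating over $i = 0,\dots,p$ — and noting at each stage that $y_i$ remains in exactly one facet, since the earlier removals eliminate other partition vertices but leave $F_i$ intact — produces a complex whose only facets are $X_p - \{x_i\}$ for $i=0,\dots,p$, namely $\partial \langle X_p\rangle$. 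Composing with the earlier retractions $\calP(2,p,m) \rightsquigarrow \calP(2,p,m-1) \rightsquigarrow \dots \rightsquigarrow \calP(2,p,1)$ yields the desired deformation retraction.

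The closed case $\barP(2,p,m) \rightsquigarrow \langle X_p\rangle$ follows the same template essentially verbatim: the additional facet $X_p$ contains no partition vertex, so it affects neither the uniqueness of the facet containing $g_S$ in the iterative step nor the uniqueness of the facet containing $y_i$ in the base case. The same sequence of retractions reduces $\barP(2,p,m)$ to $\barP(2,p,1)$ and then to $\langle X_p\rangle$ (with $X_p$ itself persisting throughout as a facet). The main technical subtlety I expect is verifying that each successive application of Lemma \ref{Lemma: Deformation Retract} in the iterated process is legitimate — that is, that the removed faces from earlier steps do not accidentally destroy the face $g_S$ or its unique containing facet. This bookkeeping is manageable given the clean combinatorial description of facets in Lemma \ref{Lemma: Partition Complex Description of Facets}, but constitutes the most delicate part of the argument.
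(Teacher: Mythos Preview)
Your proposal is correct and follows essentially the same approach as the paper: both arguments iterate Lemma \ref{Lemma: Deformation Retract} on the maximal partition-vertex faces $\{y_{i_1},\dots,y_{i_m}\}$ to obtain the chain $\calP(2,p,m)\rightsquigarrow\calP(2,p,m-1)\rightsquigarrow\dots\rightsquigarrow\calP(2,p,1)$, then strip off the remaining partition vertices to reach $\partial\langle X_p\rangle$, with the closed case handled identically since $X_p$ contains no partition vertices. Your version is somewhat more explicit about the non-interference bookkeeping between successive deletions, which the paper leaves implicit.
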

		\begin{proof}
			As noted in Example \ref{Example: Partition Complex a=2}, all the facets of $\calP(2,p,m)$ contain no more than $m$ partition vertices. Thus if $F$ is any facet with exactly $m$ partition vertices $y_{i_1},\dots,y_{i_m}$, it must be the \textit{only} facet containing the face $\{y_{i_1},\dots,y_{i_m}\}$. Because $m\leq p$, we know $F$ \textit{strictly} contains $\{y_{i_1},\dots,y_{i_m}\}$, and hence Lemma \ref{Lemma: Deformation Retract} allows us to delete the face $\{y_{i_1},\dots,y_{i_m}\}$ from the complex. The complex obtained by deleting \textit{all} faces consisting of $m$ partition vertices from $\calP(2,p,m)$ is $\calP(2,p,m-1)$.
			
			By induction on $m\geq 1$ we obtain a series of deformation retractions $\calP(2,p,m)\rightsquigarrow \calP(2,p,m-1) \rightsquigarrow \dots \rightsquigarrow \calP(2,p,1)$. The facets of $\calP(2,p,1)$ are all the totally separated subsets of $V^2_p$ with only a single partition vertex, and so once again we may use Lemma \ref{Lemma: Deformation Retract} to delete these partition vertices. This leaves us with a complex with facets of the form $\{x_0,\dots,x_p\}-\{x_i\}$ for $0\leq i \leq p$. This is $\partial \langle X_p\rangle$.
			
			The proof for $\barP(2,p,m)$ is identical, except because $\barP(2,p,m)$ also contains the face $X_p$, it deformation retracts onto the full simplex $\langle X_p\rangle$.
			
			
		\end{proof}
		\begin{ex}\label{Example: D(2,2,2) def retract}
			In the case $p=m=2$ we have the following deformation retraction.
			\begin{center}
				\begin{tabular}{ c c c c c }
					\begin{tikzpicture}[scale = 0.73]
						\tikzstyle{point}=[circle,thick,draw=black,fill=black,inner sep=0pt,minimum width=3pt,minimum height=3pt]
						\node (a)[point,label=above:$y_1$] at (-.2,3.5) {};
						\node (b)[point,label={[label distance = -1mm] above:$x_0$}] at (2,2.7) {};
						\node (c)[point,label=above:$y_2$] at (4.2,3.5) {};
						\node (d)[point,label={[label distance = -2.5mm] below left:$x_2$}] at (1.5,2) {};
						\node (e)[point,label={[label distance = -2.2mm] below right:$x_1$}] at (2.5,2) {};
						\node (f)[point,label=below:$y_0$] at (2,0) {};	
						
						\begin{scope}[on background layer]
							\draw[fill=gray] (a.center) -- (b.center) -- (d.center) -- cycle;
							\draw[fill=gray] (b.center) -- (c.center) -- (e.center) -- cycle;
							\draw[fill=gray]   (d.center) -- (e.center) -- (f.center) -- cycle;
							
							\draw[fill=gray] (a.center) -- (b.center) -- (c.center) -- cycle;
							\draw[fill=gray] (a.center) -- (f.center) -- (d.center) -- cycle;
							\draw[fill=gray] (c.center) -- (f.center) -- (e.center) -- cycle;
						\end{scope}
					\end{tikzpicture} & 
					&  \begin{tikzpicture}[scale = 0.73]
						\tikzstyle{point}=[circle,thick,draw=black,fill=black,inner sep=0pt,minimum width=3pt,minimum height=3pt]
						\node (a)[point,label=above:$y_1$] at (-.2,3.5) {};
						\node (b)[point,label={[label distance = -1mm] above:$x_0$}] at (2,3) {};
						\node (c)[point,label=above:$y_2$] at (4.2,3.5) {};
						\node (d)[point,label={[label distance = -2.5mm] below left:$x_2$}] at (1.35,1.8) {};
						\node (e)[point,label={[label distance = -2.2mm] below right:$x_1$}] at (2.65,1.8) {};
						\node (f)[point,label=below:$y_0$] at (2,0) {};	
						
						\begin{scope}[on background layer]
							\draw[fill=gray] (a.center) -- (b.center) -- (d.center) -- cycle;
							\draw[fill=gray] (b.center) -- (c.center) -- (e.center) -- cycle;
							\draw[fill=gray]   (d.center) -- (e.center) -- (f.center) -- cycle;
							
						\end{scope}
					\end{tikzpicture} & 
					& \begin{tikzpicture}[scale = 1.3]
						\tikzstyle{point}=[circle,thick,draw=black,fill=black,inner sep=0pt,minimum width=3pt,minimum height=3pt]
						\node (b)[point,label={[label distance = -1mm] above:$x_0$}] at (2,2.9) {};
						\node (d)[point,label={[label distance = -2.5mm] below left:$x_2$}] at (1.5,2) {};
						\node (e)[point,label={[label distance = -2.2mm] below right:$x_1$}] at (2.5,2) {};
						\node at (2,1) {};
						
						\draw (b.center) -- (d.center) -- (e.center) -- cycle;
						
						
					\end{tikzpicture}\\
					$\calP(2,2,2)$& $\rightsquigarrow$  & $\calP(2,2,1)$ & $\rightsquigarrow$  & $\partial \langle X_2 \rangle$
				\end{tabular}
			\end{center}	
		\end{ex}
		
		\begin{prop}\label{Proposition: Deformation D(2,p,p+1) to point}
			For any integer $p\geq 0$, the partition complex $\calP(2,p,p+1)$ is acyclic.
		\end{prop}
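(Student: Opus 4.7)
The plan is to prove acyclicity by induction on $p \geq 0$ using a Mayer-Vietoris decomposition. For the base case $p=0$, the complex $\calP(2,0,1)$ has a unique facet $\{y_0\}$ (with $x_0$ appearing as a missing vertex), so topologically it is a single point and is acyclic.

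For the inductive step, assume $\calP(2,p-1,p)$ is acyclic and set $\Delta = \calP(2,p,p+1)$. I would decompose $\Delta = A \cup B$, where $A = \overline{\Star_\Delta(x_0)}$ is the closed star of $x_0$ (the subcomplex generated by facets containing $x_0$) and $B = \Delta - x_0$ is the deletion of $x_0$. Since $A$ is a cone over $x_0$, it is acyclic by Corollary \ref{Corollary: Homology of Cone}. Also, using the equivalence $F \in \overline{\Star_\Delta(x_0)} \Leftrightarrow F \cup \{x_0\} \in \Delta$, one checks that $A \cap B = \link_\Delta x_0$. By Lemma \ref{Lemma: Partition Complex Description of Facets}, the facets of $\link_\Delta x_0$ are exactly the totally separated, partition-complete subsets $F$ of $\{x_1,y_1,\dots,x_p,y_p\}$ of size $p$ with $1 \leq |\Supp(F_Y)| \leq p$; after relabeling indices, this is precisely $\calP(2,p-1,p)$, which is acyclic by the inductive hypothesis.

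The key step -- and the one requiring the most care -- is showing that $B$ is acyclic by identifying $B$ with the closed star $\overline{\Star_\Delta(y_0)}$, which is again a cone. The containment $\overline{\Star_\Delta(y_0)} \subseteq B$ is immediate by total separation. For the reverse containment, given $F \in B$ (so $x_0 \notin F$), I would show that $F$ is contained in some facet of $\Delta$ containing $y_0$. If $y_0 \in F$ this is immediate; otherwise $F \subseteq \{x_1,y_1,\dots,x_p,y_p\}$ and $F \cup \{y_0\}$ is totally separated of size at most $p+1$ with at least one partition vertex. Crucially, because $m = p+1$ imposes no nontrivial upper bound on $|\Supp(F_Y)|$, one can freely extend $F \cup \{y_0\}$ to a totally separated, partition-complete set of size $p+1$ containing $y_0$, which by Lemma \ref{Lemma: Partition Complex Description of Facets} is a facet of $\Delta$.

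With $A$, $B$, and $A \cap B$ all acyclic and $A \cap B$ nonempty, the Mayer-Vietoris sequence (Proposition \ref{Proposition: MVS}) yields $\Hred_i(\Delta) = 0$ for every $i$, completing the induction. The only potentially delicate point is the identification $B = \overline{\Star_\Delta(y_0)}$, which is what makes the role of $m = p+1$ essential and distinguishes this case from Proposition \ref{Proposition: Deformation D(2,p,m) to D(1,p,m)}: the asymmetry between $x_0$ and $y_0$ is what allows the complex to ``fill in'' through $y_0$ but not through $x_0$.
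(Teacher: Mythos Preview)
Your proof is correct, but it takes a genuinely different route from the paper. The paper mirrors the argument of Proposition \ref{Proposition: Deformation D(2,p,m) to D(1,p,m)} with the roles of boundary and partition vertices swapped: it uses Lemma \ref{Lemma: Deformation Retract} to successively delete faces consisting entirely of boundary vertices (starting with those of size $p$, which lie in a unique facet, and working downward), obtaining a deformation retraction of $\calP(2,p,p+1)$ onto the single simplex $\langle\{y_0,\dots,y_p\}\rangle$. Your approach instead inducts on $p$ via a Mayer--Vietoris decomposition into the closed stars of $x_0$ and $y_0$, with the crucial identification $\Delta - x_0 = \overline{\Star_\Delta(y_0)}$ (which, as you correctly note, is exactly where $m=p+1$ enters). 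Both arguments exploit the same asymmetry between the $x$- and $y$-vertices; the paper's version yields a slightly stronger conclusion (an explicit deformation retraction onto a named simplex) and avoids induction, while yours makes the role of $m=p+1$ and the link structure (via Lemma \ref{Lemma: Partition Complex Link of x}, which directly gives $\link_\Delta x_0 \cong \calP(2,p-1,p)$) more visible.
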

		\begin{proof}
			Our proof for this result is very similar to our proof for Proposition \ref{Proposition: Deformation D(2,p,m) to D(1,p,m)} above. In that proof, we used Lemma \ref{Lemma: Deformation Retract} to remove all of the partition vertices $y_0,\dots,y_p$ from the complex. In this case, we use the same lemma to remove the boundary vertices $x_0,\dots,x_p$.
			
			Note that while $\Delta=\calP(2,p,p+1)$ contains the facet $\{y_0,\dots,y_p\}$ (which has $p+1$ partition vertices), no facet of $\Delta$ contains more than $p$ boundary vertices. Thus if $F$ is a facet of $\Delta$ containing $p$ boundary vertices $x_{i_1},\dots,x_{i_p}$, it must be the \textit{only} facet containing all of those vertices; and once again, because it \textit{strictly} contains them, Lemma \ref{Lemma: Deformation Retract} allows us to delete the face $\{x_{i_1},\dots,x_{i_p}\}$ from the complex. Proceeding in this way, we may remove all faces consisting of boundary vertices from $\Delta$, beginning with the ones of size $p$ and continuing in decreasing order of size. Thus we obtain a deformation retraction of $\calP(2,p,p+1)$ on to the complex generated by the single facet $\{y_0,\dots,y_p\}$, which is acyclic.
		\end{proof}
		\begin{ex}\label{Example: D(2,2,3) def retract}
			In the case $p=2$, $m=3$ we have the following deformation retraction.
			\begin{center}
				\begin{tabular}{ c c c c c }
					\begin{tikzpicture}[scale = 0.73]
						\tikzstyle{point}=[circle,thick,draw=black,fill=black,inner sep=0pt,minimum width=3pt,minimum height=3pt]
						\node (a)[point,label=below:$x_2$] at (-.2,0) {};
						\node (b)[point,label={[label distance = 0mm] below:$y_0$}] at (2,0.8) {};
						\node (c)[point,label=below:$x_1$] at (4.2,0) {};
						\node (d)[point,label={[label distance = -2.5mm] above left:$y_1$}] at (1.5,1.5) {};
						\node (e)[point,label={[label distance = -2.2mm] above right:$y_2$}] at (2.5,1.5) {};
						\node (f)[point,label=above:$x_0$] at (2,3.5) {};	
						
						\begin{scope}[on background layer]
							\draw[fill=gray] (a.center) -- (b.center) -- (d.center) -- cycle;
							\draw[fill=gray] (b.center) -- (c.center) -- (e.center) -- cycle;
							\draw[fill=gray]   (d.center) -- (e.center) -- (f.center) -- cycle;
							
							\draw[fill=gray] (a.center) -- (b.center) -- (c.center) -- cycle;
							\draw[fill=gray] (a.center) -- (f.center) -- (d.center) -- cycle;
							\draw[fill=gray] (c.center) -- (f.center) -- (e.center) -- cycle;
							
							\draw[fill=gray] (b.center) -- (d.center) -- (e.center) -- cycle;
						\end{scope}
					\end{tikzpicture} & 
					&  \begin{tikzpicture}[scale = 0.73]
						\tikzstyle{point}=[circle,thick,draw=black,fill=black,inner sep=0pt,minimum width=3pt,minimum height=3pt]
						\node (a)[point,label=below:$x_2$] at (-.2,0) {};
						\node (b)[point,label={[label distance = 0mm] below:$y_0$}] at (2,0.5) {};
						\node (c)[point,label=below:$x_1$] at (4.2,0) {};
						\node (d)[point,label={[label distance = -2.5mm] above left:$y_1$}] at (1.24,1.65) {};
						\node (e)[point,label={[label distance = -2.2mm] above right:$y_2$}] at (2.76,1.65) {};
						\node (f)[point,label=above:$x_0$] at (2,3.5) {};	
						
						\begin{scope}[on background layer]
							\draw[fill=gray] (a.center) -- (b.center) -- (d.center) -- cycle;
							\draw[fill=gray] (b.center) -- (c.center) -- (e.center) -- cycle;
							\draw[fill=gray]   (d.center) -- (e.center) -- (f.center) -- cycle;
							
							
							\draw[fill=gray] (b.center) -- (d.center) -- (e.center) -- cycle;
						\end{scope}
					\end{tikzpicture} & 
					& \begin{tikzpicture}[scale = 1.7]
						\tikzstyle{point}=[circle,thick,draw=black,fill=black,inner sep=0pt,minimum width=3pt,minimum height=3pt]
						\node (b)[point,label={[label distance = 0mm] below:$y_0$}] at (2,0.8) {};
						\node (d)[point,label={[label distance = -2.5mm] above left:$y_1$}] at (1.6,1.5) {};
						\node (e)[point,label={[label distance = -2.2mm] above right:$y_2$}] at (2.4,1.5) {};
						\node at (2,0.3) {};
						
						\begin{scope}[on background layer]
							
							
							\draw[fill=gray] (b.center) -- (d.center) -- (e.center) -- cycle;
						\end{scope}
					\end{tikzpicture}\\
					$\calP(2,2,3)$& $\rightsquigarrow$  &  & $\rightsquigarrow$  & $\langle \{y_0,y_1,y_2\} \rangle$
				\end{tabular}
			\end{center}	
		\end{ex}

		\begin{prop}\label{Proposition: Partition Complex Cross Polytope}
			Let $p\geq -1$ be an integer. The closed partition complex $\barP(2,p,p+1)$ is isomorphic to the cross-polytope $O^p$.
		\end{prop}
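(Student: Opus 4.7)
The plan is to give an explicit combinatorial description of the facets of $\barP(2,p,p+1)$ and then recognise the resulting complex as an iterated suspension, which is precisely the cross-polytope.

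First I would use Lemma \ref{Lemma: Partition Complex Description of Facets} to characterise the facets of $\calP(2,p,p+1)$. With $a=2$, the partition completeness condition is vacuous (since the only partition vertices are $y_i = y_i^1$), so the lemma says a set $F \subseteq V^2_p$ is a facet of $\calP(2,p,p+1)$ if and only if it is totally separated, has size $p+1$, and satisfies $1 \leq |\Supp(F_Y)| \leq p+1$. But the upper bound $|\Supp(F_Y)| \leq p+1$ is automatic for any totally separated subset of $V^2_p$, and the lower bound $|\Supp(F_Y)| \geq 1$ excludes exactly one subset, namely $F = X_p$. Appending the facet $X_p$ via the closure operation, I conclude that the facets of $\barP(2,p,p+1)$ are precisely the totally separated subsets of $V^2_p$ of size $p+1$, i.e., the sets obtained by choosing exactly one vertex from each pair $\{x_i, y_i\}$ for $0 \leq i \leq p$.

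Next I would proceed by induction on $p \geq -1$. For the base case $p = -1$, the vertex set $V^2_{-1}$ is empty, so $\barP(2,-1,0) = \{\emptyset\} = O^{-1}$ (using Remark \ref{Remark: Closed Partition Complex m=0 Cases}). For the inductive step, assume $\barP(2,p-1,p) \cong O^{p-1}$. Every facet $F$ of $\barP(2,p,p+1)$ contains exactly one of $x_p$ or $y_p$, and $F \setminus \{x_p\}$ (respectively $F \setminus \{y_p\}$) is a totally separated subset of $V^2_{p-1}$ of size $p$, which by the facet characterisation above is a facet of $\barP(2,p-1,p)$. This gives an isomorphism
\[
\barP(2,p,p+1) \;\cong\; \barP(2,p-1,p) \ast \bigl(\{x_p\} \sqcup \{y_p\}\bigr) \;=\; S\bigl(\barP(2,p-1,p)\bigr),
\]
where the two vertices $x_p$ and $y_p$ play the role of the suspension points. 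By the inductive hypothesis and Definition \ref{Definition: Cross Polytope}, this equals $S(O^{p-1}) = O^p$, completing the induction.

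The only potential obstacle is making sure the facet characterisation of $\barP(2,p,p+1)$ is watertight at the boundary case $m = p+1$, so I would take care to verify the two inequalities $1 \leq |\Supp(F_Y)| \leq p+1$ line up correctly with the addition of the single extra facet $X_p$ in the closure. Once this is handled, the rest reduces to a straightforward identification of each pair $\{x_i, y_i\}$ with an antipodal pair of vertices in the cross-polytope.
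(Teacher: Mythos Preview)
Your proof is correct but takes a different route from the paper's published argument. The paper gives a one-line proof via minimal nonfaces: the faces of $\barP(2,p,p+1)$ are exactly the separated subsets of $V^2_p$, so its minimal nonfaces are the $p+1$ pairwise disjoint edges $\{x_0,y_0\},\dots,\{x_p,y_p\}$, which are precisely the minimal nonfaces of $O^p$. Your approach instead characterises the facets and uses induction on $p$ to exhibit $\barP(2,p,p+1)$ as a suspension of $\barP(2,p-1,p)$, matching the recursive Definition~\ref{Definition: Cross Polytope} of the cross-polytope directly. Amusingly, the paper's \LaTeX\ source contains, commented out, almost exactly your inductive suspension argument --- the author evidently wrote it first and then replaced it with the shorter nonface identification. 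The nonface argument is slicker; your argument has the virtue of making the recursive structure explicit and connecting more transparently to the definition of $O^p$.
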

		\begin{proof}
			The minimal nonfaces of the cross polytope $O^p$ consist of $p$ pairwise disjoint edges. Meanwhile, the faces of $\barP(2,p,p+1)$ are all the separated subsets of $V^2_p=\{x_0,\dots,x_p,y_0,\dots,y_p\}$ (including both the sets $\{x_0,\dots,x_p\}$ and $\{y_0,\dots,y_p\}$), and so its minimal nonfaces are the $p$ pairwise disjoint edges $\{x_0,y_0\},\dots,\{x_p,y_p\}$. The result follows.
		\end{proof}
		
		Taken together these results are enough to give us the homology of all partition complexes and closed partition complexes.
		\begin{cor}\label{Corollary: Homology of Partition Complexes}
			Let $a$, $p$ and $m$ be integers with $a\geq 2$, $p\geq -1$ and $0\leq m\leq p+1$. Let $\Delta = \calP(a,p,m)$ and $\barDelta = \barP(a,p,m)$.
			\begin{enumerate}
				\item $h(\Delta) = \begin{cases}
					\{-1\} &\text{ if } m=0\\
					\{p-1\} &\text{ if } 1\leq m\leq p\\
					\emptyset & \text{ if } m=p+1 \text{ and } p\neq -1
				\end{cases}$
				
				and in the first two cases, the dimension of the nontrivial homology is $1$.
				\item $h(\barDelta) = \begin{cases}
					\emptyset &\text{ if } m=0 \text{ and } p\neq -1\\
					\emptyset & \text{ if } 1\leq m\leq p\\
					\{p\} &\text{ if } m=p+1
				\end{cases}$
				
				and in the final case, the dimension of the nontrivial homology is $1$.
			\end{enumerate}
		\end{cor}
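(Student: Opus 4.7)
The plan is to prove both parts by case analysis, chaining together the deformation retractions and identifications established in Propositions \ref{Proposition: Deformation D(a,p,m) to D(2,p,m)}, \ref{Proposition: Deformation D(2,p,m) to D(1,p,m)}, \ref{Proposition: Deformation D(2,p,p+1) to point}, and \ref{Proposition: Partition Complex Cross Polytope}, and then invoking Proposition \ref{Proposition: Homotopy Equiv => Homology Equiv} to transfer homology through each deformation retraction. Since homotopy equivalent complexes have isomorphic homology, it suffices to identify the homotopy type of $\Delta$ and $\barDelta$ in each of the relevant ranges of $m$.

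For part (1), I would split into three cases. When $m=0$, Definition \ref{Definition: Partition Complexes} gives $\Delta = \{\emptyset\}$ directly, so $h(\Delta) = \{-1\}$ with the nontrivial homology of dimension $1$ by Remark \ref{Remark: void does not equal irrelevant}. When $1 \leq m \leq p$, Proposition \ref{Proposition: Deformation D(a,p,m) to D(2,p,m)} gives $\Delta \rightsquigarrow \calP(2,p,m)$, and Proposition \ref{Proposition: Deformation D(2,p,m) to D(1,p,m)} gives $\calP(2,p,m) \rightsquigarrow \partial \langle X_p\rangle$, which is the boundary of the $p$-simplex and has only $(p-1)$-st homology of dimension $1$ by Corollary \ref{Corollary: Betti Diagram of Simplex} (or directly by Remark \ref{Remark: Simplicial Spheres are CM}, since it is a simplicial $(p-1)$-sphere). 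When $m = p+1$ and $p \neq -1$, Proposition \ref{Proposition: Deformation D(a,p,m) to D(2,p,m)} gives $\Delta \rightsquigarrow \calP(2,p,p+1)$, which is acyclic by Proposition \ref{Proposition: Deformation D(2,p,p+1) to point}.

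For part (2), I would again split into three cases, this time tracking the closed complex. When $m=0$ and $p \neq -1$, Definition \ref{Definition: Closed Partition Complexes} gives $\barDelta = \langle X_p \rangle$ (a full simplex on $p+1 \geq 1$ vertices, which is acyclic by Corollary \ref{Corollary: Homology of Cone}). When $1 \leq m \leq p$, Proposition \ref{Proposition: Deformation D(a,p,m) to D(2,p,m)} gives $\barDelta \rightsquigarrow \barP(2,p,m)$, and Proposition \ref{Proposition: Deformation D(2,p,m) to D(1,p,m)} deformation retracts this onto $\langle X_p\rangle$, which is again acyclic. When $m=p+1$, Proposition \ref{Proposition: Deformation D(a,p,m) to D(2,p,m)} gives $\barDelta \rightsquigarrow \barP(2,p,p+1)$, which by Proposition \ref{Proposition: Partition Complex Cross Polytope} is isomorphic to the cross polytope $O^p$; as a simplicial $p$-sphere this has only $p$-th homology of dimension $1$ (by Remark \ref{Remark: Simplicial Spheres are CM} together with the well-known fact that $O^p$ triangulates $S^p$, which one can also verify inductively from Definition \ref{Definition: Cross Polytope} via Corollary \ref{Corollary: Homology of Suspension}). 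The case $p=-1$ is covered under $m=p+1=0$: here $\barP(a,-1,0) = \{\emptyset\}$ by Remark \ref{Remark: Closed Partition Complex m=0 Cases}, with $(-1)$-st homology of dimension $1$, consistent with $\{p\}=\{-1\}$.

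This proof is largely mechanical, but the one step that warrants care is the verification that the cross polytope $O^p$ has homology concentrated in degree $p$ of dimension $1$. Since Definition \ref{Definition: Cross Polytope} gives $O^p$ recursively as $S(O^{p-1})$, the cleanest approach is an induction using Corollary \ref{Corollary: Homology of Suspension}: the base case $O^{-1} = \{\emptyset\}$ has $(-1)$-st homology of dimension $1$, and the suspension shifts the unique nontrivial homology degree up by one while preserving its dimension. No other step requires significant new calculation, so no serious obstacle is anticipated.
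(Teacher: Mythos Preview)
Your proposal is correct and follows essentially the same approach as the paper: both proofs chain together the deformation retractions of Propositions \ref{Proposition: Deformation D(a,p,m) to D(2,p,m)}, \ref{Proposition: Deformation D(2,p,m) to D(1,p,m)}, \ref{Proposition: Deformation D(2,p,p+1) to point}, and \ref{Proposition: Partition Complex Cross Polytope} in exactly the same case split, identifying the homotopy type in each range of $m$. The only difference is organizational (the paper treats the $m=0$ cases for both parts together first, whereas you complete part (1) in full before turning to part (2)), and your explicit inductive justification of the cross-polytope homology via Corollary \ref{Corollary: Homology of Suspension} is a welcome elaboration of a step the paper leaves implicit.
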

		\begin{proof}
			We consider the $m=0$ cases together first. The partition complex $\calP(a,p,0)$ is defined to be the irrelevant complex $\{\emptyset\}$ which has only $\nth[st]{(-1)}$ homology. Meanwhile the closed partition complex $\barP(a,p,0)$ is the full $p$-simplex $\langle X_p\rangle$, which is acyclic except in the case where $p=-1$. We now proceed to the other cases.
			
			For part (1), if $1\leq m\leq p$, then Propositions \ref{Proposition: Deformation D(a,p,m) to D(2,p,m)} and \ref{Proposition: Deformation D(2,p,m) to D(1,p,m)} give us a deformation retraction from $\calP(a,p,m)$ on to $\partial \langle X_p \rangle$, which is the boundary of a $p$-simplex, and hence has only $\nth[st]{(p-1)}$ homology of dimension 1. If $m=p+1$ and $p\neq -1$, then Proposition \ref{Proposition: Deformation D(a,p,m) to D(2,p,m)} gives us a deformation retraction from $\calP(a,p,p+1)$ on to $\calP(2,p,p+1)$, which is acyclic by Proposition \ref{Proposition: Deformation D(2,p,p+1) to point}.
			
			For part (2), if $1\leq m\leq p$, then Propositions \ref{Proposition: Deformation D(a,p,m) to D(2,p,m)} and \ref{Proposition: Deformation D(2,p,m) to D(1,p,m)} give us a deformation retraction from $\barP(a,p,m)$ to $\langle X_p \rangle$, which is a full $p$-simplex, and is hence acyclic. If $m=p+1$, then Proposition \ref{Proposition: Deformation D(a,p,m) to D(2,p,m)} gives us a deformation retraction from $\barP(a,p,m)$ on to $\barP(2,p,m)$. By Proposition \ref{Proposition: Partition Complex Cross Polytope} this is isomorphic to the cross polytope $O^p$, which has only $\nth[th]{p}$ homology, of dimension $1$.
		\end{proof}
		
		We now examine the links in partition complexes. Suppose $\sigma$ is a face of a partition complex $\calP(a,p,m)$. We can partition $\sigma$ into $\sigma_X\sqcup \sigma_Y$ (as defined in Notation \ref{Notation: sigma-X and sigma-Y and support}). We first consider the case $\sigma_Y = \emptyset$. In fact, it suffices to consider the subcase where $\sigma$ is equal to the boundary vertex $x_p$.
		
		\begin{lem}\label{Lemma: Partition Complex Link of x}
			Let $a$, $p$ and $m$ be integers with $a\geq 2$ and $1\leq m\leq p+1$, and let $\Delta=\calP(a,p,m)$. We have
			$$\link_\Delta x_p = \begin{cases}
				\calP(a,p-1,m)& \text{ if } 1\leq m \leq p\\
				\calP(a,p-1,p)& \text{ if } m = p+1.
			\end{cases}$$
		\end{lem}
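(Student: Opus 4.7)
The plan is to describe the facets of $\link_\Delta x_p$ explicitly using the characterization of facets of $\Delta = \calP(a,p,m)$ given in Lemma \ref{Lemma: Partition Complex Description of Facets}, and then recognize that these match precisely the facets of the claimed partition complex on the smaller vertex set $V^a_{p-1}$.

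Concretely, a facet of $\link_\Delta x_p$ is of the form $F - \{x_p\}$ where $F$ is a facet of $\Delta$ containing $x_p$. By Lemma \ref{Lemma: Partition Complex Description of Facets}, such an $F$ is a partition-complete, totally separated subset of $V^a_p$ of size $a+p-1$ with $1\leq |\Supp(F_Y)|\leq m$. Since $F$ is totally separated and $x_p\in F$, no partition vertex $y_p^j$ lies in $F$, and hence $F_Y\subseteq Y^a_{p-1}$ and $\Supp(F_Y)\subseteq \{0,\dots,p-1\}$. Setting $F' = F - \{x_p\}$, we see that $F'$ is a subset of $V^a_{p-1}$ of size $a+(p-1)-1$ which is still partition complete, still totally separated (now as a subset of $V^a_{p-1}$, noting that every index $0\leq i\leq p-1$ contributes exactly one of $x_i$ or $y_i$ to $F'$ just as it did to $F$), and satisfies $|\Supp(F'_Y)| = |\Supp(F_Y)|$. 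Conversely, any partition-complete, totally separated subset $F'\subset V^a_{p-1}$ of size $a+(p-1)-1$ with $1\leq |\Supp(F'_Y)|$ extends uniquely to a facet $F'\sqcup\{x_p\}$ of $\Delta$ containing $x_p$, provided the support condition for $\Delta$ is preserved.

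It remains to check that the upper bound on $|\Supp(F_Y)|$ translates correctly in both cases. If $1\leq m\leq p$, then the condition $1\leq |\Supp(F'_Y)|\leq m$ is exactly the defining condition for a facet of $\calP(a,p-1,m)$ (noting that $m\leq p = (p-1)+1$ is in the permissible range). If instead $m = p+1$, the condition reads $1\leq |\Supp(F'_Y)|\leq p+1$, but since $F'_Y\subseteq Y^a_{p-1}$ forces $|\Supp(F'_Y)|\leq p$ automatically, this is equivalent to $1\leq |\Supp(F'_Y)|\leq p$, matching the facets of $\calP(a,p-1,p)$. Thus the bijection $F\mapsto F-\{x_p\}$ identifies the facets of $\link_\Delta x_p$ with those of $\calP(a,p-1,m)$ or $\calP(a,p-1,p)$ respectively, proving the equality of complexes.

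The main obstacle, such as it is, lies in the $m=p+1$ case: one must notice that the nominal upper bound $p+1$ on $|\Supp(F_Y)|$ is never attained once $x_p$ is in the facet, so the link drops from index $m=p+1$ down to $m=p$. Everything else is routine bookkeeping with the defining properties of facets.
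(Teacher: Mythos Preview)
Your proof is correct and follows essentially the same approach as the paper: both arguments set up the bijection $F \mapsto F - \{x_p\}$ between facets of $\Delta$ containing $x_p$ and facets of the target partition complex, and both handle the $m=p+1$ case by observing that no facet containing $x_p$ can have $|\Supp(F_Y)| = p+1$. The only difference is packaging: the paper works directly with the generating facets $gG^\lambda_{p,i}$ and the $S^0_p$-action, while you invoke Lemma~\ref{Lemma: Partition Complex Description of Facets} to characterize facets intrinsically, which makes your bookkeeping slightly cleaner.
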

		\begin{proof}
			We begin with the case where $1\leq m \leq p$. Let $F$ be a facet of $\Delta$ containing $x_p$. We know $F$ is of the form $gG_{p,i}^\lambda$ for some permutation $g\in S^0_p$, some integer $1\leq i \leq m$ and some partition $\lambda \in \PP(a,i)$.
			
			Because $x_p$ is in $F$ we may assume that $g(x_p)=x_p$ (if not, we may decompose $g$ into cycles and remove $p+1$ from its cycle without affecting $gG_{p,i}^\lambda$). Thus we have
			\begin{align*}
				F-x_p&= gG_{p,i}^\lambda - x_p\\
				&=g(G_{p,i}^\lambda - x_p)\\
				&=gG_{p-1,i}^\lambda
			\end{align*}
			which is a facet of $\calP(a,p-1,m)$.
			
			Conversely, for any generating facet $G^\lambda_{p-1,i}$ of $\calP(a,p-1,m)$ and any permutation $h$ in $S^0_{p-1}$ we can view $h$ as a permutation in $S^0_p$ which fixes $p$, and hence we have $hG^\lambda_{p-1,i}\sqcup \{x_p\}=hG^\lambda_{p,i}$, which is a facet of $\calP(a,p,m)$.
			
			For the $m=p+1$ case note that the set $G_{p,p+1}$ is empty, and therefore for every partition $\lambda \in \PP(a,p+1)$ the generating facet $G^\lambda_{p,p+1}$ contains no partition vertices. Thus the only facets of $\calP(a,p,p+1)$ which contain $x_p$ are those of the form $gG^\lambda_{p,i}$ for $1\leq i \leq p$ and permutations $g\in S^0_p$. The remainder of the proof is identical to the earlier case.
		\end{proof}
		
		\begin{cor}\label{Corollary: Partition Complex Link of sigma_X}
			Let $a$, $p$ and $m$ be integers with $a\geq 2$ and $1\leq m\leq p+1$, and let $\Delta=\calP(a,p,m)$. Let $\sigma$ be a face of $\Delta$ of size $\alpha$ contained entirely inside $X_p$. We have an isomorphism of complexes
			$$\lkds \cong \begin{cases}
				\calP(a,p-\alpha,m)& \text{ if } 0\leq \alpha \leq p - m\\
				\calP(a,p-\alpha,p+1-\alpha)& \text{ otherwise.}
			\end{cases}$$
		\end{cor}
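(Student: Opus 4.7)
The plan is to reduce to the vertex case handled by Lemma \ref{Lemma: Partition Complex Link of x}, then use Lemma \ref{Lemma: links in links} to peel off the vertices of $\sigma$ one at a time, inducting on $\alpha$. First I would use the $S^0_p$-symmetry of $\calP(a,p,m)$ together with Lemma \ref{Lemma: link x = link gx} to assume without loss of generality that $\sigma = \{x_{p-\alpha+1},\dots,x_p\}$, so that $\sigma$ consists of the ``last'' $\alpha$ boundary vertices.

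Next I would set up the induction on $\alpha \geq 0$. For the base case $\alpha=0$, we have $\sigma = \emptyset$ and $\lkds = \Delta = \calP(a,p,m)$: if $1\leq m\leq p$ this is the first branch of the formula, and if $m=p+1$ then $p-m=-1$ so we land in the ``otherwise'' branch, which reads $\calP(a,p,p+1)$, as required. For the inductive step, write $\sigma' = \sigma \cup \{x_{p-\alpha}\}$ of size $\alpha+1$, and use Lemma \ref{Lemma: links in links} to rewrite $\link_\Delta \sigma' = \link_{\lkds} \{x_{p-\alpha}\}$. By the inductive hypothesis, $\lkds$ is isomorphic to a partition complex of the form $\calP(a,q,m')$ where $(q,m')$ is determined by the case split, and under this isomorphism $x_{p-\alpha}$ corresponds (again by the symmetry of Lemma \ref{Lemma: link x = link gx}) to the ``top'' boundary vertex $x_q$ of that complex. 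Then Lemma \ref{Lemma: Partition Complex Link of x} computes $\link_{\calP(a,q,m')} x_q$ directly.

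The remaining work is bookkeeping: verifying that the three subcases align with the stated formula. Concretely, I expect the following three subcases for $\sigma'$: (i) if $\alpha+1 \leq p-m$, then $\lkds \cong \calP(a,p-\alpha,m)$ with $m \leq p-\alpha-1$, and Lemma \ref{Lemma: Partition Complex Link of x} yields $\calP(a,p-\alpha-1,m)$, matching the first branch; (ii) if $\alpha = p-m$ (the transition point), then $\lkds \cong \calP(a,m,m)$ and the lemma yields $\calP(a,m-1,m) = \calP(a,p-\alpha-1,p-\alpha)$, matching the ``otherwise'' branch at $\alpha' = \alpha+1$; (iii) if $\alpha > p-m$ already, then $\lkds \cong \calP(a,p-\alpha,p+1-\alpha)$ which falls into the second clause of Lemma \ref{Lemma: Partition Complex Link of x} (since here the inner $m$-parameter equals the $p$-parameter plus one), giving $\calP(a,p-\alpha-1,p-\alpha)$, again matching the ``otherwise'' branch.

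The main obstacle is not conceptual but organisational: ensuring the formula in Lemma \ref{Lemma: Partition Complex Link of x} is applied correctly at the transition $\alpha = p - m$, where the complex switches from having a ``room-to-spare'' parameter ($m < q$) to having a ``saturated'' parameter ($m = q+1$), and confirming that the piecewise definition in the statement matches the piecewise conclusion of the lemma at exactly this threshold. Once this is checked, the induction closes cleanly.
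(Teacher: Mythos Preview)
Your proposal is correct and follows essentially the same approach as the paper: induction on $\alpha$, normalising $\sigma$ via the $S^0_p$-symmetry, and peeling off one boundary vertex at a time using Lemma~\ref{Lemma: Partition Complex Link of x} together with Lemma~\ref{Lemma: links in links}. The only organisational difference is that the paper removes $x_p$ \emph{first} and then applies the inductive hypothesis to $\sigma - x_p$ in the smaller complex $\calP(a,p-1,\,\cdot\,)$, which lets the case split collapse to just two cases (the ``otherwise'' branch of the inductive hypothesis absorbs your transition subcase~(ii) and subcase~(iii) simultaneously, since $\alpha-1 > (p-1)-m$ and $\alpha-1 > (p-1)-p$ both hold whenever $\alpha > p-m$); your version applies the inductive hypothesis first and then the vertex lemma, which forces you to track the threshold explicitly.
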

		\begin{proof}
			We proceed by induction on $\alpha\geq 0$. The base case $\alpha=0$ is immediate.
			
			Now suppose $\alpha \geq 1$. By symmetry we may assume that $x_p\in \sigma$: indeed, if it is not we may choose some permutation $g\in S^0_p$ such that $x_p\in g\sigma$, and we have that $\lkds\cong \lkds[g\sigma]$ by Lemma \ref{Lemma: link x = link gx}.
			
			We consider the two cases separately. First we assume that $1\leq \alpha \leq p-m$. In particular this means that $m\leq p-1$, and hence Lemma \ref{Lemma: Partition Complex Link of x} tells us that $\lkds[x_p] = \calP(a,p-1,m)$. Thus $\lkds$ is equal to $\link_{\calP(a,p-1,m)}(\sigma-x_p)$. Note that $0\leq \alpha -1 \leq (p-1)-m$, and so the inductive hypothesis gives us an isomorphism $\link_{\calP(a,p-1,m)}(\sigma-x_p)\cong \calP(a,p-\alpha,m)$.
			
			Now suppose $\alpha > p-m$. By Lemma \ref{Lemma: Partition Complex Link of x}, we know that $\lkds$ is equal to either $\link_{\calP(a,p-1,m)}(\sigma-x_p)$ or $\link_{\calP(a,p-1,p)}(\sigma-x_p)$. We know that $\alpha-1$ is greater than both $(p-1)-m$ and $(p-1)-p$, so in either case the inductive hypothesis gives us an isomorphism $\lkds\cong \calP(a,p-\alpha,p+1-\alpha)$.
		\end{proof}
		
		Corollary \ref{Corollary: Partition Complex Link of sigma_X} allows us to restrict to the case where $\sigma_X=\emptyset$, because it shows us that the link of $\sigma_X$ is also a partition complex, and thus the link of $\sigma$ in $\calP(a,p,m)$ is isomorphic to the link of $\sigma_Y$ in a smaller partition complex.
		
		%
		
		The following lemma allow us to restrict our attention even further.
		
		\begin{lem}\label{Lemma: Partition incomplete => acyclic}
			Let $a$, $p$ and $m$ be positive integers with $m\leq p+1$, and let $\Delta = \calP(a,p,m)$. If $\sigma$ is a face of $\Delta$ which is not partition complete then $\lkds$ is acyclic.
		\end{lem}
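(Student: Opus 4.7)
The proof should be short and follow directly from the fact that facets of $\calP(a,p,m)$ are partition complete (Lemma \ref{Lemma: Partition Complex Description of Facets}). Here is the plan.

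Since $\sigma$ is not partition complete, by definition there exists some partition vertex $y_i^j \in \sigma$ with $j \geq 2$ such that $y_i^{j-1} \notin \sigma$. The key observation is that although $\sigma$ itself fails partition completeness, every facet of $\Delta$ \emph{is} partition complete. So for any facet $F$ of $\Delta$ with $\sigma \subseteq F$, the presence of $y_i^j \in \sigma \subseteq F$ forces $y_i^{j-1} \in F$. In particular, picking any such $F$ shows that $\sigma \cup \{y_i^{j-1}\}$ is a face of $\Delta$, so $y_i^{j-1}$ is a vertex of the link $\lkds$.

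Next I would observe that $y_i^{j-1}$ lies in every facet of $\lkds$: the facets of $\lkds$ are precisely the sets $F - \sigma$ as $F$ ranges over facets of $\Delta$ containing $\sigma$, and each such $F$ contains $y_i^{j-1}$ by the argument above (while $y_i^{j-1} \notin \sigma$, so it survives in $F - \sigma$). Hence $\lkds$ is a cone over the vertex $y_i^{j-1}$, and so it is acyclic by Corollary \ref{Corollary: Homology of Cone}.

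There is no serious obstacle; the argument is essentially a direct unfolding of the definition of partition completeness combined with Lemma \ref{Lemma: Partition Complex Description of Facets}. The only thing to be mindful of is ensuring that $y_i^{j-1}$ genuinely becomes a vertex in the link (rather than being absorbed into $\sigma$), which is guaranteed by our choice of $y_i^{j-1} \notin \sigma$.
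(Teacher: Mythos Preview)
Your proposal is correct and follows essentially the same approach as the paper: both arguments observe that since every facet of $\calP(a,p,m)$ is partition complete, any facet containing $\sigma$ must also contain the missing lower-index partition vertex, making $\lkds$ a cone over that vertex. The only cosmetic difference is that the paper picks \emph{any} missing $y_i^j$ with $j<j'$ (where $y_i^{j'}\in\sigma$), whereas you specifically choose the adjacent one $y_i^{j-1}$; both choices work equally well.
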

		\begin{proof}
			Suppose $\sigma\in \Delta$ is not partition complete. This means there exist some $1\leq i \leq p$ and $1\leq j<j'\leq a-1$ such that the parition vertex $y_i^{j'}$ is in $\sigma$ but $y_i^j$ is not. Because every facet of $\Delta$ is partition complete, every facet of $\Delta$ containing $\sigma$ also contains $y_i^j$. Thus $\lkds$ is a cone over $y_i^j$.
		\end{proof}
		
		It now only remains for us to consider the case where $\sigma$ is a nonempty partition complete face of $\Delta$ contained entirely in $Y_p^a$.
		
		\begin{lem}\label{Lemma: Partition Vertices Deformation Retract}
			Let $a$, $p$ and $m$ be positive integers with $m\leq p+1$, and let $\Delta = \calP(a,p,m)$. Fix some $\sigma \in \Delta$. For each $1\leq i\leq p+1$ we define $\varphi(\sigma,i)=\min \{1\leq j \leq a-1 : y_i^j \notin \sigma\}$. There is a deformation retraction $\Phi$ of $\lkds$ onto a complex $\Gamma$ obtained by identifying every partition vertex $y_i^j$ with the partition vertex $y_i^{\varphi(\sigma,i)}$.
		\end{lem}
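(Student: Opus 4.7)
The plan is to obtain $\Phi$ by applying Corollary \ref{Corollary: Deformation Retract} iteratively, once for each partition vertex $y_i^j$ in $\lkds$ with $j > \varphi(\sigma,i)$, identifying it with $y_i^{\varphi(\sigma,i)}$. (The vertices $y_i^{j'}$ with $j' < \varphi(\sigma,i)$ all lie in $\sigma$ by definition of $\varphi$, so they do not appear in $\lkds$ and require no identification; and if $\varphi(\sigma,i)$ is undefined for some $i$, then every $y_i^{j'}$ lies in $\sigma$ and there is nothing to do at that index.)

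The key observation to set up the first step is the following. Fix $i$ with $\varphi=\varphi(\sigma,i)$ defined, and fix $j$ with $\varphi<j\leq a-1$. Suppose $y_i^j$ is not a missing vertex of $\lkds$. Then every facet of $\lkds$ containing $y_i^j$ also contains $y_i^{\varphi}$. Indeed, the facets of $\lkds$ are precisely the sets $F-\sigma$ where $F$ ranges over the facets of $\Delta$ containing $\sigma$; by Lemma \ref{Lemma: Partition Complex Description of Facets} any such $F$ is partition complete, so $y_i^j\in F$ forces $y_i^1,\ldots,y_i^{j-1}\in F$, and hence $y_i^{\varphi}\in F$. Since $y_i^{\varphi}\notin\sigma$, we have $y_i^{\varphi}\in F-\sigma$. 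This verifies the hypothesis of Corollary \ref{Corollary: Deformation Retract}, giving a deformation retract $\lkds \leadsto \lkds - \{y_i^j\}$ sending $y_i^j\mapsto y_i^{\varphi}$.

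To iterate, I would check that the hypothesis of Corollary \ref{Corollary: Deformation Retract} persists after each identification. Write $L'=\lkds - \{y_i^j\}$. A facet $F'$ of $L'$ is either a facet of $\lkds$ not containing $y_i^j$, or a face of the form $F-\sigma-\{y_i^j\}$ for some facet $F$ of $\Delta$ containing $\sigma\cup\{y_i^j\}$ (maximal among faces of $\lkds$ avoiding $y_i^j$). In either case, if a further vertex $y_{i'}^{j''}$ with $j''>\varphi(\sigma,i')$ lies in $F'$, then $y_{i'}^{j''}$ lies in the original facet $F$ of $\Delta$, whose partition completeness again yields $y_{i'}^{\varphi(\sigma,i')}\in F$, and hence (since $y_{i'}^{\varphi(\sigma,i')}\notin\sigma\cup\{y_i^j\}$) $y_{i'}^{\varphi(\sigma,i')}\in F'$. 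Thus Corollary \ref{Corollary: Deformation Retract} applies again.

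Iterating through all pairs $(i,j)$ with $j>\varphi(\sigma,i)$ and $y_i^j$ not a missing vertex, in any order, produces a chain of deformation retractions whose composite is the desired $\Phi:\lkds\leadsto\Gamma$, where $\Gamma$ is the complex obtained from $\lkds$ by deleting every such $y_i^j$ and collapsing it onto $y_i^{\varphi(\sigma,i)}$. The main subtlety, and the only thing requiring care, is step-by-step preservation of the hypothesis of Corollary \ref{Corollary: Deformation Retract}; once the persistence claim above is in hand, the rest of the argument is just bookkeeping.
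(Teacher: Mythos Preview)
Your proposal is correct and follows the same approach as the paper: both use partition completeness of facets of $\Delta$ to show that every facet of $\lkds$ containing a partition vertex $y_i^j$ also contains $y_i^{\varphi(\sigma,i)}$, and then invoke Corollary~\ref{Corollary: Deformation Retract}. You are more explicit than the paper about verifying that the hypothesis of Corollary~\ref{Corollary: Deformation Retract} persists after each deletion, a step the paper leaves implicit.
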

		\begin{proof}
			Let $y_i^j$ be a partition vertex in $\lkds$. For any facet $F$ of $\lkds$ containing $y_i^j$, we have that $F\sqcup \sigma$ is a facet of $\Delta$. This means $F\sqcup \sigma$ must be partition complete, and must therefore also contain the vertex $y_i^{\varphi(\sigma,i)}$. But $y_i^{\varphi(\sigma,i)}\notin \sigma$ by definition, and so it must be in $F$. The result follows from Corollary \ref{Corollary: Deformation Retract}. 
		\end{proof}
		
		\begin{prop}\label{Proposition: Partition Complex Skeleton Link}
			Let $a$, $p$ and $m$ be positive integers with $m\leq p+1$, and let $\Delta = \calP(a,p,m)$. Let $\sigma$ be a nonempty partition complete face of $\Delta$ contained entirely inside $Y_p^a$, and set $\beta = |\{y_i^j \in \sigma : j = 1\}|$ and $\gamma = |\{y_i^j \in \sigma : j > 1\}|$. We have a deformation retraction
			$$\lkds \rightsquigarrow \barP(2,p-\beta,m-\beta)\ast \Skel_{a-\gamma-3}([\beta]).$$
		\end{prop}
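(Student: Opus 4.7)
The plan is to apply Lemma \ref{Lemma: Partition Vertices Deformation Retract} to obtain a deformation retract $\lkds \rightsquigarrow \Gamma$ by collapsing, at each index $i$, all partition vertices to the single representative $y_i^{\varphi(\sigma,i)}$, and then to verify directly that $\Gamma$ decomposes as the claimed join. Writing $S = \Supp(\sigma_Y)$ (so $|S| = \beta$) and letting $j_i = \max\{j : y_i^j \in \sigma\}$ for $i \in S$, a preparatory observation following Lemma \ref{Lemma: Partition Complex Description of Facets} is that a partition-complete, totally-separated subset of $V^a_p$ is a face of $\Delta$ precisely when its upper-partition count is at most $a-2$ and its partition-support size is at most $m$. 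From this one deduces which vertices of $V^a_p$ appear in $\Gamma$: $x_i$ for $i \notin S$ always; $y_i^1$ for $i \notin S$ provided $\beta < m$; and $y_i^{j_i+1}$ for $i \in S$ with $j_i < a-1$ provided $\gamma < a - 2$.

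I would then partition $V(\Gamma) = A \sqcup B$ with $A = \{x_i, y_i^1 : i \notin S\}$ and $B = \{y_i^{j_i+1} : i \in S, j_i < a-1\}$, and show that a facet of $\Gamma$ is exactly a union $F_A \sqcup F_B$ of a facet of $\barP(2, p-\beta, m-\beta)$ inside $A$ (after relabeling the $p-\beta+1$ indices $i \notin S$ as $0, \ldots, p-\beta$) and a facet of $\Skel_{a-\gamma-3}([\beta])$ inside $B$. The decomposition is driven by parameterising facets $F$ of $\Delta$ containing $\sigma$ by (i) an extension $T = \Supp(F_Y) \setminus S$ with $|T| \leq m - \beta$, and (ii) a partition $(\ell_i)_{i \in S \cup T}$ of $a + \beta + |T| - 2$ satisfying $\ell_i \geq j_i$ on $S$ and $\ell_i \geq 1$ on $T$; the total slack $\sum_{i \in S}(\ell_i - j_i) + \sum_{i \in T}(\ell_i - 1)$ equals $a - \gamma - 2$, and after applying the identification of Lemma \ref{Lemma: Partition Vertices Deformation Retract} it is precisely the set $I = \{i \in S : \ell_i > j_i\}$ of ``active'' incomplete indices that contributes to $F_B$.

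The main technical obstacle will be showing that maximality in $\Gamma$ corresponds to absorbing as much slack as possible into $I$, so that facets of $\Gamma$ have $|F_B| = \min(\beta, a-\gamma-2)$---matching facets of $\Skel_{a-\gamma-3}([\beta])$ exactly---while $F_A$ remains a facet of $\barP(2, p-\beta, m-\beta)$ parameterised by the choice of $T$ (and including the closure facet $X_{p-\beta}$ when $T = \emptyset$). The edge cases then handle themselves: when $\gamma = a-2$ the slack is zero, no vertex of $B$ belongs to the link, and $\Skel_{-1}([\beta]) = \{\emptyset\}$ collapses the join to $\barP(2,p-\beta,m-\beta)$; when $\beta = m$ no lower partition vertex $y_i^1$ for $i \notin S$ can extend $\sigma$ to a face, matching the degeneration $\barP(2,p-\beta,0) = \langle X_{p-\beta}\rangle$.
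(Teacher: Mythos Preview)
Your plan is correct and closely parallels the paper's argument: both apply Lemma~\ref{Lemma: Partition Vertices Deformation Retract} to pass to $\Gamma$, split the surviving vertices by whether their support lies inside or outside $\Supp(\sigma)$ (your $A,B$ are exactly the vertex sets of the paper's subcomplexes $A',B'$), and identify the two pieces as $\barP(2,p-\beta,m-\beta)$ and $\Skel_{a-\gamma-3}([\beta])$ respectively.

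The one substantive difference is in how the join $\Gamma = A' \ast B'$ is verified. You propose to work at the level of \emph{facets}: parameterise facets of $\Delta$ containing $\sigma$ by $(T,(\ell_i))$, track their images under the retraction, and then argue about maximality to pin down which images are facets of $\Gamma$. This is what forces the ``main technical obstacle'' you flag. The paper instead works at the level of \emph{faces}: for arbitrary $\tau \in A'$ and $\rho \in B'$ it checks directly that $\tau \sqcup \rho \sqcup \sigma$ is separated, partition complete, has $|\Supp((\tau\sqcup\rho\sqcup\sigma)_Y)| \leq m$, and has at most $a-2$ upper partition vertices---hence lies in $\Delta$ by (the face version of) Lemma~\ref{Lemma: Partition Complex Description of Facets}. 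This immediately gives $A' \ast B' \subseteq \Gamma$, and since the reverse inclusion is automatic from the identification of $A'$ and $B'$ as the induced subcomplexes on the two vertex halves, no maximality argument is needed. Your facet parameterisation would in fact have to pass through this same containment check to establish that each candidate $F_A \sqcup F_B$ is genuinely a face of $\Gamma$, so the paper's route is strictly shorter.
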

		\begin{proof}
			We begin by decomposing $\lkds$ into the disjoint union of two subcomplexes $A$ and $B$ defined as
			\begin{align*}
				A &= \{ \tau \in \lkds : \Supp(\tau) \cap \Supp(\sigma) = \emptyset\}\\
				B &= \{ \tau \in \lkds : \Supp(\tau) \subseteq \Supp(\sigma)\}.
			\end{align*}
			In particular note that $|\Supp(\sigma)|=\beta$ because $\sigma$ is partition complete and thus must contain the partition vertex $y^1_i$ for every $i\in \Supp(\sigma)$. Without loss of generality we will assume that $\Supp(\sigma)=\{p-\beta+1,\dots,p\}$.
			
			We apply the deformation retraction $\Phi$ from Lemma \ref{Lemma: Partition Vertices Deformation Retract} on to the complex $\lkds$ to get a new complex $\Gamma$ whose only partition vertices are the vertices $y_0^{\varphi(0,\sigma)}, \dots, y_p^{\varphi(p,\sigma)}$, and denote the images of $A$ and $B$ under $\Phi$ by $A'$ and $B'$ respectively. Note that we have $\Gamma = A'\sqcup B'$.
			
			We aim to show the following.
			\begin{enumerate}
				\item $A' = \barP(2,p-\beta,m-\beta)$.
				\item $B' = \Skel_{a-\gamma-3}(S)$ where $S$ denotes the set $\{y_i^{\varphi(i,\sigma)}: i \in \Supp(\sigma)\}$.
				\item $\Gamma = A' \ast B'$.
			\end{enumerate}
			
			For part (1), note first that by our assumption on $\Supp(\sigma)$, we have $\varphi(i,\sigma)=1$ for each $0\leq i \leq p-\beta$. Thus, the only partition vertices of $A'$ are the vertices $y^1_0,\dots,y^1_{p-\beta}$. Hence the complex $A'$ has vertex set $V^2_{p-\beta}$, so it is equal to the induced subcomplex of $A$ on this vertex set. Suppose $\tau$ is a face of the complex $A$. Because $\sigma \sqcup \tau$ is a face of $\Delta$, we must have $|\Supp(\sigma_Y\sqcup \tau_Y)|\leq m$. We know $|\Supp(\sigma_Y)|=\beta$ so this means $|\Supp(\tau_Y)|\leq m-\beta$. Thus the faces of $A'$ are all the separated subsets $\tau$ of $V^2_{p-\beta}$ such that $0\leq |\Supp(\tau_Y)|\leq m-\beta$ (including the subset $\{x_0,\dots,x_{p-\beta}\}$). These are precisely the faces of $\barP(2,p-\beta,m-\beta)$.
			
			Now we move on to part (2). Because $\sigma$ consists entirely of partition vertices, the faces of $B$ must also consist entirely of partition vertices (otherwise their union with $\sigma$ would not be separated). In particular, this means that every face of $B'$ must be a subset of $S$, and so $B'$ is equal to the induced subcomplex $B|_S$. From Remark \ref{Remark: Partitions of a-2} we know $B$ must contain all subsets of $S$ whose union with the upper partition vertices of $\sigma$ has size up to $a-2$. Thus $B'$ contains every subset of $S$ of size up to $a-\gamma - 2$ (i.e. dimension at most $a-\gamma-3$).
			
			Finally we consider part (3). For any two faces $\tau\in A'$ and $\rho\in B'$, the disjoint union $\tau\sqcup \rho \sqcup \sigma$ is partition complete and separated. From the above discussion we also have that $|\tau_Y|\leq m-\beta$ and $|\rho_Y|\leq a-\gamma - 2$. This means both that $|\tau_Y \sqcup \rho_Y \sqcup \sigma|\leq a+m-2$ and $|\Supp(\tau_Y \sqcup \rho_Y \sqcup \sigma)|\leq m$. We conclude that the disjoint union $\tau\sqcup \rho \sqcup \sigma$ is a face of $\Delta$, which shows that $\tau \sqcup \rho$ is a face of $\Gamma$. The result follows.
		\end{proof}
		
		\begin{ex}\label{Example: Partition Complex D(2,p,m) Link Poset} 
			The complex $\calP(2,p,m)$ contains no upper partition vertices so in this case the Skeleton complex $\Skel_{a-\gamma-2}([\beta])$ as given in the above proposition would be $\Skel_{-1}([\beta])$, which is the irrelevant complex $\{\emptyset\}$. It follows that the links in $\calP(2,p,m)$ are all either partition complexes or closed partition complexes.
			
			The following diagram shows all possible maximal chains in the link poset of $\calP(2,3,2)$. We use $\arrowX{x}$ to denote taking the links of boundary vertices and $\arrowX{y}$ to denote taking the links of (lower) partition vertices.
			\[
			\begin{tikzcd}[row sep=1.5em]
				|[draw=red, line width =.5mm, rectangle]|\calP(2,3,2) \arrow{dr}{y} \arrow{r}{x} & |[draw=red, line width =.5mm, rectangle]|\calP(2,2,2)\arrow{dr}{y}  \arrow{r}{x}  &  \calP(2,1,2)\arrow{dr}{y} \arrow{r}{x}& \calP(2,0,1)\arrow{dr}{y} & \\
				&\barP(2,2,1)\arrow{dr}{y} \arrow{r}{x}& \barP(2,1,1)\arrow{dr}{y} \arrow{r}{x} & |[draw=red, line width =.5mm, rectangle]|\barP(2,0,1)\arrow{dr}{y} \arrow{r}{x}& |[draw=red, line width =.5mm, rectangle]|\barP(2,-1,0)\\
				&& \barP(2,1,0)\arrow{r}{x} & \barP(2,0,0) \arrow{r}{x}& |[draw=red, line width =.5mm, rectangle]|\barP(2,-1,0)\\
				\Hred_2&\Hred_1&\times&\Hred_0&\Hred_{-1}
			\end{tikzcd}
			\]
			The homologies of these complexes can be computed from Corollary \ref{Corollary: Homology of Partition Complexes}. We can see from the diagram that the complex $\calP(2,3,2)$ is PR with degree type $(1,2,1)$.
		\end{ex}

		\subsection{Proving Theorem \ref{Theorem: Partition Complex Degree Type}}\label{Subsection: Proving Partition Complex Theorem}
		We now have all the ingredients we need to prove Theorem \ref{Theorem: Partition Complex Degree Type}. We start by proving the following corollaries to our results about the links of $\Delta$ in the last section.
		
		\begin{cor}\label{Corollary: Partition Complex Homology of Link of sigma_X}
			Let $a$, $p$ and $m$ be integers with $a\geq 2$ and $1\leq m\leq p+1$, and let $\Delta=\calP(a,p,m)$. Let $\sigma$ be a face of $\Delta$ contained entirely inside $X_p$. We have
			$$h(\Delta,\sigma) = \begin{cases}
				\{p-|\sigma|-1\}& \text{ if } 0\leq |\sigma| \leq p - m\\
				\emptyset& \text{ otherwise.}
			\end{cases}$$
		\end{cor}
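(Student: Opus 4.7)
The plan is to combine Corollary \ref{Corollary: Partition Complex Link of sigma_X}, which identifies $\lkds$ as another partition complex, with Corollary \ref{Corollary: Homology of Partition Complexes}, which records the homology of any partition complex. Let $\alpha = |\sigma|$. A preliminary observation is that $\alpha \leq p$: indeed every facet of $\Delta$ has the form $gG_{p,i}^\lambda$ for some $1\leq i \leq m$, and by Lemma \ref{Lemma: Partition Complex Description of Facets} such a facet contains exactly $p+1-i$ boundary vertices. Since $i \geq 1$, no facet contains all of $X_p$, so $\sigma \subsetneqq X_p$ and $\alpha \leq p$.

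Next I would split into the two cases given by Corollary \ref{Corollary: Partition Complex Link of sigma_X}. In the first case, $0 \leq \alpha \leq p-m$, we have $\lkds \cong \calP(a,p-\alpha,m)$, and the parameters satisfy $1 \leq m \leq p-\alpha$. Part (1) of Corollary \ref{Corollary: Homology of Partition Complexes} then applies and yields
\[
h(\Delta,\sigma) \;=\; h(\calP(a,p-\alpha,m)) \;=\; \{(p-\alpha)-1\} \;=\; \{p-|\sigma|-1\},
\]
matching the first branch of the claim.

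In the second case, $p-m < \alpha \leq p$, we have $\lkds \cong \calP(a,p-\alpha,p+1-\alpha)$. Setting $p' = p-\alpha$ and $m' = p+1-\alpha$, we find $p' \geq 0$ and $m' = p'+1$, so the complex falls squarely into the third branch of part (1) of Corollary \ref{Corollary: Homology of Partition Complexes}, giving $h(\lkds) = \emptyset$. This matches the second branch of the claim and completes the argument.

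There is no substantive obstacle here; the corollary is essentially a bookkeeping synthesis of the two preceding results, and the only real care needed is in confirming that the parameter regimes in Corollary \ref{Corollary: Partition Complex Link of sigma_X} correspond exactly to the homological cases listed in Corollary \ref{Corollary: Homology of Partition Complexes}, together with the observation that $\alpha = p+1$ is excluded because $X_p \notin \Delta$.
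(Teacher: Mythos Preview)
Your proof is correct and follows essentially the same approach as the paper: both combine Corollary \ref{Corollary: Partition Complex Link of sigma_X} with Corollary \ref{Corollary: Homology of Partition Complexes} and split into the same two cases on $\alpha$. Your argument is slightly more detailed in that you explicitly justify $\alpha \leq p$ (ensuring the $p' \neq -1$ condition in Corollary \ref{Corollary: Homology of Partition Complexes} is met), which the paper leaves implicit.
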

		\begin{proof}
			We set $\alpha = |\sigma|$. By corollary \ref{Corollary: Partition Complex Link of sigma_X}, $\lkds$ is isomorphic to $\calP(a,p-\alpha,m)$ if $0\leq \alpha \leq p-m$ or $\calP(a,p-\alpha,p+1-\alpha)$ otherwise. By Corollary \ref{Corollary: Homology of Partition Complexes}, the former of these has $\nth[st]{(p-\alpha- 1)}$ homology while the latter is acyclic.
		\end{proof}
		
		\begin{cor}\label{Corollary: Partition Complex Homology of Skeleton Link}
			Let $a$, $p$ and $m$ be integers with $a\geq 2$ and $1\leq m\leq p+1$, and let $\Delta = \calP(a,p,m)$. Let $\sigma$ be a partition complete face of $\Delta$ with $\sigma_Y\neq \emptyset$. We have
			$$h(\Delta,\sigma)=\begin{cases}
				\{a+p - |\sigma| - 2\} & \text{ if } |\sigma_X| \geq p-m + 1 \text{ and } |\sigma_Y| \geq a-1 \\
				\emptyset & \text{ otherwise.}
			\end{cases}$$
		\end{cor}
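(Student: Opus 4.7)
The plan is to reduce the computation to an iterated link: first strip off the boundary-vertex part $\sigma_X$, then apply the deformation retraction of Proposition \ref{Proposition: Partition Complex Skeleton Link} to the remaining partition-vertex part $\sigma_Y$. Concretely, using Lemma \ref{Lemma: links in links} I would write $\lkds=\link_{\lkds[\sigma_X]}(\sigma_Y)$. Since $\sigma_X\subseteq X_p$, Corollary \ref{Corollary: Partition Complex Link of sigma_X} identifies $\lkds[\sigma_X]$ with either $\calP(a,p',m)$ (when $|\sigma_X|\leq p-m$, Case A) or $\calP(a,p',p'+1)$ (when $|\sigma_X|\geq p-m+1$, Case B), where $p'=p-|\sigma_X|$. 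In both cases $\sigma_Y$ is still partition-complete and nonempty inside this smaller partition complex, so Proposition \ref{Proposition: Partition Complex Skeleton Link} applies and yields a deformation retraction of $\lkds$ onto a join of the form $\barP(2,p'-\beta,m'-\beta)\ast\Skel_{a-\gamma-3}([\beta])$, where $\beta=|\Supp(\sigma_Y)|$ and $\gamma=|\sigma_Y|-\beta$ are the numbers of lower and upper partition vertices of $\sigma$ respectively, and $m'\in\{m,p'+1\}$ depending on the case.

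The final step is to read off the homology of this join by combining Corollary \ref{Corollary: Homology of Partition Complexes}, Lemma \ref{Lemma: Homology of Skeleton Complexes} and the K\"unneth formula (Proposition \ref{Proposition: Kunneth Formula}). In Case A, the parameters of $\barP(2,p'-\beta,m-\beta)$ lie in the range $0\leq m-\beta\leq p'-\beta$, so Corollary \ref{Corollary: Homology of Partition Complexes}(2) forces this closed partition complex to be acyclic; by K\"unneth the whole join is acyclic and $h(\Delta,\sigma)=\emptyset$, matching the first inequality in the statement. In Case B, by contrast, the closed partition complex is $\barP(2,p'-\beta,(p'-\beta)+1)$, which sits in the exceptional case $m=p+1$ of Corollary \ref{Corollary: Homology of Partition Complexes} and thus has one-dimensional homology in degree $p'-\beta$. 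The skeleton factor $\Skel_{a-\gamma-3}([\beta])$ is nontrivial (with all homology concentrated in degree $a-\gamma-3$) precisely when $0\leq a-\gamma-2\leq\beta-1$, equivalently $|\sigma_Y|=\beta+\gamma\geq a-1$; otherwise it is acyclic and the join is acyclic again. When both factors are nontrivial, K\"unneth places the single nonzero homology of the join in degree $(p'-\beta)+(a-\gamma-3)+1=a+p'-\beta-\gamma-2=a+p-|\sigma|-2$, giving exactly $h(\Delta,\sigma)=\{a+p-|\sigma|-2\}$.

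The main obstacle I anticipate is bookkeeping the edge cases of the parameters to ensure Proposition \ref{Proposition: Partition Complex Skeleton Link} and Corollary \ref{Corollary: Homology of Partition Complexes} apply as claimed: namely, verifying $0\leq\beta\leq m'$, $p'-\beta\geq -1$, and avoiding the degenerate $p=-1$ branch of the closed partition complex in Case A, together with checking that $\sigma_Y$ really is a partition-complete face of the smaller partition complex produced by Corollary \ref{Corollary: Partition Complex Link of sigma_X}. All of these should follow from the separation and partition-completeness of $\sigma$ together with the bound $|\Supp(\sigma_Y)|\leq m$, but these small compatibility checks are where the argument has to be handled carefully.
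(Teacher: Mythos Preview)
Your proposal is correct and follows essentially the same route as the paper: first peel off $\sigma_X$ via Corollary~\ref{Corollary: Partition Complex Link of sigma_X}, then apply Proposition~\ref{Proposition: Partition Complex Skeleton Link} to $\sigma_Y$ inside the resulting smaller partition complex, and finish by reading off the homology of the join $\barP(2,p'-\beta,m'-\beta)\ast\Skel_{a-\gamma-3}([\beta])$ using Corollary~\ref{Corollary: Homology of Partition Complexes} and the K\"unneth-type Corollary~\ref{Corollary: homology index set of joins}. The only point you leave implicit that the paper also glosses over is that the lower bound $\gamma\leq a-2$ (needed to make your ``equivalently $\beta+\gamma\geq a-1$'' genuinely equivalent) is automatic from Remark~\ref{Remark: Partitions of a-2}, since every facet contains exactly $a-2$ upper partition vertices.
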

		\begin{proof}
			We set $\alpha = |\sigma_X|$, $\beta = |\{y_i^j \in \sigma_Y : j = 1\}|$ and $\gamma = |\{y_i^j \in \sigma_Y : j > 1\}|$. Because $\sigma$ is partition complete and $\sigma_Y\neq \emptyset$, we know $\beta > 0$.
			
			First suppose that $0\leq \alpha \leq p-m$. Corollary \ref{Corollary: Partition Complex Link of sigma_X} tells us that $\lkds[\sigma_X]$ is isomorphic to $\calP(a,p-\alpha,m)$. Therefore, by Proposition \ref{Proposition: Partition Complex Skeleton Link}, there is a deformation retraction
			$$\lkds \rightsquigarrow \barP(2,p-\alpha -\beta,m-\beta)\ast \Skel_{a-\gamma-3}([\beta]).$$
			The complex $\barP(2,p-\alpha -\beta,m-\beta)$ is acyclic by Corollary \ref{Corollary: Homology of Partition Complexes}, and thus so is $\lkds$.
			
			Now suppose that $\alpha \geq p-m+1$. Corollary \ref{Corollary: Partition Complex Link of sigma_X} tells us that $\lkds[\sigma_X]$ is isomorphic to $\calP(a,p-\alpha,p+1 - \alpha)$, and hence by Proposition \ref{Proposition: Partition Complex Skeleton Link}, there is a deformation retraction
			$$\lkds \rightsquigarrow \barP(2,p-\alpha -\beta,p+1-\alpha -\beta)\ast \Skel_{a-\gamma-3}([\beta]).$$
			The complex $\barP(2,p-\alpha -\beta,p+1-\alpha-\beta)$ has only $\nth[th]{(p-\alpha-\beta)}$ homology, by Corollary \ref{Corollary: Homology of Partition Complexes}. Meanwhile the complex $\Skel_{a-\gamma-3}([\beta])$ has only $\nth[rd]{(a-\gamma-3)}$ homology so long as  $a- \gamma - 3 < \beta - 1$ (otherwise it is equal to the full simplex on $[\beta]$, which is acyclic). Thus, $\lkds$ has nontrivial homology if and only if $\beta + \gamma \geq a - 1$, and in this case we have (by Corollary \ref{Corollary: homology index set of joins}) $h(\Delta,\sigma) = \{p - \alpha - \beta\} + \{a - \gamma - 3\} + \{1\} = \{a + p -|\sigma| - 2\}$.
		\end{proof}
		
		With these two corollaries in our toolkit, we now proceed to the proof of Theorem \ref{Theorem: Partition Complex Degree Type}.
		\begin{proof}[Proof of Theorem \ref{Theorem: Partition Complex Degree Type}]
			Let $a$, $p$ and $m$ be positive integers with $m\leq p$ and set $\Delta=\calP(a,p,m)$. Fix a face $\sigma \in \Delta$.
			
			By Lemma \ref{Lemma: Partition incomplete => acyclic}, $\lkds$ is acyclic unless $\sigma$ is partition complete. In the case where $\sigma$ is partition complete, Corollaries \ref{Corollary: Partition Complex Homology of Link of sigma_X} and \ref{Corollary: Partition Complex Homology of Skeleton Link} tell us that
			\begin{equation*}\label{Equation: Homology Index Sets Partition Complexes}
				h(\Delta,\sigma)=\begin{cases}
					\{p-|\sigma| - 1\} & \text{ if } 0\leq |\sigma_X|\leq p-m \text{ and } |\sigma_Y| = 0\\
					\{a+p - |\sigma| - 2\} & \text{ if } |\sigma_X| \geq p-m + 1 \text{ and } |\sigma_Y| \geq a-1 \\
					\emptyset & \text{ otherwise.}
				\end{cases}
			\end{equation*}
			
			In particular, this means that for any integer $k$, we have
			\begin{equation*}\label{Equation: Complete Homology Index Sets Partition Complexes}\hh(\Delta,k)=\begin{cases}
					\{p- k - 1\} & \text{ if } 0\leq k\leq p-m\\
					\{a+p - k - 2\} & \text{ if } a + p - m \leq k \leq a+ p - 1 \\
					\emptyset & \text{ otherwise.}
				\end{cases}
			\end{equation*}
			This proves $\Delta$ is PR with degree type $(\overbrace{1,\dots,1\underbrace{a,1,\dots,1}_{m}}^{p})$, by Proposition \ref{Proposition: Alternate PR Definition With Degree Type}.
		\end{proof}
		
	
	
	\chapter{Pure Resolutions of Any Degree Type}\label{Chapter: Generating Degree Types}
	This chapter is devoted to the proof of Theorem \ref{Theorem: PR Complexes of Any Degree Type}. Our method for this proof is focussed around finding operations on simplicial complexes which preserve the PR property while altering the degree types of PR complexes in specified ways. This reduces the problem of generating PR complexes of an arbitrary degree type $\bd$ to the problem of generating PR complexes with degree types which can be altered to $\bd$ under these operations.
	
	We begin by presenting two examples of such operations to illustrate the general principle. We then introduce an infinite family of PR-preserving operations $\{\phi_i: i \in \ZZ^+\}$ which allow us to generate PR complexes of any given degree type.
	
	\section{Operations on Degree Types}\label{Subsection: Operations on Degree Types}
	In this section we present two examples of operations on simplicial complexes which preserve the PR property while altering degree types. We begin with the \textit{scalar multiple operation} $f^\lambda$ as defined below.
	
	\begin{defn}\label{Definition: f-lambda}
		Let $\Delta$ be a complex on vertex set $V$ with facets $F_1,\dots,F_m$, and let $\lambda$ be a positive integer.
		
		We define the vertex set $V^\lambda$ to be the set   $\{u_x^1, \dots, u_x^{\lambda} : x \in V \}$. There is a natural map
		\begin{align*}
			f^\lambda : \calP(V) &\rightarrow \calP(V^\lambda)\\
			S& \mapsto \{u_x^1,\dots,u_x^\lambda : x \in S\}. 
		\end{align*}
		We define $f^\lambda(\Delta)$ to be the complex on $V^\lambda$ with facets $f^\lambda (F_1),\dots f^\lambda(F_m)$.
		
		
		
	\end{defn}
	
	We claim that if $\Delta$ is PR with degree type $\bd$, then $f^\lambda(\Delta)$ is also PR, with degree type $\lambda \bd$. This is a consequence of the following three-part lemma.
	\begin{lem}\label{Lemma: f-lambda 3-part lemma}
		Let $\Delta$ be a simplicial complex on vertex set $V$, and define $\tDelta = f^\lambda (\Delta)$ on vertex set $V^\lambda$.
		\begin{enumerate}
			\item $\tDelta$ deformation retracts on to a complex isomorphic to $\Delta$.
			\item For any face $\sigma\in \Delta$ we have $\lkds[f^\lambda (\sigma)][\tDelta]=f^\lambda(\lkds)$.
			\item For any face $\tau \in \tDelta$ which is not of the form $f^\lambda (\sigma)$ for some $\sigma \in \Delta$, the complex $\lkds[\tau][\tDelta]$ is acyclic.
		\end{enumerate}
	\end{lem}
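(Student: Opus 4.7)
The plan is to treat the three parts more or less independently, since each one reduces quickly to a routine application of either Corollary \ref{Corollary: Deformation Retract} (for the acyclicity/retraction statements) or the explicit description of facets of $\tDelta$ as $\{f^\lambda(F_1),\dots,f^\lambda(F_m)\}$ (for the link identification in part (2)). The central observation driving all three parts is the following: because the facets of $\tDelta$ are the sets $f^\lambda(F_i)$, a vertex $u_x^j$ lies in such a facet if and only if \emph{all} $\lambda$ copies $u_x^1,\dots,u_x^\lambda$ lie in it. Equivalently, every facet of $\tDelta$ containing any copy $u_x^j$ of $x$ automatically contains every other copy $u_x^k$ of $x$. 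I will establish this observation first, and then all three parts follow.

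For part (1), I will iterate Corollary \ref{Corollary: Deformation Retract}: for each $x\in V$ and each $2\le j \le \lambda$, the vertex $u_x^j$ is contained in a facet of $\tDelta$, and every facet of $\tDelta$ containing $u_x^j$ also contains $u_x^1$. Hence the map $u_x^j\mapsto u_x^1$ yields a deformation retraction of $\tDelta$ onto its induced subcomplex $\Gamma$ on vertex set $\{u_x^1:x\in V\}$. The facets of $\Gamma$ are precisely the sets $\{u_x^1:x\in F_i\}$ for $i=1,\dots,m$, and the bijection $x\mapsto u_x^1$ gives an isomorphism $\Delta\cong \Gamma$.

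For part (2), I will show the set equality $\link_{\tDelta} f^\lambda(\sigma)=f^\lambda(\link_\Delta \sigma)$ by unwinding definitions. A facet of $\link_{\tDelta} f^\lambda(\sigma)$ has the form $f^\lambda(F)-f^\lambda(\sigma)$ for some facet $F$ of $\Delta$ containing $\sigma$; since $f^\lambda$ is injective on subsets of $V$ and preserves disjointness in the sense that $f^\lambda(F)-f^\lambda(\sigma)=f^\lambda(F-\sigma)$, this facet equals $f^\lambda(F-\sigma)$. As $F$ ranges over facets of $\Delta$ containing $\sigma$, the set $F-\sigma$ ranges over facets of $\link_\Delta \sigma$, giving the required equality.

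For part (3), I will use the acyclicity criterion from the observation above. If $\tau\in\tDelta$ is not of the form $f^\lambda(\sigma)$, then there exist $x\in V$ and indices $j\ne k$ with $u_x^j\in \tau$ but $u_x^k\notin \tau$. By the key observation, every facet of $\tDelta$ containing $\tau$ (and hence $u_x^j$) also contains $u_x^k$, so $\link_{\tDelta}\tau$ is a cone over $u_x^k$ and is therefore acyclic by Corollary \ref{Corollary: Homology of Cone}. I do not expect any real obstacle: the only mild subtlety is making sure in part (2) that we work at the level of facets (since the statement is about complexes, not just face sets), but this is immediate once the facets are matched up as above.
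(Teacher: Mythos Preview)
Your proposal is correct and follows essentially the same approach as the paper: both arguments hinge on the observation that every facet of $\tDelta$ containing some $u_x^j$ contains all copies $u_x^1,\dots,u_x^\lambda$, then use Corollary~\ref{Corollary: Deformation Retract} for part~(1), the facet-level bijection $f^\lambda(F)-f^\lambda(\sigma)=f^\lambda(F-\sigma)$ for part~(2), and the cone-over-$u_x^k$ argument for part~(3).
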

	\begin{proof}
		\begin{enumerate}
			\item We identify $\Delta$ with the induced subcomplex of $f^\lambda (\Delta)$ on vertex set $U=\{u_x^1 \in \tV : x \in V\}\subset V^\lambda$. For every $x\in V$ and every $2\leq i\leq \lambda$, the vertex $u^i_x\in \tV$ is contained in exactly the same facets of $f^\lambda(\Delta)$ as the vertex $u_x^1$. Thus Corollary \ref{Corollary: Deformation Retract} gives us a deformation retraction $\tDelta\rightsquigarrow \tDelta|_U\cong \Delta$.
			\item If $F_{i_1},\dots,F_{i_k}$ are the facets of $\Delta$ containing $\sigma$, then the facets of $\tDelta$ containing $f^\lambda(\sigma)$ are $f^\lambda(F_{i_1})$,$\dots$,$f^\lambda(F_{i_k})$. The result follows.
			\item Because $\tau$ is not of the form $f^\lambda (\sigma)$ for some $\sigma \in \Delta$, there must be some $x\in V$ and some $1\leq i\neq j \leq \lambda$ such that $u_x^i \in \tau$ but $u_x^j\notin \tau$. This means that every facet of $f^\lambda (\Delta)$ containing $\tau$ must also contain $u_x^j$, and hence $\lkds[\tau][f^\lambda(\Delta)]$ is a cone over $u_x^j$.
		\end{enumerate}
	\end{proof}
	
	\begin{cor}\label{Corollary: f-lambda preserves PR}
		Suppose $\Delta$ is a PR complex on vertex set $V$ with degree type $\bd=(d_p,\dots,d_1)$. Let $\lambda$ be a positive integer, and let the operation $f^\lambda$ be as in Definition \ref{Definition: f-lambda}. The complex $f^\lambda (\Delta)$ is PR with degree type $\lambda \bd$.
	\end{cor}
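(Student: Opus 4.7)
The plan is to deduce this corollary directly from the three-part Lemma \ref{Lemma: f-lambda 3-part lemma}, by showing that the complete homology index sets of $\tDelta := f^\lambda(\Delta)$ are exactly those of $\Delta$ rescaled by $\lambda$, and then invoking the characterisation of PR degree types in Proposition \ref{Proposition: Alternate PR Definition With Degree Type}.

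First I would show that for every $\sigma \in \Delta$, the link $\link_{\tDelta} f^\lambda(\sigma) = f^\lambda(\lkds)$ (part (2) of the lemma) itself deformation retracts onto a copy of $\lkds$ by part (1) applied to $\lkds$ in place of $\Delta$. By Proposition \ref{Proposition: Homotopy Equiv => Homology Equiv}, this gives $h(\tDelta, f^\lambda(\sigma)) = h(\Delta, \sigma)$. Combined with part (3) of the lemma, which says every link in $\tDelta$ not of the form $\lkds[f^\lambda(\sigma)][\tDelta]$ is acyclic, we conclude that the faces of $\tDelta$ with nontrivial link homology are precisely the sets $f^\lambda(\sigma)$ for $\sigma \in \Delta$ with $h(\Delta,\sigma)\neq \emptyset$, and the corresponding homology degrees match.

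Let $s$ denote the offset of $\Delta$. Because $|f^\lambda(\sigma)| = \lambda |\sigma|$, the previous paragraph translates into the statement
\begin{equation*}
\hh(\tDelta, k) = \begin{cases} \hh(\Delta, k/\lambda) & \text{if } \lambda \mid k,\\ \emptyset & \text{otherwise.}\end{cases}
\end{equation*}
By Proposition \ref{Proposition: Alternate PR Definition With Degree Type} applied to $\Delta$, the right-hand side equals $\{r-1\}$ precisely when $k/\lambda = s + \sum_{j=r+1}^p d_j$ for some $0\leq r\leq p$, i.e.\ when $k = \lambda s + \sum_{j=r+1}^p \lambda d_j$, and is empty otherwise. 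Reading this in the reverse direction of Proposition \ref{Proposition: Alternate PR Definition With Degree Type}, this is exactly the condition that $\tDelta$ is PR with offset $\lambda s$ and degree type $(\lambda d_p, \dots, \lambda d_1) = \lambda \bd$, as required.

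There is no real obstacle: Lemma \ref{Lemma: f-lambda 3-part lemma} has already done the work of identifying which links of $\tDelta$ carry homology and computing them. The only mildly delicate point is to notice that part (1) of that lemma must be applied not only to $\Delta$ but to each of its links, which is legitimate because $f^\lambda$ commutes with taking links (part (2)); after this observation, the proof reduces to transporting the homological data of $\Delta$ through multiplication of face sizes by $\lambda$.
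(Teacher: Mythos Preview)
Your proof is correct and follows essentially the same approach as the paper: use parts (2) and (3) of Lemma \ref{Lemma: f-lambda 3-part lemma} to identify which faces of $\tDelta$ have link homology and what that homology is, observe that $|f^\lambda(\sigma)| = \lambda|\sigma|$ to rescale the complete homology index sets, and conclude via Proposition \ref{Proposition: Alternate PR Definition With Degree Type}. The paper's version is slightly terser and does not explicitly track the offset, but the argument is the same.
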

	\begin{proof}
		Let $\tau$ be a face of $\tDelta = f^\lambda(\Delta)$. By part (3) of Lemma \ref{Lemma: f-lambda 3-part lemma}, the complex $\lkds[\tau][\tDelta]$ is acyclic unless we have $\tau=f^\lambda(\sigma)$ for some face $\sigma\in \Delta$. In this case, we have $|\tau|=\lambda |\sigma|$, and the complex $\lkds[\tau][\tDelta]$ is equal to $f^\lambda(\lkds)$ (by part (2)), which deformation retracts onto a complex isomorphic to $\lkds$ (by part (1)). Thus for any integer $m\geq 0$, we have $\hh(f^\lambda(\Delta),\lambda m) = \hh(\Delta,m)$, and if $\lambda$ does not divide $m$ we have $\hh(f^\lambda(\Delta),m)=\emptyset$. The result follows from Proposition \ref{Proposition: Alternate PR Definition With Degree Type}.
	\end{proof}
	\begin{rem}
		It should be clear from this proof that the operation $f^\lambda$ does not affect the \textit{values} in each column of the Betti diagram $\beta(I_\Delta)^*$, only the shifts at which those values occur. In other words, for any integer $i$ and any complex $\Delta$ we have $\beta_i(I^*_{f^\lambda(\Delta)})=\beta_i(I_\Delta^*)$.
	\end{rem}

	The operation $f^\lambda$ allows us to take a degree type and scale it up by a factor of $\lambda$. This reduces the problem of finding PR complexes of arbitrary degree types to the problem of finding PR complexes of degree type $(d_p,\dots,d_1)$ where $\hcf (d_p,\dots,d_1) = 1$.
	
	We now consider a second PR-preserving operation, the \textit{free vertex operation} $f^{\free}$, as defined below.
	
	\begin{defn}\label{Definition: f-free}
		Let $\Delta$ be a complex on vertex set $V$ with facets $F_1,\dots,F_m$.
		
		We define the vertex set $\tV$ to be the set $V\cup \{u_{F_1},\dots,u_{F_m}\}$, and we define $f^{\free }(\Delta)$ to be the complex on $\tV$ with facets $F_1\cup\{u_{F_1}\},\dots, F_m \cup \{u_{F_m}\}$.
	\end{defn}
	
	The operation $f^{\free }$ acts on a complex by adding an additional free vertex $u_F$ to each of its facets $u_F$. We claim that if $\Delta$ is PR with degree type $(d_p,\dots,d_1)$, then $f^{\free}(\Delta)$ is also PR, with degree type $(d_p,\dots,d_2,d_1+1)$. Once again, our proof requires three key elements, as laid out in the following lemma.
	\begin{lem}\label{Lemma: f-free 3-part lemma}
		Let $\Delta$ be a simplicial complex on vertex set $V$, and define $\tDelta = f^{\free } (\Delta)$ on vertex set $\tV$.
		\begin{enumerate}
			\item $\tDelta$ deformation retracts on to $\Delta$, unless we have $\Delta=\{\emptyset\}$.
			\item For any face $\sigma\in \Delta$ we have an ismorphism $\lkds[\sigma][\tDelta]\cong f^{\free }(\lkds)$.
			\item For any face $\tau \in \tDelta$ containing a free vertex $u_F$ for some facet $F$ of $\Delta$, either $\tau$ is a facet of $\tDelta$ or $\lkds[\tau][\tDelta]$ is acyclic.
		\end{enumerate}
	\end{lem}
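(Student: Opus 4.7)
The plan is to verify each of the three claims directly from the construction, using the fact that the vertices $u_{F_1},\dots,u_{F_m}$ are all distinct and that each $u_{F_i}$ is contained in exactly one facet of $\tDelta$, namely $F_i \cup \{u_{F_i}\}$ (i.e.\ each $u_{F_i}$ is a free vertex in $\tDelta$).

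For part (1), I would fix any facet $F$ of $\Delta$ and pick some vertex $x \in F$ (this is where the hypothesis $\Delta \neq \{\emptyset\}$ matters: we need $F \neq \emptyset$). Since $u_F$ is only contained in the facet $F \cup \{u_F\}$, every facet of $\tDelta$ containing $u_F$ also contains $x$. Corollary \ref{Corollary: Deformation Retract} then yields a deformation retraction $\tDelta \rightsquigarrow \tDelta - \{u_F\}$ via the identification $u_F \mapsto x$. The key observation is that deleting $u_F$ does not affect the freeness of the remaining vertices $u_{F'}$ (they still lie in a unique facet, since those facets are unchanged apart from losing $u_F$, which was not in them). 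Iterating this procedure over all facets of $\Delta$ produces a deformation retraction $\tDelta \rightsquigarrow \Delta$.

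For part (2), I would enumerate the facets of $\tDelta$ containing $\sigma$. Since $\sigma \in \Delta$ and no $u_{F_i}$ belongs to $\sigma$, these are precisely the sets $F_i \cup \{u_{F_i}\}$ where $F_i$ ranges over the facets of $\Delta$ containing $\sigma$. Hence $\link_{\tDelta}\sigma$ has facets $(F_i - \sigma) \cup \{u_{F_i}\}$, while $f^{\free}(\link_\Delta \sigma)$ has facets $(F_i - \sigma) \cup \{u_{F_i - \sigma}\}$. The bijection $u_{F_i} \leftrightarrow u_{F_i - \sigma}$ (together with the identity on the other vertices) is the required isomorphism.

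For part (3), if $\tau$ contains some free vertex $u_F$, then every facet of $\tDelta$ containing $\tau$ must contain $u_F$; but the unique such facet is $F \cup \{u_F\}$. Hence either $\tau = F \cup \{u_F\}$, in which case $\tau$ is itself a facet of $\tDelta$, or $\tau \subsetneq F \cup \{u_F\}$, in which case $\link_{\tDelta}\tau$ is the full simplex on the nonempty set $(F \cup \{u_F\}) - \tau$, hence acyclic. None of the three parts requires any serious homological input; the only care needed is in part (1), to justify that the iterated use of Corollary \ref{Corollary: Deformation Retract} is legitimate, which reduces to the elementary remark that deleting a free vertex preserves the freeness of all other free vertices in the complex.
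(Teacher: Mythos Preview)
Your proposal is correct and follows essentially the same approach as the paper: each part is handled by exploiting the fact that $u_F$ lies in the unique facet $F\cup\{u_F\}$, applying Corollary~\ref{Corollary: Deformation Retract} iteratively for part~(1), identifying the facets of the link explicitly for part~(2), and observing the link is a full simplex for part~(3). You are slightly more explicit than the paper in spelling out the isomorphism in part~(2) and in justifying why the iteration in part~(1) is legitimate, but these are minor expository differences rather than a different route.
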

	\begin{proof}
		\begin{enumerate}
			\item Each vertex $u_F\in \tV - V$ is contained in only a single facet of $\tDelta$. If we have $\Delta\neq \{\emptyset\}$, then we know this facet of $\tDelta$ must contain at least one other vertex aside from $u_F$. Thus Corollary \ref{Corollary: Deformation Retract} allows us to delete all of the vertex $u_F$ from $\tDelta$. Deleting all such vertices gives us a deformation retraction $\tDelta\rightsquigarrow \Delta$.
			\item If $F_{i_1},\dots,F_{i_k}$ are the facets of $\Delta$ containing $\sigma$, then the facets of $\tDelta$ containing $\sigma$ are $F_{i_1}\cup \{u_{F_{i_1}}\}\dots,F_{i_k}\cup \{u_{F_{i_k}}\}$. The result follows.
			\item The only facet of $\tDelta$ containing the vertex $u_F$ is $F\cup \{u_F\}$, and hence $\tau$ must also be contained in this facet. Thus the complex $\lkds[\tau][\tDelta]$ is equal to $\langle F\cup \{u_f\} - \tau \rangle$, which is acyclic unless $\tau$ is equal to $F\cup \{u_F\}$ (in which case it is the irrelevant complex $\{\emptyset\}$).
		\end{enumerate}
	\end{proof}
	
	\begin{cor}\label{Corollary: f-free preserves PR}
		Suppose $\Delta$ is a PR complex on vertex set $V$ with degree type $\bd=(d_p,\dots,d_1)$. Let $\lambda$ be a positive integer, and let the operation $f^{\free}$ be as in Definition \ref{Definition: f-free}. The complex $f^{\free }(\Delta)$ is PR with degree type $(d_p,\dots,d_2,d_1+1)$.
	\end{cor}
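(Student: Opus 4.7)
My plan is to mirror the proof of Corollary \ref{Corollary: f-lambda preserves PR}: I would compute the complete homology index sets $\hh(\tDelta, m)$ of $\tDelta = f^{\free}(\Delta)$ using the three parts of Lemma \ref{Lemma: f-free 3-part lemma}, then read off the degree type via Proposition \ref{Proposition: Alternate PR Definition With Degree Type}. To this end, let $s_p < s_{p-1} < \dots < s_0$ be the sizes of faces of $\Delta$ whose links have nontrivial homology, as in Definition \ref{Definition: PR Complex Degree Types}, so that $\hh(\Delta, s_i) = \{i-1\}$ for $0\leq i \leq p$ and $\hh(\Delta,m) = \emptyset$ otherwise. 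Here $s_0$ is the size of a facet of $\Delta$, and the $\nth[st]{(-1)}$ homology at that size comes from the links of facets $F$, which are equal to $\{\emptyset\}$.

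Next I would case-split on a face $\tau \in \tDelta$. If $\tau \in \Delta$ (contains no free vertex), part (2) gives $\lkds[\tau][\tDelta]\cong f^{\free}(\lkds[\tau][\Delta])$. When $\tau$ is not a facet of $\Delta$, part (1) tells us this deformation retracts onto $\lkds[\tau][\Delta]$, so the homology is unchanged. When $\tau$ \emph{is} a facet $F$ of $\Delta$, we instead get $f^{\free}(\{\emptyset\}) = \langle \{u_F\}\rangle$, a single vertex, hence acyclic. On the other hand, if $\tau$ contains some free vertex $u_F$, part (3) forces $\lkds[\tau][\tDelta]$ to be acyclic unless $\tau$ is the facet $F \cup \{u_F\}$ of $\tDelta$, in which case the link is $\{\emptyset\}$ with $\nth[st]{(-1)}$ homology.

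Combining these observations, the sizes in $\tDelta$ at which homology appears are exactly $s_p < s_{p-1} < \dots < s_1 < s_0 + 1$, with homology degrees $p-1, p-2, \dots, 0, -1$ respectively; the value $s_0$ no longer appears (facets of $\Delta$ have acyclic links, and any smaller proper face containing some $u_F$ is non-maximal, hence acyclic by part (3)). In particular, $\tDelta$ satisfies condition (2) of Proposition \ref{Proposition: Alternate PR Definition With Degree Type}, so it is PR with offset $s_p$, and the degree type differences are $s_{i-1} - s_i = d_i$ for $i \geq 2$, together with the shifted gap $(s_0 + 1) - s_1 = d_1 + 1$, giving degree type $(d_p, \dots, d_2, d_1 + 1)$ as claimed.

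No single step looks like a real obstacle — all three parts of Lemma \ref{Lemma: f-free 3-part lemma} are already in hand, and the argument is essentially bookkeeping — but the one subtle point is the disappearance of the size-$s_0$ contribution and its reappearance at size $s_0 + 1$, which is the whole mechanism by which the operation increases $d_1$ by one without disturbing the higher $d_i$.
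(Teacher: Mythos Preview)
Your proposal is correct and follows essentially the same approach as the paper: case-split on whether $\tau$ lies in $\Delta$ or contains a free vertex, apply the three parts of Lemma \ref{Lemma: f-free 3-part lemma} to compute all homology index sets, and conclude via Proposition \ref{Proposition: Alternate PR Definition With Degree Type}. The paper's version is slightly terser (it summarizes the case analysis in a single displayed formula for $h(\tDelta,\sigma)$ rather than tracking the $s_i$ explicitly), but the logical content and the identification of the key shift from $s_0$ to $s_0+1$ are identical.
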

	\begin{proof}
		Let $\sigma$ be a face of $\tDelta = f^{\free }(\Delta)$. By part (3) of Lemma \ref{Lemma: f-free 3-part lemma}, the complex $\lkds[\sigma][\tDelta]$ is acyclic unless $\sigma$ is either a face of $\Delta$ or a facet of $\tDelta$. In the former case, the complex $\lkds[\tau][\tDelta]$ is isomorphic to $f^{\free }(\lkds)$ by part (2). If $\sigma$ is a facet of $\Delta$, then $\lkds=\{\emptyset\}$ and $f^{\free }(\lkds)=\{u_\sigma\}$, which is acyclic. Otherwise, $f^{\free }(\lkds)$ deformation retracts onto $\lkds$ by part (1). Thus for any simplex $\sigma\in \tDelta$ we have $$h(\tDelta, \sigma)=\begin{cases}
			h(\Delta,\sigma)  & \text{ if } \sigma\in \Delta\text{ but } \sigma \text{ is not a facet of } \Delta\\
			\{-1\}& \text{ if } \sigma \text{ is a facet of } \tDelta\\
			\emptyset &\text{ otherwise.}
		\end{cases}$$ and in particular, the size of the facets of $\tDelta$ is one greater than the size of the facets of $\Delta$. The result follows from Proposition \ref{Proposition: Alternate PR Definition With Degree Type}.
	\end{proof}
	\begin{rem}
		Once again, this proof also demonstrates that the operation $f^{\free }$ does not affect the total Betti numbers of the ideal $I_\Delta^*$. That is, for any integer $i$ and any complex $\Delta$ we have $\beta_i(I^*_{f^{\free }(\Delta)})=\beta_i(I^*_\Delta)$.
	\end{rem}
	
	The operation $f^{\free }$ allows us to take a degree type and add $1$ to its final value. Repeated application of this operation allows us to add any positive integer to the final value of a degree type. This reduces the problem of finding PR complexes of arbitrary degree types to the problem of finding PR complexes of degree type $(d_p,\dots,d_2,1)$.
	
	The $\phi_i$ operations we are about to introduce in the next section have a similar effect on degree types to $f^{\free }$ (in fact, as we will discuss, the operation $\phi_1$ acts identically to $f^{\free }$ on PR complexes).
	
	Before we proceed to the next section, we will look briefly at how the families of complexes we studied in Chapter \ref{Chapter: Families of PR Complexes} behave under the operations $f^\lambda$ and $f^{\free }$.
	
	\underline{\textbf{The operation $f^\lambda$:}}
	\begin{enumerate}
		\item \textbf{$f^\lambda$ preserves group symmetries.}
		
		Specifically, if $\Delta$ is symmetric under the action of a group $G$, then we can extend this group action to $f^\lambda(\Delta)$ by setting $g(u_x^i)=u_{gx}^i$.
		
		\item \textbf{The family of disjoint simplices is closed under $f^\lambda$.}
		
		Specifically, if $\Delta$ is the complex consisting of two disjoint simplices of size $a$, then $f^\lambda(\Delta)$ is the complex consisting of two disjoint simplices of size $\lambda a$.
		
		\item \textbf{The family of intersection complexes is closed under $f^\lambda$.}
		
		Specifically, we have $f^\lambda(\calI(\bm))=\calI(\lambda \bm)$.
		
		\item \textbf{The family of cycle complexes is closed under $f^\lambda$.}
		
		Specifically, we have $f^\lambda(\fC_{a,b})=\fC_{\lambda a, \lambda b}$.
		
		\item\textbf{ The family of partition complexes is \textit{not} closed under $f^\lambda$ (for $\lambda > 1$).}
		
		We can see this because for any partition complex $\calP(a,p,m)$ the complex $f^\lambda(\calP(a,p,m))$ has a degree type of form $(\overbrace{\lambda,\dots,\lambda\underbrace{\lambda a,\lambda,\dots,\lambda}_{m}}^{p})$, which cannot be degree type of a partition complex except in the case where $\lambda = 1$.
	\end{enumerate}
	
	\underline{\textbf{The operation $f^{\free }$:}}
	\begin{enumerate}
		\item \textbf{$f^{\free }$ preserves group symmetries.}
		
		Specifically, if $\Delta$ is symmetric under the action of a group $G$, then we can extend this group action to $f^{\free }(\Delta)$ by setting $g(u_F)=u_{gF}$.
		
		\item \textbf{The family of disjoint simplices is closed under $f^{\free }$.}
		
		Specifically, if $\Delta$ is the complex consisting of two disjoint simplices of size $a$, then $f^{\free }(\Delta)$ is the complex consisting of two disjoint simplices of size $a+1$.
		
		\item \textbf{The family of intersection complexes is closed under $f^{\free }$.}
		
		Specifically, we have $f^{\free }(\calI(m_1,\dots,m_n))=\calI(m_1+1,m_2,\dots,m_n)$.
		
		\item \textbf{The family of cycle complexes is \textit{not} closed under $f^{\free }$.}
		
		If $a\leq b$ (i.e. in the case where $\fC_{a,b}$ is also an intersection complex) we have $f^{\free }(\fC_{a,b})=\fC_{a,b+1}$, but this is \textit{not} the case when $a>b$. For example, the complex $\fC_{2,1}$ which we saw in Example \ref{Example: Cycle Complexes} has five facets, while the complex $\fC_{2,2}$ has only three facets. Thus $\fC_{2,2}$ cannot be equal to $f^{\free }(\fC_{2,1})$.
		
		\item \textbf{The family of partition complexes is \textit{not} closed under $f^{\free }$.}
		
		In the case where $m=1$ (i.e. when $\calP(a,p,m)$ is also an intersection complex) we have $f^{\free }(\calP(a,p,1))=\calP(a+1,p,1)$, but this is not the case for $m>1$. We can see this from the fact that in all other cases, $f^{\free }(\calP(a,p,m))$ has degree type  $(\overbrace{1,\dots,1\underbrace{a,1,\dots,1,2}_{m}}^{p})$, which cannot be the degree type of a partition complex.
	\end{enumerate}
	
	
	\section{The $\phi_i$ Operations}
	In this section we introduce a family of  PR-preserving operations which we can use to prove Theorem \ref{Theorem: PR Complexes of Any Degree Type}.
	
	Specifically, we will construct a family of operations on simplicial complexes $\{\phi_i : i\in \ZZ^+\}$ with the property that for each $i\in \ZZ^+$, and any PR complex $\Delta$ with degree type $(d_p,\dots,d_i,\underbrace{1,\dots,1}_{i-1})$ for some $p\geq i$, the complex $\phi_i(\Delta)$ is PR with degree type $(d_p,\dots,d_{i-1},d_i+1,\underbrace{1,\dots,1}_{i-1})$. If $\Delta$ is PR with a degree type of a different form, then $\phi_i(\Delta)$ will not be PR.
	
	To see why the existence of such a family is sufficient to prove Theorem \ref{Theorem: PR Complexes of Any Degree Type}, suppose we wish to construct a PR complex of degree type $\bd=(d_p,\dots,d_1)$. We can do so by taking a PR complex $\Delta$ of degree type $(\underbrace{1,\dots,1}_{p})$ (such as the boundary of the $p$-simplex in Example \ref{Example: PR Simplex}), and applying each of the operations $\phi_i$ to $\Delta$ a total of $(d_i-1)$ times.
	
	In other words, if $\Delta$ is the boundary of the $p$-simplex, then the complex $\phi_1^{d_1-1}\dots \phi_p^{d_p-1} (\Delta)$ is a PR complex of degree type $(d_p,\dots,d_1)$.
	
	We define the operation $\phi_i$ below.
	
	\begin{defn}\label{Definition: Phi_i}
	Let $\Delta$ be a simplicial complex on vertex set $V$, and fix a positive integer $i$. The complex $\phi_i(\Delta)$ is defined as follows.
	\begin{enumerate}
		\item $S^i_\Delta:=\{u_{\sigma}: \sigma \in \Delta, |\sigma|\geq \dim \Delta + 2 -i\}$.
		\item $\phi_i(\Delta)$ is the complex on vertex set $V\sqcup S^i_\Delta$ obtained by adding to $\Delta$ all those faces $\sigma \sqcup \{u_{\tau_1},\dots u_{\tau_r}\}$ for each sequence $\sigma \subseteq \tau_1 \subset \dots \subset \tau_r$ in $\Delta$ for which the vertices $u_{\tau_1},\dots, u_{\tau_r}$ are in $S^i_{\Delta}$.
	\end{enumerate}
	\end{defn}
	\begin{rem}\label{Remark: Phi_i = Phi_j}
	If $m= \dim \Delta + 2$ then for any $i>m$ we have $\phi_i(\Delta)=\phi_m(\Delta)$. Note also that $m$ is the smallest integer for which the set $S^m_\Delta$ contains the vertex $u_\emptyset$.
	\end{rem}
	
	\begin{ex}
	The complex $\phi_1(\Delta)$ is obtained from $\Delta$ by adding the faces $F\cup \{u_F\}$ for each facet $F$ in $\Delta$ of maximal dimension. In other words, $\phi_1$ acts on a complex by adding an additional free vertex $u_F$ to each facet $F$ of maximal dimension. In particular, if $\Delta$ is PR then all of its facets have maximal dimension by Lemma \ref{Lemma: PR complexes are pure}, so $\phi_1$ acts on $\Delta$ identically to the free vertex operation $f^{\free }$ from Definition \ref{Definition: f-free}.
	\end{ex}
	
	\begin{ex}\label{Example: phi_i(Delta)}
	Let $\Delta$ be the boundary of the $2$-simplex on vertex set $\{x,y,z\}$:
	\begin{center}
		\begin{tikzpicture}[scale = 1]
			\tikzstyle{point}=[circle,thick,draw=black,fill=black,inner sep=0pt,minimum width=2pt,minimum height=2pt]
			\node (a)[point, label=left:$x$] at (0,0) {};
			\node (b)[point, label=right:$y$] at (2,0) {};
			\node (c)[point, label=above:$z$] at (1,1.7) {};
			
			\draw (a.center) -- (b.center) -- (c.center) -- cycle;
		\end{tikzpicture}
	\end{center}
	This is PR with degree type $(1,1)$.
	
	The complex $\phi_1(\Delta)$ is
	\begin{center}
		\begin{tikzpicture}[scale = 0.8]
			\tikzstyle{point}=[circle,thick,draw=black,fill=black,inner sep=0pt,minimum width=3pt,minimum height=3pt]
			\node (x)[point, label=left:$x$] at (-1.7,-1) {};
			\node (y)[point, label=right:$y$] at (3.7,-1) {};
			\node (z)[point, label=above:$z$] at (1,3.7) {};
			
			
			\node (d)[point] at (1,-0.3) {}; 
			\node (e)[point] at (1.76,1) {}; 
			\node (f)[point] at (0.24,1) {}; 
			
			
			
			\node (uxy)[label = {[label distance=-1.5mm] above:$u_{\{x,y\}}$}] at (1,-0.3) {};
			\node (uyz)[label = {[label distance=-3mm] right:$u_{\{y,z\}}$}] at (3.7,1.6) {};
			\node (uxz)[label = {[label distance=-3mm] left:$u_{\{x,z\}}$}] at (-1.7,1.6) {};
			
			
			\begin{scope}[on background layer]
				
				\draw[fill=gray,fill opacity=.8] (x.center) -- (y.center) -- (d.center) -- cycle;
				\draw[fill=gray,fill opacity=.8] (y.center) -- (z.center) -- (e.center) -- cycle;
				\draw[fill=gray,fill opacity=.8] (x.center) -- (z.center) -- (f.center) -- cycle;
				
			\end{scope}
			
			\draw[->, line width=1.4pt] (uyz)  -- (e);
			\draw[->, line width=1.4pt] (uxz)  -- (f);
			
		\end{tikzpicture}
	\end{center}
	which is PR with degree type $(1,2)$.
	
	The complex $\phi_2(\Delta)$ is
	\begin{center}
		\begin{tikzpicture}[scale = 0.8]
			\tikzstyle{point}=[circle,thick,draw=black,fill=black,inner sep=0pt,minimum width=3pt,minimum height=3pt]
			\node (x)[point, label=left:$x$] at (-1.7,-1) {};
			\node (y)[point, label=right:$y$] at (3.7,-1) {};
			\node (z)[point, label=above:$z$] at (1,3.7) {};
			
			\node (a)[point] at (0,0) {}; 
			\node (b)[point] at (2,0) {}; 
			\node (c)[point] at (1,1.7) {}; 
			
			\node (d)[point] at (1,-0.3) {}; 
			\node (e)[point] at (1.76,1) {}; 
			\node (f)[point] at (0.24,1) {}; 
			
			
			\node (ux)[color=red, label = {[label distance=-3mm] left:$u_{\{x\}}$}] at (-2,0.55) {};
			\node (uy)[label = {[label distance=-3mm] right:$u_{\{y\}}$}] at (4,0.55) {};
			\node (uz)[label = {[label distance=-3mm] left:$u_{\{z\}}$}] at (-1,3.2) {};

			\node (uxy)[label = {[label distance=-1.5mm] above:$u_{\{x,y\}}$}] at (1,-0.3) {};
			\node (uyz)[label = {[label distance=-3mm] right:$u_{\{y,z\}}$}] at (3.7,1.6) {};
			\node (uxz)[label = {[label distance=-3mm] left:$u_{\{x,z\}}$}] at (-1.7,1.6) {};
			
			
			\begin{scope}[on background layer]
				\draw[fill=gray,fill opacity=.8] (x.center) -- (a.center) -- (d.center) -- cycle;
				\draw[fill=gray,fill opacity=.8] (x.center) -- (a.center) -- (f.center) -- cycle;
				\draw[fill=gray,fill opacity=.8] (y.center) -- (b.center) -- (d.center) -- cycle;
				\draw[fill=gray,fill opacity=.8] (y.center) -- (b.center) -- (e.center) -- cycle;
				\draw[fill=gray,fill opacity=.8] (z.center) -- (c.center) -- (e.center) -- cycle;
				\draw[fill=gray,fill opacity=.8] (z.center) -- (c.center) -- (f.center) -- cycle;
				
				\draw[fill=gray,fill opacity=.8] (x.center) -- (y.center) -- (d.center) -- cycle;
				\draw[fill=gray,fill opacity=.8] (y.center) -- (z.center) -- (e.center) -- cycle;
				\draw[fill=gray,fill opacity=.8] (x.center) -- (z.center) -- (f.center) -- cycle;
				
			\end{scope}
			
			\draw[->, line width=1.4pt] (ux)   -- (a);
			\draw[->, line width=1.4pt] (uy)   -- (b);
			\draw[->, line width=1.4pt] (uz)   -- (c);
			\draw[->, line width=1.4pt] (uyz)  -- (e);
			\draw[->, line width=1.4pt] (uxz)  -- (f);
			
		\end{tikzpicture}
	\end{center}
	which is PR with degree type $(2,1)$.
	
	And the complex $\phi_3(\Delta)$ is
	\begin{center}
		\begin{tikzpicture}[scale = 0.8]
			\tikzstyle{point}=[circle,thick,draw=black,fill=black,inner sep=0pt,minimum width=3pt,minimum height=3pt]
			\node (x)[point, label=left:$x$] at (-1.7,-1) {};
			\node (y)[point, label=right:$y$] at (3.7,-1) {};
			\node (z)[point, label=above:$z$] at (1,3.7) {};
			
			\node (a)[point] at (0,0) {}; 
			\node (b)[point] at (2,0) {}; 
			\node (c)[point] at (1,1.7) {}; 
			
			\node (d)[point] at (1,-0.3) {}; 
			\node (e)[point] at (1.76,1) {}; 
			\node (f)[point] at (0.24,1) {}; 
			
			\node (g)[point, label] at (1,0.5) {}; 
			
			
			
			\node (uemp)[label = {[label distance=-3mm] left:$u_{\emptyset}$}] at (-2,0.6) {};
			
			\begin{scope}[on background layer]
				\draw[fill=gray,fill opacity=.8] (x.center) -- (a.center) -- (d.center) -- cycle;
				\draw[fill=gray,fill opacity=.8] (x.center) -- (a.center) -- (f.center) -- cycle;
				\draw[fill=gray,fill opacity=.8] (y.center) -- (b.center) -- (d.center) -- cycle;
				\draw[fill=gray,fill opacity=.8] (y.center) -- (b.center) -- (e.center) -- cycle;
				\draw[fill=gray,fill opacity=.8] (z.center) -- (c.center) -- (e.center) -- cycle;
				\draw[fill=gray,fill opacity=.8] (z.center) -- (c.center) -- (f.center) -- cycle;
				
				\draw[fill=gray,fill opacity=.8] (x.center) -- (y.center) -- (d.center) -- cycle;
				\draw[fill=gray,fill opacity=.8] (y.center) -- (z.center) -- (e.center) -- cycle;
				\draw[fill=gray,fill opacity=.8] (x.center) -- (z.center) -- (f.center) -- cycle;
				
				\draw[fill=gray,fill opacity=.8] (a.center) -- (d.center) -- (g.center) -- cycle;
				\draw[fill=gray,fill opacity=.8] (a.center) -- (f.center) -- (g.center) -- cycle;
				\draw[fill=gray,fill opacity=.8] (b.center) -- (d.center) -- (g.center) -- cycle;
				\draw[fill=gray,fill opacity=.8] (b.center) -- (e.center) -- (g.center) -- cycle;
				\draw[fill=gray,fill opacity=.8] (c.center) -- (e.center) -- (g.center) -- cycle;
				\draw[fill=gray,fill opacity=.8] (c.center) -- (f.center) -- (g.center) -- cycle;
			\end{scope}
			
			\draw[->, line width=1.4pt] (uemp) -- (g);
			
		\end{tikzpicture}
	\end{center}
	Note that the addition of the facets containing $u_\emptyset$ makes this complex acyclic.
	\end{ex}
	
	\begin{rem}\label{Remark: Phi-i and Partition Complexes are Similar}
	It may help the reader to consider the parallels between the $\phi_i$ operations defined above and the partition complex construction.
	
	Recall that we can obtain $\calP(2,p,1)$ by introducing some additional partition vertices and adding additional facets to the boundary of the $p$-simplex on the vertex set $X_p=\{x_0,\dots,x_p\}$, $\partial \langle X_p\rangle$ (in fact, $\calP(2,p,1)$ is the result of applying the operation $\phi_1$ to $\partial \langle X_p \rangle$). Moreover, we can obtain $\calP(2,p,2)$ from $\calP(2,p,1)$ by adding additional facets, and we can obtain $\calP(2,p,3)$ from $\calP(2,p,2)$ similarly, and so on. Thus we get an inclusion of partition complexes $\partial \langle X_p\rangle\subseteq \calP(2,p,1) \subseteq \calP(2,p,2)\subseteq \dots\subseteq \calP(2,p,p+1)$, with the final complex in this chain being acyclic, and all the others having $\nth[st]{(p-1)}$ homology.
	
	This is analogous to how we can obtain $\phi_1(\Delta)$ from $\Delta$ by introducing the vertices in $S^1_{\Delta}$ and adding additional facets, and then $\phi_2(\Delta)$ from $\phi_1(\Delta)$ by introducing more vertices and facets, and so on. This gives us an inclusion of complexes $\Delta\subseteq \phi_1(\Delta)\subseteq \phi_2(\Delta)\subseteq \dots \subseteq \phi_{\dim\Delta+2}(\Delta)$, which mirrors the inclusion of partition complexes above.
	
	Indeed, as topological spaces, the complexes $\phi_1(\Delta)$, $\phi_2(\Delta)$ and $\phi_3(\Delta)$ in Example \ref{Example: phi_i(Delta)} are homeomorphic to the partition complexes $\calP(2,2,1)$, $\calP(2,2,2)$ and $\calP(2,2,3)$ (shown in Example \ref{Example: Closed partition complex example}). And more generally, if $\Delta$ is the boundary of a $p$-simplex, then for any $1\leq i \leq p+1$, the complex $\phi_i(\Delta)$ is homeomorphic to the partition complex $\calP(2,p,i)$.
	
	The benefit that the $\phi_i$ operations have over partition complexes, as we will see, is that they can be applied to \textit{any} PR complex of degree type $(d_p,\dots,d_i,1,\dots,1)$ and still preserve the PR property (in contrast, partition complexes all use the boundaries of simplices as their starting point). This is a significant benefit (as discussed above, we can exploit it to prove Theorem \ref{Theorem: PR Complexes of Any Degree Type}), but it comes at the cost of adding a very large number of additional vertices. In general, partition complexes have far fewer vertices than the corresponding complex of their degree type obtained from repeated use of the $\phi_i$ operations.\\
\end{rem}

We wish to prove the following theorem about the operation $\phi_i$.

\begin{thm}\label{Theorem: Phi_i Operations Degree Type}
Let $\Delta$ be a PR complex on vertex set $V$, with degree type of the form $(d_p,\dots,d_i,\underbrace{1,\dots,1}_{i-1})$ for some $p\geq i\geq 1$. The complex $\tDelta = \phi_i(\Delta)$ on vertex set $V\sqcup S$ is a PR complex with degree type $(d_p,\dots,d_i+1,\underbrace{1,\dots,1}_{i-1})$
\end{thm}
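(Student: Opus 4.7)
The plan is to reduce the computation of homologies of all links in $\tDelta = \phi_i(\Delta)$ to homologies of links in $\Delta$, and then invoke the PR property of $\Delta$ together with Proposition \ref{Proposition: Alternate PR Definition With Degree Type}. Because Lemma \ref{Lemma: Links in PR complexes also PR} tells us that every link in $\Delta$ is again PR (with degree type some subsequence of $(d_p,\dots,d_i,1,\dots,1)$), any result that expresses links in $\tDelta$ in terms of $\phi_j$ applied to links of $\Delta$ will enable an induction on $\dim \Delta$.

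The first step is a structural analysis of the faces of $\tDelta$. Every face of $\tDelta$ has a canonical decomposition $\sigma\sqcup\{u_{\tau_1},\dots,u_{\tau_r}\}$ with $\sigma\subseteq\tau_1\subsetneq\dots\subsetneq\tau_r$ in $\Delta$ and $|\tau_1|\geq \dim\Delta+2-i$, and the facets of $\tDelta$ are exactly the faces $\sigma\sqcup\{u_{\tau_1},\dots,u_{\tau_r}\}$ where $\sigma$ is a facet of $\Delta$ and the chain descends from $\sigma$ by one vertex at a time down to some $\tau_1$ of size $\dim\Delta+2-i$; in particular $\tDelta$ is pure of dimension $\dim\Delta+i-1$, so $|\tau_1|=|\sigma|+1$ and the facet has size $\dim\Delta+i$. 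This will already force the size $s_0'$ of facets of $\tDelta$ to be $s_0+i-1=s_0'$, consistent with the claimed degree type once we establish purity of the Betti diagram.

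The core of the proof is a commutation lemma of the following flavour: for any $\sigma\in\Delta$ which is not a facet, $\link_{\tDelta}\sigma$ is isomorphic to $\phi_i(\link_\Delta\sigma)$, where the subscript is understood relative to $\dim(\link_\Delta\sigma)=\dim\Delta-|\sigma|$; and for any face $\tau=\sigma\sqcup\{u_{\tau_1},\dots,u_{\tau_r}\}$ of $\tDelta$ which actually uses some of the new vertices (so $r\geq 1$), the link in $\tDelta$ deformation retracts onto an acyclic complex unless $\sigma=\tau_1$ and the chain is ``saturated'' at the top of the $\phi_i$ construction; in those saturated cases the link reduces to $\{\emptyset\}$ (so contributes $\nth[st]{(-1)}$ homology in degree $\dim\Delta+i-|\tau|$). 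For chains that are not saturated, Lemma \ref{Lemma: Deformation Retract} (applied to the vertex $u_{\tau_j}$ and the face $\tau_j\cup\{u_{\tau_j}\}$) should give the acyclicity. Assembling these two cases and using the PR property of $\link_\Delta\sigma$ inductively yields, for each $m$:
\begin{equation*}
  \hh(\tDelta,m)=\begin{cases} \hh(\Delta,m) & \text{if } m\leq \dim\Delta+1-i,\\ \{\dim\Delta+i-m-1\} & \text{if } \dim\Delta+1-i<m\leq \dim\Delta+i,\\ \emptyset & \text{otherwise},\end{cases}
\end{equation*}
which by Proposition \ref{Proposition: Alternate PR Definition With Degree Type} (in the strengthened form of Remark \ref{Remark: PR (dp,...,di,1,...,1) Definition}) is exactly the combinatorial signature of a PR complex with degree type $(d_p,\dots,d_i+1,1,\dots,1)$ and offset equal to that of $\Delta$.

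The main obstacle I expect is the commutation lemma $\link_{\tDelta}\sigma\cong\phi_i(\link_\Delta\sigma)$ when $\sigma\in\Delta$: one must verify that the new vertices $u_\tau$ of $\tDelta$ that survive in the link are precisely those indexed by faces $\tau\supseteq\sigma$, and that the identification $u_\tau\leftrightarrow u_{\tau\setminus\sigma}$ gives an isomorphism of simplicial complexes compatible with the chain-condition that defines $\phi_i$. The delicate points are (a) checking that the defining inequality $|\tau|\geq \dim\Delta+2-i$ translates under $\tau\mapsto\tau\setminus\sigma$ to $|\tau\setminus\sigma|\geq \dim(\link_\Delta\sigma)+2-i$, which it does because $\dim(\link_\Delta\sigma)=\dim\Delta-|\sigma|$, and (b) handling the boundary case $\sigma\subseteq\tau_1$ versus $\sigma=\tau_1$ carefully so that no spurious faces are introduced or omitted. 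Once this commutation is in hand, the rest of the argument is an induction on $\dim\Delta$ with the base case $\dim\Delta=-1$ trivial and the inductive step powered by Lemma \ref{Lemma: Links in PR complexes also PR}.
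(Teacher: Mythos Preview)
Your commutation lemma $\link_{\tDelta}\sigma\cong\phi_i(\link_\Delta\sigma)$ for $\sigma\in\Delta$ is correct and is exactly Lemma~\ref{Lemma: Phi_i and Link commute}, the paper's main structural tool. However, there are two substantial errors.

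First, your description of the facets of $\tDelta$ is backwards. The defining condition is $\sigma\subseteq\tau_1\subsetneq\dots\subsetneq\tau_r$, so in a facet the set $\sigma$ equals the \emph{smallest} element $\tau_1$ of the chain (of size $\dim\Delta+2-i$), and the chain ascends to a facet $\tau_r$ of $\Delta$. A facet of $\tDelta$ therefore has size $(\dim\Delta+2-i)+i=\dim\Delta+2$, so $\dim\tDelta=\dim\Delta+1$, not $\dim\Delta+i-1$. Your displayed formula for $\hh(\tDelta,m)$ is correspondingly wrong for $i\geq 3$: the correct value in the second range is $\{\dim\Delta-m+1\}$, and the range ends at $m=\dim\Delta+2$, not $\dim\Delta+i$.

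Second, and more seriously, your dichotomy ``acyclic or $\{\emptyset\}$'' for links of faces that use new vertices is false. Take $i\geq 2$ and let $\sigma$ be a maximal intersection of $\Delta$ (so $|\sigma|=\dim\Delta$ since $d_1=1$). Then $|\sigma|\geq\dim\Delta+2-i$, and under the commutation isomorphism the face $\sigma\cup\{u_\sigma\}$ has link equal to $\link_{\phi_i(\link_\Delta\sigma)}u_\emptyset=\mathcal{B}(\link_\Delta\sigma)$, which is homeomorphic to $\link_\Delta\sigma$ and hence has $\nth{0}$ homology. This face is neither a facet nor has acyclic link. The paper handles such faces by observing that once $|\sigma_V|\geq s+\sum_{j\geq i}d_j$ the link $\link_\Delta\sigma_V$ is Cohen--Macaulay, and then invoking an analysis of links in barycentric subdivisions (Proposition~\ref{Proposition: Links in BDelta} and Corollary~\ref{Corollary: Homology of Links in BDelta PR (1,..,1)}) together with Proposition~\ref{Proposition: link of new vertices may as well contain u-emptyset} to reduce to this case. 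Your proposed induction on $\dim\Delta$ does not sidestep this: it would require knowing that $\phi_i$ applied to arbitrary links of $\Delta$ is PR, but for links of large faces we land in the regime $i>\dim(\link_\Delta\sigma)+1$ where $\phi_i$ is acyclic (Corollary~\ref{Corollary: DR Phi_i Acyclic}) and the inductive hypothesis gives you nothing about links of new vertices inside it.
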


The following lemma will turn out to be particularly crucial, because it shows us that $\phi_i$ commutes with taking links of faces in $\Delta$.

\begin{lem}\label{Lemma: Phi_i and Link commute}
Let $\Delta$ be a simplicial complex on vertex set $V$, let $i\geq 1$ and let $\sigma \in \Delta$. We have an isomorphism of complexes $\link_{\phi_i(\Delta)} \sigma \cong \phi_i(\lkds)$.
\end{lem}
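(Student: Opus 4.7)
The plan is to exhibit an explicit bijection $\Phi$ on vertex sets and then verify that it takes faces to faces in both directions, so that it extends to a simplicial isomorphism. First, I would classify the vertices of $L:=\link_{\phi_i(\Delta)}\sigma$. Unpacking Definition \ref{Definition: Phi_i}, any face of $\phi_i(\Delta)$ containing a symbol $u_\tau\in S^i_\Delta$ is contained in some chain of the form $\sigma'\subseteq\tau_1\subsetneq\dots\subsetneq\tau_r$ with $u_\tau$ among the $u_{\tau_j}$; hence $\{u_\tau\}\cup\sigma\in\phi_i(\Delta)$ if and only if $\sigma\subseteq\tau$. So the vertex set of $L$ is $V(\lkds)\sqcup\{u_\tau:\sigma\subseteq\tau\in\Delta,\,|\tau|\geq\dim\Delta+2-i\}$. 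Meanwhile, the vertex set of $\phi_i(\lkds)$ is $V(\lkds)\sqcup\{u_\rho:\rho\in\lkds,\,|\rho|\geq\dim(\lkds)+2-i\}$.

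Next I would define $\Phi:L\to\phi_i(\lkds)$ to be the identity on $V(\lkds)$ and to send $u_\tau$ to $u_{\tau\setminus\sigma}$. Using that $\dim(\lkds)=\dim\Delta-|\sigma|$ (which holds whenever $\sigma$ is contained in a facet of maximum dimension, in particular whenever $\Delta$ is pure, cf.\ Lemma \ref{Lemma: PR complexes are pure}), the inequality $|\tau|\geq\dim\Delta+2-i$ combined with $\sigma\subseteq\tau$ translates exactly to $|\tau\setminus\sigma|\geq\dim(\lkds)+2-i$, so $\Phi$ is a bijection between the two vertex sets.

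Finally, I would verify that $\Phi$ is a simplicial bijection. A general face of $L$ has the form $\rho'\sqcup\{u_{\tau_1},\dots,u_{\tau_r}\}$ with $\rho'\subseteq V\setminus\sigma$, $r\geq 0$, arising from a chain $\sigma\cup\rho'\subseteq\tau_1\subsetneq\dots\subsetneq\tau_r$ in $\Delta$ (the case $r=0$ just being $\rho'\in\lkds$). Subtracting $\sigma$ throughout preserves the strict inclusions and produces a chain $\rho'\subseteq\tau_1\setminus\sigma\subsetneq\dots\subsetneq\tau_r\setminus\sigma$ in $\lkds$, witnessing that $\Phi$ maps this face to the face $\rho'\sqcup\{u_{\tau_1\setminus\sigma},\dots,u_{\tau_r\setminus\sigma}\}$ of $\phi_i(\lkds)$. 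The reverse direction is symmetric: any face of $\phi_i(\lkds)$ comes from such a chain in $\lkds$, and reinserting $\sigma$ recovers a chain of faces of $\Delta$ all containing $\sigma$, so the preimage is a face of $L$.

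The only real obstacle is the bookkeeping around the dimension identity $\dim(\lkds)=\dim\Delta-|\sigma|$: Definition \ref{Definition: Phi_i} is phrased using $\dim\Delta$, whereas $\phi_i(\lkds)$ uses $\dim(\lkds)$, and these agree precisely when $\sigma$ lies in a facet of maximum dimension. This is the only place the proof is nontrivial, and in all the applications of the lemma we will need (towards Theorem \ref{Theorem: Phi_i Operations Degree Type} and hence Theorem \ref{Theorem: PR Complexes of Any Degree Type}) the ambient complex is PR and hence pure, so the identity holds automatically.
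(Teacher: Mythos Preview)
Your proposal is correct and follows essentially the same approach as the paper: both arguments rest on the vertex map $u_\tau\mapsto u_{\tau\setminus\sigma}$ (together with the identity on $V(\lkds)$), and both verify that this gives a simplicial isomorphism via the dimension bookkeeping $|\tau\setminus\sigma|\geq\dim(\lkds)+2-i$. The paper phrases the check at the level of facets rather than general faces, but this is a cosmetic difference. Your explicit flagging of the hypothesis $\dim(\lkds)=\dim\Delta-|\sigma|$ is in fact a point of extra care: the paper's own proof uses this identity silently, so you have correctly identified that the lemma as stated tacitly requires $\sigma$ to lie in a facet of maximum dimension (which is automatic in the pure case, and hence in every application that follows).
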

\begin{proof}



We claim that the map of vertices $u_{\tau}\mapsto u_{\tau-\sigma}$ gives a well-defined bijection between the facets of $\link_{\phi_i(\Delta)} \sigma$ and the facets of $\phi_i(\lkds)$.

The facets of $\link_{\phi_i(\Delta)} \sigma$ are all of the form $G-\sigma$ for some facet $G$ of $\phi_i(\Delta)$ containing $\sigma$. Let $G = \tau_1 \cup \{u_{\tau_1},\dots, u_{\tau_r}\}$ be one such facet of $\phi_i(\Delta)$, for some $\tau_1 \subset \dots \subset \tau_r$ in $\Delta$ with $|\tau_j|\geq \dim \Delta + 2 - i$ for each $1\leq j \leq r$. Because $G$ contains $\sigma$, we must have $\sigma \subseteq \tau_1$. Hence, for each $1\leq j \leq r$, the simplex $\tau_j-\sigma$ is a face of $\lkds$, and we have
\begin{align*}
|\tau_j - \sigma| &= |\tau_j| - |\sigma|\\
&\geq \dim \Delta + 2 - i - |\sigma|\\
&= \dim(\lkds) + 2 - i
\end{align*}
which shows that $u_{\tau_j-\sigma}$ is a vertex of $S^i_{\lkds}$. Thus the map of vertices $u_{\tau}\mapsto u_{\tau-\sigma}$ gives us a unique corresponding facet $\widetilde{G}=(\tau_1 - \sigma) \cup \{u_{\tau_1-\sigma},\dots, u_{\tau_r-\sigma}\}$ of $\phi_i(\lkds)$.

Conversely, suppose  $H = \rho_1 \cup \{u_{\rho_1},\dots, u_{\rho_r}\}$ is a facet of $\phi_i(\lkds)$, for some $\rho_1 \subset \dots \subset \rho_r$ in $\lkds$ with $|\rho_j|\geq \dim (\lkds) + 2 - i$ for each $1\leq j \leq r$. By the definition of $\lkds$ we know that for each $1\leq j \leq r$,  $\rho_j\sqcup\sigma$ is a face of $\Delta$. Moreover, we have
\begin{align*}
|\rho_j \sqcup \sigma| &= |\rho_j| + |\sigma|\\
&\geq \dim (\lkds) + 2 - i + |\sigma|\\
&= \dim(\Delta) + 2 - i
\end{align*}
which shows that $u_{\rho_j-\sigma}$ is a vertex of $S^i_\Delta$. Thus the map of vertices $u_{\rho}\mapsto u_{\rho\sqcup\sigma}$ gives us a unique corresponding facet $\widehat{H}=\rho_1 \cup \{u_{\rho_1\sqcup\sigma},\dots, u_{\rho_r\sqcup\sigma}\}$ of $\link_{\phi_i(\Delta)}\sigma$.
\end{proof}
\begin{rem}\label{Remark: Phi-i and Partition Complexes Links are Similar}
Lemma \ref{Lemma: Phi_i and Link commute} can be seen as an analogue to Corollary \ref{Corollary: Partition Complex Link of sigma_X} about partition complexes, which told us that the link of any face in a partition complex consisting entirely of boundary vertices $x_0,\dots,x_p$ is a smaller partition complex.
\end{rem}

In the following sections we work towards proving Theorem \ref{Theorem: Phi_i Operations Degree Type} (and hence also Theorem \ref{Theorem: PR Complexes of Any Degree Type}). The structure will be as follows.

Section \ref{Subsection: Barycentric Subdivision} introduces the barycentric subdivision of $\Delta$, a standard combinatorial construction which is an important subcomplex of $\phi_i(\Delta)$ in the case where $i\geq \dim \Delta +1$. Section \ref{Subsection: Deformation Retractions} examines some deformation retractions of $\phi_i(\Delta)$, to help us in finding its homology. Section \ref{Subsection: Links in BDelta} is devoted to the homologies of links in the barycentric subdivision of $\Delta$. And finally Section \ref{Subsection: Proving Phi_i Theorem} assembles all of these results together to prove the theorem.

\section{Barycentric Subdivision}\label{Subsection: Barycentric Subdivision}

Let $\Delta$ be a simplicial complex on vertex set $\{v_1,\dots,v_n\}$, and let $X_\Delta$ be its geometric realization in the space $\RR^n$ with canonical basis $\{e_1,\dots,e_n\}$. Recall (e.g. from the proof of Lemma \ref{Lemma: Deformation Retract}) that we define $X_\Delta$ to be the union of the sets $X_\sigma$ for each nonempty face $\sigma = \{v_{i_1}\dots,v_{i_r}\}$, each of which is given by $X_\sigma = \left\{\sum_{j=1}^r \lambda_i e_{j_i} : \lambda_1,\dots,\lambda_r > 0, \sum_{j=1}^r \lambda_j = 1 \right\}.$

For a face $\sigma=\{v_{i_1},\dots,v_{i_r}\}$ in $\Delta$, we define the \textit{barycenter} $\bfb_\sigma$ of $X_\sigma$ to be the vector $\sum_{j=1}^r \frac{1}{r} e_{i_j}$ (this is the unique vector in $X_\sigma$ whose nonzero coordinates are all equal). We may use these barycenters to divide $X_\Delta$ up into subsimplices in a process known as \textit{barycentric subdivision}. Specifically, we can view $X_\Delta$ as the union of all the convex hulls of vertex sets of the form $\{\bfb_{\sigma_1},\dots,\bfb_{\sigma_r}\}$ for some inclusion of faces $\sigma_1\subset \dots\subset \sigma_r$ in $\Delta$.

For example if $\Delta$ is the $2$-simplex, its geometric realization in $\RR^3$ is
\begin{center}
\begin{tikzpicture}[scale=0.7][line join = round, line cap = round]
\tikzstyle{point}=[circle,thick,draw=black,fill=black,inner sep=0pt,minimum width=2pt,minimum height=2pt]

\coordinate [label=above right:$1$] (1) at (3,0,0);
\coordinate [label=left:$2$] (2) at (0,3,0);
\coordinate [label=above left:$3$] (3) at (0,0,3);		
\node [point] at (3,0,0) {};
\node [point] at (0,3,0) {};
\node [point] at (0,0,3) {};

\coordinate [label=left: $O$](O) at (0,0,0);
\coordinate [label=right:$x$] (x) at (4.3,0,0);
\coordinate [label=above:$y$] (y) at (0,4,0);
\coordinate [label=left:$z$] (z) at (0,0,4.7);

\begin{scope}
	\draw [->] (O)--(x);
	\draw [->] (O)--(y);
	\draw [->] (O)--(z);
	\draw[fill=gray,fill opacity=.6] (1)--(2)--(3)--cycle;
\end{scope}

\end{tikzpicture}
\end{center}
and its barycentric subdvision is
\begin{center}
\begin{tikzpicture}[scale=0.7][line join = round, line cap = round]
\tikzstyle{point}=[circle,thick,draw=black,fill=black,inner sep=0pt,minimum width=2pt,minimum height=2pt]

\coordinate [label=above right:$\bfb_{\{1\}}$] (1) at (3,0,0);
\coordinate [label=left:$\bfb_{\{2\}}$] (2) at (0,3,0);
\coordinate [label=above left:$\bfb_{\{3\}}$] (3) at (0,0,3);		
\node [point] at (3,0,0) {};
\node [point] at (0,3,0) {};
\node [point] at (0,0,3) {};

\coordinate [label=above right:$\bfb_{\{1,2\}}$] (4) at (1.5,1.5,0);
\coordinate [label=below right:$\bfb_{\{1,3\}}$] (5) at (1.5,0,1.5);
\coordinate [label=left:$\bfb_{\{2,3\}}$] (6) at (0,1.5,1.5);
\coordinate (7) at (1,1,1);
\node [point] at (1.5,1.5,0) {};
\node [point] at (1.5,0,1.5) {};
\node [point] at (0,1.5,1.5) {};

\coordinate [label=right:$x$] (x) at (4.3,0,0);
\coordinate [label=above:$y$] (y) at (0,4,0);
\coordinate [label=left:$z$] (z) at (0,0,4.7);

\begin{scope}
	\draw [->] (1)--(x);
	\draw [->] (2)--(y);
	\draw [->] (3)--(z);
	\draw[fill=gray,fill opacity=.8] (1)--(2)--(3)--cycle;
	\draw (1) -- (7);
	\draw (2) -- (7);
	\draw (3) -- (7);
	\draw (4) -- (7);
	\draw (5) -- (7);
	\draw (6) -- (7);
\end{scope}

\node [point, label=below right:$\bfb_{\{1,2,3\}}$] at (1,1,1) {};
\end{tikzpicture}
\end{center}

It is also possible to define this barycentric subdivision entirely combinatorially (see e.g. the introduction to \cite{bary}), on the complex $\Delta$ itself rather than on the geometric realization of $\Delta$. In the combinatorial context, instead of defining the barycenter of $\sigma$ as a vector $\bfb_\sigma$ in $\RR^n$, we simply introduce a new vertex $u_\sigma$. This gives us the following definition.
\begin{defn}\label{Definition: BDelta}
We define the \textit{barycentric subdivision} of $\Delta$ to be the complex $\BDelta$ on vertex set $S_\Delta = \{u_{\sigma}: \sigma \in \Delta-\{\emptyset\}\}$, with faces $\{u_{\sigma_1},\dots, u_{\sigma_r}\}$ whenever $\sigma_1 \subset \dots \subset \sigma_r$.
\end{defn}

Significantly, note that barycentric subdivision does not affect the topology of a simplicial complex. In other words, the complexes $\BDelta$ and $\Delta$ are homeomorphic as topological spaces. In particular, this means that for any integer $j\geq -1$ we have $$\Hred_j\left (\BDelta\right )=\Hred_j(\Delta).$$
This fact will be particularly useful to us.

Note that if $i$ is chosen such that the vertex set $S^i_\Delta$ given in Definition \ref{Definition: Phi_i} contains vertices $u_\sigma$ corresponding to every simplex $\sigma$ in $\Delta$ except $\emptyset$, then the induced subcomplex $\phi_i(\Delta)|_{S^i_\Delta}$ is equal to $\BDelta$. This happens when $i = \dim \Delta + 1$ (and when $i\geq \dim \Delta+2$, the induced subcomplex $\phi_i(\Delta)|_{S^i_\Delta}$ is equal to $\BDelta\ast u_\emptyset$).

For example, if $\Delta$ is the boundary of the $2$-simplex, as in Example \ref{Example: phi_i(Delta)}, its barycentric subdivision $\BDelta$ is
\begin{center}
\begin{tikzpicture}[scale = 1]
\tikzstyle{point}=[circle,thick,draw=black,fill=black,inner sep=0pt,minimum width=2pt,minimum height=2pt]
\node (a)[point, label=left:$u_{\{x\}}$] at (0,0) {};
\node (b)[point, label=right:$u_{\{y\}}$] at (2,0) {};
\node (c)[point, label=above:$u_{\{z\}}$] at (1,1.7) {};

\node (d)[point, label=below:$u_{\{x,y\}}$] at (1,-0.3) {};
\node (e)[point, label=above right:$u_{\{y,z\}}$] at (1.76,1) {};
\node (f)[point, label=above left:$u_{\{x,z\}}$] at (0.24,1) {};

\draw (a.center) -- (d.center) -- (b.center) -- (e.center) -- (c.center) -- (f.center) -- cycle;
\end{tikzpicture}
\end{center}
which is an induced subcomplex of the complex $\phi_2(\Delta)$.
\begin{center}
\begin{tikzpicture}[scale = 0.8]
\tikzstyle{point}=[circle,thick,draw=black,fill=black,inner sep=0pt,minimum width=3pt,minimum height=3pt]
\node (x)[point, label=left:$x$] at (-1.7,-1) {};
\node (y)[point, label=right:$y$] at (3.7,-1) {};
\node (z)[point, label=above:$z$] at (1,3.7) {};

\node (a)[point] at (0,0) {};
\node (b)[point] at (2,0) {};
\node (c)[point] at (1,1.7) {};

\node (d)[point] at (1,-0.3) {}; 
\node (e)[point] at (1.76,1) {}; 
\node (f)[point] at (0.24,1) {}; 

\begin{scope}[on background layer]
	\draw[fill=gray] (x.center) -- (a.center) -- (d.center) -- cycle;
	\draw[fill=gray] (x.center) -- (a.center) -- (f.center) -- cycle;
	\draw[fill=gray] (y.center) -- (b.center) -- (d.center) -- cycle;
	\draw[fill=gray] (y.center) -- (b.center) -- (e.center) -- cycle;
	\draw[fill=gray] (z.center) -- (c.center) -- (e.center) -- cycle;
	\draw[fill=gray] (z.center) -- (c.center) -- (f.center) -- cycle;
	
	\draw[fill=gray] (x.center) -- (y.center) -- (d.center) -- cycle;
	\draw[fill=gray] (y.center) -- (z.center) -- (e.center) -- cycle;
	\draw[fill=gray] (x.center) -- (z.center) -- (f.center) -- cycle;
\end{scope}
\end{tikzpicture}
\end{center}
Thus for $i=\dim \Delta +1$ we can view the operation $\phi_i$ as a kind of prism operator, with $\Delta$ at one end of the prism and $\BDelta$ at the other end (and for $i>\dim \Delta +1$, the complex $\phi_i(\Delta)$ is a prism with $\Delta$ at one end, and the cone $\BDelta \ast u_{\emptyset}$ at the other end).

\section{Deformation Retractions}\label{Subsection: Deformation Retractions}
To find the homologies of the links in $\phi_i(\Delta)$, we will make use of some deformation retractions.

In particular, we use Lemma \ref{Lemma: Deformation Retract} to obtain two deformation retractions of $\tDelta$: one ``\textit{vertex-first}'' deformation from $\tDelta$ on to $\Delta$, and one ``\textit{facet-first}'' deformation from $\tDelta$ on to $\BDelta$. The latter deformation only holds in the specific case where $i=\dim \Delta+1$. For each deformation we provide an example before detailing the general result. We begin with the vertex-first deformation.
\begin{ex}\label{Example: vertex-first DR}
Let $\Delta$ be the boundary of the $2$-simplex on vertex set $\{x,y,z\}$ as in Example \ref{Example: phi_i(Delta)}. We show that there is a deformation retraction $\phi_2(\Delta) \rightsquigarrow \Delta$.

Note that every facet of $\phi_2(\Delta)$ which contains $u_{\{x\}}$ also contains $x$. Thus, if we set $g=\{u_{\{x\}}\}$ and $f = \{x,u_{\{x\}}\}$, Lemma \ref{Lemma: Deformation Retract} allows us to 	remove the vertex $u_{\{x\}}$ from $\phi_2(\Delta)$. Similarly we may remove the vertices $u_{\{y\}}$ and $u_{\{z\}}$. This gives us a deformation retraction $\phi_2(\Delta)\rightsquigarrow \phi_1(\Delta)$.

The same reasoning now allows us to remove the vertices $u_{\{x,y\}}$, $u_{\{x,z\}}$ and $u_{\{y,z\}}$ from $\phi_1(\Delta)$ to obtain a deformation retraction $\phi_1(\Delta)\rightsquigarrow \Delta$.

Diagrammatically, we have the deformation retractions:
\begin{center}
\begin{tabular}{ c c c c c }
	\begin{tikzpicture}[scale = 0.55]
		\tikzstyle{point}=[circle,thick,draw=black,fill=black,inner sep=0pt,minimum width=3pt,minimum height=3pt]
		\node (x)[point, label=left:$x$] at (-1.7,-1) {};
		\node (y)[point, label=right:$y$] at (3.7,-1) {};
		\node (z)[point, label=above:$z$] at (1,3.7) {};
		
		\node (a)[point] at (0,0) {};
		\node (b)[point] at (2,0) {};
		\node (c)[point] at (1,1.7) {};
		
		\node (d)[point] at (1,-0.3) {}; 
		\node (e)[point] at (1.76,1) {}; 
		\node (f)[point] at (0.24,1) {}; 
		
		\begin{scope}[on background layer]
			\draw[fill=gray] (x.center) -- (a.center) -- (d.center) -- cycle;
			\draw[fill=gray] (x.center) -- (a.center) -- (f.center) -- cycle;
			\draw[fill=gray] (y.center) -- (b.center) -- (d.center) -- cycle;
			\draw[fill=gray] (y.center) -- (b.center) -- (e.center) -- cycle;
			\draw[fill=gray] (z.center) -- (c.center) -- (e.center) -- cycle;
			\draw[fill=gray] (z.center) -- (c.center) -- (f.center) -- cycle;
			
			\draw[fill=gray] (x.center) -- (y.center) -- (d.center) -- cycle;
			\draw[fill=gray] (y.center) -- (z.center) -- (e.center) -- cycle;
			\draw[fill=gray] (x.center) -- (z.center) -- (f.center) -- cycle;
		\end{scope}
	\end{tikzpicture} & 
	& \begin{tikzpicture}[scale = 0.55]
		\tikzstyle{point}=[circle,thick,draw=black,fill=black,inner sep=0pt,minimum width=3pt,minimum height=3pt]
		\node (x)[point, label=left:$x$] at (-1.7,-1) {};
		\node (y)[point, label=right:$y$] at (3.7,-1) {};
		\node (z)[point, label=above:$z$] at (1,3.7) {};
		
		
		\node (d)[point] at (1,-0.3) {}; 
		\node (e)[point] at (1.76,1) {}; 
		\node (f)[point] at (0.24,1) {}; 
		
		\begin{scope}[on background layer]
			
			\draw[fill=gray] (x.center) -- (y.center) -- (d.center) -- cycle;
			\draw[fill=gray] (y.center) -- (z.center) -- (e.center) -- cycle;
			\draw[fill=gray] (x.center) -- (z.center) -- (f.center) -- cycle;
		\end{scope}
	\end{tikzpicture} & 
	& \begin{tikzpicture}[scale = 0.55]
		\tikzstyle{point}=[circle,thick,draw=black,fill=black,inner sep=0pt,minimum width=3pt,minimum height=3pt]
		\node (x)[point, label=left:$x$] at (-1.7,-1) {};
		\node (y)[point, label=right:$y$] at (3.7,-1) {};
		\node (z)[point, label=above:$z$] at (1,3.7) {};
		
		\draw (x.center) -- (y.center) -- (z.center) -- cycle;
	\end{tikzpicture}\\
	$\phi_2(\Delta)$& $\rightsquigarrow$  & $\phi_1(\Delta)$ & $\rightsquigarrow$  & $\Delta$
\end{tabular}
\end{center}
where each deformation retraction is obtained by identifying the vertices $u_\sigma$ for which $|\sigma|$ is minimal with the face $\sigma$ in $\Delta$.
\end{ex}

This example generalises as follows:
\begin{lem}\label{Lemma: DR Phi_i to Delta}
Let $\Delta$ be a pure simplicial complex on vertex set $V$, $i\leq \dim \Delta + 1$, and $\tDelta=\phi_i(\Delta)$ on vertex set $V\sqcup S^i_\Delta$. There is a deformation retraction $\tDelta \rightsquigarrow \tDelta|_V = \Delta$.
\end{lem}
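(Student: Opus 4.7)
The plan is to proceed by induction on $i \geq 0$. The base case $i = 0$ is immediate, since $S^0_\Delta$ is empty and hence $\phi_0(\Delta) = \Delta$. For the inductive step I would show that $\tDelta = \phi_i(\Delta)$ deformation retracts onto $\phi_{i-1}(\Delta)$ by deleting precisely the vertices $u_\sigma \in S^i_\Delta \setminus S^{i-1}_\Delta$, i.e.\ those with $|\sigma| = s := \dim \Delta + 2 - i$. The hypothesis $i \leq \dim \Delta + 1$ guarantees $s \geq 1$, so each such $\sigma$ is nonempty and I may choose some $v_\sigma \in \sigma$; composing the resulting deformation with the inductive hypothesis $\phi_{i-1}(\Delta) \rightsquigarrow \Delta$ then completes the argument.

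The key observation is that for any $\sigma$ with $|\sigma| = s$ (i.e.\ of minimal size in $S^i_\Delta$), every facet of $\tDelta$ containing $u_\sigma$ in fact contains all of $\sigma$. Indeed, by Definition \ref{Definition: Phi_i} such a facet has the form $\rho \cup \{u_{\tau_1},\dots,u_{\tau_r}\}$ with $\sigma = \tau_j$ for some $j$, $\rho \subseteq \tau_1 \subsetneq \cdots \subsetneq \tau_r$, and $|\tau_k| \geq s$ for all $k$. Since $\tau_1 \subseteq \tau_j = \sigma$ and $|\tau_1| \geq s = |\sigma|$, this forces $\tau_1 = \sigma$; then $\rho \subseteq \sigma$, and maximality of the facet forces $\rho = \sigma$, because any $v \in \sigma \setminus \rho$ could be appended to $\rho$ without violating the chain condition $\rho \subseteq \tau_1$. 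Consequently Lemma \ref{Lemma: Deformation Retract} applies with $g = \{u_\sigma\}$ and $f = \{u_\sigma, v_\sigma\}$, permitting the deletion of $u_\sigma$.

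I would then delete all of the $u_\sigma$ with $|\sigma| = s$ one at a time, checking that the hypothesis of Lemma \ref{Lemma: Deformation Retract} persists throughout: any facet of the partially-deleted complex containing a still-present $u_{\sigma'}$ is contained in a facet $F$ of $\tDelta$ containing $u_{\sigma'}$, and by the observation above $F \supseteq \sigma'$; since each $v_{\sigma'}$ lies in $V$ while every deleted vertex lies in $S^i_\Delta$, $v_{\sigma'}$ survives. Once all of these vertices are gone, a face of $\tDelta$ survives iff all of its $u_\tau$-vertices satisfy $|\tau| \geq s+1 = \dim \Delta + 3 - i$, which by Definition \ref{Definition: Phi_i} is exactly the condition for being a face of $\phi_{i-1}(\Delta)$ on vertex set $V \sqcup S^{i-1}_\Delta$. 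The main piece of bookkeeping is this iterative argument, which works smoothly because the choice $v_\sigma \in V$ is disjoint from the family of vertices being removed; purity of $\Delta$ plays no essential role beyond the intended setup of $\phi_i(\Delta)$.
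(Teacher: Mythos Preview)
Your proof is correct and follows essentially the same approach as the paper: both arguments remove the vertices $u_\sigma$ in increasing order of $|\sigma|$, using the key observation that when $|\sigma|$ is minimal in $S^i_\Delta$ every facet of $\tDelta$ containing $u_\sigma$ must contain $\sigma$, so Lemma~\ref{Lemma: Deformation Retract} applies. The only cosmetic differences are that you package the iteration as an induction on $i$ (passing through $\phi_{i-1}(\Delta)$) and use the edge $\{u_\sigma,v_\sigma\}$ rather than the full face $\sigma\cup\{u_\sigma\}$ for $f$; you are also more explicit than the paper about checking that the deformation-retract hypothesis persists after each deletion, which the paper handles with ``continuing in this way''.
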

\begin{proof}

Let $\sigma$ be a minimally sized face of $\Delta$ such that $u_\sigma$ is in $S^i_\Delta$. Note that $|\sigma|\geq \dim\Delta + 2 - i \geq \dim\Delta + 2 - (\dim\Delta + 1) = 1$, so in particular $\sigma$ is not empty.

By construction, because $|\sigma|$ is minimal, every facet of $\Delta$ that contains $u_\sigma$ also contains $\sigma$. Thus, setting $g = \{u_\sigma\}$ and $f=\sigma \cup \{u_\sigma\}$, Lemma \ref{Lemma: Deformation Retract} gives us a deformation retraction $\tDelta \rightsquigarrow \tDelta - \{u_\sigma\}$.

Continuing in this way we may remove every vertex $u_\sigma$ in $S^i_\Delta$ from $\tDelta$ - in increasing order of the size of $\sigma$ - and thus obtain a deformation retraction $\tDelta \rightsquigarrow \tDelta|_V = \Delta$.
\end{proof}

We now proceed to the facet-first deformation.
\begin{ex}\label{Example: facet-first DR}
Let $\Delta$ be the boundary of the $2$-simplex on vertex set $\{x,y,z\}$ as in Example \ref{Example: phi_i(Delta)}. We show that there is a deformation retraction $\phi_2(\Delta) \rightsquigarrow \BDelta$.

Note that the edge $\{x,y\}$ of $\Delta$ occurs only in the facet $\{x,y,u_{\{x,y\}}\}$. Thus, if we set $g=\{x,y\}$ and $f = \{x,y,u_{\{x,y\}}\}$, Lemma \ref{Lemma: Deformation Retract} allows us to remove the edge $\{x,y\}$ from $\phi_2(\Delta)$. Similarly we may remove the edges $\{x,z\}$ and $\{y,z\}$.

The same reasoning now allows us to remove the vertices $x$, $y$ and $z$ from $\phi_2(\Delta)$ to obtain a deformation retraction $\phi_2(\Delta)\rightsquigarrow \phi_2(\Delta)|_{S^2_\Delta}= \BDelta$.

Diagrammatically, we have the deformation retractions:
\begin{center}
\begin{tabular}{ c c c c c }
	\begin{tikzpicture}[scale = 0.55]
		\tikzstyle{point}=[circle,thick,draw=black,fill=black,inner sep=0pt,minimum width=3pt,minimum height=3pt]
		\node (x)[point, label=left:$x$] at (-1.7,-1) {};
		\node (y)[point, label=right:$y$] at (3.7,-1) {};
		\node (z)[point, label=above:$z$] at (1,3.7) {};
		
		\node (a)[point] at (0,0) {};
		\node (b)[point] at (2,0) {};
		\node (c)[point] at (1,1.7) {};
		
		\node (d)[point] at (1,-0.3) {}; 
		\node (e)[point] at (1.76,1) {}; 
		\node (f)[point] at (0.24,1) {}; 
		
		\begin{scope}[on background layer]
			\draw[fill=gray] (x.center) -- (a.center) -- (d.center) -- cycle;
			\draw[fill=gray] (x.center) -- (a.center) -- (f.center) -- cycle;
			\draw[fill=gray] (y.center) -- (b.center) -- (d.center) -- cycle;
			\draw[fill=gray] (y.center) -- (b.center) -- (e.center) -- cycle;
			\draw[fill=gray] (z.center) -- (c.center) -- (e.center) -- cycle;
			\draw[fill=gray] (z.center) -- (c.center) -- (f.center) -- cycle;
			
			\draw[fill=gray] (x.center) -- (y.center) -- (d.center) -- cycle;
			\draw[fill=gray] (y.center) -- (z.center) -- (e.center) -- cycle;
			\draw[fill=gray] (x.center) -- (z.center) -- (f.center) -- cycle;
		\end{scope}
	\end{tikzpicture}& 
	& 		\begin{tikzpicture}[scale = 0.55]
		\tikzstyle{point}=[circle,thick,draw=black,fill=black,inner sep=0pt,minimum width=3pt,minimum height=3pt]
		\node (x)[point, label=left:$x$] at (-1.7,-1) {};
		\node (y)[point, label=right:$y$] at (3.7,-1) {};
		\node (z)[point, label=above:$z$] at (1,3.7) {};
		
		\node (a)[point] at (0,0) {};
		\node (b)[point] at (2,0) {};
		\node (c)[point] at (1,1.7) {};
		
		\node (d)[point] at (1,-0.3) {}; 
		\node (e)[point] at (1.76,1) {}; 
		\node (f)[point] at (0.24,1) {}; 
		
		\begin{scope}[on background layer]
			\draw[fill=gray] (x.center) -- (a.center) -- (d.center) -- cycle;
			\draw[fill=gray] (x.center) -- (a.center) -- (f.center) -- cycle;
			\draw[fill=gray] (y.center) -- (b.center) -- (d.center) -- cycle;
			\draw[fill=gray] (y.center) -- (b.center) -- (e.center) -- cycle;
			\draw[fill=gray] (z.center) -- (c.center) -- (e.center) -- cycle;
			\draw[fill=gray] (z.center) -- (c.center) -- (f.center) -- cycle;
			
		\end{scope}
	\end{tikzpicture}  & 
	& 	\begin{tikzpicture}[scale = 0.8]
		\tikzstyle{point}=[circle,thick,draw=black,fill=black,inner sep=0pt,minimum width=2pt,minimum height=2pt]
		\node (a)[point, label=left:$u_{\{x\}}$] at (0,0) {};
		\node (b)[point, label=right:$u_{\{y\}}$] at (2,0) {};
		\node (c)[point, label=above:$u_{\{z\}}$] at (1,1.7) {};
		
		\node (d)[point, label=below:$u_{\{x,y\}}$] at (1,-0.3) {};
		\node (e)[point, label=above right:$u_{\{y,z\}}$] at (1.76,1) {};
		\node (f)[point, label=above left:$u_{\{x,z\}}$] at (0.24,1) {};
		
		\draw (a.center) -- (d.center) -- (b.center) -- (e.center) -- (c.center) -- (f.center) -- cycle;
	\end{tikzpicture}\\
	$\phi_2(\Delta)$& $\rightsquigarrow$  &  & $\rightsquigarrow$  & $\BDelta$
\end{tabular}
\end{center}
where each deformation retraction is obtained by identifying the faces $\sigma$ of $\Delta$ for which $|\sigma|$ is maximal with the vertex $u_\sigma$ in $\BDelta$.
\end{ex}

Once again this example admits a generalisation:
\begin{lem}\label{Lemma: DR Phi_i to BDelta}
Let $\Delta$ be a simplicial complex on vertex set $V$, $i = \dim \Delta + 1$, and $\tDelta=\phi_i(\Delta)$ on vertex set $V\sqcup S^i_\Delta$. There is a deformation retraction $\tDelta \rightsquigarrow \tDelta|_{S^i_\Delta} = \BDelta$.
\end{lem}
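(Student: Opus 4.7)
The plan is to emulate the example in Example \ref{Example: facet-first DR}: I will iteratively apply Lemma \ref{Lemma: Deformation Retract} to delete all $V$-faces of $\Delta$ (viewed as a subcomplex of $\tDelta$) in decreasing order of dimension, thereby collapsing the $V$-side of $\tDelta$ onto its $S^i_\Delta$-side. Enumerating the nonempty faces of $\Delta$ as $\sigma_1,\dots,\sigma_N$ with $\dim\sigma_j\geq\dim\sigma_k$ whenever $j<k$, and setting $\tDelta^{(k)}:=\tDelta-\sigma_1-\cdots-\sigma_k$, I will show that each deletion $\tDelta^{(k-1)}\rightsquigarrow\tDelta^{(k)}$ arises from Lemma \ref{Lemma: Deformation Retract} applied with $g=\sigma_k$ and $f=\sigma_k\cup\{u_{\sigma_k}\}$. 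Concatenating these retractions yields $\tDelta\rightsquigarrow\tDelta^{(N)}$. The final complex coincides with $\BDelta$ because, when $i=\dim\Delta+1$, the set $S^i_\Delta$ indexes precisely the nonempty faces of $\Delta$, so the faces of $\tDelta$ supported on $S^i_\Delta$ are exactly the sets $\{u_{\tau_1},\dots,u_{\tau_r}\}$ for chains $\tau_1\subset\cdots\subset\tau_r$ of nonempty faces of $\Delta$, i.e.\ exactly $\BDelta$.

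A preparatory step will be to describe the facets of $\tDelta$ explicitly. Unwinding Definition \ref{Definition: Phi_i}, I expect every facet to take the form $\tau_1\sqcup\{u_{\tau_1},\dots,u_{\tau_r}\}$ where $\tau_1\subsetneq\tau_2\subsetneq\cdots\subsetneq\tau_r$ is a saturated chain in $\Delta$ ending at a facet $\tau_r$ of $\Delta$. The crucial consequence I will extract is that for any face $F$ of $\tDelta$, the intersection $F\cap V$ is itself a face of $\Delta$, and in fact coincides with the $V$-part $\mu$ in the representation $F=\mu\sqcup\{u_{\rho_1},\dots,u_{\rho_s}\}$.

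With this description in hand, the inductive step reduces to verifying the hypothesis of Lemma \ref{Lemma: Deformation Retract}: every facet $F$ of $\tDelta^{(k-1)}$ containing $\sigma_k$ also contains $u_{\sigma_k}$. Given such an $F$, the face $F\cap V$ of $\Delta$ contains $\sigma_k$, and it cannot strictly contain it, for then $\dim(F\cap V)>\dim\sigma_k$ would force $F\cap V=\sigma_j$ for some $j<k$, contradicting $F\in\tDelta^{(k-1)}$. Hence $F\cap V=\sigma_k$, so $F\cup\{u_{\sigma_k}\}$ is still a face of $\tDelta$ (by taking $\tau_1=\sigma_k$ in the chain representation of $F$) whose $V$-part is still $\sigma_k$; it therefore lies in $\tDelta^{(k-1)}$, and maximality of $F$ forces $u_{\sigma_k}\in F$. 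I will also record the routine check that $\sigma_k$ and $\sigma_k\cup\{u_{\sigma_k}\}$ themselves belong to $\tDelta^{(k-1)}$, which follows immediately from the dimension ordering since neither of them can contain any $\sigma_j$ with $j<k$.

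The main obstacle I anticipate is keeping careful track of how the chain representation of a facet of $\tDelta$ interacts with the earlier deletions — in particular, verifying that $F\cup\{u_{\sigma_k}\}$ remains a face after the previous removals, and that it is genuinely larger than $F$ (so that maximality of $F$ produces a contradiction). Beyond this, the argument should be essentially bookkeeping on the combinatorial structure of $\tDelta$ relative to $\Delta$.
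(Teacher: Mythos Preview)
Your proposal is correct and follows essentially the same approach as the paper's proof: both iteratively delete the nonempty faces of $\Delta$ from $\tDelta$ in decreasing order of size via Lemma \ref{Lemma: Deformation Retract}, using $g=\sigma$ and $f=\sigma\cup\{u_\sigma\}$ at each step. The paper's version is more terse and leaves the bookkeeping of the intermediate complexes $\tDelta^{(k)}$ implicit, while you spell out more carefully why each facet of $\tDelta^{(k-1)}$ containing $\sigma_k$ must satisfy $F\cap V=\sigma_k$ (and hence contain $u_{\sigma_k}$); but the underlying argument is the same.
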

\begin{proof}
The condition on the value of $i$ here means that $S^i_\Delta$ contains vertices $u_\sigma$ for every face $\sigma$ of $\Delta$ such that $|\sigma|\geq \dim \Delta + 2 - (\dim \Delta + 1) = 1$. In other words, \textit{every nonempty} face $\sigma$ of $\Delta$ has a corresponding vertex $u_\sigma$ in $S^i_\Delta$, but $u_\emptyset$ is not in $S^i_\Delta$. In particular this means that $\tDelta|_{S^i_\Delta} = \BDelta$.

Let $\sigma$ be a maximally sized face of $\Delta$ (i.e. a facet of dimension $\dim \Delta$). By construction, because $|\sigma|$ is maximal, every facet of $\tDelta$ that contains $\sigma$ also contains $u_\sigma$. Thus, setting $g = \sigma$ and $f=\sigma \cup \{u_\sigma\}$, Lemma \ref{Lemma: Deformation Retract} gives us a deformation retraction $\tDelta \rightsquigarrow \tDelta - \sigma$.

Because every nonempty face $\sigma$ of $\Delta$ has a corresponding vertex in $S^i_\Delta$, then we may continue in this way to remove every nonempty face $\sigma$ of $\Delta$ from $\tDelta$ - in decreasing order of the size of $\sigma$ - and thus obtain a deformation retraction $\tDelta \rightsquigarrow \tDelta|_{S^i_\Delta} = \BDelta$.
\end{proof}

The deformation in Lemma \ref{Lemma: DR Phi_i to BDelta} has the following important corollary.

\begin{cor}\label{Corollary: DR Phi_i Acyclic}
Let $\Delta$ be a simplicial complex on vertex set $V$, and $i > \dim \Delta + 1$. The complex $\tDelta=\phi_i(\Delta)$ is acyclic.
\end{cor}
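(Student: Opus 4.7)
My plan is to reduce to the case $i = \dim \Delta + 2$, then repeat the deformation-retraction argument of Lemma \ref{Lemma: DR Phi_i to BDelta} essentially verbatim, and finish by observing that the resulting retract is a cone with apex $u_\emptyset$.

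First I would invoke Remark \ref{Remark: Phi_i = Phi_j}: setting $m = \dim \Delta + 2$, we have $\phi_i(\Delta) = \phi_m(\Delta)$ whenever $i \geq m$, so it suffices to treat $i = m$. In this case $S^i_\Delta = \{u_\sigma : \sigma \in \Delta\}$ and, crucially, $u_\emptyset \in S^i_\Delta$.

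Next I would mirror the proof of Lemma \ref{Lemma: DR Phi_i to BDelta}: pick a maximally sized face $\sigma$ of $\Delta$; then the chain condition $\sigma \subseteq \tau_1 \subsetneq \dots \subsetneq \tau_r$ in $\Delta$ forces $\tau_1 = \sigma$ and $r = 1$, so $\sigma \cup \{u_\sigma\}$ is the unique facet of $\phi_i(\Delta)$ containing $\sigma$. Applying Lemma \ref{Lemma: Deformation Retract} with $g = \sigma$ and $f = \sigma \cup \{u_\sigma\}$ deforms $\sigma$ away, and iterating in decreasing order of size for every nonempty face of $\Delta$ yields a deformation retraction $\phi_i(\Delta) \rightsquigarrow \phi_i(\Delta)|_{S^i_\Delta}$. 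The main subtlety I expect to need to verify is that the presence of $u_\emptyset$ in $S^i_\Delta$ does not break the ``unique facet'' condition at each stage; this is fine because adjoining $u_\emptyset$ to a face with a nonempty $V$-part would require the $V$-part to be contained in $\emptyset$, which fails.

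Finally, I would identify $\phi_i(\Delta)|_{S^i_\Delta}$ with the cone $\BDelta \ast \{u_\emptyset\}$. A face of this induced subcomplex corresponds (taking $\sigma = \emptyset$ in the construction of $\phi_i$) to a chain $\tau_1 \subsetneq \dots \subsetneq \tau_r$ of faces of $\Delta$; since $\emptyset \in \Delta$ and $u_\emptyset \in S^i_\Delta$, we may always prepend $\emptyset$ to this chain, so $u_\emptyset$ lies in a facet containing every face of $\BDelta$. Hence $\phi_i(\Delta)|_{S^i_\Delta}$ is a cone over $u_\emptyset$, and is acyclic by Corollary \ref{Corollary: Homology of Cone}. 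Combined with Proposition \ref{Proposition: Homotopy Equiv => Homology Equiv}, this yields the acyclicity of $\phi_i(\Delta)$.
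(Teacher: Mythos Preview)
Your argument is correct, and it reaches the same endpoint as the paper (the cone $\BDelta \ast \{u_\emptyset\}$), but via a different route. The paper does not re-run the deformation-retraction argument on $\phi_m(\Delta)$ directly; instead it decomposes $\phi_m(\Delta)$ as the union of $\phi_{m-1}(\Delta)$ and $\BDelta \ast \{u_\emptyset\}$, intersecting along $\BDelta$, and then invokes Lemma~\ref{Lemma: DR Phi_i to BDelta} as a black box on the first piece to retract it onto $\BDelta$, extending this to a retraction of the whole complex onto the cone. Your approach is more self-contained: you observe that the proof of Lemma~\ref{Lemma: DR Phi_i to BDelta} goes through verbatim with $i = \dim\Delta + 2$ (since the only hypothesis used there is that every nonempty $\sigma \in \Delta$ has $u_\sigma \in S^i_\Delta$), and that the extra vertex $u_\emptyset$ is harmless because no face with nonempty $V$-part can contain it. This lands you directly on $\phi_m(\Delta)|_{S^m_\Delta} = \BDelta \ast \{u_\emptyset\}$ without the intermediate decomposition. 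Both arguments are short; yours avoids the (admittedly standard but unstated) step of extending a deformation retraction of a subcomplex across a union, while the paper's avoids re-verifying the retraction lemma in a new setting.
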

\begin{proof}
The condition on the value of $i$ here means that \textit{every} face $\sigma$ of $\Delta$ has a corresponding vertex $u_\sigma$ in $S^i_\Delta$, including the empty set. By Remark \ref{Remark: Phi_i = Phi_j} we also have that $\tDelta$ is equal to $\phi_m(\Delta)$ where $m = \dim \Delta + 2$.

We can decompose $\tDelta$ into those facets $F$ which contain $u_\emptyset$ and those which do not. Note that we have $u_\emptyset \notin F$ if and only if $F$ is a facet of $\phi_{m-1}(\Delta)$; and $u_\emptyset \in F$ if and only if $F-\{u_\emptyset\}$ is a facet of $\BDelta$. Thus $\tDelta$ may be expressed as the union of $\phi_{m-1}(\Delta)$ and $\BDelta \ast \{u_\emptyset\}$, and these two subcomplexes intersect at $\BDelta$.

By Lemma \ref{Lemma: DR Phi_i to BDelta} we have a deformation retraction $\phi_{m-1}(\Delta)\rightsquigarrow \BDelta$, which extends to a deformation retraction $\tDelta \rightsquigarrow \BDelta \ast \{u_\emptyset\}$. The complex $\BDelta \ast \{u_\emptyset\}$ is a cone over $u_\emptyset$, and is thus acyclic.
\end{proof}

\begin{ex}\label{Example: phi3 def retract}
Once again, let $\Delta$ be the boundary of the $2$-simplex on vertex set $\{x,y,z\}$ as in Example \ref{Example: phi_i(Delta)}. Corollary \ref{Corollary: DR Phi_i Acyclic} gives us a deformation retraction $\phi_3(\Delta) \rightsquigarrow \BDelta$.
\begin{center}
\begin{tabular}{ c c c c c }
	\begin{tikzpicture}[scale = 0.6]
		\tikzstyle{point}=[circle,thick,draw=black,fill=black,inner sep=0pt,minimum width=3pt,minimum height=3pt]
		\node (x)[point, label=left:$x$] at (-1.7,-1) {};
		\node (y)[point, label=right:$y$] at (3.7,-1) {};
		\node (z)[point, label=above:$z$] at (1,3.7) {};
		
		\node (a)[point] at (0,0) {};
		\node (b)[point] at (2,0) {};
		\node (c)[point] at (1,1.7) {};
		
		\node (d)[point] at (1,-0.3) {}; 
		\node (e)[point] at (1.76,1) {}; 
		\node (f)[point] at (0.24,1) {}; 
		
		\node (g)[point] at (1,0.5) {};
		
		\begin{scope}[on background layer]
			\draw[fill=gray] (x.center) -- (a.center) -- (d.center) -- cycle;
			\draw[fill=gray] (x.center) -- (a.center) -- (f.center) -- cycle;
			\draw[fill=gray] (y.center) -- (b.center) -- (d.center) -- cycle;
			\draw[fill=gray] (y.center) -- (b.center) -- (e.center) -- cycle;
			\draw[fill=gray] (z.center) -- (c.center) -- (e.center) -- cycle;
			\draw[fill=gray] (z.center) -- (c.center) -- (f.center) -- cycle;
			
			\draw[fill=gray] (x.center) -- (y.center) -- (d.center) -- cycle;
			\draw[fill=gray] (y.center) -- (z.center) -- (e.center) -- cycle;
			\draw[fill=gray] (x.center) -- (z.center) -- (f.center) -- cycle;
			
			\draw[fill=gray] (a.center) -- (d.center) -- (g.center) -- cycle;
			\draw[fill=gray] (a.center) -- (f.center) -- (g.center) -- cycle;
			\draw[fill=gray] (b.center) -- (d.center) -- (g.center) -- cycle;
			\draw[fill=gray] (b.center) -- (e.center) -- (g.center) -- cycle;
			\draw[fill=gray] (c.center) -- (e.center) -- (g.center) -- cycle;
			\draw[fill=gray] (c.center) -- (f.center) -- (g.center) -- cycle;
		\end{scope}
	\end{tikzpicture} && 
	\begin{tikzpicture}[scale = 0.8]
		\tikzstyle{point}=[circle,thick,draw=black,fill=black,inner sep=0pt,minimum width=3pt,minimum height=3pt]
		\node (x) at (-1.7,-1) {};
		\node (y) at (3.7,-1) {};
		\node (z) at (1,3.7) {};
		
		\node (a)[point,label=left:$u_{\{x\}}$] at (0,0) {};
		\node (b)[point,label=right:$u_{\{y\}}$] at (2,0) {};
		\node (c)[point,label=above:$u_{\{z\}}$] at (1,1.7) {};
		
		\node (d)[point,label=below:$u_{\{x,y\}}$] at (1,-0.3) {}; 
		\node (e)[point,label=right:$u_{\{y,z\}}$] at (1.76,1) {}; 
		\node (f)[point,label=left:$u_{\{x,z\}}$] at (0.24,1) {}; 
		
		\node (g)[point] at (1,0.5) {};
		
		\begin{scope}[on background layer]
			\draw[fill=gray] (a.center) -- (d.center) -- (g.center) -- cycle;
			\draw[fill=gray] (a.center) -- (f.center) -- (g.center) -- cycle;
			\draw[fill=gray] (b.center) -- (d.center) -- (g.center) -- cycle;
			\draw[fill=gray] (b.center) -- (e.center) -- (g.center) -- cycle;
			\draw[fill=gray] (c.center) -- (e.center) -- (g.center) -- cycle;
			\draw[fill=gray] (c.center) -- (f.center) -- (g.center) -- cycle;
		\end{scope}
	\end{tikzpicture}
	&&  \begin{tikzpicture}[scale = 0.8]
		\tikzstyle{point}=[circle,thick,draw=black,fill=black,inner sep=0pt,minimum width=3pt,minimum height=3pt]
		\node (x) at (-1.5,-1) {};
		\node (y) at (3.7,-1) {};
		\node (z) at (1,3.7) {};
		
		\node (g)[point,label=below:$u_{\emptyset}$] at (1,0.5) {};
	\end{tikzpicture}\\
	$\phi_3(\Delta)$& $\rightsquigarrow$  & $\BDelta\ast \{u_\emptyset\}$ & $\rightsquigarrow$  & $\{u_\emptyset\}$
\end{tabular}
\end{center}
\end{ex}
\begin{rem}\label{Remark: Phi-i and Partition Complex Deformation Retractions are Similar}
The deformation retractions given in these last few lemmas can be seen as analogues to the deformation retractions given in Propositions \ref{Proposition: Deformation D(2,p,m) to D(1,p,m)} and \ref{Proposition: Deformation D(2,p,p+1) to point} for partition complexes $\calP(2,p,m)$. 
\end{rem}

\begin{lem}\label{Lemma: DR Phi_i link of new vertices acyclic}
Let $\Delta$ be a simplicial complex on vertex set $V$, $i\leq \dim \Delta + 1$, and $\tDelta=\phi_i(\Delta)$ on vertex set $V\sqcup S^i_\Delta$. For any nonempty face $\sigma$ of $\tDelta$ contained entirely in $S^i_\Delta$, the complex $\link_{\tDelta} \sigma$ is acyclic.
\end{lem}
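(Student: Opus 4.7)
The plan is to construct a deformation retraction from $\link_{\tDelta} \sigma$ onto a cone, which will automatically be acyclic. Write $\sigma = \{u_{\tau_1}, \ldots, u_{\tau_r}\}$ with $\tau_1 \subsetneq \cdots \subsetneq \tau_r$ a chain in $\Delta$ and each $|\tau_j| \geq \dim \Delta + 2 - i$. From Definition \ref{Definition: Phi_i}, the facets of $\tDelta$ containing $\sigma$ are precisely those of the form $\rho_1 \cup \{u_{\rho_1}, \ldots, u_{\rho_s}\}$ where $\rho_1 \subsetneq \cdots \subsetneq \rho_s$ is a maximal chain in $\Delta$ refining the $\tau_j$'s, with $|\rho_1| \geq \dim \Delta + 2 - i$ and $\rho_s$ a facet of $\Delta$; maximality forces the $\alpha$-part of such a face to equal $\rho_1$. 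Consequently the vertices of $L := \link_{\tDelta} \sigma$ consist of the vertices of $\tau_1$ together with $u_\mu$ for each $\mu \in \Delta \setminus \{\tau_1, \ldots, \tau_r\}$ that is comparable with every $\tau_j$ and has $|\mu| \geq \dim \Delta + 2 - i$.

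The key step is to delete from $L$, one at a time, every vertex $u_\mu$ with $\mu \subsetneq \tau_1$ and $|\mu| \geq \dim \Delta + 2 - i$, processed in order of increasing $|\mu|$. At the moment when $u_\mu$ is about to be deleted, every strictly smaller such vertex has already been removed, so every facet of the current complex arises from a chain $\rho_1 \subsetneq \cdots \subsetneq \rho_s$ with $|\rho_1| \geq |\mu|$. If such a facet contains $u_\mu$, then $\mu$ appears in its chain, and the size bound on $\rho_1$ forces $\rho_1 = \mu$; hence $\alpha = \rho_1 = \mu$ and the facet contains every vertex of $\mu$. Applying Lemma \ref{Lemma: Deformation Retract} with $g = \{u_\mu\}$ and $f = \mu \cup \{u_\mu\}$ then yields a deformation retraction identifying $u_\mu$ with a vertex of $\mu$. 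Composing all such retractions produces a deformation retraction from $L$ onto the complex $L^{(T)}$ obtained by removing every $u_\mu$ with $\mu \subsetneq \tau_1$.

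Once all these deletions are done, each remaining facet corresponds to a chain starting at $\rho_1 = \tau_1$, so $\alpha = \tau_1$ and every facet of $L^{(T)}$ contains all of $\tau_1$ as vertices. Since $i \leq \dim \Delta + 1$ forces $|\tau_1| \geq 1$, picking any $v \in \tau_1$ shows that $v$ lies in every facet of $L^{(T)}$, which is therefore a cone over $v$ and in particular contractible. By Proposition \ref{Proposition: Homotopy Equiv => Homology Equiv}, $L$ and $L^{(T)}$ have the same homology, so $L$ is acyclic.

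The main obstacle is verifying that the iterative deletion procedure is clean, i.e.\ that no unexpected new facets appear along the way. The point is that if $F$ was a facet of the current complex containing $u_\mu$, coming from a chain $\mu = \rho_1 \subsetneq \rho_2 \subsetneq \cdots$, then $F - \{u_\mu\}$ is strictly contained in the facet obtained by truncating the chain to $\rho_2 \subsetneq \cdots$ and enlarging $\alpha$ from $\mu$ to $\rho_2$ (this is a face because $\mu \subsetneq \rho_2 \subseteq \tau_1$, which in particular guarantees $\rho_2$ exists inside $\tau_1$). Hence the facets of the deleted complex are exactly those facets of the previous complex that did not already contain $u_\mu$, and the clean description of facet chains propagates unchanged to the next iteration.
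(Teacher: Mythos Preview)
Your proof is correct and follows essentially the same approach as the paper: write $\sigma=\{u_{\tau_1},\dots,u_{\tau_r}\}$, observe that $\tau_1\neq\emptyset$, and delete the vertices $u_\mu$ with $\mu\subsetneq\tau_1$ in increasing order of $|\mu|$ using Lemma~\ref{Lemma: Deformation Retract} with $g=\{u_\mu\}$ and $f=\mu\cup\{u_\mu\}$. The only minor difference is that the paper then continues deleting the vertices $u_\rho$ with $\rho\supsetneq\tau_1$ (using $f=\{u_\rho\}\cup\tau_1$) to retract all the way down to the simplex $\langle\tau_1\rangle$, whereas you stop one step earlier and simply observe that the intermediate complex is already a cone over any vertex of $\tau_1$; these are the same idea, and your final ``cleanliness'' paragraph is a slightly more explicit version of what the paper leaves implicit in the phrase ``continuing in this way''.
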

\begin{proof}
Suppose $\sigma = \{u_{\tau_1},\dots,u_{\tau_r}\}$ for some faces $\tau_1 \subset \dots \subset \tau_r$ of $\Delta$. The condition on $i$ implies that $\tau_1 \neq \emptyset$.

We must have $\tau_1 \in \link_{\tDelta} \sigma$ because $\tau_1 \sqcup \{u_{\tau_1},\dots,u_{\tau_r}\}$ is a face of $\tDelta$. Also, no vertex in $V-\tau_1$ can be contained in $\link_{\tDelta} \sigma$, because by construction, for any facet $G$ of $\tDelta$ containing $u_{\tau_1}$, we have $G\cap V \subseteq \tau_1$. We claim that there is a deformation retraction $\link_{\tDelta} \sigma \rightsquigarrow \link_{\tDelta} \sigma|_V = \langle\tau_1\rangle$, which is acyclic.

We start by removing every vertex $u_\rho$ in $\link_{\tDelta} \sigma$ and $S^i_\Delta$ for which $\rho \subset \tau_1$. Suppose $u_\rho$ is any such vertex with $|\rho|$ minimal. By the minimality of $|\rho|$, we know that every facet of $\tDelta$ containing $u_\rho$ must also contain $\rho$. Thus every facet of $\link_{\tDelta} \sigma$ containing $u_\rho$ must also contain $\rho$. Setting $g=\{u_\rho\}$ and $f = \{u_\rho\} \cup \rho$, Lemma \ref{Lemma: Deformation Retract} gives us a deformation retraction $\link_{\tDelta} \sigma \rightsquigarrow \link_{\tDelta} \sigma - \{u_\rho\}$. Continuing in this way we may remove every vertex $u_\rho$ in $S^i_\Delta$ from $\link_{\tDelta} \sigma$, in increasing order of the size of $\rho$.

Now we remove the vertices $u_\rho$ in $\link_{\tDelta} \sigma$ and $S^i_\Delta$ for which $\rho \supset \tau_1$. Suppose $u_\rho$ is any such vertex. Because $\rho$ contains $\tau_1$, every facet of $\tDelta$ containing $u_\rho$ and $\sigma$ must also contain $\tau_1$. Thus every facet of $\link_{\tDelta} \sigma$ containing $u_\rho$ must also contain $\tau_1$. Setting $g=\{u_\rho\}$ and $f = \{u_\rho\} \cup \tau_1$, Lemma \ref{Lemma: Deformation Retract} allows us to remove $u_\rho$ from $\link_{\tDelta} \sigma$.
\end{proof}

\section{Links in $\BDelta$}\label{Subsection: Links in BDelta}
Let $\Delta$ be a simplicial complex and $i > \dim \Delta + 1$, and set $\tDelta = \phi_i(\Delta)$. The condition on $i$ ensures that $u_\emptyset$ is a vertex in $S^i_\Delta$.

In this section, we examine the links of those faces $\sigma$ of $\tDelta$ for which $\emptyset \neq \sigma \subseteq S^i_\Delta$ (i.e. the nonempty faces in the induced subcomplex $\tDelta|_{S^i_\Delta}$). We begin by showing that for any such $\sigma$, we can express the homology of $\link_{\tDelta} \sigma$ in terms of the homology of $\link_{\tDelta} (\sigma\cup \{u_\emptyset\})$. As we will explain, this allows us to restrict our attention to the links in the barycentric subdivision complex $\BDelta$.

\begin{prop}\label{Proposition: link of new vertices may as well contain u-emptyset}
Let $\Delta$ be a simplicial complex and $i > \dim \Delta + 1$, and set $\tDelta = \phi_i(\Delta)$. Suppose $\emptyset \neq \sigma \subseteq S^i_\Delta$ is a face of $\tDelta$ with $u_{\emptyset}\notin \sigma$. For every $j\geq -1$ we have an isomorphism $\Hred_j(\link_{\tDelta} \sigma)\cong \Hred_{j-1}(\link_{\tDelta} (\sigma\cup \{u_{\emptyset}\}))$.
\end{prop}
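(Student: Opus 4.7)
The plan is to decompose $\link_{\tDelta}\sigma$ into two acyclic subcomplexes whose intersection is $\link_{\tDelta}(\sigma\cup\{u_\emptyset\})$, and then extract the asserted degree-shifting isomorphism from the Mayer-Vietoris sequence.

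First I would set $m=\dim\Delta+2$, so that by Remark~\ref{Remark: Phi_i = Phi_j} I may assume $i=m$ and thus $S^i_\Delta=\{u_\tau:\tau\in\Delta\}$ contains $u_\emptyset$. As in the proof of Corollary~\ref{Corollary: DR Phi_i Acyclic}, decomposing $\tDelta$ according to whether a facet contains $u_\emptyset$ gives $\tDelta=\phi_{m-1}(\Delta)\cup (\BDelta\ast\{u_\emptyset\})$, with intersection $\BDelta$. Since $u_\emptyset\notin\sigma$, both $\sigma\in\phi_{m-1}(\Delta)$ and $\sigma\in\BDelta\ast\{u_\emptyset\}$, so I can take links in each piece. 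Writing
\[ A=\link_{\phi_{m-1}(\Delta)}\sigma,\qquad B=\link_{\BDelta\ast\{u_\emptyset\}}\sigma, \]
the definition of ``link'' gives at once $\link_{\tDelta}\sigma=A\cup B$ and $A\cap B=\link_{\BDelta}\sigma$.

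The next step is to show $A$ and $B$ are acyclic. For $B$, since $u_\emptyset$ lies in every facet of $\BDelta\ast\{u_\emptyset\}$ that contains $\sigma$, one has $B=(\link_{\BDelta}\sigma)\ast\{u_\emptyset\}$, which is a cone and hence acyclic. For $A$, note that $\sigma$ is a nonempty face of $\phi_{m-1}(\Delta)$ contained entirely in $S^{m-1}_\Delta=S^i_\Delta-\{u_\emptyset\}$, and $m-1=\dim\Delta+1$, so Lemma~\ref{Lemma: DR Phi_i link of new vertices acyclic} applies directly and gives that $A$ is acyclic.

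Finally, I would identify $\link_{\tDelta}(\sigma\cup\{u_\emptyset\})$ with $\link_{\BDelta}\sigma$: a face $F$ of $\tDelta$ contains $u_\emptyset$ iff $F-\{u_\emptyset\}$ is a face of $\BDelta$, so $\link_{\tDelta}\{u_\emptyset\}=\BDelta$, and then $\link_{\tDelta}(\sigma\cup\{u_\emptyset\})=\link_{\BDelta}\sigma=A\cap B$. With $A$ and $B$ acyclic, the reduced Mayer-Vietoris sequence (Proposition~\ref{Proposition: MVS}) collapses to the isomorphism
\[ \Hred_j(\link_{\tDelta}\sigma)\;\cong\;\Hred_{j-1}(A\cap B)\;=\;\Hred_{j-1}(\link_{\tDelta}(\sigma\cup\{u_\emptyset\})), \]
for every $j\geq -1$, which is the desired statement.

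I do not expect any serious obstacle: the real content has already been packaged into Lemma~\ref{Lemma: DR Phi_i link of new vertices acyclic} and Corollary~\ref{Corollary: DR Phi_i Acyclic}. The only point requiring a little care is checking that the decomposition $\tDelta=\phi_{m-1}(\Delta)\cup(\BDelta\ast\{u_\emptyset\})$ passes cleanly to links (so that $A\cup B$ really is all of $\link_{\tDelta}\sigma$ and $A\cap B$ really is $\link_{\BDelta}\sigma$), but this is immediate from the face-level description of the link once one notes that $\sigma$ sits inside both pieces of the decomposition.
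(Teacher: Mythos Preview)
Your proof is correct and is essentially the same argument as the paper's: both decompose $\link_{\tDelta}\sigma$ into the part coming from facets containing $u_\emptyset$ (a cone over $u_\emptyset$) and the part coming from facets not containing $u_\emptyset$ (a link in $\phi_{\dim\Delta+1}(\Delta)$, acyclic by Lemma~\ref{Lemma: DR Phi_i link of new vertices acyclic}), identify the intersection with $\link_{\tDelta}(\sigma\cup\{u_\emptyset\})$, and apply Mayer--Vietoris. The only cosmetic differences are that your labels $A$ and $B$ are swapped relative to the paper's, and you set up the decomposition at the level of $\tDelta$ (via the identity $\tDelta=\phi_{m-1}(\Delta)\cup(\BDelta\ast\{u_\emptyset\})$ from Corollary~\ref{Corollary: DR Phi_i Acyclic}) before passing to links, whereas the paper decomposes the link directly.
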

\begin{proof}
As in the proof of Corollary \ref{Corollary: DR Phi_i Acyclic}, we may decompose $\link_{\tDelta} \sigma$ into a subcomplex $A$ consisting of facets which contain $u_\emptyset$ and a subcomplex $B$ consisting of those which do not. The intersection of these subcomplexes consists of those faces $f$ in $\link_{\tDelta} \sigma$ for which $u_{\emptyset}$ is not in $f$ but $f\sqcup \{u_\emptyset\}$ is a face of $\tDelta$. In other words, we have $A\cap B = \link_{\tDelta} (\sigma\cup \{u_{\emptyset}\})$.

The subcomplex $A$ is a cone over $u_\emptyset$, and is therefore acyclic. For every face $f$ in $B$, the intersection of $f$ and $S^i_{\Delta}$ contains only vertices $u_\tau$ for which $\tau$ is nonempty. All of these are faces of $\phi_m(\Delta)$ where $m=\dim\Delta +1$, and hence we have $B=\link_{\phi_m(\Delta)} \sigma$, which is also acyclic by Lemma \ref{Lemma: DR Phi_i link of new vertices acyclic}.

Thus for every $j\geq -1$, the Mayer-Vietoris Sequence gives us an exact sequence
\begin{equation*}
0 \rightarrow \Hred_j(\link_{\tDelta} \sigma)\rightarrow \Hred_{j-1}(\link_{\tDelta} (\sigma\cup \{u_{\emptyset}\}))\rightarrow 0 
\end{equation*}
as required.
\end{proof}

Proposition \ref{Proposition: link of new vertices may as well contain u-emptyset} allows us to restrict our attention to those faces of $\tDelta|_{S^i_\Delta}$ which contain $u_\emptyset$. Note we have $\link_{\tDelta} u_\emptyset = \BDelta$, and hence the link of any face of $\tDelta$ which contains $u_\emptyset$ must be a link in $\BDelta$. For this reason, we devote the rest of this section to investigating the links of $\BDelta$.

\begin{prop}\label{Proposition: Links in BDelta}
Let $\sigma = \{u_{\tau_1},\dots, u_{\tau_r}\}\in \BDelta$ for some faces $\tau_1 \subset \dots \subset \tau_r$ of $\Delta$. We have an isomorphism of complexes
\begin{equation*}
\link_{\BDelta}\sigma \cong \mathcal{B}(\lkds[\tau_r])\ast \mathcal{B}(\link_{\partial \tau_r} \tau_{r-1})\ast \dots \ast \mathcal{B}(\link_{\partial \tau_2} \tau_1)\ast \mathcal{B}(\partial \tau_1).
\end{equation*}
\end{prop}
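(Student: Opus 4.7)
The plan is to prove this by explicitly describing the faces of $\link_{\BDelta}\sigma$ and matching them, via an inclusion-preserving bijection on vertices, with the faces of the join on the right-hand side.

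A face of $\BDelta$ is by definition a set $\{u_{\rho_1},\dots,u_{\rho_s}\}$ corresponding to a chain $\rho_1\subsetneq\dots\subsetneq\rho_s$ of nonempty faces of $\Delta$. Such a face lies in $\link_{\BDelta}\sigma$ precisely when it is disjoint from $\sigma$ and $\{u_{\rho_1},\dots,u_{\rho_s}\}\cup\sigma$ is again a face of $\BDelta$, which by definition means that $\{\rho_1,\dots,\rho_s\}\cup\{\tau_1,\dots,\tau_r\}$ forms a chain under strict inclusion. Thus each $\rho_j$ sits in exactly one of the following $r+1$ ``slots'': strictly below $\tau_1$, strictly between $\tau_i$ and $\tau_{i+1}$ for some $1\le i<r$, or strictly above $\tau_r$. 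This partitions the $\rho_j$'s into $r+1$ sub-chains $R_0,R_1,\dots,R_r$.

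The next step is to identify each slot's contribution with a barycentric subdivision. For $R_0$, the chain consists of nonempty proper subsets of $\tau_1$, i.e.\ a chain in $\partial\tau_1$, giving a face of $\mathcal{B}(\partial\tau_1)$. For $1\le i<r$, the chain $R_i$ consists of faces $\rho$ with $\tau_i\subsetneq\rho\subsetneq\tau_{i+1}$; the map $\rho\mapsto\rho-\tau_i$ turns this into a chain of nonempty faces of $\link_{\partial\tau_{i+1}}\tau_i$, and hence a face of $\mathcal{B}(\link_{\partial\tau_{i+1}}\tau_i)$. For $R_r$, the map $\rho\mapsto\rho-\tau_r$ turns the chain of strict supersets of $\tau_r$ in $\Delta$ into a chain of nonempty faces of $\lkds[\tau_r]$, i.e.\ a face of $\mathcal{B}(\lkds[\tau_r])$. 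Each of these maps is clearly a bijection on vertices, sending vertex sets of chains to vertex sets of chains.

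Finally, I would assemble these bijections into the isomorphism. A face of the join $\mathcal{B}(\lkds[\tau_r])\ast\mathcal{B}(\link_{\partial\tau_r}\tau_{r-1})\ast\dots\ast\mathcal{B}(\link_{\partial\tau_2}\tau_1)\ast\mathcal{B}(\partial\tau_1)$ is by Definition \ref{Definition: Simplicial Complex Key Terminology and Constructions}(9) the disjoint union of one face from each factor. The decomposition above shows that every face of $\link_{\BDelta}\sigma$ arises uniquely this way, and conversely that any independent choice of chains in each slot recombines to a chain in $\BDelta$ disjoint from $\sigma$. The crucial consistency check — the only place where anything could go wrong — is that the different slots really are \emph{independent}: whether a $\rho\in R_i$ strictly contains $\rho'\in R_{i-1}$ is automatic from their slot positions, so no cross-slot compatibility condition cuts down the join. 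Given this observation, the disjoint-union bijection is the required isomorphism of complexes. The main obstacle is purely bookkeeping — keeping the three cases (below $\tau_1$, between consecutive $\tau_i$, above $\tau_r$) cleanly separated and checking that in the middle case the passage through $\link_{\partial\tau_{i+1}}\tau_i$ correctly recovers the condition $\tau_i\subsetneq\rho\subsetneq\tau_{i+1}$.
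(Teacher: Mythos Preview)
Your proposal is correct and follows essentially the same approach as the paper. The paper also partitions the vertices of $\link_{\BDelta}\sigma$ into $r+1$ ``slots'' (naming the resulting induced subcomplexes $A_0,\dots,A_r$), identifies each $A_j$ with the corresponding barycentric subdivision via the same vertex maps $u_\rho\mapsto u_{\rho-\tau_j}$, and then verifies directly that $\link_{\BDelta}\sigma = A_r\ast\dots\ast A_0$ by the same independence-of-slots observation you make.
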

\begin{proof}

For notational convenience, we set $\tau_0 = \emptyset$, so that $\mathcal{B}(\partial \tau_1)$ may be rewritten as $\mathcal{B}(\link_{\partial \tau_1} \tau_0)$.

Let $A_r$ denote the induced subcomplex of $\BDelta$ on vertices of the form $u_f$ where $f$ contains $\tau_r$; and for each $0\leq j \leq r-1$, let $A_j$ denote the induced subcomplex of $\BDelta$ on vertices of the form $u_f$ for which we have $\tau_j \subset f\subset \tau_{j+1}$. Note that $A_r,\dots,A_0$ are pairwise disjoint subcomplexes of $\BDelta$.

We claim that $\link_{\BDelta}\sigma = A_r\ast A_{r-1}\ast \dots\ast A_0$. This is sufficient to prove our proposition because the complex $A_r$ is isomorphic to $\mathcal{B}(\lkds[\tau_r])$ via the vertex map $u_f\mapsto u_{f-\tau_r}$, and for each $0\leq j \leq r-1$, the complex $A_j$ is isomorphic to $\mathcal{B}(\link_{\partial \tau_{j+1}} \tau_j)$ via the vertex map $u_f\mapsto u_{f-\tau_j}$.

Let $\rho = \rho_r\sqcup \dots \sqcup \rho_0$ be a face of $A_r\ast \dots\ast A_0$, with $\rho_j\in A_j$ for each $0\leq j \leq r$. For each $0\leq j \leq r-1$, the face $\rho_j$ must be of the form $\{u_{f_1},\dots,u_{f_m}\}$ for some sequence of faces $\tau_j \subset f_1\subset \dots \subset f_m\subset \tau_{j+1}$ of $\Delta$. Similarly the face $\rho_r$ must be of the form $\{u_{f_1},\dots,u_{f_m}\}$ for some sequence of faces $\tau_r \subset f_1\subset \dots \subset f_m$ of $\Delta$. In particular, none of the vertices $u_{\tau_1},\dots,u_{\tau_r}$ are contained in $\rho$, so we have $\rho \cap \sigma = \emptyset$. Moreover, the faces of $\Delta$ corresponding to vertices in $\rho\sqcup \sigma$ may be arranged in a strict sequence by inclusion, which means that $\rho\sqcup \sigma\in \BDelta$. Thus $\rho$ is a face of $\link_{\BDelta}\sigma$.

Conversely, suppose $\rho=\{u_{f_1},\dots,u_{f_m}\}$ is any face of $\link_{\BDelta} \sigma$. We may decompose $\rho$ into the disjoint union $\rho_r\sqcup \dots\sqcup \rho_0$, where for each $0\leq j\leq r-1$  the face $\rho_j$ contains all the vertices $u_f$ in $\rho$ for which we have $\tau_j\subset f \subset \tau_{j+1}$, making $\rho_j$ a face of $A_j$; and the face $\rho_r$ contains all the vertices $u_f$ in $\rho$ for which $f$ contains $\tau_r$, making $\rho_r$ a face of $A_r$. Thus $\rho$ is a face of $A_r\ast \dots \ast A_0$.
\end{proof}

In particular, Proposition \ref{Proposition: Links in BDelta} has the following important corollaries.
\begin{cor}\label{Corollary: Homology of Links in BDelta}
Let $\Delta$ and $\sigma=\{u_{\tau_1},\dots,u_{\tau_r}\}\in \BDelta$ be as in Proposition \ref{Proposition: Links in BDelta}. We have $h(\BDelta,\sigma)=h(\Delta,\tau_r)+\{|\tau_r|-r\}$.
\end{cor}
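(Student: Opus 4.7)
The plan is to derive this identity as a direct arithmetic consequence of Proposition \ref{Proposition: Links in BDelta}, combined with the K\"{u}nneth formula for joins (Corollary \ref{Corollary: homology index set of joins}) and the fact that barycentric subdivision preserves homotopy type (and hence homology). So for any complex $X$ we have $h(\mathcal{B}(X))=h(X)$, which lets me replace each subdivided factor in the join decomposition of $\link_{\BDelta}\sigma$ by its unsubdivided counterpart.

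First I would apply Proposition \ref{Proposition: Links in BDelta} to write $\link_{\BDelta}\sigma$ as a join of $r+1$ complexes, and then Corollary \ref{Corollary: homology index set of joins} to obtain
\[
h(\BDelta,\sigma)=h(\lkds[\tau_r])+\sum_{j=1}^{r-1}h(\link_{\partial \tau_{j+1}}\tau_j)+h(\partial \tau_1)+\{r\}.
\]
Adopting the convention $\tau_0=\emptyset$ (so that $\partial \tau_1=\link_{\partial \tau_1}\tau_0$) collapses this into $h(\Delta,\tau_r)+\sum_{j=0}^{r-1}h(\link_{\partial \tau_{j+1}}\tau_j)+\{r\}$.

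The next step is the small combinatorial identification: for $\tau_j\subsetneqq \tau_{j+1}$ in $\Delta$, the link $\link_{\partial \tau_{j+1}}\tau_j$ consists of all subsets of $\tau_{j+1}-\tau_j$ whose union with $\tau_j$ is a proper subset of $\tau_{j+1}$, which is precisely the boundary of the simplex on vertex set $\tau_{j+1}-\tau_j$. By Example \ref{Example: PR Simplex} (or directly from Corollary \ref{Corollary: Betti Diagram of Simplex}), this boundary has a single nontrivial homology group in degree $|\tau_{j+1}|-|\tau_j|-2$, so
\[
h(\link_{\partial \tau_{j+1}}\tau_j)=\{|\tau_{j+1}|-|\tau_j|-2\}.
\]
This formula is also correct in the edge case $j=0$, $|\tau_1|=1$, where the link is the irrelevant complex with homology index $-1$.

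Finally I would substitute and telescope:
\[
h(\BDelta,\sigma)=h(\Delta,\tau_r)+\Bigl\{\sum_{j=0}^{r-1}\bigl(|\tau_{j+1}|-|\tau_j|-2\bigr)+r\Bigr\}=h(\Delta,\tau_r)+\{|\tau_r|-2r+r\}=h(\Delta,\tau_r)+\{|\tau_r|-r\}.
\]
There is no real obstacle here; the only thing to be slightly careful about is bookkeeping in the degenerate cases (in particular the $j=0$ boundary term and the possibility that some of the homology index sets are empty, in which case the sum $A+B$ is empty by the convention in Definition \ref{Definition: Homology Index Sets} and the equation reads $\emptyset=\emptyset$ on both sides, since $h(\Delta,\tau_r)=\emptyset$ forces the join to be acyclic and conversely).
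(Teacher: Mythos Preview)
Your proof is correct and follows essentially the same approach as the paper: both apply Proposition \ref{Proposition: Links in BDelta} to get the join decomposition, use Corollary \ref{Corollary: homology index set of joins} together with the fact that barycentric subdivision preserves homology, identify each $\link_{\partial \tau_{j+1}}\tau_j$ as the boundary of a simplex with homology index $\{|\tau_{j+1}|-|\tau_j|-2\}$, and telescope. Your additional remark on the empty-index-set case is a nice touch that the paper leaves implicit.
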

\begin{proof}
Just as in the proof of Proposition \ref{Proposition: Links in BDelta}, we set $\tau_0=\emptyset$ for notational convenience. Using this proposition and Corollary \ref{Corollary: homology index set of joins}, we can compute the homology of $\link_{\BDelta}\sigma$ from the homologies of $\calB(\link_\Delta \tau_r), \mathcal{B}(\link_{\partial \tau_r} \tau_{r-1}), \dots, \mathcal{B}(\link_{\partial \tau_1} \tau_0)$.

To compute these homologies, first recall from Section \ref{Subsection: Barycentric Subdivision} that we have, for any integer $i\geq -1$,
\begin{equation}\label{Equation: h(BDelta)=h(Delta)}
\Hred_i(\BDelta)=\Hred_i(\Delta).
\end{equation}
Next, note that for each $0\leq j \leq r-1$, the complex $\partial \tau_{j+1}$ is the boundary of the $(|\tau_{j+1}|-1)$-simplex. As observed in Example \ref{Example: PR Simplex}, the link of any face $\tau$ in the boundary of the $p$-simplex is the boundary of the $(p-|\tau|)$-simplex, which has homology only at degree $p-|\tau|-1$. Thus we have
\begin{equation}\label{Equation: h(link of tau_j)}
h(\link_{\partial \tau_{j+1}}\tau_j)=\{|\tau_{j+1}|-|\tau_j|-2\}.
\end{equation}
Putting these results together, we find
\begin{align*}
h(\BDelta, \sigma) &= h(\mathcal{B}(\lkds[\tau_r])\ast \circledast_{j=0}^{r-1} \mathcal{B}(\link_{\partial \tau_{j+1}}\tau_j)) &\text{by Prop. \ref{Proposition: Links in BDelta}}\\
&= h(\mathcal{B}(\lkds[\tau_r]))+ \sum_{j=0}^{r-1} h(\mathcal{B}(\link_{\partial \tau_{j+1}}\tau_j))+ \{r\} &\text{by Cor. \ref{Corollary: homology index set of joins}}\\
&= h(\lkds[\tau_r])+ \sum_{j=0}^{r-1}h(\link_{\partial \tau_{j+1}}\tau_j)+ \{r\} &\text{by Equ. \ref{Equation: h(BDelta)=h(Delta)}}\\
&=h(\Delta,\tau_r) + \sum_{j=0}^{r-1} \{|\tau_{j+1}|-|\tau_j|-2\}+\{r\}&\text{by Equ. \ref{Equation: h(link of tau_j)}}\\
&=h(\Delta, \tau_r)+\{|\tau_r|-r\}.&
\end{align*}
\end{proof}
\begin{cor}\label{Corollary: Homology of Links in BDelta PR (1,..,1)}
Let $\Delta$ be a PR complex with degree type $(1,\dots,1)$. For any $\sigma\in \BDelta$ we have that $h(\BDelta,\sigma)$ is either empty or equal to $\{\dim \Delta-|\sigma|\}$	

\end{cor}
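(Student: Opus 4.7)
The plan is to deduce this corollary directly from Corollary \ref{Corollary: Homology of Links in BDelta} together with the characterization of Cohen--Macaulay complexes.

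First I would observe that, by Corollary \ref{Corollary: CM iff PR of deg type (1...1)}, the hypothesis that $\Delta$ is PR with degree type $(1,\dots,1)$ is equivalent to $\Delta$ being Cohen--Macaulay. Reisner's criterion (Theorem \ref{Theorem: Reisner's Criterion}) then tells us that for every face $\tau\in\Delta$, the homology index set $h(\Delta,\tau)$ is either empty or the singleton $\{\dim\Delta-|\tau|\}$.

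Next I would split into cases on $\sigma\in\BDelta$. If $\sigma=\emptyset$, then $\link_{\BDelta}\emptyset=\BDelta$, which is homeomorphic to $\Delta$ (as noted in Section \ref{Subsection: Barycentric Subdivision}), so $h(\BDelta,\emptyset)=h(\Delta)=h(\Delta,\emptyset)$, which is either empty or $\{\dim\Delta\}=\{\dim\Delta-|\sigma|\}$, as required. Otherwise, write $\sigma=\{u_{\tau_1},\dots,u_{\tau_r}\}$ for some chain $\tau_1\subsetneq\dots\subsetneq\tau_r$ in $\Delta$ with $r=|\sigma|\geq 1$, and apply Corollary \ref{Corollary: Homology of Links in BDelta} to obtain
\[
h(\BDelta,\sigma)=h(\Delta,\tau_r)+\{|\tau_r|-r\}.
\]
If $h(\Delta,\tau_r)=\emptyset$, the sum is empty by the convention that adding an empty set to a singleton yields the empty set (see the remark after Definition \ref{Definition: Homology Index Sets}); otherwise $h(\Delta,\tau_r)=\{\dim\Delta-|\tau_r|\}$, and then
\[
h(\BDelta,\sigma)=\{\dim\Delta-|\tau_r|\}+\{|\tau_r|-r\}=\{\dim\Delta-r\}=\{\dim\Delta-|\sigma|\}.
\]

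There is no real obstacle here: the work has already been done in Proposition \ref{Proposition: Links in BDelta} and Corollary \ref{Corollary: Homology of Links in BDelta}, and the corollary is essentially a direct combination of that computation with Reisner's criterion. The only minor subtlety is tracking that the chain length $r$ equals $|\sigma|$, and handling the degenerate case $\sigma=\emptyset$ separately (since Corollary \ref{Corollary: Homology of Links in BDelta} is phrased for nonempty $\sigma$ via the chain $\tau_1\subsetneq\dots\subsetneq\tau_r$).
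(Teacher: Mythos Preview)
Your proposal is correct and follows essentially the same approach as the paper: both handle $\sigma=\emptyset$ separately via the homeomorphism $\BDelta\cong\Delta$, and for nonempty $\sigma$ both apply Corollary~\ref{Corollary: Homology of Links in BDelta} and then use the fact that $h(\Delta,\tau_r)$ is either empty or $\{\dim\Delta-|\tau_r|\}$. The only cosmetic difference is that you invoke Reisner's criterion directly (via Corollary~\ref{Corollary: CM iff PR of deg type (1...1)}), whereas the paper cites its own reformulation Corollary~\ref{Corollary: PR (1...1) Definition}; these are equivalent.
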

\begin{proof}
For $\sigma=\emptyset$ we have $h(\BDelta,\emptyset)=h(\BDelta)=h(\Delta)$, because $\BDelta$ is homeomorphic to $\Delta$. Thus $h(\BDelta,\emptyset)$ is empty, unless $\Delta$ has homology, in which case it is equal to $\{\dim \Delta\}$ by Corollary \ref{Corollary: PR (1...1) Definition}.

Now assume $\sigma=\{u_{\tau_1},\dots,u_{\tau_r}\}$ for some $\tau_1\subset \dots \subset \tau_r$ in $\Delta$. From Corollary \ref{Corollary: Homology of Links in BDelta}, we know that $h(\BDelta,\sigma)=h(\Delta,\tau_r)+\{|\tau_r|-|\sigma|\}$. If $h(\Delta,\tau_r)$ is empty (i.e. $\lkds[\tau_r]$ is acyclic) then this sum is empty. Otherwise, by Corollary \ref{Corollary: PR (1...1) Definition} we have $h(\Delta,\tau_r)=\{\dim \Delta-|\tau_r|\}$, and hence $h(\BDelta,\sigma)=\{\dim \Delta-|\tau_r|\}+\{|\tau_r|-|\sigma|\}=\{\dim \Delta-|\sigma|\}$.
\end{proof}

It follows from Corollary \ref{Corollary: Homology of Links in BDelta PR (1,..,1)} that if $\Delta$ is Cohen-Macaulay (i.e. PR with degree type $(1,\dots,1)$), then $\BDelta$ is also Cohen-Macaulay. In fact, this turns out to be the \textit{only} condition under which $\BDelta$ is PR, as the following proposition demonstrates. This proposition will not be strictly necessary for our proof of Theorem \ref{Theorem: Phi_i Operations Degree Type}, but it helps to illuminate why the operation $\phi_i$ preserves the PR property for PR complexes of degree type $(d_p,\dots,d_i,1,\dots,1)$, and why it fails to do so for PR complexes of other degree types.

\begin{prop}\label{Proposition: When BDelta is PR}
Let $\Delta$ be a simplicial complex. The following are equivalent.
\begin{enumerate}
\item $\BDelta$ is PR.
\item  $\BDelta$ is Cohen-Macaulay (i.e. PR with degree type $(1,\dots,1)$).
\item $\Delta$ is Cohen-Macaulay (i.e. PR with degree type $(1,\dots,1)$).
\end{enumerate}
\end{prop}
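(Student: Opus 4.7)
The plan is to prove the three implications in the cycle $(2) \Rightarrow (1) \Rightarrow (3) \Rightarrow (2)$.

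The implication $(2) \Rightarrow (1)$ is immediate from the definitions. For $(3) \Rightarrow (2)$ I would observe that $\BDelta$ is homeomorphic to $\Delta$, so $\dim \BDelta = \dim \Delta$, and then invoke Corollary \ref{Corollary: Homology of Links in BDelta PR (1,..,1)}: it states that $h(\BDelta, \sigma)$ is empty or $\{\dim \Delta - |\sigma|\}$ for every $\sigma \in \BDelta$, which is exactly Reisner's criterion for $\BDelta$.

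The substantive step is $(1) \Rightarrow (3)$. Assume $\BDelta$ is PR. First, $\BDelta$ is pure by Lemma \ref{Lemma: PR complexes are pure}, and since each facet $F$ of $\Delta$ gives rise to a maximal chain in $\BDelta$ of length $|F|$, the complex $\Delta$ is itself pure with every facet of size $\dim \Delta + 1$. Next, for any non-empty $\tau \in \Delta$ Corollary \ref{Corollary: Homology of Links in BDelta} applied to the vertex $\{u_\tau\}$ gives
\begin{equation*}
h(\BDelta, \{u_\tau\}) = h(\Delta, \tau) + \{|\tau| - 1\},
\end{equation*}
which is empty or a singleton by the PR property of $\BDelta$, forcing $h(\Delta, \tau)$ to be empty or a singleton as well. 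Specialising to a facet $F$ of $\Delta$ (where $h(\Delta,F) = \{-1\}$ and $|F| = \dim \Delta + 1$) produces $\dim\Delta - 1 \in \hh(\BDelta, 1)$, and the PR condition on $\BDelta$ pins $\hh(\BDelta, 1) = \{\dim \Delta - 1\}$. Reading the display backwards then yields $h(\Delta, \tau) = \{\dim \Delta - |\tau|\}$ or $\emptyset$ for every non-empty $\tau \in \Delta$.

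The remaining task is to handle $\tau = \emptyset$. For each $1 \leq r \leq \dim \Delta + 1$ a facet $F$ of $\Delta$ admits a chain $\tau_1 \subsetneq \dots \subsetneq \tau_r = F$ of non-empty sub-faces, and Corollary \ref{Corollary: Homology of Links in BDelta} gives $h(\BDelta, \{u_{\tau_1}, \dots, u_{\tau_r}\}) = \{-1\} + \{|F| - r\} = \{\dim \Delta - r\}$. So $\hh(\BDelta, r) = \{\dim \Delta - r\}$ for every such $r$, and the PR condition forces $h(\BDelta, \emptyset) = h(\Delta)$ to be disjoint from the set $\{-1, 0, \dots, \dim \Delta - 1\}$. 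Combining this with the general bound $h(\Delta) \subseteq \{-1, 0, \dots, \dim \Delta\}$ leaves only $h(\Delta) = \emptyset$ or $h(\Delta) = \{\dim \Delta\}$, completing the verification of Reisner's criterion. The part I expect to require the most care is keeping track of how the shift $|\tau| - r$ in Corollary \ref{Corollary: Homology of Links in BDelta} interacts with the PR condition on chains of every possible length in $\BDelta$, since the argument for the empty face depends on knowing that the singletons $\{\dim\Delta - r\}$ appear for the full range $r = 1, \dots, \dim\Delta + 1$.
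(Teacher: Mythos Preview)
Your proof is correct and takes a genuinely different route from the paper for the implication $(1)\Rightarrow(3)$. The paper argues by contrapositive: assuming $\Delta$ is not Cohen--Macaulay, it splits into two cases (either $\Delta$ is not PR, or $\Delta$ is PR with some $d_j>1$) and in each case exhibits two vertices $u_{\tau_1},u_{\tau_2}$ whose links in $\BDelta$ have homology at different degrees, so that $\hh(\BDelta,1)$ fails to be a singleton. You instead argue directly: assuming $\BDelta$ is PR, you use the facet chain to pin down $\hh(\BDelta,1)=\{\dim\Delta-1\}$, then read Corollary~\ref{Corollary: Homology of Links in BDelta} backwards to force $h(\Delta,\tau)\subseteq\{\dim\Delta-|\tau|\}$ for every nonempty $\tau$, and finally handle $\tau=\emptyset$ by populating $\hh(\BDelta,r)$ for all $r$ via chains of varying length. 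The paper's contrapositive is marginally shorter because it avoids treating the empty face separately, but your direct argument has the advantage of explicitly computing all of the complete homology index sets of $\BDelta$ along the way, which makes the structure of $\BDelta$ more transparent.
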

\begin{proof}
(3)$\Rightarrow$(2) follows from Corollary \ref{Corollary: Homology of Links in BDelta PR (1,..,1)}, and (2)$\Rightarrow$(1) is immediate. To prove (1)$\Rightarrow$(3), we show the contrapositive.

First assume that $\Delta$ is not PR. This means that $\Delta$ has two faces $\tau_1$ and $\tau_2$ of different sizes such that the intersection $h(\Delta,\tau_1)\cap h(\Delta,\tau_2)$ is nonempty. Suppose $\iota$ is an index in both $h(\Delta,\tau_1)$ and $h(\Delta,\tau_2)$. By Corollary \ref{Corollary: Homology of Links in BDelta}, we have $h(\BDelta,u_{\tau_1})=h(\Delta,\tau_1)+\{|\tau_1|-1\}$ and $h(\BDelta,u_{\tau_2})=h(\Delta,\tau_2)+\{|\tau_2|-1\}$. Thus the complete homology index set $\hh(\BDelta,1)$ contains both $\iota+|\tau_1|-1$ and $\iota+|\tau_2|-1$, and therefore cannot be a singleton. This means $\BDelta$ is not PR by Corollary \ref{Corollary: PR Complex links have single homology}.

Now assume that $\Delta$ is PR of degree type $(d_p,\dots,d_1)$ where $d_j > 1$ for some $1\leq j\leq p$. By Proposition \ref{Proposition: Alternate PR Definition With Degree Type}, $\Delta$ must have two faces $\tau_1$ and $\tau_2$ such that $|\tau_2|=|\tau_1|+d_j$ and $h(\Delta,\tau_1)=\{\iota\}$ while $h(\Delta,\tau_2)=\{\iota-1\}$ for some index $\iota$. By Corollary \ref{Corollary: Homology of Links in BDelta}, we have $h(\BDelta,u_{\tau_1})=\{\iota\}+\{|\tau_1|-1\}=\{\iota+|\tau_1|-1\}$, and $h(\BDelta,u_{\tau_2})=\{\iota-1\}+\{|\tau_2|-1\}=\{\iota + |\tau_2|+d_j-2\}$. In particular, because $d_j>1$, we have $d_j-2>-1$ and hence these two sets are not equal. Thus, once again, the complete homology index set $\hh(\BDelta,1)$ is not a singleton, and so $\BDelta$ cannot be PR by by Corollary \ref{Corollary: PR Complex links have single homology}.
\end{proof}

\section{Proving Theorem \ref{Theorem: Phi_i Operations Degree Type}}\label{Subsection: Proving Phi_i Theorem}
In this section, we bring together all of the results of the previous sections to prove Theorem \ref{Theorem: Phi_i Operations Degree Type}.

\begin{proof}[Proof of Theorem \ref{Theorem: Phi_i Operations Degree Type}]
Let $\Delta$ be a PR complex with offset $s$ and degree type $(d_p,\dots,d_i,\underbrace{1,\dots,1}_{i-1})$ for some $p\geq i \geq 1$, and set $\tDelta =\phi_i(\Delta)$. We aim to prove that $\tDelta$ is PR with degree type $(d_p,\dots,d_i+1,1,\dots,1)$.

By Proposition \ref{Proposition: Alternate PR Definition With Degree Type}, the only nonempty complete homology index sets are the ones given in Table 6.1 below. By the same proposition, we can show that $\tDelta$ is PR with the desired degree type by proving that its only nonempty complete homology index sets are the ones given in Table 6.2.

\begin{center}
\begin{tabular}{c c}
	\textbf{Table 6.1:} \textit{Homology} & \textbf{Table 6.2:} \textit{Required Homology}\\
	\textit{Index Sets of} $\Delta$ &\textit{Index Sets of} $\tDelta$\\
	\begin{tabular}{ c | l }
		$\hh(\Delta,m)$ & $m$\\
		\hline
		$\{p-1\}$ & $s$\\
		$\{p-2\}$ & $s+d_p$\\
		$\vdots$ & $\vdots$\\
		$\{i-1\}$ & $s+\sum_{j=i+1}^p d_j$ \\
		$\{i-2\}$ & $s+\sum_{j=i}^p d_j$ \\
		$\{i-3\}$ & $s+\sum_{j=i}^p d_j + 1$ \\
		$\vdots$ & $\vdots$ \\
		$\{0\}$ & $s+\sum_{j=i}^p d_j + (i-2)$ \\
		$\{-1\}$ & $s+\sum_{j=i}^p d_j + (i-1)$\\
	\end{tabular}
	&
	\begin{tabular}{ c | l }
		$\hh(\tDelta,m)$ & $m$\\
		\hline
		$\{p-1\}$ & $s$\\
		$\{p-2\}$ & $s+d_p$\\
		$\vdots$ & $\vdots$\\
		$\{i-1\}$ & $s+\sum_{j=i+1}^p d_j$ \\
		$\{i-2\}$ & $s+\sum_{j=i}^p d_j + 1$ \\
		$\{i-3\}$ & $s+\sum_{j=i}^p d_j + 2$ \\
		$\vdots$ & $\vdots$ \\
		$\{0\}$ & $s+\sum_{j=i}^p d_j + (i-1)$ \\
		$\{-1\}$ & $s+\sum_{j=i}^p d_j + i$\\
	\end{tabular}
\end{tabular}
\end{center}
Thus it suffices to show that for any natural number $0\leq m \leq s + \sum_{j=i}^p d_j + i$, we have
\begin{equation}\label{Equation: homology index sets}
\hh(\tDelta,m) = \begin{cases}
	\hh(\Delta,m)& \text{ if } m < s + \sum_{j=i}^p d_j\\
	\emptyset & \text{ if } m = s + \sum_{j=i}^p d_j\\
	\hh(\Delta,m-1)& \text{ if } m > s + \sum_{j=i}^p d_j.
\end{cases}
\end{equation}

To this end, we fix a face $\sigma$ of $\tDelta$ of size $|\sigma|=m$, and investigate the homology index set $h(\tDelta,\sigma)$.

We start by decomposing $\sigma$ into $\sigma = \sigma_V \sqcup \sigma_S$, where $\sigma_V$ is a subset of $V$ (and is thus in $\Delta$) and $\sigma_S$ is a subset of $S^i_\Delta$ (and is thus in $\BDelta$).

Note that $\link_{\tDelta} \sigma = \link_{\link_{\tDelta} \sigma_V} \sigma_S$. Thus, using the isomorphism in Lemma \ref{Lemma: Phi_i and Link commute}, we may view $\link_{\tDelta} \sigma$ as a link in the complex $\phi_i(\lkds[\sigma_V])$, and then apply the results of Sections \ref{Subsection: Deformation Retractions} and \ref{Subsection: Links in BDelta} to this link.

To work out which results to apply, we will need to determine whether $i$ is less than, equal to, or greater than $\dim(\lkds[\sigma_V])+1$. By Proposition \ref{Proposition: Dimension of PR Complexes} we know $\dim \Delta = s + \sum_{j=i}^p d_j + i - 2$, which means we have
\begin{equation}\label{Equation: dim lkds[sigma_V]}
\dim (\lkds[\sigma_V])+1= s+ \sum_{j=i}^p d_j + i - 1 - |\sigma_V|.
\end{equation}

We now examine a number of different cases.

\begin{itemize}
\item \textbf{Case 1:} Assume $\sigma_S = \emptyset$ (i.e. $\sigma \in \Delta$). By Lemma \ref{Lemma: Phi_i and Link commute}, we have an isomorphism of complexes $\link_{\tDelta} \sigma \cong\phi_i(\lkds)$.

\subitem \textbf{Case 1.1:} If $|\sigma_V| < s + \sum_{j=i}^p d_j$, then by Equation (\ref{Equation: dim lkds[sigma_V]}) above, we have $i \leq \dim (\lkds) + 1$. Thus Lemma \ref{Lemma: DR Phi_i to Delta} gives us a deformation retraction $\phi_i(\lkds)\rightsquigarrow \lkds$, which means we have $h(\tDelta, \sigma) = h(\Delta,\sigma) \subseteq \hh(\Delta,m)$.

\subitem \textbf{Case 1.2:} If $|\sigma_V| \geq s + \sum_{j=i}^p d_j$, then by Equation (\ref{Equation: dim lkds[sigma_V]}) above, we have $i > \dim (\lkds) + 1$. By Corollary \ref{Corollary: DR Phi_i Acyclic}, the complex $\phi_i(\lkds)$ is acyclic, so we have $h(\tDelta, \sigma) =\emptyset$.

\item \textbf{Case 2:} Assume $\sigma_S \neq \emptyset$ (i.e. $\sigma \notin \Delta$). As mentioned above we use Lemma \ref{Lemma: Phi_i and Link commute} to reinterpret $\link_{\tDelta} \sigma$ as a link in $\phi_i(\lkds[\sigma_V])$. Specifically we have $\link_{\tDelta}\sigma = \link_{\phi_i(\lkds[\sigma_V])} \tau$ for some nonempty face $\tau$ contained in $S^i_{\lkds[\sigma_V]}$, obtained by relabelling the vertices of $\sigma_S$ under the isomorphism given in Lemma \ref{Lemma: Phi_i and Link commute}. In particular we have $|\sigma_S|=|\tau|$.

\subitem \textbf{Case 2.1:} If $|\sigma_V| < s+ \sum_{j=i}^p d_j$, then by Equation (\ref{Equation: dim lkds[sigma_V]}) above, we have $i \leq \dim (\lkds[\sigma_V]) + 1$. By Lemma \ref{Lemma: DR Phi_i link of new vertices acyclic}, the complex $\link_{\phi_i(\lkds[\sigma_V])} \tau$ is acyclic, so we have $h(\tDelta, \sigma) = \emptyset$.

\subitem \textbf{Case 2.2:} Now let $|\sigma_V| \geq s + \sum_{j=i}^p d_j$. By Equation (\ref{Equation: dim lkds[sigma_V]}) above, we have $i > \dim (\lkds[\sigma_V]) + 1$ which means $u_\emptyset \in S^i_{\lkds[\sigma_V]}$. We also have that $\lkds[\sigma_V]$ is PR with degree type $(\underbrace{1,\dots,1}_j)$ for some $0\leq j \leq i-1$.

\subitem First we assume $u_\emptyset \in \tau$. In this case we have
\begin{align*}
	h(\tDelta, \sigma) &= h(\phi_i(\lkds[\sigma_V]), \tau) &\text{by Lemma \ref{Lemma: Phi_i and Link commute}}\\
	&= h(\mathcal{B}(\lkds[\sigma_V]),\tau-\{u_\emptyset\}) &\link_{\phi_i(\Gamma)} u_{\emptyset}=\mathcal{B}(\Gamma)\\
	&\subseteq \{\dim (\lkds[\sigma_V])-|\tau|+1\}&\text{by Corollary \ref{Corollary: Homology of Links in BDelta PR (1,..,1)}}\\
	&=  \{\dim \Delta -|\sigma_V|-|\sigma_S|+1\}&|\tau|=|\sigma_S|\\
	&= \{\dim\Delta - m+1\}&\text{by definition of }m\\
	&= \hh(\Delta,m-1)&\text{by Remark \ref{Remark: PR (dp,...,di,1,...,1) Definition}.}
\end{align*}

\subitem Now we assume $u_\emptyset\notin \tau$. In this case we have
\begin{align*}
	h(\tDelta, \sigma) &= h(\phi_i(\lkds[\sigma_V]), \tau) &\text{by Lemma \ref{Lemma: Phi_i and Link commute}}\\
	&= h(\phi_i(\lkds[\sigma_V]), \tau\sqcup \{u_\emptyset\}) + \{1\} &\text{by Prop. \ref{Proposition: link of new vertices may as well contain u-emptyset}}\\
	&\subseteq \hh(\Delta,m) + \{1\} &\text{by the } u_\emptyset \in \tau \text{ case}\\
	&= \{\dim \Delta - m\}+\{1\} &\text{by Remark \ref{Remark: PR (dp,...,di,1,...,1) Definition}.}\\
	&= \hh(\Delta,m-1) &\text{by Remark \ref{Remark: PR (dp,...,di,1,...,1) Definition}.}
\end{align*}


This exhausts all the possible cases for $\sigma$, and it is sufficent to prove that Equation (\ref{Equation: homology index sets}) is satisfied, because
\begin{itemize}
	\item For $m< s + \sum_{j=i}^p d_j$, any face of $\tDelta$ of size $m$ falls under case 1.1 or 2.1, and hence $\hh(\tDelta,m)=\hh(\Delta,m)$.
	\item For $m= s + \sum_{j=i}^p d_j$, any face of $\tDelta$ of size $m$ falls under case 1.2 or 2.1, and hence $\hh(\tDelta,m)=\emptyset$.
	\item For $n> s + \sum_{j=i}^p d_j$, any face of $\tDelta$ of size $m$ falls under case 1.2, 2.1 or 2.2, and hence $\hh(\tDelta,m)=\hh(\Delta,m-1)$.
\end{itemize}
\end{itemize}


\end{proof}


	
	\chapter{Future Directions}\label{Chapter: Future Directions}
	
	We end by briefly considering a number of future avenues of research arising from the topics we have presented.
	
	\section{Minimal Numbers of Vertices and Shift Types}\label{Subsection: Shift Types and Bounds for n}
	Question \ref{Question: Lower bounds on n for each degree type} still remains unsolved for most degree types. That is, for an arbitrary given degree type $\bd$ we do not know the minimum value of $n$ for which there exists a PR complex of degree type $\bd$ on $n$ vertices.
	
	Recall (from the discussion in Section \ref{Subsection: Motivation PR Families}) that this is in fact equivalent to Question \ref{Question: Shift Types}, because an answer to this question for any given degree type $\bd$ also tells us all of the possible shift types of pure resolutions arising from Stanley-Reisner ideals that have $\bd$ as their difference sequence. Thu, answering this question entirely would provide a full analogue of the first Boij-S\"{o}derberg conjecture for squarefree monomial ideals.
	
	Our procedure in Chapter \ref{Chapter: Generating Degree Types} gives us upper bounds for $n$ for any given degree type, but in general (as noted already), these bounds seem to be far greater than necessary. As an illustration of quite how excessive these bounds are, consider using the procedure to generate a PR complex of degree type $(3,1)$. To do this, we start with the boundary of the $2$-simplex $\Delta$ which has degree type $(1,1)$, and apply the operation $\phi_2$ to $\Delta$ twice. The complex $\phi_2(\Delta)$, as shown below,
	\begin{center}
		\begin{tikzpicture}[scale = 0.7]
			\tikzstyle{point}=[circle,thick,draw=black,fill=black,inner sep=0pt,minimum width=3pt,minimum height=3pt]
			\node (x)[point] at (-1.7,-1) {};
			\node (y)[point] at (3.7,-1) {};
			\node (z)[point] at (1,3.7) {};
			
			\node (a)[point] at (0,0) {};
			\node (b)[point] at (2,0) {};
			\node (c)[point] at (1,1.7) {};
			
			\node (d)[point] at (1,-0.3) {}; 
			\node (e)[point] at (1.76,1) {}; 
			\node (f)[point] at (0.24,1) {}; 
			
			\begin{scope}[on background layer]
				\draw[fill=gray] (x.center) -- (a.center) -- (d.center) -- cycle;
				\draw[fill=gray] (x.center) -- (a.center) -- (f.center) -- cycle;
				\draw[fill=gray] (y.center) -- (b.center) -- (d.center) -- cycle;
				\draw[fill=gray] (y.center) -- (b.center) -- (e.center) -- cycle;
				\draw[fill=gray] (z.center) -- (c.center) -- (e.center) -- cycle;
				\draw[fill=gray] (z.center) -- (c.center) -- (f.center) -- cycle;
				
				\draw[fill=gray] (x.center) -- (y.center) -- (d.center) -- cycle;
				\draw[fill=gray] (y.center) -- (z.center) -- (e.center) -- cycle;
				\draw[fill=gray] (x.center) -- (z.center) -- (f.center) -- cycle;
			\end{scope}
		\end{tikzpicture}
	\end{center}
	has $9$ vertices. It also has $9$ facets and $9$ edges, each of which has a corresponding vertex in the set $S^2_{\phi_2(\Delta)}$. Thus the set $S^2_{\phi_2(\Delta)}$ contains $18$ vertices, and the complex $\phi_2^2(\Delta)$ on vertex set $V\sqcup S^2_\Delta \sqcup S^2_{\phi_2(\Delta)}$ has a total of $27$ vertices. Meanwhile, the cycle complex $\fC_{3,1}$ has only $7$ vertices.
	
	The families of PR complexes presented in Chapter \ref{Chapter: Families of PR Complexes} lower the bounds for $n$ for their given degree types; and in a few simple cases they even resolve Question \ref{Question: Lower bounds on n for each degree type} entirely. As noted in Sections \ref{Subsection: PR projdim 2} and \ref{Subsection: Intersection Complexes}, we strongly suspect that cycle complexes and intersection complexes have minimal numbers of vertices for their given degree types; we hope to find proofs for these statements in due course. 
	
	Answering Question \ref{Question: Lower bounds on n for each degree type} in general for arbitrary degree types may be currently out of reach; however answers for some other specific cases should be manageable. For example we could restrict our attention to degree types of projective dimension $3$; or attempt to improve the bounds for $n$ given by partition complexes, for degree types of the form $(\overbrace{1,\dots,1,\underbrace{a,1,\dots,1}_{m}}^{p})$.
	
	\section{Ideals Generated in Degree $d$}\label{Subsection: Pure Resolutions in Degree d}
	Edge ideals are Stanley-Reisner ideals generated in degree $2$. It may be of interest to consider whether results about their Betti diagrams can be generalised to the case of Stanley-Reisner ideals generated in some arbitrary degree $d$; or at least to see if similar results can be obtained for specific higher degrees.
	
	For example, we could search for analogues of Theorems \ref{Theorem: dimCn} and \ref{Theorem: dimCnh} on the dimensions of the cones $\Cn$ and $\Cnh$, for cones generated by squarefree monomial ideals generated in higher degrees. It is possible that the formulae given in Theorems \ref{Theorem: dimCn} and \ref{Theorem: dimCnh} may turn out to be instances of a more general result.
	
	Another result about edge ideals which might admit a generalisation is the classification of all edge ideals with pure resolutions, due to Fr\"{o}berg, Bruns and Hibi. To state it, we require the following terminology.
	\begin{defn}\label{Definition: Chordal Graphs}
		Let $C$ be a cycle of edges $\{v_1,v_2\},\dots,\{v_{m-1},v_m\},\{v_m,v_1\}$ in a graph $G$. A \textit{chord} for $C$ is an edge of $G$ of the form $\{v_i,v_j\}$ for some non-consecutive $1\leq i < j \leq m$. We say $G$ is \textit{chordal} if all of its cycles of length at least $4$ have a chord.
	\end{defn}
	
	The classification is as follows. The first of these three cases comes from Theorem 1 in \cite{Fro}; the latter two come from Theorem 2.1 in \cite{Bruns-Hibi}. Note that the requirement that $G$ has no isolated vertices is harmless, by Corollary \ref{Corollary: Betti Graphs With Isolated Vertices}.
	\begin{thm}\label{Theorem: Edge Ideals with Pure Resolutions}
		Let $G$ be a graph with no isolated vertices. The edge ideal $I(G)$ has a pure resolution if and only if the complement $G^c$ is of one of the following three forms.
		\begin{enumerate}
			\item $G^c$ is chordal, in which case $I(G)$ has a pure resolution with shift type degree type $(1,\dots,1)$.
			\item $G^c$ is cyclic, in which case $I(G)$ has a pure resolution with degree type $(2,1,\dots,1)$.
			\item $G^c$ is the $1$-skeleton of a cross polytope, in which case $I(G)$ has a pure resolution with degree type $(2,\dots,2)$.
		\end{enumerate}
	\end{thm}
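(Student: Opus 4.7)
The plan is to reformulate everything through the independence complex. Using Proposition \ref{Proposition: I(G) = I_Ind(G)}, I would replace $\beta(I(G))$ by $\beta(I_{\Ind(G)})$, and then use the identity $\Ind(G) = \Cl(G^c)$ from Remark \ref{Remark: Ind(G)=Cl(G^c)}. Hochster's Formula (Theorem \ref{Theorem: Hochster's Formula}) then gives
\begin{equation*}
\beta_{i,d}(I(G)) = \sum_{U \in \binom{V(G)}{d}} \dim_\KK \Hred_{d-i-2}\bigl(\Cl(G^c|_U)\bigr),
\end{equation*}
so purity of $\beta(I(G))$ becomes a condition on how $\tilde{H}_*$ of induced clique subcomplexes of $G^c$ distributes across degrees. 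Since $I(G)$ is generated in degree two, the initial shift is always $c_0 = 2$.

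For the forward direction I would treat each case separately. Case (1) is precisely Fr\"{o}berg's theorem (\cite{Fro}, Theorem 1), which I would invoke as a black box. Case (2) follows from a direct computation using Proposition \ref{Proposition: Betti-Cn}: when $G^c = C_m$, the only nonzero Betti numbers are $\beta_{i,i+2}$ for $0 \leq i \leq m-4$ and $\beta_{m-3,m}$, giving shifts $2, 3, \dots, m-2, m$ and thus degree type $(2,1,\dots,1)$. For case (3), I would first observe that if $G^c$ is the $1$-skeleton of $O^{d-1}$, then $G$ is the disjoint union $dL$ of $d$ edges (each vertex of $G^c$ is adjacent to all but its antipode), so $\Ind(dL) = O^{d-1}$. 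An induction on $d$ using Corollary \ref{Corollary: Betti G+L} starting from $\beta(L) = [\beta_{0,2}=1]$ would yield $\beta_{i,2(i+1)}(I(dL)) = \binom{d}{i+1}$, confirming a pure resolution of degree type $(2,\dots,2)$.

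The reverse direction is the genuine work. Assuming $\beta(I(G))$ is pure with shifts $2 = c_0 < c_1 < \dots < c_p$, I would split into two cases. If the resolution is linear, Fr\"{o}berg's theorem directly yields the chordal conclusion. Otherwise, pick the smallest $k$ with $c_k > c_{k-1}+1$; the goal is to force $G^c$ to be either $C_m$ (if $k=p$) or the $1$-skeleton of a cross polytope (if $c_i = 2(i+1)$ for all $i$). The strategy is: since $G^c$ is not chordal, it contains a minimal induced cycle $C_m$ with $m \geq 4$. The induced subgraph $G|_U$ on the vertices of this cycle has complement $C_m$, and by Proposition \ref{Proposition: Betti-Cn} contributes to $\beta_{m-4,m-2}$ and $\beta_{m-3,m}$ of $\beta(I(G))$, forcing $c_{p-1} = m-2$ and $c_p = m$. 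Purity then forces every induced subgraph of $G^c$ contributing to column $i$ to do so at shift $c_i$; carefully tracking how suspensions (Corollary \ref{Corollary: Betti G+L}, corresponding to adding components to $G$) interact with inducing should force either (i) $G^c$ has no other induced cycles of length $\geq 4$ beyond this one, leading to case (2), or (ii) all jumps are of size $2$, which via an iterated application of the suspension identity pins down $G$ as $dL$ and hence $G^c$ as the cross polytope skeleton.

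The main obstacle will be this last structural analysis in the non-linear reverse direction. One must rule out infinitely many candidate graphs and show that purity is preserved only under the rigid combinatorial skeletons of (2) and (3). The most delicate step is showing that a single ``jump'' in the shift sequence of a non-chordal $G^c$ globally constrains \emph{every} other induced subgraph's homology to align — this is where one leverages that $G^c|_U$ must behave uniformly under taking induced subgraphs, a hereditary condition which I expect to reduce, after Fr\"{o}berg-style chord-chasing and a case analysis on $m = 4$ versus $m \geq 5$, to exactly the two extremal cases.
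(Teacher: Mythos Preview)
The paper does not prove this theorem. It is stated in Chapter \ref{Chapter: Future Directions} as a known classification result from the literature, with the attribution given immediately before the statement: ``The first of these three cases comes from Theorem 1 in \cite{Fro}; the latter two come from Theorem 2.1 in \cite{Bruns-Hibi}.'' There is no proof in the paper to compare your proposal against.

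That said, your forward direction is essentially the verification the paper performs implicitly elsewhere: Proposition \ref{Proposition: Betti-Cn} handles case (2), and the cross-polytope computation for case (3) is exactly the one the paper sketches in the Remark following Theorem \ref{Theorem: Edge Ideals with Pure Resolutions}. Your reverse direction, however, is only an outline. The step where you claim that a single minimal induced cycle $C_m$ ``forces $c_{p-1}=m-2$ and $c_p=m$'' is not justified: the contributions you identify land in those columns, but purity only tells you each column has a unique shift, not that these particular contributions sit at the \emph{top} of their columns. You would need to rule out other induced subcomplexes contributing higher homology in the same columns before drawing that conclusion. The subsequent dichotomy (``either no other long induced cycles, or all jumps are size 2'') is asserted rather than argued, and this is precisely the structural heart of the Bruns--Hibi argument. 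If you want a self-contained proof you will need to engage with that case analysis directly rather than gesture at ``Fr\"{o}berg-style chord-chasing''.
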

	\begin{rem}
		We also know all of the possible Betti diagrams arising from the three families of graphs presented in Theorem \ref{Theorem: Edge Ideals with Pure Resolutions}. For graphs of the first form, \cite{E-S} proves that all corresponding Betti diagrams can be obtained from threshold graphs (Theorem 4.4) and enumerated accordingly (Proposition 4.11). We have seen the Betti diagram of the complement of the cyclic graph $C_m$ already in Proposition \ref{Proposition: Betti-Cn}; and by repeated application of Lemma \ref{Lemma: Betti S-Delta} we can compute the Betti numbers of the cross polytope $O^m$ as $\beta_{i,2i+2}(O^m)={m+1 \choose i+1}$ for $0\leq i \leq m$. 
	\end{rem}
	
	This classification raises a natural question.
	\begin{qu}\label{Question: Pure resolutions in degree d}
		For a given positive integer $d$, what are the possible pure resolutions of Stanley-Reisner ideals generated in degree $d$?
	\end{qu}
	
	Recall that the degree in which such an ideal $I$ is generated is equal to the value of $c_0$ for which $\beta_{0,c_0}(I)$ is nonzero. Thus Question \ref{Question: Pure resolutions in degree d} is similar to Question \ref{Question: Lower bounds on n for each degree type}, but in reverse: instead of fixing a degree type of a pure resolution and asking what the possible values of $n$ and $c_0$ could be for PR complexes of that degree type, we fix a value for $c_0$ and ask which pure resolutions we can get which have that value of $c_0$ as their initial shift. Due to this similarity it may be useful to attack these two problems in tandem.
	
	As observed in Remark \ref{Remark: Link of Emptyset and Facets}, the minimum degree of the generators of $I_\Delta^*$ is equal to the codimension of $\Delta$. Thus our problem can be reframed as the task of finding all PR complexes of a given codimension.
	
	If we are to reframe the problem in this light, it would be useful to obtain a restatement of Theorem \ref{Theorem: Edge Ideals with Pure Resolutions} in terms of PR complexes. This would require finding the Alexander duals of the complexes corresponding to the three cases of graphs above. We present the duals of cross polytopes below.
	
	\begin{lem}
		The dual of the $d$-dimensional cross polytope $O^d$ can be obtained by applying the scalar multiple operation $f^2$ from Definition \ref{Definition: f-lambda} to the boundary of the $d$-simplex $\partial \Delta^d$. That is, $$(O^d)^*=f^2(\partial \Delta^d).$$
	\end{lem}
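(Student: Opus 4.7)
The plan is to verify the claimed identity by matching facets on both sides, exploiting the fact that Alexander duality sends complements of minimal nonfaces to facets.

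First I would unpack the vertex sets and facet structure of each complex. Unwinding the recursive definition $O^d = S(O^{d-1})$, one sees that $O^d$ has a vertex set consisting of $d+1$ disjoint pairs $\{a_0,b_0\},\dots,\{a_d,b_d\}$ (the two vertices used at each suspension step), and a subset is a face of $O^d$ precisely when it contains at most one vertex from each pair. Equivalently, the minimal nonfaces of $O^d$ are exactly the $d+1$ pairs $\{a_i,b_i\}$, and the facets are the totally separated sets of size $d+1$. This should be a quick induction on $d\ge -1$ using the suspension definition.

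Next I would compute the facets of each side of the equation. By the definition of Alexander dual (Definition~\ref{Definition: Simplicial Complex Key Terminology and Constructions}(8)), the facets of $(O^d)^*$ are exactly the complements in $V(O^d)$ of the minimal nonfaces of $O^d$; by the previous paragraph these are the $d+1$ sets of the form $V(O^d) - \{a_i,b_i\}$, each of size $2d$. On the other side, the boundary $\partial \Delta^d$ on vertex set $[d+1]$ has facets $[d+1]-\{i\}$ for $0\le i \le d$, and applying $f^2$ (Definition~\ref{Definition: f-lambda}) produces the complex on vertex set $\{u_j^1,u_j^2 : 0\le j \le d\}$ whose facets are $\{u_j^1,u_j^2 : j \in [d+1]-\{i\}\}$ for $0\le i \le d$.

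Finally I would conclude by writing down the vertex-set bijection $a_i\mapsto u_i^1,\ b_i\mapsto u_i^2$ and observing that under this identification the facet $V(O^d)-\{a_i,b_i\}$ of $(O^d)^*$ corresponds exactly to the facet $\{u_j^1,u_j^2 : j\ne i\}$ of $f^2(\partial \Delta^d)$. Since a simplicial complex is determined by its facets, this yields $(O^d)^* = f^2(\partial \Delta^d)$.

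There isn't really a hard step here; the only place to be careful is the inductive description of the minimal nonfaces of $O^d$ coming from the suspension recursion, since the definition is recursive rather than given directly in terms of a nonface structure. Everything else is bookkeeping: matching two explicit lists of $d+1$ facets, each of size $2d$, on the same ambient $2(d+1)$-vertex set.
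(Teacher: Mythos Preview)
Your proposal is correct and follows essentially the same approach as the paper: identify the minimal nonfaces of $O^d$ as the $d+1$ disjoint pairs, take complements to obtain the facets of $(O^d)^*$, and match these with the facets of $f^2(\partial\Delta^d)$. The only minor difference is that you justify the minimal-nonface structure of $O^d$ by induction on the suspension definition, whereas the paper simply asserts it (having already noted this fact in the proof of Proposition~\ref{Proposition: Partition Complex Cross Polytope}).
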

	\begin{proof}
		The minimal nonfaces of the cross polytope $O^d$ consist of $d+1$ disjoint edges. Thus we can think of $O^d$ as the complex on vertex set $V=\{u_i^1,u_i^2 : 1 \leq i \leq d+1 \}$ whose minimal nonfaces are the edges $e_i=\{u_i^1,u_i^2\}$ for each $i\in [d+1]$. The facets of its Aleander dual are the faces $V-e_i$ for each $i\in [d+1]$. Meanwhile the facets of the complex $\partial \Delta^d$ on vertex set $[d+1]$ are the sets $[d+1]-\{i\}$ for each $i\in [d+1]$, which are mapped to $V-e_i$ under the operation $f^2$.
	\end{proof}
	
	We do not currently know of a concrete description for the Alexander duals of clique complexes of cyclic and chordal graphs.
	
	\section{Homology-preserving Subcomplexes of $\BDelta$}\label{Subsection: Subcomplexes of BDelta}
	Let $\Delta$ be a simplicial complex. Recall from the discussion in Section \ref{Subsection: Barycentric Subdivision} that $\Delta$ is homeomorphic to its barycentric subdivision $\BDelta$ and hence both complexes have identical homology. Recall also (from the same section) that $\BDelta$ is equal to the induced subcomplex $\phi_m(\Delta)|_{S_\Delta}$ when $m=\dim \Delta+1$.
	
	While investigating the $\phi_i$ operations as given in Definition \ref{Definition: Phi_i}, it came to our attention that even for smaller values of $i$, the induced subcomplex $\phi_i(\Delta)|_{S^i_\Delta}$ seems to have identical homologies to $\Delta$ up to a certain degree. This leads to a natural question.
	\begin{qu}
		What is the smallest subcomplex of $\BDelta$ that preserves the homological data of $\Delta$ up to a given degree?
	\end{qu}
	
	Proposition \ref{Proposition: Links in BDelta} demonstrates that $\BDelta$ preserves a lot of information about the homology of the links in $\Delta$. This observation led us to consider the following subcomplexes of $\BDelta$.
	
	\begin{defn}\label{Definition: Li-Delta}
		Let $i\geq -1$ be an integer, and let $\LiDelta$ be the induced subcomplex of $\BDelta$ on vertex set $\{u_\sigma \in S_\Delta : \Hred_j(\lkds)\neq 0 \text{ for some } j\leq i\}$.
	\end{defn}
	
	\begin{defn}\label{Definition: L-Delta}
		Let $\LDelta$ be the induced subcomplex of $\BDelta$ on vertex set $\{u_\sigma \in S_\Delta : h(\Delta,\sigma)\neq \emptyset\}$ (i.e. $\LDelta=\bigcup_i \LiDelta$).
	\end{defn}
	
	Based on some experimental data from the software system Macaulay2 (\cite{M2}), our belief is that the homology of the complex $\LiDelta$ agrees with the homology of $\Delta$ up to degree $i$. In other words, we make the following conjecture.
	\begin{conj}\label{LiDelta Conjecture}
		For each $-1 \leq j \leq i$ we have $\Hred_j\left (\LiDelta\right) = \Hred_j(\Delta)$.
	\end{conj}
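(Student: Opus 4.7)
The plan is to show that the inclusion $\LiDelta \hookrightarrow \BDelta$ induces an isomorphism on reduced homology in degrees $\leq i$; combined with the standard fact that $\BDelta$ is homeomorphic to $\Delta$, this yields the conjecture. I would proceed by induction on $\dim \Delta$, the base case being the routine verification that the statement holds when $\Delta$ is the irrelevant complex or a zero-dimensional complex. For the inductive step, I would enumerate the vertices $u_\sigma$ in $V(\BDelta) \setminus V(\LiDelta)$, i.e.\ those for which $\lkds$ has no reduced homology in degrees $\leq i$, in order of \emph{decreasing} $|\sigma|$, producing a chain $\BDelta = K_0 \supseteq K_1 \supseteq \dots \supseteq K_N = \LiDelta$ in which each $K_t$ deletes a single vertex $u_{\sigma_t}$ from $K_{t-1}$.

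At each stage, the Mayer-Vietoris sequence (Proposition \ref{Proposition: MVS}) applied to the decomposition $K_{t-1} = K_t \cup \Star_{K_{t-1}} u_{\sigma_t}$, whose closed star is acyclic and whose pairwise intersection with $K_t$ is $\link_{K_{t-1}} u_{\sigma_t}$, reduces the desired isomorphism $\Hred_j(K_t) \cong \Hred_j(K_{t-1})$ for $j \leq i$ to showing that this link has vanishing reduced homology in degrees $\leq i$. Since the previously deleted vertices $u_{\sigma_1}, \dots, u_{\sigma_{t-1}}$ all satisfy $|\sigma_j| \geq |\sigma_t|$, they can appear in $\link_{\BDelta} u_{\sigma_t}$ only when $\sigma_j \supsetneq \sigma_t$; writing $\rho_j = \sigma_j - \sigma_t$ and using $\link_\Delta \sigma_j = \link_{\lkds[\sigma_t]} \rho_j$ (Lemma \ref{Lemma: links in links}), Proposition \ref{Proposition: Links in BDelta} identifies the surviving link as
\[
\link_{K_{t-1}} u_{\sigma_t} \;\cong\; \mathcal{L}_i\!\left(\lkds[\sigma_t]\right) \ast \mathcal{B}(\partial \sigma_t).
\]

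The inductive hypothesis applied to $\lkds[\sigma_t]$ (which has strictly smaller dimension) gives that $\mathcal{L}_i(\lkds[\sigma_t])$ matches $\lkds[\sigma_t]$ on reduced homology in degrees $\leq i$, hence is acyclic in those degrees by the defining property of $\sigma_t$. Since $\mathcal{B}(\partial \sigma_t)$ is a $(|\sigma_t|-2)$-sphere, the K\"unneth formula for joins (Proposition \ref{Proposition: Kunneth Formula}) shifts the non-vanishing degrees of the join upward by $|\sigma_t| - 1 \geq 0$, so the join has no reduced homology in degrees $\leq i$, as required; the Mayer-Vietoris conclusion then completes the inductive step.

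The main obstacle I anticipate is the bookkeeping in identifying the deletion of $\mathcal{B}(\lkds[\sigma_t])$ precisely as $\mathcal{L}_i(\lkds[\sigma_t])$: one must verify carefully that a vertex $u_\rho$ in the $\mathcal{B}(\lkds[\sigma_t])$ factor corresponds under the identification $\rho \leftrightarrow \rho \cup \sigma_t$ to a previously removed vertex if and only if $\link_{\lkds[\sigma_t]} \rho$ is acyclic in degrees $\leq i$ — that is, precisely the condition defining $\mathcal{L}_i$ applied to $\lkds[\sigma_t]$. A secondary technicality, less serious but worth recording, is that two simplices of equal cardinality cannot be comparable under strict inclusion, so vertices deleted at the same $|\sigma|$ level in the ordering do not interfere with one another's links inside $\BDelta$.
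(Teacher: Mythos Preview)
The paper does not prove this statement: it appears as Conjecture~\ref{LiDelta Conjecture} in the ``Future Directions'' chapter, supported only by Macaulay2 experimentation, so there is no proof in the paper to compare against. What you have sketched is therefore a proposed \emph{resolution} of an open conjecture, not an alternative to an existing argument.

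That said, your outline looks sound and the pieces fit together. The identification you flag as the ``main obstacle'' does go through cleanly: under the isomorphism of Proposition~\ref{Proposition: Links in BDelta}, a vertex $u_\tau$ of $\link_{\BDelta} u_{\sigma_t}$ with $\tau \supsetneq \sigma_t$ corresponds to $u_{\tau - \sigma_t}$ in the $\mathcal{B}(\lkds[\sigma_t])$ factor, and since $|\tau| > |\sigma_t|$ it has already been processed by step $t$; it was deleted precisely when $\link_\Delta \tau = \link_{\lkds[\sigma_t]}(\tau - \sigma_t)$ has no homology in degrees $\le i$, which is exactly the $\mathcal{L}_i$ condition on $\lkds[\sigma_t]$. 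Vertices $u_\tau$ with $\tau \subsetneq \sigma_t$ lie in the $\mathcal{B}(\partial\sigma_t)$ factor and, having strictly smaller cardinality, are untouched. Since $\mathcal{L}_i$ is defined as an \emph{induced} subcomplex and induced subcomplexes of joins are joins of induced subcomplexes, the identification $\link_{K_{t-1}} u_{\sigma_t} \cong \mathcal{L}_i(\lkds[\sigma_t]) \ast \mathcal{B}(\partial\sigma_t)$ is exact.

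Two small points worth tightening when you write it up. First, your induction must be strong induction on $\dim\Delta$, since the links $\lkds[\sigma_t]$ can drop in dimension by more than one. Second, the K\"unneth step needs the observation that even if $\mathcal{L}_i(\lkds[\sigma_t])$ acquires homology in some degree $j > i$ (the inductive hypothesis says nothing there), the shift by $|\sigma_t| - 1 \ge 0$ keeps the join's homology above degree $i$; you state the conclusion but should make this explicit. The degree $-1$ case is harmless because $\sigma_t$ is never a facet (facets have $\link = \{\emptyset\}$ with $\Hred_{-1}\neq 0$, so $u_F \in \LiDelta$ for every facet $F$), whence $\lkds[\sigma_t]$ has a vertex and so does $\mathcal{L}_i(\lkds[\sigma_t])$.
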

	
	In particular, $\LDelta$ is equal to $\LiDelta$ for some sufficiently large $i$, so a proof of Conjecture \ref{LiDelta Conjecture} would also prove the following slightly weaker conjecture.
	\begin{conj}\label{LDelta Conjecture}
		For every $j\geq -1$ we have $\Hred_j\left(\LDelta\right) = \Hred_j(\Delta)$.
	\end{conj}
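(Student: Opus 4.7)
The plan is to construct a chain of homology equivalences from $\BDelta$ to $\LDelta$ by iteratively deleting the ``bad'' vertices $u_\sigma$ (those for which $\lkds$ is acyclic), invoking the standard fact that if $v$ is a vertex of a simplicial complex $X$ with $\link_X v$ acyclic, then $\Hred_j(X - \{v\}) = \Hred_j(X)$ for every $j$. This follows from the Mayer-Vietoris sequence applied to the decomposition of $X$ as the union of the closed star at $v$ (a cone, hence acyclic by Corollary \ref{Corollary: Homology of Cone}) with the induced subcomplex on $V(X) - \{v\}$, their intersection being $\link_X v$. Since $\BDelta$ is homeomorphic to $\Delta$, chaining such equivalences will yield $\Hred_j(\LDelta) = \Hred_j(\BDelta) = \Hred_j(\Delta)$.

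I would proceed by induction on $|V(\Delta)|$, with the trivial base cases $|V(\Delta)| \leq 1$. For the inductive step, enumerate the bad vertices $u_{\sigma_1}, \dots, u_{\sigma_N}$ of $\BDelta$ in decreasing order of $|\sigma_i|$, and let $X_{k-1}$ denote the induced subcomplex of $\BDelta$ obtained by deleting $u_{\sigma_1}, \dots, u_{\sigma_{k-1}}$. The key claim, from which the result follows by successive applications of the fact above, is that $\link_{X_{k-1}} u_{\sigma_k}$ is acyclic.

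To establish this claim, begin from the join decomposition
\begin{equation*}
\link_{\BDelta} u_{\sigma_k} \;\cong\; \mathcal{B}(\lkds[\sigma_k]) \ast \mathcal{B}(\partial \sigma_k),
\end{equation*}
furnished by Proposition \ref{Proposition: Links in BDelta}. By the chosen ordering, the previously deleted vertices appearing inside this link are exactly those $u_{\sigma_i}$ with $\sigma_i \supsetneqq \sigma_k$ (the case $\sigma_i \subsetneqq \sigma_k$ would force $|\sigma_i| < |\sigma_k|$, and incomparable $\sigma_i$ never appear in the link). Under the isomorphism of Proposition \ref{Proposition: Links in BDelta} these correspond to vertices $u_{\sigma_i - \sigma_k}$ of the first factor, and Lemma \ref{Lemma: links in links} identifies them with exactly the bad vertices of $\mathcal{B}(\lkds[\sigma_k])$. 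Since restriction to induced subcomplexes commutes with joins (both factors having disjoint vertex sets), the link in $X_{k-1}$ takes the form
\begin{equation*}
\link_{X_{k-1}} u_{\sigma_k} \;\cong\; \mathcal{L}(\lkds[\sigma_k]) \ast \mathcal{B}(\partial \sigma_k).
\end{equation*}
Because $\sigma_k \neq \emptyset$, the complex $\lkds[\sigma_k]$ has strictly fewer vertices than $\Delta$, so the inductive hypothesis applied to it gives $\Hred_j(\mathcal{L}(\lkds[\sigma_k])) = \Hred_j(\lkds[\sigma_k])$; this vanishes for every $j$ because $\sigma_k$ is bad. The K\"{u}nneth formula (Proposition \ref{Proposition: Kunneth Formula}) then forces the entire join to be acyclic, proving the claim.

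The main obstacle is the order of deletion: naively removing bad vertices can modify the link of another bad vertex $u_\tau$ whenever $\tau$ is comparable to the deleted $\sigma$, and an arbitrary order will generally leave links that are not obviously acyclic. Processing in decreasing size is the right choice because it ensures that the only earlier deletions visible in $\link u_{\sigma_k}$ come from proper supersets of $\sigma_k$, and the ``cleaned'' first factor is recognised as $\mathcal{L}$ of a smaller complex---precisely the structure the inductive hypothesis is designed to handle. Conjecture \ref{LiDelta Conjecture} on the partial complexes $\LiDelta$ should follow from a refinement of the same argument, tracking at each step the range of degrees $j$ in which the inductive hypothesis is invoked, though the details of that refinement require more care than the proof sketched here.
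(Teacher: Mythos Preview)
The paper does not prove this statement: it is presented as an open conjecture in Chapter~\ref{Chapter: Future Directions}, supported only by Macaulay2 experimentation, with no proof attempt given. So there is nothing in the paper to compare against.

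Your argument, however, appears to be a genuine proof of the conjecture. The key insight---deleting bad vertices in \emph{decreasing} order of $|\sigma|$ so that the link of $u_{\sigma_k}$ in the intermediate complex $X_{k-1}$ factors as $\mathcal{L}(\lkds[\sigma_k]) \ast \mathcal{B}(\partial\sigma_k)$---is exactly right. The identification of the deleted vertices in this link with the bad vertices of $\mathcal{B}(\lkds[\sigma_k])$ via Lemma~\ref{Lemma: links in links} is clean, and the inductive hypothesis then forces the first join factor to be acyclic, so K\"{u}nneth finishes the step. The Mayer--Vietoris argument for ``deleting a vertex with acyclic link preserves all reduced homology'' is standard and holds down to degree $-1$ (the star is a cone, the intersection is the link).

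Two small points worth tightening. First, the induction variable: since the definitions of $\BDelta$ and $\LDelta$ depend only on the nonempty faces of $\Delta$ (not on any ambient vertex set with missing vertices), it is cleanest to induct on the number of nonempty faces of $\Delta$; the map $\tau\mapsto\tau\sqcup\sigma_k$ then shows $\lkds[\sigma_k]$ is strictly smaller whenever $\sigma_k\neq\emptyset$. Your phrasing ``$|V(\Delta)|$'' works too, provided $V(\Delta)$ is read as the set of vertices that are actually faces. Second, you should record that $\mathcal{L}(\lkds[\sigma_k])$ is never void when $\sigma_k$ is bad (any facet $F$ of the acyclic complex $\lkds[\sigma_k]$ has $\link F=\{\emptyset\}$, so $u_F$ survives), which is needed for the join to behave as expected. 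With these clarifications in place, your proof resolves Conjecture~\ref{LDelta Conjecture}.
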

	
	\section{Extremal rays and Defining Halfspaces}\label{Subsection: Extremal Rays and Defining Halfspaces}
	A complete description of the cone $\Dn$, or any of its subcones, would require a classification either of extremal rays or of defining halfspaces. We hope that the results in Chapter \ref{Chapter: Dimensions}, on the dimensions of these cones and their minimal ambient vector spaces, might prove useful in attempts to establish defining halfspaces.
	
	Classifying the extremal rays of $\Dn$ for arbitrary $n$ is a difficult task. As noted in Section \ref{Subsection: Motivation PR}, these rays are \textit{not} all given by pure diagrams, nor is every pure diagram in $\Dn$ extremal.
	
	\begin{ex}\label{Example: non-pure extremal ray} 
		If $\Delta$ is the complex 	$$\begin{tikzpicture}[scale = 0.6]
			\tikzstyle{point}=[circle,thick,draw=black,fill=black,inner sep=0pt,minimum width=4pt,minimum height=4pt]
			\node (a)[point] at (0,0){};
			\node (b)[point] at (4,0){};
			\node (c)[point] at (4,4){};
			\node (d)[point] at (0,4){};	
			\node (e)[point] at (2,2){};	
			\draw[black] (a.center) -- (b.center) -- (c.center) -- (d.center) -- cycle;
			\draw[black] (a.center) -- (e.center) -- (c.center);
		\end{tikzpicture}$$ on $5$ vertices, then the diagram $\beta=\beta(I_\Delta)$ of its Stanley-Reisner ideal is $$\begin{array}{c | ccc}
			& 0 & 1 & 2 \\
			\hline
			2 & 4 & 2 & .\\
			3 & . & 3 & 2
		\end{array}$$
		which is not pure. However, using the `\textit{Polyhedra}' (\cite{M2-Poly}) and `\textit{BoijSoederberg}' (\cite{M2-BS}) packages on the software system Macaulay2 we found that this diagram \textit{is} in fact an extremal ray of $\Dn$. We will not prove this explicitly here; instead we show simply that it cannot be decomposed into a rational sum of pure diagrams in $\Dn$, which demonstrates that $\Dn$ must contain at least \textit{some} non-pure extremal ray.
		
		Suppose for contradiction that our diagram $\beta$ can be decomposed into a rational sum of pure diagrams $\sum_i q_i \alpha^i$. In particular, $\beta_{2,5}$ is nonzero, so at least one of the pure diagrams $\alpha$ in this decomposition must satisfy $\alpha_{2,5}\neq 0$. Our two options for the shift type of $\alpha$ are $(5,4,2)$ and $(5,3,2)$. The first option would require a pure diagram of degree type $(1,2)$, but we saw in Section \ref{Subsection: PR projdim 2} (Proposition \ref{Proposition: n = (2m+3)b for some cases}) that the smallest PR complex of degree type $(1,2)$ is the cycle complex $\fC_{1,2}$ on $6$ vertices. So $\alpha$ must instead have shift type $(5,4,2)$, and the only option is the diagram corresponding to the the cycle complex $\fC_{2,1}$, which is
		$$\begin{array}{c | ccc}
			& 0 & 1 & 2 \\
			\hline
			2 & 5 & 5 & .\\
			3 & . & . & 1
		\end{array}$$
		Because we have $\beta_{2,5}=2 = 2 \alpha_{2,5}$, the coefficient of $\alpha$ in the decomposition must be $2$. But this is a contradiction because $2\alpha_{0,2}=10$, which is greater than $\beta_{0,2}$.
		
		Thus $\Dn[5]$ contains some extremal rays which are not pure.
	\end{ex}
	
	\begin{ex}\label{Example: non-extremal pure diagram}
		If $\Delta$ is the complex $$\begin{tikzpicture}[scale = 0.75]
			\tikzstyle{point}=[circle,thick,draw=black,fill=black,inner sep=0pt,minimum width=4pt,minimum height=4pt]
			
			\node (a)[point] at (0,1) {};
			\node (b)[point] at (0,-1) {};
			\node (c)[point] at (2,0) {};
			
			\draw (a.center) -- (b.center);
		\end{tikzpicture}$$ on $3$ vertices, then the diagram $\beta=\beta(I_\Delta)$ of its Stanley-Reisner ideal is $$\begin{array}{c | cc}
			& 0 & 1 \\
			\hline
			2 & 2 & 1 
		\end{array}$$ This diagram is pure, but it can be expressed as a sum of smaller pure diagrams in $\Dn[3]$. Specifically, using the notation $\beta(\Delta)$ to stand in for $\beta(I_\Delta)$ as in Chapter \ref{Chapter: Dimensions}, we have
		\begin{center}
			\begin{tabular}{ c c c c c }
				$\beta\left (\begin{tikzpicture}[scale = 0.4]
					\tikzstyle{point}=[circle,thick,draw=black,fill=black,inner sep=0pt,minimum width=4pt,minimum height=4pt]
					
					\node (a)[point] at (0,0) {};
					\node (b)[point] at (0,1.7) {};
					\node (c)[point] at (2,0.85) {};
					\node at (0,1.8) {};
					
					\draw (a.center) -- (b.center);
				\end{tikzpicture}\right )$ & $=$ & $\frac{1}{2} \beta\left (\begin{tikzpicture}[scale = 0.4]
					\tikzstyle{point}=[circle,thick,draw=black,fill=black,inner sep=0pt,minimum width=4pt,minimum height=4pt]
					
					\node (a)[point] at (0,0) {};
					\node (b)[point] at (2,0) {};
					\node at (0,1.8) {};
					
				\end{tikzpicture}\right )$ & $+$ & $\frac{1}{2} \beta\left (\begin{tikzpicture}[scale = 0.4]
					\tikzstyle{point}=[circle,thick,draw=black,fill=black,inner sep=0pt,minimum width=4pt,minimum height=4pt]
					
					\node (a)[point] at (0,0) {};
					\node (b)[point] at (2,0) {};
					\node (b)[point] at (1,1.7) {};
					\node at (0,1.8) {};
					
				\end{tikzpicture}\right )$ \\
				$\begin{array}{c | cc}
					& 0 & 1 \\
					\hline
					2 & 2 & 1 
				\end{array}$ & $=$ & $\frac{1}{2}\left (\begin{array}{c | cc}
					& 0 \\
					\hline
					2 & 1 
				\end{array}\right )$ & $+$ & $\frac{1}{2} \left (\begin{array}{c | cc}
					& 0 & 1\\
					\hline
					2 & 3 & 2
				\end{array}\right )$
			\end{tabular}
		\end{center}
		
		Thus $\beta$ is a pure diagram in $\Dn[3]$ which is not extremal.
	\end{ex}
	
	Of course, as a consequence of the Boij-S\"{o}derberg conjectures, all pure diagrams in $\Dn$ which correspond to Cohen-Macaulay complexes are extremal rays of the wider Betti cone generated by all $R$-modules, and hence of $\Dn$ itself. This gives us the following proposition.
	\begin{prop}\label{Proposition: Pure Extremal Rays of Dn}
		If $\Delta$ is a complex of any of the following forms, then the Betti diagram $\beta(I_\Delta)$ lies on an extremal ray of the cone $\Dn$.
		\begin{enumerate}
			\item The skeleton complex $\Skel_r([m])$ for some $-1\leq r < m \leq n$.
			\item The $d$-dimensional cross polytope $O^d$ for some $d\leq \frac{n}{2}-1$.
			\item The $1$-dimensional cycle $C_m$ for some $m\leq n$.
		\end{enumerate} 
	\end{prop}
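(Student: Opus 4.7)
The plan is a direct application of Boij--Söderberg theory. For each of the three complexes $\Delta$, I would verify that $R/I_\Delta$ is Cohen--Macaulay and that $\beta(I_\Delta)$ is pure. These two properties together imply that $\beta(R/I_\Delta)$ is a pure Cohen--Macaulay Betti diagram, hence proportional to the pure diagram $\pi(\bc)$ of its shift type; by Theorem~\ref{Theorem: BS Conjecture 2 non-CM}, such diagrams span the extremal rays of the cone of Betti diagrams of all finitely generated graded $R$-modules sitting inside $\Vn$. Extremality then descends to $\beta(I_\Delta)\in\Dn$ via the linear shift relation $\beta_{i,d}(R/I_\Delta)=\beta_{i-1,d}(I_\Delta)$ valid for $i\geq 1$.

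In each case, Cohen--Macaulayness follows from Reisner's criterion (Theorem~\ref{Theorem: Reisner's Criterion}). For $\Skel_r([m])$, every link is again a skeleton complex whose homology is concentrated in the top degree by Lemma~\ref{Lemma: Homology of Skeleton Complexes}, so Reisner's criterion is immediate. For the cross polytope $O^d$ and the cycle $C_m$, both are simplicial spheres (of dimensions $d$ and $1$ respectively) and hence Cohen--Macaulay by Remark~\ref{Remark: Simplicial Spheres are CM}. Purity of the Betti diagrams is a direct reading of existing formulas in the paper: Lemma~\ref{Lemma: Betti Skeleton Complexes} shows $\beta(I_{\Skel_r([m])})$ is concentrated on $d=i+r+2$; the formula $\beta_{i,2i+2}(I_{O^d})=\binom{d+1}{i+1}$ noted after Theorem~\ref{Theorem: Edge Ideals with Pure Resolutions} handles the cross polytope; and Proposition~\ref{Proposition: Betti-Cn} together with the identification $I_{C_m}=I(C_m^c)$ handles the cycle. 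In each case one reads off that each column of the diagram contains at most one nonzero entry.

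The main obstacle, I expect, is articulating the descent from extremality in the ambient cone of all Betti diagrams to extremality in $\Dn$. The cleanest route appears to go via the subcone generated by the quotient diagrams $\{\beta(R/I_\Delta)\}$, together with the linear shift map $F$ sending $\beta(R/I_\Delta)\mapsto\beta(I_\Delta)$ (forgetting row $0$ and shifting homological index down by one). The kernel of $F$ restricted to this subcone is exactly the ray spanned by $\beta(R)=e_{0,0}$, which lies there as $\beta(R/I_{\Delta^{n-1}})$ corresponding to the full simplex, so $F$ identifies $\Dn$ with a quotient of cones; an extremal ray of the quotient-diagram cone distinct from the $\beta(R)$-ray therefore descends to an extremal ray of $\Dn$, and this applies to each of our three $\beta(R/I_\Delta)$'s because they are extremal in the ambient cone by the Boij--Söderberg step above. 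Executing this descent carefully, rather than the combinatorial verifications of (i) and (ii), is where the real work lies.
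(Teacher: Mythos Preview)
Your approach matches the paper's: verify that each $\Delta$ is Cohen--Macaulay with pure Betti diagram, then invoke Boij--S\"{o}derberg theory. The paper's proof does exactly this and stops there, relying on the sentence preceding the proposition for the descent to $\Dn$; you go further and try to justify that descent, which is commendable.

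However, your descent argument has a genuine gap. The claim ``an extremal ray of the quotient-diagram cone distinct from the $\beta(R)$-ray therefore descends to an extremal ray of $\Dn$'' is \emph{not} a general fact about quotients of cones by an extremal ray. Concretely, take the cone in $\QQ^3$ generated by $(1,0,1)$, $(0,1,1)$, $(1,1,0)$, $(0,0,1)$, and project away the last coordinate: the extremal ray $(1,1,0)$ maps to $(1,1)$, which is not extremal in the image. So identifying $\ker F$ with the ray through $e_{0,0}$ is not enough on its own.

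What saves the argument here is the Herzog--K\"{u}hl functional $\HK_0$. Every nonzero generator $\beta(I_\Gamma)$ of $\Dn$ satisfies $\HK_0(\beta(I_\Gamma))=1$ (Remark~\ref{Remark: HK Equations}), so for any decomposition $\beta(I_\Delta)=v+w$ in $\Dn$ you get $\HK_0(v)+\HK_0(w)=1$. Lifting $v=\sum a_j\beta(I_{\Gamma_j})$ to $\alpha_v=\sum a_j\beta(R/I_{\Gamma_j})$ then gives $(\alpha_v)_{0,0}=\sum a_j=\HK_0(v)$, and likewise for $w$; hence $\alpha_v+\alpha_w$ agrees with $\beta(R/I_\Delta)$ at every position including $(0,0)$, and linear independence of $v,w$ forces linear independence of $\alpha_v,\alpha_w$, contradicting extremality of $\beta(R/I_\Delta)$ in the full module cone. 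Equivalently: all generators of your cone $\mathcal{C}'$ lie on the affine hyperplane $\{\alpha_{0,0}=1\}$, which is transverse to $\ker F$, so $F$ restricted to this slice is an affine bijection and genuinely preserves vertices. Add this observation and your argument is complete --- and in fact more carefully justified than the paper's own treatment.
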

	\begin{proof}
		For part (1), the complex $\Skel_r([m])$ has a pure Betti diagram by Lemma \ref{Lemma: Betti Skeleton Complexes}, and it satisfies Reisner's criterion for Cohen-Macaulay complexes (this can be shown by an inductive argument on $r\geq -1$, using the fact that the link of any vertex in $\Skel_r([m])$ is equal to $\Skel_{r-1}([m-1])$).
		
		For parts (2) and (3), we saw that both the complexes $O^d$ and $C_m$ have pure Betti diagrams in Theorem \ref{Theorem: Edge Ideals with Pure Resolutions}, and they are both boundaries of simplicial spheres, which means they are Cohen-Macaulay by Remark \ref{Remark: Simplicial Spheres are CM}.
	\end{proof}
	\begin{rem}\label{Remark: All extremal pure diagrams of this shape accounted for}
		The above proposition is not an exhaustive list of the pure extremal rays of the cone $\Dn$. However, these diagrams \textit{do} account for all pure extremal rays of $\Dn$ of their particular shift types. To see why, suppose (for example) that $\beta$ is a pure diagram in $\Dn$ with the same shift type $\bc=(c_p,\dots,c_0)$ as the diagram of a skeleton complex. The second Boij-S\"{o}derberg Theorem allows us to decompose the ray $t\cdot\beta$ into a sum of rays of the form $t\cdot\pi(\bc')$ for subsequences $\bc'=(c_r,\dots,c_0)$ of $\bc$; and for every such subsequence $\bc'$ there is a corresponding skeleton complex on up to $n$ vertices whose diagram lies on the ray $t\cdot \pi(\bc')$, which means that all of these rays lie in $\Dn$. A similar argument holds for pure diagrams with the same shift type as cross polytopes or cyclic graphs.
		
		Thus the diagrams of skeleton complexes are the only linear extremal rays in $\Dn$; and the diagrams of cross polytopes and cyclic graphs are, respectively, the only pure extremal rays with shift types of the form $(2r,2r-2,\dots,4,2)$ and $(m+2,m,m-1,\dots,3,2)$. In particular, this accounts for all the pure extremal rays in $\Cn$, by Theorem \ref{Theorem: Edge Ideals with Pure Resolutions}.
	\end{rem}
	
	Using Macaulay2 we have found the extremal rays of $\Dn$ for low values of $n$, but we have so far failed to establish any patterns for the non-pure examples. We hope that further investigation with Macaulay2 might yield some examples of non-pure rays that occur for arbitrary values of $n$ (although a complete description of the extremal rays of $\Dn$ is probably too ambitious).
	

	\backmatter
	\phantomsection
	\addcontentsline{toc}{chapter}{Bibliography}
	
	
		
	\printunsrtglossary[title=\listofsymbolsname,type=symbols,style=long-name-desc,nogroupskip]
	
\end{document}